    \newcommand{\lm}[1]{\texorpdfstring{#1}{replacedLaTeXcode}} %for math in section headings
\newcommand{\ms}[1]{\mathscr{#1}}
\let\oldtocsection=\tocsection
\let\oldtocsubsection=\tocsubsection
\let\oldtocsubsubsection=\tocsubsubsection
\renewcommand{\tocsection}[2]{\hspace{0em}\oldtocsection{#1}{#2}}
\renewcommand{\tocsubsection}[2]{\hspace{1em}\oldtocsubsection{#1}{#2}}
\renewcommand{\tocsubsubsection}[2]{\hspace{2em}\oldtocsubsubsection{#1}{#2}}
\newcounter{prcounter}
\newcommand{\resume}{\setcounter{enumi}{\value{prcounter}}}
\newcommand{\lf}{\left}
\newcommand{\ri}{\right}
\newcommand{\f}{\frac} 
\newcommand{\into}{\hookrightarrow}
\newcommand{\onto}{\twoheadrightarrow}
\newcommand{\iso}{\xrightarrow{\sim}}
\newcommand{\wh}{\widehat}
\DeclareMathOperator{\sgn}{sgn}
\DeclareMathOperator{\rank}{rank}
\newcommand{\triv}{\mathrm{triv}}
\DeclareMathOperator{\Out}{Out}
\DeclareMathOperator{\Gal}{Gal}
\DeclareMathOperator{\Hom}{Hom}
\DeclareMathOperator{\Res}{Res}
\DeclareMathOperator{\Ind}{Ind}
\DeclareMathOperator{\Sym}{Sym}
\DeclareMathOperator{\vol}{vol}
\DeclareMathOperator{\diag}{diag}
\DeclareMathOperator{\ad}{ad}
\DeclareMathOperator{\Ad}{Ad}
\DeclareMathOperator{\tr}{tr}
\newcommand{\id}{\mathrm{id}}
\DeclareMathOperator{\Lie}{\mathrm{Lie}}
\newcommand{\Ld}[1]{{}^L\!{#1}}
\newcommand{\m}[1]{\mathbf{#1}}
\newcommand{\mf}[1]{\mathfrak{#1}}
\newcommand{\mc}[1]{\mathcal{#1}}
\newcommand{\td}[1]{\tilde{#1}}
\newcommand{\wtd}[1]{\widetilde{#1}}
\newcommand{\C}{\mathbb C}
\newcommand{\R}{\mathbb R}
\newcommand{\Q}{\mathbb Q}
\newcommand{\Z}{\mathbb Z}
\newcommand{\A}{\mathbb A}
\newcommand{\SL}{\mathrm{SL}}
\newcommand{\Sp}{\mathrm{Sp}}
\newcommand{\SO}{\mathrm{SO}}
\newcommand{\GL}{\mathrm{GL}}
\newcommand{\St}{\mathrm{St}}
\newcommand{\eps}{\epsilon}
\newcommand{\om}{\omega}
\newcommand{\lb}{\lambda}
\newcommand{\Om}{\Omega}
\newcommand{\bs}{\backslash}
\newcommand{\1}{\m 1}
\newcommand{\tdj}{\td{\jmath}}
\newcommand{\adj}{\mathrm{ad}}
\newcommand{\scn}{\mathrm{sc}}
\newcommand{\disc}{\mathrm{disc}}
\newcommand{\cusp}{\mathrm{cusp}}
\newcommand{\el}{\mathrm{ell}}
\newcommand{\ur}{\mathrm{ur}}
\newcommand{\spl}{\mathrm{spl}}
\newcommand{\ns}{\mathrm{ns}}
\newcommand{\pl}{\mathrm{pl}}
\newcommand{\der}{\mathrm{der}}
\newcommand{\sm}{\mathrm{sim}}
\newcommand{\temp}{\mathrm{temp}}
\newcommand{\st}{\mathrm{st}}
\newcommand{\rss}{\mathrm{rss}}
\newcommand{\BC}{\mathbb C}
\newcommand{\BZ}{\mathbb Z}
\newcommand{\BA}{\mathbb A}
\newcommand{\fp}{\mathfrak{p}}
\newcommand{\fn}{\mathfrak{n}}
\newcommand{\EP}{\mathrm{EP}}
\newcommand{\symp}{\mathrm{symp}}
\newcommand{\orth}{\mathrm{orth}}
\DeclareMathOperator*{\prodf}{%
  \ThisStyle{\mathop{\ensurestackMath{\stackinset{c}{0\LMpt}{c}{}{%
  \rotatebox[origin=lb]{-90}{$\SavedStyle\scalerel*{\oplus}{%
  i}$}}{\SavedStyle\prod}}}}}
\newcommand\numberthis{\addtocounter{equation}{1}\tag{\theequation}}
\newtheorem{thm}{Theorem}[subsection]
\newtheorem{prop}[thm]{Proposition}
\newtheorem{cor}[thm]{Corollary}
\newtheorem{lem}[thm]{Lemma}
\newtheorem{conj}[thm]{Conjecture}
\newtheorem{assumption}[thm]{Assumption}
\theoremstyle{remark}
\newtheorem*{note}{Note}
\theoremstyle{definition}
\newtheorem{dfn}[thm]{Definition}
\numberwithin{equation}{subsection}
\title[Root Number Equidistribution]{Root Number Equidistribution for %(Conjugate) blue
Self-Dual Automorphic Representations on $\GL_N$}
\author{Rahul Dalal$^1$}
\address{$^1$Faculty of Mathematics, University of Vienna}
\email{rahul.dalal@univie.ac.at}
\author{Mathilde Gerbelli-Gauthier$^2$}
\address{$^2$Department of Mathematics, University of Toronto}
\email{m.gerbelli@utoronto.ca}
\date{\today}
\begin{document}

\begin{abstract}
Let $F$ be a totally real field. We study the root numbers $\eps(1/2, \pi)$ of self-dual cuspidal automorphic representations $\pi$ of~$\GL_{2N}/F$ with conductor~$\mf n$ and regular integral infinitesimal character~$\lb$. If $\pi$ is orthogonal, then $\eps(1/2, \pi)$ is known to be identically one. We show that for symplectic representations, the root numbers~$\eps(1/2, \pi)$ equidistribute between~$\pm 1$ as $\lb \to \infty$, provided that there exists a prime dividing~$\fn$ with power $>N$.We also study conjugate self-dual representations with respect to a CM extension $E/F$, where we obtain a similar result under the assumption that $\fn$ is divisible by a large enough power of a ramified prime and provide evidence that equidistribution does not hold otherwise. In cases where there are known to be associated Galois representations (e.g by \cite{Shin11}), we deduce root number equidistribution results for the corresponding families of $N$-dimensional Galois representations.

The proof generalizes a classical argument for the case of $\GL_2/\Q$ by using Arthur's trace formula and the endoscopic classification for quasisplit classical groups similarly to a previous work (\cite{DGG22}). The main new technical difficulty is evaluating endoscopic transfers of the required test functions at central elements.  
\end{abstract}

\maketitle

\tableofcontents

\section{Introduction}

\subsection{Context}
Recall a classical open question: consider the set of elliptic curves $E/\Q$ with conductor $\leq n$. The BSD conjecture leads us to expect that the parity of the rank of the group of $\Q$-points of $E$ depends on the root number $\eps(1/2, E) = \pm 1$ of the corresponding $L$-function. We can therefore ask whether this root number is equidistributed between $\pm 1$ as $n \to \infty$. This paper studies an automorphic, weight-aspect variant of this question for higher-rank groups. Under very mild technical assumptions, we prove that equidistribution holds in all cases where it should be expected, see Theorem \ref{mainthmintro}.

\subsubsection{Automorphic Translation}
By the modularity theorem, every elliptic curve $E/\Q$ of conductor~$n$ corresponds to a cuspidal, weight-$2$ modular (eigen)newform~$f$ of level~$n$ with matching $L$-function/root number: $\eps(1/2, E) = \eps(1/2, f)$. We can therefore ask instead if the root numbers of cuspidal weight-$2$ modular newforms of level~$n$ equidistribute as~$n \to \infty$. This does not amount to computing the same statistic for rational elliptic curves since there are many extra modular forms. However, it is more natural from some perspectives
: by the modularity theorem, cuspidal weight-$2$ newforms are in bijection with irreducible $2$-dimensional  representations of~$\Gal(\overline \Q/\Q)$ with Hodge-Tate weights $\{0,1\}$. These counts may be more relevant in certain contexts. 

Most importantly for our purposes, counting modular forms opens the way for the application of trace formula techniques. These have been used to prove equidistribution in \cite{SZ88, Martin18, Martin23} and are effective enough to compute finer statistics such as lower-order biases as in the results of Martin. Trace formulas even allow for computing subtle correlations between root numbers and Hecke eigenvalues depending on second-order estimates---for example, the ``murmurations'' phenomenon proven in \cite{Zub23}. 

\subsubsection{Generalization to Higher Dimensions} \label{ss_generalization_higher_dim}
Recall that cuspidal modular newforms correspond to certain automorphic representations $\pi$ on $\GL_2/\Q$ with regular, integral infinitesimal character at infinity. Any automorphic representation factors as a restricted direct product over places $v$ of $\Q$:
\[
\pi = {\bigotimes_v}' \pi_v,
\]
with each $\pi_v$ an irreducible representation of $\GL_2(\Q_v)$. In this context, the generalization of weight of a modular form is the so-called infinitesimal character of $\pi_\infty$.
The automorphic representations corresponding to holomorphic modular forms are those where $\pi_\infty$ has regular, integral infinitesimal character---i.e. the same as a finite-dimensional irreducible representation and therefore as ``simple'' as possible. Furthermore, as pointed out in \cite{BG14}, having integral infinitesimal character is exactly the expected condition needed to have an associated Galois representation. Thus automorphic representations of $GL_N/\Q$ with regular, integral infinitesimal character appear to be a natural generalization of holomorphic modular forms.  

Generalizations of modular forms actually satisfy one more condition. For automorphic representations $\pi$ on $\GL_2$, having regular integral infinitesimal character at infinity is equivalent to 
$\pi_\infty$ 
being either discrete series or trivial. This discrete-at-infinity requirement can be thought of as a  
``niceness'' condition; that these automorphic representations are the simplest possible in the same way that modular forms of weight~$\geq 2$ are simpler than Maass forms or weight-$1$ forms. Unfortunately, for~$N > 2$,~$\GL_N(\R)$ has no discrete series, so it at first appears that modular forms do not actually generalize to higher dimension. 

A good resolution is to  
consider automorphic representations $\pi$ that are either:
\begin{itemize}
    \item \emph{self-dual
    }: $\pi$ is an automorphic representation for $\GL_N/F$ for $F$ a totally real number field and $\pi^\vee \cong \pi$.
    \item \emph{conjugate self-dual 
    }: $\pi$ is an automorphic representation for $\GL_N/E$ for $E/F$ a CM quadratic extension of number fields and $\pi^\vee \cong \bar \pi$ (i.e. the $E/F$-conjugate).
\end{itemize}
Any (conjugate) self-dual $\pi$ has component $\pi_\infty$ that is also (conjugate) self-dual.
Moreover, such $\pi_\infty$ with regular enough and integral infinitesimal character are discrete within the (conjugate) self-dual spectrum. Lastly, these are also the automorphic representations to which, even in higher dimension, there are expected to be attached Galois representations---e.g. by work of Shin and collaborators \cite{Shin11}, \cite{KS20}, \cite{KS23} in some cases and \cite{Shi24} for a ``weak transfer'' result in all cases. 

Self-dual automorphic representations can be classified into two types: orthogonal- or symplectic-type. This should be thought of as encoding whether the
conjectural associated Galois representations preserve a symmetric or skew-symmetric bilinear form (the precise definition will be recalled in \S\ref{sec globalduality} and depends on \cite{Art13}). One of the outputs of the endoscopic classification \cite{Art13} is that orthogonal-type automorphic representations have root number determined by their central character, and as such, easily understood. In the self-dual case, the most interesting statistics are those of the symplectic-type automorphic representations. We similarly restrict our attention to the conjugate-symplectic case for conjugate self-dual representations, this time justified by results in the unitary endoscopic classification \cite{Mok15}. 

Note that in the self-dual case, restriction to symplectic-type is also natural from the Galois side. The natural generalization of elliptic curves is higher-dimensional abelian varieties. Just like elliptic curves, these are conjecturally associated to automorphic representations through their Galois representations. However, Galois representations of abelian varieties always preserve the symplectic Weil pairing, so their associated automorphic representations should  
be symplectic-type.

In summary, it is most appropriate to consider the following generalization of modular forms: (conjugate) symplectic-type automorphic representations of $\GL_N$ with regular, integral infinitesimal character at infinity.

\subsection{Results}

\subsubsection{Setup}
To make the results precise, fix a totally real number field $F$. We consider two cases:
\begin{itemize}
    \item self-dual: define $G_N = \GL_N/F$,
    \item conjugate self-dual: define $G_N = \Res^E_F \GL_N$.
\end{itemize}
Let $\mc{AR}_\cusp(G_N)$ be the set of cuspidal automorphic representations for $G_N$---i.e those on $\GL_N(F) \bs \GL_N(\A_F)$ in the self-dual case and those on $\GL_N(E) \bs \GL_N(\A_E)$ in the conjugate self-dual case. 

As discussed in \S\ref{ss_generalization_higher_dim} and made precise in \S\ref{sec globalduality}, there is a subset of $\mc{AR}_\cusp(G_N)$ of (conjugate) self-dual representations which can be partitioned in to (conjugate) symplectic and (conjugate) orthogonal pieces. Call these~$\mc{AR}^\symp_\cusp(G_N)$ and~$\mc{AR}^\orth_\cusp(G_N)$ respectively. 

Any $\pi \in \mc{AR}_\cusp(G_N)$ has two associated invariants: its infinitesimal character $\lb$ depending on $\pi_\infty$ that is a tuple of complex numbers $\lb_v$ for each $v|\infty$ (see \S\ref{sec infchars}), and its conductor $\mf n$ depending on $\pi^\infty$ that is an ideal of $\mc O_F$ (see \S\ref{sec newvectors}). For $\star = \text{orth, symp}$, let $\mc{AR}^\star_N(\lb, \mf n)$ be the subset of $\mc{AR}^\star_\cusp(G_N)$ with the corresponding invariants. Following the discussion of \S\ref{ss_generalization_higher_dim}, we only consider regular integral $\lb$ (see \S\ref{integralinfchars}).

Finally, every (conjugate) self-dual representation $\pi$ has a root number invariant $\eps(1/2,\pi) = \pm 1$ (see \S\ref{selfdualrootnumbers} for normalization details). We are interested in the average value 
\[
\lim_{\substack{\lb \to \infty \\ \lb \text{ integral}}} \f1{|\mc{AR}^\star_N(\lb, \mf n)|} \sum_{\pi  \in \mc{AR}^\star_N(\lb, \mf n)} \eps(1/2, \pi). 
\]
for $\star = \orth, \symp$ and where $\lb \to \infty$ is as a dominant weight of $G_\infty$, i.e. so that the minimum difference between elements in each tuple $\lb_v$ goes to $\infty$; see \S\ref{sec infcharnorms}.

\subsubsection{Main Theorem}
First, by the endoscopic classifications \cite{Art13} and \cite{Mok15}, the root number  $\eps(1/2, \pi)$ of $\pi \in \mc{AR}^\orth_\cusp(G_N)$ depends only on the central character of $\pi$. Therefore, we focus on the symplectic-type representations. We also assume that $N$ is even, as this restriction applies only in the conjugate self-dual case since there are no self-dual, symplectic-type representations with $N$ odd.

Our main result is that $\eps(1/2, \pi)$ equidistributes under these conditions:
\begin{thm}[\ref{mainthm}, \ref{mainthmconj}]\label{mainthmintro}
Let $N$ be even and let $\mf n = \prod_v \mf p_v^{e_v}$ be such that: 
\begin{itemize}
    \item One of the $e_v$ is either odd or greater than $N$ (self-dual case),
    \item One of the $e_v$ is large enough for one of the ramified $\mf p_v$ and $\mf n$ is coprime to $2$ (conjugate self-dual case).
\end{itemize}
Then
\[
\lim_{\substack{\lb \to \infty \\ \lb \text{ integral}}} \f1{|\mc{AR}^\symp_N(\lb, \mf n)|} \sum_{\pi \in \mc{AR}^\symp_N(\lb, \mf n)} \eps(1/2, \pi) \to 0.
\]
Furthermore, in the self-dual case, the conditions on $\mf n$ are also necessary. 
\end{thm}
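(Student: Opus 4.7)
The overall strategy, following the classical approach for $\GL_2/\Q$ and extending the authors' prior work \cite{DGG22}, is to encode the root number as an endoscopic sign accessible via the stable trace formula. The plan is to compute both $|\mc{AR}^\symp_N(\lb, \mf n)|$ and the signed sum
\[
S(\lb,\mf n) := \sum_{\pi \in \mc{AR}^\symp_N(\lb, \mf n)} \eps(1/2, \pi)
\]
as spectral traces on a suitable quasi-split classical group $H$---namely $\SO_{2N+1}/F$ in the self-dual case, or a quasi-split unitary group in the conjugate self-dual case---via the endoscopic classifications \cite{Art13, Mok15}, and then to show that $S(\lb,\mf n) = o(\dim \lb)$ while the total count grows of exact order $\dim \lb$.

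First I would transfer the problem to $H$ using the endoscopic classification. The key spectral input is that $\eps(1/2, \pi)$ can be expressed as a product of local signs arising from evaluating a distinguished character on a central element of the component group of the Arthur parameter associated to $\pi$. This realizes $S(\lb, \mf n)$ as the trace of a ``twisted'' test function on $H$, where the twist at the ramified primes dividing $\mf n$ inserts the required sign character. At infinity I would use averaged pseudo-coefficients of discrete series to isolate the packet with infinitesimal character $\lb$; at unramified finite places take the spherical unit; and at each $v \mid \mf n$ use newform-type characteristic functions of congruence subgroups of level $\mf p_v^{e_v}$, modified at one chosen place by the sign twist.

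Next, I would apply Arthur's stable trace formula to this test function. The geometric side decomposes into the identity contribution, contributions from non-identity central elements (most importantly $-I$), and contributions from non-central regular semisimple orbital integrals. The identity term of $S(\lb, \mf n)$ vanishes thanks to the sign twist at the chosen ramified place, and the non-central semisimple orbital integrals are controlled by $o(\dim \lb)$ using Matz--Templier-type archimedean estimates combined with the finite-place bounds already developed in \cite{DGG22}. What remains is the central contribution, which is where the conditions on $\mf n$ intervene.

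The principal obstacle, as flagged in the abstract, is this central contribution. Evaluating it requires computing the Kottwitz--Shelstad endoscopic transfer of the newform-type test functions at the central element $-I$ of $H$ (or its analogue in the unitary case) across each elliptic endoscopic group of $H$. The combinatorial content of the hypotheses on $\mf n$---some $e_v$ odd or greater than $N$ in the self-dual case, some $e_v$ sufficiently large at a ramified prime with $\mf n$ coprime to $2$ in the conjugate self-dual case---should be exactly what forces this local transfer to vanish at $-I$, making the central term lower order and completing the equidistribution. Conversely, an explicit computation of the central contribution when the conditions fail should show that it matches the identity term of the total count, yielding the necessity claim in the self-dual case.
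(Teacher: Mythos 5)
Your sketch captures the very high-level shape of the argument (isolate the symplectic spectrum via the endoscopic classification, compare a signed count against an unsigned count using the stable trace formula, show the signed main term vanishes under the conditions on~$\mf n$), but the central technical idea is misidentified and several essential pieces are missing.

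The paper does not encode $\eps(1/2,\pi)$ as the value of a character on the component group $\mc S_\psi$. The character $\eps_\psi$ appearing in Arthur's multiplicity formula is built from root numbers of $\wedge^2$/$\Sym^2$ or Asai $L$-functions, not of the standard $L$-function, and there is no way to read $\eps(1/2,\pi)$ off of $\mc S_\psi$ directly. Instead, the key input is that for a self-dual $\pi_v$ with newvector $\varphi_v$, the generalized Atkin--Lehner involution $\iota_{\mf n}$ acts on $\varphi_v$ with eigenvalue $\eps(1/2,\pi_v,\psi)^{N-1}$ (Theorems~\ref{epsformula} and~\ref{epsformulaconj}), proved by running the $\GL_N \times \GL_{N-1}$ Rankin--Selberg functional equation with the mirahoric test vector. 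Because $\iota_{\mf n}$ is inner to $g \mapsto g^{-T}$, which is outer for $N > 2$, the resulting test functions $\wtd E^\infty_{\mf n}$ necessarily live on the \emph{twisted} group $\wtd G_N = \GL_N \rtimes \theta$, not on the classical group $H$. Your proposal works entirely on $H$ and offers no mechanism that would actually detect the sign $\eps(1/2,\pi)$; the ``sign twist at the ramified primes'' is not a construction.

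A second gap: you say the total count is ``of exact order $\dim\lb$'' but give no argument. The unweighted-count test function $\wtd C^\infty_{\mf n}$ is constructed by an abstract trace Paley--Wiener theorem (Corollary~\ref{localC}) and has no explicit formula, so showing that the leading geometric term $(\wtd C^\infty_{\mf n})^G$ is strictly positive at central elements is non-trivial. The paper proves this via a ``stable Fourier inversion'' result (Theorem~\ref{stablefourierinversion}) asserting that the Plancherel measure is constant on $L$-packets, combined with local existence results for conjugate self-dual representations of prescribed conductor and central character. Without this, the denominator could a priori vanish to leading order.

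Smaller issues: the relevant endoscopic group in the self-dual case is $\SO_{N+1}$ (for $\pi$ on $\GL_N$), not $\SO_{2N+1}$; and the central element that matters is $1 \in G$ (indeed $Z_{\SO_{N+1}}$ is trivial for $N$ even), not $-I$. The vanishing $(\wtd E^\infty_{\mf n})^G(1) = 0$ is proved by a twisted Shalika germ descent (Proposition~\ref{transfervanishing} and Corollary~\ref{equisingulartransfer}) showing the support of the relevant cosets in $\wtd G_{N,v}$ misses the stable class of $1 \rtimes \theta$; the conditions on $\mf n$ encode exactly when some coset indicator $\td f_{v,0}$ fails to appear in $\wtd E_{v,k}$ with nonzero coefficient. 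Your intuition about where the $\mf n$-conditions enter is directionally right but the mechanism is the support of a twisted coset, not a central-element transfer computation on $H$.
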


Theorems \ref{mainthm} and \ref{mainthmconj} are in fact stronger: they show that averages of~$\eps(1/2, \pi)$  are $0$ even when weighted by $\tr_{\pi_S}(f_S)$ for any ``positive" unramified test function~$f_S$ at a finite set of places $S$ coprime to $\mf n$: see Definition \ref{def fullweighting}. This is analogous to weighting the average on $\GL_2$ by a classical Hecke eigenvalue. The main theorems also provide a bound on the rate of convergence in terms of this weighting. 

Finally, by Conjecture \ref{expectedEtransfernonvanishingconj}, we expect the precise conditions on $\mf n$ in Theorem~\ref{mainthmconj} to also be necessary conditions as long as there is no obstruction to the existence of $\pi \in \mc{AR}^\symp_N(\lb, \mf n)$ coming from the central character of $\pi^\infty$. 

\begin{note}
It is somewhat surprising that equidistribution in this conjugate self-dual setting seems to only holds at ramified level. In fact, checking examples on \cite{LMFDB} seems to suggest that the root number of a local conjugate self-dual representation over an unramified quadratic extension is determined by its conductor.  
\end{note}

\subsubsection{Side Effects}
Our techniques have two interesting side effects:

First, one of the key technical tools is a ``weight-aspect'' version of the bounds in~\cite{DGG22} that also applies to the self-dual setting (\S\ref{sec tfdecomp}, \ref{sec weightasymptotics}). This is of independent interest---for example, Theorem \ref{thm:shapeboundI} is a key input in proving that the gate sets for quantum circuits constructed in \cite{DEP} achieve the bounds needed to be so-called ``golden gates''.

Second, Theorem \ref{stablefourierinversion} roughly states that the Plancherel measure on a $p$-adic quasisplit classical group $G_v$ is ``constant'' on $L$-packets. In particular, this shows that all elements of a given supercuspidal $L$-packet for $G_v$ have the same formal degree (Corollary \ref{cor formaldegree}).

\subsubsection{Methods and Potential Extensions}
We rely on generalizations of the classical trace formulas that were so effective in studying root numbers of modular forms \cite{Martin23,Zub23}. Specifically, through the stabilization of the \emph{twisted} trace formula as used in the endoscopic classifications of~\cite{Art13} and~\cite{Mok15}, we produce an expression for our desired statistic made up of terms as in~\cite{Art89}, which have further been studied and bounded in \cite{ST16}. This analysis builds on the authors' previous works \cite{Dal22} and~\cite{DGG22}. It should be compared to an alternate, non-endoscopic strategy for analyzing the twisted trace formula in~\cite{TW24}. 

These more intricate methods currently come with serious costs (beyond just the length of the argument). First, the very precise bounds needed for the finer estimates of \cite{Martin23} and especially \cite{Zub23} depend on interpretations of terms on the geometric side of the trace formula in terms of $L$-values/class numbers. These are currently only available in the case of \emph{untwisted} $\GL_N$ from \cite{Yun13}.

Second, it would be desirable to have asymptotics in the level aspect $\mf n \to \infty$ in addition to $\lb \to \infty$. The complexity of the test functions $\wtd C^\infty_\mf n$ and $\wtd E^\infty_\mf n$ constructed in \S\ref{sectiontestfunctions} make this hard: we would require explicit asymptotics in $\mf n$ of their orbital integrals instead of just the very coarse information computed in \S\S\ref{sec shalika germs}-\ref{sec stable plancherel}. We are still hopeful that the $\mf n \to \infty$ limit is tractable with more work and leave it for a future paper, possibly using techniques from \cite{KY12} and~\cite{TW24}.  

Finally, our techniques fall short in the conjugate self-dual case for $N$ odd. Conceptually, we are really studying $(N-1)$st powers of root numbers by taking advantage of the particular simplicity of the $N \times (N-1)$ Rankin-Selberg integral representation: that the involution on functions that proves this functional equation is ``geometric''---i.e. comes from an involution of the group instead of something like a Fourier transform in the case of Godement-Jacquet. Our parity restriction is also related to the miraculous vanishing of signs in twisted endoscopic transfers for odd $N$ that was key to the main result of \cite{AOY23}. In our context, these signs exactly carry the information of root numbers we want to probe, so their vanishing is a tragedy instead of a miracle.

\subsection{Application to Galois Representations}
In the conjugate self-dual case when~$N$ is even and $\lb$ is integral, \cite{Shin11} attaches to each $\pi \in \mc{AR}^\symp_N(\mf n, \lb)$ a Galois representation of conductor $\mf n$, Hodge-Tate weights matching $\lb$, and the same root number.  Let $\overline \Phi^\symp_N(E/F, \mf n, \lb)$ be the resulting set of conjugate symplectic-type Galois representations $\Gamma_F \to \GL_N$. 

We deduce from our main theorems an equidistribution result for the family~$\overline \Phi^\symp_N(E/F, \mf n, \lb)$:
\begin{cor}
Let $N$ be even and assume that $\mf n$ is divisible by a sufficiently large power of a ramified prime. Then
\[
\lim_{\substack{\lb \to \infty \\ \lb \text{ integral}}} \f1{|\overline \Phi^\symp_N(E/F, \mf n, \lb)|} \sum_{\varphi \in \overline \Phi^\symp_N(E/F, \mf n, \lb)} \eps(1/2, \varphi) \to 0.
\]
\end{cor}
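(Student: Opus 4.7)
The plan is to reduce the corollary directly to Theorem~\ref{mainthmconj} via the automorphic-to-Galois correspondence $\pi \mapsto \varphi_\pi$ of \cite{Shin11} recalled in the excerpt. By the very definition of $\overline \Phi^\symp_N(E/F, \mf n, \lb)$ as the image of this map on $\mc{AR}^\symp_N(\mf n, \lb)$, surjectivity is tautological; the only nontrivial point is injectivity. I would use strong multiplicity one on $\GL_N$: if $\pi, \pi' \in \mc{AR}^\symp_N(\mf n, \lb)$ satisfy $\varphi_\pi \cong \varphi_{\pi'}$, then the local Langlands parameters agree at every unramified finite place, the Satake parameters of $\pi$ and $\pi'$ coincide outside a finite set, and strong multiplicity one forces $\pi \cong \pi'$.

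Once the correspondence is known to be a bijection, the preservation of root numbers (a key feature of Shin's construction, as recalled in the excerpt) gives the term-by-term identity
\[
\f1{|\overline \Phi^\symp_N(E/F, \mf n, \lb)|} \sum_{\varphi \in \overline \Phi^\symp_N(E/F, \mf n, \lb)} \eps(1/2, \varphi) = \f1{|\mc{AR}^\symp_N(\mf n, \lb)|} \sum_{\pi \in \mc{AR}^\symp_N(\mf n, \lb)} \eps(1/2, \pi).
\]
Taking $\lb \to \infty$ through regular integral infinitesimal characters and applying Theorem~\ref{mainthmconj} then sends both sides to $0$, which is precisely the desired conclusion.

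The main technical obstacle, modest as it is, is bookkeeping on the hypotheses: one must ensure that the conditions on $\mf n$ in the corollary (a sufficiently large power of a ramified prime) are taken to incorporate every hypothesis appearing in Theorem~\ref{mainthmconj}, including the coprimality of $\mf n$ to $2$. Provided those conditions are stated so as to match, the substantive content is supplied entirely by the main theorem, and the corollary becomes a formal consequence of Shin's construction together with strong multiplicity one.
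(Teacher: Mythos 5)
Your proof is correct and takes the same route the paper intends: the paper simply asserts ``We deduce from our main theorems'' without spelling out the reduction, and your argument (bijectivity via Shin's construction plus strong multiplicity one for $\GL_N$, matching of root numbers, then Theorem~\ref{mainthmconj}) is the natural fleshing-out of that reduction. The bookkeeping caveat you raise—that the informal hypothesis ``a sufficiently large power of a ramified prime'' must subsume the validity conditions, the $v(\mf n) > N/2$ or half-integrality condition, and coprimality of $\mf n$ to $2$ from Theorem~\ref{mainthmconj}—is exactly the right thing to flag, and indeed is the only substantive check one has to make.
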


Assuming the Langlands correspondence, we expect $\overline \Phi^\symp_N(E/F, \mf n, \lb)$ to be exactly the set of irreducible Galois representations of specific Hodge-Tate weight and conductor. This gives:

\begin{cor}
Let $N$ be even and assume the Langlands correspondence for conjugate symplectic-type automorphic representations of $\GL_N/E$ with regular integral infinitesimal character at infinity. 

Then, if  $\Phi_N^\symp(E/F, \mf n, \lb)$ is the set of irreducible $N$-dimensional Galois representations that are 
\begin{itemize}
    \item conjugate symplectic,
    \item conductor $\mf n$ coprime to $2$ divisible by a high enough power of a ramified prime,
    \item Hodge-Tate weights matching $\lb$,
\end{itemize}
we have,
\[
\lim_{\substack{\lb \to \infty \\ \lb \text{ integral}}} \f1{|\Phi^\symp_N(E/F, \mf n, \lb)|} \sum_{\varphi \in \Phi^\symp_N(E/F, \mf n, \lb)} \eps(1/2, \varphi) \to 0
\]
\end{cor}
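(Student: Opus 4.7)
The plan is to reduce this corollary directly to Theorem \ref{mainthmconj} by using the assumed Langlands correspondence to set up a root-number-preserving bijection
\[
\Phi^\symp_N(E/F, \mf n, \lb) \;\longleftrightarrow\; \mc{AR}^\symp_N(\lb, \mf n),
\]
and then invoking that theorem. The hypotheses on $\mf n$ are identical on the two sides, so once the bijection is in place the result is immediate.

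The first step is to verify this bijection. Under the Langlands correspondence for $\GL_N/E$, cuspidal automorphic representations with regular integral infinitesimal character should correspond bijectively to irreducible $N$-dimensional Galois representations of $\Gamma_E$ of the appropriate geometric type. I would check that the four defining invariants match across the correspondence: irreducibility on the Galois side is equivalent to cuspidality on the automorphic side; the Hodge-Tate weights match $\lb$ through the archimedean local Langlands correspondence applied to $\pi_\infty$; the Galois conductor matches the automorphic conductor $\mf n$ through the local Langlands correspondence at finite places; and the conjugate symplectic condition on the Galois side corresponds to membership in $\mc{AR}^\symp_\cusp(G_N)$ as defined via Mok's endoscopic classification in \S\ref{sec globalduality}. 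The last of these is the only point requiring care: on the Galois side, conjugate symplecticity refers to preservation of a skew-Hermitian form with respect to the $E/F$-action, while on the automorphic side it is defined via the transfer to a unitary group, and these two notions match under the local-global compatibilities that are part of the full correspondence (and are already consistent for those representations produced by \cite{Shin11} used in the previous corollary).

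Having established the bijection, matching of root numbers is automatic: the local $\eps$-factors on each side are defined to agree under local Langlands, and both the global Galois and global automorphic root numbers are obtained as products of local factors over all places. Substituting the bijection into the target average converts it to the average appearing in Theorem \ref{mainthmconj}, which vanishes under the hypothesis that $\mf n$ is coprime to $2$ and divisible by a sufficiently high power of a ramified prime. There is no real obstacle here; the content of the corollary is entirely in the main theorem, and the Langlands correspondence merely re-packages its statement on the Galois side.
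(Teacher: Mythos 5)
Your proposal is correct and takes essentially the same approach as the paper: the corollary is stated without an explicit proof precisely because, under the assumed Langlands correspondence, it reduces immediately to Theorem \ref{mainthmconj} via the bijection you describe, matching cuspidality/irreducibility, conductor, Hodge–Tate weights/infinitesimal character, conjugate-symplectic type, and local (hence global) root numbers across the two sides. The only point worth noting is that the paper's convention takes the Galois representations to be of $\Gamma_F$ (valued in a suitable "unitary" $L$-group) rather than of $\Gamma_E$, but this is a cosmetic repackaging of the same data and does not affect the argument.
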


Known analogous self-dual results, \cite{KS20}, \cite{KS23}, do not yet cover the symplectic-type case---in particular, the weak matching of \cite{Shi24} does not yet give matching of root numbers. Therefore we only get a Galois representation corollary conditional on the Langlands correspondence:

\begin{cor}
Let $N$ be even and assume the Langlands correspondence for symplectic-type automorphic representations of $\GL_N/F$ with regular integral infinitesimal character at infinity. 

Let $\Phi_N^\symp(F, \mf n, \lb)$ be the set of irreducible symplectic Galois representations $\Gal(\bar F/F) \to \Sp_N$ that have
\begin{itemize}
    \item conductor $\mf n$ divisible by a sufficiently large power of some prime,
    \item Hodge-Tate weights matching $\lb$,
\end{itemize}
Then
\[
\lim_{\substack{\lb \to \infty \\ \lb \text{ integral}}} \f1{|\Phi^\symp_N(F, \mf n, \lb)|} \sum_{\varphi \in \Phi^\symp_N(F, \mf n, \lb)} \eps(1/2, \varphi) \to 0.
\]
if and only if there is finite place $v$ such that $v(\mf n) > N$ or $v(\mf n)$ is odd. 
\end{cor}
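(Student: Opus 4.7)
The plan is to use the assumed Langlands correspondence to transport the statement directly onto the automorphic side, where Theorem \ref{mainthm} (together with its necessity clause in Theorem \ref{mainthmintro}) applies. Since the theorem to be proven assumes the Langlands correspondence, essentially all the work consists of verifying that the correspondence identifies all the invariants that index the two sides and that it matches root numbers.

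First, I would establish a bijection $\mc{AR}^\symp_N(\lb, \mf n) \iso \Phi^\symp_N(F, \mf n, \lb)$. Under the assumed Langlands correspondence for $\GL_N/F$, a cuspidal automorphic $\pi$ with regular integral infinitesimal character corresponds to an irreducible algebraic Galois representation $\varphi_\pi \colon \Gal(\bar F/F) \to \GL_N$. The verifications needed are: (i) the infinitesimal character $\lb$ of $\pi_\infty$ agrees with the Hodge--Tate weights of $\varphi_\pi$ (this is part of the archimedean local Langlands correspondence); (ii) the conductor of $\pi$ matches the Artin conductor of $\varphi_\pi$ place by place, via local Langlands for $\GL_N$; (iii) the symplectic-type condition in the sense of \S\ref{sec globalduality} corresponds exactly to $\varphi_\pi$ factoring through $\Sp_N \subset \GL_N$, with self-duality on the automorphic side mirroring the symplectic pairing preserved by $\varphi_\pi$; and (iv) cuspidality on the automorphic side is equivalent to irreducibility on the Galois side. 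Crucially, the correspondence matches $\eps$-factors locally and hence globally, so $\eps(1/2, \pi) = \eps(1/2, \varphi_\pi)$.

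With the bijection in place, the two averages literally coincide:
\[
\frac{1}{|\Phi^\symp_N(F, \mf n, \lb)|} \sum_{\varphi \in \Phi^\symp_N(F, \mf n, \lb)} \eps(1/2, \varphi) \;=\; \frac{1}{|\mc{AR}^\symp_N(\lb, \mf n)|} \sum_{\pi \in \mc{AR}^\symp_N(\lb, \mf n)} \eps(1/2, \pi).
\]
Theorem \ref{mainthm} then gives the forward ("if") direction: when some $v(\mf n) > N$ or is odd, the right-hand side tends to $0$, so the left-hand side does too. The "only if" direction is the necessity clause of Theorem \ref{mainthmintro} in the self-dual case: when no $v(\mf n)$ satisfies this condition, the automorphic average does not converge to $0$, and the bijection transports this obstruction to $\Phi^\symp_N(F, \mf n, \lb)$.

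The main difficulty is less mathematical than bookkeeping: one has to be careful about what package of compatibilities is meant by "the Langlands correspondence" in the hypothesis. The argument requires not merely a bijection of sets but the precise matching of infinitesimal characters with Hodge--Tate weights, of conductors via local Langlands, of self-duality structures (including the distinction between orthogonal and symplectic type), and of root numbers. All of these are part of the standard expected formulation, but it is worth stating the hypothesis in a form that packages them together, so that the corollary becomes a direct translation of Theorem \ref{mainthm} rather than an argument requiring further input.
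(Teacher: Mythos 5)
Your argument is correct and is exactly what the paper intends: the corollary is a direct translation of Theorem~\ref{mainthm} across the (assumed) Langlands correspondence, with the bookkeeping of invariants — Hodge--Tate weights versus infinitesimal character, Artin conductor versus automorphic conductor, symplectic Galois image versus symplectic-type, irreducibility versus cuspidality, and matching of root numbers — being exactly the content one needs to package into the hypothesis. The paper gives no separate proof precisely because the translation is routine once those compatibilities are taken as part of the hypothesis, and the ``only if'' direction is indeed just the second clause of Theorem~\ref{mainthm} (the nonzero asymptotic bias $\eps(1/2,\varphi_\lb)(-1)^{\sum_v v(\mf n)/2} R$ with $R>0$) transported through the bijection.
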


As in Theorem \ref{mainthmintro}, we can weight the averages by functions of Frobenius conjugacy classes at places coprime to $\mf n$, and we maintain the same control over rates of convergence---see again Theorems \ref{mainthm} and \ref{mainthmconj} for full generality.

\subsection{Summary of Argument}

Once the outline of the argument in \S\ref{sec fulloutline} is understood well, the details of the individual steps are reasonably self-contained and can more-or-less be read in any order. 

\subsubsection{$\GL_2$-model}
Our method is very closely modeled on the classical $\GL_2/\Q$-argument which we now present in modernized language to emphasize the parallelism. 

Let $\mc{AR}_\cusp(\GL_2/\Q, \lb)$ be the set of automorphic representations for $\GL_2/\Q$ with regular, integral infinitesimal character $\lb$ at infinity. Our goal is to study the asymptotic fractions of representations $\pi \in \mc{AR}_\cusp(\GL_2/\Q, \lb)$ with conductor $c(\pi) = n$ and $\eps(1/2, \pi) = \pm 1$---in other words, the ratio
\begin{equation}\label{eq gl2goal}
\lf. \sum_{\substack{\pi \in \mc{AR}_\cusp(\GL_2/\Q, \lb) \\ c(\pi) = n}} \eps(1/2, \pi) \ri/ \sum_{\substack{\pi \in \mc{AR}_\cusp(\GL_2/\Q, \lb) \\ c(\pi) = n}} 1. 
\end{equation}
as $\lb \to \infty$. 

\noindent \underline{Step 1:} 

Following the theory of newvectors, there exists a sequence of compact-open subgroups $\Gamma_0(n) \subset GL_2(\A^\infty)$ such that \[
(\pi^{\infty})^{\Gamma_0(n)} =  \begin{cases}
    1 & c(\pi) = n \\ 0 &c(\pi) > n.
\end{cases}
\]
If $c(\pi) = n$, we call an appropriately normalized element of~$(\pi^{\infty})^{\Gamma_0(n)}$ the newvector~$\varphi$ for~$\pi$. The newvector satisfies a key property of being a ``test vector'' for Rankin-Selberg integral representations. 

\noindent \underline{Step 2:}

The conductor $n$ has an associated Atkin-Lehner involution $\iota_n$. As a corollary of the proof of the functional equation of the $L$-function $L(s, \pi)$ through its $\GL_2 \times \GL_1$-Rankin-Selberg integral representation (e.g. \cite[5.10.2]{DS05}), we get
\[
\varphi^{\iota_n} = \eps^\infty(1/2, \pi) \varphi
\]
(leaving out an archimedean factor). The ``test vector'' property of the newvector $\varphi$ is a crucial input here. 

\noindent \underline{Step 3:}

We can find $g_n \in \GL_2(\A^\infty)$ such that $f^{\iota_n} = g_n f$ for all $f$ in the space of automorphic forms. By taking the trace of automorphic representations $\pi$ against the volume-normalized indicator functions of the corresponding cosets, we get:
\[
\tr_{\pi^\infty} \bar \1_{\Gamma_0(n)} = \begin{cases}
0 & c(\pi) > n \\
1 & c(\pi) = n %\\ ?? & c(\pi) < n
\end{cases}, \qquad 
\tr_{\pi^\infty} \bar \1_{g_n \Gamma_0(n)} = \begin{cases}
0 & c(\pi) > n \\
\eps^\infty(1/2, \pi) & c(\pi) = n %\\ ?? & c(\pi) < n
\end{cases}, 
\]
where the contributions from the archimedean place are omitted. 

\noindent \underline{Step 4:}

Using an explicit description of the space of oldforms---the $\Gamma_0(n')$-fixed points in $\pi^\infty$ when $c(\pi) = n$ and  $n|n'$---we compute
\[
\tr_{\pi^\infty} \bar \1_{\Gamma_0(n')} = \dim((\pi^\infty)^{\Gamma_0(n')}), \qquad \tr_{\pi^\infty} \bar \1_{g_n \Gamma_0(n')}.
\]
The values depend only on $n'/n$. This allows us to use linear combination of functions of the type $\bar \1_{\Gamma_0(n)}$ or $\bar \1_{g_n \Gamma_0(n)}$ to construct functions $E^\infty_n$ and $C^\infty_n$ respectively so that
\[
\tr_{\pi^\infty} C^\infty_n = \begin{cases}
1 & c(\pi) = n \\
0 & c(\pi) \neq n \\
\end{cases}, \qquad
\tr_{\pi^\infty} E^\infty_n = \begin{cases}
\eps^\infty(1/2, \pi) & c(\pi) = n \\
0 & c(\pi) \neq n \\
\end{cases}.
\]
Since $\eps_\infty(\pi_\infty, 1/2) =: \eps_\infty(1/2, \lb)$ is constant for $\pi_\infty$ with fixed infinitesimal character, we can express the quantity in \eqref{eq gl2goal}, which we aim to estimate, as the following ratio of traces:
\[
\eps_\infty(1/2, \lb) \lf. \sum_{\pi \in \mc{AR}_\cusp(\GL_2/\Q, \lb)} \tr_{\pi^\infty}(E^\infty_n) \ri/ \sum_{\pi \in \mc{AR}_\cusp(\GL_2/\Q, \lb)} \tr_{\pi^\infty}(C^\infty_n). 
\]

\noindent \underline{Step 5:}

The Eichler-Selberg trace formula gives a tractable expression for
\[
\sum_{\pi \in \mc{AR}_\cusp(\GL_2/\Q, \lb)} \tr_{\pi^\infty}(\psi)
\]
for any test function $\psi$. Using this expression, we show that if $F_\lambda$ is the finite-dimensional representation of $GL_2$ with infinitesimal character $\lb$, then
\[
(\dim F_\lb)^{-1} \sum_{\pi \in \mc{AR}_\cusp(\GL_2/\Q, \lb)} \tr_{\pi^\infty}(\psi) \to \psi(1)
\]
as $\lb \to \infty$. In particular the expression \eqref{eq gl2goal} has asymptotics:
\[
\eqref{eq gl2goal} \to \eps_\infty(1/2, \lb) \f{E^\infty_n(1)}{C^\infty_n(1)}.
\]

\noindent \underline{Step 6:}

We compute that $C_n(1) > 0$. Furthermore, $E_n(1) = 0$ if and only if $n$ is not a product of squares of distinct primes. This shows that we have asymptotic equidistribution of $\eps(1/2, \pi)$ if and only if $n$ is not product of squares of distinct primes.

\subsubsection{Full Argument}\label{sec fulloutline}
We now summarize the general $\GL_N$-argument and point out where complications occur. We are interested in the analogous
\begin{equation}\label{eq sdgoal}
\lf. \sum_{\substack{\pi \in \mc{AR}^\symp_\cusp(\GL_N/F, \lb) \\ c(\pi) = \mf n}} \eps(1/2, \pi) \ri/ \sum_{\substack{\pi \in \mc{AR}^\symp_\cusp(\GL_N/F, \lb) \\ c(\pi) = \mf n}} 1. 
\end{equation}
for totally real $F$ in the self-dual case and 
\begin{equation}\label{eq csdgoal}
\lf. \sum_{\substack{\pi \in \mc{AR}^\symp_\cusp(\Res^E_F \GL_N, \lb) \\ c(\pi) = \mf n}} \eps(1/2, \pi) \ri/ \sum_{\substack{\pi \in \mc{AR}^\symp_\cusp(\Res^E_F \GL_N, \lb) \\ c(\pi) = \mf n}} 1. 
\end{equation}
for $E/F$ CM in the conjugate self-dual case. Note here that over a general number field, the level becomes an ideal $\mf n$ of $F$ instead of an integer $n$.

\noindent \underline{Step 1:}

Newvector theory directly generalizes to $\GL_N(\A_F)$ or $\GL_N(\A_E)$ through \cite{JPSS81}. The corresponding level subgroups are the ``mirahorics'' $K_1(\mf n)$. We summarize the results and the necessary ``test vector'' properties in \S\S\ref{sec newvectors}, \ref{sec RS}.

\noindent \underline{Step 2:}

As in \cite[\S4.2]{Tom24}, we generalize the Atkin-Lehner involution to define involutions ~$\iota_\mf n, \bar \iota_\mf n$ in the self-dual or conjugate self-dual cases respectively. To connect this to root numbers, it turns out to be best to look at~$\GL_N \times \GL_{N-1}$ Rankin-Selberg $L$-functions instead of those associated to $\GL_{N} \times \GL_1$.  The proof of the functional equation (e.g. \cite[\S6]{cogdell2004long}) can be algebraically manipulated to involve either $\iota_\mf n$ or $\bar \iota_\mf n$ instead of the standard involution $g \mapsto g^{-T}$. This roughly shows that for newvectors $\varphi$ in (conjugate) self-dual representations,
 \begin{equation} \label{eq eigenvalue intro}
      \varphi^{\iota_\mf n} \approx \eps^\infty(1/2, \pi)^{N-1} \varphi.
 \end{equation}
After reviewing some background in \S\ref{sec rootnumbers}, this is work of \S\S\ref{sec involutions},\ref{sec involutionsconj} culminating in local Theorems \ref{iotafn} and \ref{iotafnconj}. 

The exponent in~\eqref{eq eigenvalue intro} is the reason for our restriction to even $N$ in the conjugate self-dual result. Luckily, there are no odd symplectic-type representations in the self-dual case.   

\noindent \underline{Step 3:}

This is where we make our main departure from the classical argument. Since~$\iota_\mf n, \bar \iota_\mf n$ are inner to~$g \mapsto g^{-T}$ and $g \mapsto \bar g^{-T}$ which is outer when $N > 2$, the element $g_\mf n$ such that 
\[
g_\mf n \varphi = \varphi^{\iota_\mf n}
\]
no longer lies $\GL_N(\A_F)$ itself, but instead in a larger twisted group. Consider involutions $\theta, \bar \theta$ that are inner to $g \mapsto g^{-T}$, $g \mapsto \bar g^{-T}$ respectively. Then $g_\mf n$ lies in one of the cosets $\wtd G_N := \GL_N(\A_F) \rtimes \theta$ or $\GL_N(\A_E) \rtimes \bar \theta$ in a corresponding semidirect product to which the action of $\GL_N$ on $\pi$ can be extended. See \S\ref{sec twistedaction} for details. 

Therefore, the generalized versions of the building blocks $\bar \1_{g_n K_1(n)}$ for $E_n$ lie on the twisted group $\wtd G_N$ instead. This is a feature and not a bug since it allows us to use the twisted trace formula and isolate the (conjugate) self-dual spectrum for~$\GL_N$ as desired. 

On the other hand, generalizing the building blocks $\bar \1_{K_1(n)}$ of $C_n$ is much more difficult since we do not have a twisted analogue. We will therefore need to construct a generalized $C_n$ in another way. 

\noindent \underline{Step 4:}

A result of Reeder \cite{Ree91} gives a description of oldform spaces $(\pi^\infty)^{K_1(\mf n')}$ for $c(\pi) | \mf n'$ in terms of Hecke operators for $\GL_{N-1}$. The action of these $\GL_{N-1}$-Hecke operators can be commuted with the $\mf \iota_\mf n$'s to give a formula for $\tr_{\pi^\infty}(\1_{g_{\mf n'} K_1(\mf n')})$ that only depends on $\mf n'/c(\pi)$ (from local Propositions \ref{twistedtraceoldvectors}, \ref{twistedtraceoldvectorsconj}). 

In particular, we can similarly solve a system of linear equations and find a linear combination $\wtd E^\infty_\mf n$ of the twisted test functions $\bar \1_{g_\mf n K_1(\mf n)}$ on $\wtd G_N$ so that roughly:
\[
\tr_{\pi^\infty}(\wtd E^\infty_\mf n) \approx \begin{cases}
\eps^\infty(1/2, \pi) & c(\pi) = \mf n \\
0 & c(\pi) \neq \mf n
\end{cases}
\]
This is the work of \S\S\ref{sec oldforms}, \ref{Edef} culminating in Theorems \ref{globalepsilontrace} and \ref{globalepsilontraceconj}. 

The twisted generalization of $C^\infty_n$ requires a completely different method in \S\ref{sec Cdef}. We first show that root numbers are constant on Bernstein components of $\GL_N$ intersect the set of (conjugate) self-dual representations of a fixed conductor (Propositions \ref{epsconstant}, \ref{epsconstantconj}). Therefore, an abstract Paley-Wiener theorem \cite{Rog81} lets us bootstrap the existence of $\wtd E^\infty_\mf n$ into the existence of a $\wtd C^\infty_\mf n$ such that
\[
\tr_{\pi^\infty}(\wtd C^\infty_\mf n) = \begin{cases}
1 & c(\pi) = \mf n \\
0 & c(\pi) \neq \mf n
\end{cases}
\]
(Corollary \ref{localC}). 

\noindent \underline{Step 5:}

The techniques of \cite{DGG22} using the inductive argument of \cite{Tai17} modify in a straightforward way to generate a tractable expression for both
\[
\sum_{\pi \in \mc{AR}_\cusp^\symp(\GL_N/F, \lb)} \tr_{\pi^\infty}(\wtd \psi)
\]
and 
\[
\sum_{\pi \in \mc{AR}_\cusp^\symp(\Res^E_F \GL_N, \lb)} \tr_{\pi^\infty}(\wtd \psi),
\]
for suitable twisted test functions $\wtd \psi$ (Proposition \ref{stablecount} and \S\ref{sec tfdecomp}).

With the bounds from \cite{ST16}, we find that, up to technical issues with centers, both expressions have scaling limits:
\[
\approx \wtd \psi^G(1),
\]
for $\wtd \psi^G$ an endoscopic transfer to the twisted endoscopic group $G=\SO_{N+1}$ of $\wtd G_N$ in the self-dual case and $G=U_N$ of $\wtd G_N$ embedded through base change in the conjugate self-dual case (\S\ref{sec weightasymptotics} culminating in Theorem \ref{weightasymptotics}). 

In total, we get that, up to center technicalities:
\[
\eqref{eq sdgoal}, \eqref{eq csdgoal} \overset{\approx}{\to} \eps_\infty(1/2, \lb) \f{(\wtd E^\infty_\mf n)^G(1)}{(\wtd C^\infty_\mf n)^G(1)}
\]
as $\lb \to \infty$. Note that \cite{ST16} also gives us a power saving on the error term.

\noindent \underline{Step 6:}

It remains to understand $(\wtd E^\infty_\mf n)^G(1)$ and $(\wtd C^\infty_\mf n)^G(1)$. Unlike the $\GL_2$ case, this is by far the hardest part of the argument. 

Our explicit description for $\wtd E^\infty_\mf n$ allows us to use the orbital integral transfer identities to compute $(\wtd E^\infty_\mf n)^G(1)$. The identity class of our relevant $G$ is a twisted analogue of the ``equisingular'' classes studied in \cite[2.4.A]{LS07}. Therefore in \S\ref{sec shalikaargument}, we recall a Shalika germ argument from M\oe{}glin and Waldspurger's stabilization of the twisted trace formula \cite{MW16} to get that $(\wtd E^\infty_\mf n)^G(1)$ equals a particular non-regular twisted $\kappa$-orbital integral on $\wtd G_N$ (Corollary \ref{equisingulartransfer}). Computing when this integral vanishes is the work of \S\ref{sec E(1)},\ref{sec E(1)conj} and is somewhat involved in the conjugate self-dual case. 

The function $\wtd C^\infty_\mf n$ is much more daunting since it only exists by an abstract Paley-Wiener theorem. Luckily, we only need to show that $(\wtd C^\infty_\mf n)^G(1) > 0$ (up to details with the center). By the endoscopic character identities and the spectral definition of $\wtd C^\infty_\mf n$, we can show that a formally defined ``stable Fourier transform'' of~$(\wtd C^\infty_\mf n)^G$ is always positive and somewhere non-vanishing. This allows us to apply a positivity criterion (Proposition~\ref{spectralpositivity}) that we prove from a ``stable Fourier inversion'' result (Theorem \ref{stablefourierinversion}). 

We obtain this stable Fourier inversion result by showing that the Plancherel measure for each $G_v$ descends to a formally-defined ``stable unitary dual'' in a key Proposition \ref{stablefourierinversioninput}. We first interpret the Plancherel measure of a local test function~$\wh \psi_v$ on the unitary dual as an asymptotic count of global automorphic representations. By ideas from \cite{DGG22}, this is then shown to match a stable trace formula asymptotic  which necessarily only depends on the stable Fourier transform of~$\psi_v$.  This is the work of \S\ref{stableinversion}. We conclude the positivity statement in \S\ref{sec positivitytest}.

That the stable Fourier transform is somewhere non-vanishing requires %some blue
number-theoretic input: the existence of local (conjugate) self-dual representations with desired invariants---type, conductor, central character, etc. This is done in \S\ref{sec repexist} and is again quite involved in the conjugate self-dual case. Our final statements for $\wtd C^\infty_\mf n$ are in \S\ref{sec C(1)}.

\subsection{Conditionality}
Since we need to separate (conjugate) symplectic from (conjugate) orthogonal global representations with the trace formula, this result is conditional on the endoscopic classification---\cite{Art13} for the self-dual case and \cite{Mok15} for the conjugate self-dual. These classifications depend on the unpublished weighted, twisted fundamental lemma. 

We note that the dependence of \cite{Art13} on its unpublished references A25-27 and \cite{Mok15} on their unitary analogues has recently been resolved by \cite{AGIKMS24}.

\subsection{Acknowledgments}
The idea for this paper came from watching talks on the IAS YouTube channel about murmurations of families of elliptic curves and how they could be studied with the trace formula---we would like to thank %the YouTube recommendation algorithm for pointing them out to us and 
the IAS for making them publicly available. Yiannis Sakellaridis pointed out how higher-rank Rankin-Selberg integrals would be a good way to generalize the classical $\GL_2$ argument relating root numbers to Atkin-Lehner involutions, and Radu Toma alterted us to a important sign mistake in an earlier version. We would also like to thank Alexander Bertoloni-Meli, Peter Dillery, Peter Humphries, Ashwin Iyengar, Arno Kret, Huajie Li, Kimball Martin, Masao Oi, David Schwein, Sug Woo Shin, Joel Specter, and Sandeep Varma for helpful conversations. 

The first author was supported by NSF postdoctoral grant 2103149 while working on this project.

\subsection{Notation}\label{notation}
\subsubsection{Groups Considered}\label{groups}
Throughout this paper we will be looking at two cases of involution/self-duality on $\GL_N$. Summarizing our parallel notational conventions for each case:

\noindent The self-dual or orthogonal/symplectic case where we consider:
\begin{itemize}
    \item $G_N = \GL_N$ over a totally real number field $F$ (so that for $G \in \wtd{\mc E}_\sm(N)$, the group $G_\infty$ has discrete series)
    \item Involution $\iota : g \mapsto g^{-T}$
    \item Involution $\theta = \Ad_{w_N} \circ \iota$ inner to $\iota$ fixing the standard pinning
    \item Local groups $G_{N,v} = \GL_N(F_v)$
    \item Local conductors $\mf p_v^i$ for non-negative integer $i$
    \item Global conductors $\mf n$ that are ideals of $\mc O_F$
    \item Twisted group $\wtd G_N = G_N \rtimes  \theta$
    \item Simple Endoscopic groups $G \in \mc E_\sm(R)$ that are either $\SO_{k}$ or $\Sp_{2k}$'s
\end{itemize}

\noindent The conjugate self-dual or unitary case where we consider:
\begin{itemize}
    \item $G_N = \Res^E_F G_N$ for a CM quadratic extension $E/F$ of number fields (so that for $G \in \wtd{\mc E}_\sm(N)$, $G_\infty$ has discrete series)
    \item Involution $\bar \iota : g \mapsto \bar g^{-T}$
    \item Involution $\bar \theta = \Ad_{w_N} \circ \bar \iota$ inner to $\bar \iota$ fixing the standard pinning
    \item Local groups $G_{N,v} = \GL_N(F_v \otimes_F E)$
    \item Local conductors $\mf p_v^i$ for $i$ either
    \begin{itemize}
        \item A pair of non-negative integers if $v$ is split
        \item A non-negative integer if $v$ is inert
        \item A non-negative half-integer if $v$ is ramified. When speaking locally, we will usually instead index by the exponent $2i$ of $\mf p_w^{2i} = \mf p_v^i$ for $w$ lying over $v$. 
    \end{itemize}
    \item Global conductors $\mf n$ that are finite products of local levels at different places
    \item Twisted group $\wtd G_N = G_N \rtimes \bar \theta$
    \item Simple Endoscopic groups $G \in \wtd{\mc E}_\sm(N)$ that are quasisplit $U_N$'s
\end{itemize}

\subsubsection{Other}

\noindent Basics:
\begin{itemize}
 \item $F/\Q$ a totally real number field, $E/F$ an imaginary quadratic extension
    \item places of $F$ are denoted $v$, and $q_S$ the product of residue field degrees over a finite set $S$ of finite places of $F$ 
    \item $\Gamma_F$ the absolute Galois group of $F$
  \item $\star^S$ and $\star_S$ are components at $S$ and away from $S$ of structure $\star$ for $S$ some set of places
  \item $\spl, \ns$ the sets of finite places of $F$ that are split, non-split in $E$ respectively
  \item $v_1 \boxtimes v_2$ is the corresponding representation of $H_1 \times H_2$ if $v_i$ is a representation of $H_i$ 
  \item $[d]$ is the $d$-dimesional irreducible representation of $\SL_2$
  \item $w_N$ is the antidiagonal matrix of $\GL_N$ with alternating signs so that $g \mapsto w_N g^{-T} w_N$ fixes the standard pinning.
  \item $\1_X$ is the indicator function of the set $X$ and $\bar \1_X = \vol(X)^{-1}\1_X$ 
  \item $\tau_v(A)$ is the eigenvalue of the operator $A$ on the vector $v$
  \item $\om_\pi$ is the central character of representation $\pi$
\end{itemize}
\noindent Groups 
\begin{itemize}
    \item $Z_G$ is the center of $G$
    \item $G_\gamma$ is the centralizer of $\gamma$ in $G$
    \item $G^0$ is the connected component of the identity
    \item $G^\der$ is the derived subgroup
    \item $G_\adj$ is the adjoint quotient
    \item $G^\scn$ is the simply-connected cover of $G^\der$
    \item $P_G$ is the number of positive roots of $G$
    \item $\wh G$ is the complex dual group
    \item $\Ld G = \wh G \rtimes \Gamma_F$ is the Langlands dual
    \item $\check G_v, \check G_S$ are the unitary duals (with Fell topology and Plancherel measure)
    \item $\check G_v^\temp, \check G_S^\temp$ are the tempered unitary duals
    \item $\wh f$ is the Fourier transform of function $f$ on $G_v$ or $G_S$. Beware that this is modified from the usual definition at non-tempered $\pi$; see \S\ref{sec:inversionsetup}. 
\end{itemize}
\noindent With respect to either the self-dual or conjugate self-dual endoscopic classification:
\begin{itemize}
%    \item $G(N)$ is the $\GL_N$-like group defined in \S\ref{groups}
%    \item $\wtd G(N)$ is the $\GL_N$-like twisted group defined in \S\ref{groups}
    \item $\wtd{\mc E}_\el(N), \mc E_\el(G)$ are the elliptic endoscopic groups of $\wtd G_N, G$ respectively as in \S\ref{sec twistedendoscopicgroups}
    \item $\wtd{\mc E}_\sm(N)$ are the simple endoscopic groups of $\wtd G_N$ as in \S\ref{groups}
    \item $f^H$ for $f$ a test function on reductive $G/F$ is a choice of endoscopic transfer to some $H \in \mc E_\el(G)$
    \item $f^N$ for $f$ a test function on $G \in \wtd{\mc E}_\el(N)$ is a test function on $\wtd G_N$ that transfers to $f$
    \item $\psi = \oplus \tau_i[d_i]$ is an Arthur parameter with cuspidal components $\tau_i$
    \item $\Psi(N), \Psi(G)$ are the sets of parameters associated to $\wtd G_N$ and $G$ respectively (or for $G = \SO_{2N}^\eta$, outer automorphism orbits of such)
    %\item $\bar \Psi(G)$ in the self-dual case is the set of parameters possibly up to outer automorphism when $\wh G \cong \SO_{2n}$. 
    \item The ``Arthur-$\SL_2$'' of a parameter is an unordered partition $Q$ representing its restriction to the Arthur-$\SL_2$
    \item $\pi_\psi$ is the automorphic representation of $G_N$ corresponding to $\psi$
    \item $\td \pi_\psi$ is the extension of $\pi_\psi$ to $\wtd G_N$
    \item $\varphi_\psi$ is the $L$-parameter associated to the $A$-parameter $\psi$
    \item $\mc S_\psi, \mc S_{\psi_v}$ are Arthur's component groups associated to global or local parameters
    \item $s_\psi, s_{\psi_v}$ are the special elements identified in these component groups
    \item $\eps_\psi$ is the identified character on global $\mc S_\psi$
    \item $\Pi_\psi(G), \Pi_{\psi_v}(G_v)$ are the $A$-packets associated to global or local $A$-parameters on group $G,G_v$ (or for $G = \SO_{2N}^\eta$, outer automorphism orbits of such). %\item $\Pi_{\varphi_v}(G_v)$ is the local $L$-packet associated to $L$-parameter $\varphi_v$ on $G_v$ from \S\ref{ACLpacket} 
    % \item $P\Pi_{\varphi_v}(G_v)$ is the local pseudo-$L$-packet associated to $L$-parameter $\varphi_v$ on $G_v$ from \S\ref{ACLpacket} 
    \item $\eta^\psi_\pi$ is the character of $\mc S_\psi$ associated to $\pi \in \Pi_\psi$
    \item $\tr_{\psi_v} := \tr^{G_v}_{\psi_v}$ is the stable packet trace for parameter $\psi_v$ on group $G_v$
\end{itemize}
\noindent Shapes
\begin{itemize}
   \item $\Delta = ((T_i, d_i, \eta_i, \lb_i))_i$ is a refined shape as in Definition \ref{def refinedshape}.
    \item $\Box = ((T_i, d_i, \eta_i))_i$ is an unrefined shape as in \S\ref{sec unrefinedshapes}
    \item $\Box(\Delta)$ is the unrefined shape from forgetting information in $\Delta$
    \item $\Sigma_{\eta,\lb}, \Sigma_\eta = \Box(\Sigma_{\eta,\lb})$ are the simple (refined) shapes as in Definition \ref{def simpleshape}
    \item $G(\Delta), G(\Box)$ are the groups associated to (refined) shapes $\Delta$ or $\Box$ as in Proposition \ref{shapetogroup}
    \item $G_F(\Box)$ is the reduced group associated to shape $\Box$ as in \S\ref{sec unrefinedshapes}
    \item $\Box_\lb$ is a set of refined shapes $\Delta$ with total infinitesimal character $\lb$ associated to unrefined shape $\Box$ as in \S\ref{sec unrefinedshapes}
    %\item $\lb(\Box)$ is a set of possible total infinitesimal characters $\lb$ associated to shape $\Box$
    \item $\dim \lb$ is the dimension of the finite-dimensional representation with regular, integral infinitesimal character $\lb$
    \item $m(\lb)$ is a norm of the infinitesimal character $\lb$ from \S\ref{sec infcharnorms}
    \item $\dim_\Box \lb$ is a modified dimension based on the unrefined shape $\Box$ from \S\ref{sec infcharnorms} 
\end{itemize}
\noindent Trace Formula Decompositions:
\begin{itemize}
    \item $\mc{AR}_\disc(G)$, $\mc{AR}_\cusp(G)$ is the set of discrete (resp. cuspidal) automorphic representations of $G/F$
    \item
    $I^G, S^G$ are Arthur's invariant and stable trace formulas for $G$
    \item $I_\disc^G, S_\disc^G$ are their discrete parts
    \item $R^G_\disc$ is the trace against $\mc{AR}_\disc(G)$
    \item $I^G_\psi, S^G_\psi$ are the summands of $I^G_\disc, S^G_\disc$ associated to parameter $\psi$
    \item $I^G_\Delta, S^G_\Delta, I^G_\Box, S^G_\Box$ are the summands of $I^G_\disc, S^G_\disc$ associated to the refined shape $\Delta$ or shape $\Box$
    \item $\star^N$ is the version of any of the variants above associated to $\wtd G_N$ 
    \item $\EP_\lb$ is the (endoscopically normalized) Euler-Poincar\'e function for regular integral infinitesimal character $\lb$ as in \S\ref{sec weightsetup}
\end{itemize}
\noindent Conductors:
\begin{itemize}
    \item $W(\psi)$ is the space of Whittaker functions on $\GL_N(F_v)$ for character $\psi$
    \item $W_\varphi := W^\psi_\varphi$ is the Whittaker function for vector $\varphi$ in representation $\pi$.
    %\item $\mf n$ is a conductor: either an ideal of $\mc O_F$ in the self-dual case or as in \S\ref{sec conductorsconj} in the conjugate-self dual case
    \item $K_v$ is the hyperspecial subgroup of the $p$-adic group $G_v$ %that should be clear from context
    \item $K_1(\mf n)$ is the ``mirahoric'' level-subgroup for conductor $\mf n$ from \ref{def mirahoric}
    \item $K_{1,v}(k) := K_1(\mf p_v^k)$ is the local factor at $v$ of $K_1(\mf n)$
    \item $c(\pi)$ is the conductor of global representation $\pi$.
%    \item $v$ is usually a place of $F$,
    \item $w$ is place of $E$ lying over a non-split place $v$ of $F$ %in the conjugate self-dual case
    \item $c(\pi_v)$ is the $k$ such that local $\pi_v$ has conductor $\mf p_v^k$ in the self-dual case and $\mf p_w^k$ in the conjugate self-dual case (beware that this local indexing is twice the global indexing for ramified $v$ in the conjugate self-dual case)
    \item $e_v$ is the ramification index of $v$ in $E$ %in the conjugate self-dual case
    \item $\mf D_{E/F}$ is the different ideal of $E/F$ %in the conjugate self-dual case
    \item $j_v$ is a ``depth-of-ramification'' invariant of $v$ from \S\ref{sec repexist} in the conjugate self-dual case
    \item $b_v$ is an invariant of $v$ depending on roots of unity defined in Lemma \ref{lem ccharglobconj} in the conjugate self-dual case. 
    \item $\wtd E^\infty_{v,k}, \wtd E^\infty_\mf n$ are twisted test functions counting representations weighted by root number defined in \S\ref{Edef} 
    \item $\wtd C^\infty_{v,k}, \wtd C^\infty_\mf n$ are twisted test functions counting representations without weighting defined in \S\ref{sec Cdef} 
    \item $\td f_{v,k}$, $\td f_{\mf n}$ are indicators of cosets defined in \S\ref{sec E(1)} and \S\ref{sec fvkconj}
    \item $\mc{AR}_N^\star(\lb, \mf n)$ is the set of (conjugate) $\star$-type cuspidal automorphic representations on $\GL_N$ with infinitesimal character $\lb$ at infinity and conductor $\mf n$ for $\star = \symp$ or $\orth$. 
\end{itemize}

\section{Self-Dual and Conjugate Self-Dual Representations}\label{sec duality}

We will consider, in parallel, self-dual and conjugate self-dual automorphic representations of $GL_N$, and their local constituents.

\subsection{Local Notions}
Let $F$ be a local field and $\pi$ be a unitary irreducible representation of $GL_N(F)$. The dual $\pi^\vee$ of $\pi$ is the representation acting on the same space, but such that \[ \pi^\vee(g) = \pi(g^{-T}). \] 

\subsubsection{Self-dual representations} \label{ss def conj}

We will say that $\pi$ is self-dual if $\pi \simeq \pi^\vee$. In this case, $\pi$ corresponds under the local Langlands correspondence \cite{HT99, Hen00, Sch13} to an $N$-dimensional representation $\rho$ of the Local Langlands group $L_F$, i.e. a representation such that 
\[
\rho(\sigma) = \rho(\sigma)^{-T} \quad \sigma \in L_F.
\] 

If $\rho$ is additionally irreducible, then it preserves a nondegenerate bilinear form~$B$. We say that $\rho$ is \emph{orthogonal} or \emph{symplectic} depending on whether $B$ can be chosen to be. We correspondingly say that $\pi$ is orthogonal-type or symplectic-type. If $\pi$ is symplectic-type, then $N$ is necessarily even, and the central character of $\pi$ is trivial. When $\pi$ is orthogonal-type, its central character is a quadratic character $\omega_\pi: F^\times \to \pm 1$. Note that being orthogonal and symplectic aren't mutually exclusive. 

If $\rho$ is reducible but semisimple, we say it is orthogonal or symplectic if all its irreducible constituents are. 

\subsubsection{Conjugate self-dual representations}
Let $E/F$ be a (possibly split) extension of local fields, with involution $x \mapsto \bar{x}$. We say that a representation $\pi$ of $GL_N(E)$ is \emph{conjugate self-dual} if $\pi \simeq \bar{\pi}^\vee$ where 
\[ 
\bar{\pi}^{\vee}(g) = \pi(\bar{g}^{-T}). 
\]
Such a representation $\pi$ corresponds under the local Langlands correspondence  to an $N$-dimensional conjugate self-dual representation $\rho$ of $\Gamma_E$. That is, we have $\rho^{-T} \simeq \rho^c$ where $\rho^c(\sigma) := \rho(w_c^{-1}\sigma w_c)$ for $w_c \in L_E \setminus L_F$. 

If $\rho$ is further irreducible, there is a bilinear form $B$ on $\rho$ such that for all $\sigma \in \Gamma_E$
\[
B(\rho^c(\sigma)x, \rho(\sigma) x) = B(x,y), \qquad B(x,y) = \eta B(y, \rho(w_c^2)x)
\]
for $\eta \pm 1$. If we can choose $\eta = 1$, then $\rho$ is \emph{conjugate-orthogonal} and if we can choose $\eta = -1$, $\rho$ is \emph{conjugate-symplectic}. 
Note that these conditions are not mutually exclusive. We accordingly say that $\pi$ is orthogonal-type or symplectic-type.

If $\rho$ is reducible but semisimple, we say it is conjugate orthogonal or conjugate symplectic if all its irreducible constituents are.

\subsection{Twisted Groups and Endoscopy}\label{sec twistedintro}

\subsubsection{Twisted Groups}
Self-dual and conjugate self-dual representations of $\GL_N$ are best studied using twisted endoscopy. We first define the key twisted groups. 

Over field $F$ or quadratic extension $E/F$, let
\[
G_N = \begin{cases}
\GL_N & \text{self-dual case} \\
\Res^E_F \GL_N & \text{conjugate self-dual case.}
\end{cases}
\]
In the self-dual case, consider the involution
\[
\iota: G_N := \GL_N \to \GL_N : g \mapsto g^{-T}
\]
over the number field $F$. In the conjugate self-dual case, consider
\[
\bar \iota: G_N := \Res^E_F \GL_N \to \Res^E_F \GL_N : g \mapsto \bar g^{-T},
\]
also defined over $F$. 

Next, define the matrix
\begin{equation} \label{eq long Weyl element}
w_N := \begin{pmatrix}
& & & (-1)^{N-1} \\
& & \iddots & \\
& -1  & & \\
1 & & &
\end{pmatrix}
\end{equation}
so that conjugating $\iota, \bar \iota$ by $w_N$ gives involutions $\theta, \bar \theta$ of $\GL_N$ and $\Res^E_F \GL_N$ respectively that preserve the standard pinnings. 
\begin{note}
As computational reminders, $w_N^{-1} = (-1)^{N-1} w_N$ and the automorphism $A \mapsto w_N A^T w_N^{-1}$ reflects the entries of an $N \times N$ matrix $A$ along the anti-diagonal and then multiplies the $(i,j)$th entry by the ``checkerboard pattern'' $(-1)^{i+j}$. 
\end{note}
%Then, f blue
Following \cite{Art13} and \cite{Mok15}, define twisted groups
\[
\wtd G_N = \begin{cases}
\GL_N \rtimes \theta & \text{self-dual case} \\
\Res^E_F \GL_N \rtimes \bar \theta & \text{conjugate self-dual case.}
\end{cases}
\]
The (conjugate) self-dual representations of $G_N(\A)$ or $G_N(F_v)$ are exactly the ones that extend to $\wtd G_N$. Any such irreducible representation has exactly two extensions, though in our cases we will choose one as canonical in section \ref{sec twistedaction}.

\subsubsection{Twisted Endoscopic Groups}\label{sec twistedendoscopicgroups}
(Conjugate) self-dual automorphic representations can be studied through the twisted version of Arthur's trace formula on $\wtd G_N$ as in \cite[\S3]{Art13} or \cite[\S4]{Mok15}. The trace formula decomposes into terms for elliptic endoscopic groups of $G_N$. We will only need to worry about the simple endoscopic groups $\wtd{\mc E}_\sm(N)$:

In the self-dual case, the simple twisted endoscopic groups $G \in \wtd{\mc E}_\sm(N)$ are (\cite[\S 1.2]{Art13}):
\begin{itemize}
    \item
    $N$ even: $\wh G \cong \Sp_N$ embedded in one possible way with trivial Galois action. This corresponds to $G \cong \SO_{N+1}$. 
    \item 
    $N$ even: $\wh G \cong \SO_N$ with Galois action by $\Out(G) \cong \Z/2$ through the quadratic character $\eta$. There is a unique embedding for each $\eta$.  
    This corresponds to $G \cong \SO^\eta_N$, a quasisplit orthogonal group that is not split if $\eta \neq 1$.  %\blue{I added this}: We additionally require that $\eta \neq 1$ if $N=2$. \red{maybe more precise: to make sure we have discrete series at infinity, we will only consider the case where $\eta$ restricts to the right thing at infinity depending on the parity of $N$}
    \item %with 
    $N$ odd: $\wh G \cong \SO_N$ with trivial Galois action. There is one possible embedding for each quadratic character $\eta$ of $\Gamma_F$ (equivalently, of $F^\times \bs \A_F^\times$).  This corresponds to $G \cong \Sp_{N-1}$. We denote the versions for different embeddings $\Sp^\eta_{N-1}$. 
\end{itemize}

In the conjugate self dual case, they can be (\cite[\S2.4.1]{Mok15}):
\begin{itemize}
    \item $\wh G \cong \GL_N$ with $\Gal(E/F)$ acting through an automorphism inner to $g \mapsto g^{-T}$. This corresponds to $G \cong U^{E/F}_N$, the quasisplit unitary group. There are two possible embeddings $\Ld G \to \Ld G_N$, and we denote the endoscopic data associated to different embeddings by $U^{\pm}_{E/F}(N)$. 
\end{itemize}

\subsection{Global Notions}\label{sec globalduality}

\subsubsection{Self-Dual Case}
Consider a number field $F$. A representation $\pi$ of $\GL_N(\A_F)$ is self-dual if $\pi \cong \pi^\vee$. The central character $\omega_\pi$ of $\pi$ is then necessarily quadratic.

As one of the key outputs of the endoscopic classification \cite{Art13}, every self-dual cuspidal automorphic representation $\pi$ of $\GL_N/F$ is assigned a group $G \in \wtd{\mc E}_\sm(N)$. If the dual group $\wh G$ is symplectic, we call $\pi$ symplectic-type and if $\wh G$ is orthogonal, we call $\pi$ orthogonal-type.
As part of the result, whenever $\pi$ is orthogonal/symplectic, then all its local factors $\pi_v$ are too since they have parameters factoring through $G$. 

Finally, \cite{Art13} gives a criterion for distinguishing orthogonal vs. symplectic  which can be taken as a more direct definition (though we will not use it): 
\begin{thm}[{\cite[Thm 1.5.3(a)]{Art13}}]
Let $\pi$ be a self-dual automorphic representation of $\GL_N/F$. If $\om_\pi$ is non-trivial, then the assigned $G$ is $\SO_N^{\om_\pi}$ or $\Sp_{N-1}^{\om_\pi}$ (through class field theory) and $\pi$ is therefore orthogonal. 

Otherwise, $\pi$ is symplectic if an only if its alternating square $L$-function $L(s, \pi, \wedge^2)$ has a pole at $s=1$ and orthogonal if and only if its symmetric square $L$-function $L(s, \pi, \Sym^2)$ has a pole at $s=1$. 
\end{thm}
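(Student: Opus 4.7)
The plan is to deduce this statement from Arthur's endoscopic classification together with classical results on Rankin-Selberg, symmetric square, and exterior square $L$-functions. By the classification, the self-dual cuspidal $\pi$ is assigned a group $G \in \wtd{\mc E}_\sm(N)$ and a parameter $\psi$ factoring through $\Ld G$, so the task reduces to reading off $\om_\pi$ and the pole behavior of the various $L$-functions from the shape of $\wh G$.

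For the central character claim, first suppose $\wh G = \Sp_N$ (the symplectic-type case). Every element of $\Sp_N$ lies in $\SL_N$, so $\det \circ \psi$ is trivial and hence $\om_\pi$ is trivial; contrapositively, nontrivial $\om_\pi$ forces $\pi$ to be orthogonal-type. To identify $G$ in the orthogonal case, I would compute the determinant of the embedding $\wh G \into \GL_N$: for $\wh G = \SO_N$ with $N$ even and outer Galois action through $\eta$, and likewise for $\wh G = \SO_N$ with $N$ odd under the twisted embedding indexed by $\eta$, class field theory identifies the abelianization of $\psi$ with $\eta$, giving $\om_\pi = \eta$ and singling out $\SO_N^{\om_\pi}$ or $\Sp_{N-1}^{\om_\pi}$ respectively.

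For the $L$-function characterization (with $\om_\pi$ trivial), I would use the factorization
\[
L(s, \pi \times \pi) = L(s, \pi, \Sym^2) \, L(s, \pi, \wedge^2).
\]
By Jacquet-Shalika, self-duality $\pi \cong \pi^\vee$ gives a simple pole at $s = 1$ of the Rankin-Selberg $L$-function on the left. By Shahidi's analysis via the Langlands-Shahidi method, both factors on the right are holomorphic at $s = 1$ except possibly for a simple pole, and the pole appears in $L(s, \pi, \Sym^2)$ precisely when $\psi$ preserves a symmetric bilinear form (i.e.\ $\wh G \subset \SO_N$) and in $L(s, \pi, \wedge^2)$ precisely when $\psi$ preserves an alternating form (i.e.\ $\wh G \subset \Sp_N$). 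Since exactly one of these holds, the simple pole must lie in exactly one factor, giving the stated dichotomy.

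The hard part will be the last matching step: identifying the dual-group-theoretic orthogonal/symplectic distinction with the analytic pole of Shahidi's $L$-functions. This rests on the agreement between the Langlands-Shahidi $L$-functions and the dual-group $L$-functions $L(s, \pi, r)$ for $r = \Sym^2, \wedge^2$, together with local control at unramified places (to read off the leading term at $s = 1$) and vanishing or boundedness of local factors elsewhere, all of which ultimately feed back into Arthur's construction of the parameter $\psi$ itself; in particular, the characterization is really a theorem inside the endoscopic classification rather than an independent input.
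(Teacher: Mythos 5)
The paper does not prove this statement at all: it is quoted verbatim as Theorem 1.5.3(a) of Arthur's book and used as a black box. There is therefore no internal proof to compare against.

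Your sketch is nonetheless a reasonable reconstruction of what goes into the theorem, and you correctly flag at the end the essential point: the pole dichotomy for $L(s,\pi,\Sym^2)$ versus $L(s,\pi,\wedge^2)$ is not an independent analytic input that one feeds into the classification, but a theorem established \emph{within} the classification via the stabilization of the twisted trace formula. Two small points worth tightening if you were to write this out. First, the determinant computation in the $\wh G = \SO_N$ cases needs care: the map $\det$ is trivial on $\SO_N(\C)$ itself, so $\om_\pi = \eta$ comes from the Galois-equivariance of the $L$-embedding $\Ld G \hookrightarrow \Ld G_N$ (for $N$ even the outer action is realized by conjugation by an element of $\mathrm{O}_N$ with determinant $-1$, and for $N$ odd the embedding is twisted by $\eta^N = \eta$), not from a direct composition $\det \circ \psi$ on the connected dual. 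Second, the claim that ``exactly one of $\wh G \subset \SO_N$ or $\wh G \subset \Sp_N$ holds'' is justified here because $\pi$ is cuspidal, hence its parameter is irreducible, and an irreducible self-dual representation preserves a bilinear form that is nondegenerate and of a unique symmetry type; for non-cuspidal self-dual representations the two possibilities can coexist, which is precisely why Arthur's statement is restricted to the cuspidal case.
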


\subsubsection{Conjugate Self-Dual Case}
 Consider a quadratic extension $E/F$.  A representation $\pi$ of $\GL_N(\A_E)$ is conjugate self-dual if $\pi \circ \sigma \cong \pi^\vee$ where $\sigma$ is the non-trivial element of $\Gal(E/F)$. 

 As one of the key outputs of the endoscopic classification \cite{Mok15}, every conjugate self-dual cuspidal automorphic representation $\pi$ of~$\Res^E_F \GL_N$ is assigned a group~$G \in \wtd{\mc E}_\sm(N)$. If this group is~$U_N^{(-1)^{N-1}}$, we call $\pi$ conjugate orthogonal. Otherwise, we call $\pi$ conjugate symplectic.  As part of the result, whenever $\pi$ is conjugate orthogonal/symplectic, then all its local factors $\pi_v$ are too since they have parameters factoring through $G$. 

Similar to the above, \cite{Mok15} gives a criterion for distinguishing conjugate orthogonal vs. conjugate symplectic which can be taken as a more direct definition (which we again will not use):
\begin{thm}[{\cite[Thm 2.5.4(a)]{Mok15}}]
Let $\pi$ be conjugate self-dual automorphic representation of $\Res^E_F \GL_N$. Then $\pi$ is conjugate symplectic if an only if its Asai $L$-function $L(s, \pi, \mathrm{Asai}^{-})$ has a pole at $s=1$ and conjugate orthogonal if and only if its Asai $L$-function $L(s, \pi, \mathrm{Asai}^{+})$ has a pole at $s=1$. 
\end{thm}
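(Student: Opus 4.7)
The plan is to combine Jacquet-Shalika for $\GL_N \times \GL_N$ with a factorization of the tensor-product $L$-function into the two Asai $L$-functions, and then identify which sign carries the pole via the endoscopic classification of \cite{Mok15}.

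First, since $\pi^\sigma \cong \pi^\vee$ by conjugate self-duality, Jacquet-Shalika implies that the Rankin-Selberg $L$-function $L(s, \pi \times \pi^\sigma)$ has a simple pole at $s=1$. On the $L$-group side, the standard-tensor-standard representation of $\GL_N(\C) \times \GL_N(\C)$, when restricted along the embedding into $\Ld{(\Res^E_F \GL_N)}$, decomposes according to the $\Gal(E/F)$-action into two irreducible pieces, which are by definition $\mathrm{Asai}^+$ and $\mathrm{Asai}^-$. This yields the factorization
\[
L(s, \pi \times \pi^\sigma) = L(s, \pi, \mathrm{Asai}^+)\, L(s, \pi, \mathrm{Asai}^-).
\]
Using Shahidi's theory, identifying each Asai $L$-function as a Langlands-Shahidi $L$-function for an appropriate maximal Levi inside a quasi-split unitary group, each factor on the right is non-vanishing at $s=1$ and holomorphic there apart from a possible simple pole. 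Combined with the simple pole on the left, exactly one of the two factors must have a simple pole at $s=1$.

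Next I would match signs with types via \cite{Mok15}. By the endoscopic classification, $\pi$ descends to a unique quasi-split unitary group in $\wtd{\mc E}_\sm(N)$, corresponding to one of the two $L$-embeddings of \S\ref{sec twistedendoscopicgroups}; by the definition used in the paper, $\pi$ is conjugate-orthogonal when this group is $U_N^{(-1)^{N-1}}$ and conjugate-symplectic otherwise. On the dual side, $\mathrm{Asai}^\pm$ can be characterized as the representation whose restriction to the dual of the associated unitary group contains the trivial representation. Equivalently, the parameter $\varphi_\pi$ factors through the specified $\Ld U_N^\pm$ iff $\mathrm{Asai}^\pm \circ \varphi_\pi$ contains the trivial character, which by the factorization above is equivalent to a pole of $L(s, \pi, \mathrm{Asai}^\pm)$ at $s=1$.

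The main obstacle is the analytic input combined with bookkeeping of sign conventions. Analytically, one must show that $L(s, \pi, \mathrm{Asai}^\pm)$ has at worst a simple pole and is non-vanishing at $s=1$: this is the substance of Shahidi's Langlands-Shahidi machinery applied to the Levi $\Res^E_F \GL_N$ inside a quasi-split unitary group, combined with the standard non-vanishing result at the edge of the critical strip. Combinatorially, the precise matching $\mathrm{Asai}^+ \leftrightarrow U_N^{(-1)^{N-1}}$ depends on the embedding conventions of \cite{Mok15}, and is best nailed down by verifying a small-rank case (for instance $N=1$, where a conjugate self-dual Hecke character $\chi$ of $E^\times$ satisfies $\chi|_{\A_F^\times} = 1$, so that $L(s, \chi, \mathrm{Asai}^+) = \zeta_F(s)$ picks up the pole, consistent with $\chi$ being conjugate-orthogonal) and then propagating the convention globally.
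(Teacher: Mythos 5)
The paper does not prove this statement; it is simply cited from \cite[Thm 2.5.4(a)]{Mok15} and used as a black box (as is appropriate given that the paper's entire framework is conditional on the endoscopic classification of \cite{Mok15}). So there is no in-paper proof to compare against, and the exercise is to evaluate your reconstruction of Mok's argument on its own terms.

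Your sketch is a reasonable summary of the standard strategy and contains the right ingredients: the Jacquet--Shalika pole for $L(s,\pi\times\pi^\vee)$ applied via conjugate self-duality, the factorization $L(s,\pi\times\pi^\sigma) = L(s,\pi,\mathrm{Asai}^+)\,L(s,\pi,\mathrm{Asai}^-)$, non-vanishing and holomorphy-up-to-a-simple-pole of the Asai $L$-functions at $s=1$ from Langlands--Shahidi theory on a quasi-split unitary group, and then a sign-convention check to identify which Asai carries the pole in each case. Two caveats. First, this does not quite reconstruct Mok's actual proof: in \cite{Mok15} the dichotomy of 2.5.4(a) is embedded in the long induction of \S4--5 and the ``seed theorems,'' where the twisted trace formula on $\GL_N(\A_E)$ is compared against the stabilized trace formulas on $U^\pm_{E/F}(N)$, and the exclusivity of the two signs is ultimately a consequence of that comparison rather than a direct analytic-number-theoretic argument. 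The analytic input you describe is genuinely used, but it is one strand inside a much larger structure; your sketch is cleaner than, but not identical to, the logical route in \cite{Mok15}. Second, your $N=1$ check has a slip: a conjugate self-dual Hecke character $\chi$ of $\A_E^\times$ only satisfies $\chi|_{\A_F^\times}=1$ when it is conjugate-orthogonal; in general $\chi|_{\A_F^\times}\in\{1,\omega_{E/F}\}$, and the two cases are exactly the conjugate-orthogonal vs.\ conjugate-symplectic dichotomy. You are implicitly fixing the conjugate-orthogonal case to pin down the sign convention, which is legitimate, but the unqualified ``satisfies $\chi|_{\A_F^\times}=1$'' is false as written and should be corrected to avoid circularity.
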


\subsubsection{Restrictions on $F$}
Finally, to keep the ``discreteness'' at infinity analogy with $\GL_2/\Q$-modular forms, we will only study cases when the relevant $G \in \wtd{\mc E}_\sm(N)$ have discrete series at infinity. Therefore, we make the following assumption:

\begin{assumption}\label{assump EF}
In the conjugate self-dual case assume that $F$ is totally real. In the conjugate self-dual case assume that $E/F$ is CM: a totally imaginary extension of a totally real field.
\end{assumption}

Outside of the $G = \SO_{2n}^\eta$ case, Assumption \ref{assump EF} implies that $G$ has discrete series at infinity.  In the $G = \SO_{2n}^\eta$ case, Assumption \ref{assump EF} gives that $G_\infty$ has discrete series if and only if for each $v|\infty$, the following holds: $\eta|_{\Gal_{F_v}}$ is trivial if and only if $n$ is even. As we will see in Proposition~\ref{simpleshapesselfdual}, only these $G$ will be relevant.

\section{New Vectors and Root Numbers} 
In this section, we exploit integral representations of $L$-functions to realize the root number of a symplectic automorphic $L$-function as the eigenvalue of a certain operator acting on the space of newforms. The main statements \ref{epsformula}, \ref{epsformulaconj} are local, and in the next section we will consider global applications.

\subsection{New Vectors}\label{sec newvectors}

\subsubsection{Basic Version}
We recall the theory of newvectors for local representation of %of some $G_{N, v}$. 
$\GL_N$.  A useful reference is \cite[\S 3]{Hum20}, though results there are stated more generally than we need here.

Let $v$ be a place of our number field $F$ and $K_v = \GL_N(\mc O_v)$. 
\begin{dfn}\label{def mirahoric}
For each $i$, the mirahoric subgroup $K_1(\mf p_v^i) = K_{1,v}(i) \subseteq K_v  $ %$\GL_N(\mc O_v)$ blue
is
\[
K_1(\mf p_v^i) := K_{1,v}(i) := \lf\{ A \in K_v : A \cong 
\begin{pmatrix} * & \cdots & * & *\\
\vdots & \ddots & \vdots & \vdots \\
* & \cdots & * & * \\
0 & \cdots & 0 & 1
\end{pmatrix} \pmod{\mf p_v^i} \ri\}.
\]
Given an 
ideal $\mf n = \prod_v \mf p_v^{e_v}$ of $\mc O_F$, then
\[
K_1(\mf n) := \prod_v K_1(\mf p_v^{e_v}) \subseteq \prod_v \GL_N(\mc O_v).
\]
\end{dfn}

\begin{thm}[{\cite[Thm. 5]{JPSS81}}]\label{newformexistence}
Let $\pi_v$ be a generic irreducible representation of $\GL_N(F_v)$. Then there is  $i_{\pi_v}\geq 0$ such that
\[
\dim\lf((\pi_v)^{K_1(\mf p^i)}\ri) =  \begin{cases} 0 & i < i_{\pi_v} \\
1 & i = i_{\pi_v}
\end{cases}.
\]
\end{thm}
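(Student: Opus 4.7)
My plan is to follow the original argument of Jacquet--Piatetski-Shapiro--Shalika by passing to the Whittaker model of $\pi_v$ and then exploiting the local Rankin--Selberg functional equation against $\GL_{N-1}$-twists. Since $\pi_v$ is generic, I would fix a non-degenerate additive character $\psi$ of the upper unipotent radical $N(F_v)$ and identify $\pi_v$ with its Whittaker model $\mc W(\pi_v, \psi)$, so that the space $\pi_v^{K_1(\mf p_v^i)}$ becomes the subspace $\mc W(\pi_v)^{K_1(\mf p_v^i)}$ of right $K_1(\mf p_v^i)$-invariant Whittaker functions. Define $i_{\pi_v}$ to be the smallest $i \geq 0$ for which this subspace is nonzero; finiteness of $i_{\pi_v}$ follows from a standard Kirillov-model argument together with the observation that $K_1(\mf p_v^i)$ is exactly the stabilizer in $K_v$ of the row vector $(0,\ldots,0,1)$ modulo $\mf p_v^i$.

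The substantive content is then the uniqueness claim $\dim \mc W(\pi_v)^{K_1(\mf p_v^{i_{\pi_v}})} \leq 1$. For any unramified generic irreducible $\pi_v'$ of $\GL_{N-1}(F_v)$ with normalized spherical Whittaker function $W^\circ \in \mc W(\pi_v', \bar\psi)$, I would consider the $\GL_N \times \GL_{N-1}$ Rankin--Selberg zeta integral
\begin{equation*}
\Psi(s, W, W^\circ) = \int_{N_{N-1}(F_v)\backslash \GL_{N-1}(F_v)} W\!\begin{pmatrix} g & \\ & 1 \end{pmatrix} W^\circ(g)\,|\det g|^{s-1/2}\,dg,
\end{equation*}
for $W \in \mc W(\pi_v)^{K_1(\mf p_v^{i_{\pi_v}})}$. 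The right $K_1(\mf p_v^{i_{\pi_v}})$-invariance of $W$ combined with the bi-$\GL_{N-1}(\mc O_v)$-equivariance of $W^\circ$ allows one, after an Iwasawa decomposition of $\GL_{N-1}(F_v)$, to collapse the integral onto the diagonal torus and identify it with $W(1) \cdot L(s, \pi_v \times \pi_v')$ up to an invertible element of $\C[q^{\pm s}]$. Comparing with the local functional equation then pins $i_{\pi_v}$ to the conductor exponent appearing in $\epsilon(s, \pi_v \times \pi_v', \psi)$.

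The main obstacle will be upgrading the observation that the functional $W \mapsto \Psi(s, W, W^\circ)$ is proportional to $W \mapsto W(1)$ into genuine uniqueness of $W$. I would argue by contradiction: suppose $W \in \mc W(\pi_v)^{K_1(\mf p_v^{i_{\pi_v}})}$ satisfies $W(1) = 0$ but $W \neq 0$. Then $\Psi(s, W, W^\circ)$ vanishes identically for every unramified $\pi_v'$, and varying the Satake parameters of $\pi_v'$ through a Mellin-inversion argument on the $\GL_{N-1}$-diagonal torus forces all values $W(\diag(a_1, \ldots, a_{N-1}, 1))$ to vanish. Combined with the left $N(F_v)$-equivariance and the right $K_v$-translation structure, this would show $W$ is actually invariant under a strictly larger mirahoric subgroup containing $K_1(\mf p_v^{i_{\pi_v} - 1})$, contradicting the minimality of $i_{\pi_v}$. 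This yields $\dim \pi_v^{K_1(\mf p_v^{i_{\pi_v}})} = 1$; that $\dim \pi_v^{K_1(\mf p_v^i)} = 0$ for $i < i_{\pi_v}$ is tautological from the definition of $i_{\pi_v}$.
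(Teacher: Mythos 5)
The paper gives no proof of this theorem: it is simply cited from \cite{JPSS81} (with the test-vector refinement later corrected by Matringe in \cite{Mat13}). So the comparison is against the structure of the JPSS argument rather than anything in the present text.

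Your sketch follows the correct broad strategy — Whittaker model, mirahoric invariants, $\GL_N \times \GL_{N-1}$ Rankin--Selberg integrals — but there is a genuine and fatal gap at the central step. You assert that the Iwasawa decomposition, together with right $K_1(\mf p_v^{i_{\pi_v}})$-invariance of $W$ and bi-$K_{N-1}$-invariance of $W^\circ$, lets one ``identify [$\Psi(s,W,W^\circ)$] with $W(1)\cdot L(s,\pi_v\times\pi_v')$ up to an invertible element of $\C[q^{\pm s}]$.'' Iwasawa only collapses the zeta integral to a sum over $T_{N-1}(F_v)/T_{N-1}(\mc O_v)$ of the values $W\!\begin{pmatrix} a & \\ & 1\end{pmatrix}W^\circ(a)|\det a|^{s-1/2}\delta^{-1}(a)$; it does not produce the factor $W(1)$. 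Passing from that torus sum to $W(1)\cdot L(s,\pi_v\times\pi_v')$ is precisely the test-vector identity (Theorem~\ref{RStestvector} in the paper, i.e.\ JPSS Theorem~4 as corrected by Matringe), which in turn is established only for the essential newvector, once its uniqueness is already known. In fact the identity you claim is \emph{equivalent} to the uniqueness you are trying to prove: if the fixed space had dimension~$\ge 2$ and $W\neq 0$ had $W(1)=0$, then your formula would give $\Psi(s,W,W^\circ)\equiv 0$, hence (by the Mellin argument you describe, which is fine) $W=0$, a contradiction — so asserting the formula for arbitrary invariant $W$ is assuming what must be proved. The actual argument in \cite{JPSS81} is substantially more delicate: it proceeds by induction on $N$ and uses the local functional equation and a careful analysis of the degree of the numerator of $\Psi(s,W,W^\circ)/L(s,\pi_v\times\pi_v')$ as a polynomial in $q_v^{\pm s}$, matched against the conductor exponent in $\epsilon(s,\pi_v\times\pi_v',\psi)$; uniqueness falls out of this degree bookkeeping, not from a ready-made test-vector formula. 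The surrounding parts of your sketch (finiteness of $i_{\pi_v}$ via the Kirillov model, and deducing global vanishing of $W$ from vanishing on $\diag(a_1,\dots,a_{N-1},1)$ using left $N(F_v)$-equivariance plus right $\GL_{N-1}(\mc O_v)$-invariance and Iwasawa for $\GL_{N-1}$) are sound, but they hang on the missing central identity.
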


\begin{dfn}\label{oldformdimension}
We call $c(\pi_v) = \mf p_v^{i_{\pi_v}}$  
the conductor of $\pi_v$. The conductor $\mf n_\pi$ of a global automorphic representation $\pi = \bigotimes_v' \pi_v$ is defined as
\[
c(\pi) := \mf n = \prod_v c(\pi_v).
\]
As local shorthand, we will often say $c(\pi_v) = i_{\pi_v}$. 
\end{dfn} 

\begin{dfn}
If $\pi$ is a global automorphic representation of $\GL_N$, its newvector, defined only up to constant factor, is an element of the one-dimensional space $\pi^{K_1(\mf n_\pi)}$. 
\end{dfn}

\subsubsection{$\Res^E_F \GL_N$ version}\label{sec conductorsconj}
In the unitary case, $G_{N,v}$ is one of the following:
\begin{itemize}
    \item If $v$ is split, $\GL_N(F_v \otimes_F E) \cong \GL_N(F_v)^2$.
    \item If $v$ is inert or ramified, $\GL_N(F_v \otimes_F E) \cong \GL_N(E_w)$ for $w$ lying over $v$. 
\end{itemize}
Applying Theorem \ref{newformexistence} for $GL_N(F_v)^2$ and $GL_N(E_w)$ respectively lets us define local conductors as formal powers of $\mf p_v$ as follows: 
\begin{itemize}
    \item If $v$ is split: a pair $\mf p_w^i, \mf p_{w'}^j$ for integral $i,j$ and $w$, $w'$ above $v$. We represent this as $\mf p_w^{(i,j)}$ for ordered pair of integers $(i,j)$. If $\pi_v$ is conjugate self-dual, then $i = j$ so we can formally represent this as $\mf p_v^i$ for a single integer $i$. We use shorthand $c(\pi_v) = i$. %\emph{\blue{This last bit is not consistent with Definition 3.1.3. Maybe $\mf p_v^i$? Same below. Fixed} }
    \item If $v$ is inert: a single $\mf p_w^i$ which we represent formally by $\mf p_v^i$ for integral $i$. We use shorthand $c(\pi_v) = i$
    \item If $v$ is ramified: a single $\mf p_w^{2i}$ which we represent formally as $\mf p_v^i$ for half-integral $i$. We use shorthand $c(\pi_v) = 2i$. %\red{go through and make sure later sections are consistent with this!!!!!}
\end{itemize}
These correspond to analogous subgroups $K_1(\mf p_w^i) = K_{1,v}(i) \subseteq G_{N,v}$ that satisfy the dimension-one property of Theorem \ref{newformexistence}. We similarly define newvectors. 

The global conductor $c(\pi)$ can therefore be thought of as an ideal of $\mc O_E$. If $\pi$ is further conjugate self-dual, then $c(\pi)$ can be represented as a formal product
\[
c(\pi) = \mf n = \prod_v \mf p_v^{k_v}
\]
where $v$ ranges over finite places of $\mc O_F$ and the $k_v$ are positive integers except when $v$ is ramified in which case they can also be half-integers. 

\begin{note}
Beware that there is a difference of a factor of $2$ between our local and global indexing at ramified primes. These can be distinguished by local indexing always being by an integer and global indexing always being by an ideal. 
\end{note}

\subsection{Rankin-Selberg Integral Representations}\label{sec RS} Our construction of test functions that pick out root numbers relies on the Rankin-Selberg integral for~$\pi$ and~$\pi'$. We recall this now as presented in \cite{cogdell2004long}, see also \cite[\S\S 2,3]{Hum20}.

\subsubsection{Whittaker Functions}
Consider the upper-triangular unipotent subgroup $U \subset GL_N$. Pick a nontrivial character $\psi$ of $F^\times \bs \A^\times$ and extend it to a character of $U(k) \bs U(\BA)$ through
\[
\psi \begin{pmatrix}
1 & x_1 &  \cdots &  \\
& \ddots  & \ddots &  \vdots \\
& & 1 & x_{n-1}  \\
& & & 1
\end{pmatrix} = \psi(x_1 + \cdots + x_{n-1}).
\]
Then, for any automorphic function $\varphi$ on $\GL_N$, we get a Whittaker function:
\[
W_\varphi(g, \psi) := W_\varphi(g) := \int_{U(F) \bs U(\A)} \varphi(xg) \psi^{-1}(x) \, dx
\]
in the Gelfand-Graev space $W(\psi)$ of functions satisfying $W_\varphi(xg) = \psi(x) W_\varphi(g)$ for all $x \in U(\A)$. Through this transformation property, we think of $\varphi \mapsto W_\varphi$ as attempting to embed $\pi$ as a $G(\A)$-rep into a ``twisted quotient'' $(U(\A), \psi) \bs G(\A)$. If this map $\pi \to W(\psi)$ is non-zero, $\pi$ is called \emph{generic}. It is well-known that all cuspidal automorphic representations of $\GL_N$ are generic, and that the embedding $\pi \into W(\psi)$ is unique if it exists. 

Therefore, If $\varphi = \prod_v \varphi_v$ is a pure tensor in $\pi = \bigotimes_v' \pi_v$, we have a factorization 
\[
W_\varphi =: \prod_v W_{\varphi_v}
\]
through a corresponding factorization of the unique $\pi \into W(\psi)$. 

We similarly define $\wh W_\varphi$ by integration against $\psi$ to get an element of $W(\psi^{-1})$.

\subsubsection{Integral Representations}
Let $\pi, \pi'$ be cuspidal automorphic representations of $\GL_N/F$ and $GL_{N-1}/F$ respectively. For any $\varphi \in \pi$ and $\varphi' \in \pi'$, define the Rankin-Selberg integral: 
\begin{equation}\label{globalRS}
I(s, \varphi, \varphi') := \int_{\GL_{N-1}(F) \bs \GL_{N-1}(\A)} \varphi\lf(\begin{pmatrix}
\m a & \\
& 1
\end{pmatrix}\ri)  \varphi'(\m a) |\det \m a|^{s - 1/2} \, d\m a. \\
\end{equation}

Rankin-Selberg theory provides a factorization of \eqref{globalRS}: if both $\varphi$, $\varphi'$ are pure tensors, then
\begin{multline}\label{RSfactorization}
I(s, \varphi, \varphi') = \prod_v I_v(s, W_{\varphi_v}, \wh W_{\varphi'_v}) \\
:= \prod_v \int_{U_{N-1,v} \bs \GL_{N-1,v}} W_{\varphi_v} \begin{pmatrix}
\m a_v & \\
& 1
\end{pmatrix}  \wh W_{\varphi'_v}(\m a_v) |\det \m a_v|_v^{s - 1/2} \, d\m a_v.
\end{multline}

Then as a special case of a theorem of Jacquet--Piatetskii-Shapiro--Shalika corrected by Matringe: 
\begin{thm}[{\cite[Thm (4)]{JPSS81}, corrected in \cite[Cor 3.3]{Mat13}}]\label{RStestvector}
Let $\pi_v$ be a generic representation of $\GL_{N,v}$ and $\varphi_v$ its newvector. Let $\pi'_v$ be an unramified representation of $\GL_{N-1,v}$ and $\varphi'_v$ its unramified vector. Normalize both so that $W_{\varphi_v}(1) = W_{\varphi'_v}(1) = 1$. Then $I(s, W_{\varphi_v}, W_{\varphi'_v}) = L(s, \pi_v \times \pi_v')$. 
\end{thm}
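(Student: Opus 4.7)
The plan is to use the Iwasawa decomposition to reduce the full $\GL_{N-1}$ integral to an integral over the diagonal torus $A_{N-1} \subset \GL_{N-1}$, where known formulas for the diagonal values of both Whittaker functions allow direct identification with the Euler product defining $L(s, \pi_v \times \pi'_v)$. Writing $\GL_{N-1}(F_v) = U_{N-1}(F_v) A_{N-1}(F_v) K_{N-1,v}$ and using that the Haar quotient measure on $U_{N-1,v} \bs \GL_{N-1,v}$ decomposes as $\delta_{B_{N-1}}^{-1}(t)\, dt\, dk$, the integral factors as
\[
I_v(s, W_{\varphi_v}, \wh W_{\varphi'_v}) = \int_{A_{N-1}(F_v)} \int_{K_{N-1,v}} W_{\varphi_v}\!\lf(\begin{pmatrix} tk & \\ & 1 \end{pmatrix}\ri) \wh W_{\varphi'_v}(tk) |\det t|^{s-1/2} \delta_{B_{N-1}}^{-1}(t)\, dk\, dt.
\]

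First I would verify that both Whittaker functions are right-invariant under $K_{N-1,v}$, so that the inner $k$-integral collapses to a constant. For $\wh W_{\varphi'_v}$ this is automatic since $\pi'_v$ is unramified and $\varphi'_v$ is its spherical vector. For $W_{\varphi_v}$, the key observation is that the standard embedding $k \mapsto \begin{pmatrix} k & \\ & 1 \end{pmatrix}$ sends $K_{N-1,v}$ into the mirahoric subgroup $K_1(\mf p_v^{i_{\pi_v}})$, since the bottom row $(0, \ldots, 0, 1)$ already satisfies the mirahoric condition modulo any power of $\mf p_v$; hence $\varphi_v$ is fixed by this embedded $K_{N-1,v}$. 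This reduces the computation to an integral over the diagonal torus.

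At this point the Casselman--Shalika formula expresses $\wh W_{\varphi'_v}(t)$ on dominant $t$ as a Schur polynomial in the Satake parameters of $\pi'_v$ weighted by $\delta_{B_{N-1}}^{1/2}(t)$, while a corresponding ``essential vector'' formula gives the values $W_{\varphi_v}\!\lf(\begin{pmatrix} t & \\ & 1 \end{pmatrix}\ri)$; the normalizations $W_{\varphi_v}(1) = W_{\varphi'_v}(1) = 1$ fix the relevant constants. Both functions vanish outside the dominant chamber of $A_{N-1}$, so the torus integral becomes a sum over dominant weights which one recognizes as the formal power series defining $L(s, \pi_v \times \pi'_v)$ via the dual Cauchy identity applied to the Satake parameters of $\pi_v$ and $\pi'_v$.

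The main obstacle is this last diagonal computation. The original argument of \cite{JPSS81} carried it out by integrating only over the mirabolic subgroup $P_{N-1} \subset \GL_{N-1}$, but this omits contributions that are essential when $\pi_v$ is ramified; Matringe's correction uses the Bernstein--Zelevinsky derivatives of the new vector in $\pi_v$ to evaluate it correctly on the diagonal torus and to verify that the full $\GL_{N-1}$ integral---not just its mirabolic sub-integral---recovers the $L$-factor. Following \cite{Mat13}, I would reduce inductively to supercuspidal constituents, where the new vector's diagonal restriction is pinned down by the uniqueness of the Kirillov model and the known behavior at the identity.
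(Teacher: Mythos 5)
The paper does not actually prove this theorem; it cites it directly from \cite{JPSS81} with the correction from \cite{Mat13}. So there is no internal proof to compare against. That said, it is worth evaluating whether your sketch would succeed as a reconstruction of the cited result.

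Your first three steps are sound: the Iwasawa reduction, the observation that $k \mapsto \mathrm{diag}(k,1)$ lands in every mirahoric $K_1(\mf p_v^i)$, and the use of Casselman--Shalika for the unramified side all check out. But two issues appear in the final step. First, the identity you want is the ordinary Cauchy identity $\sum_\lambda s_\lambda(x)s_\lambda(y) = \prod_{i,j}(1-x_iy_j)^{-1}$, not the \emph{dual} Cauchy identity (which has $(1+x_iy_j)$ on the right and conjugate partitions on the left, and is not what produces an $L$-factor). Second, and more seriously, your claim that the torus values of $W_{\varphi_v}$ are ``pinned down by the uniqueness of the Kirillov model and the known behavior at the identity'' is not an adequate account of the hard part. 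Knowing $W_{\varphi_v}(1)=1$ plus Kirillov uniqueness does not determine the restriction to the diagonal torus embedded in $\GL_{N-1}$. What Matringe actually proves --- the content of \cite[Cor 3.3]{Mat13} --- is that the restriction of the essential Whittaker function of $\pi_v$ to $\GL_{N-1}(F_v)$ agrees with the \emph{unramified} Whittaker function of an auxiliary unramified standard module canonically attached to $\pi_v$, and that this auxiliary representation has the right standard $L$-factors against unramified representations. The Bernstein--Zelevinsky derivative machinery goes into establishing that comparison, after which the Cauchy/Shintani-type computation closes the argument. Without this structural identification, the ``torus integral $=$ Euler product'' step of your outline does not go through once $\pi_v$ is ramified.
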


Moreover, we have a global functional equation
\begin{equation}\label{RSfunctionalequation}
I(s,\varphi, \varphi') = I(1-s, \varphi^\iota, (\varphi')^\iota), 
\end{equation}
where $\varphi^\iota(g) := \varphi(g^{-T}) = \varphi(w_N g^{-T})$, since $\varphi$ is automorphic.

We also have a local functional equation (\cite[Thm 6.2]{cogdell2004long} specialized to $N \times N-1$): define $W_\varphi^\iota(g) := W_\varphi(w_N g^{-T})$. Then,
\begin{equation}\label{localfunctionalequation}
I_v(1-s, W^\iota_{\varphi_v}, \wh W^\iota_{\varphi'_v}) = \om_{\pi'_v}(-1)^{N-1} \gamma(s, \pi_v \times \pi'_v, \psi) I_v(s, W_{\varphi_v}, \wh W_{\varphi'_v})
\end{equation}
defining a $\gamma$-factor $\gamma(s, \pi_v \times \pi'_v, \psi)$ that is necessarily a rational function in $q_v^{-s}$ and doesn't depend on $\varphi_v, \varphi_v'$.

\subsection{Root Numbers}\label{sec rootnumbers}
\subsubsection{$\eps$-factors and definition}
The local functional equation \eqref{localfunctionalequation} lets us define $\eps$-factors from the local $L$-functions:
\begin{equation}
\label{eq definition epsilon}
\eps(s, \pi \times \pi', \psi) := \f{\gamma(s, \pi_v \times \pi'_v, \psi) L(s, \pi_v \times \pi'_v)}{L(1 - s, \pi_v^\vee \times (\pi'_v)^\vee)}.
\end{equation} 

These are the factors whose product defines the global functional equation for $L$-functions:
\[
L(s, \pi \times \pi') = \eps(s, \pi \times \pi') L(1-s, \pi^\vee \times (\pi')^\vee). 
\]
Here 
\[
\eps(s, \pi \times \pi') = \prod_v \eps(s, \pi_v \times \pi'_v, \psi)
\]
is therefore necessarily independent of $\psi$.

Factors $\eps(s, \pi) := \eps(s, \pi, \triv)$ for a single representation $\pi$ can be defined through a similar $\GL_N \times \GL_1$-theory. Then:
\begin{thm}[{\cite[Prop 6.5, Thm 6.3]{cogdell2004long}}]
Let $\pi_v$ have conductor $\mf p_v^k$. Then
\[
\eps(s, \pi_v, \psi) = \eps(1/2, \pi_v,\psi) q_v^{-k(s - 1/2)}.
\]
\end{thm}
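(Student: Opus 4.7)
The statement splits into two claims: (a) $\eps(s, \pi_v, \psi)$ depends on $s$ only through a monomial $c \cdot q_v^{-bs}$ in $q_v^{-s}$, and (b) the exponent equals the conductor, $b = k$. Granted both, evaluating at $s = 1/2$ pins down $c = \eps(1/2, \pi_v, \psi) q_v^{k/2}$ and yields the displayed formula.

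For claim (a), the definition \eqref{eq definition epsilon} writes $\eps$ as a quotient of the gamma factor by a ratio of local $L$-factors. The $L$-factors are reciprocals of polynomials in $q_v^{-s}$ normalized so that $P(0) = 1$, hence rational in $q_v^{-s}$. The gamma factor is rational in $q_v^{-s}$ because the zeta integrals appearing in the local functional equation \eqref{localfunctionalequation} are: by an Iwasawa decomposition over the maximal compact $K_v$, those integrals reduce to finite sums of characters of the diagonal torus and produce Laurent polynomials in $q_v^{-s}$ whose quotients recover $\gamma$. Thus $\eps(s, \pi_v, \psi)$ is rational in $q_v^{-s}$. Applying the local functional equation twice, once for $(\pi_v, \psi)$ and once for $(\pi_v^\vee, \psi^{-1})$, gives a relation of the form $\eps(s, \pi_v, \psi)\, \eps(1-s, \pi_v^\vee, \psi^{-1}) = \text{const}$ (a sign depending on the central character, independent of $s$). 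A nonvanishing rational function in $q_v^{-s}$ satisfying such an inversion identity must be a monomial, establishing (a).

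For claim (b), I would proceed via the test-vector property at the newvector. Write $\eps(s, \pi_v, \psi) = c \cdot q_v^{-bs}$ from (a). Take the newvector $\varphi_v$ of $\pi_v$, normalized so $W_{\varphi_v}(1) = 1$; by the analog of Theorem \ref{RStestvector} for $\GL_N \times \GL_1$ (paired with the unramified vector of the trivial character), one has $I_v(s, W_{\varphi_v}, 1) = L(s, \pi_v)$. The local functional equation \eqref{localfunctionalequation} then gives
\[
I_v(1 - s, W^\iota_{\varphi_v}, 1) = \gamma(s, \pi_v, \psi)\, L(s, \pi_v).
\]
The crucial step is to identify $W^\iota_{\varphi_v}$ with the Whittaker function of the newvector of $\pi_v^\vee$, translated by a diagonal element whose determinant scales $|\det \m a_v|^{s-1/2}$ by precisely $q_v^{-k(s-1/2)}$. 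Concretely, $\iota$ does not preserve the mirahoric $K_1(\mf p_v^k)$ on the nose, but conjugates it by a diagonal matrix involving $\varpi_v^k$; the uniqueness of the new vector (Theorem \ref{newformexistence}) then forces $W^\iota_{\varphi_v}$ to be proportional to the new Whittaker function of $\pi_v^\vee$, with the scalar accounting for the missing $q_v^{-k(s-1/2)}$. Matching gives $b = k$.

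The main obstacle is the final identification in (b): tracking the effect of $\iota$ on the newvector and extracting the exact $q_v^{-k(s-1/2)}$ contribution from the change-of-variable. This demands a careful Bruhat/Iwasawa decomposition of the supporting cells of $W_{\varphi_v}$ and use of the Kirillov-model presentation of the mirahoric-fixed line, which is exactly the computation performed in \cite[\S6]{cogdell2004long}. Once that is granted, parts (a) and (b) fit together as described, and the theorem follows.
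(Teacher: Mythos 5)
The paper does not prove this statement; it is quoted verbatim as a known result from Cogdell's lecture notes, so there is no internal proof to compare against. Your proposal is a reconstruction of the standard argument from the literature, and its overall architecture (monomiality of the $\eps$-factor, plus identification of the exponent with the algebraic conductor) is correct and matches what one finds in \cite{cogdell2004long} and \cite{JPSS81}.

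That said, your write-up has a genuine gap at exactly the place you flag. In part (a), you need slightly more than rationality: to conclude that a function of $q_v^{-s}$ satisfying $\eps(s,\pi_v,\psi)\,\eps(1-s,\pi_v^\vee,\psi^{-1}) = \om_{\pi_v}(-1)$ is a monomial, you should observe that both factors are in fact entire (Laurent polynomials in $q_v^{\pm s}$, not merely rational), so that each divides a constant and hence is a unit in $\C[q_v^{s},q_v^{-s}]$. For part (b), you correctly identify the strategy — pair the newvector against an unramified vector, apply the local functional equation, and read off the exponent from the shift $g \mapsto gJ_{v,k}$ induced by $\iota$ acting on the mirahoric-fixed line — but the actual extraction of the factor $q_v^{-k(s-1/2)}$ is deferred entirely to ``the computation performed in \cite[\S6]{cogdell2004long}.'' That computation is the theorem: it is where the equality between the analytic conductor (the degree of $\eps$ as a monomial) and the algebraic conductor (the minimal $k$ with nonzero $K_1(\mf p_v^k)$-invariants) is established, and it is not a formal consequence of the ingredients you list. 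Stated differently, your Lemma-\ref{iotafn}-style change-of-variables calculation produces a $q_v^{k(N-1)(1/2-s)}$-type shift, but converting that into the statement $b=k$ still requires knowing that the newvector's zeta integral equals $L(s,\pi_v)$ exactly and that the $\iota$-transform of the newvector is again (up to scalar) the new Whittaker function for $\pi_v^\vee$ — both nontrivial facts proved via Kirillov-model analysis. As a citation-guided sketch this is fine; as a self-contained proof it is incomplete, and you should say so rather than present the outline as if the final step were routine.
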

Globally,
\begin{equation}\label{epsconductor}
\eps(s, \pi) = \eps(1/2, \pi) |c(\pi)|^{1/2-s},
\end{equation}
which motivates:
\begin{dfn}
If $\pi$ is an automorphic representation of $\GL_N$, let the root number of $\pi$ be $\eps(1/2, \pi)$. 
\end{dfn}
As a final useful formula, if $\om_{\pi_v}$ is the central character of $\pi_v$,
\[
\eps(1/2, \pi_v, \psi) \eps(1/2, \pi_v^\vee, \psi) = \om_{\pi_v}(-1). 
\]

\subsubsection{The (conjugate) self-dual case}\label{selfdualrootnumbers}

We recall results about root numbers of (conjugate) self-dual representations (to translate statements from \cite{GGP11} to the automorphic side, note that that   
the determinant of the parameter of a representation of $\GL_{N,v}$ is the parameter of its central character).

\begin{prop}[{\cite[Thm 5.1(1)]{GGP11}}]\label{localepspm1}
Let the representation $\pi_v$ of $\GL_N(F_v)$ be self-dual with trivial central character. Then $\eps(1/2, \pi_v, \psi) = \pm 1$ and is independent of $\psi$. 
\end{prop}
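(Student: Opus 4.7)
The plan is to combine two standard properties of $\eps$-factors. First, the excerpt states the symmetry
\[
\eps(1/2,\pi_v,\psi)\,\eps(1/2,\pi_v^\vee,\psi) = \om_{\pi_v}(-1).
\]
Since $\pi_v$ is self-dual, $\pi_v^\vee \cong \pi_v$, so the two factors on the left are equal; and since $\om_{\pi_v}$ is trivial, the right side is $1$. Hence $\eps(1/2,\pi_v,\psi)^2 = 1$, which gives $\eps(1/2,\pi_v,\psi) \in \{\pm 1\}$. This step is essentially formal once the symmetry and the self-duality are in hand.

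For the independence of $\psi$, I would use the fact that any nontrivial additive character of $F_v$ has the form $\psi_a(x) := \psi(ax)$ for a unique $a \in F_v^\times$, together with the standard transformation formula
\[
\eps(s,\pi_v,\psi_a) \;=\; \om_{\pi_v}(a)\,|a|^{N(s-1/2)}\,\eps(s,\pi_v,\psi),
\]
(derivable from the $\GL_N \times \GL_1$ definition of $\eps(s,\pi_v,\psi)$ and the behavior of Whittaker models under rescaling of $\psi$; see for example the reference \cite{cogdell2004long} already cited in the excerpt). Evaluating at $s=1/2$ eliminates the $|a|$-factor, and using $\om_{\pi_v}\equiv 1$ eliminates the remaining twist, giving $\eps(1/2,\pi_v,\psi_a) = \eps(1/2,\pi_v,\psi)$.

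Since both steps reduce to invoking existing formulas, there is no real technical obstacle; the only point requiring a moment of care is confirming the scaling formula has exponent $N$ (reflecting the $\GL_N$-dimension) rather than a different value, so that the exponent $N(s-1/2)$ vanishes at the center of symmetry $s=1/2$. Together, these give both the $\pm 1$ statement and the $\psi$-independence asserted in Proposition~\ref{localepspm1}.
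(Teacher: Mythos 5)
Your proof is correct. The paper gives no proof of this proposition—it simply cites \cite[Thm.~5.1(1)]{GGP11}—but your argument reconstructs the standard one underlying that reference: the relation $\eps(1/2,\pi_v,\psi)\,\eps(1/2,\pi_v^\vee,\psi)=\omega_{\pi_v}(-1)$ together with self-duality forces $\eps(1/2,\pi_v,\psi)^2=1$, and the twist rule $\eps(s,\pi_v,\psi_a)=\omega_{\pi_v}(a)\,|a|^{N(s-1/2)}\,\eps(s,\pi_v,\psi)$ (with exponent $N$, the degree of the $L$-parameter, confirming your ``moment of care'') gives $\psi$-independence at $s=1/2$ once $\omega_{\pi_v}$ is trivial.
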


\begin{prop}[{\cite[Thm 5.1(2)]{GGP11}}]\label{localepspm1conj}
Let the representation $\pi_v$ of $\GL_N(F_v \otimes_F E)$ be conjugate self-dual. Then, if $\psi \circ \sigma = \psi^{-1}$, $\eps(1/2, \pi_v, \psi) = \pm 1$. This value is independent of $\psi$ if either $N$ is even or $\pi_v$ is conjugate orthogonal.  
\end{prop}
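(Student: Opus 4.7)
The plan is to adapt the proof of Proposition \ref{localepspm1} to the conjugate self-dual setting, substituting the identification $\pi_v^\vee \cong \bar{\pi}_v$ for self-duality and carefully tracking how the Galois twist interacts with the additive character.

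To see that $\eps(1/2,\pi_v,\psi) = \pm 1$, I would start from the standard identity $\eps(1/2,\pi_v,\psi)\,\eps(1/2,\pi_v^\vee,\psi) = \om_{\pi_v}(-1)$, a direct consequence of composing two applications of the local functional equation \eqref{localfunctionalequation}. Conjugate self-duality $\pi_v^\vee \cong \bar{\pi}_v$, combined with the elementary identity $\eps(s,\bar{\pi}_v,\psi) = \eps(s,\pi_v,\bar\psi)$ (obtained by conjugating $g$ throughout the Whittaker integrals that define the local $\gamma$-factor) and the hypothesis $\bar\psi = \psi \circ \sigma = \psi^{-1}$, yields $\eps(1/2,\pi_v^\vee,\psi) = \eps(1/2,\pi_v,\psi^{-1})$. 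Applying the additive-character twist formula $\eps(s,\pi,\psi_a) = \om_\pi(a)|a|^{N(s-1/2)}\eps(s,\pi,\psi)$ at $a=-1$, $s=1/2$ then gives $\eps(1/2,\pi_v,\psi^{-1}) = \om_{\pi_v}(-1)\eps(1/2,\pi_v,\psi)$. Substituting back, $\om_{\pi_v}(-1)\eps(1/2,\pi_v,\psi)^2 = \om_{\pi_v}(-1)$, hence $\eps(1/2,\pi_v,\psi)^2 = 1$.

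For the independence claim, I would observe that the additive characters of $E$ satisfying $\psi \circ \sigma = \psi^{-1}$ are exactly those trivial on the subfield $F \subseteq E$, and this set is a principal homogeneous space for $F^\times$ under the action $\psi \mapsto \psi_t$. Hence any two such characters are related by $\psi' = \psi_t$ with $t \in F^\times$, and the twist formula at $s=1/2$ gives $\eps(1/2,\pi_v,\psi') = \om_{\pi_v}(t)\,\eps(1/2,\pi_v,\psi)$, reducing the claim to showing $\om_{\pi_v}|_{F^\times} \equiv 1$.

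The main obstacle is this final reduction. Under local Langlands, $\om_{\pi_v}$ corresponds to $\det \rho$, where $\rho$ is the conjugate self-dual $N$-dimensional parameter of $\pi_v$. I would argue that in both stated cases $\det \rho$ is a \emph{conjugate-orthogonal} character, which is precisely the condition $\det\rho|_{F^\times} \equiv 1$ recalled in \cite[\S3]{GGP11}. If $\pi_v$ is conjugate-orthogonal, the symmetry of the invariant form on $\rho$ propagates through the determinant, yielding conjugate-orthogonality of $\det\rho$ for any $N$. If $\pi_v$ is conjugate-symplectic with $N$ even, the top exterior power of a skew form on $\rho$ is symmetric precisely because $N$ is even, so $\det\rho$ is again conjugate-orthogonal. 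The remaining case (conjugate-symplectic with $N$ odd) is excluded because there $\det\rho|_{F^\times} = \eta_{E/F} \ne 1$, and the root number really does depend on the choice of $\psi$---this parity sensitivity is also the source of the even-$N$ restriction appearing throughout the conjugate self-dual part of the paper.
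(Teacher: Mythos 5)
Your argument is correct, but it replaces the paper's citation-based proof with a self-contained derivation. The paper's own proof of Proposition~\ref{localepspm1conj} consists of quoting \cite[Thm 5.1(2)]{GGP11} for the first assertion and then appealing to the paragraph after that theorem together with the discussion after Lemma~\ref{cchartransfer} (which records that $\det\varphi_v$ is conjugate-orthogonal precisely when $N$ is even or $\pi_v$ is conjugate-orthogonal) for the independence claim. You instead derive $\eps(1/2,\pi_v,\psi)^2=1$ directly from the identity $\eps(1/2,\pi_v,\psi)\eps(1/2,\pi_v^\vee,\psi)=\om_{\pi_v}(-1)$ stated at the end of \S\ref{sec rootnumbers}, the conjugation identity $\eps(s,\bar\pi_v,\psi)=\eps(s,\pi_v,\bar\psi)$, and the additive-twist formula at $a=-1$; and you reduce the independence claim to $\om_{\pi_v}|_{F_v^\times}\equiv1$ via the $F_v^\times$-torsor structure of the set $\{\psi:\psi\circ\sigma=\psi^{-1}\}$ plus the standard sign computation $b_{\det\rho}=b_\rho^N$ for the top exterior power. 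These are precisely the ingredients underlying GGP11's own proof, so mathematically the two approaches coincide; what you buy is a proof one can read without opening GGP11, at the small cost of needing to justify the conjugation-of-integrals identity for Rankin--Selberg $\gamma$-factors (which is standard but worth a sentence: conjugate all integration variables, using that $\sigma$ preserves Haar measure and the absolute value). One minor wording issue: ``the top exterior power of a skew form is symmetric'' is not quite the right phrasing --- the line $\det\rho$ carries no symmetric-vs-skew distinction; what matters is the GGP sign $b_{\det\rho}=b_\rho^N$, which is $+1$ exactly in the two stated cases.
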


\begin{proof}
This first statement follows from \cite[Thm 5.1(2)]{GGP11}. By the paragraph just afterwards, independence follows whenever the parameter $\varphi_v$ for $\pi_v$ has conjugate orthogonal determinant. This is equivalent to the stated conditions by the discussion after Lemma \ref{cchartransfer}.
\end{proof}

\begin{dfn} \label{def standard root number}
If $N$ is even or $\pi_v$ is conjugate orthogonal, call $\eps(1/2, \pi_v)$ for~$\psi \circ \sigma = \psi^{-1}$ as in \ref{localepspm1conj} the standard root number of %conjugate self-dual 
$\pi_v$.
\end{dfn}

Next, we recall much more difficult consequences of the endoscopic classification:

\begin{thm}[{\cite[1.5.3(b)]{Art13}}]\label{globalortheps}
Let $\pi$ be a self-dual, orthogonal-type automorphic representation of $\GL_N$. Then $\eps(1/2, \pi) = 1$.
\end{thm}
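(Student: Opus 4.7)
The strategy is to invoke Arthur's endoscopic classification to realize the parameter of $\pi$ as factoring through an orthogonal dual group, and then apply the classical theorem of Deligne on $\eps$-factors of orthogonal Weil--Deligne representations.

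By \cite{Art13}, there is a global $A$-parameter $\psi = \oplus_i \mu_i[d_i]$ for $\pi$ whose image lies in an orthogonal subgroup of $\GL_N(\C)$. Equivalently, each simple constituent $\mu_i[d_i]$ is orthogonal: $\mu_i$ itself is of orthogonal type when $d_i$ is odd and of symplectic type when $d_i$ is even, so that the tensor product with the $d_i$-dimensional irreducible representation of $\SL_2$ (which is orthogonal exactly when $d_i$ is odd, symplectic when $d_i$ is even) is orthogonal overall. Passing through the local Langlands correspondence at each place, the local parameter $\varphi_{\pi_v}$ takes values, up to $\GL_N(\C)$-conjugation, in an orthogonal subgroup. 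By compatibility of LLC with $\eps$-factors, the local root number $\eps(1/2, \pi_v, \psi_v)$ coincides with the Langlands--Deligne $\eps$-factor of the corresponding orthogonal representation of the Weil--Deligne group of $F_v$.

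Next, I would apply Deligne's theorem, which expresses the $\eps$-factor of an orthogonal Weil--Deligne representation in terms of its determinant character and its second Stiefel--Whitney class. Taking the product over all places, the global product formula for second Stiefel--Whitney classes, combined with the fact that $\det \varphi_\pi$ corresponds to the quadratic global character $\om_\pi$ (whose $\eps$-factors multiply to $+1$ by the classical product formula for Gauss sums of Hecke characters), gives
\[
\eps(1/2, \pi) = \prod_v \eps(1/2, \pi_v, \psi_v) = +1.
\]

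The main obstacle is to handle the Arthur--$\SL_2$ part correctly: when some $d_i > 1$, one must verify that $\mu_i \otimes [d_i]$ is genuinely orthogonal (not merely self-dual) and that Deligne's formula interacts cleanly with this tensor structure; one also has to show that the local $\eps$-factor contribution of the $\SL_2$-piece is harmless under the global product. A related subtlety is the bookkeeping of signs when $\om_\pi$ is nontrivial quadratic; the requisite cancellations are forced by the global product formula, but must be identified through the LLC dictionary between automorphic $\eps$-factors and Galois-theoretic ones.
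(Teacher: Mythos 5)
Your argument and the paper's proof diverge completely. The paper's proof is a one-line specialization: the result being cited, \cite[Thm.\ 1.5.3(b)]{Art13}, concerns Rankin--Selberg root numbers $\eps(1/2,\pi_1\times\pi_2)$ for a pair of cuspidal self-dual representations of the \emph{same} symmetry type. Taking $\pi_2 = \mathrm{triv}$ on $\GL_1$ (which is orthogonal, since $\zeta_F(s) = L(s,\mathrm{triv},\Sym^2)$ has a pole at $s=1$) and noting $L(s,\pi\times\mathrm{triv}) = L(s,\pi)$ gives $\eps(1/2,\pi)=1$ immediately. Arthur's theorem carries the full weight; no Galois-theoretic input enters.

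Your proposed route --- localize, observe each $\varphi_{\pi_v}$ is orthogonal, apply Deligne's formula $\eps(\rho) = w_2(\rho)\cdot(\text{determinant term})$, and conclude by global reciprocity for $w_2$ --- has a genuine gap. The reciprocity $\prod_v w_2(\rho_v) = 1$ is a consequence of the Hasse--Minkowski theorem and requires that the local orthogonal representations $(\rho_v)_v$ be the localizations of a single global orthogonal representation of $\Gamma_F$ (or $W_F$). Arthur's classification does \emph{not} supply such an object: a global $A$-parameter $\psi=\oplus_i\mu_i[d_i]$ is built from cuspidal automorphic representations $\mu_i$ of $\GL_{T_i}$, which are only formally ``irreducible representations of the Langlands group.'' The local parameters $\varphi_{\pi_v}$ are honest Weil--Deligne representations, but there is no theorem that patches them into a global Galois representation preserving a global quadratic form --- that would be a substantial instance of the global Langlands correspondence, available only in special cohomological settings (e.g.\ \cite{Shin11}, \cite{KS20}, \cite{KS23}), not for an arbitrary self-dual orthogonal $\pi$ as in the statement. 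Without that global object, the product formula for $w_2$ cannot be invoked, and the argument does not close. (By contrast, the determinant step is fine: $\det\varphi_\pi = \om_\pi$ is an honest quadratic Hecke character, and the $\eps$-factor product formula for characters is classical.) In effect, your argument is trying to rederive the hard content of Arthur's Theorem 1.5.3(b) from Deligne's local formula, whereas Arthur's proof is an intricate trace-formula and sign-lemma argument that is entirely automorphic and makes no appeal to Galois representations.

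If you want to keep a flavour of your approach, a safe and slightly richer remark is this: under the hypotheses where attached Galois representations \emph{are} known (the regular algebraic, essentially self-dual cases), the $w_2$-reciprocity argument does reproduce Arthur's $\eps=1$ statement, and this is a useful consistency check between the automorphic and Galois sides. But as a \emph{proof} of the theorem in the generality stated, you must instead cite Arthur's Rankin--Selberg result and specialize $\pi_2$ to the trivial representation, as the paper does.
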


\begin{proof}
Note that the trivial representation on $\GL_1$ is orthogonal-type. 
\end{proof}

\begin{thm}[{\cite[2.5.4(b)]{Mok15}}]\label{globalorthepsconj}
Let $\pi$ be a conjugate self-dual, conjugate orthogonal-type automorphic representation of $\Res^E_F \GL_N$. Then $\eps(1/2, \pi) = 1$.
\end{thm}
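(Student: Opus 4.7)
The statement is \cite[Thm.~2.5.4(b)]{Mok15}, and at first reading it is simply imported from the endoscopic classification. What follows is how I would verify it in the style of Arthur's argument for the self-dual analogue (Theorem \ref{globalortheps}), reducing the conjugate self-dual case to the self-dual one by automorphic induction.

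The first step is to reduce to a statement about parameters. By the endoscopic classification of \cite{Mok15}, the conjugate orthogonal-type cuspidal $\pi$ of $\Res^E_F \GL_N$ corresponds to a parameter $\varphi: L_E \to \GL_N(\C)$ preserving a non-degenerate conjugate-orthogonal pairing, so $\eps(1/2,\pi) = \eps(1/2,\varphi)$. Next, I would induce to $F$: let $\Pi$ be the automorphic induction of $\pi$ to $\GL_{2N}/F$. Its parameter $\Ind_{L_E}^{L_F}\varphi$ is self-dual, and the conjugate-orthogonality of $\varphi$ translates into orthogonality of the induced parameter (compare \cite[\S3, \S5]{GGP11}). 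Since $\Pi$ is then an isobaric sum of cuspidal self-dual orthogonal-type representations of smaller $\GL$'s, applying Theorem \ref{globalortheps} to each isobaric component gives $\eps(1/2, \Pi) = 1$.

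The third step is to relate $\eps(1/2,\Pi)$ to $\eps(1/2,\pi)$. Inductivity of local $\eps$-factors yields, at $s = 1/2$ and after assembling the local identities globally,
\[
\eps(1/2, \pi) = \lambda(E/F, \psi_F)^{-N} \, \eps(1/2, \Pi),
\]
where $\lambda(E/F, \psi_F)$ is the Langlands constant for the extension $E/F$ relative to the additive character $\psi_F$ (chosen so that $\psi_F \circ \mathrm{Tr}_{E/F}$ satisfies the normalization of Definition \ref{def standard root number}). It remains to verify $\lambda(E/F, \psi_F)^N = 1$. Squaring gives $\lambda^2 = \eps(1/2, \chi_{E/F}, \psi_F)$, and the global product formula for the quadratic Hecke character $\chi_{E/F}$ (together with the fact that $\chi_{E/F}$ is self-dual orthogonal of dimension one) forces $\lambda^2 = 1$; when $N$ is even this gives $\lambda^N = 1$ immediately, and when $N$ is odd the conjugate-orthogonality of $\det\varphi$ supplies the compensating sign.

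The main obstacle is the final bookkeeping in step three. Tracking the precise dependence of $\lambda(E/F,\psi_F)$ on $\psi_F$, the interplay with the central character $\om_\pi$, and the signs arising in the odd-$N$ case all require careful local-global compatibility arguments of the type carried out in \cite[\S5]{GGP11}. In particular, the odd-$N$ case is delicate because the notion of ``standard root number'' already required a normalization of $\psi$ (Proposition \ref{localepspm1conj}), and any change in normalization must be matched by a compensating change in $\lambda$. Once these sign identities are in place, the argument collapses to the self-dual Theorem \ref{globalortheps}.
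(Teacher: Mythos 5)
Your approach is an entirely different route from the paper's. The paper's proof is a one-line observation: Mok's Theorem 2.5.4(b) is a statement about Rankin--Selberg root numbers $\eps(1/2,\pi_1\times\pi_2)=1$ for pairs of conjugate self-dual cuspidal representations whose types match, and the paper specializes this to $\pi_2 = \triv$. The only thing needing to be checked to apply Mok's hypothesis is that the trivial representation of $\Res^E_F\GL_1$ is conjugate orthogonal-type, which is precisely the content of the one-line proof. (The proof of the self-dual analogue, Theorem~\ref{globalortheps}, is the parallel observation that $\triv$ on $\GL_1$ is orthogonal-type.) You instead attempt to reprove Mok's theorem by inducing to $\GL_{2N}/F$ and reducing to Theorem~\ref{globalortheps}.

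Your reduction has a genuine gap in the non-cuspidal base-change case. When $\pi^\sigma\cong\pi$ (which happens exactly when $\pi$ is simultaneously self-dual and conjugate self-dual), $\Pi:=AI_{E/F}(\pi)$ is not cuspidal: it is $\Pi_1\boxplus\Pi_1\otimes\chi_{E/F}$, and the self-duality of $\Pi$ only forces $\Pi_1\cong\Pi_1^\vee$ or $\Pi_1\cong\Pi_1^\vee\otimes\chi_{E/F}$. In the latter case $\Pi_1$ is not self-dual, so Theorem~\ref{globalortheps} does not apply to the isobaric pieces; instead $\eps(1/2,\Pi)=\eps(1/2,\Pi_1)\,\eps(1/2,\Pi_1^\vee)=\om_{\Pi_1}(-1)$, which requires an independent argument. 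Beyond this, promoting the parameter-level fact ``$\varphi$ conjugate-orthogonal $\Rightarrow \Ind_{L_E}^{L_F}\varphi$ orthogonal'' (from \cite{GGP11}) to the statement ``$\Pi$ is orthogonal-type in the sense of Arthur's classification because $\pi$ is conjugate orthogonal-type in the sense of Mok's'' is not a formality: it is a compatibility of the two twisted-endoscopic classifications under automorphic induction, and is essentially entangled with the theorems you are trying to use as black boxes. The $\lambda$-constant bookkeeping in your Step~3 is, by comparison, the least of the issues.
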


\begin{proof}
The trivial representation on $\Res^E_F \GL_1$ is conjugate orthogonal-type. 
\end{proof}

\subsection{Involutions for Newforms : Self-Dual Case}\label{sec involutions}
We want to understand the functional equation \eqref{RSfunctionalequation} and the variant we need to compute epsilon factors for $\varphi$ a newvector. 
Reformulating \cite[\S4.2]{Tom24} in adelic language, we define:
\begin{itemize}
\item
$\mf n, \mf p^k$ will as shorthand also be some chosen lifts of the corresponding ideals to $F_v^\times$ or $\A^\times$ determined by our choices of uniformizers $\varpi_v$. All terms subscripted with these ideals will depend on this choice. 
\item
$J_{\mf p_v^k} := J_{v,k} := \diag(\varpi_v^k, \cdots, \varpi_v^k, 1) \in \GL_N(F_v)$.
\item 
$J_{\mf n} \in \GL_N(\A)$ for $\mf n$ an ideal of $\mc O_F$ the analogous global product of $J_{\mf p_v^k}$.  
\item
$\iota_{\mf p_v^k} = \iota_{v,k}: \GL_N(F_v) \to \GL_n(F_v) : g \mapsto  g^{-T}J_{\mf p_v^k}$,
\item
$\iota_{\mf n} : \GL_N(\A) \to \GL_N(\A)$ the global version of $\iota_{\mf p_v^k}$,
\item
$\iota := \iota_{\mc O_F}$ (consistent with the previous definition). 
\item
$\varphi^{\iota_\mf n} : = \varphi \circ \iota_{\mf n}$ for global test vectors $\varphi$. 
\end{itemize}
We also define an involution on local and global Whittaker models: if $w_N$ is the long Weyl element as in \eqref{eq long Weyl element}, then
\begin{itemize}
    \item $W^{\iota_\mf n}_\varphi(g) := W_\varphi(w_N g^{\iota_\mf n}) := W_\varphi(w_N g^{-T})$,
    \item $W^{\iota_{v,k}}_{\varphi_v}(g) := W_{\varphi_v}(w_N g^{\iota_{v,k}}) := W_\varphi(w_N g^{-T} J_{v,k})$. 
\end{itemize}
We justify this notation since globally, the left-invariance of $\varphi$ under $G(F)$ gives:
\begin{align*}
W_{\varphi^{\iota_\mf n}}(g) &= \int_{U(F) \bs U(\A)} \varphi(x^{-T}g^{-T} J_{\mf n}) \psi^{-1}(x) \, dx \numberthis \label{iotaconsistency} \\ 
&=  \int_{U(F) \bs U(\A)} \varphi(w_N x^{-T}g^{-T} J_{\mf n}) \psi^{-1}(x) \, dx \\
&=  \int_{U(F) \bs U(\A)} \varphi((w_N x^{-T} w_N^{-1}) w_N g^{-T} J_{\mf n}) \psi^{-1}(x) \, dx \\
&=  \int_{U(F) \bs U(\A)} \varphi(x w_N g^{-T} J_{\mf n}) \psi^{-1}(x) \, dx \\
&= W^{\iota_\mf n}_\varphi(g).
\end{align*}
We use here that the map $x \mapsto w_N x^{-T} w_N^{-1}$ preserves $\psi$ since
\begin{align*}
w_N \begin{pmatrix}
1 & x_1 &  \cdots &  \\
& \ddots  & \ddots &  \vdots \\
& & 1 & x_{n-1}  \\
& & & 1
\end{pmatrix}^{-T} w_N^{-1} &= 
w_N \begin{pmatrix}
1 & -x_1 &  \cdots &  \\
& \ddots  & \ddots &  \vdots \\
& & 1 & -x_{n-1}  \\
& & & 1
\end{pmatrix}^T w_N^{-1} \\ &= 
\begin{pmatrix}
1 & x_{n-1} &  \cdots &  \\
& \ddots  & \ddots &  \vdots \\
& & 1 & x_1  \\
& & & 1
\end{pmatrix}.
\end{align*}
It also preserves the Haar measure on $U$ since it has finite order. 

The same invariance of $\psi$ gives that locally, if $x\in U(F_v)$, 
\begin{multline}\label{localiotaconsistency}
W^{\iota_{v,k}}_{\varphi_v} (xg)  = W_{\varphi_v}(w_N x^{-T} g^{-T} J_{v,k}) = W_{\varphi_v} ((w_N x^{-T} w_N^{-1}) w_N g^{-T} J_{v,k})  \\ = \psi(w_N x^{-T} w_N^{-1}) W_{\varphi_v} (w_N g^{-T} J_{v,k}) = \psi(x) W^{\iota_{v,k}}_{\varphi_v}(g).
\end{multline}
so that $\iota_{v,k}$ preserves the Gelfand-Graev representation. Next, we show that local Whittaker-newforms are actually eigenvectors of this involution:
\begin{lem}\label{newformeigenvalue}
Let $\pi_v$ be a self-dual generic irreducible representation of $\GL_{N,v}$ of conductor $k$ and $\varphi_v \in \pi_v$ a newvector. Then $W^{\iota_{v,k}}_{\varphi_v} = \pm W_{\varphi_v}$ where both are in $W(\psi)$.
\end{lem}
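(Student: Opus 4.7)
The plan is to identify $W \mapsto W^{\iota_{v,k}}$ as scalar multiplication on the one-dimensional newform line and then pin down the scalar by squaring the operator. The strategy mirrors the classical $\GL_2$ Atkin-Lehner argument, adapted through Whittaker models and the mirahoric subgroups. First factor the operator as $W^{\iota_{v,k}}_{\varphi_v}(g) = W^\iota_{\varphi_v}(gJ_{v,k})$, where $W^\iota_{\varphi_v}(g) := W_{\varphi_v}(w_N g^{-T})$ is the standard dual Whittaker operator. A short computation using $(gh)^{-T} = g^{-T} h^{-T}$ yields the intertwining relation $R_h \circ (W \mapsto W^\iota) = (W \mapsto W^\iota) \circ R_{h^{-T}}$, so that $W \mapsto W^\iota$ transports the Whittaker model of $\pi_v$ into the Whittaker model of $\pi_v^\vee$ inside $W(\psi)$.

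Under this intertwining, right-invariance of $W_{\varphi_v}$ under $K_1(\mf p_v^k)$ becomes right-invariance of $W^\iota_{\varphi_v}$ under the ``opposite mirahoric'' $K_1^{\mathrm{op}}(\mf p_v^k) := K_1(\mf p_v^k)^{-T}$, the subgroup of $K_v$ whose matrices have last column $\equiv (0,\ldots,0,1)^T \pmod{\mf p_v^k}$. Right-translating by $J_{v,k}$ converts this into invariance under $J_{v,k} K_1^{\mathrm{op}}(\mf p_v^k) J_{v,k}^{-1}$, and a direct matrix computation using the diagonal form of $J_{v,k}$ verifies the key identity
\[
J_{v,k} K_1^{\mathrm{op}}(\mf p_v^k) J_{v,k}^{-1} = K_1(\mf p_v^k).
\]
Hence $W^{\iota_{v,k}}_{\varphi_v}$ is a $K_1(\mf p_v^k)$-right-invariant Whittaker function in the model of $\pi_v^\vee$. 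Self-duality $\pi_v \simeq \pi_v^\vee$ together with Whittaker uniqueness identifies $W(\pi_v^\vee, \psi) = W(\pi_v, \psi)$ as a common subspace of $W(\psi)$, and one-dimensionality of newvectors forces $W^{\iota_{v,k}}_{\varphi_v} = c \cdot W_{\varphi_v}$ for some $c \in \C^\times$.

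To pin down $c$, apply $\iota_{v,k}$ twice. Using $w_N^{-T} = w_N$ (from $w_N^T = w_N^{-1}$), diagonality of $J_{v,k}$, and $w_N^2 = (-1)^{N-1} I$, the composition collapses as
\[
(W^{\iota_{v,k}}_{\varphi_v})^{\iota_{v,k}}(g) = W_{\varphi_v}\bigl(w_N \cdot w_N g J_{v,k}^{-1} \cdot J_{v,k}\bigr) = W_{\varphi_v}\bigl((-1)^{N-1} g\bigr) = \om_{\pi_v}(-1)^{N-1} W_{\varphi_v}(g),
\]
so $c^2 = \om_{\pi_v}(-1)^{N-1}$. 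Since $\pi_v$ is self-dual, $\om_{\pi_v}$ is quadratic, and in the cases of interest to the paper--most importantly symplectic-type, where $\om_{\pi_v}$ is trivial, as well as the other situations compatible with Proposition \ref{localepspm1}--this reduces to $c^2 = 1$, giving $c = \pm 1$ as claimed.

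The main obstacle is the conjugation identity $J_{v,k} K_1^{\mathrm{op}}(\mf p_v^k) J_{v,k}^{-1} = K_1(\mf p_v^k)$. Though verifiable in a one-line matrix computation, it is the structural heart of the argument, precisely encoding how the Atkin-Lehner-style twist $\iota_{v,k}$ exchanges the ``last row'' and ``last column'' conditions defining the two mirahorics while preserving the conductor. This is the reason $W^{\iota_{v,k}}_{\varphi_v}$ lands on the newform line of the self-dual representation rather than on an unrelated oldform; the rest of the argument is then driven by Whittaker uniqueness and the elementary identities $w_N^T = w_N^{-1}$, $w_N^2 = (-1)^{N-1} I$.
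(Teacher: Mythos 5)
Your proof follows essentially the same route as the paper's: establish that $W^{\iota_{v,k}}_{\varphi_v}$ lies in $W(\psi)$, lands in a copy of $\pi_v^\vee$ (via the intertwining relation $R_h \circ (\cdot)^\iota = (\cdot)^\iota \circ R_{h^{-T}}$, which the paper verifies by the displayed chain of equalities), is fixed by the mirahoric, and then invoke Whittaker and newvector uniqueness plus an involution argument. Your explicit conjugation identity $J_{v,k} K_1^{\mathrm{op}}(\mf p_v^k)J_{v,k}^{-1} = K_1(\mf p_v^k)$ is a useful spelling-out of what the paper compresses into ``$\iota_{v,k}$ stabilizes $K_{1,v}(k)$.'' And your last step is actually \emph{more} careful than the paper's: squaring the operator gives $g \mapsto (-1)^{N-1}g$, not the identity, so $c^2 = \om_{\pi_v}(-1)^{N-1}$ rather than $c^2 = 1$; you correctly note that $c = \pm 1$ then requires $\om_{\pi_v}(-1)^{N-1} = 1$, which holds in the symplectic-type and trivial-central-character cases the paper actually uses. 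The paper's statement ``$g \mapsto w_N g^{-T}J_{v,k}$ is an involution'' glosses over this central factor.

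One small bookkeeping slip to be aware of: the factorization $W^{\iota_{v,k}}_{\varphi_v}(g) = W^\iota_{\varphi_v}(gJ_{v,k})$ is off by an inverse. Since $(gJ_{v,k})^{-T} = g^{-T}J_{v,k}^{-T} = g^{-T}J_{v,k}^{-1}$ (diagonality), we actually have $W^\iota_{\varphi_v}(gJ_{v,k}) = W_{\varphi_v}(w_N g^{-T}J_{v,k}^{-1})$, so the correct factorization is $W^{\iota_{v,k}}_{\varphi_v} = R_{J_{v,k}^{-1}} W^\iota_{\varphi_v}$. This flips the direction of the ensuing conjugation. Since the paper itself uses $J_{v,k} = \diag(\varpi_v^{-k},\ldots,\varpi_v^{-k},1)$ and has its own sign-convention tension between that definition, the displayed form of $J_{v,k}^{-1}w_N$ later in the paper, and the ``stabilizes'' assertion, it's worth tracking carefully which of $J_{v,k}$ and $J_{v,k}^{-1}$ ends up conjugating $K_1^{\mathrm{op}}$ to $K_1$; the verification that it works out relies on the same matrix computation you describe, just with the conjugating element pinned down precisely.
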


\begin{proof}
By \eqref{localiotaconsistency}, $W^{\iota_{v,k}}_{\varphi_v}$ is in $W(\psi)$. Since $\iota_{v,k}$ preserves invariance under right-translation by $K_{1,v}(k)$, we get that $W^{\iota_{v,k}}_{\varphi_v}$ is also fixed by $K_{1,v}(k)$. It generates a space isomorphic to $\pi_v^\vee \cong \pi_v$ since for $h \in GL_{N,v}$,
\begin{multline*}
    W^{\iota_{v,k}}_{h\varphi_v}(g) = W_{\varphi_v}(w_Ng^{-T}J_{v,k} h^{-1}) = W_{\varphi_v}(w_Ng^{-T}J_{v,k} h^{-1} J_{v,k}^{-1} J_{v,k}) =  \\ W_{\varphi_v}(w_N(gJ^{-1}_{v,k} h^{T}J_{v,k})^{-T}J_{v,k})  = W^{\iota_{v,k}}_{\varphi_v}(gJ_{v,k}^{-1} h^{T}J_{v,k}) = (J_{v,k} h^{-T}J_{v,k}^{-1})W^{\iota_{v,k}}_{\varphi_v}(g).
\end{multline*} 
%\emph{\blue{Confused why after the third equal sign, the expression in parentheses isn't $gJ_{v,k}^{-1} h^{T}J_{v,k}$.} changed}
In total, by uniqueness of Whittaker models and newvectors, $W_{\varphi_v}^{\iota_{v,k}}$ has to be constant multiple of $W_{\varphi_v}$. Since $g \mapsto w_N g^{-T} J_{v,k}$ is an involution, this multiple has to be $\pm 1$. 
\end{proof}

Classically for $\GL_2$, there is a link between the local eigenvalues from Lemma~\ref{newformeigenvalue} and the root number coming from the use of global $\iota_\mf n$ in the proof of the functional equation for Hecke $L$-functions (e.g. \cite[5.10.2]{DS05}). The correct generalization to $\GL_N$ involves $N \times (N-1)$ Rankin-Selberg $L$-functions. The standard functional equation only involves $\iota$; we give a variant for $\iota_\mf n$:
\begin{lem}\label{iotafn}
Let $\pi_v, \pi'_v$ be generic, unitary irreducible representations of $\GL_{N, v}$, $\GL_{N-1, v}$  with central characters $\om, \om'$ respectively. Let $\varphi_v, \varphi_v' \in \pi_v, \pi_v'$ respectively. Then
\begin{multline*}
I_v(1-s, W^{\iota_{v,k}}_{\varphi_v}, \wh W^{\iota}_{\varphi'_v})  \\= \om'(-1)^{N-1}  \om'(\varpi_v^{-k}) q_v^{k(N-1)(s-1/2)} \gamma(s, \pi_v \times \pi'_v, \psi) I_v(s, W_{\varphi_v}, \wh W_{\varphi'_v}).
\end{multline*}
\end{lem}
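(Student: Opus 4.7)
The plan is to reduce directly to the standard local functional equation \eqref{localfunctionalequation} by observing that passing from $\iota$ to $\iota_{v,k}$ amounts to multiplying by the central scalar factor $J_{v,k}$, whose effect on the Rankin--Selberg integral can be absorbed via a central change of variables on the $\GL_{N-1}$ side.

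First I would rewrite the modified Whittaker function on the $N$-dimensional side. Since $J_{v,k} = \diag(\varpi_v^{-k},\ldots,\varpi_v^{-k},1)$ and $\m a^{-T}$ sits in the upper-left $(N-1)\times(N-1)$ block, one has the block identity
\[
\begin{pmatrix} \m a & \\ & 1 \end{pmatrix}^{-T} J_{v,k}
\;=\; \begin{pmatrix} \varpi_v^{-k}\m a^{-T} & \\ & 1 \end{pmatrix}
\;=\; \begin{pmatrix} \varpi_v^{k}\m a & \\ & 1 \end{pmatrix}^{-T},
\]
so applying $w_N$ on the left and $W_{\varphi_v}$ yields
\[
W^{\iota_{v,k}}_{\varphi_v}\begin{pmatrix} \m a & \\ & 1 \end{pmatrix}
\;=\; W^{\iota}_{\varphi_v}\begin{pmatrix} \varpi_v^{k}\m a & \\ & 1 \end{pmatrix}.
\]

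Second, I would substitute this into the definition of $I_v(1-s, W^{\iota_{v,k}}_{\varphi_v}, \wh W^{\iota}_{\varphi'_v})$ and perform the change of variables $\m a \mapsto \varpi_v^{-k}\m a$ on $U_{N-1,v}\backslash\GL_{N-1,v}$, which is measure-preserving because $\varpi_v^{-k}$ is scalar and thus both normalizes $U_{N-1,v}$ trivially and preserves Haar measure. The shift produces two correction factors: an appropriate power of $q_v$ coming from $|\det \m a|^{1/2-s}$, and a factor $\om'(\varpi_v^{k})$ coming from re-expressing $\wh W^{\iota}_{\varphi'_v}(\varpi_v^{-k}\m a)$ through the central character of $\pi'_v$. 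The latter uses the identity $\wh W^{\iota}_{\varphi'_v}(\varpi_v^{-k} g) = \wh W_{\varphi'_v}(\varpi_v^{k} w_{N-1} g^{-T}) = \om'(\varpi_v^{k})\wh W^{\iota}_{\varphi'_v}(g)$, since scalar factors pass through the transpose-inverse up to inversion.

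After these manipulations the integral equals $\om'(\varpi_v^{k})\,q_v^{k(N-1)(s-1/2)}$ times $I_v(1-s, W^{\iota}_{\varphi_v}, \wh W^{\iota}_{\varphi'_v})$, to which the known local functional equation \eqref{localfunctionalequation} applies and supplies the remaining factor $\om'(-1)^{N-1}\gamma(s,\pi_v\times\pi'_v,\psi)\,I_v(s,W_{\varphi_v},\wh W_{\varphi'_v})$. This yields the stated identity. The main obstacle I anticipate is purely bookkeeping: keeping the block-matrix identity, the direction of the scalar substitution, and the sign conventions for $|\cdot|_v$ and for the central character of $\wh W^{\iota}_{\varphi'_v}$ consistent, so that the final exponent of $q_v$ and the factor $\om'(\varpi_v^k)$ (rather than $\om'(\varpi_v^{-k})$) come out correctly.
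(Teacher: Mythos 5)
Your proof is correct and takes essentially the same approach as the paper's: both start from the block-matrix identity $W^{\iota_{v,k}}_{\varphi_v}(\diag(\m a,1))=W^{\iota}_{\varphi_v}(\diag(\varpi_v^k\m a,1))$, then peel off $\om'(\varpi_v^k)$ and the power of $q_v$ by a central change of variables on $U_{N-1,v}\backslash\GL_{N-1,v}$, and finally invoke the local functional equation \eqref{localfunctionalequation}. The only cosmetic difference is that the paper performs the change of variables at generic $s$ and then specializes to $1-s$, whereas you work at $1-s$ throughout.

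One bookkeeping point worth flagging: your change of variables substitutes $\m a \mapsto \varpi_v^{-k}\m a$ in an integral whose $|\det|$-exponent is $(1-s)-1/2 = 1/2-s$, and on the face of it this produces a factor $q_v^{k(N-1)(1/2-s)}$ rather than the $q_v^{k(N-1)(s-1/2)}$ you write (and which the lemma asserts). The paper's own displayed intermediate computation has an analogous apparent sign flip between its lines, so you have reproduced exactly the paper's reasoning; I would nonetheless recommend you re-derive the exponent once in a toy case (e.g.\ $N=2$, $\pi'_v$ the trivial character, $\pi_v$ supercuspidal) to make sure you and the paper are using compatible conventions for $\wh W$, for the central character of $\wh W^{\iota}_{\varphi'_v}$, and for the normalization in $\eps(s,\pi_v,\psi)=\eps(1/2,\pi_v,\psi)q_v^{-k(s-1/2)}$. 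Since the exponent of $q_v$ is absorbed into $\gamma(s,\pi_v\times\pi'_v,\psi)$ downstream in Theorem~\ref{epsformula} when one sets $s=1/2$, this does not affect the final root-number identity, but it is exactly the kind of bookkeeping you anticipated and is worth nailing down.
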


\begin{proof}
The local functional equation \eqref{localfunctionalequation} gives:
\[
I_v(1-s, W^\iota_{\varphi_v}, \wh W^{\iota}_{\varphi'_v}) = \om'(-1)^{N-1}  \gamma(s, \pi_v \times \pi'_v, \psi) I_v(s, W_{\varphi_v}, \wh W_{\varphi'_v}).
\]
However, using that $\varphi^\iota$ has central character $\om^{-1}$ and that $\iota_{v,k}(g)=\iota(g J^{-1}_{v,k})$: 
\begin{align*}
&I_v(s, W^{\iota_{v,k}}_{\varphi_v}, \wh W^{\iota}_{\varphi'_v}) \\
&= \int_{U_v \bs \GL_{N-1, v}} W^\iota_{\varphi_v} \begin{pmatrix}
\varpi_v^{-k} \m a_v & \\
& 1
\end{pmatrix} \wh W^\iota_{\varphi'_v}(\m a_v) |\det \m a_v|^{s - 1/2} \, d\m a_v \\
&= 
\om'(\varpi_v^{-k}) q_v^{k(N-1)(1/2-s)} \\ & \qquad \qquad \int_{U_v \bs \GL_{N-1, v}} W^\iota_{\varphi_v} \begin{pmatrix}
\varpi^{-k} \m a_v & \\
& 1
\end{pmatrix} \wh W^\iota_{\varphi'_v}(\varpi^{-k} \m a_v) |\det \varpi^{-k} \m a_v|^{s - 1/2} \, d(\varpi^{-k} \m a_v) 
\\
&=  \om'(\varpi_v^{-k}) q_v^{k(N-1)(1/2-s)} \int_{U_v \bs \GL_{N-1, v}} W^\iota_{\varphi_v} \begin{pmatrix}
\m a_v & \\
& 1
\end{pmatrix} \wh W^\iota_{\varphi'_v}(\m a_v) |\det \m a_v|^{s - 1/2} \, d\m a_v \\
&=  \om'(\varpi_v^{-k}) q_v^{k(N-1)(1/2-s)} I_v(s, W^\iota_{\varphi_v}, \wh W^{\iota}_{\varphi'_v}),
\end{align*}
so the result follows.
\end{proof}

Next, we show that for $\pi_v$ unramified the eigenvalue of $\iota$ is always $1$:

\begin{prop}\label{epsformulaunram}
Let $\pi_v$ be an unramified, self-dual generic irreducible representation of $\GL_{N,v}$ and $\varphi_v$ an unramified vector. Then $W_{\varphi_v} = W^\iota_{\varphi_v}$ independently of $\psi$.
\end{prop}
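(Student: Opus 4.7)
The plan is to reduce to Lemma~\ref{newformeigenvalue} and then pin down the sign by a one-point evaluation at the identity, exploiting the fact that the sign-element $w_N$ lies in the hyperspecial subgroup $K_v$.

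First, since the conductor of $\pi_v$ is $k = 0$, we have $K_{1,v}(0) = K_v$, so the unramified vector $\varphi_v$ is a newvector for $\pi_v$ and $\iota_{v,0}$ coincides with $\iota$ (because $J_{v,0} = 1$). Applying Lemma~\ref{newformeigenvalue} with $k = 0$ gives
\[
W^\iota_{\varphi_v} = \epsilon\, W_{\varphi_v} \quad \text{for some } \epsilon \in \{\pm 1\},
\]
with both functions understood as elements of $W(\psi)$.

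Second, I determine $\epsilon$ by evaluating at $g = 1$. Directly from the definition,
\[
W^\iota_{\varphi_v}(1) = W_{\varphi_v}\bigl(w_N \cdot 1^{-T}\bigr) = W_{\varphi_v}(w_N).
\]
Because $\varphi_v$ is $K_v$-fixed, $W_{\varphi_v}$ is right $K_v$-invariant; since $w_N \in \GL_N(\mc O_v) = K_v$, this gives $W_{\varphi_v}(w_N) = W_{\varphi_v}(1)$. Normalize $\varphi_v$ so that $W_{\varphi_v}(1) = 1$, which is possible because the spherical Whittaker functional on a generic unramified representation is nonzero (for instance by the Casselman--Shalika formula). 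Comparing with $W^\iota_{\varphi_v}(1) = \epsilon\, W_{\varphi_v}(1) = \epsilon$ forces $\epsilon = 1$.

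The independence from $\psi$ is automatic: the preceding argument takes place inside $W(\psi)$ for an arbitrary nontrivial additive character $\psi$, and at no step do we use any feature of $\psi$ beyond the fact that $W_{\varphi_v}(1) \neq 0$, which holds for every choice of $\psi$. Consequently, the identity $W^\iota_{\varphi_v} = W_{\varphi_v}$ holds in each Whittaker model.

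I do not expect any serious obstacle here; the real content is already packaged into Lemma~\ref{newformeigenvalue}, and the present statement amounts to the observation that the involution $g \mapsto w_N g^{-T}$ sends the identity into $K_v$, so the sign cannot be $-1$ when everything is spherical. The only subtle point to document carefully is the non-vanishing $W_{\varphi_v}(1) \neq 0$ for generic unramified $\pi_v$, which is a standard consequence of Casselman--Shalika.
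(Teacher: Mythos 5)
Your approach is genuinely different from the paper's: you reduce to Lemma~\ref{newformeigenvalue} (so the sign is $\pm 1$) and then pin down the sign by a single evaluation at $g = 1$, exploiting $w_N \in K_v$ together with right $K_v$-invariance of $W_{\varphi_v}$. By contrast the paper reduces via Iwasawa decomposition to an identity on the whole torus and then invokes the Casselman--Shalika formula. Your route is more economical, and what it buys is clear: you never need the full CS expansion of $W_{\varphi_v}$ on the torus, only its non-vanishing at one point; moreover the argument is conceptually illuminating, since it isolates exactly why the eigenvalue must be $+1$ in the unramified case, namely that the relevant Atkin--Lehner element already lies in the maximal compact.

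However, the final step contains a genuine gap. You assert that ``$W_{\varphi_v}(1) \neq 0$ holds for every choice of $\psi$,'' and this is false. If $\psi$ has conductor $\mf p_v^a$ with $a > 0$, then there is $x \in \mc O_v$ with $\psi(x) \neq 1$; choosing $u \in U(\mc O_v) \subset K_v$ with that $x$ as its unique nonzero superdiagonal entry gives $W_{\varphi_v}(u) = \psi(u) W_{\varphi_v}(1)$ from the Whittaker transformation law and $W_{\varphi_v}(u) = W_{\varphi_v}(1)$ from right $K_v$-invariance, forcing $W_{\varphi_v}(1) = 0$. So your one-point evaluation determines nothing when $\psi$ is ramified, and the ``automatic'' independence from $\psi$ is not established. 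Your argument is complete only for $\psi$ trivial on $\mc O_v$. (The paper's own proof has a kindred limitation, since the version of Casselman--Shalika it cites is the unramified-$\psi$ one, but it does not rest on the false non-vanishing claim.) To extend your argument to general $\psi$ you would need to replace the evaluation point $g = 1$ with a suitably shifted diagonal element lying in the support of the spherical Whittaker function, and then track the resulting central twist; this is doable but no longer a one-line observation.
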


\begin{proof}
By the Iwasawa decomposition and since both sides of the equality are unramified, it suffices to check equality on elements of the form $u \lb(\varpi)$ for~$u \in U_{N,v}$ and $\lb$ a cocharacter of the diagonal torus. Here: 
%first line, commuting $w_N$ past, second line $K$-invariance
\begin{multline*}
W^\iota_{\varphi_v}(u\lb(\varpi)) = W_{\varphi_v}(w_N u^{-T} \lb(\varpi)^{-T}) = W_{\varphi_v}(w_N u^{-T} w_N^{-1} \lb'(\varpi) w_N)  \\ = W_{\varphi_v}(w_N u^{-T} w_N^{-1} \lb'(\varpi)),
\end{multline*}
where if $\lb = (\lb_1, \dotsc, \lb_n)$, then $\lb' := -w_N \lb w_N^{-1} =  (-\lb_n, \dotsc, -\lb_1)$. 

Factoring out the $u$ terms, it therefore suffices to show that
\[
\psi(u) W_{\varphi_v}(\lb(\varpi)) = \psi(w_N u^{-T} w_N^{-1})W_{\varphi_v}(\lb'(\varpi)) \quad (= \psi(u) W_{\varphi_v}(\lb'(\varpi)).)
\]
Since $\pi_v$ is self-dual, its Satake parameter $(q_v^{\alpha_1}, \dotsc, q_v^{\alpha_N})$ is the same as the inverse $(q_v^{-\alpha_N}, \dotsc, q_v^{-\alpha_1})$. The result therefore follows from the Casselman-Shalika formula (e.g. \cite[Thm 6.5.1]{HKP10}).
\end{proof}

Bootstrapping off this gives our key result:
\begin{thm}\label{epsformula}
Let $\pi_v$ be a generic self-dual representation of $\GL_{N,v}$ of conductor $k$. Let $\varphi_v \in \pi_v$ be a newvector and let $\tau^\psi_{v,k}(\varphi_v)  = \pm 1$ be defined by $W_{\varphi_v} = \tau^\psi_{v,k}(\varphi_v)W^{\iota_{v,k}}_{\varphi_v}$ as in Lemma \ref{newformeigenvalue}. Then
\[
\tau^\psi_{v,k}(\varphi_v) = \eps(1/2, \pi_v, \psi)^{N-1}.
\] 
If we further assume that $\pi_v$ has trivial central character and $N$ is even, then 
\[
\tau^\psi_{v,k}(\varphi_v) = \eps(1/2, \pi_v)
\]
and neither depend on the choice of $\psi$.

\end{thm}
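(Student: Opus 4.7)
The plan is to apply the Rankin-Selberg functional equation (Lemma~\ref{iotafn}) with a carefully chosen companion representation $\pi'_v$ on $\GL_{N-1,v}$. Concretely, I would pick $\pi'_v$ to be an unramified, generic, irreducible, self-dual representation with trivial central character $\om' = 1$. Such a $\pi'_v$ always exists: realize it as the full principal series induced from distinct unramified characters $\chi_1, \dotsc, \chi_{N-1}$ whose product is trivial and whose multiset is stable under $\chi \mapsto \chi^{-1}$ (for $N=2$ just take $\chi_1 = 1$). Let $\varphi'_v$ denote the unramified vector.

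Next, I combine three ingredients. Theorem~\ref{RStestvector} gives $I_v(s, W_{\varphi_v}, \wh W_{\varphi'_v}) = L(s, \pi_v \times \pi'_v)$. Proposition~\ref{epsformulaunram}, applied to $\pi'_v$, gives $\wh W^{\iota}_{\varphi'_v} = \wh W_{\varphi'_v}$. And the defining equation $W^{\iota_{v,k}}_{\varphi_v} = \tau \cdot W_{\varphi_v}$ with $\tau := \tau^\psi_{v,k}(\varphi_v) = \pm 1$ lets me pull $\tau$ out of the LHS of Lemma~\ref{iotafn}. Substituting, using self-duality of both $\pi_v$ and $\pi'_v$ to identify $L(1-s, \pi_v^\vee \times (\pi'_v)^\vee) = L(1-s, \pi_v \times \pi'_v)$, expanding $\gamma$ via \eqref{eq definition epsilon}, and canceling the common $L$-factor produces
\[
\tau \;=\; \om'(-1)^{N-1} \om'(\varpi_v)^k \, q_v^{k(N-1)(s-1/2)} \, \eps(s, \pi_v \times \pi'_v, \psi).
\]
Since $\pi'_v$ is unramified, the conductor exponent of $\pi_v \times \pi'_v$ is $k(N-1)$, so $\eps(s,\pi_v \times \pi'_v, \psi) = \eps(1/2, \pi_v \times \pi'_v, \psi)\, q_v^{-k(N-1)(s-1/2)}$, cancelling the $q_v$-power. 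With $\om' = 1$ this simplifies to $\tau = \eps(1/2, \pi_v \times \pi'_v, \psi)$.

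Then I apply multiplicativity of $\eps$-factors along the Levi of $\pi'_v$, together with the standard twist formula $\eps(s, \pi_v \otimes \chi, \psi) = \chi(\varpi_v)^{k + N \, n(\psi)} \eps(s, \pi_v, \psi)$ for unramified $\chi$. Since $\prod_i \chi_i = 1$, all the $\chi_i(\varpi_v)$-factors collapse, yielding
\[
\eps(1/2, \pi_v \times \pi'_v, \psi) \;=\; \prod_{i=1}^{N-1} \eps(1/2, \pi_v \otimes \chi_i, \psi) \;=\; \eps(1/2, \pi_v, \psi)^{N-1},
\]
which proves the first formula. For the refinement: if $\om_{\pi_v} = 1$, then the identity $\eps(1/2, \pi_v, \psi)\eps(1/2, \pi_v^\vee, \psi) = \om_{\pi_v}(-1) = 1$ together with $\pi_v \simeq \pi_v^\vee$ forces $\eps(1/2, \pi_v, \psi)^2 = 1$; for $N$ even, $N-1$ is odd, so $\eps^{N-1} = \eps \cdot (\eps^2)^{(N-2)/2} = \eps(1/2, \pi_v, \psi)$, and Proposition~\ref{localepspm1} takes care of the independence from $\psi$.

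The main thing that could go wrong is arranging the companion $\pi'_v$ so that \emph{every} parasitic factor in Lemma~\ref{iotafn}—the central character signs, the twist by $\varpi_v^k$, and the $q_v$-power—disappears; the triviality and self-duality conditions on $\om'$ together with $\prod \chi_i = 1$ are exactly what make this work. The only real computational content beyond this bookkeeping is the twist formula for unramified characters, which is standard.
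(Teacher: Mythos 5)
Your proof is correct and follows essentially the same route as the paper: choose an auxiliary unramified, self-dual, tempered $\pi'_v$ on $\GL_{N-1,v}$ with trivial central character, feed the newvector/spherical-vector pair into the Rankin--Selberg integral via Theorem~\ref{RStestvector}, apply Lemma~\ref{iotafn} and Proposition~\ref{epsformulaunram} to peel off the eigenvalue $\tau$, cancel the $L$-factors using self-duality, and then use multiplicativity of $\eps$-factors plus the constraint on the Satake parameter (your $\prod_i \chi_i = 1$ is the paper's $\sum_i \alpha_i = 0$) to reduce $\eps(1/2, \pi_v \times \pi'_v, \psi)$ to $\eps(1/2, \pi_v, \psi)^{N-1}$. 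The wrap-up for even $N$ via $\eps(1/2,\pi_v,\psi)^2 = \om_{\pi_v}(-1) = 1$ and the parity of $N-1$, with Proposition~\ref{localepspm1} for $\psi$-independence, is also the paper's argument.
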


\begin{proof}
Pick an auxilliary tempered, unramified, self-dual representation~$\pi'_v$ of~$\GL_{N-1,v}$ with Satake parameter~$(q_v^{\alpha_1}, \dotsc, q_v^{\alpha_{N-1}})$. Let $\varphi'_v \in \pi'_v$ be the unramified vector. Normalize $\varphi_v, \varphi'_v$ so that $W_{\varphi_v}(1) = W_{\varphi'_v}(1) = 1$. Then by Theorem \ref{RStestvector},
\[
I_v(s, W_\varphi, W_{\varphi'}) = L_v(s, \pi_v \times \pi_v').
\]
We can further assume that $\pi'_v$ has trivial central character. Then the definition of epsilon factors \eqref{eq definition epsilon} and Lemma \ref{iotafn} give
\[
I_v(s, W^{\iota_\mf n}_{\varphi_v}, W^{\iota}_{\varphi'_v}) = \eps(s, \pi_v \times \pi'_v,\psi)  q_v^{k(N-1)(s-1/2)}L_v(s, \pi_v \times \pi'_v).
\]
Since $\pi'_v$ is self-dual, $W^{\iota}_{\varphi'_v} = W_{\varphi'_v}$ by Proposition \ref{epsformulaunram}. Together with Lemma \ref{newformeigenvalue}, this shows that
\begin{multline*}
\tau^\psi_{v,k}(\varphi_v) I_v(s, W_{\varphi_v}, W_{\varphi'_v}) = \eps(s, \pi_v \times \pi'_v,\psi)  q_v^{k(N-1)(s-1/2)} L_v(s, \pi_v \times \pi'_v) \\
\implies \eps(s, \pi_v \times \pi'_v, \psi) = \tau^\psi_{v,k}  |\mf n|^{(N-1)(1/2-s)}. 
\end{multline*}
However, since $\pi'$ is unramified, we also have
\[
L_v(s, \pi_v \times \pi'_v) = \prod_{i=1}^{N-1} L_v(s + \alpha_i, \pi_v).
\]
The definition of $\eps$-factors and multiplicativity of $\gamma$-factors (e.g. \cite[\S3.1.6]{cogdell2004long}) then gives:
\[
\eps(s, \pi_v \times \pi'_v, \psi) = \prod_{i=1}^{N-1} \eps(s + \alpha_i, \pi_v, \psi) = \eps(s, \pi_v,\psi)^{N-1}
\]
using at the end that $\sum_i \alpha_i=0$ since $\pi'_v$ is self-dual. This is the first claim. 

If $\pi_v$ is self-dual with trivial central character, by \ref{localepspm1} and \eqref{epsconductor},  
\[
\eps(s, \pi_v) = \eps(1/2, \pi_v) q_v^{k(1/2-s)} = \pm q_v^{k(1/2-s)}.
\] 
This implies that $\eps(1/2,s) = \tau^\psi_{v,k}(\varphi_v)$ if $N-1$ is odd. 
\end{proof}

\begin{note}
The above theorem does not compute the local root number when $N$ is odd or when $\pi_v$ has central character. However,  in those cases, any global $\pi$ is orthogonal-type and therefore satisfies $\eps(1/2, \pi) = 1$ by Theorem \ref{globalortheps}. Therefore this doesn't matter for our eventual global application. 
\end{note}

\subsection{Involutions for Newforms : Conjugate Self-Dual Case}\label{sec involutionsconj}
We perform a similar argument for conjugate self-dual $\pi$. Let $E/F$ be a quadratic extension with~$\sigma : \alpha \mapsto \bar \alpha$ the corresponding Galois conjugation action. 

We fix our character $\psi : E^\times \to \C^\times$ defining Whittaker functions to be self-conjugate, i.e. $\psi \circ \sigma =: \psi^\sigma = \psi$. It is very important to note that this is \emph{not} the standard choice $\psi^\sigma = \psi^{-1}$ used in \cite{GGP11}. This annoying technical detail will add some extra central character factors that do not appear in the self-dual case.

For appropriate $v, \mf p_v, \mf n$, we similarly define:
\begin{itemize}
\item
$J_{\mf p_v^k} := \diag(\varpi_v^{k}, \cdots, \varpi_v^{k}, 1) \in \GL_N(F_v) \subseteq \GL_N(F_v \otimes_F E)$  (note that if $v$ is ramified, we need to instead choose a uniformizer $\varpi_v^{1/2} =\varpi_w$ for $E_w$), 
\item
$J_{\mf n} \in \GL_N(\A_F) \subseteq \GL_N(\A_E)$ for $\mf n$ an ideal of $\mc O_F$, the analogous global product of $J_{\mf p_v^k}$,
\item 
$J_{v,k}$ the locally-indexed version of $J_{\mf p_v^k}$, as in \S\ref{sec conductorsconj}.
\item
$\bar \iota_{\mf p_v^k} : \GL_N(F_v \otimes_F E) \to \GL_N(F_v \otimes_F E) : g \mapsto  \bar g^{-T}J_{\mf p_v^k}$, and $\bar \iota_{v,k}$ the locally-indexed version.
\item
$\bar \iota_{\mf n} : \GL_N(\A_E) \to \GL_N(\A_E)$ the global version of $\bar \iota_{\mf p_v^k}$. 
\item
$\bar \iota := \bar \iota_{\mc O_F}$
\item
$\varphi^{\bar \iota_\mf n} : = \varphi \circ \bar \iota_{\mf n}$ for global test vectors $\varphi$. 
\end{itemize}
We can also define for Whittaker functions: 
\begin{itemize}
    \item $W^{\bar \iota_\mf n}_\varphi(g) := W_\varphi(w_N g^{\bar \iota_\mf n}) := W_\varphi(w_N \bar g^{-T})$,
    \item $W^{\bar \iota_\mf n}_{\varphi_v}(g) := W_{\varphi_v}(w_N g^{\bar \iota_\mf n}) := W_\varphi(w_N \bar g^{-T} J_{v,k})$.
\end{itemize}
Following our choice of $\psi^\sigma = \psi$, this satisfies analogues of \eqref{iotaconsistency} and \eqref{localiotaconsistency}:
\[
W_{\varphi^{\bar \iota_\mf n}_v} = W^{\bar \iota_\mf n}_{\varphi_v}, \qquad 
W^{\bar \iota_{v,i}}_{\varphi_v}(xg) = \psi(x) W^{\bar \iota_{v,k}}_{\varphi_v}(g).
\]
%because of our choice $\psi^\sigma = \psi$. 

We also warn that if $\varpi \neq \bar \varpi$, then $\bar \iota_{v,i}$ isn't necessarily an involution:
\[
\bar\iota_{v,k} \bar \iota_{v,k} g = g \diag((\varpi/\overline \varpi)^{-k}, \dotsc, (\varpi/\overline \varpi)^{-k}, 1). 
\]
However, this extra factor belongs to all level subgroups~$K_{1,v}(k)$, so precomposition with~$\bar \iota_{v,k}$ does act as an involution on all new- and oldvector spaces. 

Using all this, we continue the analogy: 
\begin{lem}\label{newformeigenvalueconj}
Let $\pi_v$ be a generic conjugate self-dual irreducible representation of $\GL_N(F_v \otimes_F E)$ of conductor $k$ and $\varphi_v \in \pi_v$ a newvector. Then, provided that $\psi^\sigma = \psi$, we have $W^{\bar \iota_{v,k}}_{\psi, \varphi_v} = \pm W_{\psi, \varphi_v}$. 
\end{lem}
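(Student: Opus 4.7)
The plan is to adapt the proof of Lemma \ref{newformeigenvalue} to the conjugate self-dual setting, carefully tracking complex conjugations and handling the fact that $\bar\iota_{v,k}$ is only a quasi-involution. First I would verify that $W^{\bar\iota_{v,k}}_{\varphi_v}$ still lies in $W(\psi)$: this is the content of the local consistency identity displayed just before the lemma, whose proof uses crucially the hypothesis $\psi^\sigma = \psi$ (so that complex conjugation of unipotent entries, introduced by $\bar\iota_{v,k}$, continues to produce the correct character values rather than the inverse). Right-invariance of $W^{\bar\iota_{v,k}}_{\varphi_v}$ under $K_{1,v}(k)$ then follows by the same coset manipulation as in the self-dual case, using that $w_N$ and $J_{v,k}$ are defined over $F$ and hence fixed under $\sigma$.

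The core computation is an analogue of the matrix manipulation in Lemma \ref{newformeigenvalue}: inserting $J_{v,k} J_{v,k}^{-1}$ after $J_{v,k} h^{-1}$ and using $\overline{J_{v,k}} = J_{v,k}$, one obtains
\[
W^{\bar\iota_{v,k}}_{h \varphi_v}(g) \;=\; W_{\varphi_v}\lf(w_N \bar g^{-T} J_{v,k} h^{-1}\ri) \;=\; \lf((J_{v,k}\, \bar h^{-T}\, J_{v,k}^{-1})\, W^{\bar\iota_{v,k}}_{\varphi_v}\ri)(g).
\]
This identifies the subrepresentation of $W(\psi)$ generated by $W^{\bar\iota_{v,k}}_{\varphi_v}$ with $\bar\pi_v^\vee$, up to the inner automorphism $\Ad(J_{v,k})$. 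Conjugate self-duality then yields an isomorphism with $\pi_v$. By uniqueness of Whittaker models for the generic irreducible $\pi_v$, this subrepresentation must coincide with the image of $\pi_v$ inside $W(\psi)$, and by uniqueness of the new vector (Theorem \ref{newformexistence}) there is a scalar $\tau \in \C^\times$ with $W^{\bar\iota_{v,k}}_{\varphi_v} = \tau\, W_{\varphi_v}$.

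The final step pins down $\tau$ to $\pm 1$, and is the one genuine departure from the self-dual case. As noted just above the lemma, $\bar\iota_{v,k}^2(g) = g \cdot c$ for $c = \diag\lf((\varpi/\bar\varpi)^{-k}, \dotsc, (\varpi/\bar\varpi)^{-k}, 1\ri) \in K_{1,v}(k)$, so iterating the intertwining relation gives $\tau^2\, W_{\varphi_v} = W_{\pi_v(c)\varphi_v} = W_{\varphi_v}$, forcing $\tau = \pm 1$. I expect the only real bookkeeping difficulty to be confirming that the hypothesis $\psi^\sigma = \psi$ enters exactly where complex conjugates of the Whittaker character would otherwise appear, and that the identification of the generated subrepresentation with $\bar\pi_v^\vee$ (rather than $\pi_v^\vee$) is the sole qualitative change from the self-dual argument.
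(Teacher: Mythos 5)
Your proof takes essentially the same approach as the paper's (one-line) argument: adapt the proof of Lemma \ref{newformeigenvalue} after noting that precomposition with $\bar\iota_{v,k}$ still acts as an involution on newvectors even though $\bar\iota_{v,k}^2$ is right-translation by $c = \bar J_{v,k}^{-1}J_{v,k} \in K_{1,v}(k)$. Your fleshing-out of the intertwining step (which now produces $\bar h^{-T}$, identifying the generated subrepresentation with $\bar\pi_v^\vee \cong \pi_v$) is correct.

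One caveat worth flagging in the final step, which applies equally to the paper's own sketch: the operator $W \mapsto W^{\bar\iota_{v,k}}$ on Whittaker functions also involves left-multiplication by $w_N$, and since $w_N^2 = (-1)^{N-1} I$, iterating actually gives $\lf(W^{\bar\iota_{v,k}}\ri)^{\bar\iota_{v,k}}(g) = W_{\varphi_v}\lf((-1)^{N-1} g c\ri)$, hence $\tau^2 = \om_{\pi_v}\lf((-1)^{N-1}\ri)$ rather than $\tau^2 = 1$ outright. (For the same reason, the assertion in the proof of Lemma \ref{newformeigenvalue} that $g \mapsto w_N g^{-T} J_{v,k}$ is an involution is only true modulo the center when $N$ is even.) The conclusion $\tau = \pm 1$ is still correct: for $N$ odd this is automatic, and for $N$ even the determinant of a conjugate self-dual parameter is conjugate-orthogonal (see the discussion after Lemma \ref{cchartransfer}), so $\om_{\pi_v}|_{F_v^\times}$ is trivial and in particular $\om_{\pi_v}(-1) = 1$. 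But without this observation your iteration step only pins $\tau$ to a fourth root of unity, so the extra central-character factor does need to be addressed.
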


\begin{proof}
This is a very similar argument to Lemma \ref{newformeigenvalue}, after noting that precomposition with $\bar \iota_{v,i}$ does act as an involution on %specifically blue
newvectors.
\end{proof}

For the functional equation: 
\begin{lem}\label{iotafnconj}
Assume $\psi^\sigma = \psi$. Let $\pi_v, \pi'_v$ be generic, unitary irreducible representations of $\GL_N(F_v \otimes_F E), \GL_{N-1}(F_v \otimes_F E)$ with central characters $\om, \om'$ respectively. Let $\varphi_v, \varphi_v' \in \pi_v, \pi_v'$ respectively. Then
\begin{multline*}
I_v(1-s, W^{\bar \iota_{v,k}}_{\varphi_v}, \wh W^{\bar \iota}_{\varphi'_v}) \\= \om'(-1)^{N-1}  \bar \om'(\overline \varpi_v^{-k}) {q_v}^{k(N-1)(1/2-s)} \gamma(s, \pi_v \times \pi'_v, \psi) I_v(s, W_{\varphi_v}, \wh W_{\varphi'_v}).
\end{multline*}
\end{lem}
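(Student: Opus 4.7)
The proof parallels that of Lemma \ref{iotafn}, with Galois conjugation contributing some additional bookkeeping. My plan is to proceed in two steps: first establish a conjugate analog of the local functional equation \eqref{localfunctionalequation} with $\bar\iota$ in place of $\iota$, then connect $\bar\iota_{v,k}$ back to $\bar\iota$ via a scaling calculation in the Rankin-Selberg integral.

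For the first step, the key observation is that if $\bar\varphi_v$ denotes $\varphi_v$ viewed as an element of the Galois-conjugated representation $\bar\pi_v$, then the hypothesis $\psi^\sigma = \psi$ guarantees $W_{\bar\varphi_v}(g) = W_{\varphi_v}(\bar g)$ and hence $W^{\bar\iota}_{\varphi_v} = W^\iota_{\bar\varphi_v}$ (and similarly for $\wh W^{\bar\iota}_{\varphi'_v}$). Applying the standard functional equation \eqref{localfunctionalequation} to $\bar\pi_v, \bar\pi'_v$ and simplifying using $\om_{\bar\pi'_v}(-1) = \om_{\pi'_v}(-1)$, $\gamma(s, \bar\pi_v \times \bar\pi'_v, \psi) = \gamma(s, \pi_v \times \pi'_v, \psi)$, and $I_v(s, W_{\bar\varphi_v}, \wh W_{\bar\varphi'_v}) = I_v(s, W_{\varphi_v}, \wh W_{\varphi'_v})$---all three stemming from self-conjugacy of $\psi$ together with the measure-preserving change of variable $a \mapsto \bar a$---yields
\[
I_v(1-s, W^{\bar\iota}_{\varphi_v}, \wh W^{\bar\iota}_{\varphi'_v}) = \om'(-1)^{N-1} \gamma(s, \pi_v \times \pi'_v, \psi) I_v(s, W_{\varphi_v}, \wh W_{\varphi'_v}).
\]

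For the second step, a direct matrix computation with $J_{v,k} = \diag(\varpi^{-k}, \dotsc, \varpi^{-k}, 1)$ gives
\[
W^{\bar\iota_{v,k}}_{\varphi_v}\begin{pmatrix} a & \\ & 1 \end{pmatrix} = W^{\bar\iota}_{\varphi_v}\begin{pmatrix} \overline\varpi^k a & \\ & 1 \end{pmatrix},
\]
where the appearance of $\overline\varpi$ rather than $\varpi$ is the essential distinction from the self-dual case (coming from $\overline{\overline\varpi^k a}^{-T} = \varpi^{-k}\bar a^{-T}$). Substituting this into the Rankin-Selberg integral at $1-s$ and changing variables $b = \overline\varpi^k a$: Haar scale-invariance absorbs the differential, the identity $|\overline\varpi|_w = |\varpi|_w$ yields the factor $q_v^{k(N-1)(1/2-s)}$ from $|\det a|^{1/2-s}$, and the central character $\bar\om'^{-1}$ of $\wh W^{\bar\iota}_{\varphi'_v}$ produces the factor $\bar\om'(\overline\varpi^k)$. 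Combining with the first step gives the stated identity. The principal obstacle is simply the notational care required to track how Galois conjugation interacts simultaneously with the transpose-inverse involution, the Whittaker functional, and the uniformizer in $J_{v,k}$; the self-conjugacy hypothesis $\psi^\sigma = \psi$ is used repeatedly precisely to allow these operations to commute.
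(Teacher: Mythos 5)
Your proof is correct, and it is essentially the same argument as the paper's. Both proofs combine (i) the scaling computation that trades $\bar\iota_{v,k}$ for $\bar\iota$ at the cost of the factors $\bar\om'(\overline\varpi_v^k)q_v^{k(N-1)(1/2-s)}$, and (ii) a measure-preserving change of variables $a\mapsto\bar a$ together with $\psi^\sigma=\psi$ to reduce the $\bar\iota$-functional equation to the standard one \eqref{localfunctionalequation}; you package step (ii) by passing through the conjugated representation $\bar\pi_v$ whereas the paper simply notes $I_v(s,W^{\bar\iota}_{\varphi_v},\wh W^{\bar\iota}_{\varphi'_v})=I_v(s,W^{\iota}_{\varphi_v},\wh W^{\iota}_{\varphi'_v})$ directly, but the underlying computation is identical.
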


\begin{proof}
As in the proof of Lemma \ref{iotafn}, we have that
\[
I_v(s, W^{\bar \iota_{v,k}}_{\varphi_v}, \wh W^{\bar \iota}_{\varphi'_v}) = \bar \om'(\overline \varpi^{-k}) q_v^{k(N-1)(1/2-s)}I_v(s, W^{\bar \iota}_{\varphi_v}, \wh W^{\bar \iota}_{\varphi'_v})
\] %\blue{\emph{Added some ' on the $\varphi_v$.}}
noting that the $J_{v,k}$ gets conjugated in the $\bar \iota_{v,k}$-version of the computation (Note also that this conjugation does nothing unless $v$ is ramified and $k$ is a half-integer). However, conjugation preserves Haar measures since it is an involution, so we also have that
\[
I_v(s, W^{\bar \iota}_{\varphi_v}, \wh W^{\bar \iota}_{\varphi'_v}) = I_v(s, W^{ \iota}_{\varphi_v}, \wh W^{\iota}_{\varphi'_v}).
\]
The result follows as in Lemma \ref{iotafn}.
\end{proof}

Continuing with the analogous unramified input to \ref{epsformulaunram}:

\begin{prop}\label{epsformulaunramconj}
Let $v$ be non-split and $\pi_v$ be an unramified, conjugate self-dual irreducible representation of $G_{N,v} = \GL_N(F_v \otimes_F E)$ and $\varphi_v$ an unramified vector. Then $W_{\varphi_v} = W^{\bar \iota}_{\varphi_v}$ whenever $\psi \circ \sigma = \psi$.
\end{prop}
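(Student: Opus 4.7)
The plan is to mirror the proof of Proposition \ref{epsformulaunram} with the adjustments needed to handle the conjugation. First, I would apply the Iwasawa decomposition $G_{N,v} = U_{N,v} \cdot T \cdot K_v$, where $T$ is the diagonal torus and $K_v = \GL_N(\mc O_{E_w})$ for $w$ the unique place of $E$ above $v$. Both $W_{\varphi_v}$ and $W^{\bar\iota}_{\varphi_v}$ are right $K_v$-invariant---the latter because $\bar\iota(K_v) = K_v$ since complex conjugation preserves $\mc O_{E_w}$. So it suffices to check the equality on elements of the form $u \lb(\varpi_w)$ for $u \in U_{N,v}$, $\varpi_w$ a chosen uniformizer of $E_w$, and $\lb$ an integral cocharacter of the diagonal torus.

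Next, unwinding the definition of $W^{\bar\iota}_{\varphi_v}$ as in the self-dual proof, and writing $\lb(\bar\varpi_w)^{-T} = w_N^{-1}\lb'(\bar\varpi_w)w_N$ where $\lb' = -w_N \lb w_N^{-1}$, the trailing $w_N$ absorbs into right-$K_v$-invariance. The unipotent $w_N \bar u^{-T}w_N^{-1}$ has superdiagonal $(\bar x_{N-1}, \dots, \bar x_1)$ for $u$'s superdiagonal $(x_1,\dots,x_{N-1})$, so the assumption $\psi^\sigma = \psi$ gives that this contributes the factor $\psi(u)$. The outcome is
\[
W^{\bar\iota}_{\varphi_v}(u\lb(\varpi_w)) = \psi(u)\, W_{\varphi_v}(\lb'(\bar\varpi_w)).
\]
Since $v$ is non-split, $u_0 := \bar\varpi_w/\varpi_w$ lies in $\mc O_{E_w}^\times$ (this is the key adaptation from the self-dual case and is uniform across the inert and ramified sub-cases). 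Hence $\lb'(\bar\varpi_w) = \lb'(\varpi_w)\lb'(u_0)$ with $\lb'(u_0) \in K_v$, which is absorbed by right-$K_v$-invariance. The problem reduces to verifying
\[
W_{\varphi_v}(\lb(\varpi_w)) = W_{\varphi_v}(\lb'(\varpi_w)).
\]

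Finally, this last identity follows from the Casselman--Shalika formula, exactly as in the self-dual case, once one checks that an unramified conjugate self-dual $\pi_v$ in the non-split setting has Satake parameters satisfying $\{\alpha_i\} = \{\alpha_i^{-1}\}$ as multisets. This in turn uses that $\sigma$ acts trivially on generators of the spherical Hecke algebra: the $K_v$-double coset of $\diag(\varpi_w^{a_1}, \dots, \varpi_w^{a_N})$ is fixed by $\sigma$ again because $\bar\varpi_w/\varpi_w \in \mc O_{E_w}^\times$. Consequently $\bar\pi_v^\vee$ has Satake parameters $\{\alpha_i^{-1}\}$, and conjugate self-duality is the same symmetry needed in the self-dual case. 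Combined with $s_{\lb'}(\alpha_\cdot) = s_\lb(\alpha_\cdot^{-1}) = s_\lb(\alpha_\cdot)$ (the first from $\pi_{\lb'} \cong \pi_\lb^\vee$, the second from the multiset symmetry) and matching modulus factors $\delta^{1/2}(\lb(\varpi_w)) = \delta^{1/2}(\lb'(\varpi_w))$, the claim follows.

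The main obstacle is purely bookkeeping: ensuring the unit $u_0 = \bar\varpi_w/\varpi_w$ absorbs cleanly at every step---in the Whittaker computation, in the Satake-parameter symmetry verification, and uniformly in both the inert and ramified non-split sub-cases. Once this is established, the reduction via Iwasawa and Casselman--Shalika is mechanically identical to the self-dual argument.
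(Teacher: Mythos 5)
Your proof is correct and follows essentially the same approach as the paper: reduce to the self-dual Proposition \ref{epsformulaunram} by observing that $\psi \circ \sigma = \psi$ absorbs the conjugation on the unipotent part and that $\bar\varpi_w/\varpi_w \in \mc O_{E_w}^\times$ (equivalently, it has norm $1$) lets right $K_v$-invariance absorb the conjugation on the toral part, both in the Whittaker computation and in identifying the Satake parameter of $\bar\pi_v^\vee$ with that of $\pi_v^\vee$. The paper states this as a two-line reduction; your version simply unpacks the Iwasawa/Casselman--Shalika steps explicitly rather than appealing to them as already done in the self-dual proof.
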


\begin{proof}
This follows from \ref{epsformulaunram} applied to $\GL_N(E_w)$: for all $u \in U$, $\psi(\bar u) = \psi(u)$ and for all cocharacters $\lb$, $W_{\varphi_v}(\lb(\varpi)) = W_{\varphi_v}(\lb(\overline \varpi))$ since $\varpi/\overline \varpi$ has norm $1$. %so their difference/ratio is in K, but we're looking at unramified stuff. 
%
%An analogous argument on $\GL_N(F_v)^2$ also works in the case when $v$ is split. 
\end{proof}

We now arrive at the conjugate self-dual theorem identifying $\bar \tau_n(\varphi_v)$ with the appropriate root number. This will enable us to construct test functions counting automorphic forms of conductor $\mf n$ weighted by their root number. 

\begin{thm}\label{epsformulaconj}
Assume $\psi \circ \sigma = \psi$. Let $\pi_v$ be a generic conjugate self-dual representation of $\GL_N(F_v \otimes_F E)$ of conductor $c(\pi_v) = k$ (as defined in \S\ref{sec conductorsconj}). Let~$\varphi_v \in \pi_v$ be a newvector and let $\bar \tau^\psi_{v,k}(\varphi_v)  = \pm 1$ be defined by $W_{\varphi_v} = \bar \tau^\psi_{v,k}(\varphi_v)W^{\iota_{v,k}}_{\varphi_v}$ as in Lemma \ref{newformeigenvalueconj}. Then
\[
 \bar \tau^\psi_{v,k}(\varphi_v) = \begin{cases} \eps(1/2, \pi_v, \psi)^{N-1} & v\text{ non-split} \\
 1 & v \text{ split.}
 \end{cases}
\]

Furthermore, in the non-split case, if either $N$ is even or $\pi_v$ is conjugate-orthogonal, let $\eps(1/2, \pi_v)$ be the standard root number as in \ref{def standard root number}. %(i.e. with respect instead to any $\psi_0$ with $\psi_0 \circ \sigma = \psi_0^{-1}$ as in \ref{localepspm1conj}). 
Then:
\[
 \bar \tau^\psi_{v,k}(\varphi_v) = \begin{cases} \om(a) \eps(1/2, \pi_v) & N \text{ even} \\
 1 & N \text{ odd}, \, \pi_v \text{ conjugate-orthogonal}
 \end{cases}
\]
where $\omega$ is the central character of $\pi$ and $a \in F_v \otimes_F E$ is any element such that $\sigma(a) = -a$.
\end{thm}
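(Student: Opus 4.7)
The plan is to imitate the proof of Theorem~\ref{epsformula}, systematically replacing each ingredient by its conjugate self-dual counterpart: Lemma~\ref{iotafnconj} in place of Lemma~\ref{iotafn}, Proposition~\ref{epsformulaunramconj} in place of Proposition~\ref{epsformulaunram}, and Lemma~\ref{newformeigenvalueconj} in place of Lemma~\ref{newformeigenvalue}. First, in the non-split case, I would pick an auxiliary tempered, unramified, conjugate self-dual representation $\pi'_v$ of $\GL_{N-1}(F_v \otimes_F E) \cong \GL_{N-1}(E_w)$ with trivial central character, and let $\varphi'_v$ be its unramified vector, normalizing $W_{\varphi_v}(1) = W_{\varphi'_v}(1) = 1$. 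By Theorem~\ref{RStestvector}, $I_v(s, W_{\varphi_v}, W_{\varphi'_v}) = L_v(s, \pi_v \times \pi'_v)$; by Proposition~\ref{epsformulaunramconj}, $W^{\bar\iota}_{\varphi'_v} = W_{\varphi'_v}$; and by Lemma~\ref{newformeigenvalueconj}, $W^{\bar\iota_{v,k}}_{\varphi_v} = \bar\tau\, W_{\varphi_v}$ where $\bar\tau = \bar\tau^\psi_{v,k}(\varphi_v)$.

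Substituting these into Lemma~\ref{iotafnconj} (with $\omega' = 1$) and using the defining identity $\gamma(s, \pi_v \times \pi'_v, \psi)\, L_v(s, \pi_v \times \pi'_v) = \eps(s, \pi_v \times \pi'_v, \psi)\, L_v(1-s, \pi_v^\vee \times (\pi'_v)^\vee)$ yields an equation of the form $\bar\tau = q_v^{\star}\, \eps(s, \pi_v \times \pi'_v, \psi) \cdot R(s)$ where $R(s) = L_v(1-s, \pi_v^\vee \times (\pi'_v)^\vee)/L_v(1-s, \pi_v \times \pi'_v)$. Since $\pi_v$ and $\pi'_v$ are both conjugate self-dual, $\pi_v^\vee \times (\pi'_v)^\vee \cong (\pi_v \times \pi'_v)^c$, and Galois invariance of local $L$-factors (checked via the action of $c$ on Frobenius and inertia) gives $R(s) \equiv 1$. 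Multiplicativity of $\eps$-factors together with $\sum \alpha_i = 0$ for the Satake parameters of the self-dual $\pi'_v$ then collapses $\eps(1/2, \pi_v \times \pi'_v, \psi)$ to $\eps(1/2, \pi_v, \psi)^{N-1}$, proving the first formula.

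For the standard root-number translation, I would apply the twist formula $\eps(1/2, \pi_v, \psi(a \cdot)) = \omega(a)\, \eps(1/2, \pi_v, \psi)$. A short computation shows that if $\psi_0$ is chosen with $\psi_0^\sigma = \psi_0^{-1}$, then one may write $\psi = \psi_0(a \cdot)$ for some $a$ with $\sigma(a) = -a$, and hence $\eps(1/2, \pi_v, \psi)^{N-1} = \omega(a)^{N-1}\, \eps(1/2, \pi_v)^{N-1}$ for $\eps(1/2, \pi_v)$ the standard root number of Definition~\ref{def standard root number}. When $N$ is even both exponents $N-1$ are odd, yielding exactly $\omega(a)\, \eps(1/2, \pi_v)$; when $N$ is odd and $\pi_v$ is conjugate-orthogonal, $N-1$ is even, $\eps(1/2, \pi_v)^{N-1} = 1$, and $\omega(a)^{N-1} = 1$ follows from the conjugate-orthogonal structure of $\omega_{\pi_v}$ combined with $\sigma(a) = -a$.

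In the split case $v = ww'$, the group $G_{N,v}$ factors as $\GL_N(F_v) \times \GL_N(F_v)$ with $\sigma$ swapping the factors and $\pi_v = \pi_w \otimes \pi_w^\vee$. Here $\bar\iota_{v,k}$ interchanges the two factors while applying the self-dual involution to each, and a direct computation on the factorized Whittaker function, combined with the identity $\eps(1/2, \pi_w, \psi_w)\eps(1/2, \pi_w^\vee, \psi_w) = \omega_{\pi_w}(-1)$ and the appropriate reinterpretation of the newvector on each factor, shows $\bar\tau = 1$. The main obstacles are verifying $R(s) \equiv 1$ carefully in the ramified case, where the discrepancy between $I_w$ and $I_v$ makes the Galois-conjugation argument more subtle, and keeping careful track of the central-character contributions in both the translation to standard root numbers and the split-case computation.
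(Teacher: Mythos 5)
Your proposal is correct and follows the paper's argument essentially verbatim: in the non-split case you substitute Lemma~\ref{iotafnconj} and Proposition~\ref{epsformulaunramconj} for their self-dual counterparts, and the passage to standard root numbers is the same $\psi\mapsto\psi_0(a\cdot)$ central-character twist, with $\eps(1/2,\pi_v,\psi)^{N-1}=\om(a)^{N-1}\eps(1/2,\pi_v,\psi_0)^{N-1}$ reduced by parity of $N-1$ exactly as you describe. Your explicit verification that the $L$-factor ratio $R(s)$ is identically $1$ (via Galois-invariance of local $L$-factors on $\GL_N(E_w)$) is a step the paper leaves implicit but is genuinely needed in the conjugate self-dual setting where $\pi_v^\vee\cong\bar\pi_v$ rather than $\pi_v$; by contrast, the epsilon-factor identity you invoke in the split case is superfluous, since the paper's argument there is purely structural --- the newvector $W_f \boxtimes W_f^{\iota_{\fn}}$ of $\rho_v\boxtimes\rho_v^\vee$ is visibly fixed by $\bar\iota_\fn$ because the latter swaps the two factors and applies the involution $\iota_\fn$ to each, immediately giving $\bar\tau=1$.
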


\begin{proof}
If $v$ is split, then $\pi_v$ is a representation of the form $\rho_v \boxtimes \rho_v^\vee$ of $\GL_N(F_v)^2$. If~$W_f$ is a newvector in the $\psi$-Whittaker model for $\rho_v$, then $W_f \boxtimes W^{\iota_\mf n}_f$ is a newvector for $\pi_v$. Since $\iota_\mf n$ acts as an involution, $\bar \iota_\mf n$ acts as the identity on this newvector. 

The non-split case follows very similarly to the proof of Theorem \ref{epsformula}. Explaining the details that change: We pick an auxiliary tempered, unramified, conjugate self-dual representation $\pi'$ of $\GL_{N-1}(F_v \otimes_F E)$ with trivial central character and $\varphi' \in \pi'$. Using \ref{iotafnconj}/\ref{epsformulaunramconj} instead of \ref{iotafn}/\ref{epsformulaunram}, we get that
\[
\eps(s, \pi_v \times \pi'_v, \psi) = \bar \tau^\psi_{v,k}(\varphi_v)  q_w^{k(N-1)(1/2-s)}.
\]
%check: right scales by $\om(a)^{N-1}$ when $\psi \mapsto \psi_a$ but left has to stay $\pm 1$, can scale by any $a \in F_v$. However, $\om(a) = \pm 1$ for such $a$

When $v$ is non-split, we can also show by the same calculations as in \ref{epsformula} that
\[
\eps(s, \pi_v \times \pi_v', \psi) = \eps(s, \pi_v, \psi)^{N-1}.
\]
Note that for unramified representations at non-split places $v$, conjugate self-dual is equivalent to self-dual since the valuation is unchanged by conjugation.

The second statement follows since
\[
\eps(1/2, \pi_v, \pi, \psi) = \om(a) \eps(1/2, \pi_v, \psi_0)
\]
and that we always have that $\eps(1/2, \pi_v, \psi_0) = \pm 1$ and is independent of $\psi_0$ under our conditions by \ref{localepspm1conj}. We also note that $\om(a) = \pm 1$ when $\pi_v$ is conjugate-orthogonal or $N$ is even. 
\end{proof}

\begin{note}
In the split case, let $\pi_v = \rho_v \boxtimes \rho_v^\vee$ of $\GL_N(F_v)^2$. 
 Then 
 \[
 \eps(1/2, \pi_v, \psi) = \eps(1/2, \rho_v, \psi)\eps(1/2, \rho_v^\vee, \psi) = \om_{\rho_v}(-1). \]
 If $a \in F_v^2$ such that $a = - \bar a$, then $a$ is of the form $(x, -x)$ and 
 \[
 \om_\pi(a) = \om_{\rho_v}(x)\om_{\rho_v}(-x)^{-1} = \om_{\rho_v}(-1).
 \]In total $\eps(1/2, \pi_v, \psi) = \om_{\pi_v}(a)$. In other words, the value of $\bar\tau^\psi_{v,k}(\varphi_v)$ for $v$ split is \emph{not} $\eps(1/2,\pi_v,\psi)^{N-1}$, but rather $\eps(1/2,\pi_v)^{N-1}:=\eps(1/2,\pi_v,\psi_0)^{N-1}$. 

In particular, the split case differs from the non-split case by a factor of $\om(a)^{N-1}$.
\end{note}

\begin{note}
It is at this point that the restriction to $N$ even in our main Theorem \ref{mainthmconj} appears. In the self-dual case, we get lucky that when $N$ is odd, all representations are orthogonal-type and therefore have global root number $1$. Here, however, there are conjugate-symplectic-type representations when $N$ is odd which can have arbitrary root number. Our techniques are unable to address this case. 
\end{note}

\section{Test Functions}\label{sectiontestfunctions}
Using the results of the previous section, we now construct test functions that let us compute (twisted) counts of representations.

\subsection{Twisted Action}\label{sec twistedaction}
Recall from \eqref{eq long Weyl element} the matrix $w_N$ which conjugated $\iota, \bar \iota$ into the involutions $\theta, \bar \theta$ preserving the standard pinning. We define twisted groups:
\[
\wtd G_N = \begin{cases}
    G_N \rtimes \theta & \text{self-dual or orthogonal/symplectic case} \\
    G_N \rtimes \bar \theta & \text{conjugate self-dual or unitary case.}
\end{cases}
\] 
We first realize our involutions $\iota_\mf n, \bar \iota_\mf n$ through conjugation in the twisted group $\wtd G_N$.

If $\pi$ is a (conjugate) self-dual generic automorphic representation of $G_N$, the twisted group $\wtd G_N$ acts on $\pi$ since $\theta$ (or $\bar \theta$) acts on its Whittaker model through precomposition. Define $\wtd \pi$ to be this extended representation of $\wtd G_N$. This lets us define twisted traces $\tr_{\wtd \pi} \td f$ for test functions $\td f$ on $\wtd G_N$. Note that this all depends on the choice of $\psi$ used to define the Whittaker model. 

\begin{note}
In the conjugate self-dual case, this choice of $\psi$ is a priori different than in the previous section. Specifically: before, we were thinking of the local~$G_{N,v}$ as~$\GL_N(F_v \otimes_F E)$ and defining Whittaker models through characters $\psi$ of~$F_v \otimes_F E$. Now, we are working with the Whittaker models of the algebraic group $G_{N,v}/F_v$ depending on a choice of character $\psi$ of $F_v$. This choice in turn determines a choice of character of $F_v\otimes_F E$ through composition with the trace map and only produces characters of $F_v\otimes_F E$ satisfying $\psi^\sigma = \psi$, as was required in \S \ref{sec involutionsconj}. 
\end{note}

Since each of the $\pi_v$ are also generic, each $\wtd G_{N,v}$ also acts on $\pi_v$ through precomposition in the Whittaker model. This defines local twisted traces $\tr_{\td \pi_v} \td f_v$ for test functions $\td f_v$ on $\wtd G_{N,v}$. Then, as is well-known: %there should be a reference here, or we should recmore this

\begin{prop}\label{twistedtracefactor}
 Let $\td f = \prod_v \td f_v$ be a test function on  $\wtd G_N$. Then for all generic automorphic representations $\pi$ of $G_N$
 \[
 \tr_{\wtd \pi} \td f = \prod_v \tr_{\wtd \pi_v} \td f_v. 
 \]
\end{prop}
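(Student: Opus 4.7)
The plan is to reduce the claimed identity to the standard factorization of untwisted traces by exhibiting the global intertwiner implementing the $\theta$-action on $\pi$ as a restricted tensor product of the corresponding local intertwiners on each $\pi_v$. I will describe the argument in the self-dual case; the conjugate self-dual case is identical with $\theta$ replaced by $\bar\theta$ and $\iota$ by $\bar\iota$.

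First, I would identify test functions on $\wtd G_N$ with test functions on $G_N$: every factorizable $\td f = \prod_v \td f_v$ can be written as $\td f(g\theta) = f(g)$ for a factorizable $f = \prod_v f_v$ on $G_N(\A)$. By the definition of $\wtd\pi$, the twisted trace then reads $\tr_{\wtd\pi}\td f = \tr\bigl(\pi(f)\circ A_\theta\bigr)$, where $A_\theta \colon \pi \to \pi$ is the intertwiner characterized on the Whittaker model by $W_{A_\theta \varphi}(g) = W_\varphi(\theta(g))$. Similarly, $\tr_{\wtd\pi_v}\td f_v = \tr\bigl(\pi_v(f_v)\circ A_{\theta_v}\bigr)$ for local intertwiners $A_{\theta_v}$ defined the same way on each $\pi_v$.

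The main step is to verify the factorization
\[
A_\theta \;=\; \bigotimes{}'_v A_{\theta_v}
\]
under $\pi = \bigotimes'_v \pi_v$. Since $\theta$ acts componentwise on $G_N(\A) = \prod'_v G_N(F_v)$, precomposition with $\theta$ acts on the factorized Whittaker function $W_\varphi = \prod_v W_{\varphi_v}$ as $\prod_v(\text{precomposition with }\theta_v)$, and uniqueness of Whittaker models identifies the global intertwiner with this tensor product of local intertwiners. The restricted tensor product makes sense because Propositions \ref{epsformulaunram} and \ref{epsformulaunramconj}, combined with the fact that $\theta = \Ad_{w_N}\circ\iota$ with $w_N \in K_v$ at unramified places, imply that $A_{\theta_v}$ fixes the spherical newvector at almost all $v$.

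Combining this with the standard factorization $\pi(f) = \bigotimes'_v \pi_v(f_v)$ for factorizable test functions yields
\[
\pi(f)\circ A_\theta \;=\; \bigotimes{}'_v \bigl(\pi_v(f_v)\circ A_{\theta_v}\bigr),
\]
and the multiplicativity of traces on tensor products of operators gives the proposition. The main subtlety is the factorization of $A_\theta$, which requires the careful verification that the local and global Whittaker-model extensions of the $\theta$-action are consistent under the restricted tensor product decomposition; everything else is standard.
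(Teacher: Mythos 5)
Your proposal is correct and follows essentially the same route as the paper's (quite terse) proof: pass to the global Whittaker model, where the extension $\wtd\pi$ is defined via precomposition by $\theta$, observe that this precomposition factors place-by-place on the factored Whittaker function, and combine with the standard factorization of $\pi(f)$. Your explicit verification that $A_{\theta_v}$ fixes the spherical vector at almost all places (using Proposition \ref{epsformulaunram} together with $w_N \in K_v$ and right $K_v$-invariance) is a useful piece of bookkeeping for the restricted tensor product that the paper leaves implicit, and is correct. One small remark: the appeal to "uniqueness of Whittaker models" to identify $A_\theta$ with $\bigotimes'_v A_{\theta_v}$ is not really needed — the factorization $W_\varphi = \prod_v W_{\varphi_v}$ is part of the fixed data of the restricted tensor product decomposition, and precomposition with $\theta$ visibly acts componentwise on this product, so the identification is direct. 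The paper also invokes the computation \eqref{iotaconsistency} to make sure the Whittaker-model action of $\theta$ matches the automorphic twist $\varphi \mapsto \varphi^{\iota_{\mathfrak n}}$, which is implicitly what licenses working in the Whittaker model at all, but since the paper \emph{defines} $\wtd\pi$ through the Whittaker model, your argument works directly from that definition.
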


\begin{proof}
Through the isomorphism of $G(\A)$-reps from $\pi$ to its global Whittaker model and computation \eqref{iotaconsistency}, we can calculate $\tr_{\td \pi} \td f$ as the trace of a convolution operator on the global Whittaker model and then factor. 
\end{proof}

Our consideration of twisted traces is motivated by the multiplication formulas:
\begin{equation}\label{twistedmultform}
\begin{cases}
   (J^{-1}_{v,k} w_N \rtimes \theta) W_\varphi = W_{\varphi_v}^{\iota_{v,k}} & \text{self-dual}, \\
   (J^{-1}_{v,k} w_N \rtimes \bar \theta) W_\varphi = W_{\varphi_v}^{\bar \iota_{v,k}} & \text{conjugate self-dual.}
\end{cases}
\end{equation}
---i.e. we can relate the eigenvalue of the appropriate $\iota$ on a newvector to traces against $J w_N \rtimes \theta$. 

In particular, if we define
\[
\td f_{v,k} = (J^{-1}_{v,k} w_N \rtimes \theta) \bar \1_{K_{1,v(k)}},
\]
and assume that $\pi_v$ has conductor $\mf p_v^k$ with newvector $\varphi_v$, we get:
\[
\tr_{\td \pi_v} \td f_{v,k} = \tr_{\td \pi_v^{K_{1,v}(k)}} (\iota_{v,k}) = \tau^\psi_{v,k}(\varphi_v).
\]
from \ref{epsformula}. Therefore, $\td f_{v,k}$ works as our test function to probe $\eps(1/2, \pi_v)$ if $\pi_v$ has conductor exactly $\mf p_v^k$. However, it also picks up the traces against oldform spaces for $\pi_v$ of conductor less than $k$.

\subsection{Twisted Traces on Spaces of Oldforms}\label{sec oldforms}
We now study the extra oldform contributions. 

\subsubsection{Description of the Oldform Space}
We first recall results from \cite{Ree91} describing the space of $K_{1,v}(k)$-fixed vectors in any generic representation $\pi_v$ of $\GL_{N,v}$ in terms of a Hecke algebra for $GL_{N-1,v}$. This lets us compute the action of~$J_{v,i} w_N \rtimes \theta$ on all oldforms instead of just the newvectors.

Let $\ms H_{N-1}$ be the unramified (with respect to $K = \GL_{N-1}(\mc O_v)$) Hecke algebra for $\GL_{N-1,v}$. Let $T \subseteq \GL_{N,v}$ be the diagonal torus so that $\ms H_{N-1}$ has a Cartan basis of double coset indicator functions $\1_{K \lb(\varpi_v) K}$ for $\lb$ a dominant cocharacter of $T$. 

Every such $\lambda$ is a unique linear combination of the fundamental coweights
\[
\lb_i := (\overbrace{1, \dotsc, 1}^i, \overbrace{0, \dotsc, 0}^{N-1 - i}) \qquad 1 \leq i \leq N-1
\]
with all coefficients positive except for that of $\lb_{N-1}$. For $a_1, \dotsc, a_{N-1} \in \Z^n$, define
\[
T(a_1, \dotsc, a_{N-1}) := \1_{K (a_1 \lb_1 + \cdots + a_{N-1} \lb_{N-1})(\varpi_v) K} \in \ms H_{N-1}
\]
and
\[
\ms H_{N-1}^{\leq k} := \left\langle T(a_1, \dotsc, a_{N-1}) : a_i \geq 0, \sum a_i \leq k  \right\rangle \subseteq \ms H_{N-1}.
\]

There is a normalized action of $\ms H_{N-1}$ on $\pi_v$ given by 
\[
h * \varphi := \int_{\GL_{N-1,v}} h(g) \begin{pmatrix} g^{-1} & \\ & 1 \end{pmatrix} \varphi |\det g|^{1/2} \, dg.
\]
\begin{thm}[Slight modification of {\cite[Thm. 1]{Ree91}}]\label{oldforms}
Let $\pi_v$ be a generic irreducible representation of $\GL_N(F_v)$ with conductor $c$ and newvector $\varphi_v$. Then for $k \geq 0$, the map 
\[ 
h \mapsto h * \varphi_v
\] 
induces a linear isomorphism
\[
\Psi_{\pi_v}: \ms H_{N-1}^{\leq k} \to \pi_v^{K_1(\mf p_v^{c+k})}.
\]
\end{thm}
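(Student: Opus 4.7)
The plan is to follow Reeder's proof \cite{Ree91}, with mild modifications to account for the conductor $c$ of $\pi_v$. The argument proceeds in three steps: well-definedness, dimension matching, and injectivity.

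First I would check that $\Psi_{\pi_v}$ takes values in $\pi_v^{K_1(\mf p_v^{c+k})}$. By linearity it suffices to handle each basis element $T(a_1,\ldots,a_{N-1}) = \1_{K \lambda(\varpi_v) K}$ with $\lambda = \sum a_i \lambda_i$ and $\sum a_i \leq k$. Decomposing the double coset into left $K$-cosets reduces the integral to a finite sum of vectors $\begin{pmatrix} g^{-1} & \\ & 1 \end{pmatrix} \varphi_v$, and $K_1(\mf p_v^{c+k})$-invariance follows from the containment
\[
\begin{pmatrix} g & \\ & 1 \end{pmatrix} K_1(\mf p_v^{c+k}) \begin{pmatrix} g^{-1} & \\ & 1 \end{pmatrix} \subseteq K_1(\mf p_v^c)
\]
for $g$ of the form $\kappa \lambda(\varpi_v)$, $\kappa \in K$. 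Since $K$ already preserves the mirahoric structure, this reduces to the diagonal case $g = \lambda(\varpi_v)$, where the condition that the bottom row of a matrix in $K_1(\mf p_v^{c+k})$ is congruent to $(0,\ldots,0,1) \bmod \mf p_v^{c+k}$ becomes the weaker condition modulo $\mf p_v^c$ after conjugation, because the $(N,j)$ entry is multiplied by a power of $\varpi_v$ whose valuation is bounded by $k$.

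For the dimension count, the monomials $T(a_1,\ldots,a_{N-1})$ with $a_i \geq 0$, $\sum a_i \leq k$ number $\binom{N-1+k}{N-1}$, and the same formula governs $\dim \pi_v^{K_1(\mf p_v^{c+k})}$ by the standard extension of Theorem \ref{newformexistence} to oldvector spaces (also due to Reeder).

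The main obstacle is injectivity of $\Psi_{\pi_v}$, which combined with the matching dimensions yields the isomorphism. I would pass to the Whittaker model, where direct computation gives
\[
W_{h*\varphi_v}\!\begin{pmatrix} \m a & \\ & 1 \end{pmatrix} = \int_{\GL_{N-1,v}} h(g)\, W_{\varphi_v}\!\begin{pmatrix} \m a g^{-1} & \\ & 1 \end{pmatrix} |\det g|^{1/2} \, dg.
\]
Restricting $\m a$ to the diagonal torus $T_{N-1}$ and using the known support and asymptotic behavior of the newvector Whittaker function on $T_{N-1}$ (cf.\ \cite{Mat13}), this transforms injectivity of $\Psi_{\pi_v}$ into injectivity of a convolution map on the unramified Hecke algebra of $\GL_{N-1,v}$, which in turn follows from the injectivity of the Satake transform restricted to $\ms H_{N-1}^{\leq k}$. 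This completes the proof.
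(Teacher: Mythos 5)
Your proposal attempts to re-prove Reeder's Theorem 1 from scratch, whereas the paper's proof is essentially a one-line citation: the result \emph{is} \cite[Thm.\ 1]{Ree91}, and the only ``modification'' is a change of basis for $\ms H_{N-1}^{\leq k}$ (from Reeder's Satake/symmetric-polynomial basis to the double-coset basis $T(a_1,\dotsc,a_{N-1})$), accomplished by the Satake isomorphism. Re-deriving the whole theorem is unnecessary given that interface; but since you have tried, let me flag a genuine gap in the argument as written.

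The well-definedness step leans on the containment
\[
\begin{pmatrix} g & \\ & 1 \end{pmatrix} K_1(\mf p_v^{c+k}) \begin{pmatrix} g^{-1} & \\ & 1 \end{pmatrix} \subseteq K_1(\mf p_v^c)
\]
for $g = \kappa \lb(\varpi_v)$, and this is false once $N \geq 3$. Writing $u = \begin{pmatrix} A & b \\ c' & d\end{pmatrix}$ with $A \in M_{N-1}(\mc O_v)$, the conjugated top-left block is $\lb(\varpi_v) A \lb(\varpi_v)^{-1}$, which need not be integral: for $N=3$, $\lb = (1,0)$, $A = \begin{pmatrix}1&0\\1&1\end{pmatrix}$ one gets a sub-diagonal entry $\varpi_v^{-1}$. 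So the individual vectors $\begin{pmatrix} g_i^{-1} & \\ & 1\end{pmatrix}\varphi_v$ are \emph{not} each fixed by $K_1(\mf p_v^{c+k})$, and the reduction to summand-by-summand invariance fails. The correct argument uses the Iwahori-type factorization $K_1(\mf p_v^{c+k}) = (K_1 \cap P)(K_1 \cap U^-)$ for the $(N-1,1)$ parabolic $P$: the block-upper-triangular part $K_1\cap P$ acts by \emph{permuting} the left-coset representatives $g_iK$ (using $K$-biinvariance of $T(a_1,\dots,a_{N-1})$ and that $g$ has entries in $\mc O_v$ since $\lb$ is non-negative), while only the lower-unipotent part $K_1 \cap U^-$ satisfies the conjugation containment you wrote (there the bound $\sum a_i \leq k$ gives $c' g^{-1} \equiv 0 \pmod{\mf p_v^c}$). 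Your Whittaker-model sketch of injectivity is in the spirit of Reeder's actual proof, though it is currently too compressed to be checked — in particular it is not obvious that the convolution map on $\ms H^{\leq k}_{N-1}$ you end up with is one whose injectivity ``follows from the Satake transform''; Reeder's argument pins this down by an upper-triangularity-with-nonzero-diagonal argument using the precise support of the new-vector Whittaker function.
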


\begin{proof}
The Satake transform lets us change between the basis for $\ms H_{N-1}^{\leq k}$ described here and the one in \cite{Ree91}. 
\end{proof}

\subsubsection{Twisted Action on Oldvectors: Self-Dual Case}
Using the isomorphism $\Psi_{\pi_v}$, we can compute a key formula the self-dual case:
\begin{prop}\label{twistedactionoldvectors}
Fix a generic self-dual representation $\pi_v$ of $\GL_N$ with conductor~$c$ and newvector $\varphi_v \in \pi_v$. Then we have 
\[
(J_{v,c+k}^{-1} w_N \rtimes \theta) \Psi_{\pi_v}(h |\det|^{-1/2}) =  \tau_c(\varphi_v) \Psi_{\pi_v} (h(\varpi_v^k(*)^{-T}) |\det|^{-1/2} )
\]
for all $h \in \ms H_{N-1}$ and where we recall $\tau_c(\varphi_v) = \pm 1$ is the eigenvalue of $(J_{v,c}^{-1} w_N \rtimes \theta)$ acting on $\varphi_v$. 
\end{prop}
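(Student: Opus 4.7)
The plan is to reduce the proposition to a single commutation identity in the twisted group $\wtd G_{N,v}$ and then combine it with the eigenvalue property
\[
\pi\bigl(J^{-1}_{v,c}w_N \rtimes \theta\bigr)\varphi_v = \tau_c(\varphi_v)\varphi_v
\]
supplied by Theorem \ref{epsformula} via \eqref{twistedmultform}. Specifically, the key algebraic identity I would establish is
\[
\bigl(J^{-1}_{v,c+k} w_N \rtimes \theta\bigr) \cdot \begin{pmatrix} g^{-1} & \\ & 1 \end{pmatrix} = \begin{pmatrix} \varpi_v^k g^T & \\ & 1 \end{pmatrix} \cdot \bigl(J^{-1}_{v,c} w_N \rtimes \theta\bigr)
\]
as an equality in $\wtd G_{N,v}$, valid for every $g \in \GL_{N-1}(F_v)$. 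Once this is in hand, I apply both sides to $\varphi_v$, integrate against $h(g)\,dg$, and perform a change of variables to recover the stated formula.

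To prove the identity, first compute
\[
\theta \begin{pmatrix} g^{-1} & \\ & 1 \end{pmatrix} = w_N \begin{pmatrix} g^T & \\ & 1 \end{pmatrix} w_N^{-1}
\]
directly from $\theta(A) = w_N A^{-T} w_N^{-1}$. Expanding the semidirect-product composition on the left produces a factor $w_N^2 = (-1)^{N-1}I$ which cancels against $w_N^{-1} = (-1)^{N-1} w_N$. The remaining computation reduces to the observation that $J^{-1}_{v,c+k} = \diag(\varpi_v^{c+k},\dots,\varpi_v^{c+k},1)$ commutes with every top-left block embedding and acts on the block by multiplication by $\varpi_v^{c+k}$. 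Finally, I split $w_N = J_{v,c} \cdot (J^{-1}_{v,c} w_N)$ in order to expose the desired $(J^{-1}_{v,c} w_N \rtimes \theta)$ factor; absorbing $J_{v,c}$ into the block cancels one $\varpi_v^c$ and leaves exactly $\varpi_v^k g^T$ in the top-left position.

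Applying $\pi$ to both sides of the identity and acting on $\varphi_v$ then gives
\[
\pi\bigl(J^{-1}_{v,c+k} w_N \rtimes \theta\bigr) \pi\!\begin{pmatrix} g^{-1} & \\ & 1 \end{pmatrix} \varphi_v = \tau_c(\varphi_v) \, \pi\!\begin{pmatrix} \varpi_v^k g^T & \\ & 1 \end{pmatrix} \varphi_v.
\]
Integrating against $h(g)\, dg$ over $\GL_{N-1}(F_v)$ and using linearity of the twisted action produces
\[
\bigl(J^{-1}_{v,c+k} w_N \rtimes \theta\bigr) \Psi_{\pi_v}(h|\det|^{-1/2}) = \tau_c(\varphi_v) \int h(g)\, \pi\!\begin{pmatrix} \varpi_v^k g^T & \\ & 1 \end{pmatrix} \varphi_v \,dg.
\]
The substitution $g' = \varpi_v^{-k} g^{-T}$, equivalently $g = \varpi_v^{-k} (g')^{-T}$, rewrites $\varpi_v^k g^T$ as $(g')^{-1}$. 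Since multiplicative Haar measure on $\GL_{N-1}(F_v)$ is invariant under both $g\mapsto g^{-T}$ and multiplication by a constant, the Jacobian is trivial, and the integral becomes $\Psi_{\pi_v}(h(\varpi_v^{-k}(*)^{-T})|\det|^{-1/2})$ as required.

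The main obstacle is the algebraic identity in the first step. Conjugation by $w_N$ sends top-left block embeddings to bottom-right ones via a sign-weighted anti-diagonal reflection, so sign and transpose errors are easy to introduce; careful bookkeeping with the identities $w_N^2 = (-1)^{N-1}I$ and $w_N^{-T} = w_N$ is essential, and the final formula works only because the $(-1)^{N-1}$ produced by $\theta$ cancels exactly against the one coming from $w_N^{-1}$. Everything downstream—applying the eigenvalue relation, integrating, and changing variables—is then formal.
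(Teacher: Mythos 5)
Your proposal is correct and follows essentially the same strategy as the paper's own proof: commute the twisted operator past the top-left block embedding (using that $\theta$ and $J_{v,\ast}$ interact cleanly with block-diagonal matrices), invoke the eigenvalue relation for $J^{-1}_{v,c}w_N\rtimes\theta$ on the newvector, and change variables in the Hecke integral. The only cosmetic difference is that you fold the factorization $J^{-1}_{v,c+k}=J^{-1}_{v,k}J^{-1}_{v,c}$ directly into the initial commutation identity, whereas the paper states the commutation with $J^{-1}_{v,c+k}w_N\rtimes\theta$ intact and then peels off $J^{-1}_{v,k}$ as a separate step; the two presentations are algebraically identical.
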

\begin{proof}
Computing and recalling that $\Psi_{\pi_v}(\varphi) = h * \varphi_v$:  
\begin{align*}
(J_{v,c+k}^{-1} w_N \rtimes \theta) \Psi_{\pi_v} (h |\det|^{-1/2}) &= \int_{\GL_{N-1,v}} h(g) (J_{v,k+c}^{-1} w_N \rtimes \theta) \begin{pmatrix} g^{-1} & \\ & 1 \end{pmatrix} \varphi_v  \, dg \\
&= \int_{\GL_{N-1,v}} h(g)  \begin{pmatrix} g^{T} & \\ & 1 \end{pmatrix} (J_{v,k+c}^{-1} w_N \rtimes \theta) \varphi_v \, dg \\
&= \int_{\GL_{N-1,v}} h(g)  \begin{pmatrix} g^{T} & \\ & 1 \end{pmatrix} J_{v,k}^{-1} (J_{v,c}^{-1} w_N \rtimes \theta) \varphi_v \, dg \\
&=  \tau_c(\varphi_v) \int_{\GL_{N-1,v}} h(g)  \begin{pmatrix} \varpi_v^{-k}g^{T} & \\ & 1 \end{pmatrix}  \varphi_v \, dg  \\
&= \tau_c(\varphi_v)\int_{\GL_{N-1,v}} h(\varpi_v^kg^{-T})  \begin{pmatrix} g^{-1} & \\ & 1 \end{pmatrix}  \varphi_v \, dg \\
&=  \tau_c(\varphi_v) \Psi_{\pi_v} (h(\varpi_v^k(*)^{-T}) |\det|^{-1/2} ),
\end{align*}
which finishes the argument. 
\end{proof}

\begin{prop}\label{twistedtraceoldvectors}
Fix a generic self-dual representation $\pi_v$ of $\GL_N$ with conductor~$\mf p_v^c$, and newvector $\varphi_v$. Then,
\begin{itemize}
\item If $N$ is even: %sanity check: matches (5) on page 133 of Skoruppa-Zagier when $N=2$ and all binomial coefficients become 1
\[
\tr_{\wtd \pi_v^{K_1(c+k)}}(J_{v,c+k}^{-1} w_N \rtimes \theta) =  \tau_c(\varphi_v)
\begin{dcases}
\binom{k/2 + N/2 - 1}{N/2 - 1} & k \text{ even} \\
0 & k \text{ odd.}
\end{dcases}
\]
\item If $N$ is odd:
\[
\tr_{\wtd \pi_v^{K_1(c+k)}}(J_{v,c+k}^{-1} w_N \rtimes \theta) =  \tau_c(\varphi_v)
\begin{dcases}
\binom{k/2 + (N-1)/2}{(N-1)/2} & k \text{ even} \\
\binom{(k-1)/2 + (N-1)/2}{(N-1)/2}  & k \text{ odd.}
\end{dcases}
\]
\end{itemize}
We recall $\tau_c(\varphi_v) = \pm 1$ is the eigenvalue of $(J_{v,c}^{-1} w_N \rtimes \theta)$ acting on $\varphi_v$. 
\end{prop}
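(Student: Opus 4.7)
The plan is to transfer the trace computation to the Hecke algebra via Theorem~\ref{oldforms} and Proposition~\ref{twistedactionoldvectors}, then reduce to a combinatorial count of fixed points of an explicit involution on the Cartan basis. Concretely, under $\Psi_{\pi_v}\colon \ms H_{N-1}^{\leq k} \xrightarrow{\sim} \pi_v^{K_1(c+k)}$ the twisted operator corresponds to $\tau_c$ times the ``geometric'' operator $T_{\text{op}}\colon h \mapsto h(\varpi_v^{-k}(*)^{-T})$. On the basis element $T(a_1,\dots,a_{N-1}) = \mathbf{1}_{K\lambda(\varpi) K}$ with $\lambda = \sum a_i \lambda_i$, the transformation $g \mapsto \varpi_v^{-k} g^{-T}$ carries the double coset to the one for the coweight $-\lambda - ke$; after Weyl-reordering into the dominant chamber the result has coordinates $\mu'_i = -k - \mu_{N-i}$, which has negative last coordinate and thus lies outside $\ms H_{N-1}^{\leq k}$.

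The key combinatorial claim is that via the Hecke-algebra relations coming from the central character of $\pi_v$ and the explicit identification of oldvectors, this Hecke element is equivalent modulo $\ker(\cdot * \varphi_v)$ to a unique basis element, giving the involution
\[
(a_1, a_2, \dots, a_{N-1}) \longmapsto \bigl(a_{N-2}, a_{N-3}, \dots, a_1, \, k - \textstyle\sum_j a_j\bigr)
\]
on the index set $\{(a_i) : a_i \geq 0,\, \sum a_i \leq k\}$.  The specific form of the last coordinate is pinned down by the fact that $(J^{-1}_{v,c+k} w_N \rtimes \theta)^2$ acts trivially (up to a precisely computable central element that is absorbed by $\omega_{\pi_v}^2 = 1$) on $\pi_v^{K_1(c+k)}$: indeed, squaring the candidate map $(a_1,\dots,a_{N-1}) \mapsto (a_{N-2},\dots,a_1, b)$ returns $(a_1,\dots,a_{N-2}, k - \sum_{j < N-1} a_j - b)$, and agreement with the identity forces $b = k - \sum_j a_j$. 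No additional sign arises because $w_N^2 = (-1)^{N-1} I$ gets cancelled by the transpose action of $\theta$ in the computation of $(J^{-1}_{v,c+k} w_N \rtimes \theta)^2$.

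Once the action on the basis is known, the trace equals $\tau_c(\varphi_v)$ times the number of fixed elements, i.e.\ of $(a_1,\dots,a_{N-1})$ with $a_i = a_{N-1-i}$ for $1 \leq i \leq N-2$ and $2 a_{N-1} + \sum_{i<N-1} a_i = k$. For $N$ even, setting palindrome parameters $b_i = a_i = a_{N-1-i}$ for $i = 1,\dots, N/2-1$ gives $\sum_{i<N-1} a_i = 2\sum b_i$ and the constraint becomes $2(a_{N-1} + \sum b_i) = k$, which has solutions only when $k$ is even and then counts $\binom{k/2 + N/2 - 1}{N/2-1}$ tuples.  For $N$ odd, writing $m = a_{(N-1)/2}$ for the central palindrome entry and $b_i = a_i$ for $i < (N-1)/2$, the constraint becomes $m + 2(a_{N-1} + \sum b_i) = k$, which forces $m \equiv k \pmod 2$ and yields $\binom{k/2 + (N-1)/2}{(N-1)/2}$ or $\binom{(k-1)/2 + (N-1)/2}{(N-1)/2}$ solutions in the even/odd cases respectively.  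The main obstacle in this program is rigorously verifying the second-paragraph claim describing $T_{\text{op}}$ on the basis; this boils down to correctly handling the Hecke-algebra identification for coweights with negative last coordinate and checking that the central-character and $q_v$-power scalars cleanly cancel, using self-duality of $\pi_v$.
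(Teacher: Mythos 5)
Your proposal follows the paper's strategy: transfer the trace to $\ms H_{N-1}^{\leq k}$ via Theorem~\ref{oldforms} and Proposition~\ref{twistedactionoldvectors}, then count fixed points of the induced permutation on the Cartan index set. The Weyl-chamber computation you carry out is correct and matches what the operator $h \mapsto h(\varpi_v^{-k}(*)^{-T})$ actually does on the double-coset basis: it sends $T(a_1,\dots,a_{N-1})$ to the coset with dominant coweight $(a_{N-2},\dots,a_1,-k-d)$, whose last coordinate is negative and hence lies outside $\ms H_{N-1}^{\leq k}$. You are right to flag this; the paper's own proof asserts the literal Hecke-algebra identity $T(a_1,\dots,a_{N-1})(\varpi_v^{-k}g^{-T}) = T(a_{N-2},\dots,a_1,k-d)(g)$, which fails as a statement in $\ms H_{N-1}$ (for $N-1=1$ it reads $T(a_1)(\varpi_v^{-k}g^{-1}) = T(k-a_1)(g)$, while a direct check gives $T(-a_1-k)(g)$). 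So you have detected a genuine sign/projection issue in the paper's derivation, even though the final formula in the Proposition is correct (it matches the classical $\GL_2$ computation).

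However, your proposed repair does not close the gap. You argue that the involutivity of $(J_{v,c+k}^{-1}w_N\rtimes\theta)^2$ (up to center) forces the last coordinate to be $b = k - \sum_j a_j$. It does not: the squaring constraint on any candidate map $(a_1,\dots,a_{N-1}) \mapsto (a_{N-2},\dots,a_1,b(a))$ is also satisfied by, e.g., $b(a) = -a_{N-1}$ (or any affine $b = c_0 - \sum c_i a_i$ with $c_{N-1}=1$ and $c_i = c_{N-1-i}$ for $i \leq N-2$); the constant $c_0$ is not pinned down. What is actually required is a computation, at the level of the module $\Psi_{\pi_v}$, showing that the compressed operator $\Psi_{\pi_v}^{-1}\circ P_{K_1(c+k)}\circ\Psi_{\pi_v}\circ T_{\mathrm{op}}$ acts by the claimed permutation with eigenvalue $\tau_c$ on the diagonal entries — equivalently, that the naive image $T(a_{N-2},\dots,a_1,-k-d)*\varphi_v$ differs from $T(a_{N-2},\dots,a_1,k-d)*\varphi_v$ only by something killed by the projection, and with matching scalar at fixed points. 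You acknowledge this as ``the main obstacle,'' which is honest, but as written the proposal has the same unresolved step as the paper's proof, and the involution argument offered to plug it is not sufficient. The final fixed-point count once the permutation is granted is correct and matches the paper.
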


\begin{proof}
By Proposition \ref{twistedactionoldvectors} and Theorem \ref{oldforms}, it suffices to compute the trace of 
\[
h |\det|^{-1/2} \mapsto h(\varpi_v^k(*)^{-T}) |\det|^{-1/2}
\]
on $\ms H_{N-1}^{\leq k}$. We use the basis of elements $T(a_1, \dotsc, a_{N-1})|\det|^{-1/2}$. 

Since $-\lb_i$ is Weyl-conjugate to $\lb_{N- 1 - i} - \lb_{N-1}$ for $1 \leq i \leq N-2$, we get that if $\sum a_i = d$,
\begin{equation}
 T(a_1, \dotsc, a_{N-1})(\varpi_v^k g^{-T}) = T(a_{N-2}, a_{N-3}, \cdots, a_1, k - d )(g) .  
\end{equation}
The trace is then exactly the number fixed points of this involution: indices $(a_i)_i$ such that
\[
(a_1, \dotsc, a_{N-1}) = (a_{N-2}, a_{N-3}, \cdots, a_1, k - d ).
\]
If $N-2$ is even, there are only fixed points when $k$ is even. In this case, the choices for~$a_1, \dotsc, a_{N/2-1}$ determine the fixed point and can be any values whose sum is~$\leq k/2$. This gives the claimed formula.

If~$N-2$ is odd, the choices for $a_1, \dotsc, a_{(N-1)/2}$ still determine the fixed point. If $k$ is even, then $a_{(N-1)/2}$ is even and the rest can be any values whose sum is~$\leq k - a_{(N-1)/2}$. If $k$ is odd, then the situation is the same except $a_{(N-1)/2}$ needs to be odd. Both case give the claimed formula. 
\end{proof}

\subsubsection{Twisted Action on Oldvectors: Conjugate Self-Dual Case}
Similarly:
\begin{lem}\label{twistedactionoldvectorsconj}
Fix a generic conjugate self-dual representation $\pi_v$ of $\GL_N(F_v \otimes_F E)$ with conductor $\mf p_v^c$ and newvector $\varphi_v$. Then we have 
\[
(J_{v,c+k}^{-1} w_N \rtimes \bar \theta) \Psi_{\pi_v}(h |\det|^{-1/2}) =  \bar \tau_c(\varphi_v) \Psi_{\pi_v} (h(\varpi_v^k\bar{(*)}^{-T}) |\det|^{-1/2} )
\]
for all $h \in \ms H_{N-1}$ and where we recall $\bar \tau_c(\varphi_v) = \pm 1$ is the eigenvalue of $(J_{v,c}^{-1} w_N \rtimes \bar \theta)$ acting on $\varphi_v$. 
\end{lem}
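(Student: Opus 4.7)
The plan is to follow the exact same strategy as the self-dual case in Proposition~\ref{twistedactionoldvectors}, with the only substantive change being to track the conjugation $\sigma$ that enters through $\bar\theta$ instead of $\theta$.

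First I would unwind the definition of $\Psi_{\pi_v}$ and pull the twisted operator $(J_{v,c+k}^{-1} w_N \rtimes \bar\theta)$ inside the Hecke-algebra integral, giving
\[
(J_{v,c+k}^{-1} w_N \rtimes \bar\theta) \Psi_{\pi_v}(h |\det|^{-1/2}) = \int_{\GL_{N-1}(F_v \otimes_F E)} h(g) (J_{v,c+k}^{-1} w_N \rtimes \bar\theta) \begin{pmatrix} g^{-1} & \\ & 1 \end{pmatrix} \varphi_v \, dg.
\]
Next I would commute $\bar\theta$ past the block matrix: since $\bar\theta$ is inner to $\bar\iota : g \mapsto \bar g^{-T}$ and $w_N$ normalizes the block Levi $\GL_{N-1}\times\GL_1$ suitably, a short matrix computation shows that conjugation by $J_{v,c+k}^{-1} w_N \rtimes \bar\theta$ turns $\begin{pmatrix} g^{-1} & \\ & 1 \end{pmatrix}$ into $\begin{pmatrix} \bar g^{t} & \\ & 1 \end{pmatrix}$, up to absorbing a diagonal factor from $J_{v,c+k}$ that precisely produces the $\varpi_v^k$ rescaling when compared to $J_{v,c}$.

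The main (and really only non-routine) step is then to apply the eigenvalue relation $(J_{v,c}^{-1} w_N \rtimes \bar\theta) \varphi_v = \bar\tau_c(\varphi_v) \varphi_v$ from Lemma~\ref{newformeigenvalueconj}. Because $J_{v,c+k}^{-1} = \diag(\varpi_v^{c+k},\dots,\varpi_v^{c+k},1)^{-1}\cdot \diag(\varpi_v^{-(c+k)},\dots,1)$ differs from $J_{v,c}^{-1}$ by a central-in-the-upper-block factor of $\varpi_v^k$, the integrand becomes
\[
\bar\tau_c(\varphi_v)\, h(g)\, \begin{pmatrix} \varpi_v^{k}\bar g^{t} & \\ & 1 \end{pmatrix}\varphi_v.
\]
Here I would need to double-check that the caveat after Lemma~\ref{newformeigenvalueconj} --- that $\bar\iota_{v,k}$ is only an involution up to an element of $K_{1,v}(k)$ --- does not perturb the eigenvalue; since that discrepancy is a scalar $\diag((\varpi/\bar\varpi)^{-k},\dots,1)$ which lies in $K_{1,v}(k)$ and acts trivially on every $K_{1,v}(c+k)$-fixed vector via the Hecke action, it really does drop out.

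Finally I would change variables $g \mapsto \varpi_v^{-k}\bar g^{-T}$ (which is a measure-preserving bijection of $\GL_{N-1}(F_v \otimes_F E)$ since $|\cdot|$ of a unit times conjugation is $1$). This rewrites $h(g) \begin{pmatrix} \varpi_v^k \bar g^t & \\ & 1\end{pmatrix}$ as $h(\varpi_v^{-k}\bar g^{-T})\begin{pmatrix} g^{-1} & \\ & 1\end{pmatrix}$, and packaging the $|\det|^{-1/2}$ factors back together one recognizes the integral as $\Psi_{\pi_v}(h(\varpi_v^{-k}\bar{(*)}^{-T})|\det|^{-1/2})$, giving the claimed identity. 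The expected main obstacle is simply the careful bookkeeping of the $J$-matrices and the half-integral ramified case where $\varpi_v^{1/2} = \varpi_w$; but once one notes that any such discrepancy lies in the level subgroup acting trivially on the relevant oldform space, the computation goes through verbatim.
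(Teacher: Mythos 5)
Your proposal follows exactly the paper's route: the paper's own proof is one line — ``This is the same argument as Proposition~\ref{twistedactionoldvectors} after noting that conjugation sends any $K_{N-1,v}$-double coset to itself'' — and your plan is a faithfully expanded version of the self-dual computation, inserting bars as dictated by $\bar\theta$. The chain unwind $\Psi_{\pi_v}$, commute the twisted operator through the $\GL_{N-1}$-block (getting $\bar g^{t}$ in place of $g^{t}$), pull out the $\diag(\varpi^k,\dots,\varpi^k,1)$ factor, apply the $\bar\tau_c$-eigenvalue relation, and change variables, is exactly right.

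Two small corrections to your bookkeeping. First, $w_N$ does not in fact normalize the block Levi $\GL_{N-1}\times\GL_1$; what actually makes the block structure survive is that in $A\bar\theta(M)A^{-1}$ with $A=J_{v,c+k}^{-1}w_N$ the $w_N$ in $A$ cancels the $w_N$ inside $\bar\theta(M)=w_N\bar M^{-T}w_N^{-1}$, leaving $J_{v,c+k}^{-1}\bar M^{-T}J_{v,c+k}=\bar M^{-T}$. Second, the $\varpi$-vs-$\bar\varpi$ discrepancy that matters is not the one in your ``involution caveat'' remark (that one concerns whether $\bar\iota_{v,c}$ acts as an involution on the newvector, which is already settled inside Lemma~\ref{newformeigenvalueconj}). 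Rather, it surfaces in the change of variables: setting $g'=\varpi_w^{-k}\bar g^{-T}$ gives $g=\bar\varpi_w^{-k}\bar{(g')}^{-T}$, so the argument of $h$ is $\bar\varpi_w^{-k}\bar{(g')}^{-T}$ rather than $\varpi_w^{-k}\bar{(g')}^{-T}$. These differ by the scalar unit $(\bar\varpi_w/\varpi_w)^{-k}$ sitting in $\GL_{N-1}(\mc O)$, and what absorbs it is $K_{N-1,v}$-bi-invariance of $h$ (equivalently, the conjugation-stability of Cartan double cosets that the paper emphasizes) — not triviality of the level subgroup $K_{1,v}(c+k)$ on the oldform space, which lives on $\GL_N$ and is not where this scalar appears. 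The conclusion is correct; only the attribution of why it drops out is off.
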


\begin{proof}
This is the same argument as Proposition \ref{twistedactionoldvectors} after noting that conjugation sends any $K_{N-1,v}$-double coset to itself. 
\end{proof}

\begin{prop}\label{twistedtraceoldvectorsconj}
Fix a generic conjugate self-dual representation $\pi_v$ of $\GL_N(F_v \otimes_F E)$ with %central character $\om$, 
(locally-indexed as in \S\ref{sec conductorsconj}) conductor $c(\pi_v) = k$ and newvector $\varphi_v$. Then,

\begin{itemize}
\item If $v$ is non-split and $N$ is even:
\[
\tr_{\wtd \pi_v^{K_1(c+k)}}(J_{v,c+k}^{-1} w_N \rtimes \bar \theta) = \bar \tau_c(\varphi_v)
\begin{dcases}
\binom{k/2 + N/2 - 1}{N/2 - 1} & k \text{ even} \\
0 & k \text{ odd}
\end{dcases}
\]
\item If $v$ non-split and $N$ is odd:
\[
\tr_{\wtd \pi_v^{K_1(c+k)}}(J_{v,c+k}^{-1} w_N \rtimes \bar \theta) =  \bar \tau_c(\varphi_v)
\begin{dcases}
\binom{k/2 + (N-1)/2}{(N-1)/2} & k \text{ even} \\
\binom{(k-1)/2 + (N-1)/2}{(N-1)/2}  & k \text{ odd}
\end{dcases}
\] 
\item If $v$ is split:
\[
\tr_{\wtd \pi_v^{K_1(c+k)}}(J_{v,c+k}^{-1} w_N \rtimes \bar \theta) = \bar \tau_c(\varphi_v)
\binom{k + N - 1}{N - 1}.
\]
\end{itemize}
We recall $\tau_c(\varphi_v) = \pm 1$ is the eigenvalue of $(J_{v,c}^{-1} w_N \rtimes \bar \theta)$ acting on $\varphi_v$. 
\end{prop}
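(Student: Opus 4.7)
The plan is to adapt the proof of the self-dual Proposition \ref{twistedtraceoldvectors} in each of the three cases. In all cases, Lemma \ref{twistedactionoldvectorsconj} combined with the isomorphism $\Psi_{\pi_v}$ from Theorem \ref{oldforms} reduces the computation to the trace of the operator $h|\det|^{-1/2}\mapsto h(\varpi_v^{-k}\bar{(*)}^{-T})|\det|^{-1/2}$ on $\ms H_{N-1}^{\leq k}$, multiplied by the scalar $\bar\tau_c(\varphi_v)$. I would again use the Cartan basis $\{T(a_1,\dotsc,a_{N-1})|\det|^{-1/2}\}$ introduced in \S\ref{sec oldforms}, for which we have the key self-dual identity $T(\vec a)(\varpi^{-k}g^{-T}) = T(a_{N-2},\dotsc,a_1,k-\sum a_i)(g)$, and I would count fixed basis elements of the relevant involution.

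In the non-split case $G_{N-1,v}=\GL_{N-1}(E_w)$, whether inert or ramified, the Galois conjugate $\overline{\varpi_w}$ is again a uniformizer of $E_w$ and so differs from $\varpi_w$ by a unit. For any dominant cocharacter $\lambda$ of the diagonal torus, $\lambda(\overline{\varpi_w})$ therefore lies in $K\lambda(\varpi_w)K$, which gives $T(\vec a)(\bar g)=T(\vec a)(g)$ for every Cartan basis element. Hence the conjugate self-dual operator on $\ms H_{N-1}^{\leq k}$ acts identically to its self-dual analogue $\vec a\mapsto(a_{N-2},\dotsc,a_1,k-\sum a_i)$. The fixed-point counts, broken up by the parities of $N$ and $k$, then match Proposition \ref{twistedtraceoldvectors} verbatim.

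In the split case, $\GL_{N-1}(F_v\otimes_F E)\cong \GL_{N-1}(F_v)^2$ and the oldvector space factors as $\pi_v^{K_1(c+k)}\cong \rho_v^{K_{1,v}(c+k)}\otimes\rho_v^{\vee,K_{1,v}(c+k)}$, so Theorem \ref{oldforms} applied to each factor identifies it with $\ms H_{N-1}^{\leq k}\otimes \ms H_{N-1}^{\leq k}$ and produces a Cartan basis $\{T(\vec a)\boxtimes T(\vec b)\}$. The Galois action swaps the two $\GL_{N-1}(F_v)$-factors, so the operator $h\mapsto h(\varpi_v^{-k}\bar{(*)}^{-T})$ acts by $T(\vec a)\boxtimes T(\vec b)\mapsto T(\vec b^*)\boxtimes T(\vec a^*)$, where $\vec a\mapsto \vec a^*$ denotes the self-dual involution above. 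Fixed basis elements are precisely the pairs $(T(\vec a),T(\vec a^*))$ as $\vec a$ ranges over $\ms H_{N-1}^{\leq k}$, which yields exactly $\binom{k+N-1}{N-1}$ fixed points and, after multiplying by $\bar\tau_c(\varphi_v)$, the stated formula.

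The main obstacle I expect is the split case: one must verify carefully that the extended operator $J_{v,c+k}^{-1}w_N\rtimes\bar\theta$ really does act on the factorization $\pi_v\cong\rho_v\boxtimes\rho_v^\vee$ as ``swap of tensor factors followed by the self-dual operator on each factor''—this amounts to unraveling the semidirect product structure of $\wtd G_{N,v}$ over the two copies of $\GL_N(F_v)$ and checking that, under the $\Psi$-isomorphism, the swap at the level of representations becomes exactly the swap on Hecke-algebra basis elements described above. The non-split case in contrast is essentially bookkeeping once one records that conjugation preserves every unramified double coset of $\GL_{N-1}(E_w)$.
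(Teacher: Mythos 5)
Your proposal is correct and follows essentially the same route as the paper: reduce to the Hecke algebra involution on $\ms H_{N-1}^{\leq k}$ via Lemma \ref{twistedactionoldvectorsconj} and Theorem \ref{oldforms}, observe that in the non-split case conjugation fixes every Cartan basis element (since $\overline{\varpi_w}$ is a uniformizer), and in the split case the involution swaps the two tensor factors of $\ms H_{N-1}^{\leq k}\otimes \ms H_{N-1}^{\leq k}$ so that fixed points are parametrized by the first factor. Your worry about the split case is in fact already discharged by Lemma \ref{twistedactionoldvectorsconj}, which is proved uniformly for $G_{N,v}$ split or not: it reduces the twisted trace to the concrete operator $h\mapsto h(\varpi_v^{-k}\bar{(*)}^{-T})$ on $\ms H_{N-1}^{\leq k}$, so nothing further about the semidirect-product or Whittaker-model structure needs to be re-verified — only the fixed-point count on the Hecke side, which you compute correctly.
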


\begin{proof}
For the non-split cases, $\GL_N(F_v \otimes_F E) = \GL_N(E_w)$ for some $w|v$. The basis elements $T(a_1, \dotsc, a_{N-1})$ defined via the analogous uniformizer $\varpi_w$ of $\mc O_w$ are fixed  under conjugation. Therefore, these two cases follow from Proposition \ref{twistedtraceoldvectors} applied to representations of $GL_N(E_w)$. %after recalling in the ramified case that we allow for half-integral conductors (see \ref{sec conductorsconj}) and that $\varpi_v = \varpi_w^2$.% so the level with respect to $w$ is always even.

For the split case, $\GL_N(F_v \otimes_F E) = \GL_N(F_v)^2$ so the relevant version of $\ms H_{N-1}^{\leq k}$ is the product of two copies of the version from \ref{twistedtraceoldvectors}, where conjugation switches the two factors. There is then exactly one fixed point of the relevant action from~\ref{twistedactionoldvectorsconj} for each choice of the first factor. 
\end{proof}

\subsection{\lm{$\wtd E^\infty_\mf n$}: Counts Weighted by Epsilon Factors}\label{Edef}
Next, we find a twisted trace that counts representations $\pi$ of level $\mf n$ weighted by their root numbers $\eps(1/2, \pi)$. 
\subsubsection{Local Test Function}
Working locally, Propositions \ref{twistedactionoldvectors} and \ref{twistedactionoldvectorsconj} let us solve an upper-triangular system of linear equations to produce the local factors of the test function giving weighted counts of automorphic forms:
\begin{cor}\label{localepstestfunction}
Consider  
$G_{N,v}$ in either the self-dual or conjugate self-dual case---i.e. $G_v = \GL_{N,v}$ or $\Res^E_F \GL_{N,v}$ for $v$ split, ramified or inert. Then there are coefficients $a_N(k,i)$ such that if we define
\[
\wtd E_{v,k} := \sum_{i=0}^k a_N(k,i)( J_{v,k-i}^{-1} w_N \rtimes \theta )  \bar \1_{K_{1,v}(k-i)},
\]
then for $\pi_v$ a generic representation of $G_v$ we have
\[
\tr_{\pi_v}(\wtd E_{k,v}) = \begin{cases}
\tau & c(\pi_v) = k \\
0 & \text{else},
\end{cases}
\]
where $\tau$ is the eigenvalue of $\iota_{v,k}$ or $\bar \iota_{v,k}$ acting on the newvector for $\pi_v$. 
\end{cor}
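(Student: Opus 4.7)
The plan is to reduce the corollary to solving an upper-triangular linear system whose inputs are the twisted trace computations of the previous subsection.

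First I would fix a generic representation $\pi_v$ of conductor $c := c(\pi_v)$ and analyze, for each $j \geq 0$, the twisted trace
\[
T_j(\pi_v) := \tr_{\pi_v}\bigl((J_{v,j}^{-1} w_N \rtimes \theta) \bar\1_{K_{1,v}(j)}\bigr) = \tr_{\pi_v^{K_{1,v}(j)}}(J_{v,j}^{-1} w_N \rtimes \theta),
\]
(interpreting $\theta$ as $\bar\theta$ in the conjugate self-dual case). When $j < c$, this vanishes since $\pi_v^{K_{1,v}(j)} = 0$ by Theorem \ref{newformexistence}. When $j \geq c$, Propositions \ref{twistedtraceoldvectors} and \ref{twistedtraceoldvectorsconj} give $T_j(\pi_v) = \tau \cdot B_N(j - c)$, where $\tau$ is the eigenvalue on the newvector $\varphi_v$ and $B_N(\cdot)$ is one of the explicit binomial-type functions from those propositions, depending on the parity of $N$ and on whether $v$ is split, inert, or ramified. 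The crucial observation is that in every case $B_N(0) = 1$.

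Next I would translate the desired identity $\tr_{\pi_v}(\wtd E_{v,k}) = \tau \cdot \delta_{c,k}$ into a system of linear equations in the unknowns $a_N(k,0), \dots, a_N(k,k)$. The case $c > k$ is automatic since every summand has trace zero, so only $c \leq k$ matters; setting $\ell := k - c$, expanding $\wtd E_{v,k}$ and factoring out $\tau$ (which is nonzero), one obtains
\[
\sum_{i=0}^{\ell} a_N(k,i)\, B_N(\ell - i) = \delta_{\ell, 0}, \qquad \ell = 0, 1, \dots, k.
\]
This is a square system whose matrix, indexed by $(\ell, i)$, is lower-triangular with diagonal entries $B_N(0) = 1$, so there is a unique solution produced by forward substitution: $a_N(k, 0) = 1$, and then
\[
a_N(k, \ell) = -\sum_{i=0}^{\ell - 1} a_N(k, i)\, B_N(\ell - i) \quad (\ell \geq 1).
\]

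I do not expect a genuine obstacle here — once the building blocks $\td f_{v,j} = (J_{v,j}^{-1} w_N \rtimes \theta)\bar\1_{K_{1,v}(j)}$ and their oldform traces have been identified, the construction is a formal Möbius-type inversion. The only care needed is to treat the three variants (self-dual; conjugate self-dual at split $v$; conjugate self-dual at non-split $v$) uniformly, which is possible because only the function $B_N$ varies between them and its value at $0$ is always $1$, keeping the system invertible.
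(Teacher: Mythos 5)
Your proposal is correct and follows the same route the paper takes: the text introducing the corollary simply says the twisted-trace-on-oldforms propositions ``let us solve an upper-triangular system of linear equations,'' and your write-up supplies exactly the omitted details, correctly identifying that $B_N(0)=1$ in every case so the triangular system is uniquely solvable by forward substitution. The only cosmetic difference is calling the matrix lower- rather than upper-triangular, which is just a choice of indexing.
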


We can compute $a_N(k,i)$ exactly in many cases: 
\begin{prop}\label{Eexplicitformula}
%In these cases, 
The following formulas hold for $\wtd E_{v,k}$ from Corollary \ref{localepstestfunction}:: %\red{would be nice: add all the $N$ odd cases}
\begin{itemize}
\item self-dual case, $N$ even,
\[
\wtd E_{v,k} := \sum_{i=0}^{N/2} (-1)^i \binom {N/2}i ( J_{v,k-2i}^{-1} w_N \rtimes \theta ) \bar \1_{K_{1,v}(k-2i)},
\]
\item conjugate self-dual case, $N$ even, $v$ non-split:
\[
\wtd E_{v,k} := \sum_{i=0}^{N/2} (-1)^i \binom {N/2}i ( J_{v,k-2i}^{-1} w_N \rtimes \bar \theta ) \bar \1_{K_{1,v}(k-2i)},
\]
\item conjugate self-dual case, $v$ split
\[
\wtd E_{v,k} := \sum_{i=0}^N (-1)^i \binom Ni ( J_{v,k-i}^{-1} w_N \rtimes \bar \theta ) \bar \1_{K_{1,v}(k-i)}.
\]
\end{itemize}
\end{prop}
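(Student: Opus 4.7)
The plan is to invert an upper-triangular linear system using generating functions. By Corollary \ref{localepstestfunction}, the coefficients $a_N(k,i)$ are determined by the requirement that $\tr_{\pi_v}(\wtd E_{v,k})$ equal $\tau$ when $c(\pi_v) = k$ and $0$ otherwise. Since $\bar\1_{K_{1,v}(k-i)}$ annihilates any $\pi_v$ of conductor $>k-i$, the only nontrivial conditions come from representations $\pi_v$ of conductor $c \leq k$. For such $\pi_v$, Propositions \ref{twistedtraceoldvectors} and \ref{twistedtraceoldvectorsconj}---applied with the oldform depth $c+j = k-i$, i.e.\ $j = k-c-i$---compute the trace of $(J_{v,k-i}^{-1} w_N \rtimes \theta)\bar\1_{K_{1,v}(k-i)}$ on $\pi_v$ as $\tau_c(\varphi_v)\,T_N(k-c-i)$, where $T_N(m)$ is the relevant binomial coefficient from the proposition (with the convention $T_N(m)=0$ for $m<0$).

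Setting $d = k-c$, the conditions become
$$\sum_{i=0}^d a_N(k,i)\, T_N(d-i) \;=\; \delta_{d,0}, \qquad 0 \leq d \leq k.$$
This is an upper-triangular system in the $a_N(k,i)$ with diagonal $T_N(0)=1$, so it has a unique solution; moreover the system depends only on $d$, so I can drop the $k$-dependence and write $a_N(k,i) =: b_N(i)$. The sequence $(b_N(i))_{i\geq 0}$ is precisely the convolution inverse of $(T_N(m))_{m\geq 0}$ as a formal power series.

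The final step is to exhibit these inverses via generating functions. In the two even-$N$ cases (self-dual, and conjugate self-dual at a non-split place) the formulas coincide: $T_N(m)$ vanishes for odd $m$ and equals $\binom{m/2+N/2-1}{N/2-1}$ for even $m$, so
$$\sum_{m\geq 0} T_N(m)\, t^m \;=\; \sum_{j\geq 0}\binom{j+N/2-1}{N/2-1}t^{2j} \;=\; (1-t^2)^{-N/2}.$$
The binomial expansion $(1-t^2)^{N/2} = \sum_{i=0}^{N/2}(-1)^i\binom{N/2}{i}t^{2i}$ then gives $b_N(2i)=(-1)^i\binom{N/2}{i}$ for $0\leq i\leq N/2$ and $b_N(\text{odd})=0$, matching the stated formula. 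In the split case $T_N(m) = \binom{m+N-1}{N-1}$ has generating function $(1-t)^{-N}$, whose inverse $(1-t)^N = \sum_{i=0}^N(-1)^i\binom{N}{i}t^i$ yields $b_N(i)=(-1)^i\binom{N}{i}$, again as claimed.

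The argument is essentially bookkeeping, so there is no serious obstacle; the only place that needs care is matching the parity constraint in the even-$N$ cases (which forces $b_N$ to be supported on even indices) with the re-indexing $j = k-c-i$, but this is dictated by the generating-function identity and requires no separate argument.
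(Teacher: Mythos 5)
Your proof is correct, and it is essentially the paper's argument rephrased: the paper verifies the formula by invoking the identity $\sum_{i=0}^{b+1}(-1)^i\binom{b+1}{i}\binom{a-i+b}{b}=\delta_{a,0}$, which is precisely the coefficient-wise statement that $(1-t)^{b+1}$ and $(1-t)^{-(b+1)}$ are reciprocal power series — i.e.\ your generating-function computation. Your version is arguably cleaner in that it derives the coefficients rather than merely checking them, but the mathematical content is the same. One very minor point worth making explicit: for $k<N$ (resp.\ $k<N/2$ in the even cases) some terms in the stated sums have negative index $k-i$ (resp.\ $k-2i$); as in the paper's statement, these are understood to be absent, which is consistent with your observation that the system depends only on $d=k-c\le k$.
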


\begin{proof}
This follows from the %explicit 
formulas in Propositions \ref{twistedactionoldvectors} and \ref{twistedactionoldvectorsconj} and the identity 
\[
\sum_{i = 0}^{b+1} (-1)^i \binom {b+1}i \binom{a - i + b}{b} = \begin{cases}
    1 & a=0 \\ 0 & \text{else}
\end{cases},
\]
for $a \in \Z^+$. The combinatorial identity in turn follows from the simpler identity $ \sum_{i=0}^{b+1}(-1)^i\binom{b+1}{i}i^k = 0$ when $ k <b+1$, as first proved by Euler (see \cite{Go78}).

See also \cite{Bin17} for a similar version of this result for untwisted traces. 
\end{proof}

\subsubsection{Global Test Function}
Putting our local test functions globally:
\begin{dfn}
In the self-dual case, if $\mf n = \prod_v \mf p_v^{k_v}$ is an ideal of $F$, define the test function 
\[
\wtd E^\infty_{\mf n} = \prod_v \wtd E_{\mf p_v^{k_v}}
\]
on $\wtd G_N(\A^\infty)$ in terms of the $\wtd E_{\mf p_v^{k_v}}$ from Corollary \ref{localepstestfunction}. 
\end{dfn}
In the conjugate-self dual case, we need to twist by a central character at non-split places to deal with the corresponding discrepancy in \ref{epsformulaconj}:
\begin{dfn}\label{def globalE}
In the conjugate self-dual case, let $\mf n = \prod_v \mf p_v^{k_v}$ be a conductor as in \S\ref{sec conductorsconj}. Choose $a \in E^\times$ with $\bar a = -a$ (by Hilbert 90) and define the test function 
\[
\wtd E^\infty_{\mf n} = \prod_{v \text{ split}} \wtd E_{\mf p_v^{k_v}} \prod_{v \text{ ns.}} \delta_{a^{N-1}} \star \wtd E_{\mf p_v^{k_v}}
\]
on $\wtd G_N(\A^\infty)$ in terms of the $\wtd E_{\mf p_v^{k_v}}$ from Corollary \ref{localepstestfunction} (but now globally indexed). 
\end{dfn}

\begin{thm}\label{globalepsilontrace}
Consider the self-dual case $G_N = \GL_N/F$. Choose~$\mf n$ an ideal of~$\mc O_F$. Then for all generic self-dual automorphic representations~$\pi$ of $G_N$:
\[
\tr_{\wtd \pi^\infty}(\wtd E^\infty_\mf n) = \begin{cases}
\eps(1/2, \pi_\infty)^{-1} \eps(1/2, \pi) & c(\pi) = \mf n, \pi \text{ symplectic-type} \\
\eps(1/2, \pi_\infty, \psi)^{-N+1} & c(\pi) = \mf n, \pi \text{ orthogonal-type} \\
0 & c(\pi) \neq \mf n
\end{cases}
\]
(where the second case may depend on the choice of $\psi$ made to define $\wtd \pi$). 
\end{thm}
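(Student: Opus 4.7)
The plan is to reduce the global twisted trace to a product of local twisted traces and then invoke the local identifications between the involution eigenvalues $\tau^\psi_{v,k_v}$ and root numbers. Since $\pi$ is generic at every place (as a cuspidal representation of $\GL_N$), Proposition \ref{twistedtracefactor} applies and gives
\[
\tr_{\wtd \pi^\infty}(\wtd E^\infty_\mf n) = \prod_{v < \infty} \tr_{\wtd \pi_v}(\wtd E_{v,k_v}),
\]
where $\mf n = \prod_v \mf p_v^{k_v}$. By Corollary \ref{localepstestfunction}, each factor on the right equals $\tau^\psi_{v,k_v}(\varphi_v)$ when $c(\pi_v) = k_v$ and vanishes otherwise. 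Hence, if $c(\pi) \neq \mf n$ there is some place at which $c(\pi_v) \neq k_v$ and the product is zero, establishing the third case.

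When $c(\pi) = \mf n$, I would invoke Theorem \ref{epsformula} at every finite place to rewrite the local factor as $\tau^\psi_{v,k_v}(\varphi_v) = \eps(1/2, \pi_v, \psi)^{N-1}$. Multiplying across finite $v$ and using the global product formula $\eps(1/2, \pi) = \prod_v \eps(1/2, \pi_v, \psi)$ (which is $\psi$-independent), one gets
\[
\tr_{\wtd \pi^\infty}(\wtd E^\infty_\mf n) = \eps(1/2, \pi)^{N-1} \, \eps(1/2, \pi_\infty, \psi)^{-(N-1)}.
\]
It remains to simplify this in the symplectic and orthogonal cases.

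In the symplectic-type case, $N$ is even so $N-1$ is odd; moreover Proposition \ref{localepspm1} gives $\eps(1/2,\pi)$ and $\eps(1/2,\pi_\infty,\psi) \in \{\pm 1\}$ with the latter $\psi$-independent, so the exponent $N-1$ may be dropped, yielding $\eps(1/2,\pi_\infty)^{-1}\eps(1/2,\pi)$ as claimed. In the orthogonal-type case, Theorem \ref{globalortheps} gives $\eps(1/2,\pi) = 1$, and the expression collapses to $\eps(1/2,\pi_\infty,\psi)^{-N+1}$, matching the stated formula (this factor genuinely depends on $\psi$ when $\omega_\pi$ is nontrivial, which is precisely the caveat in the theorem statement).

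The argument is essentially bookkeeping once Corollary \ref{localepstestfunction} and Theorem \ref{epsformula} are in hand; the only mild subtlety is tracking the $\psi$-dependence in the orthogonal case, where one does not have the independence of $\psi$ enjoyed by symplectic-type representations. This is why the archimedean factor is written with explicit $\psi$-dependence there but not in the symplectic case.
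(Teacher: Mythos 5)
Your proposal is correct and follows essentially the same path as the paper: factor the twisted trace via Proposition \ref{twistedtracefactor}, invoke Corollary \ref{localepstestfunction} to kill the $c(\pi)\neq\mf n$ case and to reduce to the local eigenvalues $\tau^\psi_{v,k}$, then apply Theorem \ref{epsformula} and the global product formula. The only cosmetic difference is that the paper uses the second (trivial-central-character) part of Theorem \ref{epsformula} directly in the symplectic case, while you apply the general $\eps(1/2,\pi_v,\psi)^{N-1}$ formula uniformly and then drop the odd exponent $N-1$ using Proposition \ref{localepspm1}; these are the same computation in slightly different order.
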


\begin{proof}
The $c(\pi) \neq \mf n$ case follows from multiplying together the same cases of Corollary \ref{localepstestfunction}. 

If $c(\pi) = \mf n$, then let $\varphi = \prod_{v \nmid \infty} \varphi_v$ be in the space of newvectors for $\pi^\infty$. Then by Proposition \ref{twistedtracefactor}, Corollary \ref{localepstestfunction}, and \eqref{twistedmultform},
\[
\tr_{\wtd \pi^\infty}(\wtd E^\infty_\mf n) = 
\prod_{v \nmid \infty} \tr_{\wtd \pi_v}(\wtd E_{\mf p_v^k}) = \prod_{v \nmid \infty} \tau_{\mf p_v^k}(\varphi_v).
\]

If $\pi$ symplectic-type, then $N$ is even and $\pi$ has trivial central character so Theorem~\ref{epsformula} reduces this to
\[
\prod_{v \nmid \infty}  \eps(1/2, \pi_v) =  \eps(1/2, \pi_\infty)^{-1}  \eps(1/2, \pi)
\]
which is independent of $\psi$. The result follows. 

In general, the trace is $\eps(1/2, \pi_\infty, \psi)^{-N+1}  \eps(1/2, \pi, \psi)^{N-1}$. However, when $\pi$ is orthogonal-type, \cite[Thm 1.5.3(b)]{Art13} gives that $\eps(1/2, \pi) = \eps(1/2, \pi \times \triv) = 1$. 
\end{proof}

For the conjugate self-dual case, we need to choose $\psi = \psi \circ \sigma$ so we can understand the action of $\wtd E^\infty_\mf n$ on newvectors using \ref{epsformulaconj}:

\begin{thm}\label{globalepsilontraceconj}
Consider the conjugate self-dual case $G_N = \Res^E_F\GL_N$ and define extensions of representations to $\wtd G_N$ using $\psi$ such that $\psi \circ \sigma = \psi$. Choose $\mf n$ a conjugate self-dual conductor (as in \ref{sec conductorsconj}). Then for all generic conjugate self-dual automorphic representations $\pi$ of $G_N$ with central character $\om$:
\[
\tr_{\wtd \pi^\infty}(\wtd E^\infty_\mf n) = \begin{cases}
 \om_\infty(a)^{-1} \eps(1/2, \pi_\infty)^{-1} \eps(1/2, \pi) & c(\pi) = \mf n, N \text{ even}, \; \pi \text{ conj.-symp.} \\
  \om_\infty(a)^{-1} \eps(1/2, \pi_\infty)^{-1} & c(\pi) = \mf n, N \text{ even}, \; \pi \text{ conj.-orth.} \\
  \om_\infty(a)^{-N+1}\eps(1/2, \pi_\infty, \psi)^{-N+1}  & c(\pi) = \mf n, N \text{ odd},\; \pi \text{ conj.-symp.} \\
  1 & c(\pi) = \mf n, N \text{ odd}, \; \pi \text{ conj.-orth.} \\
0 & c(\pi) \neq \mf n
\end{cases}
\]
where $a$ appears in the definition of $\wtd E^\infty_\mf n$. Note that the $\eps$-terms without $\psi$ are standard root numbers with respect to some local $\psi_0 \circ \sigma = \psi_0^{-1}$ as in \ref{localepspm1conj}.
\end{thm}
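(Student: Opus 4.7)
The plan is to mirror the argument for Theorem~\ref{globalepsilontrace}, with extra bookkeeping to handle (a)~the twist by $\delta_{a^{N-1}}$ at non-split places built into Definition~\ref{def globalE}, (b)~the split/non-split dichotomy in the local eigenvalue formula of Theorem~\ref{epsformulaconj}, and (c)~the discrepancy between the Whittaker character $\psi$ (which here satisfies $\psi\circ\sigma = \psi$) and the ``standard'' local character $\psi_0$ with $\psi_0\circ\sigma = \psi_0^{-1}$ used to define the standard root numbers.

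First, for the vanishing case $c(\pi)\neq\mf n$: pick a finite place $v_0$ where $c(\pi_{v_0})\neq k_{v_0}$. Corollary~\ref{localepstestfunction} gives $\tr_{\wtd\pi_{v_0}}(\wtd E_{v_0,k_{v_0}}) = 0$, and convolution with the central Dirac mass $\delta_{a^{N-1}}$ merely rescales by $\om_{v_0}(a)^{N-1}$, so the local trace at $v_0$ still vanishes. By Proposition~\ref{twistedtracefactor} the global twisted trace is zero.

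Next, for $c(\pi) = \mf n$: factor $\tr_{\wtd\pi^\infty}(\wtd E^\infty_\mf n)$ into local factors using Proposition~\ref{twistedtracefactor}. At each split $v$, Theorem~\ref{epsformulaconj} gives $\bar\tau^\psi_{v,k_v}(\varphi_v) = 1$, hence that factor equals $1$. At each non-split finite $v$, convolving with $\delta_{a^{N-1}}$ multiplies the trace by $\om_v(a)^{N-1}$, so the local contribution is $\om_v(a)^{N-1}\bar\tau^\psi_{v,k_v}(\varphi_v)$. Now split into the four cases of the theorem: in the conj.-orth.\ cases one invokes Theorem~\ref{globalorthepsconj} to get $\eps(1/2,\pi) = 1$; in the conj.-symp.\ cases one substitutes the formulas of Theorem~\ref{epsformulaconj} into the above. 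For instance, when $N$ is even and $\pi$ is conj.-symp., the non-split local contribution becomes $\om_v(a)^{N-1}\cdot \om_v(a)\eps(1/2,\pi_v) = \om_v(a)^N\eps(1/2,\pi_v) = \eps(1/2,\pi_v)$, using that $\om_v(a) = \pm 1$ under our hypotheses.

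The remaining, and main, step is to convert the product $\prod_{v\text{ ns.},\,v\nmid\infty}\eps(1/2,\pi_v)$ (and its analogues in the other cases) into the global root number $\eps(1/2,\pi)$ divided by the archimedean factor $\eps(1/2,\pi_\infty)$ and a central-character correction. This is done by combining three inputs: the Hilbert~90 choice of $a\in E^\times$ with $\bar a = -a$ together with the product formula $\prod_v \om_v(a) = 1$; the identity $\eps(1/2,\pi_v,\psi_v) = \om_v(a)\eps(1/2,\pi_v)$ at every non-split place (finite or archimedean), which relates the self-conjugate character $\psi_v^\sigma = \psi_v$ to the anti-self-conjugate $\psi_{v,0}$; and the corresponding identity $\eps(1/2,\pi_v,\psi_v) = \om_v(a)$ at split places obtained from $\eps(1/2,\rho_v,\psi)\eps(1/2,\rho_v^\vee,\psi) = \om_{\rho_v}(-1)$. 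Assembling these into the global functional equation $\eps(1/2,\pi) = \prod_v \eps(1/2,\pi_v,\psi_v)$ and carefully collecting the central-character factors $\om_v(a)$ yields exactly the formula stated, the surviving $\om_\infty(a)^{-1}$ arising from the single archimedean place of $F$ not covered by the finite product. The hardest part is not any single computation but rather the careful tracking of these sign and central-character discrepancies across the four cases, particularly for odd $N$ where the conj.-symp.\ case produces an extra factor of $\om_\infty(a)^{-N+1}$ coming from Theorem~\ref{epsformulaconj}.
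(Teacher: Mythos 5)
Your proposal mirrors the paper's proof: you factor the twisted trace into local pieces via Proposition~\ref{twistedtracefactor}, use the $\delta_{a^{N-1}}$-twist at non-split places together with Theorem~\ref{epsformulaconj} and its following note to evaluate the local eigenvalues $\bar\tau$ in the split/non-split cases, invoke Theorem~\ref{globalorthepsconj} to kill the global root number in the conjugate-orthogonal cases, and reassemble via the product formula $\prod_v\om_v(a)=1$ and the $\psi$-dependence of local $\eps$-factors. This is the same route the paper takes.

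One caution: you assert rather than compute that the factor $\om_\infty(a)^{-1}$ survives the reassembly in the $N$-even cases, and this is precisely the place where the bookkeeping is delicate. If you carry out what you describe, the non-split local factor (twist included) is $\om_v(a)^{N-1}\cdot\om_v(a)\,\eps(1/2,\pi_v)=\om_v(a)^N\eps(1/2,\pi_v)=\eps(1/2,\pi_v)$ using $\om_v(a)\in\{\pm1\}$ (conjugate-orthogonal central character when $N$ even) and $N$ even; the split-place standard root numbers are all $1$; and $\prod_{v\nmid\infty}\eps(1/2,\pi_v)=\eps(1/2,\pi)\,\eps(1/2,\pi_\infty)^{-1}$ directly, since the product formula makes \emph{all} of the $\om_v(a)$ contributions, including the archimedean one, cancel. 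So the $\om_\infty(a)^{-1}$ in the statement does not actually emerge from the assembly you sketch. Since $\om_\infty(a)=\pm 1$ and in the end this constant multiplies a main term that is shown to vanish, the discrepancy is harmless for the downstream results---but it does mean the final ``assembling'' step is not as automatic as the proposal makes it sound, and you should carry it out explicitly and reconcile it with (or correct) the stated formula rather than take the claimed $\om_\infty(a)^{-1}$ on faith.
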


\begin{proof}
Similar to Theorem \ref{globalepsilontrace}, the trace is $0$ unless $c(\pi) = \mf n$ in which case
\[
\tr_{\wtd \pi^\infty}(\wtd E^\infty_\mf n) = \prod_{v \nmid \infty} \bar \tau_{v,\mf p_v^k}(\varphi_v).
\]
Recall the choice of $a \in E^\times$ such that $\bar a = -a$ to define $\wtd E^\infty_\mf n$. We use \ref{epsformulaconj} with this choice of $a$ at all places: 

In the first and second cases, the central character is conjugate-orthogonal and therefore valued in $\pm 1$ so $\om^{N-1} = \om$. Therefore, the trace is
\[
\om_{\spl}(a)^{N-1} \prod_{v \nmid \infty} \eps(1/2, \pi_v) \prod_{\substack{v \nmid \infty \\ v \text{ n.s.}}} \om_v(a) = \om_\infty(a)^{-1}\eps(1/2, \pi_\infty)^{-1} \eps(1/2, \pi),
\]
using the note after~\ref{epsformulaconj} at split places. For the second case, we additionally use Theorem \ref{globalorthepsconj} to compute that~$\eps(1/2, \pi) = 1$ globally. 

The third case is similar to the first: the trace is equal to %generally blue what do you mean by generally?
\[
\eps(1/2, \pi_\infty, \psi)^{-N+1} \om_{\infty}(a)^{-N+1} \eps(1/2, \pi)^{N-1}.
\]
Since the global root number is independent of $\psi$, multiplying together \ref{localepspm1conj} gives that it is $\pm 1$ even for our non-standard $\psi$. Therefore, the third factor vanishes. 

The fourth case follows immediately from the corresponding case of \ref{epsformulaconj}. 
\end{proof}

\begin{note}
If $\pi$ is unramified at a split place $v$, then it can be shown that $\om_v(a) = 1$. Therefore, if $\pi$ is unramified at all split places, $\om_\ns(a) = \om_\infty(a)^{-1}$.
\end{note}

Beware again that for $N$ odd, the test function $\wtd E^\infty_\mf n$ fails to produce any information about root numbers.

\subsection{\lm{$\wtd C^\infty_\mf n$}: Unweighted Counts through a Twisted Trace}\label{sec Cdef}

We now use the twisted trace-Paley-Weiner theorem of \cite{Rog88} to modify $\wtd E_{v,k}$ into a function $\wtd C_{v,k}$ on $\wtd G_{N,v}$ counting representations of conductor $\mf p_v^k$ unweighted by their root number. 

As the Paley-Weiner input:
\begin{prop}\label{PWinput}
Let $\wtd G_v = G_v \rtimes \theta$ be a twisted group and $\wtd f_v$ a test function on $\wtd G_v$. Choose a constant $a_\Theta$ for each Bernstein component $\Theta$ of $G_v$. Then there exists a test function $\wtd f_v'$ on $\wtd G_v$ such that if $\wtd \pi_v$ is an extension to $\wtd G_v$ of the self-dual irreducible representation $\pi_v \in \Theta$, we have
\[
\tr_{\wtd \pi_v} \wtd f_v' = a_\Theta \tr_{\wtd \pi_v} \wtd f_v.
\]
\end{prop}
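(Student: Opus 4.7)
The plan is to reduce Proposition \ref{PWinput} to the twisted trace Paley-Wiener theorem of Rogawski \cite{Rog88}, which characterizes the image of the twisted trace map $C_c^\infty(\wtd G_v) \to $ (functions on $\theta$-stable irreducibles) by three conditions: finite Bernstein-support, polynomial variation in unramified twists within each Bernstein component, and compatibility with parabolic induction from $\theta$-stable Levi subgroups. I would define the target function
\[
F(\wtd\pi_v) := a_\Theta \, \tr_{\wtd\pi_v}(\wtd f_v) \quad \text{for } \pi_v \in \Theta,
\]
automatically extended by $0$ on non-self-dual $\pi_v$, and seek $\wtd f_v'$ realizing $F$ as its twisted trace.

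Verifying the Paley-Wiener conditions amounts to transferring each property of $\tr_{\wtd\pi_v}(\wtd f_v)$ to $F$ via multiplication by the locally constant weight $\Theta \mapsto a_\Theta$. Namely: the Bernstein-support of $F$ is contained in that of $\tr_{\wtd\pi_v}(\wtd f_v)$, hence finite; on each Bernstein component multiplication by a constant preserves polynomial behavior in unramified twists; and the parabolic induction compatibility transfers because parabolic induction along a $\theta$-stable parabolic sends each Bernstein component of a $\theta$-stable Levi to a unique Bernstein component of $G_v$, so defining the Levi-level constants by pullback makes the induction identity automatic.

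As a concrete alternative route that avoids Rogawski's theorem as a black box, one can work inside the Hecke algebra: fixing a $\theta$-stable open compact $K$ under which $\wtd f_v$ is bi-invariant, only finitely many Bernstein components $\Theta_1,\ldots,\Theta_r$ admit irreducibles with nonzero $K$-fixed vectors, and the Bernstein decomposition restricts to $\mathcal{H}(G_v,K) = \bigoplus_{i=1}^r \mathcal{H}(G_v,K)_{\Theta_i}$ with central idempotents $\phi_i$ realized as $K$-biinvariant compactly supported smooth functions on $G_v$. Then $\wtd f_v' := \sum_i a_{\Theta_i} \, \phi_i \ast \wtd f_v \in C_c^\infty(\wtd G_v)$, and since both $\pi_v(\phi_i)$ and $\wtd\pi_v(\wtd f_v)$ factor through $\pi_v^K$ with $\pi_v(\phi_i)$ acting as $\mathbf{1}_{\pi_v \in \Theta_i}$, the required trace identity follows from $\tr(\pi_v(\phi_i) \wtd\pi_v(\wtd f_v)) = \mathbf{1}_{\pi_v \in \Theta_i} \tr_{\wtd\pi_v}(\wtd f_v)$.

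The main subtlety I expect is the parabolic-induction condition in the Paley-Wiener route, or equivalently verifying in the concrete route that the Bernstein-block idempotents on $K$-biinvariants genuinely come from compactly supported smooth functions rather than merely formal elements of a distribution algebra. Both points reduce to standard structural features of the Bernstein decomposition, so the argument ultimately rests on established results once the target function $F$ is identified correctly.
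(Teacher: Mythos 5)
Your first route is essentially the paper's proof. The paper simply cites the main theorem of \cite{Rog88} together with its \S6 characterization of twisted Bernstein components, implicitly leaving the verification you spell out — namely that a Bernstein-locally-constant multiple of a function in the twisted Paley–Wiener image remains in that image — to the reader. Your verification of the three conditions (finite Bernstein support, regularity in the unramified twist parameter, parabolic-induction compatibility) is correct in spirit, though a minor caveat: the precise formulation in \cite{Rog88} packages the induction compatibility into the regularity condition on parametrized families rather than as a separate axiom. This does not affect the argument, since the key observation is simply that multiplication by the locally constant function $\Theta \mapsto a_\Theta$ preserves the Paley–Wiener conditions, given that every irreducible subquotient of a given twisted standard module lives in a single Bernstein component.

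Your second route is a genuinely different and, in some ways, cleaner argument that the paper does not give. Taking $K$ a small enough $\theta$-stable compact open so that $\wtd f_v$ is $K$-bi-invariant, only finitely many Bernstein blocks $\Theta_1,\dots,\Theta_r$ meet $\check G_v^K$, and the corresponding idempotents $e_{\Theta_i}$ in the Bernstein center produce $\phi_i := e_{\Theta_i} \ast \bar\1_K \in \mathcal H(G_v,K)$. The worry you flag — that these idempotents might live only in a completed or formal algebra — does not arise: the Bernstein center acts on $\mathcal H(G_v)$ preserving compact support, so $\phi_i \in C_c^\infty(G_v)$, and the associativity computation $\wtd\pi_v(\phi_i\ast\wtd f_v)=\pi_v(\phi_i)\,\wtd\pi_v(\wtd f_v)$ together with $\pi_v(\phi_i)=\1_{\pi_v\in\Theta_i}\cdot\pi_v(e_K)$ (and the fact that $\wtd\pi_v(\wtd f_v)$ already factors through $\pi_v^K$) gives exactly the desired trace identity; for $\pi_v$ in any other component both sides vanish since $\pi_v^K=0$. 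This route sidesteps Rogawski's theorem entirely and makes the construction of $\wtd f_v'$ explicit, at the cost of invoking the structure of the Bernstein center. Either approach is a valid proof; the paper takes the first for brevity, but your second would equally serve.
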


\begin{proof}
This is a special case of the main theorem of \cite{Rog88} taking in to account the characterization of twisted Bernstein components in \S6 therein. Note that self-dual is the same as $\theta$-stable. 
\end{proof}

In summary, a local twisted test function 
can be modified by rescaling it by a different constant $a_\Theta$ on each Bernstein component. We next see that root numbers are constant on representations of a given conductor belonging to a specific $\Theta$.

\begin{prop}\label{epsconstant}
In the self-dual case, let $\Theta$ be a Bernstein component of $\GL_{N,v}$. Choose $k \geq 0$. Then there is a constant $C_{\Theta, k}$ such that for all self-dual irreducible representations $\pi_v \in \Theta$ with conductor $c(\pi_v) = k$, we have $\eps(1/2, \pi_v) = C_{\Theta, k}$.
\end{prop}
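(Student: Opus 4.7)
The plan is to use local Langlands for $\GL_N$ together with multiplicativity of local $\eps$-factors. Local Langlands gives $\eps(1/2, \pi_v, \psi) = \eps(1/2, \phi_v, \psi)$ for $\phi_v$ the Weil--Deligne parameter of $\pi_v$, and I would decompose its Frobenius-semisimple part as $\bigoplus_i \rho_i \otimes [d_i]$ with $\rho_i$ irreducible representations of $W_{F_v}$. Multiplicativity of $\gamma$-factors (and hence of $\eps$-factors) then yields
\[
\eps(1/2, \pi_v, \psi) = \prod_i \eps(1/2, \rho_i \otimes [d_i], \psi).
\]

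Since $\pi_v \in \Theta$ fixes the inertial class of each $\rho_i$ together with the multiplicity $d_i$, I can write $\rho_i = \rho_i^0 \otimes \chi_i$ for fixed representatives $\rho_i^0$ and unramified characters $\chi_i$ of $W_{F_v}$. Self-duality of $\pi_v$ forces the multiset $\{(\rho_i, d_i)\}$ to be closed under $(\rho, d) \mapsto (\rho^\vee, d)$, and I would partition the indices into dual pairs (with $\rho_i^\vee \cong \rho_j$ for $i \ne j$) and self-dual singletons (with $\rho_i^\vee \cong \rho_i$, forcing $\chi_i^2 = 1$). For each dual pair, the combined contribution $\eps(1/2, \rho_i[d_i])\,\eps(1/2, \rho_i^\vee[d_i])$ collapses, via the identity $\eps(1/2, \pi)\eps(1/2, \pi^\vee) = \om_\pi(-1)$ from \S\ref{sec rootnumbers} together with $\chi(-1) = 1$ for unramified $\chi$, to $\om_{\rho_i^0[d_i]}(-1)$, depending only on $\Theta$. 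For each self-dual singleton, the shift formula $\eps(s, \pi \otimes \chi) = \chi(\varpi)^{c(\pi)}\eps(s, \pi)$ for unramified $\chi$ rewrites the contribution as $\chi_i(\varpi)^{c(\rho_i^0[d_i])}\,\eps(1/2, \rho_i^0[d_i])$, where the exponent is fixed by $\Theta$.

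The main obstacle is then to show that the resulting product of signs $\prod_{i \in J_{\rm sd}} \chi_i(\varpi)^{c(\rho_i^0[d_i])}$ depends only on $\Theta$ and $k$. Unramified twists preserve conductors, so the identity $k = \sum_i c(\rho_i^0[d_i])$ is automatic for $\pi_v \in \Theta$ once its multi-segment structure is fixed and does not directly restrict the individual $\chi_i$. I expect the needed rigidity to emerge from a more refined analysis of the inertial datum underlying $\Theta$: the central character of the supercuspidal support $\prod_i \om_{\rho_i^0 \chi_i}^{d_i}$, together with the parity of the conductors $c(\rho_i^0[d_i])$ and the pairing structure from self-duality, should constrain the admissible collections $(\chi_i)$ to yield a single value of the above product. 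Carefully bookkeeping these constraints across the allowed combinations of dual pairs and self-dual singletons—rather than the multiplicativity formalism itself—will be the bulk of the work.
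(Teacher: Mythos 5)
Your reduction matches the paper's strategy: pass to Weil--Deligne parameters, use multiplicativity of $\eps$-factors, and use self-duality to pair up the constituents. The paper carries this out via the Bernstein--Zelevinsky segments $\delta_j = (\sigma_j|\det|^{\cdots})$ and then asserts a decomposition $J = J_d \sqcup J_n$ in which $J_d$ consists of indices with $\sigma_j$ self-dual \emph{and} $r_j = 0$, while $J_n$ consists of dual pairs $j\leftrightarrow j'$ with $r_j = -r_{j'}$; this forces the contributions $q_v^{r_j}$ to cancel, leaving an expression depending only on $\Theta$ and $k$.

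The step you flag as ``the bulk of the work'' is exactly where both your proposal and the paper's proof are incomplete, and I believe it cannot in fact be completed as stated. Self-duality of $\pi_v$ only forces $\Delta_j|\det|^{r_j} \cong (\Delta_j|\det|^{r_j})^\vee$, which for self-dual $\sigma_j$ gives $2r_j \equiv 0$ modulo the group of unramified twists fixing $\Delta_j$; this allows $|\det|^{r_j} = \eta_{\rm ur}$, the unramified quadratic character, not just $r_j = 0$. The paper's claim that $r_j=0$ for $j\in J_d$, and hence that ``all the powers of $q_v^{r_j}$ cancel,'' therefore misses a sign. Concretely, take $N=2$, $v$ any finite place, and $\pi_v = \St$, $\pi_v' = \St\otimes\eta_{\rm ur}$: both are self-dual of symplectic type with trivial central character, both have conductor $k=1$, both lie in the unramified Bernstein component, yet $\eps(1/2,\St) = -1$ while $\eps(1/2, \St\otimes\eta_{\rm ur}) = \eta_{\rm ur}(\varpi_v)^{c(\St)}\eps(1/2,\St) = +1$ (these are exactly the local root numbers at split vs. non-split multiplicative reduction of an elliptic curve). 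So the product of signs you identify, $\prod_i \chi_i(\varpi)^{c(\rho_i^0[d_i])}$, genuinely varies within $\Theta$ at fixed conductor whenever some self-dual constituent has odd conductor, and the ``needed rigidity'' you expected to emerge from the inertial datum does not exist. The Proposition as stated, and the paper's proof of it, appear to have a real gap at this point; one would need at minimum to refine $\Theta$ (e.g. to a finer invariant that separates $\pi_v$ from $\pi_v\otimes\eta_{\rm ur}$) or to replace $\eps(1/2,\pi_v)$ by a quantity invariant under unramified quadratic twist.
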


\begin{proof}
Fix a product of supercuspidals  $\boxtimes_{i \in I} \rho_i$ on $\prod_{i \in I} \GL_{n_i}$ with $\sum n_i = N$ that determines a Bernstein component $\Theta$. Possibly twisting by a power of $|\det|$ on each factor, which leaves $\Theta$ unchanged, we assume that for all pairs $(i,j)$, there is no~$s \neq 0$ such that $\rho_i|\det|^s = \rho_j$. 

Now consider a self-dual $\pi_v \in \Theta$, i.e. $\pi_v$ is a subquotient of the parabolic induction of  $\boxtimes_{i \in I} \rho_i |\det|^{s_i}$ for some $s_i \in \C$. 

By the Bernstein-Zelevinsky classification \cite{BZ77}, there is a partition of the sequence $(\rho_i|\det|^{s_i})_i$ into segments $\delta_j$ of the form 
\[
\delta_j = (\sigma_j |\det|^{r_j + \f12(t_j -1)}, \sigma_j |\det|^{r_j + \f12(t_j -3)}, \dotsc, \sigma_j |\det|^{r_j - \f12(t_j - 1)})
\]
such that if $\Delta_j$ is the discrete representation of $GL_{n_jt_j}$ associated to the segment $\delta_j$, then $\pi_v$ is the Langlands quotient of $\boxtimes_j \Delta_j$. Our condition on the $\rho_i$ lets us assume, up to making the correct choice of $r_j$, that the multiset of $t_j$ copies of each $\sigma_j$ is the same as the multiset of $\rho_i$'s.

By \cite[p. 153, (2)]{Hen86}, the root number of $\pi_v$ satisfies %these are cited in Kondo-Yasuda,
\begin{equation}\label{epsfactor}
    \eps(s,\pi_v) = \prod_j \eps(s,\Delta_j).
\end{equation}
Furthermore, if $\sigma_j$ is ramified, then recalling that ramified supercuspidals on $\GL_N$ have trivial $L$-factor, we have by \cite[p. 153, (5)]{Hen86}:
\begin{multline}\label{epsramspeh}
\eps(s, \Delta_j) = \prod_{k=1}^{t_j} \eps(s, \sigma_j |\det|^{r_j + \f12(t_j + 1) - k}) \\
= \prod_{k=1}^{t_j} \eps(s, \sigma_j) q_v^{c_{\sigma_j}(r_j + \f12(t_j + 1) - k)} =\eps(s, \sigma_j)^{t_j} q_v^{c_{\sigma_j}r_jt_j} .
\end{multline} %L = 1 for supercuspidals
If $\sigma_j$ is unramified, then it is necessarily of the form $|\det|^s$ on $\GL_1$ and $\Delta_j$ is of the form $\St(t_j)|\det|^{r_j}$ where $\St(t_j)$ is the Steinberg representation on $\GL_{t_j}$. Since $\St(t_j)$ has conductor $t_j-1$ and root number $(-1)^{t_j - 1}$ by, e.g, \cite[(4.1.4)-(4.1.6)]{Ta79}, we get 
\begin{equation}\label{epsstein}
    \eps(s, \Delta_j) = \eps(s,\St(t_j)) q_v^{(t_j-1)r_j} = (-1)^{t_j - 1}  q_v^{(t_j - 1)(1/2-s)} q_v^{(t_j - 1)r_j}.
\end{equation}

Putting the formulas \eqref{epsfactor}---\eqref{epsstein} together, let $J_0 \subseteq J$ be the set of indices $j$ such that $\Delta_j$ is a Steinberg (also define $I_0 \subseteq I$ correspondingly). Then we can compute the conductor
\begin{equation}\label{c1}
c(\pi_v) = \sum_{j \in J_0}(t_j - 1) + \sum_{j \in J \setminus J_0} t_j c_{\sigma_j} = u - u_0 + \sum_{i \in I} c_{\rho_i}
\end{equation}
for $u$ the total number of unramified $\rho_i$ (equivalently, of unramified $\sigma_j$) and $u_0$ the total number of $\Delta_j$ that are Steinberg so that $u-u_0=\sum_{j\in J_0}(t_j-1)$.

Also, since $\pi_v$ is self-dual, there is a decomposition of the indexing set 
\[ 
J = J_d \sqcup J_n
\] 
with $J_d$ consisting of indices such that $\sigma_j$ is self-dual and $r_j=0$, and $J_n$ consisting of pairs of indices $j,j'$ such that $r_j = -r_{j'}$ and $\sigma_j = \sigma^\vee_{j'}$ (in particular $c_{\sigma_j} = c_{\sigma^\vee_{j'}})$. 
%\begin{itemize}
%    \item $J_d = \{j : r_j = 0 \text{ and }\sigma_j \text{ is self-dual}\}$, 
%    \item $J_n =\{j,j' : r_j = -r_{j'} \text{ and } \sigma_j = \sigma^\vee_{j'}  \}$.
%\end{itemize}
%$j$'s decompose into those such that $r_j = 0$ and $\sigma_j$ is self-dual and pairs $j,j'$ such that $t_j = t_{j'}$, $r_j = -r_{j'}$, and $\sigma_j = \sigma_{j'}^\vee$.  
Therefore, when multiplying \eqref{epsramspeh} and \eqref{epsstein} over all $j$, all the powers of $q_v^{r_j}$ cancel out, leaving:
\begin{multline*}
\eps(1/2, \pi_v) = \prod_{j \in J_0} (-1)^{t_j - 1} \prod_{j \in J \setminus J_0} \eps(1/2, \sigma_j)^{t_j} = (-1)^{u - u_0} \prod_{i \in I \setminus I_0} \eps(1/2, \rho_i) = \\
= (-1)^{u - u_0} \prod_{i \in I} \eps(1/2, \rho_i).
\end{multline*}
Finally, by \ref{c1}, this becomes
\[
(-1)^{c_\pi - \sum_{i \in I} c_{\rho_i}} \prod_{i \in I} \eps(1/2, \rho_i),
\]
which is our $C_{\Theta, c(\pi_v)}$ that only depends on the $\rho_i$ and $c(\pi_v)$. 
\end{proof}

\begin{prop}\label{epsconstantconj}
Fix a character $\psi$ to define local $\eps$-factors. In the conjugate self-dual case, let $\Theta$ be a Bernstein component of $\GL_{N,v}$ for $v$ non-split. Choose $k \geq 0$. Then there is constant $C_{\Theta, k}$ such that for conjugate self-dual irreducible representations $\pi_v \in \Theta$ with conductor $c(\pi_v) = k$, we have $\eps(1/2, \pi_v, \psi) = C_{\Theta, k}$.
\end{prop}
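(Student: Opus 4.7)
The plan is to adapt the proof of Proposition \ref{epsconstant} essentially verbatim, replacing ``self-dual'' by ``conjugate self-dual'' throughout. All the key inputs---the Bernstein--Zelevinsky classification of generic representations of $\GL_N(E_w)$, the multiplicativity of $\eps$-factors through Langlands data, and the explicit $\eps$-factor formulas \eqref{epsramspeh}--\eqref{epsstein} for Speh and Steinberg representations---are standard facts about $\GL_N$ over a $p$-adic field and therefore apply identically to $E_w$ as to $F_v$.

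First, I fix a supercuspidal support $\boxtimes_{i \in I} \rho_i$ for $\Theta$, normalized so that no two $\rho_i,\rho_j$ differ by a nontrivial unramified twist. Any $\pi_v \in \Theta$ is then the Langlands quotient of $\boxtimes_j \Delta_j$ where each $\Delta_j$ is the discrete series attached to a segment $\delta_j$ of length $t_j$ on a supercuspidal $\sigma_j$ with central shift $r_j$, and the multiset of $t_j$ copies of each $\sigma_j$ matches the multiset of $\rho_i$'s. Applying \eqref{epsfactor}--\eqref{epsstein} gives a formula for $\eps(1/2,\pi_v,\psi)$ as a product indexed by $j$, with $q_w$-powers carrying the full $r_j$-dependence, and the conductor is given by \eqref{c1} as a quantity depending only on the supercuspidal support and the number $u_0$ of Steinberg-type components.

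The one place self-duality entered the original argument was the partition $J = J_d \sqcup J_n$ into fixed segments (with $\sigma_j \cong \sigma_j^\vee$, $r_j = 0$) and paired segments (with $\sigma_{j'} \cong \sigma_j^\vee$, $r_{j'} = -r_j$, $t_{j'} = t_j$). Conjugate self-duality $\pi_v \cong \bar\pi_v^\vee$ induces exactly the same kind of pairing, but now with $\sigma_{j'} \cong \bar\sigma_j^\vee$ in place of $\sigma_j^\vee$ and $r_{j'} = -r_j$, and fixed segments individually conjugate self-dual with $r_j = 0$. Since the conductor $c_\sigma$ and the central character's behavior at $\varpi$ are invariant under $\sigma \mapsto \bar\sigma^\vee$, the factors $q_w^{c_{\sigma_j} r_j t_j}$ (ramified case) and $q_w^{(t_j-1) r_j}$ (Steinberg case) cancel pairwise across $J_n$. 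What survives is a product of local root numbers $\eps(1/2,\sigma,\psi)$ (equivalently $\eps(1/2,\rho_i,\psi)$) together with a sign depending only on $u - u_0$, which by \eqref{c1} is determined by $k = c(\pi_v)$ and the supercuspidal support. Setting $C_{\Theta,k}$ to be this combined expression proves the claim.

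The main obstacle is purely bookkeeping: verifying that the pairing from conjugate self-duality produces the same kind of cancellation as ordinary self-duality. The only potentially subtle step is confirming that for pairs $(j,j')$ the identity $c_{\bar\sigma_j^\vee} = c_{\sigma_j}$ and $\eps(1/2,\sigma_j,\psi)\eps(1/2,\bar\sigma_j^\vee,\psi)$ contributes something depending only on the supercuspidal support---but this follows from the fact that conjugation and duality both preserve the conductor of a supercuspidal and the root number up to factors depending only on $\psi$ and $\sigma_j$ itself. No genuinely new ideas are needed beyond those already deployed in Proposition \ref{epsconstant}.
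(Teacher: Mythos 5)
Your proposal is correct and takes essentially the same approach as the paper's own proof, which is stated much more tersely: the paper simply observes that the argument of Proposition~\ref{epsconstant} carries over because the $w$-adic norm on $E_w$ is invariant under the Galois conjugation $x \mapsto \bar x$, so conjugate self-duality forces the same pairing condition $r_{j'} = -r_j$ on the twist exponents. Your expanded write-up correctly identifies that one key point and correctly tracks how the pairing $\sigma_{j'} \cong \bar\sigma_j^\vee$ (rather than $\sigma_j^\vee$) preserves the conductor and hence makes the $q_w^{c_{\sigma_j} r_j t_j}$ factors cancel, which is the substance of the paper's remark.
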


\begin{proof}
This is the same argument as Proposition \ref{epsconstant} since invariance of the $v$-adic norm under conjugation means that conjugate-self duality forces the same pairing condition on the $r_j$. 
\end{proof}

Now we are ready to define the local components of our test function that gives an unweighted count of representations:

\begin{cor}\label{localC}
In either the self-dual or conjugate self dual case, there is a test function $\wtd C_{v,k}$ on $\wtd G_{N,v}$ such that for all generic (conjugate) self-dual $\pi_v$
\[
\tr_{\wtd \pi_v} \wtd C_{v,k} = \begin{cases} 1 & c(\pi_v) = k \\
0 & \text{else}
\end{cases}. 
\]
\end{cor}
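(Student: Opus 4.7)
The plan is to obtain $\wtd C_{v,k}$ from $\wtd E_{v,k}$ by rescaling Bernstein-component-by-Bernstein-component, via the twisted trace-Paley-Wiener result of Proposition \ref{PWinput}. The key observation enabling this is that on (conjugate) self-dual irreducibles of fixed conductor $k$ lying in a single Bernstein component $\Theta$ of $G_{N,v}$, the newvector-eigenvalue $\tau \in \{\pm 1\}$ of $\iota_{v,k}$ (respectively $\bar\iota_{v,k}$) is constant — this is what marries the already-constructed $\wtd E_{v,k}$ to the desired $\wtd C_{v,k}$.

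First I would recall from Corollary \ref{localepstestfunction} that for any generic $\pi_v$, $\tr_{\wtd \pi_v}(\wtd E_{v,k})$ vanishes unless $c(\pi_v) = k$, in which case it equals $\tau$. Then Theorems \ref{epsformula} and \ref{epsformulaconj} identify $\tau$ as a power of $\eps(1/2, \pi_v, \psi)$ (up to a central-character factor at split places, which is harmless since $\tau = 1$ there anyway); and Propositions \ref{epsconstant}, \ref{epsconstantconj} establish that this root number is constant as $\pi_v$ ranges over the (conjugate) self-dual irreducibles of conductor $k$ inside a fixed Bernstein component $\Theta$. Together these let me define an unambiguous scalar $\tau_{\Theta, k} \in \{\pm 1\}$ equal to the common value of $\tau$ on this set (and assigned arbitrarily, say $1$, on components $\Theta$ containing no such $\pi_v$).

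Next I would apply Proposition \ref{PWinput} to $\wtd f_v := \wtd E_{v,k}$ with rescaling constants $a_\Theta := \tau_{\Theta, k}$, producing a test function $\wtd C_{v,k} := \wtd f_v'$ satisfying
\[
\tr_{\wtd \pi_v}(\wtd C_{v,k}) \;=\; \tau_{\Theta, k} \, \tr_{\wtd \pi_v}(\wtd E_{v,k})
\]
for every (conjugate) self-dual $\pi_v$ in the Bernstein component $\Theta$. When $c(\pi_v) = k$ the right-hand side becomes $\tau_{\Theta,k} \cdot \tau = \tau^2 = 1$, and when $c(\pi_v) \neq k$ it vanishes because $\wtd E_{v,k}$ already does, yielding exactly the required trace formula.

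I do not anticipate a genuine obstacle here: the technical weight of the argument sits in Propositions \ref{epsconstant}--\ref{epsconstantconj} and in the availability of Rogawski's twisted Paley-Wiener theorem, both already in hand. The only point requiring a little care is bookkeeping — Proposition \ref{PWinput} rescales by constants indexed by Bernstein components of $G_v$ rather than of $\wtd G_v$, but since our rescaling constants $\tau_{\Theta,k}$ are naturally defined on $G_v$-components and since $\wtd \pi_v$ is the canonical extension fixed in \S\ref{sec twistedaction}, the identity transfers verbatim to twisted traces.
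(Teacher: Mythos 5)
Your argument matches the paper's proof: both obtain $\wtd C_{v,k}$ by applying Rogawski's twisted Paley-Wiener theorem (Proposition \ref{PWinput}) to $\wtd E_{v,k}$ with rescaling constants that undo the newvector eigenvalue $\tau$ on each Bernstein component, and both justify the constancy of that eigenvalue via Propositions \ref{epsconstant}/\ref{epsconstantconj} together with Theorems \ref{epsformula}/\ref{epsformulaconj}. Your $a_\Theta = \tau_{\Theta,k}$ and the paper's $a_\Theta = C_{\Theta,k}^{-(N-1)}$ coincide since $\tau = C_{\Theta,k}^{N-1} \in \{\pm 1\}$ is its own inverse, so this is the same construction.
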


\begin{proof}
For all $\pi_v$ in Bernstein components~$\Theta$ such that $\tr_{\wtd \pi_v} \wtd E_{v,k} \neq 0$, we have that~$C_{\Theta,k}^{-(N-1)}$ from Proposition \ref{epsconstant} (resp. \ref{epsconstantconj}) is the same as the~$\tau$ from Corollary~\ref{localepstestfunction} by Proposition~\ref{epsformula} (resp. \ref{epsformulaconj}). Therefore, we just apply Proposition~\ref{PWinput} to~$\wtd E_{k,v}$ with constants~$a_\Theta = C_{\Theta,k}^{-(N-1)}$.
\end{proof}

Putting these together globally:

\begin{dfn}\label{Cdef}
If $\mf n = \prod_v \mf p_v^{k_v}$ is an ideal of $F$, define the test function on $\wtd G_N(\A^\infty)$:
\[
\wtd C^\infty_{\mf n} = \prod_v \wtd C_{\mf p_v^{k_v}}
\]
in terms of the $\wtd C_{\mf p_v^{k_v}}$ from Corollary \ref{localC} (but now globally indexed) in either the self-dual or conjugate self dual case. 
\end{dfn}

This of course satisfies:

\begin{cor}\label{globalC}
In either the conjugate self-dual or self-dual case, for all generic self-dual representations $\pi^\infty$ of $(G_N)^\infty$,
\[
\tr_{\wtd \pi^\infty} \wtd C^\infty_\mf n = \begin{cases} 1 & c(\pi^\infty) = \mf n \\
0 & \text{else}.
\end{cases}
\]
\end{cor}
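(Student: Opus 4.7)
The plan is to reduce the global statement to the local statement of Corollary \ref{localC} via the factorization of twisted traces. The definition of $\wtd C^\infty_\mf n$ as a pure tensor $\prod_v \wtd C_{\mf p_v^{k_v}}$ (Definition \ref{Cdef}) is perfectly set up for this approach, so no genuine new content is needed beyond unwinding the definitions.

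First I would invoke Proposition \ref{twistedtracefactor}, which says that for a pure tensor test function $\td f = \prod_v \td f_v$ on $\wtd G_N(\A^\infty)$ and a generic automorphic representation $\pi$, one has
\[
\tr_{\wtd \pi^\infty}(\td f) \;=\; \prod_v \tr_{\wtd \pi_v}(\td f_v).
\]
Applied to $\td f = \wtd C^\infty_\mf n = \prod_v \wtd C_{\mf p_v^{k_v}}$, this yields
\[
\tr_{\wtd \pi^\infty}\bigl(\wtd C^\infty_\mf n\bigr) \;=\; \prod_v \tr_{\wtd \pi_v}\bigl(\wtd C_{\mf p_v^{k_v}}\bigr).
\]
Note that $\pi$ being generic and (conjugate) self-dual forces each local $\pi_v$ to be generic and (conjugate) self-dual, so Corollary \ref{localC} applies at every finite place.

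Next I would apply Corollary \ref{localC} termwise: each local factor equals $1$ when $c(\pi_v) = k_v$ and $0$ otherwise. Recalling from Definition \ref{oldformdimension} (and its unitary analogue in \S\ref{sec conductorsconj}) that the global conductor satisfies $c(\pi^\infty) = \prod_v c(\pi_v)$, the product is nonzero precisely when $c(\pi_v) = k_v$ at every finite place $v$, i.e. exactly when $c(\pi^\infty) = \mf n$; in that case every factor is $1$ and the product equals $1$. Otherwise at least one local factor vanishes and the global trace is $0$. This is exactly the claimed formula.

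There is no real obstacle here, since all of the substantive work sits in the local Corollary \ref{localC} (and behind it in the Paley--Wiener input Proposition \ref{PWinput}, the root-number constancy on Bernstein components in Propositions \ref{epsconstant}/\ref{epsconstantconj}, and the local epsilon computations of Theorems \ref{epsformula}/\ref{epsformulaconj}). The only minor bookkeeping point worth checking is that the factorization of Proposition \ref{twistedtracefactor} is compatible with the chosen extension of $\pi$ to $\wtd G_N$, but this is exactly what the discussion in \S\ref{sec twistedaction} arranges: the global Whittaker-model extension restricts to the local Whittaker-model extensions used in defining each $\wtd C_{\mf p_v^{k_v}}$.
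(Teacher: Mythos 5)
Your proposal is correct and follows the only sensible route; the paper itself gives no explicit proof, treating the corollary as immediate from Definition~\ref{Cdef} and Corollary~\ref{localC}, which is precisely the unwinding you perform. One small point to be careful about: Proposition~\ref{twistedtracefactor} is stated for generic \emph{automorphic} representations of $G_N(\A)$, whereas Corollary~\ref{globalC} (as the note immediately following it emphasizes) concerns arbitrary generic self-dual representations $\pi^\infty$ of $(G_N)^\infty$, not only those that arise as finite parts of automorphic representations. The factorization $\tr_{\wtd\pi^\infty}(\prod_v \td f_v)=\prod_v \tr_{\wtd\pi_v}(\td f_v)$ nevertheless holds in this generality, since its proof goes through the Whittaker model and only needs $\pi^\infty$ to be a restricted tensor product of generic local representations with the canonical extensions to $\wtd G_{N,v}$ chosen consistently (exactly what \S\ref{sec twistedaction} arranges); you should just say so explicitly rather than cite Proposition~\ref{twistedtracefactor} verbatim, to avoid the appearance of proving the statement only for automorphic $\pi$.
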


\begin{note}
Unlike the case of $\wtd E^\infty_\mf n$ (specifically against orthogonal or conjugate-orthogonal representations), the trace identity characterizing $\wtd C^\infty_\mf n$ applies to more general irreducible representations of $(G_N)^\infty$ than just the ones that are factors of automorphic representations. This will be important for our spectral argument to understand its transfers in Section \ref{sec stable plancherel}. 
\end{note}

\section{Asymptotics for the Cuspidal Spectrum: Setup}\label{sec tfdecomp}
To compute our counts, we need to isolate certain parts of the automorphic spectrum on $G_N$ through the trace formula:
\begin{itemize}
    \item In the self-dual or orthogonal/symplectic case, the cuspidal, self-dual representations of either orthogonal or symplectic-type with a given  infinitesimal character $\lb$ at infinity.
    \item In the conjugate self-dual or unitary case, the cuspidal, conjugate self-dual representations with a given infinitesimal character $\lb$ at infinity. 
\end{itemize}
We do this using the endoscopic classification through the formalism of refined shapes of parameters from \cite[\S4,5]{DGG22}. While the characterization of the discrete spectrum of $\GL_N$ in \cite{MW89} might appear to be enough on the spectral side, giving a tractable geometric-side expression requires the full methods of \cite{DGG22}. 

We freely use notation analogous to \cite{DGG22} throughout. In the case of $G = \SO_{2N}^\eta$, we will suppress the bars from \cite{Art13} that represent orbits under the outer automorphism and take the unbarred notations to represent these orbits. The~$G = \SO_{2N}^\eta$ case is not a main focus of this paper and this change is irrelevant for other $G$. %See the notation section \S\ref{notation} for more details. 

Recall also the list of simple endoscopic groups of $\wtd G_N$ from \S\ref{sec twistedendoscopicgroups} and assumption~\ref{assump EF} on our number fields. 

\begin{note}
A comment on the use of the endoscopic classification and the induction of \cite{Tai17}: First, we use the it to separate globally symplectic-type and orthogonal-type automorphic representations as this distinction cannot even be defined otherwise. An alternate method is using a working sufficient condition that if $\pi$ has local component $\pi_v$ with only a single symmetry type, then $\pi$ has that same symmetry type. Through this, we can pick out some representations with fixed symmetry type using just the twisted trace formula as in \cite{TW24}. 

However, while the purely twisted-trace-formula strategy is conceptually simpler, the lack of a twisted analogue of the geometric expansion of \cite{Art89} makes it much more analytically and computationally involved. In our case, we trade-off the abstract complexity of the inductive analysis of \cite{Tai17} to make the explicit computations at the bottom tractable, particularly for our very complicated test functions. This also allows inputting the much stronger error-term bounds of \cite{ST16}.  
\end{note}

\subsection{Infinitesimal characters}\label{sec infchars}
Recall that the infinitesimal character $\lambda$ is an invariant of irreducible representations of a reductive Lie group $G(\R)$. We have $\lb \in \Om_G \bs \Hom(\mf t, \C)$, where~$\mf t$ is a Cartan subalgebra of $G(\R)$. This data is the same as a map from Weyl orbits of $X^*(\wh{\mf t}) = X_*(\mf t) \to \C$, which is further the same as a semisimple conjugacy class in~$\wh{\mf g}$.

\subsubsection{Infinitesimal characters in the classification}
We need a uniform way to talk about infinitesimal characters at infinity for all $G \in \wtd{\mc E}_\sm(N)$. 

In the self-dual case, the embedding $\wh G \into \GL_N(\C)$ induces a map from semisimple conjugacy classes in $\wh{\mf g}_\infty$ to those in $M_N(F_\infty \otimes_\R \C)$. This map is injective except when $G=\SO^\eta_{2n}$, where its fibers are orbits under the outer automorphism. We will elide this technicality since everything in the $\SO_{2n}^\eta$ case is up to outer automorphism and therefore consider an infinitesimal character of $G$ as a semisimple matrix up to conjugacy: in other words an unordered sequence
\[
\lb = (\lb_1, \lb_2, \dotsc,  \lb_N)
\]
with $\lb_i \in F_\infty \otimes_\R \C$. 

In the conjugate self-dual case, since the map $\wh G \into \GL_N(\C) \times \GL_N(\C)$ is a restriction from $\Ld G \into \Ld G_N$, it is determined by its first coordinate. Therefore, the infinitesimal character can also be represented by a semisimple conjugacy class in $M_N(F_\infty \otimes_\R \C)$ and therefore a similar unordered sequence.

It is also useful to package the tuple $\lb_v = (\lb_{1,v},...,\lb_{n,v})$  as the generating function $\sum_j X^{\lb_{j,v}}$, which, by abuse of notation, we will also denote $\lb_v$. In this way, if $\bigoplus_i \tau_{i,v}[d_i]$ is a local Arthur parameter such that each $\tau_{i,v}$ has infinitesimal character $\lb_{v}^{(i)} = \sum_{j=1}^{n_i}X^{\lb_{j,v}^{(i)}}$, then we have the infinitesimal character assignment
\begin{equation}\label{infcharform}
\lf( \bigoplus_i \tau_{i,v}[d_i] \ri)_\infty \mapsto \sum_i \lb_{v}^{(i)} \sum_{l=1}^{d_i} X^{\f{d_i+1}2 - l}.
\end{equation}
It can be seen from this that the character of $\tau[d]$ determines that of $\tau$. 

\subsubsection{Integrality}\label{integralinfchars}
The infinitesimal characters of finite-dimensional representations of the $G_v$ for $v|\infty$ will play a special role. We express their classification in terms of generating functions.

For the self-dual case:
\begin{itemize}
\item
If $G = \Sp_{2n}$, then $\wh G = \SO_{2n+1}$ and the $\lb_v$ are elements of $1 + \Z[X, X^{-1}]$ invariant under $X \mapsto X^{-1}$ and with each coefficient $0$ or $1$.  
\item
If $G = \SO_{2n+1}$, then $\wh G = \Sp_{2n}$  and the $\lb_v$ are elements of $X^{1/2} \Z[X, X^{-1}]$ invariant under $X \mapsto X^{-1}$ and with each coefficient $0$ or $1$.
\item
If $G = \SO_{2n}$, then $\wh G = \SO_{2n}$ and the $\lb_v$ are elements $\Z[X, X^{-1}]$ invariant under $X \mapsto X^{-1}$ and with each coefficient $0$ or $1$ except the constant coefficient which is $0$ or $2$. 
\end{itemize}

For the conjugate self-dual case:
\begin{itemize}
    \item If $N$ is odd, then the $\lb_v$ are elements of $X^{1/2} \Z[X, X^{-1}]$ with each coefficient $0$ or $1$.
    \item If $N$ is even, then the $\lb_v$ are elements of $\Z[X, X^{-1}]$ with each coefficient $0$ or $1$. 
\end{itemize}

\begin{dfn} \label{def integral}
Call $\lb_v$ \emph{integral} if it is one of the cases above.
\end{dfn}

Note that this is the same as the $C$-algebraic condition of \cite{BG14}. 

\subsection{Central Characters}\label{sec cchartransfer}
We collect some useful lemmas on central characters:

\subsubsection{Central Characters and Shapes}
Every parameter $\psi$ has a central character~$\eta_\psi$, of order $\leq 2$ in the self-dual case. Through class field theory, we will abuse notation and use $\eta_\psi$ to denote the corresponding character of $\Gamma_F$. Note that
\begin{equation}\label{ccharform}
\eta_{\bigoplus_i \tau_i[d_i]} = \prod_i \eta_{\tau_i}^{d_i}.
\end{equation}

\subsubsection{Central Characters and Transfer}
We need the central character results described in \cite[\S10]{GGP11}. We use throughout as in \cite[\S5.1]{KS99} that any $G \in \wtd{\mc E}_\el(N)$ comes with an injection $(Z_{G_N})_\theta \into Z_G$ from the algebraic group of coinvariants. 

First, as is known to experts:
\begin{thm}\label{thm cchareps}
In the self-dual case, let $G \in \wtd{\mc E}_\sm(N)$ be either $\Sp^\eta_{2n}$ or $\SO_{2n}^\eta$ and~$\varphi_v$ be an $L$-parameter for $G_v$. Then the central character $\om_v$ of $\pi_v \in  \Pi_{\varphi_v}$ is determined by $\om_v(-1) = \eps(1/2, \varphi_v)/\eps(1/2, \det \varphi_v)$. 
\end{thm}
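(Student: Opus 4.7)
The strategy is to reduce the assertion to the central-character formula of Gan--Gross--Prasad from \cite[\S10--12]{GGP11}, combined with the endoscopic character identities of \cite{Art13}. The first step is to justify that $\om_v$ is genuinely an invariant of the packet $\Pi_{\varphi_v}$, not just of the individual $\pi_v$. This follows because for any $z \in Z_G(F_v)$, translation by $z$ commutes with the endoscopic character identities attached to $(G_v, \varphi_v)$, so all members of $\Pi_{\varphi_v}$ must share the same eigencharacter under $Z_G$. (Alternatively, this is built into Arthur's parametrization through the identification $Z_G \cong (Z_{G_N})_\theta$ together with the description of characters of $\mc S_{\varphi_v}$.)

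Next, I would invoke the central-character formula from \cite[\S10.3, \S11, \S12]{GGP11}. For $G = \Sp_{2n}^\eta$, the embedding $\wh G = \SO_{2n+1}(\C) \hookrightarrow \GL_{N}(\C)$ (with $N = 2n+1$) twisted by $\eta$ makes the composition $\varphi_v \colon L_{F_v} \to \GL_N(\C)$ into a self-dual parameter with $\det \varphi_v$ a quadratic character depending on $\eta$; GGP's formula for the central character of elements of an $\Sp$-packet in terms of an $\eps$-factor of the parameter yields $\om_v(-1) = \eps(1/2, \varphi_v, \psi)/\eps(1/2, \det \varphi_v, \psi)$. For $G = \SO_{2n}^\eta$, the center is $\{\pm I\}$ and the analogous formula from the $\SO$-case of GGP gives the same ratio. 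In both cases, the derivation involves decomposing the parameter into irreducible symplectic/orthogonal summands, applying multiplicativity of $\eps$-factors, and using that the central character on $\pm 1$ sees only the parity data of the summands' types and degrees---exactly what the ratio of $\eps$-factors extracts.

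The main obstacle is bookkeeping: I need to verify that the ratio $\eps(1/2,\varphi_v,\psi)/\eps(1/2,\det \varphi_v,\psi)$ is independent of the additive character $\psi$, which it is because $\eps(1/2,\varphi_v,\psi_a) = \det\varphi_v(a)\,\eps(1/2,\varphi_v,\psi)$ and the determinant factor is killed upon dividing. Then I must match normalizations between GGP's conventions (they often use a quasi-character of $F_v$ normalized so $\eps$-factors become $\pm 1$) and the standard Deligne--Langlands $\eps$-factors used in this paper. The $\eta$-twist adds a modest complication: for $\Sp_{2n}^\eta$, the embedding twist shifts $\det \varphi_v$ by $\eta$, and one must check that the twisted formula remains consistent, which reduces to $\eps(1/2, \varphi_v \otimes \eta)/\eps(1/2, \eta \cdot \det \varphi_v^{\mathrm{untw}})$ equaling the untwisted ratio times correction factors that cancel. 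Once these normalizations are aligned, the theorem follows directly from the GGP formula packet-by-packet.
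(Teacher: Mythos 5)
Your proposal rests on the premise that \cite[\S\S 10--12]{GGP11} \emph{proves} the central-character formula in question, so that the theorem follows by reading it off packet-by-packet after some normalization bookkeeping. That premise is incorrect, and it is the crux of the matter. GGP formulate the relation $\om_v(-1) = \eps(1/2,\varphi_v)/\eps(1/2,\det\varphi_v)$ as a \emph{desideratum} of the local Langlands correspondence for classical groups---an expected compatibility between the parametrization and the local $\eps$-factors---not as something they establish. In particular, they do not prove that the packets produced by Arthur's endoscopic classification (the ones this paper actually works with, where the endoscopic character identities live) satisfy this relation. Your phrase ``the theorem follows directly from the GGP formula packet-by-packet'' therefore has no content without an independent argument that the relevant $\eps$-factor is the one controlling the central character of Arthur's packets.

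The genuine work, which your proposal omits, is to supply that bridge. The paper's proof cites two ingredients: the main result of \cite{LR05}, which via the doubling method produces a local $\gamma$-/$\eps$-factor for representations of classical groups and relates it to the central character directly, and a consistency statement (as in \cite[B.2]{AHO23}) identifying the Langlands--Shahidi/doubling $\eps$-factor with the one computed from the $L$-parameter $\varphi_v$ supplied by \cite{Art13}. It is precisely this identification of $\eps$-factors across constructions that turns GGP's expectation into a theorem. Without it, your argument is circular: you are using the formula you want to prove. (Your subsidiary claim that $\om_v$ is constant on the packet is true, but the paper proves it separately as Lemma \ref{packettoom} via the transfer-factor transformation property after \cite[5.1.C]{KS99}, not by the hand-wave about translation commuting with character identities; you would need to make that precise as well, though it is not the main gap.) The $\psi$-independence computation and the $\eta$-twist bookkeeping you discuss are fine, but they are not where the difficulty lies.
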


\begin{proof}
 This is claimed as an expected property in \cite[\S10]{GGP11}. A proof follows from the main result of \cite{LR05} together with a consistency statement between Langlands-Shahidi epsilon factors and those from Arthur's classification (see e.g. \cite[B.2]{AHO23} for a discussion). 
\end{proof}

\begin{lem}\label{packettoom}
Let $G \in \wtd{\mc E}_\sm(N)$ and $\varphi_v$ an $L$-parameter for $G_v$. Then the central character $\om_v$ of $\pi_v \in \Pi_{\varphi_v}$ is constant on $\Pi_{\varphi_v}$ (i.e. determined by $\varphi_v$). 
\end{lem}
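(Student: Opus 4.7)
The plan is to split according to the isomorphism type of $G \in \wtd{\mc E}_\sm(N)$ and reduce in each case to a formula for $\omega_{\pi_v}$ that is manifestly a function of $\varphi_v$ alone.

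First, when $G = \SO_{N+1}$ with $N$ even, the group $\SO_{N+1}$ has trivial center (since in odd matrix dimension $-I$ has determinant $-1$), so the claim is vacuous. Next, when $G = \SO_N^\eta$ with $N$ even or $G = \Sp^\eta_{N-1}$ with $N$ odd, the center $Z(G)$ is $\mu_2$, generated by $-I$; any character of $Z(G)(F_v)$ is therefore determined by its value at $-1$, and Theorem \ref{thm cchareps} provides the formula
\[
\omega_v(-1) = \eps(1/2,\varphi_v)/\eps(1/2,\det\varphi_v),
\]
which depends only on $\varphi_v$. Constancy on $\Pi_{\varphi_v}$ follows.

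The remaining case is $G = U^{E/F}_N$, where $Z(G) \cong U_1^{E/F}$ is a rank-one torus. Here I would invoke the compatibility between local Langlands for $G_v$ and for the central torus: the inclusion $Z(G) \hookrightarrow G$ induces a map $\wh G \to \wh{Z(G)}$, and composing $\varphi_v$ with this map produces an $L$-parameter for $Z(G)$ whose associated character under local Langlands for tori equals $\omega_{\pi_v}$ for every $\pi_v \in \Pi_{\varphi_v}$. For quasisplit unitary groups this compatibility is part of the content of the endoscopic classification \cite{Mok15}; concretely, $\omega_{\pi_v}$ corresponds to $\det\varphi_v$ viewed as a conjugate self-dual character of $E_v^\times$ through the chosen embedding $\Ld G \hookrightarrow \Ld G_N$.

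The main obstacle is that this last compatibility is rarely isolated as a single named result; one must extract it from the defining character identities for $L$-packets in \cite{Mok15} together with local Langlands for $U_1$. Once this is in hand, the recipe $\varphi_v \mapsto \omega_v$ is by construction a function of $\varphi_v$ alone, and the lemma follows uniformly.
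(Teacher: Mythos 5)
Your proof is correct and your case analysis (trivial center for $\SO_{N+1}$; Theorem~\ref{thm cchareps} for $\SO_N^\eta$ and $\Sp_{N-1}^\eta$; compatibility of the LLC with the central torus for $U_N$) reaches the same conclusion as the paper's argument. For the orthogonal/symplectic cases your approach is essentially identical to the paper's.

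For the unitary case, however, your route is genuinely different in emphasis. The paper does \emph{not} directly invoke compatibility of the LLC with the central torus as a black box; instead it derives constancy of $\omega_v$ on the packet from the transformation property of twisted transfer factors under central translation (the generalization of [5.1.C] in KS99), combined with the endoscopic character identities — this is the content cited from Mok's Remark 3.2.3. Crucially, that argument shows constancy of $\omega_v$ \emph{restricted to the image of $(Z_{G_{N,v}})_\theta \hookrightarrow Z_{G_v}$}, and the paper observes that this image is all of $Z_{G_v}$ in the conjugate self-dual case (whereas it is trivial in the self-dual case, forcing the fallback to Theorem~\ref{thm cchareps}). Your approach — composing $\varphi_v$ with $\Ld G \to \Ld Z(G)$ and appealing to Mok's Theorem 2.5.1(c) — is how the paper later proves the sharper Lemma~\ref{U+cchar}, so the two routes are not independent: the LLC-center compatibility you invoke is ultimately established via the same transfer factor calculus. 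Your proof bypasses the explicit $(Z_{G_{N,v}})_\theta$ bookkeeping, which makes the unitary case read more cleanly; the cost, as you candidly note, is that the compatibility you cite is not isolated as a standalone theorem, whereas the paper's route points at a specific remark and a specific transfer factor property. One small addition worth making: for $G = U_N^-$ the recipe $\varphi_v \mapsto \omega_v$ may differ from pure pullback of $\det\varphi_v$ by a fixed character twist coming from the choice of $L$-embedding, but since that twist depends only on the endoscopic datum and not on the representation, constancy on the packet is unaffected.
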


\begin{proof}
As explained in \cite[Rmk 3.2.3]{Mok15}, the endoscopic character identities \cite[Thm 2.2.1(a)]{Art13} and \cite[Thm 3.2.1(a)]{Mok15} together with the transformation property for transfer factors given by the generalization of 5.1.C %on page 53 of 
in \cite[p.53]{KS99} can be used to show this for the central character restricted to the image of the injection $(Z_{G_{N,v}})_\theta \into Z_{G_v}$. In the conjugate self-dual case, this is all of $Z_{G_v}$. 

In the self-dual case, $G_v$ is abelian if $N =1,2$. % cases have $G_v$ be abelian. 
When $N > 2$, $Z_{G_v}$ is trivial or~$\pm 1$. It is trivial when $\wh G \cong \Sp_{2n}$. Otherwise, $\om(-1) = \eps(1/2, \varphi_v)/\eps(1/2, \det \varphi_v)$ by Theorem \ref{thm cchareps}. 
\end{proof}

We emphasize another consequence of the transfer factor transformation property after \cite[5.1.C]{KS99}:

\begin{lem}\label{cchartransfer}
Let $G \in \wtd{\mc E}_\sm(N)$ and $\varphi_v$ an $L$-parameter for $G_v$. Let $\om_v$ be the common central character of $\pi_v \in \Pi_{\varphi_v}$ as in Lemma \ref{packettoom}. 

Then, $\om_v$ restricted to the image of $(Z_{G_{N,v}})_\theta \into Z_{G_v}$ uniquely determines the (conjugate) self-dual central character $\om'_v$ of the representation $\td \pi_{\varphi_v}$ of $\wtd G_{N,v}$ corresponding to $\varphi_v$.
\end{lem}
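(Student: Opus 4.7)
The plan is to repeat the argument of Lemma \ref{packettoom} but to extract the stronger statement about $\omega'_v$ rather than just the constancy of $\omega_v$ on the packet. The starting point is the twisted endoscopic character identity (\cite[Thm 2.2.1(a)]{Art13}, \cite[Thm 3.2.1(a)]{Mok15}): for any test function $\td f_v$ on $\wtd G_{N,v}$ with matching function $f_v^G$ on $G_v$, one has
\[
\tr_{\td \pi_{\varphi_v}}(\td f_v) \;=\; \sum_{\pi_v \in \Pi_{\varphi_v}} \langle s_{\varphi_v}, \pi_v\rangle\, \tr_{\pi_v}(f_v^G),
\]
where matching means an equality of (twisted, stable) orbital integrals weighted by the Kottwitz--Shelstad transfer factor $\Delta(\gamma, \td\delta)$.

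Next I would take any $z$ in the image of $(Z_{G_{N,v}})_\theta \into Z_{G_v}$ and apply the identity both to $\td f_v$ and to its translate $L_z \td f_v$ by a lift $\td z \in Z_{G_{N,v}}$. On the twisted side, translating by $\td z$ multiplies the trace by $\omega'_v(\td z)^{-1}$, and this factor depends only on the image $z$ of $\td z$ in $(Z_{G_{N,v}})_\theta$ because $\td\pi_{\varphi_v}$ is $\theta$-stable. On the endoscopic side, I would use the $Z$-equivariance of transfer factors (the generalization of \cite[5.1.C]{KS99}): there is a fixed character $\lambda$ of the image depending only on the endoscopic datum, not on $\varphi_v$ or on the chosen $\pi_v \in \Pi_{\varphi_v}$, such that $\Delta(z\gamma, \td z\, \td\delta) = \lambda(z) \Delta(\gamma, \td\delta)$. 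This forces the matching transfer $(L_z \td f_v)^G$ to equal $\lambda(z) \cdot L_z (f_v^G)$ up to a negligible normalization, so the right-hand side transforms by $\lambda(z)\, \omega_v(z)^{-1}$.

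Comparing the two sides and using that $\tr_{\td\pi_{\varphi_v}}(\td f_v)$ is not identically zero (for well-chosen $\td f_v$), one obtains the identity
\[
\omega'_v(\td z) \;=\; \lambda(z)^{-1}\, \omega_v(z)
\]
for all $\td z$ lifting $z$. In particular, $\omega'_v$ is determined on the $\theta$-coinvariants $(Z_{G_{N,v}})_\theta$ by $\omega_v$ restricted to the image in $Z_{G_v}$, which is exactly what was claimed (note that $\omega'_v$ is already known to factor through these coinvariants because $\td \pi_{\varphi_v}$ extends a $\theta$-stable representation).

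The main obstacle is keeping track of the precise form of the character $\lambda$ coming from \cite[5.1.C]{KS99} and verifying that it indeed descends to a character of the image of $(Z_{G_{N,v}})_\theta$ in $Z_{G_v}$; this is exactly the computation already implicit in the proof of Lemma \ref{packettoom} (and explained in \cite[Rmk 3.2.3]{Mok15}), so no new input is needed beyond making that argument track the full central character rather than only its value on $-1$.
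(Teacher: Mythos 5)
Your overall strategy---translate the twisted endoscopic character identity by central elements and track the transfer-factor transformation character $\lambda$---is the right one and is what the paper's terse justification (``a consequence of the transfer factor transformation property after \cite[5.1.C]{KS99}'') has in mind. However, there is a genuine gap in the step where you identify the domain of $\lambda$ and claim that $\omega'_v(\td z)$ depends only on the image of $\td z$ in $(Z_{G_{N,v}})_\theta$.

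You assert (a) that $\lambda$ is a character of the image of $(Z_{G_{N,v}})_\theta \into Z_{G_v}$, and (b) that $\omega'_v(\td z)^{-1}$ depends only on the image $z$ ``because $\td\pi_{\varphi_v}$ is $\theta$-stable.'' Claim (b) is false, and it loses precisely the information the lemma is meant to record. The $\theta$-stability of $\pi_{\varphi_v}$ forces $\omega'_v \circ \theta = \omega'_v$, i.e.\ triviality of $\omega'_v$ on the set of \emph{rational} coboundaries $(1-\theta)\bigl(Z_{G_{N,v}}(F_v)\bigr)$. This is strictly smaller than the kernel of $Z_{G_{N,v}}(F_v) \to (Z_{G_N})_\theta(F_v)$, which is what claim (b) implicitly requires. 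In the self-dual case the discrepancy is total: $\theta(z) = z^{-1}$, so $(1-\theta)(F_v^\times) = (F_v^\times)^2$, whereas $(Z_{G_N})_\theta$ is \emph{trivial} as an algebraic group and the kernel in question is all of $F_v^\times$. Your argument would therefore conclude that $\omega'_v$ is trivial on $F_v^\times$, but the paper itself records that $\omega'_v = \eta$ for $G = \Sp^\eta_{N-1}$ or $\SO^\eta_N$, which can be any quadratic character. Moreover, since the image of $(Z_{G_{N,v}})_\theta$ is then trivial, your final formula $\omega'_v(\td z) = \lambda(z)^{-1}\omega_v(z)$ reduces to $1=1$ and determines nothing, yet the lemma's content in the self-dual case is exactly that $\omega'_v$ is fully pinned down. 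The same issue appears in the conjugate self-dual case, where the kernel is $F_v^\times \subset E_w^\times$ but $\theta$-stability only gives triviality on the norm subgroup $N_{E_w/F_v}(E_w^\times)$.

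The missing input is that the transformation character $\lambda_C$ in the generalization after \cite[5.1.C]{KS99} is a character of $Z_{G_{N,v}}(F_v)$ itself (or the appropriate $z$-extension), not merely of the image of the algebraic coinvariants: it compares $\Delta(z\gamma, \td z\td\delta)$ to $\Delta(\gamma, \td\delta)$ as $\td z$ ranges over $Z_{G_{N,v}}(F_v)$ with $z$ its image, and it can be nontrivial on the kernel of $\td z \mapsto z$. With this correction, the comparison yields $\omega'_v(\td z) = \lambda_C(\td z)^{-1}\omega_v(z)$, a formula that does depend on $\td z$ and hence determines $\omega'_v$ on all of $Z_{G_{N,v}}(F_v)$ from $\omega_v|_{\mathrm{image}}$ and the datum $\lambda_C$ attached to the endoscopic datum for $G$. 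This matches the paper's remark that in the self-dual case $\omega'_v$ is ``completely determined by this $\lambda_C$ and therefore $G$.'' No other part of your argument needs to change once $\lambda_C$ is placed on the correct group.
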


With notation as in the statement of Lemma \ref{cchartransfer}, we will call this $\om'$ the \emph{transfer} of $\om$. Note that because of the possible non-triviality of $\lb_C$ in the generalization after \cite[5.1.C]{KS99}, the transfer $\om'$ is not necessarily pullback to $Z_{G,v}$. In fact, in the self-dual case, the image of $(Z_{G_{N,v}})_\theta \into Z_{G_v}$ is trivial, so $\om'_v$ is actually completely determined by this $\lb_C$ and therefore $G$:
\begin{itemize}
    \item If $N$ is odd, $\om'_v = \eta$ for $G = \Sp_{N-1}^\eta$. 
    \item If $N$ is even, $\om'_v = \eta$ for $G = \SO_N^\eta$ and $\om'_v = 1$ for $G = \SO_{N+1}$.
\end{itemize}

In the conjugate self-dual case, it can be checked by working over the algebraic closure that $(Z_{G_{N}})_\theta \into Z_{G}$ is given on points\footnote{we ignore the difference between this map and its inverse which depends on the specific choice of transfer factors. In our eventual application, the distinction will be moot due to arbitrary choices of infinite-place isomorphisms $\overline E_v \iso \C$ needed to parameterize infinitesimal characters.} by $\beta \mapsto \beta/ \bar \beta$. By Hilbert 90, this is a bijection over local fields, so the map $\om_v \mapsto \om'_v$ is an injection.
\begin{itemize}
    \item If $N$ is odd, the determinant of a conjugate symplectic/orthogonal parameter is the same type. Therefore, the image of $\om_v \mapsto \om'_v$ is all characters coming from conjugate symplectic or orthogonal%-type 
    parameters respectively depending on the types of parameters of $G$. 
    
    \item If $N$ is even, the determinant of a conjugate self-dual parameter is always conjugate-orthogonal. Therefore, the image of $\om_v \mapsto \om'_v$ is all characters coming from conjugate orthogonal parameters and doesn't depend on $G$.
\end{itemize}

In the case of $U^+_N$ we can be even more precise:

\begin{lem}\label{U+cchar}
Consider $G = U_N^+ \in \wtd{\mc E}_\sm(N)$. Then the transfer $\om_v \mapsto \om'_v$ is pullback through $Z_{G_{N,v}} \onto (Z_{G_{N,v}})_\theta \iso Z_{G_v}$. 
\end{lem}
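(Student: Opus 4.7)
The plan is to unwind the transformation property of transfer factors invoked just before the lemma statement and show that the twist character it introduces is trivial for the $+$ endoscopic datum. By the generalization of \cite[5.1.C]{KS99} cited there, the relation between $\om_v$ on $Z_{G_v}$ and $\om'_v$ on $Z_{G_{N,v}}$ takes the form
\[
\om'_v = (\om_v \cdot \lb_C) \circ q,
\]
where $q : Z_{G_{N,v}} \onto (Z_{G_{N,v}})_\theta \iso Z_{G_v}$ is the map in the statement and $\lb_C$ is a character of $Z_{G_v}$ depending only on the endoscopic datum. The lemma thus reduces to verifying $\lb_C \equiv 1$ for $U_N^+$.

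Next I would recall the explicit form of the embeddings $\Ld U_N^\pm \hookrightarrow \Ld G_N$ from \cite[\S2.1]{Mok15}. Each extends the natural inclusion $\wh G = \GL_N(\C) \hookrightarrow \GL_N(\C) \times \GL_N(\C) = \wh{G_N}$ by specifying the image of a fixed $w \in W_F \setminus W_E$, which amounts to choosing a Hecke character $\chi^\pm$ of $\A_E^\times/E^\times$ whose restriction to $\A_F^\times/F^\times$ is trivial (for the $+$ choice) or equals the quadratic character $\eta_{E/F}$ of $E/F$ (for the $-$ choice). Under the dictionary of \cite[\S5.1]{KS99}, $\lb_C$ is obtained by restricting the $1$-cocycle $W_{F_v} \to Z(\wh G_N)$ underlying this embedding to the dual $Z(\wh{G_N})^\theta$ of the coinvariants and translating the result to $Z_{G_v}$ via local class field theory. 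For $U_N^+$, the condition $\chi^+|_{F_v^\times} = 1$ makes exactly this restriction trivial, yielding $\lb_C \equiv 1$.

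The main technical obstacle will be to correctly reconcile the twisted transfer-factor normalizations of \cite{KS99} with the unitary conventions of \cite{Mok15}, in particular the choice of $z$-extension and Whittaker normalization that pin $\lb_C$ down absolutely rather than just up to a character. As a sanity check on both conventions and computation, one can verify the conclusion on an explicit family---say unramified principal series on $G_v = U_N^+(F_v)$ at an inert $v$---where both $\om_v$ and $\om'_v$ can be read off directly from the Satake parameters and the $+/-$ discrepancy manifests concretely as the expected $\eta_{E/F}$-twist.
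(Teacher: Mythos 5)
Your proposal is a genuinely different route from the paper's. The paper proves the lemma through parameter theory: it cites \cite[Thm 3.2.1(a)]{Mok15} and \cite[Thm 2.5.1(c)]{Mok15}, uses the fact that the central character $\om_v$ of $\pi_v$ has $L$-parameter $\det \varphi_v$, observes that this determinant necessarily factors through $\Ld Z_{G_v}$, and reads off the pullback relation by composing with the $L$-embedding. This sidesteps any explicit cocycle or transfer-factor computation. You instead work on the geometric side: you write $\om'_v = (\om_v \cdot \lb_C) \circ q$ from the generalization of \cite[5.1.C]{KS99} and reduce to showing $\lb_C \equiv 1$ by unwinding the $1$-cocycle underlying the embedding $\Ld U_N^+ \hookrightarrow \Ld G_N$ and using $\chi^+|_{F_v^\times} = 1$.

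Both approaches should terminate at the same place. What yours buys is that you obtain $\lb_C \equiv 1$ directly, which the paper instead deduces \emph{from} Lemma \ref{U+cchar} in the later proof of Lemma \ref{lem U+tfcenter} (via \cite[Rmk 3.2.3]{Mok15}); so your argument effectively short-circuits that chain of reasoning. What the paper's route buys is that it avoids precisely the obstacle you flag yourself: pinning down $\lb_C$ absolutely requires reconciling the $z$-extension and Whittaker normalizations of \cite{KS99} with Mok's conventions, and a sign or twist error there would be easy to make and hard to catch. The parameter-theoretic argument is immune to that since it never touches transfer factors. So your proposal is not wrong, but as written it leaves the decisive normalization step as a stated intention rather than a verified computation; if you want this to stand as a proof, you should actually carry out the identification of $\lb_C$ with the restriction of the embedding cocycle to $Z(\wh{G}_N)^{\theta}$ in Mok's conventions, and confirm on your proposed unramified principal-series example that the $+/-$ discrepancy indeed manifests as $\eta_{E/F}$.
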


\begin{proof}
This follows by the endoscopic transfer identity \cite[Thm 3.2.1(a)]{Mok15}  and \cite[Thm 2.5.1(c)]{Mok15}: if $\pi_v$ has parameter $\varphi_v$, then $\om_v$ has parameter $\det \varphi_v$ which necessarily lands in $\Ld Z_{G_v}$. 
\end{proof}

\subsection{Shapes for the Self-Dual Case}

We need a version of the theory of refined shapes for the %orthogonal/symplectic 
self-dual case. Informally, a shape is a coarsening of the notion of an Arthur parameter, and corresponds to the part of discrete spectrum which we will %need to 
isolate in our arguments. We use notation analogous to \cite[\S2]{DGG22} and refer readers to \cite{Art13} for more background.  %the exact background material. 
The key difference with \cite{DGG22} is that the auxiliary data $\eta$ will now be a central character instead of a parity. 

\subsubsection{Cuspidal Shapes}

One of the central results of the endoscopic classification, \cite[Theorem 1.4.1]{Art13}, is that each self-dual cuspidal parameter $\tau \in \Psi(N)$ factors through a unique simple endoscopic datum $H$. We first show that for the parameters with which we are concerned, this group $H$ is determined by the central and infinitesimal characters of $\tau$.

\begin{prop}\label{simpleshapesselfdual}
Let $\tau \in \Psi(N)$ be self-dual, cuspidal parameter with integral infinitesimal character $\lb$ at infinity and central character $\eta$. Then the endoscopic datum $G \in \wtd{\mc E}_\sm(N)$ through which $\tau$ factors is entirely determined by $\lb$ and $\eta$. 

Furthermore:
\begin{itemize}
\item
$\lb$ is the infinitesimal character of a finite-dimensional representation of $H$.
\item
If $G =\SO_{2n+1}$, then $\eta$ is trivial.
\item
If $G$ is a form of $\SO_{2n}$, then $G = \SO_{2n}^\eta$. In this case, for each $v|\infty$, $\eta$ is trivial on $\Gal_{F_v}$ if and only if $n$ is even. In particular, $G$ necessarily has discrete series.
\end{itemize}
\end{prop}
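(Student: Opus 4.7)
The plan is to first invoke \cite[Theorem 1.4.1]{Art13} to obtain a unique $G \in \wtd{\mc E}_\sm(N)$ through which $\tau$ factors, and then identify $G$ from the data $(\lb, \eta)$. The type of $\wh G$---either $\Sp_N$, or $\SO_N$ with $N$ odd, or $\SO_N$ with $N$ even---is determined by the shape of $\lb_v$ at any $v|\infty$. By the classification of integral infinitesimal characters in \S\ref{integralinfchars}, these three cases correspond respectively to half-integer entries, to integer entries with $N$ odd (which necessarily include a single $0$), and to integer entries with $N$ even. These shapes are mutually exclusive, so the type of $G$---namely $\SO_{N+1}$, $\Sp_{N-1}^{\bullet}$, or $\SO_N^{\bullet}$---is read off from $\lb$ alone; along the way, $\lb$ is identified with the infinitesimal character of a finite-dimensional representation of $G_\infty$.

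To pin down the twist, note that when $\wh G = \Sp_N \subset \SL_N$ the determinant of $\varphi_\tau$ is trivial, so $\eta = 1$ and $G = \SO_{N+1}$ unambiguously. In the two $\SO_N$ cases, the $L$-embedding $\Ld G \hookrightarrow \Ld G_N$ described in \cite[\S1.2]{Art13} twists the standard embedding by a quadratic character which, by a direct computation, matches $\det \circ \varphi_\tau = \eta$. Alternatively, this follows from \cite[Theorem 1.5.3(a)]{Art13}, cited earlier, which pins down $G$ to $\SO_N^{\eta_\tau}$ or $\Sp_{N-1}^{\eta_\tau}$ whenever $\eta_\tau \neq 1$.

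For the parity claim in the $G = \SO_{2n}^\eta$ case, I compute $\eta|_{\Gal_{F_v}}$ at each $v|\infty$ directly. Since $\tau$ is cuspidal self-dual on $\GL_{2n}$ with regular integer infinitesimal character $\lb_v$, the local parameter $\varphi_{\tau_v}\colon W_\R \to \GL_{2n}(\C)$ is a direct sum of $n$ irreducible two-dimensional inductions $\Ind_{W_\C}^{W_\R}(z^{a_i}\bar z^{-a_i})$ with distinct positive integers $a_i$. A direct matrix computation shows each summand is orthogonal-type and has determinant equal to the sign character of $W_\R^\ab \cong \R^\times$ under local class field theory, so the total determinant is the sign character raised to the $n$-th power. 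Hence $\eta|_{\Gal_{F_v}}$ is trivial iff $n$ is even. The discrete series conclusion then follows from the standard rank comparison: the quasisplit real form $G_v$---which is $\SO(n,n)$ when $\eta|_{\Gal_{F_v}} = 1$ and $\SO(n-1,n+1)$ otherwise---has discrete series iff $n$ is even, respectively iff $n$ is odd.

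The main obstacle is the determinant calculation at infinity: it rests on the explicit form of the archimedean local Langlands correspondence for self-dual tempered parameters together with the identification of the twist built into the $L$-embedding $\Ld G \to \Ld G_N$; the rest of the argument amounts to tabulating the integrality classification against the list of simple endoscopic groups.
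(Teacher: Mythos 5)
Your proposal follows the same overall structure as the paper's proof (read the type of $\wh G$ from $\lambda$, use $\eta$ for the twist, compute the local determinant at $\infty$ for the parity statement), but there is a real gap in the first step.

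You assert that "the type of $\wh G$\ldots is determined by the shape of $\lb_v$" by appeal to the classification of integral infinitesimal characters in \S\ref{integralinfchars}. That classification tells you which tuples $\lb_v$ arise as infinitesimal characters of \emph{finite-dimensional} representations of each $G \in \wtd{\mc E}_\sm(N)$, and it is true the three resulting forms are mutually exclusive. But this alone does not tell you that the group through which $\tau$ factors has the corresponding type. A semisimple conjugacy class in $\wh{\fg}$ for $\wh G$ orthogonal or symplectic is simply an $N$-tuple closed under negation --- any such tuple, with arbitrary (even half-integer) entries, is a valid infinitesimal character for $\mf{so}_N$ \emph{and} for $\mf{sp}_N$. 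So a priori a cuspidal $\tau$ with half-integral $\lb$ could factor through $\SO_N^\eta$ rather than $\SO_{N+1}$: the constraint on $\lb$ by itself does not preclude this. What rules it out is an analysis of the local $L$-parameter $\varphi_{\tau_v}$ at $v|\infty$. The paper invokes Clozel's purity lemma (via \cite[Lem.\ 3.13]{CR15}) to write $\varphi_{\tau_v}$ as a sum of $\1$, $\eps_{\C/\R}$, and inductions $I_w$, notes that $I_w$ embeds in an orthogonal (resp.\ symplectic) group iff $w$ is even (resp.\ odd), and uses the fact that $\tau$ factors through a \emph{simple} endoscopic datum to force all $w$'s to have the same parity. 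It is only this argument that ties the integer-vs.-half-integer dichotomy of $\lb_v$ to the orthogonal-vs.-symplectic dichotomy of $\wh G$. You do carry out exactly this kind of local analysis later --- for the parity computation in the $\SO_{2n}^\eta$ case you decompose $\varphi_{\tau_v}$ into two-dimensional inductions --- but you need it already in the type-determination step, and there you skip it.

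Two smaller remarks. First, you assume $\lb_v$ is \emph{regular} (so $\varphi_{\tau_v}$ decomposes into $n$ irreducible inductions with distinct positive $a_i$), but the proposition statement only assumes integrality; the paper's argument allows for $\1$ and $\eps_{\C/\R}$ summands, corresponding to the possible constant coefficient $2$ in the $\SO_{2n}$ case, and the determinant computation goes through in that generality as well. Second, for the $\Sp_{N-1}^\eta$ and $\SO_N^\eta$ cases you offer a determinant computation of the $L$-embedding as an alternative to citing \cite[Thm 1.5.3(a)]{Art13}; the paper just appeals to the discussion on \cite[p.\ 10]{Art13}, which is essentially the same content, so this part is fine.
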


\begin{proof}
 This is the %same 
 argument of \cite[\S3.11]{CR15}, but we allow %except allowing 
 for non-trivial central character. %First, by \cite[Thm. 1.4.1]{Art13}, there exists $H \in \wtd{\mc E}_\sm(N)$ such that $\tau$ is a parameter for $H$.

\noindent \underline{Using $\lb$}

We will see that $\lb$ determines $\wh G$. First, $\lb$ is integral so by the version of Clozel's purity lemma stated in \cite[Lemma 3.13]{CR15}, for each $v|\infty$, the $L$-parameter $\varphi_v : W_\R \to \GL_n \C$ associated to~$\tau_v$ is a direct sum of copies of the trivial representation~$\1$, the character $\eps_{\C/\R} : W_\R \onto \R^\times \to \pm 1$, and, for~$w \in \Z$, the representations
\[
I_w = \Ind_{\C^\times}^{W_\R} z^{-w/2} \bar z^{w/2}.
\]
Note that $I_w = I_{-w}$ and that $I_0 = \1 \oplus \eps_{\C/\R}$. 

Then $\lb_v$ is a list of numbers: a $0$ for each $\1$ or $\eps_{\C/\R}$ and a pair $(w/2, -w/2)$ for each $I_w$. In addition $I_w$ can be conjugated into an orthogonal/symplectic group if and only if $w$ is even/odd. Since $\tau$ comes from a simple endoscopic group that has either orthogonal or symplectic dual, it follows that all the $w$ have the same parity. 

In total, the non-zero numbers in $\lb_v$ are either be all integers or all half-integers for each $\lb_v$, depending on whether $\tau_v$ has image in an orthogonal or symplectic group, thereby determining%. This determines
~$\wh G$. In addition, %the restrictions in 
it follows from our definition of integral that~$\lb_v$ is the infinitesimal character of a finite-dimensional representation on $G$. %\red{see CR page 37}

\noindent \underline{Using $\eta$}

Next,~$\eta$ determines the remaining part of the datum corresponding to $G$. By the discussion on \cite[p. 10]{Art13}, we have three cases:
\begin{itemize}
\item
If $\wh G \cong \Sp_{2n}$, then $G = \SO_{2n+1}$ and $\eta$ is necessarily trivial. 
\item
If $\wh G \cong \SO_{2n + 1}$, then $G = \Sp^\eta_{2n} \cong \Sp_{2n}$. 
\item
If $\wh G \cong \SO_{2n}$, then $G = \SO^\eta_{2n}$. 
\end{itemize}
This completely determines $G$. We can also compute $\eta|_{\Gal_{F_v}}$ by computing $\det \tau_v$. In the $\wh G \cong \SO_{2n}$ case, $\tau_v$ is a sum of $n$ different summands $I_{w_i}$ for $w_i$ even and therefore acts trivially on $\Gal_{F_v}$ if and only if $n$ is even. Therefore, $G$ necessarily has discrete series under our Assumption \ref{assump EF}. 
\end{proof}

\subsubsection{Refined Shapes in the Self-Dual Case}
Motivated by the above, we define:
\begin{dfn}\label{def refinedshape}
An \emph{refined shape} is a sequence
\[
\Delta = (T_i, d_i, \lb_i, \eta_i)_i
\]
up to permutation and where $(T_i, d_i)$ are positive integers, $\lb_i$ is an infinitesimal character of rank $T_i$, and $\eta_i$ is a central character. 

We say that $\psi \in \Delta$, or that $\psi$ has refined global shape $\Delta$ if $\psi = \bigoplus_i \tau_i[d_i]$ with each $\tau_i$ of rank $T_i$ and such that each $\tau_i$ has central character $\eta_i$ and infinitesimal character $\lb_i$ at infinity. 

Let $\Psi_\el^\Delta$ be the set of all elliptic, self-dual parameters on $\GL_n$ of refined shape $\Delta$. 
\end{dfn}

Beware that $\Psi_\el^\Delta$ is empty unless all the bulleted conditions on $\lb$ and $\eta$ of Proposition \ref{simpleshapesselfdual} hold. 

\begin{dfn}
The refined shape $\Delta$ is \emph{integral} if the total infinitesimal character of $\Delta$ as in \eqref{infcharform} is integral, in the sense of Definition \ref{def integral}.
\end{dfn}

We also pick out a special refined shape:

\begin{dfn}\label{def simpleshape}
Let $\lb$ be an integral infinitesimal character for a group of rank $N$ and $\eta$ a central character that is trivial if $\lb$ has half-integral entries. Then $\Sigma_{ \eta, \lb}$ is the trivial refined shape $(N, 1, \lb, \eta)$.
\end{dfn}

\subsubsection{Properties}\label{shapepropertiesselfdual}
\begin{prop}\label{shapetogroup}
Let $\Delta$ be an integral refined shape of rank~$N$ such that all the~$\lb_i$ are integral (in the sense of~\ref{def integral}) and $\eta_i$ is trivial whenever $\lb_i$ has half-integer entries. Then there is a group $G(\Delta) \in \wtd{\mc E}_\el(N)$ such that all $\psi \in \Delta$ are parameters for $G$. Furthermore, $G(\Delta)$ has discrete series at infinity.
\end{prop}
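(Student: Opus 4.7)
The plan is to build $G(\Delta)$ piece-by-piece from the simple endoscopic groups attached to each cuspidal factor $\tau_i$ via Proposition \ref{simpleshapesselfdual}, then assemble them by orthogonal/symplectic type. Fix any $\psi = \bigoplus_i \tau_i[d_i] \in \Psi_\el^\Delta$ (we will see the output depends only on $\Delta$).

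First, for each index $i$, the hypotheses that $\lb_i$ is integral and that $\eta_i$ is trivial whenever $\lb_i$ has half-integer entries are exactly those needed to apply Proposition \ref{simpleshapesselfdual} to the cuspidal self-dual $\tau_i$. This produces a simple endoscopic datum $G_i \in \wtd{\mc E}_\sm(T_i)$, depending only on $(\lb_i, \eta_i)$, through which $\tau_i$ factors; moreover $G_i$ has discrete series at infinity and $\lb_i$ is the infinitesimal character of a finite-dimensional representation of $G_i$.

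Second, I determine the symmetry type of each factor $\tau_i[d_i] = \tau_i \boxtimes [d_i]$ using the usual tensor product rule: $\tau_i[d_i]$ is orthogonal iff $\tau_i$ and $[d_i]$ share symmetry type, where $[d_i]$ is orthogonal for $d_i$ odd and symplectic for $d_i$ even. This gives a partition $I = I^o \sqcup I^s$ visibly determined by $\Delta$ alone. Setting $\psi^o = \bigoplus_{I^o} \tau_i[d_i]$ and $\psi^s = \bigoplus_{I^s} \tau_i[d_i]$ of ranks $N^o + N^s = N$, each is a self-dual parameter of $\GL_{N^\star}$ all of whose irreducible summands share a common orthogonal/symplectic type. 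By the classification in \cite[\S1.4]{Art13} (and as recalled in \S\ref{sec twistedendoscopicgroups}), each $\psi^\star$ factors through a unique simple endoscopic group $G^\star \in \wtd{\mc E}_\sm(N^\star)$, with the specific choice of $\SO^\eta$-form in the orthogonal case pinned down via class field theory by the total central character $\prod_{i \in I^\star} \eta_i^{d_i}$ (cf.\ \eqref{ccharform}). Defining $G(\Delta) := G^o \times G^s$ then exhibits an elliptic endoscopic datum in $\wtd{\mc E}_\el(N)$ through which every $\psi \in \Psi_\el^\Delta$ factors.

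Finally, for discrete series at infinity, it suffices to check each factor $G^o, G^s$. When $G^\star$ is of type $\Sp$ or $\SO_{2n+1}$, or of type $\SO_{2n}^\eta$ with $\eta$ non-trivial and $n$ odd, or $\eta$ trivial and $n$ even, discrete series is automatic under Assumption \ref{assump EF}, exactly as in Proposition \ref{simpleshapesselfdual}; in the remaining $\SO_{2n}^\eta$-cases, one computes the parity of $\eta^\star|_{\Gal_{F_v}} = \prod_{I^\star} \eta_i^{d_i}|_{\Gal_{F_v}}$ at each archimedean $v$ from the corresponding parities for the individual $G_i$'s with $\wh{G_i} \cong \SO_{2n_i}$ coming from Proposition \ref{simpleshapesselfdual}, and checks compatibility with the rank $N^\star$ modulo $2$. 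The main obstacle is precisely this last bookkeeping: verifying that the central-character and infinitesimal-character compatibility conditions for the $\SO_{2n}^\eta$ subfactor of $G(\Delta)$ to have discrete series aggregate correctly from the individual conditions on the $\tau_i$, using the integrality assumption on the total infinitesimal character of $\Delta$.
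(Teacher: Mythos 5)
Your proof takes essentially the same approach as the paper's: apply Proposition~\ref{simpleshapesselfdual} to each cuspidal constituent $\tau_i$, use the tensor rule on $\tau_i \boxtimes [d_i]$ to determine the type of each factor, partition the indices into orthogonal and symplectic pieces, and assemble $G(\Delta)$ as the product of the two resulting simple endoscopic groups via the discussion on \cite[pp. 33-34]{Art13}, with the central character of the orthogonal piece computed from \eqref{ccharform}.

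One small point worth noting: your statement of the tensor rule — that $\tau_i[d_i]$ shares symmetry type with $\tau_i$ precisely when $d_i$ is odd, because $[d_i] = \Sym^{d_i - 1}$ of the standard $\SL_2$-representation is orthogonal for $d_i$ odd and symplectic for $d_i$ even — is the standard one and is correct. The paper's wording of this step has the parities reversed (``if $d_i$ is odd, the type of $\tau_i[d_i]$ is opposite to that of $\tau_i$''), which appears to be a typo; your version is the one the rest of the argument actually needs.

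On the discrete series claim, your proof flags the remaining bookkeeping for the $\SO_{N_O}^\eta$ factor (when $N_O$ is even) without fully carrying it out. This matches the paper's level of detail, which also leaves this implicit. The missing step in both is short: the orthogonal sub-parameter $\psi^o_v$ at a real place $v$ has regular integral infinitesimal character and image in $\SO_{N_O}$, so by the purity-lemma argument already given in the proof of Proposition~\ref{simpleshapesselfdual}, it is a sum of $N_O/2$ representations $I_w$ with $w$ even, and hence $\eta|_{\Gal_{F_v}} = \det \psi^o_v$ is trivial if and only if $N_O/2$ is even — precisely the condition for $\SO^\eta_{N_O}(F_v)$ to admit discrete series.
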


\begin{proof}
Consider $\psi = \bigoplus_i \tau_i[d_i] \in \Psi_\el^\Delta$. By the discussion on \cite[pp. 33-34]{Art13}, it suffices to show that $\Delta$ determines two things: 
\begin{itemize}
\item
whether each $\tau_i[d_i]$ has image in an orthogonal or symplectic group. Let the total ranks of the orthogonal%-image 
and symplectic%-image 
pieces be $N_O$ and $N_S$ respectively,
\item
the central character $\eta$ of the sum of the orthogonal%-image 
$\tau_i[d_i]$. 
\end{itemize}
Then we will have
\[
G(\Delta) =
\begin{cases}
 \SO_{N_S + 1} \times \Sp^\eta_{N_O - 1} & N_O \text{ odd} \\
 \SO_{N_S + 1} \times \SO^\eta_{N_O } & N_O \text{ even}
\end{cases}.
\]

To show these two facts, apply Proposition \ref{simpleshapesselfdual} and note that if $d_i$ is odd, whether $\tau_i[d_i]$ has orthogonal or symplectic image is the opposite of that for~$\tau_i$. If~$d_i$ is even, then it is the same. In addition, the central character of the sum of the orthogonal-image~$\tau_i[d_i]$ is determined through \eqref{ccharform} by the central characters of the $\tau_i$ and which $\tau_i[d_i]$ have orthogonal image. 
\end{proof}
This gives us two corollaries:
\begin{cor}\label{grouptoshapes}
Let $G \in \wtd{\mc E}_\el(N)$. Then
\[
\Psi_\disc(G) = \bigsqcup_{\Delta : H(\Delta) = G} \Psi_\el^\Delta.
\]
\end{cor}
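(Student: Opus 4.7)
The plan is to show both inclusions and disjointness by reading off the refined shape from the canonical decomposition of a discrete parameter.

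First, I would recall that by Arthur's classification, any $\psi \in \Psi_\disc(G)$ admits a unique (up to ordering) decomposition
\[
\psi = \bigoplus_i \tau_i[d_i],
\]
where each $\tau_i$ is a self-dual cuspidal automorphic representation of $\GL_{T_i}/F$ and the pairs $(\tau_i, d_i)$ are pairwise distinct. Each $\tau_i$ has a well-defined infinitesimal character $\lambda_i$ at infinity and a central character $\eta_i$. I would then \emph{define} the refined shape attached to $\psi$ by $\Delta(\psi) := (T_i, d_i, \lambda_i, \eta_i)_i$. This assignment is well-defined up to permutation of the indices, and by construction $\psi \in \Psi_\el^{\Delta(\psi)}$.

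Next, I would verify that $G(\Delta(\psi)) = G$, which is the heart of the statement. Since $\psi$ is a discrete parameter for $G$, by the endoscopic classification it factors through a specific endoscopic datum of the shape described in the proof of Proposition \ref{shapetogroup}: namely, each $\tau_i[d_i]$ is either orthogonal- or symplectic-type, and $G$ is determined by the total ranks $N_O, N_S$ and by the central character $\eta$ of the sum of the orthogonal $\tau_i[d_i]$. The content of Proposition \ref{shapetogroup}, applied to the specific $\psi$, is precisely that this same data is recoverable from $\Delta(\psi)$ (using Proposition \ref{simpleshapesselfdual} to read off orthogonality/symplecticity of each $\tau_i$ from $\lambda_i$ and $\eta_i$, and then applying the parity rule for $\tau_i[d_i]$, together with the multiplicativity~\eqref{ccharform} of central characters under $[d]$). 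Hence $G(\Delta(\psi)) = G$ as required, giving the inclusion $\Psi_\disc(G) \subseteq \bigsqcup_\Delta \Psi_\el^\Delta$.

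The reverse inclusion $\Psi_\el^\Delta \subseteq \Psi_\disc(G(\Delta))$ for any refined shape $\Delta$ with $G(\Delta) = G$ is essentially the definition of $G(\Delta)$ as given by Proposition \ref{shapetogroup}: every $\psi \in \Psi_\el^\Delta$ is a parameter for $G(\Delta)$, and the elliptic/discrete condition is captured by the combinatorics of distinct summands. Finally, disjointness is automatic, since $\psi$ determines its decomposition $\bigoplus_i \tau_i[d_i]$ uniquely up to permutation, and the infinitesimal/central character data of each $\tau_i$ are intrinsic invariants, so $\Delta(\psi)$ is an honest function of $\psi$: two different refined shapes cannot share a parameter. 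No genuine obstacle is expected here; the only subtlety to watch for is the $\SO_{2n}^\eta$ case, where one must remember that we have passed to outer-automorphism orbits (as stipulated in the conventions just before \S\ref{sec infchars}), so that $\Delta$ is really an invariant of such an orbit.
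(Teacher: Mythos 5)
Your proposal is correct and is essentially the paper's argument unpacked: the paper's proof simply invokes Proposition~\ref{shapetogroup} together with the discussion in Arthur (pp.~33--34), which is exactly the decomposition-reading-off and endoscopic-datum matching you spell out. The one point worth being explicit about is that the construction of $G(\Delta)$ in Proposition~\ref{shapetogroup} is by design identical to Arthur's assignment $\psi \mapsto G_\psi$, so $G(\Delta(\psi)) = G_\psi$ and the discrete parameters for $G$ are precisely the elliptic $\psi$ with $G(\Delta(\psi)) = G$; your phrase ``the elliptic/discrete condition is captured by the combinatorics of distinct summands'' leans on this identification, which is the real content supplied by the cited pages of \cite{Art13}.
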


\begin{proof}
This follows from the above and the same discussion in \cite[pp. 33-34]{Art13}. 
\end{proof}

\begin{cor}\label{sigmatogroup}
Let $\lb$ be an integral infinitesimal character of rank $N$. Then
\[
G(\Sigma_{\eta,\lb}) = \begin{cases}
\Sp^\eta_{2n} &  N \text{ odd} \\
\SO_{2n+1} & N \text{ even, } \lb \text{ has half-integer entries}\\
\SO^\eta_{2n} & N \text{ even, } \lb \text{ has integer entries.}
\end{cases}
\]
In particular, for every $G \in \wtd{\mc E}_\sm(N)$ and infinitesimal character $\lb$ of a finite-dimensional representation of $G_\infty$, there is $\eta$ such that $H(\Sigma_{ \eta,\lb}) = G$ (beware that $\Psi^{\Sigma_{\eta, \lb}}_\el$ is empty when the conditions from Proposition \ref{simpleshapesselfdual} don't hold). 
\end{cor}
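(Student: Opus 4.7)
The shape $\Sigma_{\eta,\lb}$ is the trivial one-summand refined shape $(N,1,\lb,\eta)$, so any $\psi \in \Psi^{\Sigma_{\eta,\lb}}_\el$ is a single self-dual cuspidal parameter $\tau$ on $\GL_N$ with central character $\eta$ and infinitesimal character $\lb$ at infinity. The plan is to apply Proposition~\ref{simpleshapesselfdual} directly to $\tau$: it asserts that the simple endoscopic group $G(\Sigma_{\eta,\lb})$ through which $\tau$ factors is determined by $\lb$ and $\eta$ alone, so the task reduces to turning that existence statement into the explicit trichotomy in the statement.

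From the argument in Proposition~\ref{simpleshapesselfdual}, integrality of $\lb$ combined with Clozel's purity forces all non-zero entries of each $\lb_v$ to be of the same parity---integer or half-integer---according to whether $\tau_v$ has image in an orthogonal or symplectic group. I would then run through the three resulting cases. If $N$ is odd, the only possibility is $\wh{G} \cong \SO_N$ (orthogonal of odd rank), which corresponds to integer entries; the endoscopic datum with this dual and Galois action by $\eta$ is $\Sp^\eta_{N-1}$. If $N$ is even and $\lb$ has half-integer entries, then $\wh{G} \cong \Sp_N$, forcing $G = \SO_{N+1}$. If $N$ is even and $\lb$ has integer entries, then $\wh{G} \cong \SO_N$ with Galois action by $\eta$, yielding $G = \SO^\eta_N$. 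These match the three branches of the stated formula.

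For the ``furthermore'' claim, given $G \in \wtd{\mc E}_\sm(N)$ and an integral $\lb$ that is the infinitesimal character of a finite-dimensional representation of $G_\infty$, I would simply read off $\eta$ by matching $G$ to the three cases above: $\eta$ must be the character cutting out the outer form of $G$ when $G = \Sp^\eta_{N-1}$ or $\SO^\eta_N$, and is immaterial when $G = \SO_{N+1}$. No real obstacle arises---the corollary is bookkeeping on top of Proposition~\ref{simpleshapesselfdual}, the only subtlety being to double-check that the dual-group parities identified in that proposition line up correctly with the trichotomy and the conventions for the subscripts on the endoscopic groups in $\wtd{\mc E}_\sm(N)$.
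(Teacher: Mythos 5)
Your proposal is correct and takes the same route the paper intends: the paper gives no explicit proof of Corollary~\ref{sigmatogroup}, treating it as a direct specialization of Propositions~\ref{simpleshapesselfdual} and~\ref{shapetogroup} to the trivial one-summand shape $(N,1,\lb,\eta)$, which is exactly what you have reconstructed. One small slip in phrasing: when $N$ is odd you write ``$\wh G \cong \SO_N$ (orthogonal of odd rank)''---you mean odd \emph{size}, i.e.\ $\SO_N$ with $N$ odd; that group has rank $(N-1)/2$. The substance is right, since for $N$ odd the only class of simple twisted endoscopic datum is the one with $\wh G \cong \SO_N$, giving $G = \Sp_{N-1}^\eta$, and the integrality definitions in \S\ref{integralinfchars} (integer entries for orthogonal $\wh G$, half-integer entries for symplectic $\wh G$) line up with your parity reasoning to give the stated trichotomy. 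It is also worth being aware, as you implicitly are in citing the ``beware'' caveat, that the formula for $G(\Sigma_{\eta,\lb})$ is defined purely from the data of $\lb$ and $\eta$ via the computation in Proposition~\ref{shapetogroup}, so the assignment makes sense even when $\Psi_\el^{\Sigma_{\eta,\lb}}$ is empty and there is no parameter $\tau$ to point at.
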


We collect some more properties of refined shapes $\Delta$:
\begin{itemize}
\item
By \cite[(1.4.9)]{Art13}, All $\psi \in \Delta$ correspond to the same pair $(\mc S_\psi,s_\psi)$ up to conjugation. Call the common values $S_{\Delta}$ and $s_{\Delta}$. 
\item
All $\psi \in \Delta$ have the same Arthur-$\SL_2$.
\item
The infinitesimal character at infinity and central character of $\psi \in \Delta$ is determined by formulas \eqref{infcharform} and \eqref{ccharform}. 
\end{itemize}

Finally, we relate integral refined shapes $\Delta$ to a class of Archimedean parameters defined in \cite{AJ87}, which we call AJ-parameters; see also \cite[\S 4.3]{DGG22}:

\begin{lem}
Let $\Delta$ be an integral refined shape. Then there exists an AJ-parameter~$\psi_\infty^\Delta$ %at infinity 
such that for all $\psi \in \Delta$, $\psi_\infty = \psi_\infty^\Delta$. Furthermore, the induced localization map $\mc S_\Delta \to  \mc S_{\psi_\infty^\Delta}$ is also determined by $\Delta$. 
\end{lem}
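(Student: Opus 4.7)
The plan is to show that each cuspidal component's local archimedean parameter $\tau_{i,\infty}$ is determined by the data $(T_i, \lb_i, \eta_i)$ recorded in $\Delta$, and then assemble these into $\psi_\infty^\Delta$ in the obvious way.

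First, for each $i$, apply Clozel's purity lemma as stated in \cite[Lemma 3.13]{CR15} (and used in the proof of Proposition \ref{simpleshapesselfdual}) to any $\tau_i$ appearing as a factor of some $\psi \in \Delta$: since $\lb_i$ is integral, the Langlands parameter $\tau_{i,\infty}:W_\R\to\GL_{T_i}(\C)$ is a direct sum of copies of $\1$, $\eps_{\C/\R}$, and $I_w$ for $w\in\Z_{>0}$. The multiset of the $I_w$ occurring is determined by the nonzero entries of $\lb_i$ (which come in $\pm w/2$ pairs), and the total number of $\1\oplus\eps_{\C/\R}$ summands equals the multiplicity of zero in $\lb_i$. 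Finally, the split between $\1$'s and $\eps_{\C/\R}$'s is pinned down by $\det\tau_{i,\infty}=\eps_{\C/\R}^{m}$ together with the constraint $\det\tau_{i,\infty}=\eta_{i,\infty}$ coming from the central character. Hence $\tau_{i,\infty}$ is determined by $(\lb_i,\eta_i)$ alone.

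Setting $\psi_\infty^\Delta:=\bigoplus_i \tau_{i,\infty}[d_i]$, this is an archimedean $A$-parameter that agrees with $\psi_\infty$ for every $\psi\in\Delta$. That $\psi_\infty^\Delta$ is an AJ-parameter follows from the discussion in \cite[\S4.3]{DGG22}: each simple summand $I_w[d]$, $\1[d]$, or $\eps_{\C/\R}[d]$ is of AJ type when the total infinitesimal character is integral and regular in the sense required, and direct sums of AJ-parameters are AJ.

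For the localization map, recall that $\mc S_\Delta$ is generated (modulo the image of $Z(\wh G)^{\Gamma_F}$ and sign relations coming from the orthogonal/symplectic types of the $\tau_i[d_i]$) by one element for each simple summand $\tau_i[d_i]$, and similarly $\mc S_{\psi_\infty^\Delta}$ is generated by one element for each simple summand of $\psi_\infty^\Delta$. The map $\mc S_\Delta\to\mc S_{\psi_\infty^\Delta}$ sends the generator attached to $\tau_i[d_i]$ to the sum of the generators attached to the simple constituents of $\tau_{i,\infty}[d_i]$ that live in the same (orthogonal/symplectic) factor of $\wh G(\Delta)_\infty$. Since both the decomposition of $\tau_{i,\infty}$ and the orthogonal/symplectic type of each summand are determined by $\Delta$ (via the arguments above and via Proposition \ref{shapetogroup}), so is this map.

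The main subtlety is step one: ensuring that the central character $\eta_i$ really does pin down the number of $\eps_{\C/\R}$-factors, which requires knowing that integrality of $\lb_i$ forces all archimedean constituents to be of the restricted form listed; purity is the essential input here. Once this is in hand, the remaining assertions are bookkeeping about how component groups of a sum decompose into component groups of the summands.
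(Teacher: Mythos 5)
Your proof is correct and follows essentially the same route as the paper's own proof, which is also a terse appeal to Clozel's purity lemma (via \cite[3.13]{CR15}) to pin down each $\tau_{i,\infty}$ from $(\lb_i,\eta_i)$, followed by an appeal to \cite[(1.4.9)]{Art13} for the localization of component groups. You unwind the purity step more explicitly than the paper does: the paper simply asserts that $\tau_{i,\infty}$ is ``the unique discrete series parameter of its infinitesimal character,'' which is only literally true once one fixes the subgroup through which $\tau_{i,\infty}$ factors (equivalently, once the determinant/central character is specified); your treatment of the $\1$ versus $\eps_{\C/\R}$ ambiguity in the zero-eigenvalue slot is exactly the content hidden in that assertion, so your version is arguably the more careful one. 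Your description of the localization map as a diagonal-type map on generators is a concrete restatement of what one reads off from \cite[(1.4.9)]{Art13}.

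One small caveat worth tightening: the phrase ``direct sums of AJ-parameters are AJ'' is not true unconditionally (the sum need not have regular infinitesimal character). In your setting this is harmless because the integrality of $\Delta$ already guarantees the \emph{total} infinitesimal character $\lb$ is regular, but the regularity should be cited as the operative hypothesis rather than a closure-under-direct-sums claim.
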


\begin{proof}
This is more-or-less the same proof as \cite[Lem. 4.3.4]{DGG22}

We have that $\psi_\infty$ is determined by the $\tau_{i, \infty}$. However, checking the cases from~\cite[3.13]{CR15}, $\tau_{i, \infty}$ is a discrete parameter with infinitesimal character matching that of a finite-dimensional representation so it is the unique discrete series parameter of its infinitesimal character $\lb_i$. This forces $\psi_\infty$ to be an AJ-parameter. 

The localization statement comes similarly as in \cite[Lem. 4.3.4]{DGG22} by inspecting the formula \cite[(1.4.9)]{Art13} which holds both locally and globally. 
\end{proof}

\subsection{Shapes in the Conjugate Self-Dual Case}
For easy reference, we recall the definition of refined shapes in the conjugate self-dual case from \cite[\S 4]{DGG22}:
\begin{dfn}
In the conjugate self-dual case, a \emph{refined shape} is a sequence
\[
\Delta = (T_i, d_i, \lb_i, \eta_i)_i
\]
up to permutation and where the $T_i$ and $d_i$ are positive integers, $\lb_i$ is an infinitesimal character of rank $T_i$, and $\eta_i = \pm 1$.
\end{dfn}

As the in the self-dual case, the $\eta_i$ (which are now just a simple signs) are needed to determine $G(\Delta)$. 
\begin{dfn}
In the conjugate self-dual case, the refined shape $\Delta$ is \emph{integral} if the total infinitesimal character of $\Delta$ as in equation \eqref{infcharform} is integral. 
\end{dfn}
As explained in \cite[\S4]{DGG22}, all the analogous properties to \S\ref{shapepropertiesselfdual} for integral refined shapes  hold in the conjugate self-dual case. The arguments are in fact simpler since all $G \in \wtd{\mc E}_\sm(N)$ have discrete series at infinity.

\subsection{Unrefined Shapes}\label{sec unrefinedshapes}
Given a refined shape $\Delta$, we can forget some data to give an unrefined shape $\Box := \Box(\Delta) = ((T_i, d_i, \eta_i))_i$:
\begin{itemize}
    \item In the self-dual case, we forget all information about the infinitesimal character except whether it takes integral or half-integral values. For indexing consistency, we fold this information into $\eta$ which now becomes a pair $(\om, \pm 1)$ of a quadratic central character and a sign.
    \item In the conjugate self-dual case, we forget the infinitesimal character. 
\end{itemize}
This still uniquely determines the group $G(\Delta) = G(\Box)$ as in \cite[Prop 4.3.1]{DGG22} and Proposition \ref{shapetogroup}. We also define trivial unrefined shape
\[
\Sigma_\eta := \Box(\Sigma_{\lb, \eta}). 
\]

Given infinitesimal character $\lb$ for $G(\Delta)_\infty$, Let $\Box_\lb$ 
be the set of $\Delta$ with total infinitesimal character $\lb$ such that $\Box(\Delta) = \Box$. We will sometimes shorthand
\[
(\lb_1, \dotsc, \lb_i) \in \Box_\lb \implies (T_i, d_i, \lb_i, \eta_i)_i \in \Box_\lb. 
\]
since $\Box$ already contains the data of the $T_i, d_i, \eta_i$. 

We define $\lb[d]$ to be the total infinitesimal character of the simple shape $(T, d, \lb, \eta)$ as in \eqref{infcharform}. Similarly, let $\boxtimes_i \lb_i$ be the total infinitesimal character of $(T_i, 1, \lb_i, \eta_i)_i$---i.e. concatenating all the coordinates of each $\lb_i$ and ordering them increasingly.

Finally, to an unrefined shape $\Box = (T_i, d_i, \eta_i)_i$ we associate a ``reduced group''
\[
G_F(\Box) := \prod_i G((T_i, d_i, \eta_i)).
\]
This is not in general an element of $\wtd{\mc E}_\el(N)$ and can be thought of as the smallest group through which $\psi \in \Box$ functorially factor. It will appear in bounds on growth rates of counts of automorphic reps $\pi$ associated to $\psi \in \Box$.

\subsection{Norms of Infinitesimal Characters}\label{sec infcharnorms}
To prove our bounds, we will compare two different norms of infinitesimal characters $\lb$ of some $G \in \wtd{\mc E}_\sm(N)$: 
\begin{itemize}
\item the dimension of the finite-dimensional representation of $G$ corresponding to $\lb$:
\[
\dim \lb := \dim^G \lb := C_G \prod_{\alpha \in \Phi_+(G)} \langle \alpha, \lb \rangle 
\]
where the constant $C_G$ only depends on $G$,
\item a minimum over positive coroots of $G$:
\[
m(\lb) := m^G(\lb) := \min_{\alpha \in \Phi_+(G)}  \langle \alpha, \lb \rangle.
\]
\end{itemize}

Given an unrefined shape $\Box$, we can also define
\[
\dim_\Box \lb := \dim^G_\Box \lb :=  \max_{(\lb_i)_i \in \Box_\lb} \prod_i \dim \lb_i
\]
Note that this can be $0$ if $\Box_\lb$ is empty. 

Given a group $G$, we can compute the number of positive roots:
\[
P_G := \f12 (\dim G - \rank G).
\]
This gives us a bound: 
\begin{lem}\label{dimboundbym}
\[
\dim_\Box \lb \leq (\dim \lb) m(\lb)^{P_{G_F(\Box)} - P_G}.
\]
\end{lem}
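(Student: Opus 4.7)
The plan is to realize $\prod_i \dim \lb_i$ as a subproduct of $\dim \lb = C_G \prod_{\alpha \in \Phi_+(G)} \langle \alpha, \lb \rangle$, with the missing linear factors each bounded below by $m(\lb)$. First I would fix a decomposition $(\lb_i)_i \in \Box_\lb$ achieving the maximum that defines $\dim_\Box \lb$, and set $G_i := G((T_i, d_i, \eta_i))$ so that $G_F(\Box) = \prod_i G_i$ and $P_{G_F(\Box)} = \sum_i P_{G_i}$. Since $\lb = \boxtimes_i \lb_i[d_i]$, the $N$ coordinates of $\lb$ partition into blocks of size $T_id_i$, each one an Arthur-$\SL_2$-shifted copy of $\lb_i$. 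Under this block decomposition, the positive roots of each $G_i$ embed into $\Phi_+(G)$ as the roots supported on the $i$-th block, producing a subset $S \subseteq \Phi_+(G)$ with $|S| = P_{G_F(\Box)}$ and
\[
\prod_i \dim \lb_i = C' \prod_{\alpha \in S} \langle \alpha, \lb \rangle
\]
for a constant $C'$ depending only on the $G_i$'s.

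Next I would split the Weyl dimension product for $\dim \lb$ along $S$ and its complement to obtain
\[
\dim \lb = \frac{C_G}{C'} \prod_i \dim \lb_i \cdot \prod_{\alpha \in \Phi_+(G) \setminus S} \langle \alpha, \lb \rangle.
\]
The complement has $P_G - P_{G_F(\Box)}$ elements, each contributing a factor $\geq m(\lb)$ by definition. A comparison of the normalizing Weyl denominators $\prod_\alpha \langle \alpha, \rho \rangle$ for $G$ versus the $G_i$'s gives $C_G/C' \geq 1$. Putting these together yields $\dim \lb \geq \prod_i \dim \lb_i \cdot m(\lb)^{P_G - P_{G_F(\Box)}}$, equivalently $\prod_i \dim \lb_i \leq \dim \lb \cdot m(\lb)^{P_{G_F(\Box)} - P_G}$; taking the maximum over $(\lb_i)_i$ then gives the lemma.

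The hard part will be verifying the root embedding $\Phi_+(G_F(\Box)) \hookrightarrow \Phi_+(G)$ together with the constant inequality $C_G \geq C'$. In the self-dual case this has to be checked separately for $G$ of types $B$, $C$, and $D$ (with the outer twists by $\eta$), making sure that the Arthur-$\SL_2$ shifts internal to each $\lb_i[d_i]$-block are compatible with the identification of roots of $G_i$ inside $G$; in particular, roots of $G_i$ pairing two coordinates within the same shifted copy of $\lb_i$ must be matched to the corresponding roots of $G$. The conjugate self-dual (type $A$) case should be cleaner, since all positive roots are simply differences of coordinates and the argument reduces to a count of pairs. The constant comparison amounts to an inequality between Weyl $\rho$-products, which should follow once one observes that $\rho_G$ restricted to each block agrees with $\rho_{G_i}$ plus a positive cross-block correction that pairs favorably with $\lb_i$.
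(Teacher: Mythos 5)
Your approach is fundamentally the same as the paper's (which disposes of this lemma in two sentences): realize the Weyl-formula factors of $\dim_\Box\lb$ as a subproduct of those for $\dim\lb$ via block embeddings of positive roots, and bound the $P_G - P_{G_F(\Box)}$ missing factors below by $m(\lb)$.

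The part where I think you go astray is the constant comparison. You claim that the Weyl $\rho$-product inequality gives $C_G/C' \geq 1$, but this is almost certainly false: $C_G = \bigl(\prod_{\alpha\in\Phi_+(G)}\langle\alpha,\rho_G\rangle\bigr)^{-1}$, and the Weyl denominator of $G$ (more roots, larger $\rho$) is generically strictly larger than that of the product $\prod_i G_i$, so $C_G/C' < 1$. For a concrete check: $G=\SO_5$, $G_F(\Box)=\SO_3\times\SO_3$ gives $C_G/C' = (1/2)^2 \big/ (3/2) = 1/6$. The ``positive cross-block correction'' you gesture at does not rescue the inequality because the cross-block pairings $\langle\alpha,\rho_G\rangle$ contribute additional factors $>1$ in the denominator of $C_G$ rather than helping $C'$. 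That said, this does not actually break the argument as it is used: the lemma is only ever applied (in the proof of Theorem~\ref{weightasymptotics}) to extract the asymptotic $\dim_\Box\lb/\dim\lb = O(m(\lb)^{-1})$, so an implied constant depending only on $G$ and $\Box$ is harmless. The paper glosses over the constant entirely; you noticed it, which is good, but the resolution is not the one you gave — you should just absorb it into a $\Box$-dependent constant.

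One further subtlety worth flagging: when some $d_i>1$, the $\dim\lb_i$ appearing in $\dim_\Box\lb$ are Weyl dimensions for the group $H_i=G(\Sigma_{\eta_i,\lb_i})\in\wtd{\mc E}_\sm(T_i)$ of rank roughly $T_i/2$, whereas $G_F(\Box)$ has factors $G_i$ of rank roughly $T_id_i/2$, so $\sum_i P_{H_i}$ is strictly \emph{smaller} than $P_{G_F(\Box)}$ and the ``subset'' $S$ you build with $|S|=P_{G_F(\Box)}$ is not what you want. In practice this is moot, because any $\Box$ with some $d_i>1$ forces $\langle\alpha,\lb\rangle$ to be a bounded constant for some $\alpha$ (the Arthur-$\SL_2$ coordinate gaps), hence $\Box_\lb=\emptyset$ and $\dim_\Box\lb=0$ once $m(\lb)$ is large; but the clean statement of the root embedding and factor count that you promise to verify should be phrased only for shapes with all $d_i=1$, or you will run into this mismatch.
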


\begin{proof}
The factors $\langle \alpha, \lb \rangle$ in the Weyl dimension formula for $\dim_\Box \lb$ are always a subset of those in $\dim \lb$. However, $\dim_\Box \lb$ has $P_G - P_{G_F(\Box)}$ fewer factors.
\end{proof}

\section{Weight Asymptotics for the Cuspidal Spectrum}\label{sec weightasymptotics}
\subsection{Setup and Trace Formula Decompositions}\label{sec weightsetup}
We now apply the techniques of \cite[\S5]{DGG22} to compute our asymptotics. Fix either the self-dual or conjugate self-dual case. Let $G \in \wtd{\mc E}_\sm(N)$. Choose: 
\begin{itemize}
\item
a regular, integral infinitesimal character $\lb$ for $G_\infty$ and $\EP_\lb$ the associated Euler-Poincar\'e function, endoscopically normalized as in \cite[\S 3.3]{DGG22},
\item
a set of unramified places $S$ and a test function $f_S$ in the truncated Hecke algebra $\ms H^\ur(G_S)^{\leq \kappa}$ (see \cite{ST16}),
\item
a test function $f^{S, \infty}$ at other places.
\end{itemize}
Choose $\Box = (T_i, d_i, \eta_i)$ a shape for $G$. We use the strategy of \cite[\S 9]{DGG22} to understand the asymptotics in $\lb$ of
\begin{multline*}
I^G_\Box(\EP_\lb f^{S, \infty} f_S) := \sum_{\Delta \in \Box_\lb} I^G_\Delta(\EP_\lb f^{S, \infty} f_S) := \sum_{\Delta \in \Box_\lb} \sum_{\psi \in \Delta} I^G_\psi(\EP_\lb f^{S, \infty} f_S) \\
= \sum_{\Delta \in \Box_\lb} \sum_{\psi \in \Delta} \sum_{\pi \in \Pi^G_\psi} m_\pi \tr_\pi(\EP_\lb f^{S, \infty} f_S) ,
\end{multline*}
where $I^G_\psi$ is the trace against the part of the discrete spectrum of $G$ associated to %the parameter 
$\psi$ as in \cite[\S2.6]{DGG22} through \cite[Thm 1.5.2]{Art13} or \cite[Thm 2.5.2]{Mok15}.

In actuality, we will instead use the stable versions:
\begin{multline*}
S^G_\Box(\EP_\lb f^{S, \infty} f_S) := \sum_{\Delta \in \Box_\lb } S^G_\Delta(\EP_\lb f^{S, \infty} f_S) \\:= \sum_{\Delta \in \Box_\lb } \sum_{\psi \in \Delta} m_\psi \eps_\psi(s_\psi) |\mc S_\psi| \tr_{\Pi^G_\psi}(\EP_\lb f^{S, \infty} f_S),
\end{multline*}
which are expressed in terms of traces against $A$-packets and certain signs and constants that appear in the stable multiplicity formulas \cite[4.1.2]{Art13}, \cite[5.6.1]{Mok15} (note that we ignore the $\sigma$ terms since all our parameters are elliptic). 

The key formula for our final application is, the following, where we recall from \S\ref{sec unrefinedshapes} that in the self-dual case $\Sigma_\eta$ is the unrefined shape determined by a rank, a character $\eta$, and a choice of parity of entries of $\lb$:
\begin{prop}\label{stablecount}
In either the self-dual or conjugate self-dual case, let $\eta$ be such that $G(\Sigma_{\eta}) = G \in \wtd{\mc E}_\sm(N)$. Then for any infinitesimal character $\lb$ for $G$, 
\[
\sum_{\psi \in \Sigma_{\eta, \lb}} \tr_{\wtd \pi^\infty_\psi}(f^\infty) = \begin{cases}
(1/2) S^G_{\Sigma_\eta}(\EP_\lb (f^\infty)^G) & G = \SO_{2n}^\eta  \\
S^G_{\Sigma_\eta}(\EP_\lb (f^\infty)^G) & \text{else.}
\end{cases}
\]
\end{prop}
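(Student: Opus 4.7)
The plan is to expand the right-hand side via the stable multiplicity formula, then match term-by-term with the left-hand side using the endoscopic character identities at finite places and the spectral property of the Euler--Poincar\'e function at infinity.

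First, I would unfold the definition of $S^G_{\Sigma_\eta}$ given in \S\ref{sec weightsetup}:
\[
S^G_{\Sigma_\eta}(\EP_\lb (f^\infty)^G) = \sum_{\psi \in \Sigma_{\eta,\lb}} m_\psi\, \eps_\psi(s_\psi)\, |\mc S_\psi|\, \tr_{\Pi^G_\psi}(\EP_\lb (f^\infty)^G),
\]
and compute each factor for a simple (trivial-shape) parameter $\psi = \tau[1]$. Since the Arthur $\SL_2$ is trivial, $s_\psi = 1$ and $\eps_\psi(s_\psi) = 1$. An irreducible self-dual (resp.\ conjugate self-dual) $\tau$ has centralizer in $\wh G$ exactly equal to $Z(\wh G)^{\Gamma_F}$ (up to the $\pm 1$ from the symmetry type, which is already absorbed into the center for the relevant $G$), so $\mc S_\psi = S_\psi/Z(\wh G)^{\Gamma_F}$ is trivial by Schur's lemma, giving $|\mc S_\psi| = 1$. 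The Arthur multiplicity $m_\psi$ equals $1$ in all cases except $G = \SO_{2n}^\eta$, where the convention of identifying parameters under the outer automorphism forces $m_\psi = 1/2$ (see \cite[(1.5.2)]{Art13}); this is precisely the source of the factor $1/2$ in the statement.

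Next, I would factor the packet trace as a product over places, $\tr_{\Pi^G_\psi} = \prod_v \tr_{\Pi^{G_v}_{\psi_v}}$, and handle finite and infinite places separately. At each finite place $v$, the local endoscopic character identity (\cite[Thm 2.2.1(a)]{Art13}, \cite[Thm 3.2.1(a)]{Mok15}) gives
\[
\tr_{\Pi^{G_v}_{\psi_v}}(f_v^G) = \tr_{\wtd \pi_{\psi_v}}(f_v),
\]
where the extension $\wtd \pi_{\psi_v}$ is the one fixed by the choice of Whittaker datum used throughout \S\ref{sec twistedaction}. Multiplying over all finite $v$ and using the factorization of twisted traces in Proposition \ref{twistedtracefactor} yields
\[
\tr_{\Pi^{G,\infty}_{\psi}}((f^\infty)^G) = \tr_{\wtd \pi^\infty_\psi}(f^\infty).
\]

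At infinity, by Proposition \ref{simpleshapesselfdual} (and its conjugate self-dual analogue), the parameter $\psi_\infty$ for any $\psi \in \Sigma_{\eta,\lb}$ is a discrete series parameter with infinitesimal character matching that of the finite-dimensional representation of $G_\infty$ corresponding to $\lb$. The defining property of the endoscopically normalized Euler--Poincar\'e function $\EP_\lb$ from \cite[\S 3.3]{DGG22} is precisely that its stable packet trace against such a discrete series packet equals $1$, so $\tr_{\Pi^{G_\infty}_{\psi_\infty}}(\EP_\lb) = 1$. Combining these computations, each term on the right-hand side collapses to $m_\psi \cdot \tr_{\wtd\pi^\infty_\psi}(f^\infty)$, and summing over $\psi \in \Sigma_{\eta,\lb}$ gives the claim, with the factor $1/2$ appearing precisely in the $\SO_{2n}^\eta$ case.

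The main technical obstacle is step two: verifying that for simple parameters the invariants $(m_\psi, \eps_\psi(s_\psi), |\mc S_\psi|)$ collapse to the claimed values in all the subcases of $\wtd{\mc E}_\sm(N)$, and that the endoscopic normalization of $\EP_\lb$ is indeed calibrated so that the archimedean packet trace is exactly $1$ rather than some constant depending on $G$ or $\lb$. The rest is an exercise in unfolding definitions.
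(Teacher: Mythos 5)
Your overall structure is the same as the paper's (the paper refers to this as ``an easier version of the argument in [DGG22, Prop 5.3.1]''): unfold $S^G_{\Sigma_\eta}$ via the stable multiplicity formula, observe that $\eps_\psi(s_\psi)=1$ and $|\mc S_\psi|=1$ for simple cuspidal parameters, and match with the left side via the twisted endoscopic character identities at finite places and the endoscopic normalization of $\EP_\lb$ at infinity. Those steps are correct.

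However, your value of $m_\psi$ in the even orthogonal case is wrong, and the error makes your argument internally inconsistent. You claim $m_\psi = 1/2$; the paper's proof states that for simple shapes ``$m_\psi$ is $2$ in the even orthogonal case,'' and this is what is needed for the identity to close. To see this, trace the constants through: since $\eps_\psi(s_\psi)=|\mc S_\psi|=1$ and the finite/infinite trace factors collapse to $\tr_{\wtd\pi^\infty_\psi}(f^\infty)$, we have
\[
S^G_{\Sigma_\eta}(\EP_\lb (f^\infty)^G) = \sum_{\psi\in\Sigma_{\eta,\lb}} m_\psi\,\tr_{\wtd\pi^\infty_\psi}(f^\infty).
\]
For the statement $\sum_\psi\tr_{\wtd\pi^\infty_\psi}(f^\infty) = \tfrac12 S^G_{\Sigma_\eta}(\cdots)$ to hold, one needs $\tfrac12\,m_\psi=1$, i.e.\ $m_\psi = 2$. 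With your value $m_\psi = 1/2$, substituting back would give $\mathrm{LHS} = \tfrac12\cdot\tfrac12\,\mathrm{LHS}$, a contradiction. Conceptually: $m_\psi$ counts the number of genuine $\wh G$-orbits of parameters inside the outer-automorphism orbit, and for a generic simple cuspidal parameter of $\SO_{2n}$ this is $2$; the $1/2$ in front of $S^G_{\Sigma_\eta}$ in the proposition is there precisely to cancel this $2$, not because $m_\psi$ itself is $1/2$. So the $1/2$ is indeed ``sourced'' by $m_\psi$, but in the opposite sense from what you wrote.
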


\begin{proof}
This is an easier version of the argument in \cite[Prop 5.3.1]{DGG22}. We note that for simple shapes, the constants in the stable multiplicity formula are all trivial except for $m_\psi$ which is $2$ in the even orthogonal case. 
\end{proof}

Following the above proposition, we wish to compute asymptotics in~$\lambda$ of $S^G_{\Sigma_\eta}(\EP_\lb (f^\infty)^G)$. In the rest of this section, we show inductively that the main term in these asymptotics is a geometric term consisting of evaluation at central elements. This mirrors our previous strategy to compute level asymptotics in~\cite[\S 5]{DGG22}. In Sections \ref{sec shalika germs}
 and \ref{sec stable plancherel}, we will compute the relevant evaluations at central elements for the functions $\wtd E_{\mf n}^\infty$ and $\wtd C_{\mf n}^\infty$ previously constructed.

\subsection{Geometric Estimation of Traces of \lm{$\EP_\lambda$}}
As input, we use the weight aspect bounds of \cite{ST16} generalized to deal with nontrivial center as in \cite{Dal22}. 

\subsubsection{Computation of Constants}
For $G/F$ a reductive group, $\lb$ a regular integral infinitesimal character of $G_\infty$, $\om_\lb$ the central character of the corresponding finite-dimensional representation of $G_\infty$, and $f^\infty$ a test function on $G(\A^\infty)$, the main term in our asymptotics is given by a linear combination of evaluation of $f^\infty$ at central elements:
\begin{equation}\label{mainterm}
\Lambda(G, \om_\lb, f^\infty) :=  \tau'(G) \sum_{\gamma \in Z_G(F)} \om^{-1}_\lb(\gamma) f^\infty(\gamma). 
\end{equation}
Here, $\tau'(G)$ is a modified Tamagawa number from \cite[\S6.6]{ST16} under canonical times Euler-Poincar\'e measure. 

Note that in our cases, $Z_G(F)$ always intersects the support of $f^\infty$ in a finite set.

\subsubsection{Statement}

\begin{thm}[Slight variant/Special case of {\cite[Thm. 9.1.1]{Dal22}}]\label{seedbound}
With notation as above, there are constants $A,B,C$ with $C \geq 1$ and depending only on $G$ such that,
\[
(\dim \lb)^{-1} S^G(\EP_\lb f_S f^{S, \infty}) = \Lambda(G, \om_\lb, f^\infty)+ O(m(\lb)^{-C} q_S^{A + B \kappa} f_S(1)),
\]
Where $\Lambda$ is as in \eqref{mainterm}. 
\end{thm}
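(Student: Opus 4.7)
The plan is to deduce the theorem from \cite[Thm.~9.1.1]{Dal22} with minor bookkeeping modifications: a central-character twist, and the passage from the invariant to the stable trace formula. The underlying mechanism is that $S^G$ admits a geometric expansion as a sum of (weighted) stable orbital integrals, and the endoscopically normalized Euler--Poincar\'e function $\EP_\lb$ has stable orbital integrals concentrated on semisimple $\gamma$ with $(G_\gamma)_\infty$ possessing discrete series, with the central elements producing the dominant contribution.

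First I would recall the key property of $\EP_\lb$: after endoscopic normalization as in \cite[\S 3.3]{DGG22}, one has $SO_\gamma(\EP_\lb) = 0$ unless $(G_\gamma)_\infty$ has discrete series, and at central $\gamma \in Z_G(F)$ one has $SO_\gamma(\EP_\lb) = \omega_\lb^{-1}(\gamma)\dim\lb$, up to the measure normalization accounted for by $\tau'(G)$. This yields the main term $(\dim\lb)\,\Lambda(G,\omega_\lb,f^\infty)$ after pairing with the values of $f^\infty$ at central elements and summing. The central-character twist $\omega_\lb^{-1}(\gamma)$, absent in the formulation of \cite{Dal22} but needed here since our $G$ may have nontrivial center (e.g.\ in the unitary or $\SO^\eta_{2n}$ cases), is tracked through the explicit orbital integral formula and does not affect the analytic estimates.

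Next I would bound the non-central contributions. For semisimple $\gamma \notin Z_G(F)$ with $(G_\gamma)_\infty$ having discrete series, the standard estimate (essentially \cite[Prop.~6.1]{ST16}, cf.\ the Weyl dimension formula comparison in Lemma~\ref{dimboundbym}) gives
\[
(\dim\lb)^{-1} SO_\gamma(\EP_\lb) = O\bigl(m(\lb)^{-C}\bigr),
\]
with $C\geq 1$ determined by the positive-root counts $P_G - P_{G_\gamma}$. The Shin--Templier bounds of \cite{ST16} then control, uniformly in $\kappa$, the total contribution of such $\gamma$ to $S^G(\EP_\lb f_S f^{S,\infty})$ by $q_S^{A+B\kappa} f_S(1)$ for suitable $A,B$ depending only on $G$ and on $f^{S,\infty}$; here one uses that $f^{S,\infty}$ has fixed compact support so cuts out a fixed finite set of global classes outside $S$, and that $f_S \in \ms H^{\ur}(G_S)^{\leq \kappa}$ enters through the usual $q_S$-adic volume bounds. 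The non-elliptic (parabolic) contributions to the geometric side of $S^G$ reduce to analogous expressions on Levi subgroups of smaller semisimple rank, where the induction hypothesis (on $\dim G$) provides matching or better error terms; these are absorbed into the $O(\cdot)$.

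The main obstacle, which is exactly the hard part of \cite{Dal22}, is producing the uniform $m(\lb)^{-C}$ saving at non-central semisimple classes together with polynomial $q_S$-control on their multiplicities, so that the two aspects combine without blowing up the constants. For the present application, the only genuinely new ingredient beyond citing \cite[Thm.~9.1.1]{Dal22} is the verification that the stable side $S^G$ behaves like $I^G$ under this analysis, which follows from the endoscopic stabilization used throughout \cite{Art13, Mok15}: each endoscopic correction term involves a proper elliptic endoscopic group of strictly smaller semisimple rank at infinity, and is handled inductively by the same argument, with the resulting errors dominated by those of the simple-endoscopic main contribution.
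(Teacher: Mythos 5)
Your underlying idea is the same as the paper's: reduce $S^G$ to invariant trace formulas of smaller groups via the stabilization, then cite the bound of \cite[Thm.~9.1.1]{Dal22}. But the organization is backwards in a way that leaves a real gap. Your paragraphs two and three purport to directly analyze ``the geometric expansion of $S^G$ as a sum of (weighted) stable orbital integrals'': you estimate $\mc{SO}_\gamma(\EP_\lb)$ at central vs.\ non-central $\gamma$, and you dispose of parabolic contributions by a Levi induction as in \cite{ST16}. This is exactly the analysis one performs for the \emph{invariant} trace formula $I^G$, whose geometric side has the explicit fine expansion of \cite{Art89} that \cite{ST16} and \cite{Dal22} exploit. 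The geometric side of $S^G$, by contrast, is defined implicitly through the recursion $S^G = I^G - \sum_{H \subsetneq G}\iota(G,H)\widehat S^H$; its non-elliptic part in particular does not come pre-packaged as weighted orbital integrals on Levis, so the estimates you invoke are not directly available for $S^G$. Your fourth paragraph correctly identifies that ``the stable side $S^G$ behaves like $I^G$'' is the point needing verification, but it relegates this to an afterthought and handles it with a hand-wave about induction on smaller endoscopic groups, when in fact it must be the \emph{first} step: one applies the hyperendoscopy expansion (the device used in the proof of \cite[Thm.~9.1.1]{Dal22}) to write $S^G$ as an explicit finite linear combination $\sum_i c_i\, I^{H_i}(g_i)$, and only then applies the $I$-bound term by term. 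Only the $H_i = G$ term produces the main term $\Lambda(G,\om_\lb,f^\infty)$; the others are absorbed into the error via the dimension drop $\dim^{H_i}\lb_i \ll \dim^G\lb$. You also gloss over the need to control the iterated transfers $g_i$ (archimedean transfer of $\EP_\lb$ to Euler--Poincar\'e functions on $H_i$, and the fundamental-lemma transfer of $f_S$ preserving membership in a truncated Hecke algebra with comparable $\kappa$), which is where the uniformity in $A,B,C$ actually comes from. Finally, a small factual correction: the central-character twist $\om_\lb^{-1}(\gamma)$ is \emph{not} absent from \cite[Thm.~9.1.1]{Dal22}---handling nontrivial center is one of the points of that reference (cf.\ the remark at the start of \S\ref{sec weightsetup} of the present paper)---so it is not a new ingredient you need to supply.
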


\begin{proof}
Hyperendoscopy techniques as in the proof of \cite[Thm. 9.1.1]{Dal22} let us write $S^G$ in terms of $I^H$ for smaller groups. Then this follows from the exact same argument. 
\end{proof}

\subsection{Preliminary Bounds for the Induction}
Since we do not need to keep track of functions at split places, the bounds of \cite[\S 7]{DGG22} simplify. Recall the definitions of $\mc T := \mc T_\Delta, \mc T_i := \mc T_{\Delta,i}$ from \cite[\S6.2.4]{DGG22}.

\begin{prop}\label{newshapebound}
Let $\Delta= (T, d, \lb, \eta)$ be a simple shape and $f_S f^{S, \infty}$ a trace-positive test function on $G(\Delta)^\infty$ with $f_S$ unramified. Then there is a test function~$(f^{S, \infty})'$ on $G(1, d, \lb, \eta)^{S, \infty}$ such that
\[
\lf|S^{Td}_\Delta\lf( (f_S f^{S, \infty})^{Td}\ri)\ri| \leq S^T_{\Sigma_{\eta,\lb}} \lf((\mc T f_S (f^{S, \infty})')^T\ri).
\]
\end{prop}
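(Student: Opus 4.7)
The plan is to adapt the strategy of \cite[\S 7]{DGG22}, which handled the analogous bound in the conjugate self-dual case, to the self-dual setting. For a simple refined shape $\Delta = (T, d, \lb, \eta)$, Proposition \ref{shapetogroup} and Corollary \ref{sigmatogroup} give a group $G(\Delta)$ of rank $Td$ and identify the parameters of $\Delta$ with cuspidal parameters of $G(\Sigma_{\eta,\lb})$ (of rank $T$) via the bijection $\tau[d] \leftrightarrow \tau$. By \cite[(1.4.9)]{Art13}, the stable multiplicity invariants $m_\psi$, $\eps_\psi(s_\psi)$, $|\mc S_\psi|$ depend only on $\Delta$ and are uniformly bounded in $\tau$, so they can be absorbed into $(f^{S,\infty})'$ without affecting the inequality.

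First I would unfold $S^{Td}_\Delta$ via the stable multiplicity formula and factor the packet trace $\tr_{\Pi^{G(\Delta)}_{\tau[d]}}(f_S f^{S,\infty})$ over places. At an unramified place $v \in S$, the Satake parameter of $\tau_v[d]$ on $\wh{G(\Delta)}$ is obtained from that of $\tau_v$ on $\wh{G(\Sigma_{\eta,\lb})}$ by the standard Arthur-$\SL_2$ transform, which dualizes through Satake to the local Hecke algebra operator $\mc T$ from \cite[\S 6.2.4]{DGG22}, yielding the exact identity $\tr_{\Pi^{G(\Delta)}_{\tau_v[d]}}(f_v) = \tr_{\tau_v}(\mc T f_v)$. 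At the archimedean place, the AJ-parameter $\tau_\infty[d]$ pairs with the archimedean factor in a way that matches the pairing of $\tau_\infty$ with a corresponding Euler--Poincar\'e function on $G(\Sigma_{\eta,\lb})_\infty$, as established in \cite[\S 4.3]{DGG22}.

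At the remaining finite places in $S^\infty \setminus S$, I would construct $(f^{S,\infty})'$ as a positive majorant: applying trace Paley--Wiener (following \cite{Rog88}, in the spirit of Proposition \ref{PWinput}) and using trace positivity of $f^{S,\infty}$, one builds a positive test function on $G(\Sigma_{\eta,\lb})^{S,\infty}$ with
\[
\bigl| \tr_{\Pi^{G(\Delta)}_{\tau[d]}}(f^{S,\infty}) \bigr| \leq \tr_\tau\bigl( (f^{S,\infty})' \bigr)
\]
uniformly in $\tau$. Assembling the three local bounds, summing over $\tau \in \Psi_\cusp(G(\Sigma_{\eta,\lb}))$ of infinitesimal character $\lb$, and absorbing the uniformly bounded stable multiplicity constants then yields the desired inequality.

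The main obstacle is the trace Paley--Wiener step at the ramified finite places: $(f^{S,\infty})'$ must simultaneously be positive, dominate $|\tr_{\Pi^{G(\Delta)}_{\tau[d]}}(f^{S,\infty})|$ \emph{uniformly} in $\tau$ rather than $\tau$-by-$\tau$, and respect the self-dual structure with its possibly non-trivial central character $\eta$. The construction of \cite[\S 7]{DGG22} in the conjugate self-dual case serves as the template; the new input needed for the self-dual adaptation is careful bookkeeping of $\eta$ in both cutting out the correct Bernstein components and interacting with the finite-group signs $\eps_\psi(s_\psi)$ appearing in the stable multiplicity formula.
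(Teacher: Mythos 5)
Your proposal captures the same high-level architecture as the paper's proof, which is a one-line reference to \cite[Prop.~7.3.2]{DGG22} (``same argument with empty $S_s$, slight modification of \cite[Lem.~6.3.7]{DGG22}''). The decomposition you outline---unfold $S^{Td}_\Delta$ via the stable multiplicity formula, observe that $m_\psi$, $\eps_\psi(s_\psi)$, $|\mc S_\psi|$ depend only on $\Delta$ (this is exactly the remark in \S\ref{shapepropertiesselfdual} following from \cite[(1.4.9)]{Art13}), factor packet traces over places, dualize the Arthur-$\SL_2$ at unramified places through $\mc T$, and majorize the contribution at ramified finite places---is indeed the skeleton of the DGG22 argument being imported.

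The one place where the proposal falls short of a proof is precisely the step you flag as the ``main obstacle'': the construction of the uniform positive majorant $(f^{S,\infty})'$. You appeal to trace Paley--Wiener ``in the spirit of Proposition~\ref{PWinput},'' but trace Paley--Wiener theorems let one \emph{prescribe} traces on a finite set of Bernstein components, not \emph{dominate} an infinite family of packet traces $\bigl|\tr_{\Pi^{G(\Delta)}_{\tau[d]}}(f^{S,\infty})\bigr|$ uniformly in $\tau$. That uniform domination is exactly what \cite[Lem.~6.3.7]{DGG22} supplies; the paper explicitly notes that the self-dual case requires a slight modification of that lemma (dropping a dependence on the exponent $d$). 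Without giving the actual mechanism---e.g., the observation that for fixed $f^{S,\infty}$ the packets in question occupy only finitely many Bernstein components of $G(\Delta)^{S,\infty}$, so one can build the majorant component by component and sum---this step remains a gesture rather than an argument. Similarly, your identity $\tr_{\Pi^{G(\Delta)}_{\tau_v[d]}}(f_v) = \tr_{\tau_v}(\mc T f_v)$ at unramified $v$ is the right kind of statement, but the paper works with $\mc T$ via bounds (\cite[Lem.~6.2.3]{DGG22} is invoked elsewhere to bound $\mc T_i f_S(1)$), and verifying whether it is an exact Satake-dual identity or merely a domination is part of what is being delegated to the prior work. Finally, you note in passing that the self-dual modification involves tracking $\eta$ as a genuine central character rather than a parity bit; that is indeed the content of the ``slight modification'' the paper references, and a complete argument would have to make that bookkeeping explicit in the construction of $(f^{S,\infty})'$.
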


\begin{proof}
This is the same argument as \cite[Prop. 7.3.2]{DGG22} with empty ${S_s}$. We use a slightly modified version of \cite[Lem. 6.3.7]{DGG22}, noting that the proof does not actually require the exponent $d$. 
\end{proof}

\begin{prop}\label{fullnewshapebound}
Let $\Delta= (T_i, d_i, \lb_i, \eta_i)_i$ be a shape of rank $N$ and $f_S f^{S, \infty}$ a trace-positive test function on $G(\Delta)^\infty$ with $f_S$ unramified. Then there are test functions $f^{S, \infty}_i$ on each $G(\Sigma_{\eta_i, \lb_i })^{S, \infty}$ such that
\[
|S^N_\Delta((f_S f^{S, \infty})^N)| \leq \prodf_i S^{T_i}_{\Sigma_{\eta_i, \lb_i }} \lf((f^{S, \infty}_i\mc T_i f_S )^{T_i}\ri)
\] 
\end{prop}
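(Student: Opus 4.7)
The plan is to reduce the bound to the simple-shape case handled by Proposition~\ref{newshapebound}, by factoring the stable trace $S^N_\Delta$ across the simple constituents $\Delta_i := (T_i, d_i, \lambda_i, \eta_i)$ of $\Delta$ and then applying the simple bound to each index $i$ independently.

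First I would invoke the structural fact that every $\psi \in \Delta$ decomposes uniquely as a direct sum $\psi = \bigoplus_i \psi_i$ with $\psi_i \in \Delta_i$, and that under this decomposition the endoscopic group factors as $G(\Delta) \cong \prod_i G(\Delta_i)$ (Proposition~\ref{shapetogroup} and the remarks around Corollary~\ref{grouptoshapes}). The ingredients of the stable multiplicity formula are multiplicative under direct summation of~$\psi$: $m_\psi = \prod_i m_{\psi_i}$, $|\mc S_\psi| = \prod_i |\mc S_{\psi_i}|$, the distinguished element $s_\psi$ corresponds to the tuple $(s_{\psi_i})_i$, and $\varepsilon_\psi(s_\psi) = \prod_i \varepsilon_{\psi_i}(s_{\psi_i})$ by the formula in \cite[(4.1.1)]{Art13} (and its unitary counterpart from \cite[Chap.~5]{Mok15}). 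After decomposing the test function $f_S f^{S,\infty}$ into factorizable pieces on the product $\prod_i G(\Delta_i)^\infty$, the stable trace therefore breaks up as a sum of products $\prod_i S^{T_i d_i}_{\Delta_i}$ evaluated on the individual factor test functions.

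Next I would apply Proposition~\ref{newshapebound} to each $\Delta_i$ separately, obtaining for each $i$ an auxiliary test function $f^{S,\infty}_i$ and the bound
\[
\bigl|S^{T_i d_i}_{\Delta_i}((\cdot)^{T_i d_i})\bigr| \le S^{T_i}_{\Sigma_{\eta_i,\lambda_i}}\bigl((f^{S,\infty}_i\, \mc T_i f_S)^{T_i}\bigr).
\]
Because the operators $\mc T_i$ at the unramified set $S$ from \cite[\S6.2.4]{DGG22} are attached to the sub-shapes individually and commute with the product structure, assembling these bounds into the box-product notation $\prodf_i$ of the preamble produces the stated inequality. Trace-positivity of $f^{S,\infty}$ lets absolute values pass through the product without loss.

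The main obstacle will be the spectral reduction that decomposes a general test function on $G(\Delta)^\infty$ as a controlled sum of factorizable ones on $\prod_i G(\Delta_i)^\infty$ without losing trace-positivity or introducing cross terms incompatible with the product bound. In the spirit of \cite[\S7.3]{DGG22}, this is carried out via the same Dixmier--Malliavin style argument combined with Cauchy--Schwarz as in the simple-shape case, but now applied simultaneously across all indices $i$; the multiplicativity of the stable multiplicity constants verified above is what makes the simultaneous reduction compatible.
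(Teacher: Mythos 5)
Your overall plan---first factor $S^N_\Delta$ across the simple constituents $\Delta_i$ and then apply Proposition~\ref{newshapebound} to each piece---is structurally the same as the paper's, which is simply a citation to \cite[Cor.~7.2.2]{DGG22} followed by Proposition~\ref{newshapebound}. However, the specific mechanism you propose for the first step contains a genuine error. You assert that $G(\Delta) \cong \prod_i G(\Delta_i)$, but Proposition~\ref{shapetogroup} shows that $G(\Delta)$ is an element of $\wtd{\mc E}_\el(N)$ of the form $\SO_{N_S+1} \times \Sp^\eta_{N_O-1}$ or $\SO_{N_S+1} \times \SO^\eta_{N_O}$, where $N_S$ and $N_O$ are the \emph{total} symplectic and orthogonal ranks: a group with at most two factors, determined only by the sum of ranks and not by the individual indices $i$. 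The object that does factor over $i$ is the ``reduced group'' $G_F(\Box) = \prod_i G((T_i,d_i,\eta_i))$ from \S\ref{sec unrefinedshapes}, which the paper explicitly notes is \emph{not} in general an elliptic endoscopic group. Consequently the Dixmier--Malliavin/Cauchy--Schwarz decomposition of the test function ``on $\prod_i G(\Delta_i)$'' you envisage does not make sense for a test function living on $G(\Delta)^\infty$.

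Your multiplicativity claims for the stable-multiplicity-formula constants are also incorrect as stated. For a simple parameter $\psi_i = \tau_i[d_i]$ factoring through $G(\Delta_i)$ one has $|\mc S_{\psi_i}| = 1$, whereas for $\psi = \bigoplus_i \tau_i[d_i]$ the component group $\mc S_\psi$ is a quotient of $\prod_i \mathrm O(1,\C)$ by the image of $Z(\wh{G(\Delta)})^\Gamma$ and has order roughly $2^{k-1}$ for $k$ summands of the relevant type; similarly $m_\psi$ depends on whether $G(\Delta)$ itself is even orthogonal and is not the product of the $m_{\psi_i}$. The actual reduction in \cite[Cor.~7.2.2]{DGG22} does not proceed by literal factorization of $G(\Delta)$ or of the multiplicity constants: it works on the twisted side $\wtd G_N$, exploiting that $\psi = \bigoplus_i \psi_i$ corresponds to a $\theta$-stable Levi $\prod_i \GL_{T_i d_i}$ of $\GL_N$ and that the twisted trace of the isobaric representation $\td\pi_\psi$ against $(f_S f^{S,\infty})^N$ is controlled by descent to that Levi; the ``$\prodf$'' notation is defined there precisely to encode the resulting upper bound, absorbing the bounded discrepancies in the component-group constants and the fact that only tuples of \emph{distinct} cuspidal summands contribute to the elliptic parameter set. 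These are corrections of substance rather than bookkeeping: as written, your factorization equality is false, and the step from ``the trace breaks up as a sum of products'' to the displayed inequality is not established.
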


\begin{proof}
This is like the proof of \cite[Prop 7.3.3]{DGG22}: apply \cite[Cor 7.2.2]{DGG22} with empty $S_s$ and then Proposition \ref{newshapebound}.
\end{proof}

\subsection{Inductive Result}
Now we can induct to get asymptotics of the contribution of $\Sigma_\eta$ to the stable trace:
\begin{thm}\label{weightasymptotics}
Let $G \in \wtd{\mc E}_\sm(N)$ in either the self-dual or conjugate self-dual case and let $\lb$ be an infinitesimal character for $G$. Then there are $A,B,C$ with $C \geq 1$ such that %\red{recall: when there are no roots this means the error term is constant in $\lb$ since $m(\lb)$ is constant}
\begin{multline*}
(\dim \lb)^{-1} S_{\Sigma_\eta}^G(\EP_\lb f_S f^{S, \infty}) \\
= \begin{cases}
\Lambda(G, \om_\lb, f^\infty)+ O(m(\lb)^{-C} q_{S}^{A + B \kappa}f_S(1)) & G \neq \SO_{2, \triv} \\
0 & G = \SO_{2, \triv},
\end{cases}
\end{multline*}
where $\Sigma_\eta$ is the unrefined trivial shape such that $G(\Sigma_\eta) = G$ and $\Lambda(\cdot)$ is given by~\eqref{mainterm}. 

In addition, if $\Box$ is an unrefined shape and $G = G(\Box)$,
\begin{equation} \label{eq bound nontrivial shapes}
(\dim_\Box \lb)^{-1} |S_\Box^G(\EP_\lb f_S f^{S, \infty})| \leq C(G, \Box, f^\infty) + O(m(\lb)^{-C} q_{S}^{A + B \kappa}f_S(1))
\end{equation}
for some inexplicit constant $C(G, \Box, f^\infty)$ depending only on the arguments. 
\end{thm}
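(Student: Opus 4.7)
The plan is a joint induction on the rank $N$, simultaneously establishing the asymptotic for the trivial shape $\Sigma_\eta$ and the bound for an arbitrary unrefined shape $\Box$. The base case $N = 1$ reduces directly to Theorem \ref{seedbound}. I would dispense at once with the degenerate case $G = \SO_{2,\triv}$: since $G_\infty$ is then a split torus with no discrete series, $\EP_\lb$ vanishes identically for every regular integral $\lb$, so the trace is zero.

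For the inductive step I would first establish the general-shape bound \eqref{eq bound nontrivial shapes}. Given $\Box = (T_i, d_i, \eta_i)_i$ with $G(\Box) = G$ and $\Box \neq \Sigma_\eta$, every $T_i < N$. For each refinement $(\lb_i)_i \in \Box_\lb$, apply Proposition \ref{fullnewshapebound} to the refined shape $(T_i, d_i, \lb_i, \eta_i)_i$ to produce trace-positive test functions $f^{S,\infty}_i$ on the simple endoscopic factors with
\[
|S^N_{(T_i, d_i, \lb_i, \eta_i)_i}((\EP_\lb f_S f^{S, \infty})^N)| \leq \prodf_i S^{T_i}_{\Sigma_{\eta_i, \lb_i}}\bigl(((\EP_\lb)_i \mc T_i f_S f^{S, \infty}_i)^{T_i}\bigr).
\]
The inductive hypothesis controls each factor by $\dim \lb_i$ times a bounded constant plus the error. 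Multiplying out, summing over the finitely many refinements $(\lb_i)_i \in \Box_\lb$, and dividing by $\dim_\Box \lb = \max_{(\lb_i)_i \in \Box_\lb} \prod_i \dim \lb_i$ yields the claimed bound.

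For the main asymptotic I would then write
\[
S^G_{\Sigma_\eta}(\EP_\lb f^\infty) = S^G(\EP_\lb f^\infty) - \sum_{\Box \neq \Sigma_\eta} S^G_\Box(\EP_\lb f^\infty),
\]
applying Theorem \ref{seedbound} to the first term. For every non-trivial $\Box$ with $G(\Box) = G$, the group $G_F(\Box)$ is a proper product of smaller classical endoscopic groups and hence has strictly fewer positive roots than $G$. By Lemma \ref{dimboundbym}, $\dim_\Box \lb \leq \dim \lb \cdot m(\lb)^{-c}$ for some $c \geq 1$. Combined with the shape bound just proved, each $|S^G_\Box|$ contributes only to the error term, and since there are finitely many shapes at rank $N$ the total contribution is absorbed in the $O(m(\lb)^{-C})$ term.

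The main obstacle is the combinatorial verification that $P_{G_F(\Box)} < P_G$ strictly whenever $\Box \neq \Sigma_\eta$ and $G(\Box) = G$. This requires checking, across all ways a classical simple endoscopic group of rank $N$ can arise from a non-cuspidal or non-irreducible Arthur parameter $\bigoplus \tau_i[d_i]$, that decomposing into the product $\prod G((T_i, d_i, \eta_i))$ strictly decreases the number of positive roots; the distinct orthogonal, symplectic and unitary cases each have to be treated by inspection. A secondary technical point is ensuring that the constants $A, B, C$ and the implicit constants in the errors can be chosen uniformly across the finitely many shapes, which is automatic since the induction terminates in bounded depth.
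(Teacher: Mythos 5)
Your proposal follows the paper's inductive route fairly closely: same decomposition $S^G_{\Sigma_\eta} = S^G - \sum_{\Box\neq\Sigma_\eta}S^G_\Box$, same use of Theorem~\ref{seedbound}, Proposition~\ref{fullnewshapebound}, and Lemma~\ref{dimboundbym}. However there are two gaps.

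First, you have identified the ``main obstacle'' correctly — verifying $P_{G_F(\Box)} < P_G$ strictly — but you have not noticed that this inequality actually \emph{fails} in one case, so there is a missing base case. When $N=2$ in the self-dual setting and $G = \SO_2^\eta$ with $\eta\neq\triv$, the group $G$ is a one-dimensional (non-split) torus so $P_G = 0$. The only non-trivial shape $\Box$ corresponds to $\eta_1\boxtimes\eta_2$ and the associated reduced group $G_F(\Box)$ is a product of the simple endoscopic groups of $\wtd G_1$, which are trivial; hence $P_{G_F(\Box)} = 0 = P_G$, and Lemma~\ref{dimboundbym} gives no decay in $m(\lb)$. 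So the inductive step does not close here and it must be treated as a separate base case (as the paper does, by observing that $\eta_1\boxtimes\eta_2$ contributes for only finitely many $\lb$ and absorbing the discrepancy into a constant). Since you treat only $N=1$ and $\SO_{2,\triv}$ as base cases, your induction is broken at this rank. Your treatment of $\SO_{2,\triv}$ via vanishing of $\EP_\lb$ is an acceptable alternative to the paper's argument (no simple parameters), so no issue there.

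Second, Proposition~\ref{fullnewshapebound} requires $f_S f^{S,\infty}$ to be \emph{trace-positive}, which is not assumed in the statement of the theorem. You need to first reduce to this case (the paper invokes \cite[Lem.~9.3.1]{DGG22}); as written your proposal applies \ref{fullnewshapebound} to arbitrary test functions, which is not licensed.
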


\begin{proof}
This is the same induction-on-classification-rank argument as in \cite[Thm. 9.3.2]{DGG22}. First, \cite[Lem. 9.3.1]{DGG22} reduces to the case where $f^\infty$ is trace-positive. In the self-dual case, there are three bases cases (those when $G$ is abelian):
\begin{itemize}
    \item $N=1$. Then $\Sigma_\eta$ is trivially the only shape so the bound follows from a very special case of Theorem \ref{seedbound}.
    \item $N=2$, $G = \SO_{2}^\triv$. Then there are no simple parameters since $\Ld G$ embeds into a split torus. 
    \item $N = 2$, $G = \SO_{2}^\eta$, $\eta \neq \triv$: then the only parameters with non-trivial shape are those for $\eta_1 \boxtimes \eta_2$ with $\eta = \eta_1 \eta_2$. Since each component $(\eta_i)_v$ for $v|\infty$ is valued in $\pm 1$, these only appear in our traces for finitely many $\lb$. Therefore, we can bound $S^G = S^G_{\Sigma_\eta} + E$ for an error $E$ that is constant in $\lb$ and apply Theorem~\ref{seedbound}. 
\end{itemize} 
In the conjugate self-dual case, the bases cases are $N=1$ and $G = U^\pm_1$ which follows as in the $N=1$ case above. 

For the inductive step, first note that by \cite[5.3.1]{DGG22} or an analogous statement in self-dual setting:
\[
S_{\Sigma_\eta}^G(\EP_\lb f_S f^{S, \infty}) = S^G(\EP_\lb f_S f^{S, \infty}) - \sum_{\Box \neq \Sigma_\lb}S_{\Box}^G(\EP_\lb f_S f^{S, \infty}).
\]
If we could show that the terms in the rightmost sum the have smaller growth in~$m(\lb)$ than~$\dim(\lb)$, then Theorem \ref{seedbound} would prove the bound we want for $\Sigma_\eta$. For this, outside of our base cases, we always have $P_{G_F(\Box)} < P_G$ so $\dim(\lb)^{-1} \dim_\Box(\lb)  = O( m(\lb)^{-1})$ by Lemma~\ref{dimboundbym}. In particular, knowing the second bound would imply that all the terms in the sum are lower order in $m(\lb)$ and still exponential in $q_v^\kappa$ by \cite[Lem. 6.2.3]{DGG22}.

Therefore, it suffices to show the second bound \eqref{eq bound nontrivial shapes}. For this,
\[
|S_{\Box}^G(\EP_\lb f_S f^{S, \infty})| \leq \sum_{\Box_\Delta = \Box} |S^N_\Delta((f_S f^{S, \infty})^N)| 
\]
Each summand can be bounded by Proposition \ref{fullnewshapebound}, again using \cite[Lem.6.2.3]{DGG22} to bound $\mc T_i f_S(1)$:
\begin{multline*}
\prodf_i S^{T_i}_{\Sigma_{\eta_i, \lb_i }} \lf((f^{S, \infty}_i\mc T_i f_S )^{T_i}\ri) \\
=  \lf( \prod_i \dim \lb_i \ri)\lf( \prodf_i\Lambda(G_i, \om_{\lb_i},  f^{S, \infty}_i\mc T_i f_S )  + O(m(\lb)^{-C} q_{S}^{A + B \kappa} f_S(1))\ri) \\
\leq \dim_\Box(\lb) \lf(\prodf_i  \Lambda(G_i, \om_{\lb_i}, f^{S, \infty}_i \mc T_i f_S ) + O(m(\lb)^{-C} q_{S}^{A + B \kappa}f_S(1))\ri)
\end{multline*}
by the inductive hypothesis.

The result then follows by noting that $|\Box_\lambda|$ is bounded above by a constant only depending on $\Box$ and taking a maximum over all constants that are in the exponent of $q_v$. Note that we can remove the dependence of the $\Lambda(G_i, \om_{\lb_i}, f^{S, \infty}_i \mc T_i f_S )$ on $\om_{\lb_i}$ through upper-bounding the sum defining it by a sum of absolute values. 
\end{proof}

\subsection{From \lm{$S$} to \lm{$I$}}
For future use, we note that the bound also applies to $I$-terms, even though we will not use it in this specific paper. 

\begin{thm}\label{thm:shapeboundI}
Choose $G^* \in \wtd{\mc E}_\el(N)$ and let $G = G^*$ in the self-dual case or an extended pure inner form of $G^*$ in the conjugate self-dual case. If $\lb \in \Box_\lambda$, the bounds in Theorem \ref{weightasymptotics} hold with the $S^G_\Box(\EP_\lb f_S f^{S,\infty})$ replaced by $I^G_\Box(\EP_\lb f_S f^{S,\infty})$ after also replacing all $f^\infty$ terms on the right with $(f^\infty)^{G^*}$.
\end{thm}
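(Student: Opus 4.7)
The strategy is to reduce the bounds for the invariant trace $I^G_\Box$ to the stable-trace bounds of Theorem \ref{weightasymptotics} via a shape-refined stabilization of the invariant trace formula. Concretely, I would apply $I^G_\disc = \sum_{H \in \mc E_\el(G)} \iota(G,H) S^H_\disc$ (with the convention $\iota(G,G^*) = 1$ for the trivial endoscopic datum $s=1 \in \wh G = \wh{G^*}$) and refine by shape. Each parameter $\psi$ for $G$ arising from shape $\Box$ on $\wtd G_N$ (i.e.\ a shape on $G^* \in \wtd{\mc E}_\el(N)$) corresponds, via the endoscopic classification, to parameters on elliptic endoscopic groups $H$ of $G$ whose pushforwards to $\wtd G_N$ have shape $\Box$. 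Only finitely many shapes $\Box_H$ on $H$ arise this way, yielding a decomposition
\[
I^G_\Box(\phi) = \sum_{H \in \mc E_\el(G)} \iota(G,H) \sum_{\Box_H \to \Box} S^H_{\Box_H}(\phi^H).
\]

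The main term comes from $H = G^*$: the shape $\Box$ on $G^*$ lifts tautologically to itself, so applying Theorem \ref{weightasymptotics} to $S^{G^*}_\Box\bigl((\EP_\lb f_S f^{S,\infty})^{G^*}\bigr)$ produces precisely the asserted bound in terms of $(f^\infty)^{G^*}$. In the simple-shape case $\Box = \Sigma_\eta$ this is the main asymptotic identity, and in the general case it gives the constant $C(G^*, \Box, (f^\infty)^{G^*})$ with the same error $O(m(\lb)^{-C} q_S^{A+B\kappa} f_S(1))$.

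For $H \neq G^*$, the endoscopic group has strictly smaller rank, and any $\Box_H$ on $H$ lifting to $\Box$ on $G^*$ satisfies $P_{G_F(\Box_H)} < P_{G_F(\Box)}$; by Lemma \ref{dimboundbym}, this gives $\dim_{\Box_H}(\lb_H) \leq (\dim_\Box \lb)\, m(\lb)^{-1}$. Applying Theorem \ref{weightasymptotics} to each $S^H_{\Box_H}(\phi^H)$ therefore produces contributions of order $O(\dim_\Box(\lb) \cdot m(\lb)^{-1} q_S^{A+B\kappa} f_S(1))$, which are absorbed into the error term (possibly after decreasing $C$). A hyperendoscopy iteration identical to that in the proof of \cite[Thm 9.1.1]{Dal22} handles the remaining dependence of the error on transferred test functions at endoscopic groups of endoscopic groups.

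\textbf{Main obstacle.} The genuine subtlety is verifying the shape-refinement of the stabilization---i.e.\ that the correspondence between parameters for $G$ and parameters on its endoscopic groups respects the shape decomposition in the precise sense needed, and that the consecutive transfers $\phi \mapsto \phi^H \mapsto (\phi^H)^{G^*}$ for $H = G^*$ agree with $\phi^{G^*}$. Both are implicit in the shape formalism of \cite{DGG22}, but require careful bookkeeping in the conjugate self-dual case, where $G$ is a pure inner form of $G^*$ and the transfer is the inner-form transfer matching orbital integrals.
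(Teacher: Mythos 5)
Your proposal is essentially the paper's argument, repackaged as a shape-refined geometric stabilization of $I^G_\disc$ rather than via the packaged spectral expansion the paper cites. The paper invokes \cite[Cor 10.2.2]{DGG22}, which expands the multiplicity $m^G(\pi_0,\Delta,\cdot)$ as a sum over $s\in\mc S_\psi$ of transfers to the endoscopic groups $H_s$; for stable $\Box$ (in particular $\Sigma_\eta$, where $\mc S_\psi$ is trivial) only the $s=1$ term survives, giving $I^G_\Box = S^{G^*}_\Box((\cdot)^{G^*})$ exactly, and otherwise the $s\neq 1$ terms are dominated. Your displayed identity
\[
I^G_\Box(\phi) = \sum_{H\in\mc E_\el(G)} \iota(G,H)\sum_{\Box_H\to\Box} S^H_{\Box_H}(\phi^H)
\]
is precisely that corollary reorganized by endoscopic group instead of by $s$, and your acknowledged ``main obstacle''---verifying the shape-refined stabilization---is exactly what Cor 10.2.2 establishes, so you should cite it rather than redo the bookkeeping.

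Two small inaccuracies worth fixing. First, elliptic endoscopic groups $H\neq G^*$ of a classical $G$ do \emph{not} generally have strictly smaller rank (e.g.\ $\SO_{2a+1}\times\SO_{2b+1}$ for $G=\SO_{2n+1}$); what shrinks is the number of positive roots $P_{G_F(\Box_H)} < P_{G_F(\Box)}$, which is what Lemma~\ref{dimboundbym} actually controls, so state the estimate in those terms. Second, the remark that ``the consecutive transfers $\phi\mapsto\phi^H\mapsto(\phi^H)^{G^*}$ for $H=G^*$ agree with $\phi^{G^*}$'' is garbled---for $H=G^*$ there is only one transfer; in the conjugate self-dual inner-form case the composite you want is the inner-form transfer $G\to G^*$ followed by the endoscopic identity at $G^*$, not a double transfer. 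You should also note, as the paper does, that the non-quasisplit cases are conditional on \cite{KMSW14} and its unavailable reference KMSb. With these adjustments the argument matches the paper's.
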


\begin{proof}
First, note that the case of non-quasisplit $G$ is conditional on \cite{KMSW14} and its publicly unavailable reference ``KMSb''.  

For the proof, we apply \cite[Cor 10.2.2]{DGG22} (or its exactly analogous self-dual version), referring to the notation therein and noting that
\[
I^G_\Box(\EP_\lb f_S f^{S, \infty}) = \sum_{\Delta \in \Box_\lb} \sum_{\pi_0 \in \psi^\Delta_\infty} \tr_{\pi_0}(\EP_\lb) m^G(\pi_0, \Delta, f_Sf^{S, \infty}).
\]

If $\Box$ is stable (in particular when it is $\Sigma_\lb$), the expansion in \cite[Cor 10.2.2]{DGG22} only has one term with $s=1$ so $I^G_\Box(\EP_\lb f_S f^{S,\infty}) = S^{G^*}_\Box(\EP_\lb (f_S f^{S,\infty})^{G^*})$. 

Otherwise, this follows from applying Theorem \ref{weightasymptotics} to each term of \cite[Cor 10.2.2]{DGG22} or its self-dual analog. The dominant contribution comes from $s=1$. 
\end{proof}

\begin{note}
For a shape $\Box$ with some $d_i > 1$, there is a subset $\Phi_\Box \subseteq  \Phi_+(G)$ such that  $\langle \alpha, \lb \rangle$ is constant over $\alpha \in \Phi_\Box$. The $\alpha \in \Phi_\Box$ are exactly those not appearing in the computation of $\dim_\Box \lb$. 

An inspection of the argument above shows that the error term $O(m(\lb)^{-1})$ in Theorems \ref{weightasymptotics} and \ref{thm:shapeboundI} can be improved to an analogous $O(m_\Box(\lb)^{-1})$ where~$m_\Box(\lb)$ only minimizes over $\alpha \notin \Phi_\Box$. This is irrelevant to the current paper where we only consider $\Box = \Sigma$. 
\end{note}

\section{Shalika Germs and \lm{$(\wtd E^\infty_\mf n)^G$}} \label{sec shalika germs}
To apply Theorem \ref{weightasymptotics}, we need to understand the main term \eqref{mainterm} given by a sum over central elements for $(\tilde f^\infty)^G = (\wtd E^\infty_\mf n)^G$, the function whose trace is a count of automorphic forms weighted by their root number. In this case, we wish to show that the main term \eqref{mainterm} vanishes. %In particular, we wish to show that for this choice of function, the main term \eqref{mainterm} vanishes. 

\subsection{Transfers at Non-Regular Elements through Shalika Germs}\label{sec shalikaargument}
In this section, we consider the general problem of understanding values of transfers at central elements. Consider a (possibly twisted) reductive %or twisted 
group $\wtd G$ and $H \in \mc E_\el(\wtd G)$.  We will eventually apply this to $\wtd G = \wtd G_N$ and $H$ a classical group as in \S\ref{sec twistedendoscopicgroups}. 

We first note that for central $\gamma \in H$, 
\[
(\wtd f^\infty)^H(\gamma) = \mc O_\gamma((\wtd f^\infty)^H) = \mc{SO}_\gamma((\wtd f^\infty)^H).
\]
If $\gamma$ were strongly regular, then we could understand this through the standard orbital integral transfer identities as in \cite{KS99}:
\begin{equation}\label{transferidentity}
\mc{SO}^H_\gamma((\wtd f^\infty)^H) = \sum_i \Delta^{\wtd G}_H(\gamma, \wtd \gamma_i) \mc O^{\wtd G}_{\wtd \gamma_i}(\wtd f^\infty),
\end{equation}
where $\gamma$ is a norm of the stable class containing the rational-class representatives~$\wtd \gamma_i$ and the $\Delta^{\wtd G}_H$ are a choice of transfer factors. These would allow to evaluate~$\mc{SO}_\gamma((\wtd f^\infty)^H)$ in terms of orbital integrals of $f^\infty$. For arbitrary semisimple elements, doing so is much more complicated.

The standard technique (e.g. \cite[\S3]{Kot88}) to resolve this issue is comparing Shalika germ expansions. This was implemented in full generality \cite[Thm 2.4.A]{LS07} for non-twisted endoscopy. We mimic the argument in the twisted case using a comparison of transfer factors which was the main technical result of \cite{WTE}. This argument is known to experts and seems implicit in the stabilization of the twisted trace formula---we just reconstruct it to deduce some bounds from the details. 

We first recall the definition of non-regular stable orbital integrals: 

\begin{dfn}
Let $H/F_v$ be quasiplit reductive and $\gamma \in H$ an arbitrary semisimple element. If the $\gamma_i$ are representatives for the rational classes in the stable class of $\gamma$, then
\[
\mc{SO}_\gamma := \sum_i e(H_{\gamma_i}) \mc O_{\gamma_i},
\]
where $e(H_{\gamma_i})$ is the Kottwitz sign of the necessarily connected $H_{\gamma_i}$ (i.e. $(-1)^r$ where~$r$ is the difference between the split ranks of $H_{\gamma_i}$ and its quasisplit inner form). 
\end{dfn}

The main goal of what follows is Proposition \ref{transfervanishing}. For this, we compare two Shalika germ expansions of \eqref{transferidentity}: with respect to $H$ on the left-hand side, and with respect to $\wtd G$ on the right-hand side. The latter relies on a descent property for twisted transfer factors, which we begin by recalling below. 

\subsubsection{Descent for Twisted Transfer Factors}\label{sectiontransferdescent}
The descent property we need is the main result of \cite{WTE}; an English-language summary of a special case can be found in \cite{Var09}. We begin with a very general setup: $G/F_v$ is an arbitrary reductive group, and we let~$\wtd G := G \rtimes \theta$ for an automorphism $\theta$ fixing a pinning. Let $H \in \mc E_\el(\wtd G)$ be such that there is a transfer map defined to $H$ instead of to a $z$-extension (this will be satisfied for all $(\wtd G, H)$ appearing in our self-dual and conjugate self-dual cases). 

Fix semisimple $\td \gamma \in \wtd G_v$ and $\gamma \in H_v$ such that $\gamma$ is the norm of $\td \gamma$ (i.e. $\td \gamma$ is the stable class that is the transfer of $\gamma$). As explained in \cite[\S1.3-4]{LS07}, since $H$ is quasisplit, we can without loss of generality replace $\gamma$ by a stable conjugate so that~$H_\gamma = H^0_{\gamma}$ is quasisplit. Then \cite[\S3.5-6]{WTE} constructs an $\bar H$ such that:
\begin{itemize}
    \item $\bar H$ is an endoscopic group of $(G_{\td \gamma}^0)^\scn$ (Waldspurger more precisely constructs the entire endoscopic quadruple). 
    \item $(H^0_\gamma)^{\scn}$ is a quasisplit ``non-standard endoscopic group'' for $\bar H$ as in \cite[\S1.7]{WTE}. In particular, there is transfer of stable conjugacy classes from $\mf h_\gamma$ to $\bar{\mf h}$ constructed from an isomorphism of tori contained in rational Borels (note that this factors through the canonical projection $\mf h_\gamma \to \mf h_{\gamma, \scn}$). 
\end{itemize}

Now, choose strongly $\wtd G$-regular $X_H \in \mf h_\gamma$. Then $X_H$ transfers to a stable class~$X_{\bar H}$ in~$\bar{\mf h}$ and further to a stable class in~$\mf g_{\td \gamma}$. Choose $X_G$ in this stable class of $\mf g_{\td \gamma}$. We can decompose it as
\[
X_G = \wtd X_{G,\scn} + \wtd X_{G,z}, \qquad \wtd X_{G,\scn} \in \mf g_{\td \gamma, \scn},\;  \wtd X_{G,z} \in \Lie(Z_{G_{\wtd \gamma}}). 
\]

Finally, we let $\Delta^{\wtd G}_H$ and $\Delta^{\mf g_{\wtd \gamma, \scn}}_{\mf h_\gamma}$ be the transfer factors from \cite{KS99} for the groups/Lie algebras determined by the sub/super-scripts (but normalized as in \cite{WTE} without the $\Delta_{IV}$-factor).

\begin{thm}[{\cite[Thm 3.9]{WTE}}]\label{transferdescent}
With notation from above, there is a choice of transfer factors and a neighborhood $0 \in U \subset \mf h_\gamma$ such that for $X_H \in U$,
\[
\Delta^{\wtd G}_H(\exp(X_H)\gamma, \exp(X_G)\wtd \gamma) = \Delta^{\mf g_{\wtd \gamma, \scn}}_{\bar{\mf h}}(X_{\bar H}, X_{G,\scn}).
\]
\end{thm}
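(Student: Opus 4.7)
The plan is to prove this descent by carefully decomposing each factor in the twisted transfer factor $\Delta^{\wtd G}_H$ as a product of an ``infinitesimal'' part, coming from roots that vanish at $\gamma$ (resp.\ at $\wtd \gamma$), and a ``bulk'' part, coming from roots that do not. For $X_H$ in a sufficiently small neighborhood $U$ of $0 \in \mf h_\gamma$, the bulk part is locally constant and can be absorbed into the normalization of $\Delta^{\wtd G}_H$ (which is only defined up to a scalar), while the infinitesimal part should reproduce the Lie-algebra transfer factor $\Delta^{\mf g_{\wtd \gamma, \scn}}_{\bar{\mf h}}$ on the right-hand side.

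First, I would review the explicit formulas for the twisted transfer factor from \cite[\S4--5]{KS99}, which factor as $\Delta_I \Delta_{II} \Delta_{III_1} \Delta_{III_2}$ in the twisted normalization (omitting $\Delta_{IV}$ as Waldspurger does). In the twisted setting, the relevant root data are the $\theta$-restricted roots of $G$, so the first technical step is to identify these near $\wtd \gamma$: restricted roots $\alpha$ with $N\alpha(\wtd \gamma) = 1$ correspond precisely to roots of $G_{\wtd \gamma}^0$, and upon passage to the simply-connected cover they match root data on $\mf g_{\wtd \gamma, \scn}$ via the transfer of tori described in \S\ref{sectiontransferdescent}. An analogous decomposition applies to the root system of $H$ at $\gamma$, where roots $\alpha$ with $\alpha(\gamma) = 1$ give roots of $H^0_\gamma$, and these pull back under the non-standard endoscopy identification to roots of $\bar H$.

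Next, I would analyze each factor separately as $X_H \to 0$. For $\Delta_{II}$, which is a product of characters $\chi_\alpha$ applied to $(N\alpha(\exp(X_H)\gamma) - 1)$ against chosen $a$-data, roots with $N\alpha(\gamma) \neq 1$ contribute a quantity of the form $\chi_\alpha(N\alpha(\gamma) - 1) \cdot (1 + O(X_H))$, which is locally constant to leading order; roots with $N\alpha(\gamma) = 1$ contribute $\chi_\alpha(N\alpha(X_H) + O(X_H^2))$, which matches exactly the Lie-algebra $\Delta_{II}$ for $\bar H \subset (G_{\wtd \gamma})^\scn$ after using the transfer of root data. A parallel analysis applies on the $H$-side, with the non-standard endoscopy ensuring that the stable classes match. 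The factor $\Delta_I$, which depends only on $a$-data and the endoscopic datum, is likewise locally constant near $\gamma$ and can be absorbed.

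The main obstacle, and where the argument becomes delicate, is the cohomological factor $\Delta_{III}$: one must produce a canonical identification of the Galois gerbe/cocycle attached to the pair $(\gamma, \wtd \gamma)$ with the one attached to the Lie-algebra pair $(X_{\bar H}, X_{G,\scn})$. This requires a careful comparison of the cocycles arising from admissible embeddings of tori $T_H \hookrightarrow H$ and $T_G \hookrightarrow G$ with those arising from $\bar H$ and $\mf g_{\wtd \gamma, \scn}$, using that the isomorphism of tori in \S\ref{sectiontransferdescent} is constructed inside rational Borels and hence preserves the relevant Galois structure. Once this cohomological compatibility is established, one normalizes the residual global constant by absorbing it into the choice of $\Delta^{\wtd G}_H$ (allowed since transfer factors are unique only up to a scalar for fixed endoscopic data), obtaining the desired equality on $U$. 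The hard part will be the bookkeeping of $\chi$-data and cocycles that ensures the ``bulk'' contributions on the $H$- and $\wtd G$-sides cancel against each other rather than merely being individually constant.
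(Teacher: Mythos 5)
Your proposal attempts to reprove the theorem from scratch, but the paper does not do this. As the theorem header indicates (it is literally labeled \cite[Thm 3.9]{WTE}), the paper's proof is a \emph{citation} plus a notation dictionary: it points out that Waldspurger's Theorem 3.9 is stated as an equality of relative transfer factors, that the note afterward upgrades this to an equality of transfer factors for suitable choices, and then explains the translation between the paper's notation and Waldspurger's (our $\wtd\gamma$ is Waldspurger's $y\eta y^{-1}$, our $\gamma$ is his $\varepsilon$, the $\bar H$ is his $\bar G[y]$ and $\varphi$, and the subscripted $1$'s can be ignored). No independent argument is given, nor is one intended.

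Your sketch of the descent strategy --- decomposing $\Delta = \Delta_I \Delta_{II} \Delta_{III_1}\Delta_{III_2}$, separating ``bulk'' roots from roots vanishing at the semisimple element, matching the infinitesimal $\Delta_{II}$ contribution with the Lie-algebra transfer factor, and absorbing the locally constant remainder into the normalization --- is essentially the Langlands--Shelstad strategy from \cite[\S2.4.A]{LS07} in the untwisted case, and it is the right template for what Waldspurger's proof does. But you have left the decisive step as a placeholder: you acknowledge that the cohomological factor $\Delta_{III}$ ``requires a careful comparison'' of the Galois cocycles for $(\gamma, \wtd\gamma)$ and $(X_{\bar H}, X_{G,\scn})$ and that this is ``where the argument becomes delicate,'' but you do not carry it out. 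That comparison \emph{is} the theorem; it is the main technical content of Waldspurger's paper \cite{WTE} (``L'endoscopie tordue n'est pas si tordue'') and occupies a substantial fraction of it. In the twisted setting the difficulty is compounded by the interaction of $\theta$ with the $z$-extensions, by the fact that the norm map from twisted classes is not a group homomorphism, and by the appearance of non-standard endoscopy on the $H$-side, all of which make the $\Delta_{III}$ bookkeeping qualitatively harder than in \cite[2.4.A]{LS07}. A short proof that leaves this to ``once this cohomological compatibility is established'' has not proven the statement. The appropriate move here is exactly what the paper does: cite Waldspurger and supply the dictionary.
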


\begin{proof}
The statement of \cite[Thm 3.9]{WTE} is an equality of relative transfer factors which, as a note afterwards points out, can become an equality of transfer factors after making the correct choices. We also comment on notation for the reader's convenience: our $\wtd \gamma$ is Waldspurger's $y \eta y^{-1}$ and our $\gamma$ is their $\eps$. Our assumptions on $H$ let us ignore all subscripted $1$'s. 

We write out Waldspurger's map $\varphi$ explicitly as it is defined before Remark 3.8. Similarly, we explicitly write out $\bar G[y]$ as it is defined at the end of \S3.5. 
\end{proof}

As a nice corollary: 
\begin{cor}\label{nonregtransferfactors}
With notation from above, assume that $\dim H_\gamma = \dim G_{\wtd \gamma}$. Then $\Delta^{\wtd G}_H(\exp(X_H)\gamma, \exp(X_G)\wtd \gamma)$ is constant for $X_H$ close enough to $0$. 

In particular, we can define $\Delta^{\wtd G}_H(\gamma, \wtd \gamma)$ to be this constant value. 
\end{cor}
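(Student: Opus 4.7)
The plan is to apply the descent result Theorem \ref{transferdescent} to reduce the problem to a Lie algebra transfer factor, and then show that the dimension hypothesis collapses this factor to a constant in $X_H$. First I would invoke Theorem \ref{transferdescent}, which identifies
\[
\Delta^{\wtd G}_H(\exp(X_H)\gamma, \exp(X_G)\wtd \gamma) = \Delta^{\mf g_{\wtd \gamma, \scn}}_{\bar{\mf h}}(X_{\bar H}, X_{G, \scn})
\]
on a neighborhood $U$ of $0 \in \mf h_\gamma$. Since $X_{\bar H}$ and $X_{G, \scn}$ are determined by $X_H$, it suffices to show that the right-hand side is independent of $X_H$ for $X_H$ sufficiently close to $0$.

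Next I would use the dimension hypothesis to argue that the non-standard endoscopic pair $(\bar H, \mf g_{\wtd \gamma, \scn})$ is essentially trivial. Non-standard endoscopy preserves rank via its isomorphism of maximal tori, so $\rank \bar H = \rank (H^0_\gamma)^\scn = \rank H^0_\gamma$; likewise the standard endoscopy embedding $\bar H \hookrightarrow (G^0_{\wtd \gamma})^\scn$ preserves rank. Combining these with the hypothesis $\dim H_\gamma = \dim G_{\wtd \gamma}$ forces $\dim \bar H = \dim \mf g_{\wtd \gamma, \scn}$. Consequently $\bar H$ and $\mf g_{\wtd \gamma, \scn}$ have the same number of roots, and the non-standard endoscopic identification pairs the two root systems bijectively.

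Given this bijection, I would decompose the Lie algebra transfer factor as $\Delta_I \cdot \Delta_{II} \cdot \Delta_{III}$ in the normalization of \cite{WTE} (no $\Delta_{IV}$) and analyze each piece. The factor $\Delta_{II}$ is a product over roots of $\mf g_{\wtd \gamma, \scn}$ outside the image of $\bar{\mf h}$, which is empty here and hence contributes $1$. The remaining $\Delta_I$ and $\Delta_{III}$ depend only on discrete cohomological data attached to the tori and to the elements $\gamma, \wtd \gamma$ (choices of $a$-data, $\chi$-data, and cocycle representatives), and in particular do not depend on $X_H$ as it varies in $U$. This gives that $\Delta^{\mf g_{\wtd \gamma, \scn}}_{\bar{\mf h}}(X_{\bar H}, X_{G, \scn})$ is constant on $U$, so defining $\Delta^{\wtd G}_H(\gamma, \wtd \gamma)$ to be this common value is well-posed.

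The hard part will be verifying cleanly that the dimension matching really makes $\Delta_{II}$ an empty product in the non-standard setting. For standard endoscopy this is immediate from rank plus dimension coincidence, but Waldspurger's non-standard endoscopy permits exchanges between long and short roots (for instance between $B_n$ and $C_n$ factors), so the correct statement is that every root of $\mf g_{\wtd \gamma, \scn}$ is matched to a root of $\bar{\mf h}$ under the non-standard pairing, leaving no residual factor. Making this precise will require tracing through the construction of $\bar H$ in \cite[\S3.5--3.6]{WTE} and confirming that the root correspondence underlying the non-standard transfer indeed exhausts all of $\Phi_+(\mf g_{\wtd \gamma, \scn})$ whenever the ranks and dimensions of $\bar H$ and $\mf g_{\wtd \gamma, \scn}$ coincide.
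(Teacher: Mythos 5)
Your proposal arrives at the correct conclusion and follows the same high-level strategy as the paper: apply Theorem \ref{transferdescent} to reduce to the Lie algebra transfer factor $\Delta^{\mf g_{\wtd \gamma, \scn}}_{\bar{\mf h}}$, and then leverage the dimension hypothesis to show that factor is locally constant in $X_H$. Where you diverge from the paper is in how you handle the Lie algebra transfer factor once the descent is done. The paper's argument is simply: the dimension hypothesis forces $\bar H$ to be an (elliptic) endoscopic group of $(G^0_{\wtd \gamma})_\scn$ of the \emph{same} dimension, hence it must be the quasisplit inner form, and for that trivial endoscopic situation the transfer factor is identically $1$ for a suitable normalization---this is the content of \cite[2.4.A]{LS07}. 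You instead propose to unwind the $\Delta_I \cdot \Delta_{II} \cdot \Delta_{III}$ decomposition and verify constancy factor by factor. This is a longer road to the same place; it works, but you are re-deriving a standard fact.

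The part of your write-up you flag as ``the hard part'' contains a genuine confusion worth correcting. You worry that because non-standard endoscopy may exchange long and short roots, the root matching needed to show $\Delta_{II}$ is an empty product is not immediate. But the transfer factor $\Delta^{\mf g_{\wtd \gamma, \scn}}_{\bar{\mf h}}$ is a \emph{standard} Lie algebra endoscopic transfer factor for the pair in which $\bar H$ is a standard endoscopic group of $(G^0_{\wtd \gamma})_\scn$. The non-standard endoscopy in Waldspurger's construction is between $\bar H$ and $(H^0_\gamma)^\scn$; its sole role is to transfer the argument $X_H \in \mf h_\gamma$ to the stable class $X_{\bar H} \in \bar{\mf h}$. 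It plays no part in the \emph{definition} of the transfer factor, so the $\Delta_{II}$ analysis is ordinary standard endoscopy, where ``same dimension and same rank'' immediately forces root systems to coincide. Once you keep the standard and non-standard sides of Waldspurger's construction separate, your ``hard part'' evaporates and your argument closes cleanly---though at that point you may as well just cite \cite[2.4.A]{LS07} as the paper does.
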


\begin{proof}
Such corollaries were originally pointed out in \cite[\S2.4]{LS07}: by the dimension assumption, $\bar H$ is necessarily an elliptic endoscopic group of $(G^0_{\wtd \gamma})_\scn$ of the same dimension, so it is necessarily the quasisplit inner form. Therefore $\Delta^{\mf g_{\wtd \gamma, \scn}}_{\bar{\mf h}}(X_{\bar H}, X_{G,\scn}) = 1$ always. The result follows from Theorem~\ref{transferdescent} noting that different choices of transfer factors differ by a constant value. 
\end{proof}

\subsubsection{Shalika Germs}
We recall some standard material on Shalika germs (see e.g.~\cite[\S II.2]{MW16}). Let $\wtd G = G \rtimes \theta$ be a possibly twisted reductive group over $F_v$ and fix semisimple $\td \gamma \in \wtd G_v$. For any test function $f$ on $\wtd G_v$, there is a neighborhood~$1 \in U_f \subseteq G_{v, \td \gamma}$ on which the Shalika germ expansion holds:  if~$\mf u_f = \exp^{-1}(U_f) \subseteq \mf g_{\td \gamma}$, then for all regular semisimple $X \in \mf u_f$,
\begin{equation}\label{germexpansion}
\mc O_{\exp(X)\td \gamma}(f) = \sum_{U \in \mc N(\mf g_{\wtd \gamma})} \Gamma^{\mf g_{\wtd \gamma}}_U(X) \mc O_{\exp(U)\wtd \gamma}(f)
\end{equation}
where $\mc N(\mf g_{\td \gamma})$ is the set of nilpotent $G_{v,X}$-orbits in $\mf g_{\td \gamma}$ and $\Gamma^{\mf g_{\td \gamma}}_U(X)$ is the Shalika germ---a smooth function on $\mf g_{\td \gamma}^\rss$ independent of $f$ such that for all $\alpha \in F_v^\times$: %that is homogeneous with respect to squares:
\begin{equation}\label{shalikahomog}
\Gamma^{\mf g_{\wtd \gamma}}_U(\alpha^2 X) = |\alpha|_v^{-\dim \mc O_U} \Gamma^{\mf g_{\wtd \gamma}}_U(X).
\end{equation}

\subsubsection{Comparing Germ Expansions}
We can now mimic the argument of \cite[2.4.A]{LS07}. Fix a reductive, possibly twisted, $\wtd G = G \rtimes \theta$ over $F_v$, $H_v \in \mc E_\el(\wtd G_v)$ and semisimple $\gamma \in H_v$. 

Let $T_H$ be a maximal torus of $H_v$ that is also an elliptic maximal torus of $H_\gamma$. Let $j_i$ for $1 \leq i \leq l$ be representatives for the $H_v$-conjugacy classes of admissible embeddings $T_H \into H_v$. %Without loss of generality, these may all be chosen to be automorphisms of $T_H$. 

For any test function $f$ on $G_v$, there is a neighborhood $U \subseteq T_H$ such that the Shalika germ expansion holds for $f^H$ on each $j_i(U)$. Then for strongly regular $X_H \in \mf u = \exp^{-1}(U)$,
\begin{equation}\label{germexpansiononH}
\mc{SO}_{\exp(X_H)\gamma}(f^H) = \sum_i \sum_{U \in \mc N(\mf h_{\gamma_i})} \Gamma^{\mf h_{\gamma_i}}_U(j_i(X_H)) \mc O_{\exp(U)\gamma_i}(f^H)
\end{equation}
where we use the shorthand $\gamma_i := j_i(\gamma)$.
This gives us:
\begin{lem}\label{shalikatransferformula}
Choose a test function $f^H$ on $H$, semisimple $\gamma \in H$, and strongly regular $X_H \in \mf t_H$ (i.e. the Lie algebra of our elliptic torus $T_H$ of $H_\gamma$). Then the function
\[
r(\alpha) := \mc{SO}_{\exp(\alpha^2X_H)\gamma}( f^H)
\]
for small enough $\alpha$ is of the form
\[
r(\alpha) = C \mc{SO}_\gamma(f^H) + \sum_{d_i} C_{d_i} |\alpha|_v^{-d_i}
\]
for $C \neq 0$, some set of $d_i > 0$, and arbitrary constants $C_{d_i}$.  
\end{lem}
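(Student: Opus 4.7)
The plan is to plug $\alpha^2 X_H$ into the Shalika germ expansion \eqref{germexpansiononH} and then use the homogeneity relation \eqref{shalikahomog} to isolate the dependence on $\alpha$. For $\alpha$ small enough, $\alpha^2 X_H$ lies in the neighborhood $\mf u$ on which the expansion is valid, so we may apply it uniformly in $\alpha$.

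Concretely, by the definition of $\mc{SO}$ and the germ expansion on each $\mf h_{\gamma_i}$, we have
\[
r(\alpha) = \sum_i e(H_{\gamma_i}) \sum_{U \in \mc N(\mf h_{\gamma_i})} \Gamma^{\mf h_{\gamma_i}}_U(j_i(\alpha^2 X_H)) \, \mc O_{\exp(U)\gamma_i}(f^H).
\]
Applying \eqref{shalikahomog} with the scalar $\alpha$ yields $\Gamma^{\mf h_{\gamma_i}}_U(\alpha^2 j_i(X_H)) = |\alpha|_v^{-\dim \mc O_U}\Gamma^{\mf h_{\gamma_i}}_U(j_i(X_H))$. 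Separating the trivial orbit $U = 0$, for which $\dim \mc O_0 = 0$ and the standard normalization gives $\Gamma_0^{\mf h_{\gamma_i}} \equiv 1$ (so that the expansion reads $\mc O_{\exp(X)\gamma_i}(f^H) = \mc O_{\gamma_i}(f^H) + (\text{contributions from } U\neq 0)$ near $X = 0$), we rewrite $r(\alpha)$ as
\[
\sum_i e(H_{\gamma_i}) \mc O_{\gamma_i}(f^H) + \sum_i \sum_{U \neq 0} e(H_{\gamma_i}) |\alpha|_v^{-\dim \mc O_U}\, \Gamma^{\mf h_{\gamma_i}}_U(j_i(X_H))\, \mc O_{\exp(U)\gamma_i}(f^H).
\]
The first sum is exactly $\mc{SO}_\gamma(f^H)$ by definition, so the coefficient $C$ equals $1$, which is nonzero. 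In the second sum, each non-trivial nilpotent orbit satisfies $\dim \mc O_U > 0$, so grouping by the common value $d = \dim \mc O_U$ produces the displayed form $\sum_{d_i > 0} C_{d_i} |\alpha|_v^{-d_i}$, with each $C_{d_i}$ independent of $\alpha$.

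There is no real obstacle here: the argument is a direct bookkeeping calculation once one accepts the standard Shalika germ expansion and the homogeneity property, both of which are recorded in \eqref{germexpansiononH} and \eqref{shalikahomog}. The only mildly delicate points are verifying that the $U=0$ term carries coefficient $1$ (which follows from the standard normalization of Shalika germs) and that the enumeration of rational classes $\gamma_i$ and their Kottwitz signs $e(H_{\gamma_i})$ remain valid for all $\alpha$ in a small enough neighborhood of $0$, which is immediate since perturbing by a strongly regular element in $\mf t_H$ does not change the stable class of $\gamma$ to which the $\gamma_i$'s belong.
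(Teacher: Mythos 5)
Your bookkeeping (plugging in $\alpha^2 X_H$, invoking homogeneity \eqref{shalikahomog}, separating the $U = 0$ term) matches the shape of the argument, but there is a genuine gap at the key step: the claim that ``the standard normalization gives $\Gamma_0^{\mf h_{\gamma_i}} \equiv 1$'' is false, and it is precisely there that the substance of the lemma lives.

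The Shalika germ for the zero orbit is homogeneous of degree $0$, hence locally constant near $0$ modulo squares, but its \emph{value} on an elliptic torus is not $1$; this is a theorem (Rogawski \cite{Rog81}, reinterpreted by Kottwitz \cite{Kot86,Kot88}), and the answer is proportional to a Kottwitz sign. Concretely, in \eqref{germexpansiononH} the degree-$0$ part reads
\[
\sum_i \Gamma_0^{\mf h_{\gamma_i}}\bigl(j_i(X_H)\bigr)\, \mc O_{\gamma_i}(f^H),
\]
and the lemma asserts this equals $C\,\mc{SO}_\gamma(f^H) = C\sum_i e(H_{\gamma_i})\,\mc O_{\gamma_i}(f^H)$. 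For this you need $\Gamma_0^{\mf h_{\gamma_i}}(j_i(X_H)) = C\, e(H_{\gamma_i})$ with the same nonzero $C$ for all $i$. If $\Gamma_0 \equiv 1$, the degree-$0$ part would instead be the unsigned sum $\sum_i \mc O_{\gamma_i}(f^H)$, which is not $\mc{SO}_\gamma(f^H)$ as soon as some centralizer $H_{\gamma_i}$ is a nonquasisplit inner form of $H_\gamma$ (e.g.\ already for $\gamma$ with $\mathrm{SL}_2$-type factors in its centralizer). Note also that the naive Rogawski formula expresses $\Gamma_0$ in terms of the formal degree of the Steinberg of $H_{\gamma_i}$, which varies across inner forms; this is why the paper emphasizes Kottwitz's specific measure normalization, which simultaneously cancels the formal degree and yields the clean sign factor $e(H_{\gamma_i})$. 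That input, together with the compatibility of the embeddings $j_i$ with the definition of the non-regular $\mc{SO}$, is the content of \cite[\S3]{Kot88} through the third displayed equation on p.\ 639 that the paper's proof cites; your proof omits it.
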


\begin{proof}
This is the intermediate result of \cite[\S3]{Kot88} up until the third equation on page 639. Summarizing the argument, Kottwitz computes the homogeneous of degree-$0$ part of \eqref{germexpansiononH} after applying \eqref{shalikahomog}: first, the main result of \cite{Rog81} computes all the $\Gamma_1$ terms on the elliptic torus $T_H$. Then, formulas from \cite{Kot86} simplify the remainders into the definition of the non-regular stable orbital integral\footnote{We warn that this definition is not simply the sum of orbital integrals and instead involves ``Kottwitz signs'' of centralizers as coefficients.}.

Kottwitz uses a clever choice of measure normalization to simultaneously remove the formal degree of the Steinberg from the formula in \cite{Rog81} and to simplify the relation between embeddings $j_i$ and the non-regular stable orbital integral. 
\end{proof}

Fix $X := X_H \in \mf t_H$ such that  $\exp(\alpha^2 X)\gamma$ is strongly regular for some open set of small enough $\alpha$. Then, setting $\wtd \gamma_i = \tdj_i(\gamma)$, we get the other side of the comparison from~\eqref{transferidentity}:
\begin{multline} \label{eq first expansion stable}
\mc{SO}_{\exp(X)\gamma}(f^H) \\ = \f{D^G(\wtd \gamma \exp(\tdj (X)))^{1/2}}{D^H(\gamma \exp(X))^{1/2}} \sum_i \Delta^{\wtd G}_H(\exp(X)\gamma, \tdj_i(\exp(X))\wtd \gamma_i) \mc O_{\tdj_i(\exp(X))\wtd \gamma_i} (f),
\end{multline}
where we again use Waldspurger's convention for transfer factors (leaving out the $\Delta_{IV}$ term) and the $D$ terms are Weyl discriminants:
\[
D^H(\gamma) = |\det(1 - \Ad(\gamma)|_{\mf h /\mf h_\gamma})|_v
\]
(note that the $D^G$-term is stable-conjugacy invariant and therefore independent of the suppressed $i$). 

We can further restrict our choice of $X$ so that the quotient of $D$-terms simplifies to an expression
\[
\f{D^G(\wtd \gamma)^{1/2}}{D^H(\gamma)^{1/2}}\f{D^{\mf g_{\td \gamma}}(\tdj(X))^{1/2}}{D^{\mf h_\gamma}(X)^{1/2}}
\]
that involves a similar term for Lie algebras:
\[
D^\mf h(X) = |\det(\ad(X)|_{\mf h /\mf h_X})|_v.
\]
By Theorem \ref{transferdescent}, for small enough $X_H$, the transfer factors in \eqref{eq first expansion stable} simplify to
\[
\Delta^{\mf g_{\td \gamma_i, \scn}}_{\bar{\mf h}_i}(X_{\bar H_i}, \tdj_i(X)_\scn)
\]
after constructing appropriate $\bar H_i$ that is an endoscopic group of $(G_{\td \gamma_i}^0)_\scn$ as in \S\ref{sectiontransferdescent} and where $X_{\bar H_i}$ is the (non-standard) transfer of $X$ to $\bar H_i$. 

We can then Shalika-germ expand again, this time on $\wtd G$ and around $\tilde \gamma_i$; for $X$ small enough, the identity \eqref{eq first expansion stable} becomes:
\begin{multline}\label{germexpansiononG}
\mc{SO}_{\exp(X)\gamma}(f^H) = \f{D^G(\wtd \gamma)^{1/2}}{D^H(\gamma)^{1/2}} \f{D^{\mf g_{\td \gamma}}(\tdj(X))^{1/2}}{D^{\mf h_\gamma}(X)^{1/2}}  \\ \times \sum_i \sum_{U \in \mc N(\mf g_{\wtd \gamma_i, \scn})} \Delta^{\mf g_{\wtd \gamma_i, \scn}}_{\bar{\mf h}_i}(X_{\bar H_i}, \tdj_i(X)_\scn) \Gamma^{\mf g_{\wtd \gamma_i}}_U(\tdj_i(X)) \mc O_{\exp(U)\wtd \gamma_i}(f).
\end{multline}
We recall:
\begin{lem}[{\cite[Lem. 3.2.1]{Fer07}}]
Let $G/F_v$ be a reductive group and $H \in \mc E_\el(G)$. Then for any choice of Lie algebra transfer factors, there is a quadratic character $\chi_{G,H} : F_v^\times \to \pm 1$ such that the transfer factors satisfy:
\[
\Delta(\alpha X_H, \alpha X_G) =  \chi_{G,H} (\alpha) \Delta(X_H, X_G)
\]
for all $\alpha \in F_v^\times$. 
\end{lem}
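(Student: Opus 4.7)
The plan is to decompose the Langlands--Shelstad Lie-algebra transfer factor
\[
\Delta(X_H,X_G) = \Delta_I(X_H,X_G)\cdot \Delta_{II}(X_H,X_G)\cdot \Delta_{III}(X_H,X_G)
\]
(no $\Delta_{IV}$ in the normalization used throughout this section) and check how each of the three pieces behaves under the simultaneous rescaling $(X_H,X_G)\mapsto(\alpha X_H,\alpha X_G)$ for $\alpha \in F_v^\times$. Since any two choices of Lie-algebra transfer factors differ by an overall constant in $\C^\times$, it suffices to treat one normalization: the constant cancels in the ratio $\Delta(\alpha X_H,\alpha X_G)/\Delta(X_H,X_G)$, so $\chi_{G,H}$ is well-defined independently of the choice.

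The factor $\Delta_I$ is built from the splitting invariant attached to an admissible embedding $T_H\hookrightarrow T_G$ of tori in $G$; it depends only on the tori and not on the particular regular element, and is therefore invariant under rescaling. The factor $\Delta_{II}$ is a product over $\Gamma_{F_v}$-orbits of roots $\beta\in R(G,T_G)$ that do not descend to $H$, of expressions of the shape $\chi_\beta(\beta(X_G))$; here each $\chi_\beta$ is a character of $F_\beta^\times$ whose restriction to the index-$\leq 2$ subfield $F_{\pm\beta}^\times$ is the quadratic character of $F_\beta/F_{\pm\beta}$. Under the rescaling, $\beta(X_G)\mapsto \alpha\,\beta(X_G)$ with $\alpha\in F_v^\times\subseteq F_{\pm\beta}^\times$, and each factor picks up a sign $\chi_\beta(\alpha)\in\{\pm 1\}$; taking the Galois-invariant product produces a quadratic character $\chi_2:F_v^\times\to\pm 1$.

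Finally, $\Delta_{III}$ pairs a cohomology class built from the tori and the stable class of $X_G$ against the endoscopic datum. In the Lie-algebra setting this class depends on $X_G$ only through its image up to data that scale at most by a quadratic character; one verifies directly (as in Ferrari's argument) that its contribution to the rescaling is also $\pm 1$-valued, giving another quadratic character $\chi_3$. Multiplying $\chi_2\cdot\chi_3$ yields the desired $\chi_{G,H}$.

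The main obstacle will be the last step: the $\Delta_{III}$ calculation, since it involves a choice of $a$-data and $\chi$-data as well as the identification of a torsor under $Z(\wh H)^{\Gamma}/Z(\wh G)^{\Gamma}$, and one must check that none of these ingredients contributes a non-quadratic character under scaling. This is the heart of \cite{Fer07} and requires careful bookkeeping, but once performed, the quadratic nature of $\chi_{G,H}$ is a transparent consequence of the Galois-theoretic structure of the $\chi_\beta$.
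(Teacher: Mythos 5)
The paper does not give a proof of this lemma; it is cited verbatim from Ferrari's thesis \cite[Lem.~3.2.1]{Fer07}, so there is no in-paper argument to compare against. What I can do is assess whether your sketch is a plausible reconstruction of Ferrari's proof.

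Your overall strategy --- fix the normalization constant (since any two Lie-algebra transfer factors differ by a scalar, the ratio $\Delta(\alpha X_H,\alpha X_G)/\Delta(X_H,X_G)$ is independent of it), decompose $\Delta = \Delta_I\Delta_{II}\Delta_{III}$, and track each piece under $X \mapsto \alpha X$ --- is the right one and matches Ferrari's. The treatment of $\Delta_I$ is correct: with $T_H$, $T_G$, the splitting, and the $a$-data held fixed, $\Delta_I$ is a constant on the torus and does not see the scaling.

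However, the $\Delta_{II}$ step as written is inaccurate. The property you cite --- that $\chi_\beta\vert_{F_{\pm\beta}^\times}$ is the quadratic character of $F_\beta/F_{\pm\beta}$ --- is the defining condition on $\chi$-data only for \emph{symmetric} roots (those with $F_\beta \neq F_{\pm\beta}$, i.e.\ $-\beta$ in the Galois orbit of $\beta$). For asymmetric roots one has $F_\beta = F_{\pm\beta}$ and $\chi_\beta$ need not be quadratic at all; the only constraint is $\chi_{-\beta} = \chi_\beta^{-1}$. So your claim that ``each factor picks up a sign $\chi_\beta(\alpha)\in\{\pm 1\}$'' fails for asymmetric roots. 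The correct argument is that the contributions from the orbits of $\beta$ and of $-\beta$ cancel ($\chi_\beta(\alpha)\chi_{-\beta}(\alpha)=1$), leaving only the symmetric orbits, whose restricted $\chi_\beta$'s are indeed $\pm 1$-valued on $F_v^\times$. This distinction is not cosmetic; omit it and $\chi_2$ is not visibly quadratic.

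For $\Delta_{III}$ you acknowledge the gap, which is honest, but the justification offered (``depends on $X_G$ only through its image up to data that scale at most by a quadratic character'') is an assertion of the conclusion, not an argument. One actually needs to unwind how the cocycle entering the cohomological pairing changes under $X\mapsto\alpha X$: the Langlands parameter attached to $X\in \mf t(F_v)$ through the $\chi$-data shifts by the restrictions $\chi_\beta(\alpha)$ again, and those are what produce (and are already accounted for, depending on how one apportions between $\Delta_{II}$ and $\Delta_{III}$) the sign; simply saying it ``scales at most by a quadratic character'' is circular. This is the piece of Ferrari's argument that your proposal does not reconstruct.
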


In particular, since the quadratic character $\chi_{G,H}$ vanishes on $\alpha^2$, the summand in \eqref{germexpansiononG} for a single pair $(i,U)$ scales with $\alpha$ like:
\begin{multline}\label{germexpansiononGhomog}
\f{D^{\mf g_{\wtd \gamma}}(\alpha^2 \tdj(X))^{1/2}}{D^{\mf h_\gamma}(\alpha^2 X)^{1/2}}  \Delta^{\mf g_{\wtd \gamma_i, \scn}}_{\bar{\mf h}_i}(\alpha^2 X_{\bar H_i}, \alpha^2 \tdj_i(X)_\scn) \Gamma^{\mf g_{\wtd \gamma_i}}_U(\alpha^2 \tdj_i(X)) \\= C_{i,U} |\alpha|^{-\dim \mc O_U + \dim G_{\td \gamma} - \dim H_\gamma}  
\end{multline}
for some constant $C_{i,U}$ and small enough $\alpha$. 

\subsubsection{Results}
As a consequence that is surely known to experts but does not seem to be written down anywhere, we get the following key input for the vanishing of the main term in our global trace formula expressions:
\begin{prop}\label{transfervanishing}
Let $f$ be a test function on $\wtd G_v$ and $\gamma, \wtd \gamma_i$ as above. Assume that for all $i$ and all nilpotent orbits $U$ of $\mf g_{\td \gamma_i}$,
\[
\dim \mc O_U = \dim G_{\wtd \gamma} - \dim H_\gamma \implies \mc O_{\exp(U) \wtd \gamma_i}(f) = 0.
\]
Then $\mc{SO}_{\gamma}(f^H) = 0$. 

(Note that if $\dim G_{\wtd \gamma} = \dim H_\gamma$, we only need to check that $\mc O_{\wtd \gamma_i}(f) = 0$.)
\end{prop}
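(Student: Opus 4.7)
The plan is to compare two Shalika germ expansions of the same function $r(\alpha) := \mc{SO}_{\exp(\alpha^2 X)\gamma}(f^H)$, viewed for small $\alpha \in F_v^\times$ and a fixed strongly $\wtd G$-regular $X \in \mf t_H$ chosen so that both Lemma \ref{shalikatransferformula} and equation \eqref{germexpansiononG} apply. The key observation is that both sides express $r(\alpha)$ as a finite sum $\sum_d c_d |\alpha|^{-d}$, and the coefficient of $|\alpha|^0$ is different things in the two expansions. By linear independence of the functions $|\alpha|^{-d}$ in $\alpha$ (for distinct real $d$, as $\alpha$ ranges over small nonzero elements of $F_v^\times$), we can equate the degree-$0$ coefficients.

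From Lemma \ref{shalikatransferformula}, the first expansion gives
\[
r(\alpha) = C \cdot \mc{SO}_\gamma(f^H) + \sum_{d > 0} C_d |\alpha|^{-d}
\]
with $C \neq 0$, so showing $\mc{SO}_\gamma(f^H) = 0$ is equivalent to showing that the degree-$0$ coefficient of $r(\alpha)$ vanishes. On the other hand, the transfer identity and the descent statement of Theorem \ref{transferdescent} yield the second expansion \eqref{germexpansiononG}, and the scaling computation \eqref{germexpansiononGhomog} shows that the summand indexed by $(i, U)$ scales as
\[
|\alpha|^{-\dim \mc O_U + \dim G_{\wtd \gamma} - \dim H_\gamma}.
\]
Thus the $|\alpha|^0$ coefficient of the second expansion is a sum over only those pairs $(i, U)$ with $\dim \mc O_U = \dim G_{\wtd \gamma} - \dim H_\gamma$, and each such summand carries a factor of $\mc O_{\exp(U) \wtd \gamma_i}(f)$.

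By the hypothesis of the proposition, every such $\mc O_{\exp(U) \wtd \gamma_i}(f)$ vanishes, so the degree-$0$ coefficient from the second expansion is zero; equating coefficients then forces $C \cdot \mc{SO}_\gamma(f^H) = 0$, and since $C \neq 0$, we conclude. The main subtlety I anticipate is verifying the linear-independence step rigorously, i.e.\ justifying that the two formal expansions in powers of $|\alpha|$ really do agree term-by-term as functions of $\alpha \in F_v^\times$ near $0$. This is essentially the content of Kottwitz's argument in \cite{Kot88}: the exponents appearing in the Shalika germ expansion on either side form a finite set, so one can separate the leading (in $|\alpha|^{-1}$) terms and work down to degree $0$, at each stage using the homogeneity property \eqref{shalikahomog} and the fact that $|\alpha|^d$ for distinct $d \in \R$ are linearly independent as $\alpha \to 0$ in $F_v^\times$.
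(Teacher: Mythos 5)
Your proposal is correct and follows essentially the same route as the paper: compare the degree-$0$ (in $|\alpha|$) coefficient of Lemma~\ref{shalikatransferformula}, which is $C\cdot\mc{SO}_\gamma(f^H)$ with $C\neq 0$, against the degree-$0$ coefficient of the transfer-side expansion \eqref{germexpansiononG} via the homogeneity in \eqref{germexpansiononGhomog}, observe that the latter vanishes under the hypothesis, and conclude by linear independence of the finitely many $|\alpha|^{-d}$ appearing. The paper's proof is terser but identical in substance; you have simply spelled out the coefficient-matching step that Kottwitz's argument in \cite{Kot88} is being invoked for.
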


\begin{proof}
This comes from comparing the expansion in Lemma \ref{shalikatransferformula} to equations \eqref{germexpansiononG} and \eqref{germexpansiononGhomog} for $X := X_H \in \mf t_H$. Note that we get much stronger information about the terms in \eqref{germexpansiononGhomog}, in particular, that many of them need to vanish. However, we don't actually need this extra information. 
\end{proof}

We can also derive a more refined bound which we expect to be useful in a future work studying the level-aspect version of this problem:

\begin{prop}\label{transferbound}
Let $f$ be a test function on $\wtd G_v$ and $\gamma, \td \gamma_i$ as above. Assume that for all $i$ and all nilpotent orbits $U$ of $\mf g_{\td \gamma_i}$ such that 
\[
\dim \mc O_U = \dim G_{\wtd \gamma} - \dim H_\gamma,
\]
there is a bound
\[
|D^G(\wtd \gamma_i)^{1/2} \mc O_{\exp(U) \wtd \gamma_i}(f)| \leq M. 
\]
Then we can bound
\[
|D^H(\gamma)^{1/2}\mc{SO}_{\exp(X)\gamma}(f^H)| \leq MC_{G_v,H_v}
\]
for some constant $C_{\td G_v, H_v}$ depending only on $v,\td G,H$.  
\end{prop}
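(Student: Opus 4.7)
The plan is to refine the Shalika germ comparison from Proposition \ref{transfervanishing}, tracking the sizes of each term instead of just extracting vanishing. The input is exactly the two expansions of $\mc{SO}_{\exp(\alpha^2 X)\gamma}(f^H)$ used there: the $H$-side expansion $r(\alpha) = C\,\mc{SO}_\gamma(f^H) + \sum C_{d_i}|\alpha|^{-d_i}$ from Lemma \ref{shalikatransferformula}, and the $\wtd G$-side expansion from \eqref{germexpansiononG} combined with the scaling \eqref{germexpansiononGhomog}, whose $(i,U)$ summand has $|\alpha|$-exponent $-\dim \mc O_U + \dim G_{\wtd\gamma} - \dim H_\gamma$.

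First I would match the coefficient of $|\alpha|^0$ in the two expansions. This produces an explicit identity that expresses $C\,\mc{SO}_\gamma(f^H)$ as a finite linear combination of the twisted orbital integrals $\mc O_{\exp(U)\wtd\gamma_i}(f)$ indexed by the pairs $(i,U)$ satisfying $\dim\mc O_U = \dim G_{\wtd\gamma} - \dim H_\gamma$, with $f$-independent coefficients built from the Lie algebra transfer factors $\Delta^{\mf g_{\wtd\gamma_i,\scn}}_{\bar{\mf h}_i}$, the Shalika germs $\Gamma^{\mf g_{\wtd\gamma_i}}_U$ evaluated at $\tdj_i(X)$, the Lie algebra discriminant ratio $D^{\mf g_{\wtd\gamma}}(\tdj(X))^{1/2}/D^{\mf h_\gamma}(X)^{1/2}$, and the constant $C$. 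Inserting a factor of $D^G(\wtd\gamma_i)^{1/2}=D^G(\wtd\gamma)^{1/2}$ (by stable-conjugacy invariance of this Weyl discriminant) regroups each summand as a product of $D^G(\wtd\gamma_i)^{1/2}\mc O_{\exp(U)\wtd\gamma_i}(f)$, bounded by $M$ by hypothesis, and a purely geometric residual factor. Multiplying through by $D^H(\gamma)^{1/2}/C$ and applying the triangle inequality then yields the stated bound, provided one can absorb the residual factor into a constant of the required form.

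The main obstacle is controlling this residual factor by a constant depending only on $v$, $\wtd G$, and $H$. Transfer factors contribute roots of unity and so are bounded absolutely, and the constant $C$ from Lemma \ref{shalikatransferformula} is, by Kottwitz's normalization choice, a uniformly nonzero quantity attached to the elliptic torus. The delicate point is bounding the Shalika germs $\Gamma_U$ and the Lie algebra discriminant ratios appearing for each of the finitely many critical nilpotent orbits: up to stable conjugacy there are only finitely many possibilities for the descent datum $((G^0_{\wtd\gamma})^\scn,\bar H)$ produced in \S\ref{sectiontransferdescent}, so at an appropriately normalized test direction $X$ these quantities range over a finite set determined by $v$, $\wtd G$, and $H$. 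This is essentially the twisted analogue of the estimate inside \cite[2.4.A]{LS07}, now made compatible with twisted descent via Theorem \ref{transferdescent} and the homogeneity identity \eqref{germexpansiononGhomog}.
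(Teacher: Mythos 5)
Your proposal is correct and follows essentially the same route as the paper's proof: extract the coefficient of $|\alpha|^0$ from the two Shalika germ expansions (Lemma~\ref{shalikatransferformula} vs.\ \eqref{germexpansiononG} with the homogeneity identity \eqref{germexpansiononGhomog}), feed in the hypothesis $|D^G(\wtd\gamma_i)^{1/2}\mc O_{\exp(U)\wtd\gamma_i}(f)|\le M$ for each critical $(i,U)$, and bound the finitely many residual constants by noting that only finitely many pairs $(H_\gamma, G_{\wtd\gamma_i})$ can occur over a local field and that each constant $C_{i,U}$ is determined by plugging in a fixed small test direction $\alpha_0^2X_0$. The only slight deviation is that you unbundle the transfer factor, germ, and discriminant contributions and argue about each separately, whereas the paper simply bounds the combined locally-constant quantity $C_{i,U}$ at the fixed test point; your extra claim that Lie-algebra transfer factors are roots of unity is not quite what the paper relies on, but it is harmless here since the single-test-point argument covers it.
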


\begin{proof}
Let $\mc U$ be the set of such $(i,U)$. Comparing Lemma \ref{shalikatransferformula} and \eqref{germexpansiononG}, 
\[
|D^H(\gamma)^{1/2}\mc{SO}_{\exp(X)\gamma}(f^H)| \leq M \lf |\sum_{(i,U) \in \mc U}  C_{i,U}(f)\ri|,
\]
where the $C_{i,U}$ are as in \eqref{germexpansiononGhomog}. The number of terms in the sum can be bounded by a constant depending only on $G$ so it suffices to bound each term. Furthermore, since local fields have finitely many extensions of each degree, there are only finitely many possibilities for the pair $(H_\gamma, G_{\td \gamma_i})$ so it suffices to allow the $C_{i,U}$ bound to also depend on these centralizers. 

For our special $(i,U) \in \mc U$ where \eqref{germexpansiononGhomog} is locally constant, we compute~$C_{i,U}$ by plugging in a small enough fixed $\alpha_0^2 X_0$. ``Small enough'' depends on the neighborhood~$U$ in Theorem \ref{transferdescent} and the validity of the Shalika germ expansions. All these only depends on $H_\gamma, G_{\td \gamma_i}, H, G$. In other words, there is a small enough $\alpha_0^2 X_0$ depending only on $H_\gamma, G_{\td \gamma_i}, H, G$. This finishes the argument.
\end{proof}

Finally, we get an exact formula in special cases---a twisted version of \cite[Lemma 2.4.A]{LS07}:
\begin{cor}\label{equisingulartransfer}
Let $f$ be a test function on $\wtd G_v$ and $\gamma, \td \gamma_i$ as above. Assume that $\dim G_{\wtd \gamma} = \dim H_\gamma$. Then
\[
\mc{SO}_\gamma(f^H) = \sum_i e(G_{\wtd \gamma_i})\Delta^{\wtd G}_H(\gamma, \td \gamma_i) \mc O_{\wtd \gamma_i}(f),
\]
where $e(\cdot)$ is the Kottwitz sign (and recalling the definition of transfer factors at these potentially non-regular elements from Corollary \ref{nonregtransferfactors}). 
\end{cor}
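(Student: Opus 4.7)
The plan is to refine the Shalika-germ comparison argument used in the proof of Proposition \ref{transfervanishing}, extracting the precise constant coefficient rather than merely noting vanishing. This mirrors the strategy of \cite[Lem.~2.4.A]{LS07}, adapted to the twisted setting via the descent of transfer factors in Theorem \ref{transferdescent}.

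Concretely, I would run the same one-parameter family $r(\alpha) := \mc{SO}_{\exp(\alpha^2 X)\gamma}(f^H)$ for strongly $\wtd G$-regular $X \in \mf t_H$. Lemma \ref{shalikatransferformula} already expresses this as $C \mc{SO}_\gamma(f^H)$ plus terms of strictly negative degree in $|\alpha|$, so the $|\alpha|^0$-coefficient on the $H$-side is $C \mc{SO}_\gamma(f^H)$ for an explicit nonzero $C$. On the $\wtd G$-side, applying the transfer identity \eqref{transferidentity} together with the Shalika expansion \eqref{germexpansiononG} and the homogeneity computation \eqref{germexpansiononGhomog} writes $r(\alpha)$ as a sum indexed by pairs $(i,U)$, each summand scaling as $|\alpha|^{-\dim \mc O_U + \dim G_{\wtd \gamma} - \dim H_\gamma}$. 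The equisingular hypothesis $\dim G_{\wtd \gamma} = \dim H_\gamma$ collapses this exponent to $-\dim \mc O_U$, so only the trivial nilpotent orbit $U=0$ contributes to the $|\alpha|^0$ part. Equating the $|\alpha|^0$-coefficients of the two expansions isolates an identity of exactly the claimed form.

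The $U=0$ contributions can then be computed explicitly. With the standard normalization $\Gamma^{\mf g_{\wtd \gamma_i}}_{0} \equiv 1$, so $\mc O_{\exp(0)\wtd \gamma_i}(f) = \mc O_{\wtd \gamma_i}(f)$. The equisingular hypothesis forces $\bar H_i$ to have the same dimension as its target $(G^0_{\wtd \gamma_i})^{\scn}$, hence to be its quasisplit inner form, so $\Delta^{\mf g_{\wtd \gamma_i,\scn}}_{\bar{\mf h}_i}(0,0) = 1$; the locally constant $D$-ratio in \eqref{germexpansiononG} combined with Theorem \ref{transferdescent} produces precisely the value $\Delta^{\wtd G}_H(\gamma,\wtd \gamma_i)$ that was defined in Corollary \ref{nonregtransferfactors}. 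The residual normalization factors on the two sides (the overall $D^G(\wtd \gamma)^{1/2}/D^H(\gamma)^{1/2}$ and the measure factor implicit in Kottwitz's formulation of the Shalika germ expansion) cancel against the factor $C$ on the $H$-side, leaving behind the Kottwitz sign $e(G_{\wtd \gamma_i})$ by the same mechanism that produces the signs $e(H_{\gamma_i})$ in the definition of the nonregular stable orbital integral, as in \cite[\S3]{Kot88}.

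The main obstacle is purely the bookkeeping of normalizations: tracking the Weyl discriminant factors, the Kottwitz signs, Kottwitz's measure convention in the Shalika germ expansion, and the difference between the transfer factor conventions of \cite{WTE} and \cite{KS99}, so as to confirm that each $U=0$ contribution has the stated coefficient $e(G_{\wtd \gamma_i}) \Delta^{\wtd G}_H(\gamma,\wtd \gamma_i)$. This parallels the untwisted computation in \cite[Lem.~2.4.A]{LS07}; the only genuinely new ingredient is the invocation of Theorem \ref{transferdescent} to reduce the twisted transfer factor at $(\gamma,\wtd \gamma_i)$ to the trivial Lie algebra factor for the non-standard endoscopic datum $\bar H_i$.
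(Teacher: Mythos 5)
Your proposal follows exactly the paper's argument: compare the Shalika-germ expansions on the $H$- and $\wtd G$-sides along the one-parameter family $r(\alpha)$, note that the equisingular hypothesis $\dim G_{\wtd\gamma} = \dim H_\gamma$ collapses the homogeneity exponent so that only $U=0$ contributes to the degree-$0$ part, and defer the sign and normalization bookkeeping to \cite[Lem.~2.4.A]{LS07} together with Kottwitz's argument in \cite[\S3]{Kot88}, with Theorem~\ref{transferdescent}/Corollary~\ref{nonregtransferfactors} substituting for the untwisted descent of transfer factors. The paper's own proof is equally terse and cites precisely these inputs, so your route is the same one.
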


\begin{proof}
We again compare Lemma \ref{shalikatransferformula} to \eqref{germexpansiononG}. The argument of \cite[2.4.A]{LS07} exactly carries over after replacing their discussion involving 1.6.A with Corollary \ref{nonregtransferfactors}. We point out that the signs $e(G_{\wtd \gamma_i})$ again come from %Rogawski's formula 
\cite{Rog81} through another use of the aforementioned argument in \cite[\S3]{Kot88} and that the right-hand side should be though of as the ``correct'' definition of a non-regular $\kappa$-orbital integral. 
\end{proof}

\subsection{Application to \lm{$(\wtd E^\infty_\mf n)^G$}: Self-Dual Case}\label{sec E(1)}
We apply the result of \ref{sectiontransferdescent} to 
show the vanishing at central elements of the function $(\wtd E^\infty_\mf n)^G$ defined in \ref{Edef}. 

In this section, we will only study the self-dual case with $N$ even and $G = \SO_{N+1} \in \wtd{\mc E}_\sm(N)$ where root numbers are most interesting. The conjugate self-dual case is significantly more computationally involved and will be put off until next section. 

Note first that the class $1 \rtimes \theta \in \wtd G_N$ is the one that transfers to $1 \in G$ since $\theta$ preserves the standard pinning. To eventually use Proposition \ref{transfervanishing}, we note:
\[
(G_N)_{1 \rtimes \theta} = 
\begin{cases}
\Sp_N & N \text{ even} \\
\mathrm O_N & N \text{ odd}
\end{cases}.
\]
Furthermore, when $N$ is even, the stable class of $1 \rtimes \theta$ is the same as its rational class since $\Sp_N$ is semisimple and simply connected implying that $H^1(F_v, (G_N)_{1 \rtimes \theta}) = 0$ by a standard result on $p$-adic groups.

\subsubsection{Local Result}

We first study the indicator functions of cosets used to build the local factors of $\wtd E^\infty_\mf n$:
\begin{prop}\label{cosettransfervanishing}
Let $N$ be even, and choose $G = \SO_{N+1} \in \wtd{\mc E}_\sm(N)$ (i.e, $\wh G \cong \Sp_N$). Let $v$ be a place of $F$, choose $k > 0$, and let
\[
\td f_{v,k} := (J_{v,k}^{-1} w_N \rtimes \theta) \bar \1_{K_{1,v}(k)}.
\]
Then for all $\gamma \in Z_{G_v}$, $\td f_{v,k}^G(\gamma) = 0$.
\end{prop}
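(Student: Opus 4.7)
The plan is to reduce vanishing of $\td f_{v,k}^G(\gamma)$ at central $\gamma$ to vanishing of a $\theta$-twisted orbital integral on $\wtd G_N$ via Proposition \ref{transfervanishing}, then to check this directly by an elementary matrix computation that exploits the skew-symmetry forced by $\theta$ when $N$ is even and the rigid $(1,1)$-unit condition built into the mirahoric coset.

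First, since $N$ is even, $G=\SO_{N+1}$ has odd rank, so $Z_{G_v}=\{1\}$ and it suffices to treat $\gamma=1$. The class in $\wtd G_N$ transferring to $1$ is $\wtd\gamma=1\rtimes\theta$, whose twisted centralizer is the fixed points $\{g:g=\theta(g)\}=\{g:gw_N g^T=w_N\}=\Sp_N$ since $w_N$ is skew for $N$ even. Because $\Sp_N$ is semisimple and simply connected, $H^1(F_v,\Sp_N)=0$, so the stable class of $\wtd\gamma$ consists of a single rational class represented by $\wtd\gamma$ itself. Moreover, $\dim\Sp_N=\binom{N+1}{2}=\dim\SO_{N+1}$, so the parenthetical case of Proposition \ref{transfervanishing} reduces the proof to showing
\[
\mc O_{1\rtimes\theta}(\td f_{v,k})=\int_{\Sp_N\backslash\GL_N(F_v)}\td f_{v,k}\bigl(g^{-1}\theta(g)\rtimes\theta\bigr)\,dg=0.
\]

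Non-vanishing of the integrand would require some $g\in\GL_N(F_v)$ with $g^{-1}\theta(g)\in J_{v,k}^{-1}w_N K_{1,v}(k)$. Substituting $\theta(g)=w_N g^{-T}w_N^{-1}$ and rearranging, this is equivalent to $B:=g^{-1}w_N g^{-T}\in J_{v,k}^{-1}w_N K_{1,v}(k)w_N$. Since $N$ is even we have $w_N^T=-w_N$, and a direct check gives $B^T=-B$; in particular $B_{1,1}=0$. Using $w_N^2=-I$ one computes $w_N K_{1,v}(k)w_N=-\hat K$, where $\hat K:=w_N K_{1,v}(k)w_N^{-1}$ is the subgroup of $K_v$ obtained from $K_{1,v}(k)$ by the anti-diagonal reflection plus checkerboard signs; explicitly, its defining last-row condition becomes a first-row condition, so $\hat K$ consists of the elements of $K_v$ whose first row is $\equiv(1,0,\dotsc,0)\pmod{\mf p_v^k}$. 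The containment $B\in -J_{v,k}^{-1}\hat K$ thus forces $-J_{v,k}B\in\hat K$; reading off the $(1,1)$-entry and recalling $J_{v,k}=\diag(\varpi_v^{-k},\dotsc,\varpi_v^{-k},1)$, this gives
\[
-\varpi_v^{-k}B_{1,1}\equiv 1\pmod{\mf p_v^k},
\]
which is impossible since $B_{1,1}=0$ and $k>0$. Hence the orbital integral vanishes.

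The main obstacle is mild bookkeeping of signs from $w_N^2=(-1)^{N-1}I$ and $w_N^T=(-1)^{N-1}w_N$; everything else is formal. Conceptually the mechanism is clean: it is precisely the parity of $N$ that simultaneously turns the twisting coboundary $g^{-1}w_N g^{-T}$ into a \emph{skew}-symmetric matrix (killing the diagonal) and lets the mirahoric normalizer $J_{v,k}^{-1}w_N$ enforce a nonzero $(1,1)$-entry modulo $\mf p_v^k$, so the two constraints are incompatible.
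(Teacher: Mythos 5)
Your proof is correct and takes essentially the same route as the paper: reduce via Proposition \ref{transfervanishing} to vanishing of $\mc O_{1\rtimes\theta}(\td f_{v,k})$, then observe that the parity of $N$ forces the diagonal of the "coboundary" $B=g^{-1}w_Ng^{-T}$ to vanish (equivalently, the paper observes the $(N,1)$-entry of $A=g^{-1}\theta(g)$ vanishes), while the coset support of $\td f_{v,k}$ forces a unit at a diagonal position of the associated matrix.

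One small imprecision: the support of $\td f_{v,k}$ is the coset $K_{1,v}(k)(J_{v,k}^{-1}w_N\rtimes\theta)$, i.e.\ $g^{-1}\theta(g)\in K_{1,v}(k)J_{v,k}^{-1}w_N$, not $J_{v,k}^{-1}w_N K_{1,v}(k)$ as you wrote (the twisted element $J_{v,k}^{-1}w_N\rtimes\theta$ does not normalize $K_{1,v}(k)$, so these two-sided cosets genuinely differ). With the correct coset one instead extracts $-BJ_{v,k}\in K_{1,v}(k)$ and reads off the $(N,N)$-entry, getting $-B_{N,N}\equiv 1\pmod{\mf p_v^k}$; since $B_{N,N}=0$ by skew-symmetry this is again a contradiction. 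Because both cosets pin a unit at a diagonal position after undoing the $w_N$, your conclusion is unaffected, but the bookkeeping should be fixed to match the actual definition of $\td f_{v,k}$. Your explicit dimension count $\dim\Sp_N=\dim\SO_{N+1}$ and the appeal to $H^1(F_v,\Sp_N)=0$ to reduce to the single rational class are both accurate, and the latter is also noted (a few lines earlier) in the paper, even though the paper's proof, by arguing on the full stable class directly, does not strictly need it.
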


\begin{proof}
First, $\td f_{v,k}$ is the indicator function of the coset
\[
K_{1,v}(k) (J_{v,k}^{-1} w_N \rtimes \theta) = K_{1,v}(k) \begin{pmatrix}  & & & \pm \varpi_v^{-k} \\ & & \iddots &  \\  & \pm \varpi_v^{-k} & & \\ 1 &&& \end{pmatrix}  \rtimes \theta.
\]  
Elements of this coset %necessarily 
have a non-zero entry in the bottom-left corner since  elements of $K_{1,v}(k)$ necessarily have a non-zero entry in the bottom-right when $k > 0$. 

However, the identity on $G$ transfers to the stable class of $1 \rtimes \theta$ in $\wtd G_N$ and all elements in this class are of the form $A \rtimes \theta$ with
\[
w_N A^T w_N^{-1} = A.
\]
In particular, since $N$ is even, the signs in $w_N$ force all elements in the stable class of $1 \rtimes \theta$ to have bottom left corner $a$ satisfying $a = -a \implies a = 0$. Therefore our coset doesn't intersect the stable class that transfers to the identity so $\mc O_{1 \rtimes \theta}(\td f_{v,k}) = 0$.

We then apply Proposition \ref{transfervanishing}. For the identity element, $\dim G_{1 \rtimes \theta} = \dim H$ so we only need to vanishing at $U = 0$ which follows by the above argument. Therefore $\mc{SO}_1(\td f_{v,k}^H) = \td f_{v,k}^H(1) = 0$. Since $\SO_{N+1}$ has trivial center, we are done.
\end{proof}

\subsubsection{Global Result}
We can then put these together to get a global result:
\begin{cor}\label{Etransfervanishing}
Choose $G = \SO_{N+1} \in \wtd{\mc E}_\sm(N)$ (i.e, $\wh G \cong \Sp_N$). Choose conductor 
\[
\mf n = \prod_v \mf p_v^{v(\mf n)}.
\]
If there exists finite place $v$ satisfying either:
\begin{itemize}
    \item $v(\mf n)$ is odd,
    \item $v(\mf n) > N$,
\end{itemize}
then $(\wtd E^\infty_\mf n)^G(1) = 0$. Otherwise, 
\[
(\wtd E^\infty_\mf n)^G(1) = \prod_v (-1)^{v(\mf n)/2}\binom{N/2}{v(\mf n)/2}.
\]
\end{cor}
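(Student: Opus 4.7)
The strategy is to reduce to a purely local computation at each finite place via the multiplicativity of the endoscopic transfer. Since $\wtd E^\infty_\mf n = \prod_v \wtd E_{v, v(\mf n)}$ factors as a product over finite places and the transfer respects this product,
\[
(\wtd E^\infty_\mf n)^G(1) = \prod_v (\wtd E_{v, v(\mf n)})^G(1).
\]
Writing $k = v(\mf n)$ and applying the explicit formula from Proposition~\ref{Eexplicitformula}, each local factor expands as
\[
(\wtd E_{v,k})^G(1) = \sum_{0 \leq i \leq \min(N/2,\, k/2)} (-1)^i \binom{N/2}{i}\, (\td f_{v,k-2i})^G(1),
\]
where $\td f_{v,m} := (J_{v,m}^{-1} w_N \rtimes \theta)\, \bar\1_{K_{1,v}(m)}$; the upper cutoff enforces $k-2i \geq 0$ so that $K_{1,v}(k-2i)$ is defined.

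Proposition~\ref{cosettransfervanishing} kills $(\td f_{v,m})^G(1)$ whenever $m > 0$, so the only term in this sum that can contribute is the one with $k-2i = 0$. Such a term exists precisely when $k$ is even and $k \leq N$. If some finite place $v$ has $k = v(\mf n)$ odd, or $k > N$, then the constraints fail, the local factor $(\wtd E_{v,k})^G(1)$ vanishes, and hence so does the global product; this establishes the first assertion. Otherwise, $k$ is even and $\leq N$ at every finite place and exactly one summand survives at each $v$, namely the $i = k/2$ term, leaving
\[
(\wtd E_{v,k})^G(1) = (-1)^{k/2}\binom{N/2}{k/2}\,(\td f_{v,0})^G(1).
\]

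It remains to verify $(\td f_{v,0})^G(1) = 1$ at every finite place (including all those unramified in $\mf n$, where the local factor reduces to $\td f_{v,0}$ alone and must contribute the trivial factor $(-1)^0\binom{N/2}{0} = 1$). Since $w_N \in K_v$, the function $\td f_{v,0} = \bar\1_{K_v}(w_N \rtimes \theta)$ is the normalized characteristic function of the hyperspecial $\theta$-twisted coset $K_v \rtimes \theta$ of $\wtd G_{N,v}$. By the twisted fundamental lemma for the unit element, this transfers to the normalized characteristic function of a hyperspecial maximal compact of $G_v = \SO_{N+1}(F_v)$, whose value at the identity is $1$ under the measure conventions used in this paper. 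Multiplying over all places gives the stated product. The main obstacle is already surmounted upstream in Proposition~\ref{cosettransfervanishing}, which is the genuine Shalika-germ input; the corollary itself amounts to bookkeeping the linear combination in Proposition~\ref{Eexplicitformula} and invoking the fundamental lemma with matched normalizations.
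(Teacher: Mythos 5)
Your proof is correct and follows essentially the same path as the paper: expand each local factor $\wtd E_{v,k}$ via Proposition~\ref{Eexplicitformula}, note that Proposition~\ref{cosettransfervanishing} kills all terms $\td f_{v,m}^G(1)$ with $m>0$, observe the $m=0$ term survives only when $k$ is even and $\leq N$, and evaluate $\td f_{v,0}^G(1)=1$ via the twisted fundamental lemma. The only additions beyond the paper's terse argument are the explicit statement of multiplicativity of the transfer and the $\min(N/2,k/2)$ cutoff on the index, both of which are implicit in the paper and correctly handled.
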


\begin{proof}
First, $\wtd E^\infty_\mf n$ has $\wtd E_{v, v(\mf n)}$ as its factor at $v$. By Proposition \ref{Eexplicitformula}, $\wtd E_{v, v(\mf n)}$ is a linear combination of the $\td f_{v,k}$ from Proposition \ref{cosettransfervanishing} with $\td f_{v,0}$ appearing with coefficient $0$ if the stated 
conditions on $v$ hold, and coefficient $(-1)^{v(\mf n)/2}\binom{N/2}{v(\mf n)/2}$ otherwise. 

Next, Proposition~\ref{cosettransfervanishing} gives that for $k > 0$, $\td f_{v,k}^H(\gamma) = 0$. The twisted fundamental lemma gives that $\td f_{v,0}^H(\gamma) = \bar \1_{K_v \rtimes \theta}^H(1) = \bar \1_{K_{H,v}}(1) = 1$. The result follows. 
\end{proof}

\begin{note}
When $N$ is odd or $G \cong \SO_{N, \eta}$, $\wtd E^\infty_\mf n$ has positive trace against every cuspidal $A$-packet on $G$. Therefore, by Proposition~\ref{spectralpositivity} and Lemma \ref{existenceofcondN}, we instead have that both
\[
(\wtd E^\infty_\mf n)^G (1) \pm (\wtd E^\infty_\mf n)^G(-1) > 0.
\]
so the above computation needs to somehow fail to not contradict.

When $N$ is odd, we can see this in the failure of the argument of Proposition~\ref{transfervanishing} due to the negative sign for the bottom-left corner becoming positive. When~$N$ is even and $\wh G$ is orthogonal, $G$ no longer has the same dimension as $(G_{N,v})_\theta$, so the formula of Corollary~\ref{equisingulartransfer} doesn't apply and non-trivial unipotent orbital integrals in $\wtd G_{N,v}$ contribute to central values of $(\wtd E^\infty_\mf n)^G$. 
\end{note}

\section{Application to \lm{$(\wtd E^\infty_\mf n)^G$}: Conjugate Self-Dual Case}\label{sec E(1)conj}
The conjugate self-dual case is significantly more complicated. As preliminary computations:

\subsection{Twisted Conjugacy Classes}
\subsubsection{Transfers}\label{ss transfer central elements}
As in the self-dual case, the stable class of $1 \rtimes \bar \theta \in \wtd G_N$ transfers to the identity in any $G \in \wtd{\mc E}_\sm(N)$ since $\bar \theta$ fixes the standard pinning. Also,
\[
(G_N)_{1 \rtimes \bar \theta} = U^{E/F}_N. 
\]
As in the generalization after~\cite[5.1.C]{KS99}, we can compute transfers from~$Z_{G_N}(F_v) \rtimes \bar \theta$ to $Z_G(F_v)$:
\[
\beta \rtimes \bar\theta \mapsto \beta/\bar \beta,
\]
implicitly using $Z_{G_N}(F_v) = E_v^\times$ and $Z_G(F_v) = U_1(E_v)$. This map is surjective onto~$Z_G(F_v)$ by Hilbert 90.

\subsubsection{The Stable Class of the Identity}\label{identityclassdecompconj}
We compute how the stable ``identity class''~$1 \rtimes \bar \theta$ decomposes into rational classes. We recall (e.g. \cite[\S III.3]{Lab11}) that for $\gamma$ semisimple, the rational classes inside the stable class of $\gamma$ are in bijection with
\[
\ker(H^1(F, G^0_\gamma) \to H^1(F, G)).
\]
Applying this formula, consider the non-trivial case when $v$ is non-split. Then
\[
\ker(H^1(F_v, U^{E_w/F_v}_N) \to H^1(F_v,\Res_{F_v}^{E_w} \GL_N)) = H^1(F_v, U^{E_w/F_v}_N) = \Z/2\Z
\]
by Shapiro's lemma, Hilbert 90, and Kottwitz's formula from \cite{Kot86} for the unitary cohomology group. Therefore, the stable conjugacy class of $1 \rtimes \bar \theta$ breaks up into two rational conjugacy classes. 

We will need to distinguish the rational classes. Working over $\overline F_v$, $\wtd G_N(\overline F_v) \cong (\GL_N \times \GL_N)(\overline F_v)$ and conjugation acts by swapping the coordinates, so the stable conjugacy class over $F_v$ consists of all elements of the form $B \rtimes \bar \theta$ with $B \in G_N(F_v) \cong \GL_N(E_w)$ and
\[
B = \bar \theta(B)^{-1} \; (= w_N \bar B^T w_N).
\]
In particular, $\det B = \det \bar B$, so $B \rtimes \theta \mapsto \det(B)$ is a map from the stable conjugacy class to $F_v^\times$. We can also compute that all values of this invariant can be realized by matrices in the stable class (e.g, pick an appropriate anti-diagonal matrix). 
%e.g. for $N=2$, let $\bar e = -e$, then something like $(-e, \alpha e^{-1})$ on the antidiagonal works

Since
\[
A(B \rtimes \bar \theta) A^{-1} = ABw_N \bar A^T w_N \rtimes \bar \theta,
\]
rational conjugacy can only scale the above determinant invariant by elements of~$N_{E_w/F_v}(E_w^\times)$. In total,
\[
B \rtimes \bar \theta \mapsto \det(B) \in F_v^\times/N_{E_w/F_v}(E_w^\times) \cong \Z/2\Z
\]
is a complete invariant determining the rational conjugacy class of $B \rtimes \bar \theta$. 

For $B \rtimes \bar \theta$ in the stable class of arbitrary $\gamma \rtimes \bar \theta$ for $\gamma \in Z_{G_{N,v}}$, we can instead use the invariant $\det(B)/\gamma^N \in F_v^\times/N_{E_w/F_v}(E_w^\times)$. 

\subsubsection{Transfer Factor Lemmas}
We finally need some information about transfer factors for $G = U^+_N \in \wtd{\mc E}_\sm(N)$:

\begin{lem}\label{lem U+tfidentity}
Consider $G = U^+_N \in \wtd{\mc E}_\sm(N)$ and $e \rtimes \bar \theta$ a representative for the other rational class in the stable class of $1 \rtimes \bar \theta$. Then for any choice of transfer factors:
\[
\Delta^{\wtd G_N}_G(1, 1\rtimes \bar\theta) =  \Delta^{\wtd G_N}_G(1, e \rtimes \bar \theta).
\]
\end{lem}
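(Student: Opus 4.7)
By Corollary \ref{nonregtransferfactors}, both transfer factors $\Delta^{\wtd G_N}_G(1, a \rtimes \bar\theta)$ for $a \in \{1, e\}$ are well-defined, since the centralizer $(G_N)_{1 \rtimes \bar\theta} \cong U^{E/F}_N$ has the same dimension $N^2$ as $G = U^+_N$. The ratio of the two is independent of the chosen normalization of transfer factors, so it suffices to exhibit one convenient normalization and show that the ratio equals $1$ for that choice. My plan is to compute this ratio as the evaluation of an endoscopic character on a cohomology class, and then argue that for $U^+_N$ (as opposed to $U^-_N$) the character is trivial on the relevant class.

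First, I reduce to such a character-evaluation. For strongly regular pairs, the Kottwitz--Shelstad transformation formula (i.e.\ the $\Delta_{\mathrm{III}}$-part of the factor, cf.\ the generalization of \cite[5.1.C]{KS99} to the twisted setting) asserts that for $\wtd \gamma, \wtd \gamma'$ stably conjugate in $\wtd G_N$,
\[
\frac{\Delta^{\wtd G_N}_G(\gamma_H, \wtd \gamma')}{\Delta^{\wtd G_N}_G(\gamma_H, \wtd \gamma)} = \langle \operatorname{obs}(\wtd \gamma, \wtd \gamma'), s_G \rangle,
\]
where $\operatorname{obs}(\wtd \gamma, \wtd \gamma') \in H^1(F_v, (G_N)^0_{\wtd \gamma})$ is the obstruction to rational conjugacy and $s_G$ is the semisimple element of $\wh{\wtd G}_N$ defining the endoscopic datum. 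I apply this to small strongly regular perturbations $\exp(X_H) \cdot 1$, $\exp(X_G) \cdot (1 \rtimes \bar \theta)$, $\exp(X_G') \cdot (e \rtimes \bar\theta)$ and then pass to the limit $X_H \to 0$. The descent of Theorem \ref{transferdescent} ensures each factor is locally constant near $(1, a \rtimes \bar\theta)$ (dimensions match), so the same formula persists at the non-regular limit. By \S\ref{identityclassdecompconj}, $\operatorname{obs}(1 \rtimes \bar\theta, e \rtimes \bar\theta)$ is the non-trivial class in $H^1(F_v, U^{E/F}_N) \cong \Z/2\Z$, and is detected by the invariant $\det(B) \in F_v^\times/N_{E_w/F_v}(E_w^\times)$.

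Second, I show that $\langle \,\cdot\,, s_{U^+_N} \rangle$ is trivial on this $\Z/2\Z$. This is the pure-inner-form analog of the central-character statement in Lemma \ref{U+cchar}. Concretely, the two endoscopic data $U^\pm_N \in \wtd{\mc E}_\sm(N)$ come from two $L$-embeddings $\Ld U_N \into \Ld \wtd G_N$ that differ by the character $\eta_{E/F}$ (cf.\ \cite[\S 2.1]{Mok15}); the $+$ convention is the one under which $s_{U_N^+}$ pairs trivially with the Tate--Nakayama dual of $H^1(F_v, U^{E/F}_N)$, while $s_{U^-_N}$ pairs non-trivially. Equivalently, the pair $(U^+_N, U^-_N)$ is set up precisely so that one of the two characters $\langle \,\cdot\,, s_{U^\pm_N} \rangle$ on $H^1(F_v, U^{E/F}_N) \cong \Z/2\Z$ is trivial and the other is not, and the $+$ label is attached to the trivial one. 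Combining this with the previous step yields $\Delta^{\wtd G_N}_G(1, 1 \rtimes \bar\theta) = \Delta^{\wtd G_N}_G(1, e \rtimes \bar\theta)$, as desired.

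The main obstacle is the second step: pinning down the sign, that is, verifying that it really is $U^+_N$ and not $U^-_N$ whose $s$-parameter pairs trivially with $\operatorname{obs}(1 \rtimes \bar\theta, e \rtimes \bar\theta)$. No new idea is required, but it involves carefully tracking conventions between \cite[\S 2.1]{Mok15} (definition of $U^\pm_N$), \cite[Ch.\ 4--5]{KS99} (definition of $\Delta_{\mathrm{III}}$ for twisted endoscopy), and the explicit invariant $\det(B)/\gamma^N$ from \S\ref{identityclassdecompconj}. This is best done by taking $e$ to be an explicit anti-diagonal representative of the non-trivial class and directly computing $\Delta_{\mathrm{III}_2}$ on an easily-chosen regular perturbation.
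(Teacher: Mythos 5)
Your proposal correctly identifies the two structural ingredients (reduction to strongly regular elements via Corollary \ref{nonregtransferfactors}, then the $\Delta_{\mathrm{III}}$-style transformation formula for the ratio of transfer factors under stable conjugacy), which matches the paper's strategy. However, there are two issues.

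First, the transformation formula you write, $\Delta(\gamma_H,\wtd\gamma')/\Delta(\gamma_H,\wtd\gamma) = \langle \operatorname{obs}(\wtd\gamma,\wtd\gamma'), s_G\rangle$, is the \emph{untwisted} form. The correct statement in twisted endoscopy is \cite[5.1.D]{KS99}, where the character is not the bare Tate--Nakayama pairing with $s_G$ but a hypercocycle $\kappa_T$ that is built from $s_G$, the $L$-embedding $\xi$, and $\chi$-data. Collapsing $\kappa_T$ to ``$\langle\,\cdot\,,s_G\rangle$'' is precisely what hides the dependence on the $\pm$ label: the element $s$ itself does not distinguish $U^+_N$ from $U^-_N$ (what differs is the $L$-embedding), so the purported pairing $\langle\,\cdot\,,s_{U^\pm_N}\rangle$ cannot by itself see the sign. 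You would need to work with $\kappa_T$ for this step to make sense.

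Second, and more seriously, the sign verification that you call ``the main obstacle'' — showing that the relevant character is trivial on the nontrivial class for $U^+_N$ — is not actually carried out; you describe a plan (explicit anti-diagonal representative, direct computation of $\Delta_{\mathrm{III}_2}$) but do not execute it. This is the whole content of the lemma. The paper avoids the computation with a short conceptual observation: $\kappa_T$ is controlled by a cocycle $a_T$ (defined on \cite[pp.~39--40]{KS99}), and for $H = U^+_N$ one has $\wh H = \wh G_1$ \emph{exactly}, which forces $a_T$, hence $\kappa_T$, to be trivial. No case analysis on $\operatorname{obs}$, and no explicit matrix representative, is needed. If you want to salvage your route you would need to actually evaluate the twisted $\kappa_T$ (not just $s_G$) on the obstruction class and match conventions against \cite[\S2.1]{Mok15}; as written, the claim that the ``$+$'' label is attached to the trivial character is asserted rather than proved, and the proposal does not close the gap.
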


\begin{proof}
Any strongly regular $\gamma \rtimes \bar \theta$ close enough to $1 \rtimes \bar \theta$ would have stable conjugate~$\gamma' \rtimes \bar \theta$ close to $e \rtimes \bar \theta$. By Corollary \ref{nonregtransferfactors}, it suffices to show the result for all such pairs $\gamma \rtimes \bar \theta$ and $\gamma' \rtimes \bar \theta$ in some small enough neighborhood.

For this, we apply \cite[5.1.D]{KS99}. The hypercocycle $\kappa_T$ is determined by the cocycle $a_T$ defined on pp. 39-40 in loc. cit. Using notation therein, in the case of~$H = U^+_N$, we have that $\wh H = \wh G_1$ exactly. Therefore $a_T$ is trivial so $\kappa_T$ is as well. The result follows. 
\end{proof}

\begin{lem}\label{lem U+tfcenter}
Consider $G = U^+_N \in \wtd{\mc E}_\sm(N)$, and semisimple $\gamma \rtimes \bar \theta \in \wtd G_{N,v}$ and~$\delta \in G_v$. Then, for all $a \in E^\times = Z_{G_{N,v}}$,
\[
\Delta^{\wtd G_N}_G(\delta, \gamma) = \Delta^{\wtd G_N}_G((a/ \bar a)\delta, a \gamma).
\]
\end{lem}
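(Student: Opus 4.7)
The plan is to apply the transformation property of twisted transfer factors under translation by central elements, as formulated in the generalization of \cite[5.1.C]{KS99} recalled in \S\ref{sec cchartransfer}. Under the canonical map $(Z_{G_N})_\theta \hookrightarrow Z_G$, the central element $a \in E^\times = Z_{G_{N,v}}$ maps to $a/\bar a \in U_1(E_v) = Z_{G,v}$, so $(a\gamma, (a/\bar a)\delta)$ is precisely the translate of $(\gamma, \delta)$ by a pair of related central elements. The general form of the transformation property asserts that such a translation scales the transfer factor by a character of $a$, determined by a cocharacter $\lambda_C$ built from the endoscopic datum: explicitly,
\[
\Delta^{\wtd G_N}_G((a/\bar a)\delta, a\gamma) = \langle \lambda_C, a \rangle \cdot \Delta^{\wtd G_N}_G(\delta, \gamma).
\]

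The main step is to observe that the character $\langle \lambda_C, \cdot \rangle$ is exactly the discrepancy between the transfer map $\om_v \mapsto \om'_v$ on central characters and pullback through $Z_{G_{N,v}} \twoheadrightarrow (Z_{G_{N,v}})_\theta \iso Z_{G_v}$, as explained in the discussion following Lemma~\ref{cchartransfer}. For the specific endoscopic datum $G = U^+_N \in \wtd{\mc E}_\sm(N)$, Lemma~\ref{U+cchar} states precisely that this transfer is ordinary pullback, i.e.\ that $\lambda_C$ is trivial. Hence the scaling factor equals $1$, which gives the claimed equality.

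The only subtlety is that the statement allows $\gamma\rtimes\bar\theta$ to be arbitrary semisimple rather than strongly regular, so a priori the identities from \cite[5.1.C]{KS99} apply only in a neighborhood of regular elements. This is easily handled by the descent recalled in Corollary~\ref{nonregtransferfactors}: both sides are locally constant as functions of a small regular perturbation of $\gamma$, and the regular equality propagates to the (possibly non-regular) central translates. No computation beyond invoking Lemmas~\ref{cchartransfer} and~\ref{U+cchar} is needed; the argument is purely a bookkeeping exercise in central character transfer.
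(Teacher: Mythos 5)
Your proposal is correct and follows essentially the same route as the paper's proof: invoke the central-translation transformation property after \cite[5.1.C]{KS99}, note that consistency with Lemma~\ref{U+cchar} forces $\lambda_C$ to be trivial for $U_N^+$, and extend to non-strongly-regular classes via Corollary~\ref{nonregtransferfactors}. Your elaboration of why $\lambda_C$ being trivial is equivalent to the central-character transfer being pullback is a helpful unpacking of the paper's terse phrasing, but it is the same argument.
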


\begin{proof}
This follows from the generalization after \cite[5.1.C]{KS99}. The character $\lb_C$ there must be trivial to be consistent with Lemma \ref{U+cchar}, see \cite[Rmk 3.2.3]{Mok15}. 

Note that for the non-strongly regular case, we prove this first for strongly regular classes close to $\gamma$ and $\delta$ and then use Corollary/Definition \ref{nonregtransferfactors}. 
\end{proof}

\subsection{Number-Theoretic Lemmas}
We also need two number-theoretic lemmas. Beware an indexing technicality: at places $v$ of $F$ that ramify in $E$ to places $w$, we state all results in terms of powers of $\mf p_w$. Let $\mf D_{E_w/F_v}$ be the different ideal.

\begin{lem}\label{lem OEcoboundaries}
Define the map
\[
\phi: \mc O_{E_w}^\times \to \mc O_{E_w}^\times, \quad \phi(x) = \frac{x}{\bar x}.
\]
Then
\[
\phi(1 + \mf p_w^k) \subseteq 1 + \mf D_{E_w/F_v} \cap \mf p_w^k
\]
Furthermore, if $E_w/F_v$ is at most tamely ramified or $k = 0$, then
\[
\phi(1 + \mf p_w^k) = (1+(\mf p_w^k \cap \mf D_{E_w/F_v}))_{Nm = 1}.
\]
\end{lem}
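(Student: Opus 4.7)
The inclusion is a direct computation. For $x = 1 + a$ with $a \in \mf p_w^k$, expand
\[
\phi(x) = \frac{1+a}{1+\bar a} = 1 + \frac{a - \bar a}{1+\bar a}.
\]
Since $(1+\bar a)^{-1} \in \mc O_{E_w}^\times$, it suffices to show $a - \bar a \in \mf p_w^k \cap \mf D_{E_w/F_v}$. Membership in $\mf p_w^k$ is automatic as $\mf p_w$ is Galois-stable. For membership in the different, I would use the classical formula $\mf D_{E_w/F_v} = (f'(\alpha)) = (\alpha - \bar\alpha)$ for $\alpha$ any $\mc O_{F_v}$-algebra generator of $\mc O_{E_w}$ with minimal polynomial $f$, together with the decomposition $a = x_1 + x_2\alpha$ ($x_i \in \mc O_{F_v}$), giving $a - \bar a = x_2(\alpha - \bar\alpha) \in \mf D_{E_w/F_v}$.

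For the equality claim under the tame or $k = 0$ hypothesis, the natural framework is Tate cohomology: the exact sequence
\[
1 \to (1 + \mf p_w^k)^{\Gal} \to 1 + \mf p_w^k \xrightarrow{\phi} (1 + \mf p_w^k)_{Nm=1} \to \hat H^1(\Gal(E_w/F_v),\, 1 + \mf p_w^k) \to 1
\]
identifies the obstruction to surjectivity of $\phi$. When $\hat H^1$ vanishes, combining with the inclusion above sandwiches $\phi(1+\mf p_w^k) = (1 + \mf p_w^k)_{Nm=1} = (1 + \mf D_{E_w/F_v} \cap \mf p_w^k)_{Nm=1}$. For $k \geq 1$ I would filter by $U^{(j)} := 1 + \mf p_w^j$ with graded pieces $U^{(j)}/U^{(j+1)} \cong \mf p_w^j/\mf p_w^{j+1}$, one-dimensional over the residue field. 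In the unramified case the Galois action on each piece is by Frobenius, and $\hat H^1$ vanishes by additive Hilbert 90. In the tame ramified case, choose an anti-invariant uniformizer $\varpi_w$ (available in odd residue characteristic), so that $\sigma$ acts by $(-1)^j$ on the $j$-th piece; in either parity $\hat H^*(k_{F_v})$ vanishes in odd residue characteristic. Assembling via long exact sequences and passing to the inverse limit $1 + \mf p_w^k = \varprojlim_n (1 + \mf p_w^k)/U^{(k+n)}$ gives the required vanishing. The $k = 0$ case in the unramified setting is essentially identical, since $\hat H^1(\Gal, \mc O_{E_w}^\times) = 0$ by the long exact sequence attached to $1 \to \mc O_{E_w}^\times \to E_w^\times \to \Z \to 0$ and $v_w(F_v^\times) = \Z$.

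The tame ramified $k = 0$ case is the most delicate, since the same long exact sequence now yields $\hat H^1(\Gal, \mc O_{E_w}^\times) = \Z/2$ (because $v_w(F_v^\times) = 2\Z$) and the sandwich argument fails. Instead I would compare indices: from the cohomology, $\phi(\mc O_{E_w}^\times)$ has index $2$ in $(\mc O_{E_w}^\times)_{Nm=1}$. On the other hand, reduction modulo $\mf p_w$ surjects $(\mc O_{E_w}^\times)_{Nm=1}$ onto $\{\pm 1\} \subseteq k_{F_v}^\times$ (using triviality of $\sigma$ on the residue field), with kernel $(1 + \mf p_w)_{Nm=1} = (1 + \mf D_{E_w/F_v})_{Nm=1}$ (using $\mf D_{E_w/F_v} = \mf p_w$ in the tame ramified case), another index-two subgroup. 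Since the initial inclusion places $\phi(\mc O_{E_w}^\times)$ inside $(1 + \mf D_{E_w/F_v})_{Nm=1}$, these two index-two subgroups coincide. The main technical obstacle throughout is correctly identifying the Galois action on the graded pieces of the unit filtration in the ramified case; the tame hypothesis enters through the odd residue characteristic, which is needed both to exhibit the anti-invariant uniformizer and to ensure vanishing of cohomology of the residue field under the relevant $\pm$-actions.
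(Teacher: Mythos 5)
Your proof of the inclusion and of the tame/unramified cases is correct and takes essentially the same approach as the paper: the inclusion via $x-\bar{x}\in\mf D_{E_w/F_v}$, and the vanishing of $\hat H^1(\Gal(E_w/F_v),1+\mf p_w^k)$ via the filtration by higher unit groups (which is what the paper's citation of \cite{NSW13cohomology} proves). Two remarks, one minor and one substantive.

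\emph{Minor.} Your separate index-two argument for the tame $k=0$ case is more elaborate than necessary. Since $\mf D_{E_w/F_v}=\mf p_w$ in the tame case, and you have already shown $\hat H^1(\Gal,1+\mf p_w)=0$, you can simply sandwich:
\[
(1+\mf D_{E_w/F_v})_{Nm=1}=\phi(1+\mf p_w)\subseteq\phi(\mc O_{E_w}^\times)\subseteq(1+\mf D_{E_w/F_v})_{Nm=1},
\]
where the last inclusion is the first part of the lemma. No comparison of indices is needed. (Your index argument is nevertheless correct.)

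\emph{Substantive gap.} The hypothesis of the lemma reads ``$E_w/F_v$ is at most tamely ramified \emph{or} $k=0$,'' so the equality is also asserted for $E_w/F_v$ \emph{wildly} ramified and $k=0$. Your proposal never touches this case. The index-two comparison does not adapt: in residue characteristic $2$ the reduction map $(\mc O_{E_w}^\times)_{Nm=1}\to\{\pm1\}\subseteq k_{F_v}^\times$ has trivial image, and moreover $\mf D_{E_w/F_v}\neq\mf p_w$ so the kernel is not $(1+\mf D_{E_w/F_v})_{Nm=1}$. Nor does the filtration argument help, since $\hat H^*$ of the graded pieces does not vanish over $\F_2$. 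The paper handles this case by a different route: one reduces (as you observe) to showing that the connecting map $\hat H^1(\Gal,1+\mf D_{E_w/F_v})\to\hat H^1(\Gal,\mc O^\times_{E_w})$ is zero, equivalently that $\hat H^1(\Gal,\mc O^\times_{E_w})\hookrightarrow\hat H^1(\Gal,\mc O^\times_{E_w}/(1+\mf D_{E_w/F_v}))$. From the valuation sequence, $\hat H^1(\Gal,\mc O^\times_{E_w})\cong\Z/2$ generated by the class of $\varpi/\bar\varpi$; since $\varpi/\bar\varpi$ lies in $(1+\mf p_w^{j-1})\setminus(1+\mf p_w^j)$ where $\mf D_{E_w/F_v}=\mf p_w^j$, it is nontrivial modulo $1+\mf D_{E_w/F_v}$, and since $\Gal$ acts trivially on the quotient $\mc O^\times_{E_w}/(1+\mf D_{E_w/F_v})$ there are no nontrivial coboundaries, so the class survives. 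This needs the precise value of $v_w(\varpi-\bar\varpi)$ from the theory of the different, which your decomposition $\mc O_{E_w}=\mc O_{F_v}[\alpha]$, $\mf D_{E_w/F_v}=(\alpha-\bar\alpha)$ already sets up but which you never push to the wild case.
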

\begin{proof}
For all $x \in \mc O^\times_{E_w}$, we have $\phi(x) \equiv 1 \mod \mf D_{E_w/F_v}$ since $x - \bar x \in \mf D_{E_w/F_v}$ by \cite[IV.2]{Se13}. This gives the inclusion. 

For the equality, let $j\geq 0$ be such that $\mf D_{E_w/F_v} = \fp_w^j$. It suffices to show that $H^1(\Gal(E_w/F_v),1+\mf p_w^k)$ vanishes for all~$k \geq j$. If~$E_w$ is unramified, then $\mf D_{E_w/F_v} = \mc O_{E_w}$, and the vanishing for all~$k$ follows from \cite[VII.7.1.2]{NSW13cohomology}. This lemma also gives the desired vanishing tamely ramified case for $k\geq 1$ (since then $\mf D_{E_w/F_v} = \mf p_w$).

It remains to check the case when $v$ is wildly ramified and $k=0$: that $\phi(\mc O_{E_w}^\times) = (1+(\mf p_w^k \cap \mf D_{E_w/F_v}))_{Nm = 1}$. This is equivalent to checking that in the long exact sequence associated to 
\[
0 \to 1+\mf D_{E_w/F_v} \to \mc O^\times_{E_w} \to \mc O^\times_{E_w}/(1+\mf D_{E_w/F_v}) \to 0,
\] 
the natural map $H^1(\Gal(E_w/F_v),1+\mf D_{E_w/F_v}) \to H^1(\Gal(E_w/F_v),\mc O^\times_{E_w})$ is trivial. We check instead that 
\begin{equation} \label{eq cohomology inj}
H^1(\Gal(E_w/F_v),\mc O^\times_{E_w}) \hookrightarrow H^1(\Gal(E_w/F_v),\mc O^\times_{E_w}/(1+\mf D_{E_w/F_v})).
\end{equation}
A computation with the long exact sequence associated to $\mc O_{E_w}^\times \hookrightarrow E_w^\times \twoheadrightarrow \Z$ shows that $H^1(\Gal(E_w/F_v),\mc O^\times_{E_w})\simeq \BZ/2\BZ$, generated by the class of $\varpi/\overline \varpi$. To get \eqref{eq cohomology inj}, it suffices to show that this class survives under reduction modulo $1+\mf D_{E_w/F_v} =: 1+\mf p_w^j$. Since $ \varpi/\overline \varpi \in (1+ \mf p_w^{j-1}) \setminus  (1+ \mf p_w^{j})$ \cite[IV.1-2]{Se13}, the cocycle is nontrivial in the quotient. Furthermore, since $\mc O^\times_{E_w}/(1+\mf D_{E_w/F_v})$ is a trivial Galois module, any coboundary is trivial. In total, $\varpi/\overline \varpi$ represents a nontrivial class. 
\end{proof}

\begin{lem}\label{lem linearalgebracomp} 
Let $N$ be odd. There is $A \in \GL_N(\mc O_{E_w})$ such that $w_N \bar A^T w_N^{-1} = yA$ if and only if $N_{E_w/F_v}(y) = 1$ and either:
\begin{enumerate}
\item
$E_w/F_v$ is unramified,
\item
$E_w/F_v$ is tamely ramified and $y^N \not\equiv -1 \pmod{\mf p_w}$,
\item
$E_w/F_v$ is wildly ramified and $y^N \notin 1 + \mf p_w$ or $y \in 1 + \mf D_{E_w/F_v}$.
\end{enumerate}
\end{lem}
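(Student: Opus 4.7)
The plan is to show that the existence of such $A$ is equivalent to the condition $y \in \phi(\mc O_{E_w}^\times)$, where $\phi(x) = x/\bar x$, and then translate this via Lemma~\ref{lem OEcoboundaries} into the three cases of the statement. Throughout, the fact that $N$ is odd will play the crucial role of making $y$ and $y^N$ interchangeable in terms of membership in the relevant multiplicative subgroups.

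For necessity, I would take determinants of $w_N \bar A^T w_N^{-1} = yA$: since $\det(\bar A^T) = \overline{\det A}$ and $\det(w_N w_N^{-1}) = 1$, this gives $\overline{\det A} = y^N \det A$. Setting $\alpha := \det A \in \mc O_{E_w}^\times$ (a unit because $A \in \GL_N(\mc O_{E_w})$), we obtain $y^N = \bar\alpha / \alpha \in \phi(\mc O_{E_w}^\times)$. Taking norms forces $N_{E_w/F_v}(y)^N = 1$, and since $N$ is odd, $N_{E_w/F_v}(y) = 1$. Lemma~\ref{lem OEcoboundaries} at $k=0$ then puts $y^N \in (1 + \mf D_{E_w/F_v})_{Nm = 1}$.

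The key step is upgrading $y^N \in 1+\mf D$ to $y \in 1+\mf D$. Since $N(y)=1$, reducing mod $\mf p_w$ gives $\bar y^{\,2} = 1$ in $k_w^\times$, so $\bar y = \pm 1$. In the tame case (odd residue characteristic), if $\bar y = -1$ then $y^N \equiv -1 \pmod{\mf p_w}$, contradicting $y^N \in 1+\mf p_w = 1+\mf D$; so $\bar y = 1$ and $y \in 1+\mf D$. In the wild case (residue characteristic $2$), $\bar y = \pm 1 = 1$ automatically, so $y = 1 + a$ with $a \in \mf p_w$; the binomial expansion $y^N = 1 + Na + O(a^2)$, together with the fact that $N$ is a unit in residue characteristic $2$, yields $v(y^N - 1) = v(a)$, so $y^N \in 1+\mf p_w^j = 1+\mf D$ forces $v(a) \geq j$ and hence $y \in 1+\mf D$.

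For sufficiency, once the case conditions imply $y \in \phi(\mc O_{E_w}^\times) = (1+\mf D)_{Nm=1}$, I would write $y = \bar b / b$ with $b \in \mc O_{E_w}^\times$ and take the scalar matrix $A = b I_N$; a direct check gives $w_N \bar A^T w_N^{-1} = \bar b \, I_N = (yb) I_N = yA$. The case match with the lemma's phrasing then follows from Lemma~\ref{lem OEcoboundaries}: in the unramified case $1+\mf D = \mc O_{E_w}^\times$, so only $N(y)=1$ is required; in the tame case the condition reduces to $y \in 1+\mf p_w$, equivalent via $N$ odd and $\bar y = \pm 1$ to $y^N \not\equiv -1 \pmod{\mf p_w}$; and in the wild case the condition reduces to $y \in 1+\mf D$, with the disjunct $y^N \notin 1+\mf p_w$ being vacuous since residue characteristic $2$ and $N(y)=1$ force $y \in 1+\mf p_w$ automatically. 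The main obstacle is the wild-case binomial computation, where controlling $v(y^N - 1)$ relies essentially on $N$ being coprime to the residue characteristic.
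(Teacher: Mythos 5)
Your approach differs genuinely from the paper's. The paper first reduces to $B = w_N A$ satisfying $\bar B^T = yB$, handles the case $y \in \phi(\mc O_{E_w}^\times)$ by taking $B$ diagonal, and then, for $y \notin \phi(\mc O_{E_w}^\times)$, observes that $B$ must have vanishing diagonal and carries out a combinatorial analysis of $\det B$ as a sum over fixed-point-free permutations, paired as $\sigma \leftrightarrow \sigma^{-1}$ (legitimate since $N$ odd rules out fixed-point-free involutions); this produces the shape $\det B = R + y^N \bar R$ and a unit-solvability criterion which is then massaged into the three cases. You instead prove directly that existence of $A$ is equivalent to $y \in \phi(\mc O_{E_w}^\times)$ and only then use Lemma~\ref{lem OEcoboundaries} at $k=0$ to translate. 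Your route is cleaner and sidesteps the permutation-determinant analysis entirely; the trade-off is that you need the ``upgrade'' step from $y^N \in 1+\mf D$ to $y \in 1+\mf D$, which you carry out correctly (reduction mod $\mf p_w$ plus, in the wild case, the binomial estimate $v(y^N-1) = v(y-1)$ using that $N$ is a unit). Your conclusion agrees with the paper's: the paper's condition (3) ($y^N \not\equiv -1 \pmod{\mf p_w}$) coincides with $y\in\phi(\mc O_{E_w}^\times)$ in the tame case and is vacuous given $N(y)=1$ in the wild case, which is exactly what your equivalence encodes.

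However, there is one incorrect step. You derive $N_{E_w/F_v}(y)^N = 1$ from the determinant equation and then write ``and since $N$ is odd, $N_{E_w/F_v}(y)=1$.'' This does not follow: $F_v^\times$ can contain nontrivial $N$-th roots of unity (for instance $\Q_7^\times$ contains primitive cube roots of unity when $N=3$), so $N(y)^N=1$ alone is insufficient. The correct derivation is to apply the map $\mu\colon A \mapsto w_N \bar A^T w_N^{-1}$ twice: one checks $\mu^2 = \mathrm{id}$ (using $w_N^T = w_N^{-1}$ and $w_N$ real), and since $\mu(A) = yA$ gives $\mu^2(A) = \bar y\, \mu(A) = N(y)A$, invertibility of $A$ forces $N(y)=1$, with no appeal to parity of $N$ at all. (Alternatively, one can first deduce $A^T = N(y)A$ and hence $N(y)^2 = 1$, then invoke $N$ odd to rule out $N(y)=-1$ since an $N\times N$ antisymmetric matrix with $N$ odd is singular.) Once this is patched, the rest of your argument, including the sufficiency via the scalar matrix $A = bI_N$ with $y = \bar b/b$ and the translation through Lemma~\ref{lem OEcoboundaries}, is correct.
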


\begin{proof}
We instead construct $B = w_N A \in \GL_N(\mc O_{E_w})$, which would have to satisfy:
\[
\bar B^T =  \bar A^T w_N^{-1} = w_N^{-1} w_N \bar A^T w_N^{-1} = (-1)^{N-1} w_N y A = y B. 
\]
If $y$ can be written as $\alpha/\overline \alpha$ for $\alpha \in \mc O_E^\times$, then $B$ can be chosen to be diagonal with~$\alpha$'s on the diagonal.

If not, then any such $B$ has only zeros on the diagonal. Let its entries be $b_{ij}$ so that
\[
\det B = \sum_\sigma \sgn(\sigma) \prod_i b_{i \sigma(i)},
\]
where by the assumption on $B$, we can sum over just permutations that don't have fixed points. Comparing terms for $\sigma$ and $\sigma^{-1}$,
\[
\sgn(\sigma^{-1}) \prod_i b_{i \sigma^{-1}(i)} = \sgn(\sigma) \prod_i b_{\sigma(i)i} = \sgn (\sigma)  y^N \prod_i \bar b_{i\sigma(i)}.
\]
Since $N$ is odd, all permutations with no fixed points aren't equal to their inverses so we can therefore pair them up and write $\det B$ in the form $R + y^N \bar R$ where $R$ can be freely chosen to be any element of $\mc O_{E_w}$. 

If $y^N \not\equiv -1 \pmod{\mf p_v}$, this can be arranged to be in $\mc O_{E_w}^\times$ by choosing $R \in \mc O_{F_v}^\times$. Otherwise, there is $R$ such that $R + y^N \bar R \in \mc O_{E_w}^\times$ if and only if there is $R$ such that~$R/\bar R \not\equiv 1 \pmod{ \mf p_v}$. Summarizing, the possible $y$'s are exactly those of norm $1$ for which at least one of the following hold:
\begin{enumerate}
\item
There is $R \in \mc O_{E_w}^\times$ such that $R/\bar R \not\equiv 1 \pmod{ \mf p_w}$,
\item
$y$ can be written in the form $\alpha/\overline \alpha$ for $\alpha \in \mc O_{E_w}^\times$,
\item
$y^N \not\equiv -1 \pmod{\mf p_w}$.
\end{enumerate}
As in Lemma \ref{lem OEcoboundaries}, (1) holds if and only if $E_w/F_v$ is unramified. For $E_w/F_v$ tamely ramified, (2) implies that $y \equiv 1 \pmod{\mf p_w}$ which together with $\mf p_v \nmid 2$ implies (3). 

If $E_w/F_v$ is wildly ramified, then $\mf p_w | 2$ so (3) is equivalent to $y^N \notin 1 + \mf p_w$. Also~(2) in our setting is equivalent to $y \in 1 + \mf D_{E_w/F_v}$ by Lemma~\ref{lem OEcoboundaries}. 
\end{proof}

\subsection{Local Results}\label{sec fvkconj}
Now we are ready for the main results. We again study coset indicators $\td f_{v,k}$ used to build up $\wtd E^\infty_\mf n$ in Corollary \ref{localepstestfunction}. Beware that we use local indexing for $\td f_{v,k}$; see \S\ref{sec conductorsconj}. 

\subsubsection{Non-Intersection of Support}

\begin{prop}\label{cosettransfervanishingconj} 
In the conjugate self-dual case, let $N$ be even and $G \in \wtd{\mc E}_\sm(N)$. Let $v$ be a place of $F$ that doesn't split in $E$ and $w$ the corresponding place of $E$. Let 
\[
\td f_{v,k} := 
(J_{v,k}^{-1} w_N \rtimes \bar \theta) \bar \1_{K_{1,v}(k)}. 
\]
Then for all $\gamma \in Z_{G_v} \subseteq E_w^\times$, the support of $\td f_{v,k}^G$ intersects the stable conjugacy class of $\gamma$ only if:
\begin{enumerate}
\item
$-\gamma \in \phi(1+\mf p_w^k)$ (see Lemma \ref{lem OEcoboundaries}), and:
\item
either:
\begin{enumerate}
    \item $E_w/F_v$ is unramified,
    \item $E_w/F_v$ is ramified and $k$ is even.
\end{enumerate}
\end{enumerate}
\end{prop}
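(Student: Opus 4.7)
The plan is to reduce the statement to a concrete matrix equation and then analyze it block-wise. First, by the discussion in \S\ref{ss transfer central elements} and \S\ref{identityclassdecompconj}, for any lift $\gamma_0 \in E_w^\times$ of $\gamma$, the twisted centralizer $(G_N)_{\gamma_0 \rtimes \bar\theta}$ equals $U_N^{E_w/F_v}$, which has the same dimension as $G_\gamma = G$. So Proposition \ref{transfervanishing} applied to the zero nilpotent orbit implies $\mc{SO}_\gamma(\td f_{v,k}^G) = 0$ unless some element of the coset $K_{1,v}(k) \cdot (J_{v,k}^{-1} w_N \rtimes \bar\theta)$ is stably twisted conjugate to $\gamma_0 \rtimes \bar\theta$ for some (equivalently, any) lift $\gamma_0$ of $\gamma$. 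Over $\bar F_v$, the stable class of $\gamma_0 \rtimes \bar\theta$ is characterized by the norm invariant $B \bar\theta(B)$ as in \S\ref{identityclassdecompconj}, which for the central element $\gamma_0 \rtimes \bar\theta$ equals $(\gamma_0/\bar\gamma_0) I = \gamma I$ (using $\bar\theta(\gamma_0) = \bar\gamma_0^{-1}$).

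The next step is to translate the equation $B\bar\theta(B) = \gamma I$, with $B = K J_{v,k}^{-1} w_N$ and $K \in K_{1,v}(k)$, into a condition on $K$. Using the identities $w_N^T = -w_N$ and $w_N^2 = -I$ valid for $N$ even (so that $w_N^{-T} w_N^{-1} = I$), a direct computation reduces the norm equation to
\[
K J_{v,k}^{-1} \;=\; -\gamma\, \bar J_{v,k}^{-1}\, \bar K^T.
\]
Writing $K = \begin{pmatrix} A & b \\ c^T & d \end{pmatrix}$ with $A \in \mathrm{Mat}_{N-1}(\mc O_{E_w})$ and $d \in \mc O_{E_w}$, and expanding block-wise, one obtains four matrix equations. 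The scalar $(2,2)$-entry gives $d = -\gamma\bar d$, i.e., $d/\bar d = -\gamma$; combined with $d \in 1+\mf p_w^k$ (forced by $K \in K_{1,v}(k)$) this yields condition (1): $-\gamma \in \phi(1+\mf p_w^k)$.

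Condition (2) will follow from taking determinants. From $\det(B\bar\theta(B)) = \gamma^N$ and $\det \bar\theta(B) = \overline{\det B}^{-1}$ one gets $\det B/\overline{\det B} = \gamma^N$; combined with the explicit form $\det B = \det K \cdot \varpi^{k(N-1)} \cdot \det w_N$ (with $\det w_N = \pm 1$), this gives
\[
\det K/\overline{\det K} \;=\; \gamma^N (\bar\varpi/\varpi)^{k(N-1)}.
\]
Since $\det K \in \mc O_{E_w}^\times$, the left side lies in $\phi(\mc O_{E_w}^\times)$. As $\gamma \in U_1(\mc O_{E_w})$, we have $\gamma^N = \phi(\gamma^{N/2}) \in \phi(\mc O_{E_w}^\times)$, so the requirement reduces to $(\bar\varpi/\varpi)^{k(N-1)} \in \phi(\mc O_{E_w}^\times)$. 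By Lemma \ref{lem OEcoboundaries}, $U_1(\mc O_{E_w})/\phi(\mc O_{E_w}^\times)$ is trivial when $v$ is unramified and isomorphic to $\Z/2$ (generated by the class of $\bar\varpi/\varpi$) when $v$ is ramified, using the standard identification with $H^1(\Gal(E_w/F_v), \mc O_{E_w}^\times)$. Since $N-1$ is odd, this forces condition (2): either $v$ is unramified, or $v$ is ramified and $k$ is even.

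The main obstacle will be tracking the sign factors coming from the $w_N$-identities and executing the block expansion cleanly, particularly because the conjugation in $\bar\theta$ interacts non-trivially with the off-diagonal blocks $b$ and $c$. The crucial conceptual input, beyond the Shalika-germ reduction of Proposition \ref{transfervanishing}, is Lemma \ref{lem OEcoboundaries}, which handles the tamely and wildly ramified cases uniformly via the cohomology computation $H^1(\Gal(E_w/F_v), \mc O_{E_w}^\times) \cong \Z/2$.
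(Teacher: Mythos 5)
Your proof is correct and takes a genuinely different route for condition (2), which is worth comparing to the paper's. Both arguments start identically: the stable class of the transfer of $\gamma \in Z_{G_v}$ is characterized by the norm condition $B\bar\theta(B) = \gamma I$, and the $(N,N)$-entry of the resulting matrix equation gives condition (1) directly. For condition (2), the paper analyzes the top-right $(N-1)\times(N-1)$ block of the support element and invokes Lemma \ref{lem linearalgebracomp} — a separate existence result for matrices $A \in \GL_{N-1}(\mc O_{E_w})$ with $w_{N-1}\bar A^T w_{N-1}^{-1} = yA$ — which requires a case split between tamely and wildly ramified $E_w/F_v$, followed by a simplification step that feeds condition (1) back in. You instead take the determinant of the entire norm equation, reduce the constraint to $(\bar\varpi/\varpi)^{k(N-1)} \in \phi(\mc O_{E_w}^\times)$, and then read off the answer from the computation $H^1(\Gal(E_w/F_v), \mc O_{E_w}^\times) \cong U_1(\mc O_{E_w})/\phi(\mc O_{E_w}^\times)$, which is trivial in the unramified case and $\Z/2$ (generated by $\bar\varpi/\varpi$) in all ramified cases — a fact implicit in the proof of Lemma \ref{lem OEcoboundaries} via the long exact sequence for $\mc O_{E_w}^\times \into E_w^\times \onto \Z$. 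This handles the tame and wild cases uniformly in one stroke and is noticeably cleaner for the ``only if'' direction. The trade-off is that the determinant argument alone does not give sufficiency: the paper's block analysis and Lemma \ref{lem linearalgebracomp} also \emph{construct} an element of the coset in the stable class when the conditions hold, and the paper records this in a ``Sufficiency'' paragraph. Since the stated proposition only asserts ``only if,'' your argument fully proves what is claimed; just be aware that if you were to extend this to later results in the section you would need to supply the converse construction separately. Two minor points of hygiene: the parenthetical ``(so that $w_N^{-T}w_N^{-1}=I$)'' is not what you actually use — the relevant identity is $w_N w_N^T = I$; and when $k=0$ you should take $d \in (1+\mf p_w^k) \cap \mc O_{E_w}^\times = \mc O_{E_w}^\times$ to stay in the domain of $\phi$.
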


\begin{proof}

As explained in \ref{ss transfer central elements}, the element $\gamma \in U_{E/F}(1)_v = Z_{G_v}$ transfers to the stable conjugacy class $\beta \rtimes \bar \theta$ of $\wtd G_N$ with $\beta/\bar\beta =\gamma$. This stable class  consists exactly of elements of the form  
\begin{equation}\label{eq stablebetacondition}
\beta A \rtimes \bar \theta \text{ with } w_N \overline A^T w_N^{-1} = A.
\end{equation}
We want to compare this stable class with the support of $\td f_{v,k}$, which 
consists of all elements of the form
\[
X (J_{v,k}^{-1} w_N \rtimes \bar \theta) = X \begin{pmatrix}  & & & \pm \varpi_v^{-k} \\ & & \iddots &  \\  & \pm \varpi_v^{-k} & & \\ 1 &&& \end{pmatrix}  \rtimes \bar \theta
\]
for $X \in K_{1,v}(k)$. We write $X$ in block matrix form
\begin{equation}\label{eq fvksupport}
X = \begin{pmatrix}
X_0 & \vec v_1 \\
\vec v_2 & x
\end{pmatrix} \iff  X (J_{v,k}^{-1} w_N \rtimes \bar \theta) =  \begin{pmatrix}
\vec v_1 & \varpi_v^{-k} X_0 w_{N-1} \\
x & \varpi_v^{-k} \vec v_2 w_{N-1}
\end{pmatrix} \rtimes \bar \theta.
\end{equation}
We now assume that there exists such an element $X (J_{v,k}^{-1} w_N \rtimes \bar \theta)$ in the stable class of $\beta \rtimes \bar\theta$ and derive restrictions on $\gamma$.

\noindent \underline{Necessary Condition 1:}

As in the proof of Proposition \ref{cosettransfervanishing}, we have a restriction from the bottom-left matrix entry $x$ of $X (J_{v,k}^{-1} w_N \rtimes \theta)$. Since $N$ is even, condition \eqref{eq stablebetacondition} gives that $x = \beta \alpha$ for $\alpha$ satisfying $\overline \alpha = - \alpha$. However, we also have $x \equiv 1 \pmod{\mf p_w^k}$. Therefore, the class of $\beta \rtimes \theta$ only intersects the support of $f_k$ if there is $\alpha \in E_w$ such that:
\begin{align} \label{eq cong conditions on alpha}
    \bar \alpha = -\alpha, \quad \alpha \beta \in \mc O_{E_w}, \quad \text{and} \quad \alpha\beta \equiv 1 \pmod{\mf p_w^k}.
\end{align}
This is equivalent to something simpler: if $\alpha, \beta$ satisfy conditions \eqref{eq cong conditions on alpha}, then there is $h = \alpha\beta -1 \in \mf p_w^k$ such that 
\[
\gamma = -(1 + h)/(1 + \bar h).
\]
Conversely, given $h \in \mf p_w^k$, $\gamma$ satisfying the above equation, and $\beta$ such that $\beta/ \bar \beta = \gamma$, setting $\alpha = (1 + h)/\beta$ satisfies conditions \eqref{eq cong conditions on alpha}. Finally, by Lemma \ref{lem OEcoboundaries}, the $\gamma = -(1 + h)/(1 + \bar h)$ condition is further equivalent to
\begin{equation}\label{eq gammacondition}
-\gamma \in \phi(1 + \mf p_w^k)
\end{equation}
for the map $\phi$ of Lemma \ref{lem OEcoboundaries}. This is condition (1).

\noindent\underline{Necessary Condition 2:}

We also have a restriction from the top-right $(N-1) \times (N-1)$ block in the right-hand side of \eqref{eq fvksupport}. Condition \eqref{eq stablebetacondition} requires that
\[
w_{N-1} \overline{(\beta^{-1} \varpi^{-k} X_0 w_{N-1})}^T w_{N-1}^{-1} = -(\beta^{-1} \varpi^{-k} X_0 w_{N-1}),
\]
where the minus sign appears since ``restricting'' the conjugation by $w_N$ to the block doesn't exactly give $w_{N-1}$. In particular $X_0 w_{N-1}$ is an eigenvector for the conjugate-linear involution on $GL_{N_1}(\mc O_{E_w})$ given by $Y \mapsto w_{N-1} \overline Y^T w_{N-1}^{-1}$, with eigenvalue 
\[
-(\overline \beta/\beta)(\overline \varpi^k/\varpi^k) = -\gamma^{-1} (\overline \varpi^k/\varpi^k).
\] 
Assume that $k>0$. Since $\vec v_2 \equiv 0 \pmod{\mf p_w^k}$ and $X$ is invertible over $\mc O_{E_w}$, the matrix $X_0$, and therefore $X_0 w_{N-1}$ must also be invertible over $\mc O_{E_w}$. 

In particular, $- \gamma^{-1} (\overline \varpi^k/ \varpi^k)$ satisfies the conditions of Lemma \ref{lem linearalgebracomp}.

\noindent \underline{Sufficiency}

In the converse direction, if $\gamma$ and $\beta$ satisfy \eqref{eq cong conditions on alpha} and $y = -\gamma^{-1}(\overline \varpi/\varpi)$ satisfies the conditions of Lemma \ref{lem linearalgebracomp} for $\GL_{N-1}(\mc O_{E_w})$, we can choose $X_0 w_{N-1} \in \GL_{N-1}(\mc O_{E_w})$ satisfying  
\[
w_{N-1} \overline{(X_0 w_{N-1})^T} w_{N-1}^{-1}  = -(\overline \beta/\beta)(\overline \varpi^k/\varpi^k)(X_0 w_{N-1}). 
\]
Then choosing $\alpha$ as above, the matrix 
\[
A = \begin{pmatrix}
& \varpi^{-k} \beta^{-1} X_0 w_{N-1} \\
\alpha &
\end{pmatrix}
\]
satisfies $w_N \bar A^T w_N^{-1} = A$ and $\beta A \rtimes \bar \theta$ is in the support of $\td f_{v,k}$.

\noindent \underline{Simplification of Condition 2} 

Finally, \eqref{eq gammacondition} lets us simplify condition 2. By Lemma \ref{lem OEcoboundaries}, condition \eqref{eq gammacondition} gives that $-\gamma^{-1}(\varpi/\overline \varpi)^{-k} \equiv (\varpi/\overline \varpi)^{-k}$ mod both $\mf p_w$ and $\mf D_{E_w/F_v}$. This congruence class mod either ideal is independent of the choice of $\varpi$ since different choices make the quotient vary only by $\alpha/\bar \alpha \equiv 1 \pmod{\mf D_{E_w/F_v}}$ for $\alpha \in \mc O_{E_w}$. Our desired conditions from Lemma~\ref{lem linearalgebracomp} therefore become
\begin{itemize}
    \item If $v$ is unramified: nothing
    \item If $v$ is tamely ramified: $(\varpi/\overline \varpi)^{-(N-1)k} \equiv 1 \pmod{\mf p_w}$
    \item If $v$ is wildly ramified: $(\varpi/\overline \varpi)^{-(N-1)k} \not\equiv 1 \pmod{\mf p_w}$ or $(\varpi/\overline \varpi)^{-k} \equiv 1 \pmod{\mf D_{E_w/F_v}}$.
\end{itemize}

A tamely ramified quadratic extension is generated by a square root of a uniformizer, so in this case $\varpi/\overline \varpi \equiv -1 \pmod{\mf p_w}$. Therefore, since $N-1$ is odd, the condition in Lemma~\ref{lem linearalgebracomp} becomes that $k$ is even. 

In the wildly ramified case, we note since $\mf D_{E_w/F_v}$ is exactly generated by $\varpi - \overline \varpi$, we have that $\varpi/\overline \varpi$ is a non-trivial element of $(1 + \varpi^{-1} \mf D_{E_w/F_v})/(1 + \mf D_{E_w/F_v})$. In particular $(\varpi/\overline \varpi) \equiv 1 \pmod{\mf p_w}$ always. On the other hand, since $v | 2$, the quotient is a $2$-group so we again have that $(\varpi/\overline \varpi)^{-k} \in 1 + \mf D_{E_w/F_v}$ holds if and only if $k$ is even. 
\end{proof}

\subsubsection{Orbital Integral Vanishing}
Our support result immediately gives:
\begin{cor}\label{cosettransfervanishingconjcor}
Consider the setup of Proposition \ref{cosettransfervanishingconj}. For all $\gamma \in Z_{G_v}$ that do not satisfy the conditions therein, we have $f^G_{v,k}(\gamma) = 0$. 
\end{cor}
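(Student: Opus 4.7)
The plan is to deduce this directly from Proposition \ref{cosettransfervanishingconj} combined with the twisted endoscopic transfer identity at non-regular semisimple elements proved in Corollary \ref{equisingulartransfer}. The key observation I would make first is that the hypotheses of Corollary \ref{equisingulartransfer} apply to central $\gamma \in Z_{G_v}$: indeed, $(H_v)_\gamma = H_v$, and for any representative $\td\gamma_i = \beta_i A_i \rtimes \bar\theta$ of a rational class inside the stable class of $\wtd G_N$ transferring to $\gamma$, a direct computation shows $(G_{N,v})_{\td\gamma_i} = U^{E/F}_{N,v}$ (as $\beta_i$ is central in $G_{N,v}$). Thus $\dim (H_v)_\gamma = \dim (G_{N,v})_{\td\gamma_i}$ in both the unramified and ramified cases, so the formula of Corollary \ref{equisingulartransfer} is available.

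Next I would invoke Proposition \ref{cosettransfervanishingconj}: if $\gamma$ does not satisfy conditions (1) and (2) of that proposition, then the support of $\td f_{v,k}$ does not intersect the stable class of $\wtd G_N$ transferring to $\gamma$. In particular, $\mc O_{\td\gamma_i}(\td f_{v,k}) = 0$ for every rational-class representative $\td\gamma_i$ in that stable class (there being finitely many by the computation of \S\ref{identityclassdecompconj} and its generalization to twists by central elements).

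Applying Corollary \ref{equisingulartransfer} then gives
\[
\mc{SO}_\gamma(\td f^G_{v,k}) \;=\; \sum_i e\bigl((G_{N,v})_{\td\gamma_i}\bigr)\,\Delta^{\wtd G_N}_G(\gamma,\td\gamma_i)\,\mc O_{\td\gamma_i}(\td f_{v,k}) \;=\; 0.
\]
Since $\gamma \in Z_{G_v}$ has trivial $G_v$-conjugacy class (its centralizer being all of $G_v$), its stable class reduces to the single rational class $\{\gamma\}$, so $\mc{SO}_\gamma(\td f^G_{v,k}) = \td f^G_{v,k}(\gamma)$ up to the (positive) Kottwitz sign of the quasisplit $G_v$. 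This yields $\td f^G_{v,k}(\gamma) = 0$ as claimed.

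The only real subtlety I anticipate is bookkeeping: one must verify that Corollary \ref{equisingulartransfer} — and hence the descent of transfer factors from \S\ref{sectiontransferdescent} that it relies on — continues to apply after translating by a central element $\beta$ of $G_{N,v}$ (so that the distinguished class is $\beta \rtimes \bar\theta$ rather than $1 \rtimes \bar\theta$). This is immediate from Lemma \ref{lem U+tfcenter}, which gives the required central-translation invariance of transfer factors, together with the fact that centralizers are unchanged by central translation. No further input is required.
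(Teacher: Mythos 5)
Your proof is correct and is essentially the same as the paper's. The paper simply observes $\dim(G_v) = \dim((G_{N,v})_{\gamma \rtimes \bar\theta})$ and cites Proposition~\ref{transfervanishing}, whose parenthetical note covers exactly this equidimensional case; you instead invoke the exact formula of Corollary~\ref{equisingulartransfer}, which is the same germ comparison stated more precisely under the same dimension hypothesis. Both routes reduce to: the support of $\td f_{v,k}$ misses the stable class transferring to $\gamma$, hence all orbital integrals $\mc O_{\td\gamma_i}(\td f_{v,k})$ vanish, hence the stable orbital integral of the transfer at $\gamma$ vanishes, and for central $\gamma$ the stable orbital integral is just $f^G_{v,k}(\gamma)$ (up to $e(G_v)=1$).

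One small comment on your closing paragraph: the worry about justifying Corollary~\ref{equisingulartransfer} after central translation to $\beta \rtimes \bar\theta$ is unnecessary. Theorem~\ref{transferdescent} and Corollary~\ref{nonregtransferfactors}, and hence Corollary~\ref{equisingulartransfer}, are stated for an arbitrary semisimple $\td\gamma \in \wtd G_v$ with $\gamma$ its norm in $H$; there is no privileged role of $1 \rtimes \bar\theta$ in their hypotheses. Lemma~\ref{lem U+tfcenter} is needed elsewhere (to relate transfer-factor values at different central translates, as in Proposition~\ref{cosettransfervanishingconj+}(1)), but not merely to get the vanishing identity to apply. So your proof is sound, with a minor bit of surplus bookkeeping.
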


\begin{proof}
Since $ \dim (G_v)   = \dim((G_{N,v})_{\gamma \rtimes \theta})$,  this follows from Proposition \ref{transfervanishing}. 
\end{proof}

In the case of $G = U^+_N$, we can say more:

\begin{prop}\label{cosettransfervanishingconj+}
Consider the setup of Proposition \ref{cosettransfervanishingconj} and specialize to the case where $G = U_N^+$. Assume that $\gamma$ satisfies the conditions therein. Then:
\begin{enumerate}
\resume
\item if $\gamma'$ also satisfies the conditions of Proposition \ref{cosettransfervanishingconj}, then $\td f^G_{v,k}(\gamma) = \td f^G_{v,k}(\gamma')$,
\item if $v$ is unramified, $f_{v,k}^G(\gamma)$ is non-zero with sign $(-1)^k$,
\item if $v$ is ramified, $f_{v,k}^G(\gamma) = 0$ whenever $k > 0$. 
\end{enumerate}
\end{prop}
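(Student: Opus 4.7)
The main input is Corollary \ref{equisingulartransfer}. Since $\dim G_v = \dim (G_{N,v})_{\gamma \rtimes \bar\theta}$ for any $\gamma \in Z_{G_v}$, the corollary gives
\[
\td f^G_{v,k}(\gamma) = \sum_i e\bigl((G_N)_{\tilde\gamma_i}\bigr)\, \Delta^{\wtd G_N}_G(\gamma, \tilde\gamma_i)\, \mc O_{\tilde\gamma_i}(\td f_{v,k}),
\]
where $\tilde\gamma_i$ ranges over representatives of the rational classes in the stable class in $\wtd G_{N,v}$ transferring to $\gamma$. By \S\ref{identityclassdecompconj} (using $N$ even and $v$ non-split), there are exactly two such classes $\tilde\gamma_1, \tilde\gamma_2$, distinguished by the invariant $\det(B)/\beta^N \in F_v^\times/N_{E_w/F_v}(E_w^\times)$ for any lift $\beta \in E_w^\times$ with $\beta/\bar\beta = \gamma$. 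Combining Lemma \ref{lem U+tfidentity} (at $1 \in Z_{G_v}$) with Lemma \ref{lem U+tfcenter} (to translate by $a \in E_w^\times$ with $a/\bar a = \gamma$) shows the transfer factors $\Delta^{\wtd G_N}_G(\gamma, \tilde\gamma_i)$ are equal for $G = U_N^+$; meanwhile, the two twisted centralizers realize the two local inner forms of $U_N^{E_w/F_v}$, whose split ranks differ by one, so $e\bigl((G_N)_{\tilde\gamma_1}\bigr) = -e\bigl((G_N)_{\tilde\gamma_2}\bigr)$. The formula collapses to
\[
\td f^G_{v,k}(\gamma) = \pm\bigl[\mc O_{\tilde\gamma_1}(\td f_{v,k}) - \mc O_{\tilde\gamma_2}(\td f_{v,k})\bigr].
\]

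For item (2), I would write a valid $\gamma = -(1+h)/(1+\bar h)$ with $h \in \mf p_w^k$ and lift it to $\beta = a(1+h) \in E_w^\times$ for $a$ satisfying $\bar a = -a$. Two such valid $\gamma, \gamma'$ yield lifts with $\beta'/\beta = (1+h')/(1+h) \in 1+\mf p_w^k \subseteq Z_{G_{N,v}} \cap K_{1,v}(k)$. Since $\td f_{v,k}$ is left-$K_{1,v}(k)$-invariant, left-translation of the twisted orbit by the central element $\beta'/\beta$ preserves its support, and a change of variables shows each orbital integral is unchanged; the overall sign is likewise preserved.

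For items (3) and (4), the task reduces to explicit evaluation of the difference of orbital integrals. Using the parametrization from the proof of Proposition \ref{cosettransfervanishingconj}, support elements in the stable class of $\gamma$ are indexed by $X_0 \in \GL_{N-1}(\mc O_{E_w})$ satisfying a Hermitian-type condition together with the remaining data $\alpha, \vec v_1, \vec v_2$; the discriminating invariant becomes
\[
\det(B)/\beta^N \equiv (-1)^{N-1}\alpha\beta\,\varpi^{-k(N-1)}\det(X_0 w_{N-1}) \pmod{N_{E_w/F_v}(E_w^\times)}.
\]
For $k=0$ unramified, $w_N \in K_v$ forces $\td f_{v,0} = \bar\mathbf 1_{K_v \rtimes \bar\theta}$, and the twisted fundamental lemma gives $\td f_{v,0}^G(\gamma) = 1$. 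For $k>0$ unramified, $\varpi$ has nontrivial class in $F_v^\times/N(E_w^\times)$, so $\varpi^{-k(N-1)}$ contributes $(-1)^k$ to the invariant (as $N-1$ is odd); the remaining $X_0$-parameter space splits asymmetrically between the two rational classes, yielding non-vanishing with sign $(-1)^k$. For ramified $v$ with $k>0$ (necessarily even), $\varpi^{-k(N-1)}$ becomes a norm, so the invariant depends only on $X_0$ (and $\alpha$); a natural involution on the Hermitian parameter space (multiplication by a unit of nontrivial class that preserves the defining relation) interchanges the two rational classes and preserves the integrand, forcing $\mc O_{\tilde\gamma_1}(\td f_{v,k}) = \mc O_{\tilde\gamma_2}(\td f_{v,k})$ and hence the vanishing.

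The main obstacle will be the measure-theoretic bookkeeping in the $X_0$-integration: constructing the correct symmetry of the Hermitian parameter space that realizes the pairing in the ramified case, and computing the asymmetry (with the correct sign $(-1)^k$) in the unramified case, all with proper normalizations of Haar measures on the twisted centralizer.
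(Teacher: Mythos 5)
Your proposal follows essentially the same skeleton as the paper's proof: you apply Corollary \ref{equisingulartransfer}, note the two rational classes and the opposite Kottwitz signs, use Lemma \ref{lem U+tfidentity} together with Lemma \ref{lem U+tfcenter} to compare transfer factors across classes and across central translates, and reduce the problem to the determinant invariant $\det(B)/\beta^N \in F_v^\times/N_{E_w/F_v}(E_w^\times)$. The constancy argument in item (1) is the same as the paper's: translate by a central element of $K_{1,v}(k)$ and invoke $K_{1,v}(k)$-invariance of $\tilde f_{v,k}$ together with Lemma \ref{lem U+tfcenter}.

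However, your treatment of the unramified case contains an error, and the ramified case has a gap. In the unramified case you write that ``the remaining $X_0$-parameter space splits asymmetrically between the two rational classes.'' This is incorrect, and unnecessary: since every $X \in K_{1,v}(k)$ lies in $\GL_N(\mc O_{E_w})$, the determinant of any element of the support $X(J_{v,k}^{-1}w_N \rtimes \bar\theta)$ has valuation exactly $-k(N-1) \equiv k \pmod 2$ (as $N-1$ is odd). So the \emph{entire} support lies in a single rational class, namely the one whose determinant valuation has parity $k$; one of the two orbital integrals vanishes outright, and the sign is the Kottwitz sign $(-1)^k$ attached to the (non)quasisplit inner form picked out by that parity, with the transfer factor normalized to $+1$ by the twisted fundamental lemma at $k=0$. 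Your conclusion is right but the stated reasoning does not get you there.

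In the ramified case, you gesture at ``a natural involution on the Hermitian parameter space (multiplication by a unit of nontrivial class)'' forcing $\mc O_{\tilde\gamma_1}(\tilde f_{v,k}) = \mc O_{\tilde\gamma_2}(\tilde f_{v,k})$. The idea is the right one, but as stated it has a gap: multiplying the Hermitian parameter $X_0 w_{N-1}$ by a unit $\eps \notin N(E_w^\times)$ gives a bijection on the intersection of the support with the stable class, but that does not by itself show the two orbital integrals agree, because each orbital integral involves Haar measure on a different twisted centralizer $G_{\tilde\gamma_i}$. The paper resolves this by exhibiting this symmetry as conjugation by a specific element $(E,1) \in G_N(E_w) \setminus G_N(F_v)$ (a diagonal matrix with $\eps$'s and a $1$), verified to preserve the support within the stable class and to swap the two rational orbits; conjugation automatically intertwines the centralizers and their Haar measures, which is what makes the orbital integrals literally equal. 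Without identifying your involution as a conjugation (or otherwise justifying measure compatibility), the vanishing conclusion does not follow.
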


\begin{proof} We prove the points in order:

\noindent\underline{Constancy}

For $i=1,2$, consider $\gamma_i \in Z_{G_v}$ satisfying the conditions of Proposition~\ref{cosettransfervanishingconj} and let $\gamma_i$ be a norm of~$\beta_i \rtimes \bar \theta \in Z_{G_{N,v}} \rtimes \theta$; we can choose $\beta_i \equiv a \pmod{\mf p_w^k}$ for some~$a \in E^\times_w$ such that $\bar a = -a$. Let $z = \beta_2/\beta_1$, noting that $z \equiv 1 \pmod{\mf p_w^k}$ so that~$z \in K_{1,v}(k)$. Let $e = z/\bar z = \gamma_2/\gamma_1$.

First, by Lemma \ref{lem U+tfcenter},
\[
\Delta^{\wtd G}_G(e\gamma, z \wtd \gamma) = \Delta^{\wtd G}_G(\gamma, \wtd \gamma). 
\]
Second, $z \in K_{1,v}(k)$ gives that $\td f_{v,k}$ is invariant under translation by $z$ so $\mc O_{\wtd \gamma}(\td f_{v,k}) = \mc O_{z \wtd \gamma}(\td f_{v,k})$ for any $\wtd \gamma \in \wtd G_{N,v}$. Let $\beta_i' \rtimes \bar \theta$ be a representative for the other rational class in the stable class of $\beta_i \rtimes \bar \theta$. Putting everything together, the formula in Corollary \ref{equisingulartransfer} shows that 
\begin{multline}\label{identitystableintegral}
\td f_{v,k}^G(\gamma_1)  \\ 
= e(G_{\beta_1 \rtimes \bar \theta}) \Delta^{\wtd G}_G(\gamma_1, \beta_1 \rtimes \bar \theta)  \mc O_{\beta_1 \rtimes \bar \theta}(\td f_{v,k}) + e(G_{\beta_1' \rtimes \bar \theta}) \Delta^{\wtd G}_G(\gamma_1, \beta_1' \rtimes \bar \theta)\mc O_{\beta_1' \rtimes \bar \theta}(\td f_{v,k}) \\
= e(G_{\beta_2 \rtimes \bar \theta}) \Delta^{\wtd G}_G(\gamma_2, \beta_2 \rtimes \bar \theta)  \mc O_{\beta_2 \rtimes \bar \theta}(\td f_{v,k}) + e(G_{\beta_2' \rtimes \bar \theta}) \Delta^{\wtd G}_G(\gamma_2, \beta_2' \rtimes \bar \theta)\mc O_{\beta_2' \rtimes \bar \theta}(\td f_{v,k}) 
\\ = \td f_{v,k}^G(\gamma_2).
\end{multline}
This proves the equality we want.

\noindent\underline{Unramified Calculation}

For the unramified calculation, choose again $\gamma \in Z_{G_v}$ that is a norm of $\beta \rtimes  \bar \theta \in Z_{G_{N,v}} \rtimes \theta$ such that the other rational class inside the stable class of $\beta \rtimes \bar\theta$ is represented by $\beta' \rtimes \bar\theta$. 

In the unramified case, $N_{E_w/F_v}(E_w^\times)$ consists of all elements of even valuation. Therefore, by the discussion in \S\ref{identityclassdecompconj}, we can without loss of generality choose the conjugation orbit of $\beta \rtimes \bar\theta$ to be only supported on elements of even parity while that of $\beta' \rtimes \bar\theta$ is only on those with odd parity. Note that since $\det \beta$ has even parity, this also implies that $G_{\beta \rtimes \bar \theta}$ is the quasisplit inner form of $G_v$ while  $G_{\beta' \rtimes \bar \theta}$ is the non-quasisplit one. In particular, since $N$ is even, $e(G_{\beta \rtimes \bar \theta}) = 1$ and $e(G_{\beta' \rtimes \bar \theta}) = -1$. 

However, on the support of $\td f_{v,k}$, the determinant has constant valuation $-k(N-1)$ which has the same parity as $k$. Therefore, only one of the orbital integrals in \eqref{identitystableintegral} is non-zero---the one for $\beta \rtimes \bar \theta$ if $k$ is even and that for $\beta' \rtimes \bar \theta$ if $k$ is odd. 

Finally, Lemma \ref{lem U+tfidentity} forces the two transfer factors to have the same sign. The consistency of our choice with the twisted fundamental lemma forces this common sign to be positive. Combining this with the above two paragraphs finishes the sign computation.

\noindent\underline{Ramified Calculation}

As in the unramified calculation, choose $\gamma, \beta \rtimes \bar \theta, \beta' \rtimes \bar \theta$. This computation is more convenient to do using a different presentation of $\wtd G_{N,v}$. We realize 
\[
\wtd G_N(E_w) \cong \GL_N(E_w)^2 \rtimes s
\]
where $s$ is the involution switching the coordinates and $\wtd G_{N,v}$  is embedded through
\[
g(J^{-1}_\mf n w_N \rtimes \bar \theta) \mapsto (g, J^{-1}_\mf n \bar g^{-T} J_\mf n) \rtimes s.
\]
If $E_w/F_v$ is ramified there is $\eps \in \mc O_{F_v} \setminus N_{E_w/F_v}(E_w^\times)$. Consider the $N \times N$ matrix 
\[
E = \begin{pmatrix}
\eps &  &  &  \\
 & \ddots & & \\
& & \eps &  \\
& & & 1
\end{pmatrix}. 
\]

Recall the form of elements in $K_{1,v}(k)(J_\mf n^{-1} w_N \rtimes \bar \theta)$ given in \eqref{eq fvksupport}. If $k \geq 1$, any such element additionally of the form in \eqref{eq stablebetacondition} must satisfy $\vec v_1 = \vec v_2 = 0$ since otherwise their coordinates have different valuations but must also be equal up to sign. In particular, $E$ commutes with all $X$ such that $X(J_\mf n^{-1} w_N \rtimes \bar \theta)$ is of the form of both \eqref{eq fvksupport} and \eqref{eq stablebetacondition}. Furthermore, $J^{-1}_\mf n \bar E^{-T} J_\mf n = E$, so in total,
\[
(E, 1) X(J_\mf n^{-1} w_N \rtimes \bar \theta) (E, 1)^{-1} = EX(J_\mf n^{-1} w_N \rtimes \bar \theta). 
\]
considering $(E,1)$ as an element of $G_N(E_w) = \GL_N(E_w) \times \GL_N(E_w)$. In particular, this preserves both the conditions of \eqref{eq fvksupport} and \eqref{eq stablebetacondition}---in other words, conjugation by $(E,1)$ sends the intersection of support of $\td f_{v,k}$ with the stable orbit of any $\beta \rtimes \bar \theta$ for $\beta \in Z_{G,v}$ to itself.

Finally, note that since $N$ is even, the determinants of $XE$ and $X$ descend to the two different classes in $F_v^\times / N_{E_w/F_v}(E_w^\times)$ so conjugation by $(E,1)$ also swaps the two rational orbits in the stable orbit. Since the measures on the different centralizers used to define orbital integrals in a stable class are related by conjugation in the algebraic closure, this gives that the two orbital integrals in \eqref{identitystableintegral} are equal. By the same argument as the unramified calculation, this implies that the transfer must vanish. 
\end{proof}

We need two extra arguments. When $v$ is ramified and $k = 0$ we will use the spectral arguments of Section \ref{sec stable plancherel}:

\begin{lem}\label{cosettransfervanishingramk0}
Consider the same setup as Proposition \ref{cosettransfervanishingconj} and assume additionally that $G = U^+_N$, $v$ is ramified, and $k=0$. Then for $\gamma \in Z_{G_v}$, $\td f_{v,0}^G(\gamma) > 0$ whenever $\gamma \equiv -1 \pmod{\mf D_{E_w/F_v}}$. 
\end{lem}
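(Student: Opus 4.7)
The plan has three parts: a constancy reduction, a spectral reformulation, and a positivity conclusion.

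For the constancy reduction, the argument from the proof of Proposition \ref{cosettransfervanishingconj+}(ii) carries over verbatim to $k = 0$: the scaling element $z \in \mc O_{E_w}^\times$ used there lies in $K_{1,v}(0) = K_v$, so translation by $z$ preserves $\td f_{v,0}$, and the transfer factor is invariant under the corresponding scaling by Lemma \ref{lem U+tfcenter}. Combined with Corollary \ref{equisingulartransfer}, this shows $\td f_{v,0}^G$ is constant on the locus where the support condition $-\gamma \in \phi(\mc O_{E_w}^\times)$ of Proposition \ref{cosettransfervanishingconj} holds; by Lemma \ref{lem OEcoboundaries} applied with $k = 0$, this locus is exactly $\{\gamma \in Z_{G_v} : \gamma \equiv -1 \pmod{\mf D_{E_w/F_v}}\}$, and outside of it $\td f_{v,0}^G$ vanishes by Corollary \ref{cosettransfervanishingconjcor}. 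It therefore suffices to produce a single $\gamma_0$ in this congruence class at which $\td f_{v,0}^G(\gamma_0) > 0$.

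For the spectral step, observe that $\td f_{v,0}$ is the volume-normalized indicator of the twisted coset $K_v \rtimes \bar\theta$. Its twisted trace against any irreducible $\bar\theta$-stable representation $\wtd\pi_v$ vanishes unless $\pi_v$ is unramified, in which case Theorem \ref{epsformulaconj} identifies the trace with $\eps(1/2,\pi_v,\psi)^{N-1}$; since $N$ is even this reduces to $\eps(1/2,\pi_v,\psi) \in \{\pm 1\}$. The plan is to show that this sign is identically $+1$ on the spherical locus for our chosen $\psi$ satisfying $\psi \circ \sigma = \psi$ (which in $E_w$ has conductor $\mf D_{E_w/F_v}^{-1}$). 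Granting this, $\td f_{v,0}$ and $\wtd C_{v,0}$ have identical twisted traces on the entire spectrum of $\wtd G_{N,v}$, and the twisted Paley--Wiener theorem cited in Proposition \ref{PWinput} forces the equality of endoscopic transfers $\td f_{v,0}^G = \wtd C_{v,0}^G$.

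The positivity then follows from Proposition \ref{spectralpositivity} applied to $\wtd C_{v,0}$: its stable Fourier transform takes only the values $0$ and $1$ by Corollary \ref{localC}, so it is non-negative, and its non-vanishing on at least one $A$-packet of $U^+_N(F_v)$ is supplied by the existence results of Section \ref{sec repexist}. The main obstacle is the epsilon-factor identity of the spectral step: because the non-standard choice $\psi \circ \sigma = \psi$ places the conductor of $\psi$ at $\mf D_{E_w/F_v}^{-1}$ rather than $0$, the usual triviality of unramified epsilon factors must be replaced by a more careful computation in which the conjugate self-duality of the inducing characters of an unramified principal series causes the individual epsilon factors to pair up and cancel. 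The resulting appearance of $\mf D_{E_w/F_v}$ in this calculation is precisely what produces the congruence condition in the statement of the lemma.
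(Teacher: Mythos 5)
Your constancy reduction (first paragraph) is sound, though the reference should be to part (1) of Proposition \ref{cosettransfervanishingconj+}, not part (ii). However, the spectral step contains a genuine gap that cannot be repaired without a further idea, and the "obstacle" you flag at the end is in fact fatal in the form you state it.

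The claim that $\tr_{\wtd\pi_v}(\td f_{v,0})$ is identically $+1$ on unramified conjugate self-dual $\pi_v$ is false. By Theorem~\ref{epsformulaconj} (the $N$ even, non-split case), that trace equals $\om(a)\,\eps(1/2,\pi_v)$, where $\om$ is the central character of $\pi_v$ and $\sigma(a)=-a$. For unramified $\pi_v$ the standard root number is $1$, so the trace is $\om(a)$ — and at a ramified place this can be $-1$. Concretely, take $\pi_v$ to be an unramified principal series with one inducing character equal to the unramified quadratic character of $E_w^\times$: such a representation is conjugate self-dual and has $\om(\varpi_w)=-1$. The hoped-for cancellation "because the conjugate self-duality of the inducing characters causes the individual epsilon factors to pair up" is exactly what fails here, because unramified conjugate self-dual \emph{does not force pairing}: order-two unramified characters are their own duals and can appear as unpaired singletons. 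As a result $\td f_{v,0}$ is not trace-positive, the asserted equality $\td f_{v,0}^G = \wtd C_{v,0}^G$ is false (the traces differ by the non-constant factor $\om(a)$), and Proposition~\ref{spectralpositivity} cannot be applied to $\td f_{v,0}$.

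The missing ingredient is the $\delta_a$-twist. The paper applies Proposition~\ref{spectralpositivity} to $\delta_a \star \td f_{v,0}$, whose trace against unramified conjugate self-dual $\wtd\pi_v$ is $\om(a)^2 = \om(-1)^{-1} = 1$ (since $\om$ unramified kills $-1 \in \mc O_{E_w}^\times$), so this function genuinely is trace-positive. One then transports back to $\td f_{v,0}$ via Lemma~\ref{lem U+tfcenter}, which gives $(\delta_a \star \td f_{v,0})^G(1) = \td f_{v,0}^G(a/\bar a) = \td f_{v,0}^G(-1)$, and the constancy from your first paragraph finishes the job. Without the $\delta_a$-twist your spectral comparison to $\wtd C_{v,0}$ breaks down at the first step.
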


\begin{proof}
By Proposition \ref{cosettransfervanishingconj+} (1), it suffices to show that $\td f_{v,0}^G(-1) > 0$. However, as in Theorem \ref{globalepsilontraceconj}, the function $\delta_a \star \td f_{v,0}$ is trace-positive since unramified representations have trivial standard root number. Therefore, since conjugate self-dual unramified representations of $\GL_N(E_w)$ exist and are both conjugate-orthogonal and -symplectic, Proposition \ref{spectralpositivity} shows that $(\delta_a \star \td f_{v,0})^G(1) > 0$. 

Finally, by Lemma \ref{lem U+tfcenter}, $(\delta_a \star \td f_{v,0})^G(1) = \td f_{v,0}^G(a/\bar a) = \td f_{v,0}^G(-1)$. 
\end{proof}

Finally, in the case of $v$ split:
\begin{lem}\label{cosettransfervanishingsplit}
Consider the same setup as Proposition \ref{cosettransfervanishingconj} except that $v$ now splits in $E$ and we allow odd $N$. Then we can choose
\[
(\td f_{v,k})^G =  \bar \1_{K_{1,v}(k)}.
\]
In particular
\[
\td f_{v,1}^G(\gamma) = \begin{cases}
\vol(K_{1,v}(k))^{-1} & \gamma \in \mc O_{E_w}^\times \text{ and } \gamma \equiv 1 \pmod{\mf p^k_v} \\
0 & \text{ else}
\end{cases}.
\]
\end{lem}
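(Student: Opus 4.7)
The plan is to verify the transfer identity at split places via spectral considerations, exploiting the triviality of $\bar\tau_c$ at split places from Theorem \ref{epsformulaconj} and matching traces against the tempered spectrum.

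First, I would unravel the structure at a split place. Writing $E\otimes_F F_v \cong F_v \oplus F_v$ gives $G_{N,v} = \GL_N(F_v)^2$ with $\bar\theta$ swapping the two factors (composed with $\theta = \Ad_{w_N}\circ(-)^{-T}$), while $G_v = U_N^{E/F}(F_v)$ becomes $\GL_N(F_v)$ via projection to the first factor. Under base change, any conjugate self-dual irreducible $\pi_v$ has the form $\rho\boxtimes\rho^\vee$ for a unique irreducible $\rho$ of $\GL_N(F_v)$; moreover, at split places the local $U_N$-packets are singletons, and the packet corresponding to $\tilde\pi_v$ is exactly $\{\rho\}$. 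Mok's endoscopic character identity \cite[Thm 3.2.1(a)]{Mok15} therefore collapses to
\[
\tr_\rho\lf(\td f_{v,k}^G\ri) = \tr_{\td\pi_v}(\td f_{v,k})
\]
for every tempered $\rho$, so it suffices to verify this identity with $\bar\1_{K_{1,v}(k)}$ in place of $\td f_{v,k}^G$.

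Next, I would compute both sides. For the right-hand side, set $c := c(\pi_v) = c(\rho)$. Proposition \ref{twistedtraceoldvectorsconj} (split case) gives
\[
\tr_{\td\pi_v^{K_{1,v}(c+j)}}\lf(J_{v,c+j}^{-1} w_N \rtimes \bar\theta\ri) = \bar\tau_c(\varphi_v)\binom{j+N-1}{N-1},
\]
and Theorem \ref{epsformulaconj} yields $\bar\tau_c(\varphi_v) = 1$ at split places, so $\tr_{\td\pi_v}(\td f_{v,k})$ equals $\binom{k-c+N-1}{N-1}$ if $c \leq k$ and vanishes otherwise. For the left-hand side, the standard oldform dimension formula for $\GL_N$ gives $\tr_\rho(\bar\1_{K_{1,v}(k)}) = \dim\rho^{K_1(\fp_v^k)}$, which has the same piecewise expression. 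The two traces therefore agree on all tempered $\rho$, so $\bar\1_{K_{1,v}(k)}$ and any representative of $\td f_{v,k}^G$ differ by a distribution with vanishing stable orbital integrals on strongly regular semisimple elements. Since the transfer is only defined up to such an ambiguity, we may take $\td f_{v,k}^G := \bar\1_{K_{1,v}(k)}$.

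Finally, the evaluation at central $\gamma$ is immediate: identifying $Z_{G_v}$ with scalar matrices $\gamma\cdot I_N$ in $\GL_N(F_v) = G_v$ (so that $\mc O_{E_w}^\times = \mc O_{F_v}^\times$), the scalar $\gamma I_N$ lies in $K_1(\fp_v^k)$ exactly when $\gamma \in \mc O_{F_v}^\times$ and the mirahoric condition on the bottom row forces $\gamma \equiv 1 \pmod{\fp_v^k}$; the normalization $\bar\1 = \vol(\cdot)^{-1}\1$ then gives the stated formula. The main point requiring care is the justification that trace agreement against the tempered dual determines the transfer up to the allowed ambiguity; this is standard given that strongly regular stable orbital integrals are recovered from tempered characters via Plancherel inversion, and is consistent with the spectral characterization of transfer used throughout the paper.
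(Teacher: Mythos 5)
Your argument is correct but takes a genuinely different route from the paper. The paper's proof is a one-line citation of \cite[Lem.\ 6.1.1(3)]{DGG22}, which computes the transfer at split places directly on the geometric side, using that the twisting element $J_{v,k}^{-1}w_N\rtimes\bar\theta$ is an involution (up to center) normalizing $K_{1,v}(k)$: at a split place the ``twisted'' conjugacy classes on $G_{N,v}\cong\GL_N(F_v)^2$ match untwisted conjugacy classes on $\GL_N(F_v)$ in the most naive way, so the transfer of a normalized coset indicator is the normalized indicator of the fixed-point subgroup with no endoscopic corrections. You instead run a spectral argument: identify $U_N(F_v)\cong\GL_N(F_v)$, observe that base-change packets at split places are singletons $\{\rho\}\leftrightarrow\rho\boxtimes\rho^\vee$, reduce Mok's character identity to the equality $\tr_\rho(\td f_{v,k}^G)=\tr_{\td\pi_v}(\td f_{v,k})$, then verify it for $\bar\1_{K_{1,v}(k)}$ by matching Reeder's oldform dimension $\binom{k-c+N-1}{N-1}$ against the split case of Proposition \ref{twistedtraceoldvectorsconj} (where $\bar\tau_c\equiv 1$ by Theorem \ref{epsformulaconj}). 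Both are valid. The geometric approach is shorter and needs no uniqueness-of-transfer input; the spectral route shifts the burden onto the (standard but non-trivial) assertion that agreement of tempered characters forces agreement of strongly regular orbital integrals, plus a small normalization point that the Whittaker-normalized extension $\td\pi_v$ used in Mok's identity agrees with the one used in Proposition \ref{twistedtraceoldvectorsconj}. That said, there is no circularity (the inputs you cite are all upstream of this lemma), and your closing evaluation at central $\gamma$ is correct. One remark: your approach is arguably more robust, since it would still go through if the twisting element failed to literally be an involution for $N$ even (it actually squares to $(-1)^{N-1}I$, a center element), a detail the paper's one-line citation elides.
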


\begin{proof}
This follows from Lemma 6.1.1(3) in \cite{DGG22} using that $J_{v,k}^{-1} w_N \rtimes \theta$ is an involution that normalizes $K_{1,v}(k)$. 
\end{proof}

\subsection{Global Result}
We now put these together to get a global vanishing result:

\begin{cor}\label{Etransfervanishingconj}
Let $N$ be even and consider $G = U_{N,+} \in \wtd{\mc E}_\sm(N)$. Fix a conductor 
\[
\mf n = \prod_v \mf p_v^{k_v}
\]
globally indexed as in \S\ref{sec conductorsconj}. If there exists a ramified place $v$ of $F$ such that either:
\begin{itemize}
    \item $k_v$ is half-integral,
    \item $k_v > N/2$,
\end{itemize}
then $(\wtd E^\infty_\mf n)^G(\gamma) = 0$ for all $\gamma \in Z_G(F)$.
\end{cor}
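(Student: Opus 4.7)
The plan is to exploit the factorization of $(\wtd E^\infty_\mf n)^G$ over places and show that at the prescribed ``bad'' ramified place $v_0$, the local factor $(\wtd E_{v_0, k_{v_0}})^G$ already vanishes identically on $Z_{G_{v_0}}$. Because transfer commutes with central translation (via the transformation property after \cite[5.1.C]{KS99}, which is exactly the content of Lemma \ref{lem U+tfcenter} applied to $b = a^{N-1} \in Z_{G_{N,v_0}}$ whose image in $Z_{G_{v_0}}$ is $a^{N-1}/\overline{a^{N-1}} = (-1)^{N-1} = -1$), the $\delta_{a^{N-1}}\star$ twist at non-split places only shifts the argument within $Z_{G_{v_0}}$ and does not disturb vanishing on the whole centre. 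Similarly, any $\gamma \in Z_G(F)$ projects to an element of $Z_{G_v}$ at every place, so a single bad-place vanishing is enough.

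It thus remains to analyse the local factor at $v_0$. Writing $c = 2k_{v_0}$ for the local index at the ramified place (as in \S\ref{sec conductorsconj}), Proposition \ref{Eexplicitformula} expands
\[
\wtd E_{v_0, c} = \sum_{i=0}^{N/2} (-1)^i \binom{N/2}{i} \td f_{v_0,\, c-2i},
\]
so it suffices to show $(\td f_{v_0,\,c-2i})^G(\gamma) = 0$ for every $\gamma \in Z_{G_{v_0}}$ and every $i$ appearing in the sum. The bad-place hypothesis splits into two cases matching the two clauses of the corollary: if $k_{v_0}$ is half-integral then $c$ is odd, so every local index $c-2i$ is odd and positive; if $k_{v_0}$ is integral with $k_{v_0} > N/2$, then $c$ is even and $c - N > 0$, so every $c - 2i$ is strictly positive.

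In the odd-$c$ case, Proposition \ref{cosettransfervanishingconj}(2) says that the support of $\td f_{v_0, c-2i}^G$ can only meet the stable class of a central $\gamma$ if $c-2i$ is even. Since this is violated for every $i$, condition (2) fails uniformly, and Corollary \ref{cosettransfervanishingconjcor} forces $\td f_{v_0, c-2i}^G(\gamma) = 0$ on all of $Z_{G_{v_0}}$. In the even-$c$, $c>N$ case, we instead use Proposition \ref{cosettransfervanishingconj+}(5): at a ramified place with $k > 0$, $\td f_{v_0, k}^G$ vanishes on every $\gamma \in Z_{G_{v_0}}$ that satisfies the Proposition \ref{cosettransfervanishingconj} support condition, while Corollary \ref{cosettransfervanishingconjcor} handles all other $\gamma \in Z_{G_{v_0}}$. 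Either way, the linear combination $(\wtd E_{v_0, c})^G$ vanishes identically on $Z_{G_{v_0}}$, hence so does the twist $(\delta_{a^{N-1}} \star \wtd E_{v_0, c})^G$, and therefore so does the global product.

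I do not expect a genuine obstacle here since the hard local work (the Shalika-germ/transfer-factor descent leading to Propositions \ref{cosettransfervanishingconj} and \ref{cosettransfervanishingconj+}) has been done; the only subtlety to watch is the factor of two between local and global indexing of ramified conductors, which is what turns ``$k_{v_0}$ half-integral'' into ``$c$ odd'' and ``$k_{v_0} > N/2$'' into ``$c > N$'', and the bookkeeping with the $\delta_{a^{N-1}}$ twist at non-split places.
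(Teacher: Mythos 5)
Your proof is correct and takes essentially the same route as the paper's: expand $\wtd E^\infty_\mf n$ via Proposition~\ref{Eexplicitformula}, peel off the $\delta_{a^{N-1}}$ twist using Lemma~\ref{lem U+tfcenter}, and reduce to the local vanishing of each constituent $\td f_{v_0, c-2i}^G$ on $Z_{G_{v_0}}$, established by Proposition~\ref{cosettransfervanishingconj+} together with Corollary~\ref{cosettransfervanishingconjcor}. The paper phrases this in one sweep (``all the $\td f_{\mf n'}$ appearing in $\wtd E^\infty_\mf n$ have $v(\mf n')>0$ at a ramified place''), while you separate the half-integral case (where condition~(2) of Proposition~\ref{cosettransfervanishingconj} fails for every level appearing) from the $c>N$ even case (where every level appearing is positive and Proposition~\ref{cosettransfervanishingconj+} gives vanishing directly); both branches are doing exactly what the paper's single citation is quietly relying on, so the extra explicitness is harmless and arguably clarifying. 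The bookkeeping you flag — the factor of two between local and global ramified indexing, and the harmlessness of the central twist — is indeed the only thing one must watch, and you handle both correctly.
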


\begin{proof}
Recall the choice of $a \in E^\times$ such that $a/\bar a = -1$ from Definition \ref{def globalE}. Let $\td f_{\mf p_v^k}$ be as in Proposition \ref{cosettransfervanishingconj} except globally indexed. Then $\wtd E^\infty_\mf n$ is a linear combination of terms of the form
\[
\td f_{\mf n'} = \prod_{v \text{ split}}  \td f_{\mf p_v^{k_v}} \prod_{v \text{ ns}} \delta_{a^{N-1}} \star \td f_{\mf p_v^{k_v}}. %\prod_{v \text{ ram.}} \delta_{a^{N-1}} \star \td f_{v,2v(\mf n')}.
\]
First, note that by Lemma \ref{lem U+tfcenter}, for all test functions $\wtd \varphi$ on $\wtd G_{N,v}$,
\[
(\delta_{a^{N-1}} \star \wtd \varphi)^G(\gamma) = \wtd \varphi^G((-1)^{N-1}\gamma).
\]
Therefore, by Proposition \ref{cosettransfervanishingconj+}, we have $\td f_{\mf n'}^G(\gamma) = 0$ for all $\gamma \in Z_{G,v}$ if there is a ramified $v$ such that $v(\mf n') > 0$. By Proposition \ref{Eexplicitformula}, the bulleted conditions exactly guarantee that all of the $\td f_{\mf n'}$ making up $\wtd E^\infty_\mf n$ satisfy this. 
\end{proof}

Unlike in the self-dual case of Corollary~\ref{Etransfervanishing}, without the conditions on $\mf n$, there can be multiple summands $\td f_{\mf n'}$ with non-zero transfers appearing with different signs. We of course expect the terms \eqref{aa} for these $\td f_{\mf n'}$ to have very different magnitudes and therefore not cancel. However, the techniques established here do not allow us to computing fine enough information on the magnitudes of transfers to show this. We therefore give a partial non-vanishing result which will allow us, in some instances, to describe the bias in signs as the weight varies:
\begin{cor}\label{Etransfernonvanishingconj}
Consider the setup of Corollary \ref{Etransfervanishingconj} and assume that $\mf n$ does not satisfy the conditions therein. Let $\mf n_\ur$ be the part of $\mf n$ without any ramified factors. Then
\[
\prod_{v \nmid \infty} (-1)^{k_v} \sum_{\gamma \in Z_G(F)} \om(\gamma) (\wtd E^\infty_\mf n)^G(\gamma) > 0 
\]
for any character $\om$ on $Z_{G} \subseteq E^\times$ such that $\om$ is trivial on $(1 + \mf n_\ur \mf D_{E/F})\cap \mc O_E^\times$ and non-trivial on $(1 + \mf n' \mf D_{E/F} ) \cap \mc O_E^\times$ for all proper $\mf n' | \mf n_\ur$. 
\end{cor}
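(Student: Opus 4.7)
The plan is to use character orthogonality of $\om$ on the finite group $\mu(E) \cap Z_G(F)$ to reduce the sum to a single ``top'' term, and then to verify its sign explicitly.

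First, I would expand $(\wtd E^\infty_\mf n)^G$ restricted to $Z_G(F)$ as a linear combination of indicator functions of principal congruence subgroups. Using Proposition \ref{Eexplicitformula} together with Definition \ref{def globalE}, write each local factor $\wtd E_{\mf p_v^{k_v}}$ (twisted by $\delta_{a^{N-1}}$ at non-split $v$) as a signed binomial sum of coset indicators $\td f_{v,j}$. Taking endoscopic transfers and evaluating at central $\gamma$, apply Propositions \ref{cosettransfervanishingconj}, \ref{cosettransfervanishingconj+}, Lemma \ref{cosettransfervanishingramk0}, and Lemma \ref{cosettransfervanishingsplit} to rewrite each local contribution as (explicit sign) $\times$ (positive constant) $\times$ (indicator of a local congruence condition). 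In particular, at ramified $v$ only the local-index-zero term survives by Proposition \ref{cosettransfervanishingconj+}(4), contributing the coefficient $(-1)^{k_v}\binom{N/2}{k_v}$ (which is well-defined and nonzero by our assumption that $\mf n$ fails the conditions of Corollary \ref{Etransfervanishingconj}) times a positive transfer. Globally, each term of the expansion has support $\gamma \equiv 1 \pmod{\mf n'\mf D_{E/F}}$ for some divisor $\mf n'|\mf n_\ur$ determined by the local shifts.

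Next, I would reduce the sum to a finite one. Any $\gamma \in Z_G(F)$ in the support of $(\wtd E^\infty_\mf n)^G$ is a norm-$1$ algebraic unit in $E$, hence has absolute value $1$ at every archimedean place; by Kronecker's theorem $\gamma \in \mu(E)$. Applying orthogonality of characters on the finite group $\mu(E) \cap Z_G(F)$: for each divisor $\mf n' \subsetneq \mf n_\ur$, the hypothesis that $\om$ is nontrivial on $(1 + \mf n'\mf D_{E/F}) \cap \mc O_E^\times$ implies that $\sum_\gamma \om(\gamma)\mathbb 1_{\gamma \equiv 1 \pmod{\mf n'\mf D_{E/F}}}$ vanishes; for $\mf n' = \mf n_\ur$, $\om$ is trivial on the support and the sum equals the positive quantity $|\mu(E) \cap (1 + \mf n_\ur \mf D_{E/F})|$. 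Only the ``top'' term survives, corresponding to $i_v = 0$ at every split and inert $v$ and the unique forced contribution at each ramified $v$.

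Finally, I would compute the sign of this surviving term. The top-term contribution is $+1$ at split $v$ (Lemma \ref{cosettransfervanishingsplit}), has sign $(-1)^{k_v}$ at inert $v$ (Proposition \ref{cosettransfervanishingconj+}(2)), and has sign $(-1)^{k_v}$ at ramified $v$ (from the binomial $(-1)^{k_v}\binom{N/2}{k_v}$, the transfer being positive by Lemma \ref{cosettransfervanishingramk0}). Multiplying by the global prefactor $\prod_{v\nmid\infty}(-1)^{k_v}$ squares out the sign contributions at non-split places, leaving a positive quantity times a product of positive volume and binomial factors.

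\emph{Main obstacle.} The main technical point is rigorous bookkeeping of signs — in particular, making sure that the $(-1)^{k_v}$ factors at inert and ramified places correctly combine with the global prefactor and that no spurious sign survives at split places, where the $a^{N-1}$-twist is absent. A secondary subtlety is verifying that the hypothesis ``$\om$ is nontrivial on $(1+\mf n'\mf D_{E/F})\cap\mc O_E^\times$'' does translate into nontriviality on the restriction to $\mu(E)\cap(1+\mf n'\mf D_{E/F})$, which is what is actually needed to kill the non-top terms via orthogonality.
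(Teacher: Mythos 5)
Your approach matches the paper's: expand $\wtd E^\infty_\mf n$ as a signed linear combination of the coset indicators $\td f_{\mf n'}$ via Proposition \ref{Eexplicitformula}, use the orbital-integral computations of \S\ref{sec fvkconj} to reduce each $(\td f_{\mf n'})^G$ on $Z_G(F)$ to a constant supported on a congruence coset, kill all but the $\mf n' = \mf n_\ur$ term by pairing against $\om$, and then track signs. This is precisely what the paper does, so the overall strategy is right.

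However, your sign bookkeeping at \emph{split} places is wrong, and this is a genuine gap. You assert that the contribution at split $v$ has sign $+1$ (via Lemma \ref{cosettransfervanishingsplit}, where $(\td f_{v,k})^G = \bar\1_{K_{1,v}(k)} \geq 0$) and that the global prefactor $\prod_{v\nmid\infty}(-1)^{k_v}$ ``squares out the sign contributions at non-split places.'' But the prefactor runs over \emph{all} finite $v$, including split ones, where you claim there is nothing to cancel. After the squaring at inert and ramified places, what you are left with is $\prod_{v\text{ split}}(-1)^{k_v}$ times a positive quantity, which need not be positive. Either the split local contribution actually does carry a sign $(-1)^{k_v}$ (which is not what Lemma \ref{cosettransfervanishingsplit} as stated gives, so you would need to revisit the normalization of transfer factors there or the sign conventions for the twisted action at split $v$), or your conclusion is off by $\prod_{v\text{ split}}(-1)^{k_v}$. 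The paper's own proof is terse on exactly this point — it refers to ``$\prod_{v\text{ ur}}(-1)^{k_v}$'' without specifying whether ``ur'' includes split places — but your writeup makes the error concrete by explicitly declaring the split contribution to be $+1$ and then claiming the prefactor yields something positive. You need to resolve this.

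Two smaller points. First, the citation ``Proposition \ref{cosettransfervanishingconj+}(4)'' for the vanishing at ramified $v$ with $k>0$ is the wrong item; the paper's own proof refers to item (3) of that proposition. Second, you correctly flag the subtlety that the hypothesis on $\om$ (nontriviality on $(1+\mf n'\mf D_{E/F})\cap\mc O_E^\times$) must be shown to imply nontriviality on the finite group $\mu(E)\cap Z_G(F)\cap(1+\mf n'\mf D_{E/F})$ that actually indexes the sum. This is exactly what is needed for the orthogonality step to kill the proper-divisor terms; it is worth spelling out, since the translation between the congruence subgroup of $\mc O_E^\times$ and its intersection with the norm-one roots of unity is not automatic.
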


\begin{proof}
Recall the notation in the proof of Corollary~\ref{Etransfervanishingconj}, in particular the choice of $a \in E^\times$ such that $\bar{a}/a=-1$. As in that proof, by Lemma~\ref{lem U+tfcenter}, all test functions $\wtd \varphi$ on $\wtd G_{N,v}$ satisfy 
\[
(\delta_{a^{N-1}} \star \wtd \varphi)^G(\gamma) = \wtd \varphi^G((-1)^{N-1}\gamma).
\]
The function $\wtd E^\infty_\fn$ is a linear combination of $f_{\mf n'}$ for conductors $\mf n' | \mf n$.  If $\mf n'$ has a ramified divisor, Proposition \ref{cosettransfervanishingconj+} (3) and Corollary \ref{cosettransfervanishingconjcor} give that $f^G_{\mf n'}(\gamma) = 0$ for all $\gamma$.

If $\mf n' \mid \fn$ has no ramified divisors, multiply together the appropriate cases of Corollary \ref{cosettransfervanishingconjcor}, Proposition \ref{cosettransfervanishingconj+} (1), and Lemmas \ref{cosettransfervanishingsplit} and \ref{cosettransfervanishingramk0} over all $v$. This gives that for $\gamma \in Z_G$, $\td f_{\mf n'}(\gamma)^G$ is non-zero and constant for $\gamma \in \mc O_E^\times$ with $\gamma \equiv -1 \pmod{\mf n' \mf D_{E/F}}$ and zero otherwise (using the $k=0$ case of Lemma \ref{lem OEcoboundaries} to find the image of $\phi$). Therefore, for such $\mf n'$
\begin{equation}\label{aa}
\sum_{\gamma \in Z_G(F)} \om(\gamma) \td f_{\mf n'}^G(\gamma) \neq 0
\end{equation}
whenever $\om$ is trivial on $(1 + \mf n' \mf D_{E/F}) \cap \mc O_E^\times$ and is $0$ otherwise. Furthermore, when this term is non-zero, Proposition \ref{cosettransfervanishingconj+}(2) shows that it has sign $\prod_{v \text{ ur}} (-1)^{k_v}$. 

If $\mf n$ does not satisfy the conditions of Corollary \ref{Etransfervanishingconj}, then Proposition \ref{Eexplicitformula} says that there is $\td f_{\mf n'}$ appearing in the linear combination defining $\wtd E^\infty_\mf n$ with $\mf n'$ having no ramified divisors. Note that all such $\mf n'$ divide $\mf n_\ur$. The conditions on $\om$ guarantee that \eqref{aa} vanishes for all of these $\mf n'$ except $\mf n_\ur$ itself. Therefore the result follows after noting that $\td f_{\mf n_\ur}$ appears with coefficient $\prod_{v \text{ ram.}} (-1)^{k_v}$ in $\wtd E^\infty_\mf n$. 
\end{proof}

Of course, we expect the full result:

\begin{conj}\label{expectedEtransfernonvanishingconj}
Corollary \ref{Etransfernonvanishingconj} should hold for all $\om$ that are trivial on $(1 + \mf n_\ur \mf D_{E/F}) \cap \mc O_E^\times$. 
\end{conj}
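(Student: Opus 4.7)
The plan is to strengthen Corollary \ref{Etransfernonvanishingconj} by directly evaluating the period
\[
P_\om(\mf n) := \sum_{\gamma \in Z_G(F)} \om(\gamma)\,(\wtd E^\infty_\mf n)^G(\gamma)
\]
and showing it is non-zero under the weaker hypothesis on $\om$. First, as in the proof of the corollary, expand $(\wtd E^\infty_\mf n)^G = \sum_{\mf n' \mid \mf n_\ur} c_{\mf n'}\,\td f_{\mf n'}^G$ using Prop \ref{Eexplicitformula}; only divisors $\mf n' \mid \mf n_\ur$ contribute, because ramified places kill any term with $k'_v > 0$ by Prop \ref{cosettransfervanishingconj+}(3) and Cor \ref{cosettransfervanishingconjcor}. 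Each transfer $\td f_{\mf n'}^G$ restricted to $Z_G(\A^\infty)$ is, up to an explicit sign $\sigma_{\mf n'}$ and a positive volume constant, the indicator of a compact-open subgroup $U_{\mf n'} \subseteq Z_G(\A^\infty)$ cut out by the local congruence conditions from Prop \ref{cosettransfervanishingconj}, Lemma \ref{cosettransfervanishingsplit}, and Lemma \ref{cosettransfervanishingramk0}. These subgroups are nested, shrinking as $\mf n'$ grows toward $\mf n_\ur$.

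By orthogonality of characters, each slice $\sum_{\gamma \in Z_G(F) \cap U_{\mf n'}} \om(\gamma)$ equals $|Z_G(F) \cap U_{\mf n'}|$ when $\om$ is trivial on $Z_G(F) \cap U_{\mf n'}$, and vanishes otherwise. Let $\mf n^* \mid \mf n_\ur$ be the smallest divisor for which the former holds; the hypothesis of the conjecture guarantees $\mf n^*$ exists and divides $\mf n_\ur$. The conjecture then reduces to proving
\[
\sum_{\mf n^* \mid \mf n' \mid \mf n_\ur} \sigma_{\mf n'}\,c_{\mf n'}\,|Z_G(F) \cap U_{\mf n'}| \neq 0,
\]
with sign matching the one stated in Corollary \ref{Etransfernonvanishingconj}. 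A natural attack is to factor this sum place-by-place: by strong approximation for the anisotropic torus $U_1(E/F)$, identify $Z_G(F) \cap U_{\mf n'}$ with a product of local subgroups modulo a finite ray-class correction, so that $|Z_G(F) \cap U_{\mf n'}|$ becomes a product of local indices $[U_v(k^*_v) : U_v(k'_v)]$ up to an $\mf n'$-independent constant. Combined with the multiplicative structure of $c_{\mf n'}$ and $\sigma_{\mf n'}$, the sum would then factor as a product over places, with each local factor an Euler-type combinatorial identity generalizing the one used in the proof of Prop \ref{Eexplicitformula}.

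The main obstacle is this local-global factorization. While strong approximation identifies $Z_G(\A^\infty)/Z_G(F) U_{\mf n'}$ with a ray class group attached to $E$, the dependence of $|Z_G(F) \cap U_{\mf n'}|$ on $\mf n'$ is tangled with contributions from $\mc O_E^\times$ and the class group of $E$, which do not factor multiplicatively over the places dividing $\mf n'$. A secondary difficulty is tracking the signs $\sigma_{\mf n'}$ explicitly enough at wildly ramified places (where Lemma \ref{cosettransfervanishingramk0} was established only through a spectral positivity argument) to verify that all contributing terms share the sign $\prod_v (-1)^{k_v}$. Disentangling these contributions to produce a clean product decomposition is essentially the same technical difficulty preventing the level-aspect asymptotics referenced in \S\ref{sec fulloutline}, and is likely to require ingredients from \cite{KY12} and \cite{TW24}.
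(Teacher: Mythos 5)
The statement you were asked to prove is labeled as a \emph{conjecture} in the paper; the authors explicitly state after Theorem~\ref{mainthmconj} that they cannot prove it, so there is no paper proof to compare against. Your submission is correctly and honestly presented as a strategy sketch rather than a proof, and it does not close the gap. The setup is right: expand $(\wtd E^\infty_\mf n)^G$ as a signed linear combination of $\td f_{\mf n'}^G$ for $\mf n' \mid \mf n_\ur$ via Proposition~\ref{Eexplicitformula}, use Proposition~\ref{cosettransfervanishingconj+}(1) together with Lemmas~\ref{cosettransfervanishingsplit} and~\ref{cosettransfervanishingramk0} to reduce each term to a constant times $\sum_{\gamma\in Z_G(F),\,\gamma\equiv -1\,(\mf n'\mf D_{E/F})}\om(\gamma)$, and apply orthogonality. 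You also correctly identify the real obstruction: once $\om$ is merely trivial on $(1+\mf n_\ur\mf D_{E/F})\cap\mc O_E^\times$, several divisors $\mf n'$ survive, their local signs alternate (at unramified $v$ the total sign of the coefficient times the transfer is $(-1)^{(k_v + v(\mf n'))/2}$ up to the binomial magnitude), and the counting factors $|Z_G(F)\cap U_{\mf n'}|$ are controlled by which roots of unity of $E$ land in congruence subgroups $1+\mf n'\mf D_{E/F}$ and have no reason to factor multiplicatively over places dividing $\mf n'$.

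Two points are worth sharpening. First, the slices are cosets $-1\cdot(1+\mf n'\mf D_{E/F})$, not subgroups, so the orthogonality step carries an extra $\om(-1)$ and you must verify the ``smallest $\mf n^*$'' is well-defined along the divisibility lattice rather than just existing; this is fixable bookkeeping. Second, even granting a product-over-places decomposition, the resulting identity is not a clean Euler-type cancellation like the one underlying Proposition~\ref{Eexplicitformula}, because the ratios $|Z_G(F)\cap U_{\mf n'}|/|Z_G(F)\cap U_{\mf n''}|$ are global ray-class data and not local indices; one would instead need an inequality showing the extremal divisor's contribution dominates. You correctly observe that this is the same class of quantitative control over orbital integrals of $\wtd E^\infty_\mf n$ that the paper defers for the level-aspect asymptotics and attributes to techniques from \cite{KY12} and \cite{TW24}. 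Your proposal is directionally consistent with that assessment, but as written it is an outline of where a proof would have to go, not a proof.
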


\section{A Spectral Argument and \lm{$(\wtd C^\infty_\mf n)^G$}} \label{sec stable plancherel}

We will also need to understand the main term \eqref{mainterm} for $(\wtd C^\infty_\mf n)^G$. Since $C_\mf n^\infty$ was defined spectrally, this will necessarily happen through a spectral argument:
\subsection{A Stable Fourier Inversion Formula}\label{stableinversion}

\subsubsection{Setup}\label{sec:inversionsetup}
Specifically, we use a ``stable'' Fourier inversion result for a formally defined ``stable'' Fourier transform. Fix $G \in \wtd{\mc E}_\sm(N)$ in either the self-dual or conjugate self-dual case and a finite set of finite places $S$. Let $\check G_S^\temp$ be the tempered unitary dual of $G_S$.

Let $\check G_S^{\st, \temp}$ be the set of tempered $L$-packets/parameters for $G_S$ (or in the case of $\SO_{2n}$, outer-automorphism orbits of these). Since tempered $L$-packets partition tempered representations by either \cite[Thm 1.5.1(b)]{Art13} or \cite[Thm 2.5.1(b)]{Mok15}, there is a map 
\[
\rho : \check G_S^\temp \to \check G_S^{\st, \temp}.
\]
Associated to $\rho$, we have pushforward and normalized pullback maps between functions on $\check G_S^\temp$ and those on $\check G_S^{\st, \temp}$:
\[
\rho_* f(\Pi) = \sum_{\pi \in \rho^{-1}(\Pi)} f(\pi), \qquad \bar \rho^*f(\pi) = |\rho^{-1}(\rho(\pi))|^{-1} f \circ \rho(\pi),
\]
where $|\rho^{-1}(\rho(\pi))|$ is the cardinality of the packet $\Pi$ containing $\pi$.
Dual to these we also have pullback and normalized pushforward between functionals on both spaces, denoted $\rho^*, \bar \rho_*$. Note that for all functions $f$ and functionals $\mu$,
\[
\rho_* \bar \rho^* f = f, \qquad \bar \rho_* \rho^* \mu = \mu.
\]
Conversely, $\bar \rho^* \rho_* f$ and $\rho^* \bar \rho_* \mu$ can be though of as averaging over $L$-packets. 

Next, for any $U_S \subseteq G_S$ such that $U_S \cap Z_{G_S}$ is compact, define
\[
\mc{PW}(G_S, U_S) = \{ \wh f : \check G^\temp_S \to \C : f \in \mc C_c^\infty(U_S)\},
\]
the Paley-Wiener space of Fourier transforms of smooth, compactly supported functions with support in $U_S$. We can similarly define an unrestricted $\mc{PW}(G_S)$ of all compactly supported functions. By looking at the image under pushforward, we also define $\mc{PW}^\st(G_S, U_S)$ and $\mc{PW}^\st(G_S)$.

To avoid the use of the Ramanujan conjecture\footnote{this proven for regular integral infinitesimal character at infinity in many of our cases in \cite{Shin11}, \cite{KS20}, \cite{KS23} through constructing associated Galois representations. However, these results are not yet comprehensive so we will instead use a lower-tech argument.}, we need to extend these constructions to the non-tempered specturm in way that is consistent with the endoscopic classification. Since non-tempered $L$-packets are not necessarily made up of irreducible representations, we need to work with a modified Fourier transform for non-tempered $\pi$:
\[
\wh f(\pi) := \tr_{\mc I_\pi} f
\]
where $\mc I_\pi$ is the unique parabolic induction of a tempered representation tensor a character that has $\pi$ as a Langlands quotient. Then, using the definition of non-tempered $L$-packets (see e.g. \cite[pp. 44-45]{Art13}), there is an extension our map
\[
\rho: \check G_S \to \check G_S^\st
\]
to have image in the space of all $L$-parameters such that for $f \in \mc C_c^\infty(G_S)$
\[
\rho_* \wh f(\Pi) = \tr_\Pi f
\]
for even non-tempered $L$-packets $\Pi$. Of course, our modified $\wh f$ still satisfies the Fourier inversion formula since it is unchanged on the support of $\mu^\pl$.

\subsubsection{Global Input}\label{stableinversionglobalinput}

Now we are ready for the main proof:
\begin{prop}\label{stablefourierinversioninput}
Let $G \in \wtd{\mc E}_\sm(N)$ in either the self-dual ($G \neq \SO_2)$ or conjugate self-dual case. Let $S$ be a finite set of finite places of $F$ and $U_S \subseteq G_S$ such that~$U_S \cap Z_{G_s}$ is compact. Then there exists a sequence of measures $\mu_n$ on $\check G_S^{\st}$ such that for all~$\wh f_S \in \mc{PW}(G_S, U_S)$, we have
\[
\lim_{n \to \infty} \rho^* \mu_n(\wh f_S) = \mu_{G_S}^\pl(\wh f_S). 
\]
\end{prop}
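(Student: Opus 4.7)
The plan is to realize $\mu_{G_S}^\pl(\wh f_S)$ as the weight-aspect limit of a stabilized automorphic count for $G$, exploiting the fact that the stabilization forces any such count to factor through the stable unitary dual at $S$. The principal input is Theorem~\ref{seedbound}: once we isolate a single central contribution on the geometric side, it will collapse to $f_S(1) = \mu^\pl_{G_S}(\wh f_S)$ by classical local Plancherel inversion.

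To build the measures, I would fix a sequence $\lambda_n$ of regular integral infinitesimal characters with $m(\lambda_n) \to \infty$ and an auxiliary test function $f^{S,\infty}$ on $G^{S,\infty}$ with $f^{S,\infty}(1) \neq 0$ whose support has been cut down (e.g.\ by imposing a sharp congruence condition at one auxiliary finite place) so that $\operatorname{supp}(f^{S,\infty}) \cap Z_G(F) = \{1\}$. For any $h \in \mc{PW}^\st(G_S, U_S)$, lift $h$ to some $f_S^h \in \mc C_c^\infty(U_S)$ with $\rho_* \wh{f_S^h} = h$ and set
\[
\mu_n(h) := \bigl(\tau'(G)\, f^{S,\infty}(1)\, \dim(\lambda_n)\bigr)^{-1}\, S^G\bigl(\EP_{\lambda_n}\, f_S^h\, f^{S,\infty}\bigr).
\]
Well-definedness of $\mu_n$ reduces to showing that $S^G(\EP_{\lambda_n} f_S f^{S,\infty})$ depends on $f_S$ only through $\rho_* \wh f_S$: the stable spectral expansion factorizes as a product of local stable traces, and at $S$ the factor $\tr^{G_S}_{\psi_S}(f_S)$ sees only the $L$-packet $\rho(\psi_S)$. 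For non-tempered packets this is precisely the reason for the modified definition $\wh f(\pi) := \tr_{\mc I_\pi} f$ of \S\ref{sec:inversionsetup}.

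Next I would apply Theorem~\ref{seedbound} to obtain, for each fixed $h$,
\[
(\dim \lambda_n)^{-1}\, S^G(\EP_{\lambda_n}\, f_S^h\, f^{S,\infty}) = \Lambda(G, \omega_{\lambda_n}, f_S^h f^{S,\infty}) + O(m(\lambda_n)^{-C}),
\]
with $S$-dependence absorbed into the implicit constant since $S$ is fixed (so $\kappa = 0$). Under our support condition on $f^{S,\infty}$, only $\gamma = 1$ contributes to the central sum, giving $\Lambda = \tau'(G)\, f_S^h(1)\, f^{S,\infty}(1)$. Applying classical Plancherel inversion on $G_S$ gives $f_S^h(1) = \mu^\pl_{G_S}(\wh{f_S^h})$, and after dividing by the normalization factor
\[
\rho^*\mu_n(\wh f_S) = \mu_n(\rho_* \wh f_S) \longrightarrow \mu^\pl_{G_S}(\wh f_S),
\]
as required.

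The hard part will be verifying well-definedness of $\mu_n$ as a functional on $\mc{PW}^\st(G_S, U_S)$, since this genuinely requires the full endoscopic classification to match stabilized distributions with $L$-packet traces across the entire (not only tempered) unitary dual, and it is what justifies the non-standard convention for $\wh f$ at non-tempered $\pi$ via the parabolic induction $\mc I_\pi$. A secondary, more mechanical worry is the discrete-separation argument needed to produce an $f^{S,\infty}$ simultaneously satisfying $f^{S,\infty}(1) > 0$ and $\operatorname{supp}(f^{S,\infty}) \cap Z_G(F) = \{1\}$; this is standard since $Z_G(F)$ is discrete in $Z_G(\A)^1$ and every nontrivial central element can be excluded by a sufficiently sharp congruence condition at a single auxiliary place.
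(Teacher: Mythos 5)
Your proposal is correct in spirit and in asymptotic bookkeeping, but it takes a genuinely different route from the paper, and the difference is not purely cosmetic: it is exactly where you flag "the hard part." You define $\mu_n$ from the \emph{full} stable trace $S^G$ and appeal to general stability of Arthur's stable trace formula to argue that $S^G(\EP_{\lambda_n} f_S f^{S,\infty})$ factors through $\rho_*\wh f_S$. The paper instead defines $\mu_\lambda := S^G_{\Sigma_\eta}(f_\lambda)$, the contribution of the \emph{simple shape} $\Sigma_\eta$ only, and then reads off the factoring directly from the explicit spectral expansion
\[
S^G_{\Sigma_\eta}(f_\lambda) = (\dim\lambda)^{-1}\sum_{\Pi \in \Sigma_{\eta,\lambda}} \Bigl(\sum_{\pi^{S,\infty}\in\Pi^{S,\infty}}\dim((\pi^{S,\infty})^{K^{S,\infty}})\Bigr)\Bigl(\sum_{\pi_S\in\Pi_S}\tr_{\pi_S}(f_S)\Bigr),
\]
which manifestly is a finite measure evaluated on $\rho_*\wh f_S$. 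Correspondingly, the paper obtains its asymptotics from Theorem \ref{weightasymptotics} (the recursive shape bound), not Theorem \ref{seedbound} directly.

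The advantage of the paper's restriction to $\Sigma_\eta$ is precisely that it avoids the two things your argument leaves implicit. First, for the simple shape all parameters have trivial Arthur $\SL_2$, so every packet $\Pi$ appearing is a genuine tempered $L$-packet, and the factor $\sum_{\pi_S \in \Pi_S}\tr_{\pi_S}(f_S) = \rho_*\wh f_S(\Pi_S)$ needs no interpretation. In the full $S^G$, non-trivial Arthur $\SL_2$ terms contribute stable $A$-packet traces $\tr^{G_v}_{\psi_v}(f_v)$, and showing these depend only on $\rho_*\wh f_v$ requires more than the phrase "sees only the $L$-packet $\rho(\psi_S)$" — note also that $\rho$ is defined on the unitary dual, not on parameters, so $\rho(\psi_S)$ is not well-posed as written. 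Second, $S^G$ includes continuous-spectrum contributions (in the spectral expansion) that one must either show vanish against $\EP_\lambda$ or show are stable at $S$; restricting to $\Sigma_\eta$ sidesteps this entirely, as that piece of $S^G_\disc$ has only elliptic simple parameters. Your overall plan can be repaired by either citing that $S^G$ is a stable distribution and that two functions on $G_S$ with equal $L$-packet traces (in the modified sense of \S\ref{sec:inversionsetup}) have equal stable orbital integrals, or more simply by swapping $S^G$ for $S^G_{\Sigma_\eta}$ as the paper does, at which point the well-definedness becomes a visible feature of the spectral expansion rather than an abstract stability assertion.

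One minor point on the support condition: you require $\supp(f^{S,\infty}) \cap Z_G(F) = \{1\}$, which is sufficient but slightly stronger than necessary. The paper only needs that the \emph{joint} support $U_S \times K^{S,\infty}$ meets $Z_G(F)$ at the identity, which is available thanks to the hypothesis that $U_S \cap Z_{G_S}$ is compact. This is what lets the construction work uniformly over all $f_S \in \mc C_c^\infty(U_S)$ at once, with a single auxiliary $K^{S,\infty}$, rather than adjusting $f^{S,\infty}$ per function.
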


\begin{proof}
Since $Z_{G_\infty}$ is compact for the groups $G$ under consideration, there always exists compact open $K^{S, \infty} \subseteq G^{S, \infty}$ such that $U_S \times K^{S, \infty} \cap Z_G(F) = 1$. Let $\wh f_S$ be the Fourier transform of $f_S \in \mc C_c^\infty(U_S)$. 

%\noindent \underline{Main argument}

Let $\lambda$ be a regular, integral infinitesimal character, and let
\[
f_\lb = (\dim \lb)^{-1}\EP_\lb f_S \bar \1_{K^{S, \infty}}.
\] 
Our strategy is to define 
\[ 
\mu_\lambda(\wh f_S) := S_{\Sigma_\eta}^G(f_\lambda), 
\] 
show that $\mu_\lambda$ is indeed a measure on $\check G_S$ pulled back from $\check G_S^{\st}$, and to show convergence to the Plancherel measure as $m(\lb) \to \infty$.

First, the equalities \cite[(5.3.1)]{DGG22} together with the twisted endoscopic character identities for simple $A$-parameters shows that
\begin{multline*}
\mu_\lb(f_\lb) = S^G_{\Sigma_\eta}(f_\lb) \\
= (\dim \lb)^{-1}\sum_{\Pi \in \Sigma_{\eta, \lb}} \lf(\sum_{\pi^{S, \infty} \in \Pi^{S, \infty}} \dim((\pi^{S, \infty})^{K^{S, \infty}}) \ri) \lf(\sum_{\pi_S \in \Pi_S} \tr_{\pi_S}(f_S) \ri), 
\end{multline*}
which by inspection is of the form $\rho^* \mu_\lb^\st$ for some $\mu_\lb^\st$ on $\check G^\st_S$.

For the convergence, we have from Theorem \ref{weightasymptotics} that
\[
S^G_{\Sigma_\eta}(f_\lb) = \tau'(G) f_S(1) \bar \1_{K^{S, \infty}}(1) + O(m(\lb)^{-C})
\]
using that our choice of $K^{S, \infty}$ guarantees that all non-identity terms in the 
sum \eqref{mainterm} defining $\Lambda$ vanish. Evaluating constants and using Fourier inversion shows that
\[
\lim_{m(\lb) \to \infty}S^G_{\Sigma_\eta}(\tau'(G)^{-1}\vol(K^{S, \infty}) f_\lb) = \mu^\pl(\wh f_S).
\] 
Combining gives that
\begin{equation}\label{limitinproof}
\lim_{m(\lb) \to \infty} \rho^* \mu^\st_\lb(\wh f_S) =  \tau'(G) \vol(K^{S, \infty})^{-1} \mu^\pl(\wh f_S).
\end{equation}
which is what we wanted after choosing an appropriate sequence of $\lb$.
\end{proof}

\subsubsection{Final Result}
Rephrasing this in familiar language:
\begin{thm}[Stability of Plancherel Measure]\label{stablefourierinversion}
Let $G \in \wtd{\mc E}_\sm(N)$ in either the self-dual or conjugate self-dual case and $S$ a finite set of finite places of $F$. Then for all $f_S \in \mc C_c^\infty(G_S)$,
\[
f_S(1) = \int_{\check G_S} \tr_{\pi_S}(f_S) \, d\mu^{\pl}(\pi_S) = \int_{\check G_S^\st} \sum_{\pi_S \in \Pi_S} \tr_{\pi_S}(f_S) \, d\bar \rho_* \mu^\pl(\Pi_S).
\]
\end{thm}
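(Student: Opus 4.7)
The first equality of the theorem is ordinary Plancherel inversion $f_S(1)=\mu^\pl(\wh f_S)$, so the content is the second equality. Unfolding the definition $\bar\rho_*\mu^\pl(g) := \mu^\pl(\bar\rho^* g)$, that equality reduces to the single identity
\[
\mu^\pl(\wh f_S) \;=\; \mu^\pl(\bar\rho^*\rho_*\wh f_S), \qquad f_S \in \mc C_c^\infty(G_S),
\]
which informally says that $\mu^\pl$ is insensitive to averaging $\wh f_S$ over $L$-packets. The plan is to extract this ``packet-invariance'' from Proposition \ref{stablefourierinversioninput} and then combine it with the algebraic identity $\rho_*\bar\rho^* = \mathrm{id}$ on functions on $\check G_S^\st$.

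First I would choose a relatively compact open $U_S \supseteq \supp f_S$ with $U_S\cap Z_{G_S}$ compact, and apply Proposition \ref{stablefourierinversioninput} to obtain measures $\mu_n$ on $\check G_S^\st$ with
\[
\mu^\pl(h) \;=\; \lim_{n\to\infty} \mu_n(\rho_* h) \qquad \text{for all } h \in \mc{PW}(G_S, U_S).
\]
The key consequence is that on $\mc{PW}(G_S, U_S)$ the functional $\mu^\pl$ factors through $\rho_*$: if $\rho_* h_1 = \rho_* h_2$ then $\mu^\pl(h_1) = \mu^\pl(h_2)$, and in particular $\mu^\pl$ annihilates $\ker(\rho_*)\cap\mc{PW}(G_S, U_S)$. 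Next I would decompose $\wh f_S = h_{\mathrm{pk}} + h_0$, where $h_{\mathrm{pk}} := \bar\rho^*\rho_*\wh f_S$ is constant on each tempered $L$-packet and $h_0 := \wh f_S - h_{\mathrm{pk}}$; the identity $\rho_*\bar\rho^* = \mathrm{id}$ immediately gives $\rho_* h_0 = 0$. Granted that $h_0$ lies in some $\mc{PW}(G_S, U_S')$, the factorization property yields $\mu^\pl(h_0)=0$, and hence
\[
\mu^\pl(\wh f_S) \;=\; \mu^\pl(h_{\mathrm{pk}}) \;=\; \mu^\pl(\bar\rho^*\rho_*\wh f_S) \;=\; \bar\rho_*\mu^\pl(\rho_*\wh f_S),
\]
which is the desired identity after rewriting the right side as an integral over $\check G_S^\st$.

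The genuinely hard step is justifying that $h_0$ (equivalently, $h_{\mathrm{pk}}$) lies in an admissible Paley-Wiener space, i.e.\ that the $L$-packet average $\bar\rho^*\rho_*\wh f_S$ is of the form $\wh{f_S^\flat}$ for some $f_S^\flat \in \mc C_c^\infty(G_S)$. For this I would invoke the trace Paley-Wiener theorem of Bernstein-Deligne-Kazhdan in the untwisted setting and Rogawski \cite{Rog88} in the twisted setting (already used for Proposition \ref{PWinput}), combined with the fact that all representations in a tempered $L$-packet share the same inertial support. This compatibility between $L$-packets and the Bernstein decomposition ensures that the averaging operator $\bar\rho^*\rho_*$ preserves both the finite-support condition on Bernstein components and the polynomial behavior within each component, so that $h_{\mathrm{pk}}$ lies in the image of the Fourier transform. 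Everything else is a formal manipulation once the input of Proposition \ref{stablefourierinversioninput} is available.
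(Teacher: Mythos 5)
Your proof is correct and follows essentially the same route as the paper's: both arguments rest on Proposition~\ref{stablefourierinversioninput} to deduce that $\mu^\pl$ restricted to $\mc{PW}(G_S,U_S)$ factors through $\rho_*$, and both then conclude by a formal duality computation. The difference is mostly in presentation. The paper phrases the finish abstractly---writing $\mu^\pl = \rho^*\mu$ and applying the identity $\bar\rho_*\circ\rho^* = \id$ on duals---while you make the same cancellation explicit by decomposing $\wh f_S = \bar\rho^*\rho_*\wh f_S + h_0$ and arguing that $\mu^\pl(h_0)=0$. These are two descriptions of one argument.

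What you have done that is genuinely useful is to name the quiet hypothesis that both formulations lean on: that the packet-average $\bar\rho^*\rho_*\wh f_S$ again lies in some Paley--Wiener space $\mc{PW}(G_S,U_S')$, so that the factorization can legitimately be applied to it (equivalently, so that the identity $\bar\rho_*\circ\rho^* = \id$ makes sense on the dual of $\mc{PW}^\st(G_S,U_S)$). The paper elides this step. Your proposed justification via the trace Paley--Wiener theorem is the right idea, with two small corrections. First, since $G_S$ is an ordinary (untwisted) classical group, only the Bernstein--Deligne--Kazhdan version is needed here; Rogawski's twisted version enters only for test functions on $\wtd G_{N}$, not on $G$. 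Second, it is not quite true that all representations in a tempered $L$-packet have the same inertial support: by \eqref{temperedLpacket}, they are constituents of inductions from \emph{different} discrete series $\sigma_M \in \Pi_{\varphi_M}$, so they can land in distinct (but isomorphic) $\mf l$-components $\check G_v[M,\sigma_M]$. What the packet-average actually does is permute these isomorphic components in a way compatible with the unramified-twist parametrization---this is the structure laid out around Lemma~\ref{stablefouriertransformcontinuous}---and that is what preserves the combination of finite support and algebraicity that characterizes $\mc{PW}$ in the BDK theorem. With that correction your argument is complete.
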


\begin{proof}
The first equality is by Fourier inversion.  The second is equivalent to proving that for all $U_S$ such that $U_S \cap Z_{G_S}$ is compact and $\wh f_S \in \mc{PW}(G_S, U_S)$,
\[
\mu^\pl(\wh f_S) = \rho^* \bar \rho_* \mu^\pl(\wh f_S). 
\]
Let $\mc{PW}^\st(G_S, U_S) := \rho_*\left(\mc{PW}(G_S, U_S)\right)$.
Proposition \ref{stablefourierinversioninput} gives that, as a linear functional on $\mc{PW}(G_S, U_S)$, the Plancherel measure $\mu^\pl$ is of the form $\rho^* \mu$ for some linear functional $\mu$ on $\mc{PW}^\st(G_S, U_S)$. 

The result then follows since $\bar \rho_* \circ \rho^* = \id$ on the linear dual  of $\mc{PW}^\st(G_S, U_S)$. 
\end{proof}
We note a corollary that might be of independent interest (though it follows from the formal degree conjecture whenever that is known---e.g. from \cite{BP21}):
\begin{cor}\label{cor formaldegree}
Let $G \in \wtd{\mc E}_\sm(N)$ in either the self-dual or conjugate self-dual case and $v$ a finite place. For any supercuspidal $L$-packet $\Pi_v$ of representations of~$G_v$ (i.e. whose $L$-parameter is irreducible and trivial on the Deligne-$\SL_2$), all $\pi_v \in \Pi_v$ have the same formal degree. 
\end{cor}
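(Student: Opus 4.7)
The plan is to deduce the corollary directly from Theorem~\ref{stablefourierinversion}, which states that the Plancherel measure $\mu^\pl$ on $\check G_v$ satisfies $\mu^\pl = \rho^* \bar\rho_* \mu^\pl$. The key observation is that once Plancherel measure is pulled back through $\rho$ from the stable side, its values at individual representations in a packet must all coincide by construction.

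First I would verify that every $\pi_v \in \Pi_v$ is supercuspidal, so that $\{\pi_v\}$ is an isolated point of $\check G_v$ in the Fell topology and
\[
\mu^\pl(\{\pi_v\}) = c(\om_{\pi_v}) \, d(\pi_v)
\]
for some normalizing constant $c(\om_{\pi_v})$ depending only on the central character. Supercuspidality of all members of $\Pi_v$ follows from the $L$-packet being defined by an irreducible parameter trivial on the Deligne-$\SL_2$: by the endoscopic character identities of \cite{Art13} and \cite{Mok15}, all $\pi_v \in \Pi_v$ then have the same Harish-Chandra parameter data and are simultaneously supercuspidal. Moreover, by Lemma~\ref{packettoom} all members share a central character, so $c(\om_{\pi_v})$ is the same for all $\pi_v \in \Pi_v$ and can be ignored.

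Next I would unwind the pullback/pushforward formulas for measures laid out in \S\ref{sec:inversionsetup}. A direct computation from the definitions of $\rho^*$ and $\bar\rho_*$ gives
\[
(\rho^* \nu)(\{\pi\}) = \nu(\{\rho(\pi)\}), \qquad (\bar\rho_* \mu)(\{\Pi\}) = \frac{1}{|\Pi|} \sum_{\pi' \in \Pi} \mu(\{\pi'\}).
\]
Evaluating the identity $\mu^\pl = \rho^* \bar\rho_* \mu^\pl$ at any isolated $\pi_v \in \Pi_v$ (applying Theorem~\ref{stablefourierinversion} with $S = \{v\}$) therefore yields
\[
\mu^\pl(\{\pi_v\}) = \frac{1}{|\Pi_v|} \sum_{\pi_v' \in \Pi_v} \mu^\pl(\{\pi_v'\}).
\]
The right-hand side is manifestly independent of the choice of $\pi_v \in \Pi_v$, so $\mu^\pl(\{\pi_v\})$ is constant on $\Pi_v$, and dividing by the common constant $c(\om_{\pi_v})$ gives the equality of formal degrees.

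The main obstacle is really the first paragraph, i.e.\ ensuring that supercuspidality is a packet-invariant property and that the Plancherel density at a supercuspidal reduces to the formal degree with a packet-independent normalization; once these classical facts are in hand, the rest is a one-line unpacking of the stable inversion formula. One small technicality is that Theorem~\ref{stablefourierinversion} excludes the abelian base case $G = \SO_2^\triv$, but there all $L$-packets are singletons and the statement is vacuous.
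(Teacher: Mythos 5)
Your proposal is correct and follows essentially the same approach as the paper: both hinge on the observation that a supercuspidal $\pi_v$ has $\mu^\pl(\{\pi_v\}) = d(\pi_v)$ and that $\1_{\{\pi_v\}}$ is Paley--Wiener, so the stable inversion identity $\mu^\pl = \rho^*\bar\rho_*\mu^\pl$ of Theorem~\ref{stablefourierinversion} forces the Plancherel density (hence the formal degree) to be constant on the packet. The paper phrases this by feeding the matrix coefficient $c_{\pi_v}$ directly into Theorem~\ref{stablefourierinversion}, while you unpack the pullback/pushforward at singletons; the normalizing constant $c(\om_{\pi_v})$ you introduce is unnecessary here since every $G \in \wtd{\mc E}_\sm(N)$ has compact center, but carrying it does no harm.
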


\begin{proof}
All representations $\pi_v$ in a supercuspidal $L$-packet are supercuspidal\footnote{This is part of the construction of the LLC through the endoscopic classification.}. Therefore, their compactly supported matrix coefficients $c_{\pi_v}$ satisfy that $\wh c_{\pi_v} = \1_{\{\pi_v\}}$ so that $c_{\pi_v}(1)$ is the formal degree of $\pi_v$. The result then follows from applying Theorem \ref{stablefourierinversion}. 
\end{proof}

\subsection{Positivity Test}\label{sec positivitytest}
We now derive from Theorem \ref{stablefourierinversion} a positivity test:

\subsubsection{Understanding $G_S^{\st, \temp}$}
We first need to understand $\check G_S^{\st, \temp}$ and $\bar \rho_* \mu^\pl$ better through Harish-Chandra's Plancherel formula. Without loss of generality, we assume $S = \{v\}$ is a singleton. See \cite{Sil96} for a review of the Plancherel formula which we will loosely follow and the proof of 3.5.1 in \cite{Art13} for a very terse summary focused on applications to the endoscopic classification. 

First, $\check G^\temp_v$ decomposes into pieces called $\mf l$-components. Consider the set of pairs $(M, \sigma)$ of Levis $M$ of $G_v$ and discrete series (modulo the  center) $\sigma$ of~$M$ up to conjugation by~$G_v$. If we fix a minimal Levi~$M_0$ and $M \supset M_0$, then the set $ M^*/\Om^G_M$ of unramified unitary characters of $M$ mod $\Om^G_M := N_{G_v}(M)$ acts on the set of $(M', \sigma)$ with $M'$ conjugate to $M$.

Let $\mc O$ be the set of orbits of this action. Each $[M, \sigma] \in \mc O$ 
can be given the structure of quotient of a product of circles by a finite group through the corresponding structure on $M^*/\Om^G_M$. If $[M, \sigma] \in \mc O$, define
\[
\check G_v[M, \sigma] := \bigsqcup_{(M, \sigma) \in [M, \sigma]} \{\text{irred. subquotients } \pi \text{ of } \mc I_{P_M}^G(\sigma)\},
\]
where $\mc I_{P_M}^G$ is the normalized parabolic induction through a parabolic $P_M$ with Levi factor $M$ (its irreducible subquotients do not depend on the choice of $P$).

Then
\[
\check G^\temp_v = \bigsqcup_{[M, \sigma] \in \mc O}\check G_v[M, \sigma].
\]
In addition, for any open $U \subseteq [M,\sigma]$, the set
\[
\bigsqcup_{(M, \sigma) \in U} \{\text{irred. subq. } \pi \text{ of } \mc I_{P_M}^G(\sigma)\}
\]
is open in the Fell topology (this can be seen for example by the characterization of the restriction of the Fell topology to the infinite-dimensional irreducible representations in $\check G_v$ in terms of matrix coefficients and the trace Paley-Weiner theorem). 

We now establish an analoguous decomposition for $\check G_v^{\st, \temp}$. If $\varphi$ is a tempered parameter of $G$, there exists a Levi $M$ such that $\varphi$ is the unique-up-to-conjugation pushforward of discrete parameter $\varphi_M$ for $M$. As part of the construction of LLC through the endoscopic classification, 
\begin{equation}\label{temperedLpacket}
\Pi_\varphi = \bigsqcup_{\sigma_M \in \Pi_{\varphi_M}} \{\text{irred. subq. } \pi \text{ of } \mc I_{P_M}^G(\sigma_M)\}
\end{equation}
(see the discussion before Proposition 6.6.1 in \cite{Art13} and the discussion in \S7.6 in \cite{Mok15}). 

Therefore, given a  Levi $M$ and discrete packet $\Pi_M$ of $M$, we can similarly define a stable $\mf l$-component $\check G^\st_v[M, \Pi_M]$ which also inherits a real manifold-quotient structure from $M^*/\Om^G_M$ and gives disjoint union over an analogous $\mc O^\st$:
\[
\check G^{\st, \temp}_v = \bigsqcup_{[M, \Pi_M] \in \mc O^\st } \check G^\st_v[M, \Pi_M].
\]
In this way the map $\rho : \check G_v^\temp \to \check G_v^{\st, \temp}$ is continuous. Note also that since $\mu^\pl$ is supported on all of $\check G_v^\temp$, its pushforward $\bar \rho_* \mu^\pl$ is supported on all of $\check G_v^{\st, \temp}$.

Our practical punchline for this discussion is that:
\begin{lem}\label{stablefouriertransformcontinuous}
Let $f_v$ be a test function on $G_v$. Then $\rho_* \wh f_v$ is continuous on $\check G_v^{\st, \temp}$. 
\end{lem}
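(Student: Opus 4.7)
The plan is to reduce the statement to a continuity statement for parabolically induced characters under unramified twist, working one stable $\ell$-component at a time. Since $\check G_v^{\st, \temp}$ is the disjoint union of the $\check G_v^\st[M, \Pi_M]$, each of which is open (as the image under $\rho$ of a union of open subsets of $\check G_v^\temp$), it suffices to verify continuity of $\rho_* \wh f_v$ on a fixed component $\check G_v^\st[M, \Pi_M]$.

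Fix such a component, and let $\Pi_\varphi \in \check G_v^\st[M, \Pi_M]$ correspond to the tempered parameter $\varphi = \mathrm{Ind}(\varphi_M)$. By the decomposition \eqref{temperedLpacket}, we have
\[
\rho_* \wh f_v(\Pi_\varphi) = \sum_{\pi \in \Pi_\varphi} \tr_\pi(f_v) = \sum_{\sigma_M \in \Pi_{\varphi_M}} \sum_{\pi} \tr_\pi(f_v),
\]
where the inner sum runs over irreducible subquotients of $\mc I_{P_M}^G(\sigma_M)$. Since $\sigma_M$ is tempered, $\mc I_{P_M}^G(\sigma_M)$ is completely reducible, so the inner sum equals the trace $\tr_{\mc I_{P_M}^G(\sigma_M)}(f_v)$ of the full induced representation. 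Thus
\[
\rho_* \wh f_v(\Pi_\varphi) = \sum_{\sigma_M \in \Pi_{\varphi_M}} \tr_{\mc I_{P_M}^G(\sigma_M)}(f_v).
\]

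As $\Pi_\varphi$ varies within the component, the parameter $\varphi_M$ varies by unramified unitary twists of a fixed discrete parameter, and the discrete packet $\Pi_{\varphi_M}$ can be parameterized continuously (up to the finite action of $\Omega^G_M$) by these twists. For each choice of $\sigma_M \in \Pi_M$, the map sending an unramified character $\chi$ to $\tr_{\mc I_{P_M}^G(\sigma_M \otimes \chi)}(f_v)$ is continuous: this is the content of Harish-Chandra's theory of tempered characters (see the discussion around the Plancherel formula in Silberger or the references in \cite[\S 3]{Art13}), where the normalized parabolic induction gives a continuous family of unitary representations in $\chi$, and the trace of a fixed test function against a continuous family of unitary representations is continuous in the Fell topology. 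Since the packet size $|\Pi_M|$ is finite and constant on the component, $\rho_* \wh f_v$ is a finite sum of continuous functions and hence continuous.

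The only mild subtlety — which is the main point to check — is that the passage from the parameter $\varphi_M$ to the packet $\Pi_{\varphi_M}$ is well-defined and continuous across the quotient by $\Omega^G_M$: different lifts of $[M, \Pi_M]$ give the same unordered sum because the outer Weyl group action permutes the discrete parameters in $\Pi_M$ and correspondingly permutes the induced representations via intertwining isomorphisms, so the sum of their traces is invariant. Once this is observed, continuity on each component follows immediately, completing the proof.
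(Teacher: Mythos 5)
Your proof is correct and takes essentially the same route as the paper: reduce to a single stable $\mf l$-component, rewrite $\rho_*\wh f_v(\Pi_\varphi)$ as the finite sum $\sum_{\sigma_M \in \Pi_{\varphi_M}} \tr_{\mc I_{P_M}^G(\sigma_M)}(f_v)$ using \eqref{temperedLpacket}, and observe that each summand is a continuous function of the unramified twist. The only cosmetic difference is the citation for that continuity — the paper invokes the trace Paley--Wiener theorem of \cite{BDK86} (which gives the stronger statement that these traces are regular functions on the component), whereas you appeal to Harish-Chandra's continuity of parabolically induced tempered characters; either reference suffices.
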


\begin{proof}
It suffices to show this on each stable $\mf l$-component $\check G^\st_v [M, \Pi_M]$. For each~$\pi_M \in \Pi_M$,
\[
\chi \mapsto \tr_{\mc I_{P_M}^G(\pi_M \otimes \chi)} f_v
\]
is continuous on $\check G_v [M, \pi_M]$ by the trace Paley-Wiener theorem \cite{BDK86} (which actually gives much stronger regularity). However, by \eqref{temperedLpacket}, the function $\rho_* \wh f$ on $\check G_v^\st [M, \Pi_M]$ is just the sum of these functions over $\pi_M$. 
\end{proof}

\subsubsection{Spectral Interpretation of Main Term}
As a second input for the positivity test, we import some formalism from \cite[\S8.3]{Dal22} to interpret \eqref{mainterm} spectrally. For any group $H$, there is a central character map $\Om : \check H \to \check Z_H$. Given a test function $f$ on $H$ and a central character $\om$, we can define the conditional Plancherel expectation as the Radon-Nikodym derivative
\[
E^\pl(\wh f | \om) := \lf. \f{d\Om_*(\wh f \mu^\pl_H)}{d \mu^\pl_{Z_H}} \ri|_\om
\]
which exists and is necessarily continuous as a function on $\check Z_H$.

Now, consider $G \in \wtd{\mc E}_\sm(N)$ in either the self-dual or conjugate self-dual case. Let~$f = \prod_{v\nmid \infty} f_v$ be a test function on $G^\infty$ %factoring over places and with factor 
such that $f_v = \bar \1_{K_v}$, for $K_v$ a hyperspecial maximal compact, outside a finite set of places $S$. Then, as a special case of \cite[Prop. 8.3.5]{Dal22}, we can write
\begin{equation}\label{maintermspectral}
\sum_{\gamma \in Z_G(F)} \om^{-1}(\gamma) f(\gamma) =  C \sum_{\om_S \in \Om_\lb(K^{S, \infty})} E^\pl(\wh f_S | \om_S)
\end{equation}
for some constant $C > 0$ depending on various choices of measures and where
\[
\Om_\lb(K^{S, \infty}) = \{\om_S \in \check Z_{G_S} : \om_S|_{Z_G(F) \cap K^{S, \infty}} = \om_\lb|_{Z_G(F) \cap K^{S, \infty}}\}.
\]
In many cases, $Z_{G_S}$ is compact so $\check Z_{G_S}$ is discrete. Then, $E^\pl(\wh f_S | \om_S)$ simplifies as just the integral of $\wh f \, d\mu^\pl_{G_S}$ over some union of connected components of $\check G_S$.

\subsubsection{The Positivity Test}
Putting everything together lets us show:
\begin{prop}\label{spectralpositivity}
Let $G \in \wtd{\mc E}_\sm(N)$ in either the self-dual or conjugate self-dual case.
Let $f$ be a test function on $G(\A)$ equal to $\1_{K_v}$ outside of a finite set of places~$S$. Assume that for all tempered $L$-packets (or in the case of $\SO_{2n}$ the appropriate unions of such) $\Pi_S$ of $G_S$, 
\[
\tr_{\Pi_S}(f) \geq 0.
\]
Then for any character $\om$ of $Z_{G, \infty}$
\[
\sum_{\gamma \in Z_G(F)} \om^{-1}(\gamma) f(\gamma) \geq 0.
\]
Furthermore, the inequality is strict if there exists a tempered $L$-packet $\Pi_S$ such that~$\tr_{\Pi_S}(f) > 0$ and $\om_{\pi_S}|_{Z_G(F) \cap K^{S, \infty}} = \om|_{Z_G(F) \cap K^{S, \infty}}$ for all $\pi_S \in \Pi_S$.
\end{prop}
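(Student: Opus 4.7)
The plan is to combine the spectral interpretation \eqref{maintermspectral} with the stable Fourier inversion of Theorem \ref{stablefourierinversion} to reduce the positivity of the sum over central elements to positivity of integrals of $\tr_{\Pi_S}(f)$ against positive measures on $\check G_S^{\st, \temp}$.

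First, I would apply \eqref{maintermspectral} (with $\om$ in the role of $\om_\lb$) to write
\[
\sum_{\gamma \in Z_G(F)} \om^{-1}(\gamma) f(\gamma) = C \sum_{\om_S \in \Om(K^{S,\infty})} E^\pl(\wh f_S | \om_S),
\]
where $\Om(K^{S,\infty}) = \{\om_S \in \check Z_{G_S} : \om_S|_{Z_G(F)\cap K^{S,\infty}} = \om|_{Z_G(F)\cap K^{S,\infty}}\}$ and $C > 0$. So it suffices to show each conditional Plancherel expectation $E^\pl(\wh f_S|\om_S)$ is non-negative.

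Next, I would disintegrate $\mu^\pl$ on $\check G_S$ along the central character map $\Om : \check G_S \to \check Z_{G_S}$, obtaining positive conditional measures $\mu^\pl_{\om_S}$ supported on $\Om^{-1}(\om_S) \cap \check G_S^\temp$. Since $\rho : \check G_S \to \check G_S^\st$ respects central characters, the pushforward $\bar\rho_* \mu^\pl_{\om_S}$ is then supported on the corresponding fiber inside $\check G_S^{\st,\temp}$. Applying Theorem \ref{stablefourierinversion} fiberwise gives
\[
E^\pl(\wh f_S|\om_S) = \int_{\check G_S^{\st,\temp}} \rho_* \wh f_S \, d\bar\rho_* \mu^\pl_{\om_S}.
\]
On tempered $L$-packets, $\rho_* \wh f_S(\Pi_S) = \tr_{\Pi_S}(f)$, which is $\geq 0$ by assumption, so each conditional expectation is non-negative, proving the first claim.

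For strict positivity, assume $\tr_{\Pi_S}(f) > 0$ for some tempered $L$-packet $\Pi_S$ whose (common, by Lemma \ref{packettoom}) central character $\om_{\pi_S}$ agrees with $\om$ on $Z_G(F) \cap K^{S,\infty}$. By Lemma \ref{stablefouriertransformcontinuous}, $\rho_* \wh f_S$ is continuous on $\check G_S^{\st,\temp}$, so it remains strictly positive on an open neighborhood $U$ of $\Pi_S$. Since $\mu^\pl$ has full support on $\check G_S^\temp$, its pushforward $\bar\rho_* \mu^\pl$ has full support on $\check G_S^{\st,\temp}$, and a standard disintegration argument (using that $U$ has positive mass in a full-measure set of fibers near $\om_{\pi_S}$) shows $\bar\rho_*\mu^\pl_{\om_{\pi_S}}(U) > 0$. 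Thus the $\om_S = \om_{\pi_S}$ summand is strictly positive, and since all other summands are non-negative, the full sum is too.

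The main obstacle is the technical bookkeeping of the disintegration: ensuring that the conditional measures $\mu^\pl_{\om_S}$ are well-defined positive measures on the fibers of $\Om$, that Theorem \ref{stablefourierinversion} descends to each fiber (using that stability of Plancherel from Proposition \ref{stablefourierinversioninput} is compatible with central characters, since the map $\rho^* \mu^\st_\lb$ constructed there respects central-character decompositions via the endoscopic identities), and that the positive mass in the neighborhood $U$ indeed lies in the correct central-character fiber. The key point enabling everything is that both $\rho$ and the push-forward $\bar\rho_* \mu^\pl$ interact coherently with central characters, a fact built into the $\mf l$-component structure of $\check G_S^{\st,\temp}$.
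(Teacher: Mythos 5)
Your proposal is essentially the paper's own argument, but phrased in terms of explicit disintegration of the Plancherel measure rather than integration over preimages of measurable sets. The paper instead takes any measurable $X \subseteq \check Z_{G_S}$, sets $\check G_{S,X} := \Om^{-1}(X)$ (which, crucially, is the exact preimage of a subset $\check G_{S,X}^\st$ of $\check G_S^\st$ because central characters are constant on $L$-packets by Lemma~\ref{packettoom}), and writes
\[
\int_X E^\pl(\wh f_S|\om)\,d\mu^\pl_{Z_{G_S}} = \int_{\check G_{S,X}}\tr_{\pi_S}(f_S)\,d\mu^\pl_{G_S} = \int_{\check G_{S,X}^\st}\tr_{\Pi_S}(f_S)\,d\bar\rho_*\mu^\pl_{G_S} \geq 0,
\]
deducing $E^\pl(\wh f_S|\cdot) \geq 0$ directly from the defining property of a Radon--Nikodym derivative. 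This neatly avoids the measure-theoretic bookkeeping you flag: one never has to \emph{construct} the conditional measures $\mu^\pl_{\om_S}$ nor prove that Theorem~\ref{stablefourierinversion} ``descends to fibers.'' Your version buys a more explicit picture of what each conditional expectation is, at the cost of exactly the extra verification you list. Both routes lean implicitly on the fact that $Z_{G_S}$ is compact for all $G \in \wtd{\mc E}_\sm(N)$ (so $\check Z_{G_S}$ is discrete and fibers of $\Om$ are open-and-closed), which is what makes ``fiberwise'' restriction painless; you would do well to say this explicitly, especially in the strict-positivity step where you invoke ``positive mass in a full-measure set of fibers near $\om_{\pi_S}$''---in the discrete setting this collapses to the single fiber $\om_{\pi_S}$, which is the cleaner statement.
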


\begin{proof}
Consider any measurable $X \subseteq \check Z_{G_S}$ and $G_{S, X} := \Om^{-1}(X)$. Then $\check G_{S, X}$ is the exact preimage of a subset $\check G_{S, X}^\st$ of $\check G_S^\st$ since central characters are constant on $L$-packets by Lemma \ref{packettoom}. Then by the definition of $E^\pl$ and Theorem \ref{stablefourierinversion},
\begin{multline*}
\int_X E^\pl(\wh f_S | \om) \, d\mu^\pl_{Z_{G_S}}(\om) := \int_{\check G_{S, X}} \tr_{\pi_S}(f_S) \, d\mu^\pl_{G_S}(\pi_S) \\ 
= \int_{\check G_{S, X}^\st} \sum_{\pi_S \in \Pi_S} \tr_{\pi_S}(f_S) \, d\bar \rho_* \mu^\pl_{G_S}(\Pi_S)  
= \int_{\check G_{S, \om_\lb}^\st} \tr_{\Pi_S}(f_S) \, d\bar \rho_* \mu^\pl_{G_S}(\Pi_S) \geq 0. 
\end{multline*}
This forces $E^\pl(\wh f_S | \om) \geq 0$ for all $\om$ which, by \eqref{maintermspectral}, proves the non-negativity statement. 

For the strict positivity result, if such a packet $\Pi_S$ with appropriate central character $\om_0$ exists, then from Lemma \ref{stablefouriertransformcontinuous} and the fact that every tempered packet is in the support of $\bar \rho_* \mu^\pl_{G_S}$, we get that $\rho_*(\wh f_S) \, d\bar \rho_* \mu^\pl_{G_S}$ is supported at $\Pi_S$. By an argument similar to the previous paragraph, this implies that $E^\pl(\wh f_S | \om) \, d\mu^\pl_{Z_{G_S}}$ is supported at $\om = \om_0$. In particular, one summand $E^\pl(\wh f_S | \om_0)$ appearing in \eqref{maintermspectral} is strictly positive. 
\end{proof}

\subsection{Existence of Representations}\label{sec repexist}
Our strict positivity test requires the existence of local (conjugate) self-dual representations of $GL_N$ with arbitrary conductor and specified central character. We use the shorthand notation for conductors, see \S\ref{sec newvectors}.

\subsubsection{Self-dual case.} First note that the result fails for $N=1$. Self-dual characters of $F_v^\times$ must be valued in $\pm 1$, and so have conductor at most $1$ (though there always does exist a self-dual character of conductor 1.)

\begin{lem}\label{existenceGL2}
Fix a place $v$, an integer $k \geq 0$, and a character $\omega:F_v^\times \to \pm 1$ with conductor $\leq k$. Then there exists a self-dual tempered representation $\pi_v$ of $GL_2(F_v)$ of conductor $c(\pi_v) = k$ and central character $\eta$. Moreover,
\begin{itemize}
    \item If $\eta$ is trivial then $\pi_v$ can be chosen to be symplectic for all $k$, or orthogonal for $k$ even.
    \item If $\eta$ is nontrivial, then $\pi_v$ is necessarily of orthogonal type. If it is cuspidal, then its parameter factors through ${^L}SO_2^\eta$.
\end{itemize}
\end{lem}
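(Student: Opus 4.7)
The plan is to prove the existence by an explicit case-by-case construction and to derive the classification from the following structural observation: a $2$-dimensional self-dual parameter $\varphi$ necessarily has $\det \varphi = \omega_{\pi_v}$ quadratic, and preserves a nondegenerate skew form (hence is symplectic-type) precisely when $\det \varphi = 1$. This immediately implies that nontrivial $\eta$ forces $\pi_v$ to be orthogonal-type. Furthermore, when such a $\pi_v$ is additionally supercuspidal, its parameter is an irreducible $2$-dimensional orthogonal representation with determinant $\eta$, which must factor through $^L\SO_2^\eta \hookrightarrow {}^L\GL_2$ since all irreducible orthogonal $2$-dimensional representations arise as induction from an index-$2$ subgroup and the determinant pins down the associated quadratic extension.

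The existence statement is then established by three families of explicit constructions. First, tempered principal series $\pi_v = \chi \boxplus \chi^{-1}$ for $\chi$ unitary with $\chi \neq \chi^{-1}$ yield irreducible self-dual representations of conductor $2c(\chi)$ and trivial central character whose parameter $\chi \oplus \chi^{-1}$ preserves both a symmetric and an antisymmetric form; choosing $c(\chi) = k/2$ covers every even $k \geq 0$ with $\eta = 1$ and provides the symplectic and orthogonal witnesses simultaneously. Second, for $\eta$ nontrivial and $k$ small enough, $\pi_v = \chi_1 \boxplus \chi_2$ with $\chi_1, \chi_2$ distinct quadratic characters satisfying $\chi_1 \chi_2 = \eta$ and $c(\chi_1) + c(\chi_2) = k$ gives an orthogonal principal series of central character $\eta$. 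Third, the remaining cases — namely the symplectic witness at odd $k$ (with $\eta = 1$) and the orthogonal cuspidal witness at large $k$ (with $\eta$ nontrivial) — are handled by dihedral supercuspidals $\pi_v = \mathrm{AI}^{F_v}_E(\theta)$ for a suitable quadratic étale extension $E/F_v$ and a character $\theta: E^\times \to \C^\times$ with $\theta^\sigma = \theta^{-1}$. The central character is $\theta|_{F_v^\times} \cdot \om_{E/F_v}$, so for $\eta = 1$ symplectic one takes $\theta|_{F_v^\times} = \om_{E/F_v}$, while for orthogonal nontrivial $\eta$ one takes $E$ to be the quadratic extension with $\om_{E/F_v} = \eta$ and imposes $\theta|_{F_v^\times} = 1$, which makes the induced parameter factor through $^L\SO_2^\eta$ as claimed. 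The edge case $k = 1$ with $\eta = 1$ is given directly by the Steinberg $\St$.

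The main obstacle will be ensuring that the conductor of the constructed $\pi_v$ equals $k$ \emph{exactly}, not merely at most $k$. This amounts to producing characters $\theta$ of $E^\times$ of the precise prescribed $E$-conductor subject to the self-duality and central-character constraints, and then translating via the standard conductor formula for automorphic induction, $c_F(\mathrm{AI}^{F_v}_E \theta) = c_E(\theta) + v_F(\mathfrak{d}_{E/F_v})$ in the tame case, with well-documented modifications in the wildly ramified case. The required existence of $\theta$ of a given conductor, satisfying $\theta^\sigma = \theta^{-1}$ and a prescribed $\theta|_{F_v^\times}$, reduces to a finite computation in the graded pieces $(1 + \mf p_E^{n-1})/(1 + \mf p_E^n)$ together with Hilbert 90 on $\mathcal{O}_E^\times$, as in Lemma \ref{lem OEcoboundaries}. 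The flexibility of choosing $E$ among the several quadratic extensions of $F_v$, combined with the auxiliary family of Steinberg twists $\St \otimes \chi$ (which cover even conductors via quadratic $\chi$), supplies enough redundancy to cover every $k$ in every residue characteristic, including $p = 2$.
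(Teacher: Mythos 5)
Your proof takes essentially the same route as the paper: the same two principal-series families, the same dihedral/Bushnell--Henniart supercuspidal construction, and the same determinant criterion distinguishing symplectic from orthogonal. Your explicit mention of the Steinberg at $k=1$ is in fact cleaner than the paper's confusingly worded treatment of that case.

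There is, however, a gap shared by both your argument and the paper's, in the case $\eta$ unramified nontrivial with $k$ odd (odd residue characteristic). The dihedral construction cannot cover it: $\theta^\sigma = \theta^{-1}$ forces $\theta$ to kill $N_{E/F_v}(E^\times)$, so $\theta|_{F_v^\times}\in\{1,\omega_{E/F_v}\}$, and hence $\det(\Ind_E^{F_v}\theta)=\omega_{E/F_v}\,\theta|_{F_v^\times}$ is always trivial or equal to $\omega_{E/F_v}$. An unramified nontrivial determinant therefore forces $E$ to be the unramified quadratic extension, giving even conductor $2\,c_E(\theta)$. Meanwhile a principal series of two quadratic characters with unramified nontrivial product has conductor $0$ or $2$ (for $p$ odd), and Steinberg twists have trivial determinant. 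So the lemma as stated needs an additional hypothesis; this does not touch the paper's main theorems, which only invoke it with $G=\SO_{N+1}$ and trivial central character, but it does affect the $\SO_N^\eta$ branch of Lemma \ref{existenceofcondN}. A minor separate point: the conductor formula $c_{F_v}(\Ind_E^{F_v}\theta)=c_E(\theta)+v_{F_v}(\mathfrak d_{E/F_v})$ misses the residue-degree factor $f_{E/F_v}$ and so fails when $E/F_v$ is unramified; the correct identity is $c_{F_v}=f_{E/F_v}\,c_E(\theta)+v_{F_v}(\mathfrak d_{E/F_v})$, which in fact holds with no further modification even under wild ramification.
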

\begin{proof}
Self-dual representations of $GL_2$ are characterized by the property that their central character is valued in $\pm 1$. 

For $k$ even and $\eta = 1$, we can take $\pi_v$ to be the principal series associated to characters~$\chi \boxtimes \chi^{-1}$ with $\chi$ unitary of conductor $k/2$. This is self-dual with trivial central character and conductor $k$. The images of the non-irreducible parameters associated to these representations preserve both a symmetric and skew-symmetric form and can arise as the localizations of automorphic representations of either orthogonal or symplectic-type.

For $k \leq 2$ and orthogonal type, we can consider the principal series induced from a pair of self-dual quadratic characters $\chi_1$, $\chi_2$ such that $\chi_1 \chi_2 = \eta$ and $c(\chi_1)+c(\chi_2)=k$. 

For $k=1$ and symplectic-type, consider the principal series induced from a pair of self-dual quadratic characters $\chi_1$, $\chi_2$ such that $\chi_1 \chi_2 = \eta$ and $c(\chi_1)+c(\chi_2)=1$ (in the orthogonal case).

Finally, for the remaining cases with $k\geq2$ we rely on the construction of supercuspidals in \cite{BH06}, from characters $\chi$ of $L_w^\times$, for a quadratic extension $L_w/F_v$ with ramification index $e$. In this construction, if $\chi$ has conductor $\fp_w^{\ell}$, the resulting representation $\pi_\chi$ has central character $\chi\mid_{F^\times_v}$ and conductor $\mf p_v^k$ for $k = \frac{2\ell}{e}+2$, see \cite[Theorems 20.2 and 25.2]{BH06}. This reduces the existence question to that of characters of suitably ramified $L^\times_w$ with a given restriction to $F_v^\times$ and conductor, which follows from standard results on characters of finite abelian groups.

Supercuspidal representations correspond to irreducible $L$-parameters, which factor through a non-abelian subgroup of ${^L}G_N$. This subgroup is $\Ld \Sp_2 = \Ld SL_2$ precisely when the central character $ \eta = \chi \mid_{F^\times}$ is trivial. Otherwise, the $L$-parameter is orthogonal and factors though $\Ld SO_2^\eta$.  
\end{proof}

\begin{lem}\label{existenceofcondN} 
Let $N > 2$ and let $G \in \wtd{\mc E}_\sm(N)$. Fix a place $v$ and a conductor~$k$, and, if $\wh G$ is orthogonal, a central character $\om_v = \pm 1$, that can only be $-1$ if $k > 0$. Assume further that if $G_v = \SO_N^{\eta_v}$, then $k \geq c(\eta_v)$.

Then there is a self-dual tempered representation $\pi_v$ of $G_{N,v}$ with:
\begin{itemize}
\item conductor $k$, 
\item $L$-parameter factoring through $\Ld G_v$, 
\item central character $\om_v$ on $G_v$.  
\end{itemize}
\end{lem}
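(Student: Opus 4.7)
The plan is to build $\pi_v$ as a tempered parabolic induction from self-dual building blocks of ranks $1$ and $2$, matching the prescribed conductor and central character, and then read off the endoscopic group from the resulting parameter; the base case $N=2$ is Lemma~\ref{existenceGL2}. For $N > 2$, the building blocks are (a) quadratic characters $\chi : F_v^\times \to \{\pm 1\}$, which are orthogonal with conductor $c(\chi)$ and central character $\chi$; (b) self-dual tempered $\GL_2$-representations from Lemma~\ref{existenceGL2} of prescribed type, conductor, and central character; and (c) unramified principal series $I(\chi, \chi^{-1})$ for unitary $\chi$ with $\chi^2 \ne 1$, which are tempered, self-dual with trivial central character, and whose parameter $\chi \oplus \chi^{-1}$ preserves both a symmetric and a skew-symmetric form---so they can be inserted as either an orthogonal or a symplectic summand. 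Parabolic induction of pairwise non-isomorphic such blocks $\sigma_1, \dots, \sigma_r$ from a standard Levi $\GL_{n_1} \times \cdots \times \GL_{n_r}$ of $\GL_N$ is irreducible and tempered on $\GL_N(F_v)$; its parameter is $\bigoplus \varphi_{\sigma_i}$, its conductor is $\sum c(\sigma_i)$, and its central character is $\prod \omega_{\sigma_i}$.

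I would then split into cases according to the type of $\wh G$. When $\wh G \cong \Sp_N$ (so $N$ is even and $\omega_v = 1$), take one symplectic $\GL_2$-block of conductor $k$ from Lemma~\ref{existenceGL2} together with $(N-2)/2$ copies of $I(\chi_i, \chi_i^{-1})$ for pairwise distinct unramified unitary characters $\chi_i$ of order $>2$. When $\wh G \cong \SO_N$ with $N$ even, combine an orthogonal $\GL_2$-block carrying the central character $\omega_v$ (or, when the parity of $k - c(\omega_v)$ forces it, a pair of quadratic characters supplying $\omega_v$ and part of the conductor) with additional orthogonal $\GL_2$-blocks of trivial central character and enough unramified pairs so that the total conductor equals $k$. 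When $\wh G \cong \SO_N$ with $N$ odd, the parameter must contain at least one odd-dimensional orthogonal summand; I take a single quadratic character of $F_v^\times$ of appropriate conductor (trivial when $\omega_v$ can be supplied by other pieces, non-trivial otherwise) and assemble the remaining even-dimensional blocks as in the previous case.

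In each case the resulting $\pi_v$ is tempered, irreducible, and self-dual, with conductor $k$ and central character $\omega_v$; being a direct sum of blocks of the required type whose determinants multiply to $\omega_v$, its parameter factors through $\Ld G_v$ by the classification of simple twisted endoscopic groups recalled in \S\ref{sec twistedendoscopicgroups}. The hypotheses $c(\omega_v) \le k$ (following from the assumption $k \ge c(\eta_v)$) and $k > 0$ when $\omega_v \ne 1$ are exactly what is needed to make the combinatorial matching of conductors feasible.

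The main obstacle is the edge cases where $k$ is small compared to $c(\omega_v)$, or where the parity constraints on $\GL_2$-orthogonal blocks of trivial central character (which exist in Lemma~\ref{existenceGL2} only at even conductor) block a reducible decomposition. For those residual cases I would invoke the explicit supercuspidal construction of \cite{BH06} used in the $\GL_2$ version: starting from a quadratic extension $L_w/F_v$ and a suitable character of $L_w^\times$, one obtains an irreducible $N$-dimensional self-dual tempered parameter of prescribed type, conductor, and central character directly, bypassing the combinatorial reduction in the problematic edge cases.
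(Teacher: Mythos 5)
Your overall strategy (parabolic induction of low-rank self-dual blocks, with Lemma~\ref{existenceGL2} as base case) matches the spirit of the paper's proof, and the building blocks you list are mostly the right ones. However, there are two genuine gaps, one of which reflects a misreading of the statement.

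First, you never address what $\om_v$ actually is. In the lemma, $\om_v$ is the central character \emph{on the endoscopic group} $G_v$, that is a character of $Z_{G_v} \cong \{\pm 1\}$; it is not the central character of $\pi_v$ on $\GL_N(F_v)$, which is the separate datum $\eta_v$ (or $\det \varphi_v$) that picks out the form $\SO_N^{\eta_v}$ or $\Sp_{N-1}^{\eta_v}$. Your cases ``an orthogonal $\GL_2$-block carrying the central character $\om_v$'' and ``a pair of quadratic characters supplying $\om_v$'' treat $\om_v$ as a character of $F_v^\times$ realized through the determinant, which conflates the two. By Theorem~\ref{thm cchareps}, $\om_v$ is determined by the ratio $\eps(1/2,\varphi_v)/\eps(1/2,\det\varphi_v)$, so controlling $\om_v$ means controlling the \emph{root number} of $\pi_v$ relative to its determinant. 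Your construction offers no mechanism for doing this; this is a real hole in the argument, not an edge case.

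Second, you restrict the padding summands to self-dual blocks (quadratic characters, self-dual $\GL_2$ representations) and \emph{unramified} pairs $I(\chi,\chi^{-1})$. The paper's construction instead inserts pairs $\chi_i \boxplus \chi_i^{-1}$ where the $\chi_i$ are arbitrary unitary characters of $F_v^\times$, not required to be self-dual and allowed to be arbitrarily ramified. This one move solves both problems at once: each such pair contributes $2c(\chi_i)$ to the conductor (any even integer you like) without perturbing the central character on $\GL_N$, and the sign $\chi_i(-1) = \pm 1$ can be chosen freely, giving an extra $\pm 1$ factor in $\eps(1/2,\pi_v)$ that tunes $\om_v$ via Theorem~\ref{thm cchareps}. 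Because you use only unramified pairs, you lose the conductor flexibility, and because all your blocks are self-dual you lose the free sign $\chi_i(-1)$ needed to hit both values of $\om_v$. Your proposed fallback to \cite{BH06} does not rescue these cases either: that reference constructs $\GL_2$ supercuspidals, and does not directly produce $N$-dimensional self-dual parameters for general $N$ with prescribed $G$-central character.

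In short: replace the unramified self-dual padding with ramified non-self-dual pairs $\chi_i\boxplus\chi_i^{-1}$ (so that only $\sigma_m$ for $m\in\{1,2\}$ of the same parity as $N$ is self-dual), record that $c(\pi_v)=c(\sigma_m)+2\sum c(\chi_i)$ and $\eps(1/2,\pi_v)=\eps(1/2,\sigma_m)\prod\chi_i(-1)$, and invoke Theorem~\ref{thm cchareps} to turn the free sign $\prod\chi_i(-1)$ into the required control of $\om_v$.
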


\begin{proof}
Let $m \in \{1,2\}$ have the same parity as $N$ and consider the representation 
\[ 
\pi_v = \chi_1 \boxplus \cdots \boxplus \chi_{\lfloor \frac{N-1}{2}\rfloor} \boxplus \sigma_m \boxplus \chi^{-1}_{\lfloor \frac{N-1}{2}\rfloor} \boxplus \cdots \boxplus \chi^{-1}_1, 
\] 
for (not necessarily self-dual) characters $\chi_i$, and $\sigma_m$ a self-dual unitary representation of $GL_m$. Then $\pi_v$ is unitary with the same central character as $\sigma_m$ and
\[ 
c(\pi_v) = c(\sigma_m) + 2 \sum_i c(\chi_i), \qquad \eps(1/2,\pi_v) =\eps(1/2, \sigma_m)  \prod_i \chi_i(-1).
\]
In addition, by Lemma \ref{existenceGL2}, we can choose $\sigma_m$ and the $\chi_i$ so that $\pi_v$ is tempered, has arbitrary conductor, and arbitrary self-dual central character $\eta_v$. 

If $N$ is odd, this $\pi_v$ is necessarily orthogonal type and factors through $\Ld G = \Ld Sp_{N-1}^{\eta}$  if the central character of $\pi_v$ on $G_{N,v}$ is $\eta_v$. In addition, if $c > 0$ and $N > 2$, we can also choose the $\chi_i(-1)$ and therefore the root number. 
Since the root number determines the central character $\om_v$ on $G_v$ by Theorem \ref{thm cchareps}, $\pi_v$ can be chosen to satisfy all our desired conditions. 

If $N$ is even and $G = \SO_{N+1}$, we can choose $\sigma_m$ to symplectic as in Lemma~\ref{existenceGL2}, implying that $\pi_v$ is too (as in the discussion on \cite[p. 34]{Art13}). Following Lemma~\ref{existenceGL2}, the representation $\sigma_m$ can be chosen so that $\pi_v$ satisfies all our desired conditions. 

Finally, for $G = \SO_N^\eta$, we choose $\sigma_m$ to have parameter factoring through $\Ld \SO_2^\eta$ as in Lemma~\ref{existenceGL2}. Then, the parameter of $\pi_v$ factors through $\Ld \SO_N^\eta$. We can also choose the $\chi_i(-1)$ appropriately to determine the root number of $\pi_v$ and therefore its central character. 
\end{proof}

\subsubsection{Conjugate self-dual case}\label{sec existenceconj}

In the following lemmas, we fix a non-split place $v$ of $F$ and a place $w$ of $E$ above $v$. We use local indexing conventions for conductors (where ramified conductors are integers instead of half-integers, see \ref{sec conductorsconj}). Let
\begin{equation} \label{eq ram measure}
j_v := \min\{1/2, \{j : 1 + \mf p_w^j \subseteq N_{E/F}(E^\times) \}\}. 
\end{equation}
The $1/2$ is for indexing in order that the term $2j_v-1$, which will appear a lot in this section, satisfies
\[
2j_v - 1 = \begin{cases}
0 & v \text{ unramified} \\
1 & v \text{ tame} \\
\geq 1 & v \text{ wild.}
\end{cases}
\]

\begin{lem}\label{lem embedding GL1} Let $\chi: E_w \to \BC$ be a conjugate self-dual character, so that restriction~$\chi\mid_{F_v^\times} = \kappa \in \pm 1$ is either trivial or the quadratic character corresponding to the extension $E_w$. Then the $L$-parameter of $\chi$ factors through the endoscopic datum $U^\kappa_1$. 
\end{lem}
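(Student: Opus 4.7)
The plan is to unpack the definitions directly. By class field theory, the $L$-parameter of $\chi$ is the homomorphism $\varphi_\chi : L_{F_v} \to \Ld G_N$ obtained from $\chi$ by identifying $E_w^\times$ with the maximal abelian quotient of $L_{E_w}$ and then inducing. For $G_N = \Res^E_F \GL_1$, we have $\Ld G_N = (\C^\times \times \C^\times) \rtimes \Gal(E_w/F_v)$, and the conjugate self-duality $\chi \cdot \chi^c = 1$ translates to the statement that $\varphi_\chi$ is a conjugate self-dual parameter in the sense of \S\ref{ss def conj}, so there is some sign $\kappa' = \pm 1$ for which $\varphi_\chi$ factors through $\Ld U^{\kappa'}_1 \hookrightarrow \Ld G_N$. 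The content of the lemma is the identification $\kappa' = \kappa$.

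For this, I would recall the explicit description of the two embeddings $\Ld U^\pm_1 \hookrightarrow \Ld G_N$ as in \cite[\S 2.1]{Mok15}: they are built by fixing a pair of auxiliary characters $\chi_+, \chi_-$ of $\A_E^\times/E^\times$ such that $\chi_+|_{\A_F^\times} = 1$ and $\chi_-|_{\A_F^\times}$ equals the quadratic character $\eta_{E/F}$ of $\A_F^\times/F^\times N_{E/F}(\A_E^\times)$ associated to $E/F$. Locally, the embedding $\Ld U^{\pm}_1 \hookrightarrow \Ld G_{N,v}$ twists by the local component $\chi_{\pm, v}$. Consequently, a conjugate self-dual character of $E_w^\times$ factors through $\Ld U^+_1$ if and only if $\chi \cdot \chi_{+,v}^{-1}$ is trivial on $L_{F_v} \setminus L_{E_w}$ — that is, precisely when $\chi|_{F_v^\times}$ is trivial — and analogously factors through $\Ld U^-_1$ if and only if $\chi|_{F_v^\times}$ equals the quadratic character of $E_w/F_v$.

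Combining these two observations with the hypothesis $\chi|_{F_v^\times} = \kappa$ immediately yields that $\kappa' = \kappa$, which is the claim. I do not expect any real obstacle here: the whole statement is a direct unwinding of the definition of the endoscopic embeddings $\Ld U^\pm_1 \hookrightarrow \Ld G_N$ in the classification, together with the well-known fact that conjugate orthogonal (respectively conjugate symplectic) characters of $E_w^\times$ are exactly those whose restriction to $F_v^\times$ is trivial (respectively equal to $\eta_{E_w/F_v}$). The lemma essentially re-states this classical characterization in the language of endoscopic data and will be used later in \S\ref{sec existenceconj} as a bookkeeping device to match central characters on the unitary side with conductors on the $\GL_N$ side.
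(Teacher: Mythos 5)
Your argument is essentially the same as the paper's: both unpack Mok's explicit $L$-embeddings $\xi_\kappa: \Ld U_1 \into \Ld G_1$ from \cite[\S 2.1]{Mok15} and reduce the claim to identifying $\chi|_{F_v^\times}$ with $\kappa$, the paper by evaluating $\xi_\kappa$ on a lift $z_c$ of the nontrivial element of $\Gal(E_w/F_v)$ and using $\chi(z_c^2) = \kappa$ (local class field theory), and you by invoking the auxiliary characters $\chi_\pm$ that enter the definition of the two embeddings. One phrasing slip worth fixing: ``$\chi \cdot \chi_{+,v}^{-1}$ is trivial on $L_{F_v} \setminus L_{E_w}$'' does not literally parse, since $\chi\chi_{+,v}^{-1}$ is a character of $L_{E_w}$; the correct criterion (which you then supply) is that $\chi|_{F_v^\times}$ agrees with $\chi_{\kappa,v}|_{F_v^\times}$, equivalently that $\chi\chi_{\kappa,v}^{-1}$ descends through $x \mapsto x/\bar{x}$ to a character of $U_1(F_v)$.
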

\begin{proof}
    We write down the explicit embeddings, following \cite[\S 2.1]{Mok15}. Using class field theory, we view the character $\chi$ as a parameter \[ \varphi_\chi: \Gamma_{E_w} \to GL_1 \times \Gamma_{E_w}. \] 
    Recall that \[ \Ld G(1) = (GL_1 \times GL_1) \rtimes \Gamma_{F_v} \] where the action of $\Gamma_{F_v}$ is through the quotient $\Gamma_{E_w/F_v}$ and the nontrivial element swaps the two factors.  Self-duality enables us to extend $\varphi_\chi$ to a parameter \[ \varphi_\chi : \Gamma_{F_v} \to \Ld G(1)  \] as follows. Fixing a representative $z_c$ of $\Gamma_{F_v} \setminus \Gamma_{E_w}$, we let: 
    \begin{align*}
        \varphi_\chi(\sigma) &= (\chi(\sigma),\chi^{-1}(\sigma)) \rtimes \sigma \quad \sigma \in \Gamma_{E_w}\\ 
         \varphi_\chi(z_c) &= (\alpha\chi(z_c^2),\alpha^{-1}) \rtimes z_c.
        \end{align*}
In this way the value of $\varphi_\chi(z_c)$ is determined up to choice of a scalar $\alpha$ (or, equivalently, up to a choice of representative $z_c$). %, by the fact that $z_c^2 \in \Gamma_{E_w}$. 
Note that since~$z_c^2$ is invariant under conjugation by~$z_c$, we must have $\chi(z_c^2) = \chi(z_c^2)^{-1}$, i.e. $\chi(z_c^2) \in \{\pm 1\}$. Additionally, the condition that $z_c \notin \Gamma_{E_w}$ implies by local class field theory that~$\chi(z_c^2)=\kappa$. 

Next, recall that 
\[ 
\Ld U(1) = GL_1 \rtimes \Gamma_{F_v}
\] 
where again, the action is via the quotient $\Gamma_{E_w/F_v}$ and conjugation by a nontrivial element sends $g \mapsto g^{-1}.$   Each embedding of $\Ld U(1)$ into $\Ld G(1)$ involves a choice: for~$\kappa \in {\pm 1}$ we fix an auxiliary character  $\eta$ of $E_w$  whose restriction of $F_v^\times$ is $\kappa$ and we have 
\begin{align*}
\xi_\kappa : \Ld U(1) &\to \Ld G(1) \\ 
\xi_\kappa(g) &= (g,g^{-1}) \rtimes 1, \quad g \in GL_1(\BC) \\ 
\xi_{\kappa}(\sigma) &= (\eta(\sigma), \eta(\sigma^{-1})) \rtimes \sigma, \quad \sigma \in \Gamma_{E_w} \\ 
\xi_\kappa(z_c) &= (\kappa, 1) \rtimes z_c. 
\end{align*}
For the parameter $\varphi_\chi$ to factor through $\xi_\kappa$, the choice of image of $z_c$ must be compatible with the embedding, i.e. we must have $\kappa = \chi(z_c^2)$, which gives the result. 
\end{proof}

\begin{lem}\label{lem:characterexistenceconj}
 Fix a conductor $k \in \BZ_{\geq 0}$.  Then, for each $G \in \wtd{ \mathcal E}_\sm(1)$, there exists a conjugate self-dual character of $E_w^\times$ of conductor $k$ such that the associated $L$-parameter factors through $\Ld G$ if and only if one the following hold: 
\begin{itemize}
    \item when $\kappa = 1$:
    \begin{itemize}
        \item $E_w$ is unramified, 
        \item $E_w$ is ramified and $k$ is even,
    \end{itemize}
    \item when $\kappa = -1$:
    \begin{itemize}
        \item $E_w$ is unramified
        \item $E_w$ is ramified 
        and either:
        \begin{itemize}
            \item $k = 2j_v-1$,
            \item $k \geq 2j_v$ is even,
        \end{itemize}
    \end{itemize}
\end{itemize}
where $j_v$ is as in \eqref{eq ram measure}.
\end{lem}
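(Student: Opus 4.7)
The plan is to reduce the existence question for $\chi$ to a conductor computation for characters of the norm-1 torus $T := (E_w^\times)_{N=1}$ via Hilbert 90.

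By Lemma \ref{lem embedding GL1}, the factoring condition becomes $\chi|_{F_v^\times} = 1$ when $\kappa = 1$ and $\chi|_{F_v^\times} = \kappa_{E_w/F_v}$ when $\kappa = -1$. For $\kappa = 1$, the Hilbert 90 map $\phi(x) = x/\bar x$ identifies conjugate self-dual characters of $E_w^\times$ trivial on $F_v^\times$ with $\hat T$, and $c(\chi) \leq k$ iff the associated $\tilde\chi$ is trivial on $\phi(1+\mf p_w^k)$. Existence of conductor exactly $k$ thus reduces to non-vanishing of the successive quotient $\phi(1 + \mf p_w^{k-1})/\phi(1 + \mf p_w^k)$. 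By Lemma \ref{lem OEcoboundaries}, in the unramified or tame cases this image is $(1 + \mf p_w^{\max(k, 2j_v-1)})_{N=1}$. A direct computation via $\log$ (writing $a = \alpha + \beta \varpi_w$ with $\alpha, \beta \in F_v$, so that $\mathrm{tr}(a) = 2\alpha$) shows that the graded piece $(\mf p_w^{k-1})_{\mathrm{tr}=0} / (\mf p_w^k)_{\mathrm{tr}=0}$ is zero in the tame ramified case when $k$ is odd and is $\cong k_v$ when $k$ is even; in the unramified case it is non-zero for all $k \geq 0$. This establishes the $\kappa = 1$ claims.

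For $\kappa = -1$, the set of admissible $\chi$ is a torsor under $\hat T$; I would construct an explicit base point $\chi_\ast$ by extending $\kappa_{E_w/F_v}$ from $\mc O_{F_v}^\times$ to $\mc O_{E_w}^\times$ (trivially on $1 + \mf p_w$ in the tame case) and setting $\chi_\ast(\varpi_w)$ to be a square root of $\kappa_{E_w/F_v}(-1)$. The consistency $\chi_\ast(\varpi_w)^2 = \kappa_{E_w/F_v}(u\varpi_v)$ (with $\varpi_w^2 = u\varpi_v$) reduces to $-u\varpi_v \in \ker \kappa_{E_w/F_v}$, which holds automatically since $-u\varpi_v = N(\varpi_w)$. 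This gives $c(\chi_\ast) = 2j_v - 1$ in the tame ramified case. Any other conjugate self-dual $\chi$ with the desired restriction is $\chi_\ast \chi_0$ with $\chi_0 \in \hat T$, and since $c(\chi_\ast)$ is odd while $c(\chi_0)$ is always even (by the $\kappa = 1$ analysis), $c(\chi_\ast \chi_0) = \max(c(\chi_\ast), c(\chi_0))$ with no cancellation, yielding achievable conductors $\{2j_v - 1\} \cup \{k \geq 2j_v : k \text{ even}\}$. In the unramified sub-case, the base point has $c(\chi_\ast) = 0$ and the analogous argument yields all $k \geq 0$.

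The main obstacle is the wild ramification case, where Lemma \ref{lem OEcoboundaries} provides only an inclusion. There the description of the graded pieces of $T$ must be reworked using the Hasse--Arf description of how $N_{E_w/F_v}$ shifts filtration levels; concretely, $j_v$ is the break in the upper-numbered ramification filtration of $E_w/F_v$ (equivalently $\mf D_{E_w/F_v} = \mf p_w^{2j_v - 1}$), and the tame computation of trace-zero elements generalizes with $\mf D$ in place of $\mf p_w$ beyond the break. Constructing $\chi_\ast$ in the wild case also requires more care since $\kappa_{E_w/F_v}|_{\mc O_{F_v}^\times}$ now has conductor $> 1$, so the extension cannot be trivial on all of $1 + \mf p_w$ but only on $1 + \mf p_w^{2j_v - 1}$.
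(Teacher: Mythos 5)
Your proposal takes a genuinely different route from the paper's proof, and it is essentially correct in the unramified and tamely ramified cases, but it has a real gap in the wildly ramified case — which you acknowledge, but the gap matters because the statement of the lemma (through $j_v$) is precisely designed to cover wild ramification.

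The paper's argument for $\kappa = 1$ avoids the map $\phi$ entirely: it observes that a conductor-$k$ character with trivial restriction to $F_v^\times$ exists iff $U^{k-1} \not\subset F^\times U^k$, which it re-expresses as the failure of $(U^{k-1}\cap F)/(U^k\cap F) \to U^{k-1}/U^k$ to be an isomorphism and then resolves by a direct cardinality count. This works uniformly in the wild case with no input from Lemma~\ref{lem OEcoboundaries}, which is the point of the maneuver. Your reduction via Hilbert 90 to characters of the norm-one torus $T$ and then to non-triviality of $\phi(1+\mf p_w^{k-1})/\phi(1+\mf p_w^k)$ is equivalent (one can check these two quotients are naturally identified), but computing that quotient requires exactly knowing the image of $\phi$, which is where Lemma~\ref{lem OEcoboundaries} gives only an inclusion when $E_w/F_v$ is wild. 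Your $\log$/trace computation also needs care in residue characteristic $2$, where $\log$ does not converge on $1 + \mf p_w$ and "trace zero iff $\alpha = 0$" fails since $2$ is not a unit.

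For $\kappa = -1$, the paper again sidesteps explicit construction: it notes that a character with $\kappa=-1$ and conductor $\leq k$ exists iff $F^\times \cap U^k \subset N(E_w^\times)$, which pins down the minimal such conductor $k_0 = 2j_v - 1$ from the very definition of $j_v$; all larger achievable conductors then follow from the $\kappa=1$ case by multiplying by a fixed conductor-$k_0$ base character. Your torsor argument and "no cancellation because $c(\chi_\ast)$ odd, $c(\chi_0)$ even" observation reproduces exactly the paper's second step, so that part is fine. Your explicit construction of $\chi_\ast$ works in the tame case, but in the wild case you would need a base character of conductor $2j_v - 1$, and your sketch for producing it rests on the relation $\mf D_{E_w/F_v} = \mf p_w^{2j_v - 1}$, which is incorrect: for a quadratic ramified extension, $v_{E_w}(\mf D_{E_w/F_v})$ equals $j_v$ (it agrees with the conductor exponent of $\kappa_{E_w/F_v}$ since $f = 1$), not $2j_v - 1$ — the two only coincide when $j_v = 1$, i.e. the tame case. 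So as written, the sketch for the wild base point would produce the wrong conductor, and the graded-piece computation "with $\mf D$ in place of $\mf p_w$" would also be misaligned. The cardinality-comparison approach of the paper is worth internalizing precisely because it never needs the image of $\phi$ or the different.
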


\begin{proof}
    The condition of self-duality for characters is $\chi(\overline g) = \chi(g^{-1})$, or equivalently, that $\chi |_{F_v^\times} \in \pm1$, where we abuse notation and denote by $-1$ %the quadratic character of $F_v^\times$ corresponding to $E_w$, to 
the unique nontrival character of $F^\times/N(E^\times_w)$ where $N = N_{E_w/F_v}$ is the norm map. The sign $\pm 1$ determines which element of $\mc E_\sm(1)$ the parameter $\chi$ factors through, as in Lemma \ref{lem embedding GL1}.

Next, denote $U^0 = \mc O_{E_w}^\times$ and $U^k = 1+\mathfrak p_w^k$. For indexing convenience, denote~$U^{-1}$ to be some set strictly bigger than $E^\times_w$. We now consider necessary and sufficient conditions for the existence of a character of sign $\kappa$ and conductor $k$:

\noindent \underline{$\kappa = 1$}

There exists a character of sign $\kappa=1$ and conductor $k$ if and only if
\begin{equation} \label{eq obs kappa = 1}
        U^{k-1} \not\subset F^\times U^k.
\end{equation}
For $k=0$, this condition is empty.  For $k>0$, we have  
\[ 
\text{\eqref{eq obs kappa = 1}} \iff U^{k-1} \not\subset (F^\times \cap U^{k-1})U^k \iff (U^{k-1} \cap F)/(U^k \cap F) \not \simeq U^{k-1}/U^k
\] 
and, by comparing cardinalities, the latter holds if any only if \begin{itemize}
        \item $E_w$ is unramified, or
        \item  $E_w$ is ramified and $k$ is even.
\end{itemize}
    
\noindent \underline{$\kappa = -1$}:

To determine a necessary and sufficient condition for existence here, we use that given two characters $\chi_1$, $\chi_2$ with conductors $c(\chi_1) > c(\chi_2)$, then $c(\chi_1 \chi_2) = c(\chi_1)$. Furthermore, the parities satisfy $\kappa(\chi_1 \chi_2) = \kappa(\chi_1) \kappa(\chi_2)$. In particular, if $k_0$ is the smallest possible conductor for which there exists $\chi$ with $\kappa = -1$, then for all $k > k_0$, conductor $k$ is possible with $\kappa = -1$ if and only if it is with $\kappa = 1$. Therefore, it suffices to find this $k_0$.

If $E_w$ is unramified, then any character $\chi$ with $\chi(\mc O_E^\times)\equiv 1$ and such that~$\chi$ takes some uniformizer to $-1$ is unramified and has $\kappa = -1$. Therefore, $k_0 = 0$.  

If $E_w$ is ramified, note the existence of a character $\chi$ with $\kappa = -1$  requires that 
\[
F^\times \not\subset N(E^\times) U^k \iff F^\times \cap U^k \subseteq N(E^\times).
\]
This forces $k \geq 2j -1$. To construct $\chi$ with $k = 2j-1$, it suffices to further note that 
\[
1 + \mf p_v^{j-1} \notin N(E^\times) \implies U^{2j - 2} \notin N(E^\times) U^{2j-1}
\]
Therefore, $k_0 = 2j-1$. 
\end{proof}

The full list of possible pairs $(k,l)$ of conductor $c(\pi_v) = k$ and central character conductor $c(\om_{\pi_v}) = l$ is very hard to determine and likely does not have a nice description, particularly in the wildly ramified case. We settle for providing an easy-to-describe subset that covers most relevant possibilities outside of some wild ramification issues.

\begin{lem}\label{existenceofcondNconj}
Let $N \geq 4$ be even, and let $G  = U_N^+ \in \wtd{\mc E}_\sm(N)$ in the conjugate self-dual case. Fix a non-split place $v$ and let~$e_v := e(E_w:F_v)$ be the ramification index of $E_w/F_v$. Fix $k \geq 0$ a conductor, and fix a central character $\om_v$ on $G_v$ transferring to~$\om'_v$ on $G_{N,v}$ with conductor $l \leq k$. 

Then there is a conjugate self-dual tempered representation $\pi_v$ of $G_{N,v}$ with conductor $c(\pi_v) = k$ such that its corresponding $L$-parameter $\psi_{\pi_v}$ factors through~$\Ld G_v$ and has central character $\om_v$ on $G_v$ for the following cases:
\begin{itemize}
\item $l = \max\{k - (2j_v-1),0\}$ when $e_v = 1$ and the largest even number less than this otherwise and:
\begin{itemize}
    \item $v$ unramified: all possible $k$,
    \item $v$ tamely ramified: all possible $k$,
    \item $v$ wildly ramified: $k \leq N/2$ or $k > 4j_v-2$. 
\end{itemize}
\item $l = 0$:
\begin{itemize}
    \item $v$ unramified: all possible $k$,
    \item $v$ tamely ramified: all possible $k$,
    \item $v$ wildly ramified: $k \leq N/2$ or $k \geq 8j_v - 4$
\end{itemize}
\end{itemize}
\end{lem}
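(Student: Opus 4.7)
The plan is to mimic the strategy of Lemma \ref{existenceofcondN} using conjugate self-dual building blocks. Since $N$ is even and $\geq 4$, we can write $r := (N-2)/2 \geq 1$ and construct $\pi_v$ as an isobaric sum
\[
\pi_v = \sigma \boxplus \chi_1 \boxplus \chi_1^{-c} \boxplus \cdots \boxplus \chi_r \boxplus \chi_r^{-c},
\]
where $\chi^{-c}(g) := \chi(\bar g)^{-1}$, the characters $\chi_i$ of $E_w^\times$ are not required to be conjugate self-dual, and $\sigma$ is a conjugate symplectic tempered representation of $\GL_2(E_w)$. The pairs $\chi_i \boxplus \chi_i^{-c}$ preserve both a symmetric and a skew-symmetric conjugate-equivariant form, so the conjugate symplectic property of $\pi_v$---and hence the fact that its $L$-parameter factors through the embedding $\Ld U_N^+ \into \Ld G_{N,v}$---is inherited from $\sigma$. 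The invariants are then
\[
c(\pi_v) = c(\sigma) + 2\sum_i c(\chi_i), \qquad \omega_{\pi_v} = \omega_\sigma \cdot \prod_i \chi_i \chi_i^{-c}.
\]

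The main inputs needed are: (i) a conjugate self-dual analogue of Lemma \ref{existenceGL2}, namely that conjugate symplectic tempered representations of $\GL_2(E_w)$ exist with any admissible conductor and prescribed conjugate-orthogonal central character; this follows by combining conjugate symplectic principal series with the supercuspidal construction of \cite{BH06} applied to a suitably ramified quadratic extension of $E_w$, analogously to Lemma \ref{existenceGL2}. (ii) The existence of (possibly non-conjugate-self-dual) characters $\chi$ of $E_w^\times$ with prescribed conductor, which is significantly more flexible than Lemma \ref{lem:characterexistenceconj} and follows by standard character theory on $\mathcal O_{E_w}^\times$. Each factor $\chi_i \chi_i^{-c}$ is automatically trivial on $F_v^\times$, so it is a conjugate orthogonal character, and its transfer to $Z_{G_v}$ via the map of Lemma \ref{U+cchar} can be controlled independently of $\omega_\sigma'$.

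The argument then amounts to a bookkeeping exercise: given $(k,l)$, distribute the total conductor $k$ among $\sigma$ and the $\chi_i$ and choose the central characters of these pieces so that the product central character has conductor exactly $l$. In the unramified and tamely ramified cases, Lemma \ref{lem:characterexistenceconj} and its non-conjugate-self-dual analogue supply sufficiently many characters at every conductor, so every valid pair $(k,l)$ with $l \leq k$ can be realized. The main obstacle is the wildly ramified case: there, the permissible conductors of conjugate self-dual characters (and hence of the resulting central character contributions) skip values in a range controlled by $j_v$, and this sparsity prevents matching arbitrary $l$ via this construction in an intermediate regime of $k$. Outside this intermediate window---either when $k \leq N/2$ so that small, carefully chosen characters on each of the $r$ slots provide enough flexibility, or when $k$ is sufficiently large (depending on $j_v$ and whether $l=0$ or $l = \max\{k-(2j_v-1),0\}$) that the contribution of a single large-conductor character $\chi_1$ dominates and can be adjusted freely---the bookkeeping succeeds, giving the stated lists of admissible cases.
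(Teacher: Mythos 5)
Your strategy mirrors the self-dual Lemma~\ref{existenceofcondN} more closely than the paper's actual proof does: you pair non-conjugate-self-dual characters $\chi_i$ with $\chi_i^{-c}$ and anchor the sign with a single conjugate-symplectic block $\sigma$ on $\GL_2(E_w)$. The paper does \emph{not} do this. Instead it builds $\pi_v$ entirely out of conjugate-symplectic \emph{characters} of the form $\chi_0\nu$ (one fixed conjugate-symplectic $\chi_0$ of minimal conductor times a conjugate-orthogonal $\nu$), assembled into rank-$2$ or rank-$4$ principal series and Steinbergs, plus trivial filler. It never uses supercuspidals and never uses non-CSD characters.

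This is a genuinely different route, and unfortunately it has two gaps that the paper's approach is specifically designed to avoid.

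\textbf{Gap 1: the $\GL_2$ supercuspidal input.} You invoke ``a conjugate self-dual analogue of Lemma~\ref{existenceGL2}'' for conjugate-\emph{symplectic} tempered representations of $\GL_2(E_w)$ with prescribed conductor and central character, and claim it follows ``analogously'' via \cite{BH06}. This does not follow analogously. In the self-dual $\GL_2$ case the sign criterion is elementary: an irreducible self-dual $2$-dimensional parameter is symplectic iff its determinant is trivial. In the conjugate self-dual case, the sign of an \emph{irreducible} conjugate-self-dual parameter is governed by the epsilon-factor criteria of \cite[\S5]{GGP11} (see Propositions~\ref{localepspm1}, \ref{localepspm1conj}), not by a simple central-character test. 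Determining which Bushnell--Henniart supercuspidals $\pi_\chi$ of $\GL_2(E_w)$ are conjugate-symplectic requires analyzing these signs, and you'd also need $L/E_w$ to interact compatibly with the $E_w/F_v$ conjugation for $\pi_\chi$ to even be conjugate-self-dual. None of this is carried out. The paper sidesteps it entirely by remaining in the principal-series/Steinberg world, where the sign is read off directly from the components.

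\textbf{Gap 2: the conductor bookkeeping, which is the lemma.} The entire content of the statement is the precise list of achievable $(k,l)$ pairs, especially the constraints in the wildly ramified case (the thresholds $4j_v-2$ and $8j_v-4$). You dismiss this as ``a bookkeeping exercise'' and assert that ``the bookkeeping succeeds,'' but there is a specific problem with your decomposition that makes this non-obvious. In your construction the central character is $\omega_\sigma\cdot\prod_i \chi_i\chi_i^{-c}$. The factors $\chi_i\chi_i^{-c}$ are automatically conjugate-orthogonal, but their conductor is only bounded above by $c(\chi_i)$ (via Lemma~\ref{lem OEcoboundaries}) and is otherwise hard to pin down, so a pair $\chi_i\boxplus\chi_i^{-c}$ contributing $2c(\chi_i)$ to $k$ contributes an uncontrolled amount in $[0,c(\chi_i)]$ to $l$. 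Reconciling this with $\omega_\sigma$ to hit the \emph{exact} prescribed $l = \max\{k-(2j_v-1),0\}$ (or its even truncation) is precisely the delicate part. The paper's construction is engineered so that the central character is \emph{directly} $\nu_1\nu_2 = \om'_v$, with the conjugate-symplectic $\chi_0$ of minimal conductor serving only as a ``carrier,'' making the conductor arithmetic explicit and tractable; your construction loses this.

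If you want to salvage your route, you would need to (i) prove the conjugate-symplectic $\GL_2$ existence statement with an epsilon-factor argument or replace $\sigma$ with a conjugate-symplectic principal series as the paper does, and (ii) carry out the conductor bookkeeping for $c(\chi_i\chi_i^{-c})$ explicitly in the wildly ramified case. At that point you'd likely find yourself converging on the paper's construction anyway.
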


\begin{proof}
First, note that $k=l=0$ is always achievable by looking at a self-dual Satake parameter. 

Otherwise, consider $\pi_v$ to be the Langlands quotient of the parabolic induction of blocks
\[
\rho_1 \boxtimes \cdots \boxtimes \rho_t
\]
where each $\rho_i$ is a conjugate-symplectic representation of some even $G_{N_i,v}$. It suffices to consider the case where all $\rho_i$ are trivial except for one that is a principal series with central character $\om'_v$ and possibly some number that are dimension-2 Steinbergs.

Since $c(\om_{\pi_v}) = \sum_i c(\om_{\rho_i})$ and $c(\pi_v) = \sum_i c(\rho_i)$, it suffices to describe the possible values of conductor $c(\rho_i) = k$ and central character conductor $c(\om_{\rho_i}) = l$ that can be achieved with our list of possible $\rho_i$ and choose appropriately amongst these options. 

\noindent \underline{Rank $2$ Principal Series:}

Let $\chi_0$ be a choice of conjugate-symplectic character of $E_v^\times$ with minimum conductor as in the proof of Lemma \ref{lem:characterexistenceconj}. Choose conjugate orthogonal~$\nu_1 \nu_2 = \om'_v$ with conductors $k_1, k_2$ respectively. Consider  $\rho_i$ the parabolic induction of $\chi_0 \nu_1 \boxtimes \chi_0^{-1} \nu_2$ which is conjugate-symplectic since the components are (see the discussion after Definition 2.4.7 in \cite{Mok15}).

There are two possible cases for conductors:
\begin{itemize}
    \item $k_1 = k_2 \geq l$: Then $k = \max\{2k_1, 4j_v-2\}$. 
    \item $k_1 = l$ and $k_1 > k_2$: Then $k = \max\{l, 2j-1\} + \max\{k_2, 2j_v-1\}$. 
\end{itemize}
Note in the above computations that in the ramified cases when $j_v \neq 0$, the $k_i$ are always even and therefore not equal to $2j_v-1$.  Therefore, we never need to worry about the case where the conductor of a product with $\chi_0$ is not the maximum of its factors.  

This in particular realizes the following possible pairs $(k,l)$ (with $\om'_v$ arbitrary of conductor $l$):
\begin{itemize}
    \item $v$ unramified: $k$ even and $l=0$ or $k$ arbitrary and $k=l$
    \item $v$ tamely ramified: $k \geq 4$ a multiple of $4$ and $l=0$, $k = 2$ and $l = 0$, or $k > 2$ odd and $l = k-1$. 
    \item $v$ wildly ramified: $k \geq 4j_v$ a multiple of $4$ and $l=0$, $k = 4j_v-2$ and $l = 0$, or $k > 4j_v-2$ odd and $l = k-(2j_v-1)$. 
\end{itemize}

\noindent \underline{Rank-$4$ Principal Series:}

In the ramified cases when $l=0$, we also need to consider dimension-$4$ principal series. Choose $\nu_1\nu_2\nu_3 = 1$ and $\rho_i$ the parabolic induction of
\[
\chi_0 \nu_1 \boxtimes \chi_0 \nu_2 \boxtimes \chi_0^{-1} \nu_3 \boxtimes \chi_0^{-1}. 
\]
We consider the case when $c(\nu_0) = c(\nu_1) = k_2 \geq c(\nu_2) = k_2$. Then $\rho_i$ has conductor
\[
k = \max\{2k_1, 4j_v - 2\} + \max\{k_2, 2j_v - 1\} + (2j_v - 1)
\]
which can be any integer larger than $8j_v - 4$.

\noindent \underline{Steinberg}

If $\rho_i$ is the Steinberg, then it gives $k = 1$ and $l = 0$.

\noindent \underline{Even-Rank Trivial}

If $\rho_i$ is trivial with even rank, then it is both conjugate-orthogonal and conjugate-symplectic. This gives $k=0$ and $l=0$.  
\end{proof}

\begin{note}
Constructing representations of odd conductor when $N$ is even crucially depended on choosing $G = U^+_N$ through the possible choice of characters being induced. This is a good consistency check for Proposition~\ref{cosettransfervanishingconj}: when $G = U^-_N$, the function $\wtd E^\infty_{v,k} = \wtd C^\infty_{v,k}$ transfers to $0$ when $k$ is odd. Given Proposition~\ref{spectralpositivity}, this is incompatible with the existence of odd-conductor representations and sign $\kappa = -1$. 
\end{note}

At split places things are simpler:
\begin{lem}\label{splitexistenceofcondNconj}
Let $N \geq 3$, and let $G  = U_N^\pm \in \wtd{\mc E}_\sm(N)$ in the conjugate self-dual case. Fix a \emph{split} place $v$ and a conductor $k$. Fix a central character $\om_v$ on $G_v$ transferring to $\om'_v$ on $G_{N,v}$ with conductor $l \leq k$. 

Then there is a conjugate self-dual tempered representation $\pi_v$ of $G_{N,v}$ with conductor $c(\pi_v) = k$ such that its corresponding $L$-parameter $\psi_{\pi_v}$ factors through~$\Ld G_v$ and has central character $\om_v$ on $G_v$. 
\end{lem}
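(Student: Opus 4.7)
At a split place $v$, fixing a place $w$ of $E$ above $v$ identifies $G_{N,v} \cong \GL_N(F_v) \times \GL_N(F_v)$ with Galois conjugation swapping the two factors. Every irreducible conjugate self-dual representation of $G_{N,v}$ is then of the form $\pi_v = \rho_v \boxtimes \rho_v^\vee$ for a unique irreducible unitary $\rho_v$ of $\GL_N(F_v)$, and under the projection identification $G_v \cong \GL_N(F_v)$ we have $c(\pi_v) = c(\rho_v)$ and the central character of $\pi_v$ corresponds to $\om_{\rho_v}$. Both endoscopic embeddings $\xi_\pm : {}^L U_N^\pm \hookrightarrow {}^L G_N$ reduce locally at a split place to $\phi \mapsto \phi \oplus \phi^\vee$, so the $L$-parameter of $\pi_v$ automatically factors through ${}^L G_v$ regardless of the choice of $\pm$. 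The lemma is therefore equivalent to the following purely local statement: given a unitary character $\om$ of $F_v^\times$ of conductor $l \leq k$, produce a tempered irreducible representation $\rho_v$ of $\GL_N(F_v)$ with central character $\om$ and conductor $k$.

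The plan is to set $m := k - l$ and construct $\rho_v$ as the irreducible parabolic induction
\[
\rho_v = \om \boxplus \sigma
\]
from the $(\GL_1 \times \GL_{N-1})$-Levi, where $\sigma$ is an irreducible tempered representation of $\GL_{N-1}(F_v)$ with trivial central character and conductor exactly $m$. This will give $c(\rho_v) = l + m = k$ and $\om_{\rho_v} = \om$ by the additivity of conductors and multiplicativity of central characters under parabolic induction of tempered generic representations. Unitarity of $\om$ ensures that $\om$ avoids the reducibility points associated to segments making up $\sigma$, so the Bernstein--Zelevinsky classification guarantees irreducibility.

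To produce $\sigma$, I would split into cases on $m$. When $m = 0$, take $\sigma$ to be any irreducible tempered unramified principal series with trivial central character, e.g. $|\cdot|^{is} \boxplus |\cdot|^{-is} \boxplus \triv^{\oplus(N-3)}$ for generic $s \in i\R$, using that $N - 1 \geq 2$. When $m \geq 1$, take $\sigma = \sigma_2 \boxplus \triv^{\oplus(N-3)}$ (reducing to $\sigma = \sigma_2$ when $N=3$), where $\sigma_2$ is an irreducible supercuspidal representation of $\GL_2(F_v)$ of conductor $m$ and trivial central character. The existence of such $\sigma_2$ follows from the Bushnell--Henniart construction \cite{BH06}: cuspidals of $\GL_2(F_v)$ are parameterized by characters $\chi$ of $L_w^\times$ for a quadratic extension $L_w/F_v$ with the associated central character being $\chi|_{F_v^\times}$, and the conductor of $\sigma_2$ is $2c(\chi)/e(L_w/F_v) + 2$.

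The main obstacle---really the only technical point---will be confirming that, for each desired $m \geq 1$, there exists a character $\chi$ of some $L_w^\times$ with $\chi|_{F_v^\times} = 1$ and conductor matched so that the resulting $\sigma_2$ has conductor exactly $m$. This is a routine exercise on characters of finite abelian groups parallel to the computations already carried out in the proof of Lemma \ref{lem:characterexistenceconj}: varying both the quadratic extension $L_w$ (ramified vs. unramified) and the level of $\chi$ gives enough flexibility to hit every $m \geq 1$. With $\sigma_2$ in hand, the construction of $\rho_v$ is immediate, completing the proof.
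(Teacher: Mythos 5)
Your reduction at the split place is the same as the paper's: identify $G_{N,v}\cong\GL_N(F_v)\times\GL_N(F_v)$, note that conjugate self-dual $\pi_v$ are of the form $\rho_v\boxtimes\rho_v^\vee$, observe that the two endoscopic embeddings $\xi_\pm$ agree locally at split places, and reduce to building a tempered $\rho_v$ on $\GL_N(F_v)$ with conductor $k$ and prescribed central character of conductor $l$.

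However, the construction of $\rho_v$ via $\om\boxplus\sigma$ with $\sigma=\sigma_2\boxplus\triv^{\oplus(N-3)}$ and $\sigma_2$ a supercuspidal of $\GL_2(F_v)$ of conductor $m:=k-l$ and trivial central character has a genuine gap precisely at the step you flag as ``routine.'' First, there is no supercuspidal of $\GL_2(F_v)$ of conductor $1$: supercuspidals have conductor $\geq 2$, and the conductor-$1$ discrete series with trivial central character is the Steinberg, which your dichotomy does not invoke. Second, and more seriously, for $p$ odd there is \emph{no} supercuspidal of $\GL_2(F_v)$ with trivial central character and odd conductor at all. In the Bushnell--Henniart parameterization, such a supercuspidal comes from a character $\chi$ of $L_w^\times$ for a (necessarily tamely ramified or unramified) quadratic $L_w/F_v$ with conductor $2c(\chi)/e+2$. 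The constraint $\chi\mid_{F_v^\times}=1$ forces $\chi$ to be trivial on $\mc O_{F_v}^\times$; in the unramified case this gives even conductor $2c(\chi)+2$, and in the ramified case the graded pieces of $\mc O_{L_w}^\times/\mc O_{F_v}^\times$ are supported only in odd filtration degrees, which forces $c(\chi)$ even, so the conductor $c(\chi)+2$ is again even. So your claim that varying $L_w$ and the level of $\chi$ hits every $m\geq 1$ is false, and the decomposition $\rho_v=\om\boxplus\sigma$ with $\sigma$ constrained to trivial central character on $\GL_{N-1}$ cannot produce the required representations for odd $m\geq 3$ when $N-1$ is small (one can check that for $N=3$ the putative $\sigma$ on $\GL_2$ with trivial central character and odd conductor $\geq 3$ simply does not exist among principal series, Steinberg twists, or supercuspidals).

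The paper's one-line ``this can be done using principal series'' avoids the issue by not committing to a symmetric decomposition: one distributes the conductor asymmetrically among $\GL_1$-factors, allowing the character with largest conductor to absorb the ``excess,'' and brings in a Steinberg block only where needed. To repair your argument you should either mimic this directly (choose unitary characters $\nu_1,\dots,\nu_N$ with $\prod\nu_i=\om$, letting $\nu_3=\om\nu_1^{-1}\nu_2^{-1}$ have conductor $\max$ of the others so that $c(\nu_1)+c(\nu_2)+c(\nu_3)=k$ is solvable for all $l\leq k$), or else handle $m=1$ with Steinberg and odd $m\geq 3$ with a $\chi\boxplus\St\boxplus\chi^{-1}\boxplus\triv^{\oplus(N-4)}$-type constituent rather than a supercuspidal, noting that the latter requires $N\geq 4$, which is where the lemma is actually applied.
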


\begin{proof}
Any such $\pi_v$ on $G_{N,v} \simeq \GL_N(F_v) \times \GL_N(F_v)$ is of the form $\pi_0 \boxtimes (\pi_0)^\vee$. In the split place, the $\kappa = \pm$ embeddings are the same, so this has parameter factoring through both. In our conventions, the conductor of $\pi_v$ is that of $\pi_0$, so it suffices to find $\pi_0$ of conductor $k$ and central character $\om'_v = \om_v \circ \phi$. This can be done using principal series. 
\end{proof}

\subsection{Application to \lm{$(\wtd C^\infty_\mf n)^G$}}\label{sec C(1)}
Now we are finally ready to show when our main term \eqref{mainterm} for $f^\infty = (\wtd C^\infty_\mf n)^G$ is positive.

\subsubsection{Self-Dual Case}
Since it doesn't take that much extra work in the self-dual case, we prove a statement for all $G \in \wtd {\mc E}_\sm(N)$ instead of just $G = \SO_{N+1}$ when~$N$ is even (though note that for this eventual case of interest, $Z_G$ is trivial so the statement can be made simpler):

\begin{thm}\label{Ctransferpositivity}
Let $\wtd G_N$ in the self-dual case and $\wtd C^\infty_\mf n$ the test function on $\wtd G_N^\infty$ defined in \ref{Cdef}. Choose $G \in \wtd{\mc E}_\sm(N)$ and let $\om'_v$ be the corresponding central characters on $Z_{G_N,v}$ common to all transfers from $G_v$ (as in the discussion after Lemma \ref{cchartransfer}). 

Fix a central character $\om_\infty$ on $Z_{G_\infty} \subseteq \{\pm 1\}$ and assume:
\begin{itemize}
    \item For all $v$, $v(\mf n) \geq c(\om'_v)$,
    \item If $\om_\infty \neq 1$, $\mf n \neq 1$.
\end{itemize}
Then,
\[
\sum_{\gamma \in Z_G(F)} \om_\infty^{-1}(\gamma) (\wtd C^\infty_\mf n)^G(\gamma) > 0.
\]
\end{thm}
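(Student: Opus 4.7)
The plan is to deduce Theorem \ref{Ctransferpositivity} directly from the positivity criterion Proposition \ref{spectralpositivity} applied to the transfer $f = (\wtd C^\infty_\mf n)^G$. To do this, we must verify both the non-negativity hypothesis on traces against tempered $L$-packets at the finite ramified places and the strict-positivity hypothesis (existence of some tempered $L$-packet with the right central character contributing positively).

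First I would check non-negativity of the local traces. Let $S$ be the finite set of finite places where $v(\mf n) > 0$. For any tempered $L$-packet $\Pi_S$ of $G_S$ with parameter $\varphi_S$, the twisted endoscopic character identity gives
\[
\tr_{\Pi_S}\bigl((\wtd C^\infty_\mf n)^G_S\bigr) = \tr_{\td \pi_S}\bigl(\wtd C^\infty_{\mf n, S}\bigr),
\]
where $\td \pi_S$ is the canonical extension to $\wtd G_{N,S}$ of the generic self-dual representation of $\GL_N(F_S)$ with parameter the image of $\varphi_S$ under $\Ld G \hookrightarrow \Ld G_N$ (and using that the transfer factors on the \emph{simple} twisted endoscopic groups in $\wtd{\mc E}_\sm(N)$ are normalized to equal $1$ at the canonical identity class, as used implicitly in Theorems \ref{globalepsilontrace}, \ref{globalepsilontraceconj}). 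By the defining property of $\wtd C^\infty_\mf n$ from Corollary \ref{localC}, this trace is $1$ if $c(\pi_S) = \mf n_S$ and $0$ otherwise; in particular it is non-negative, so the hypothesis of Proposition \ref{spectralpositivity} holds.

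Next I would establish strict positivity by exhibiting a single tempered $L$-packet $\Pi_S$ of $G_S$ with $\tr_{\Pi_S}((\wtd C^\infty_\mf n)^G) > 0$ and whose common central character $\om_{\pi_S}$ matches $\om_\infty$ on $Z_G(F) \cap K^{S,\infty}$. By the previous paragraph, positivity at $\Pi_S$ is equivalent to choosing, at each $v \in S$, a generic tempered self-dual $\pi_v$ of $\GL_N(F_v)$ whose parameter factors through $\Ld G_v$ and whose conductor is exactly $\mf p_v^{v(\mf n)}$. Since the central character $\om'_v$ on the image of $(Z_{G_{N,v}})_\theta$ is fixed by $G$ (see the discussion after Lemma \ref{cchartransfer}), the hypothesis $v(\mf n) \geq c(\om'_v)$ combined with Lemma \ref{existenceofcondN} (for $N > 2$) or Lemma \ref{existenceGL2} (for $N = 2$) furnishes such $\pi_v$ at each $v \in S$; outside $S$ we take the unramified members of the tempered $L$-packets with the required parameter restrictions.

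The remaining check is that one can additionally arrange the global central character of the chosen $\pi_S$ to restrict correctly to $\om_\infty$ on $Z_G(F) \cap K^{S,\infty}$. Since $Z_G$ is either trivial or of order $2$ in the self-dual case, this reduces to the single question of whether the sign $\om_\infty$ can be produced; when $\om_\infty$ is trivial we pick central characters trivial at each $v \in S$, and when $\om_\infty$ is non-trivial the assumption $\mf n \neq 1$ ensures some place $v \in S$ has $v(\mf n) > 0$, so by the freedom in Lemma \ref{existenceofcondN} we may adjust the central character of $\pi_v$ on $G_v$ to be non-trivial (this is where the constructive flexibility in choosing the $\chi_i(-1)$ in the proof of that lemma is essential). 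The hardest step is this last coordination of local central characters with the prescribed global sign $\om_\infty$; I expect it to be the main obstacle because it is the place where the hypotheses of the theorem (in particular $\mf n \neq 1$ when $\om_\infty \neq 1$, and $v(\mf n) \geq c(\om'_v)$) enter in an essential way. Once this is done, Proposition \ref{spectralpositivity} delivers the strict inequality.
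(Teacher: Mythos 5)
Your proposal follows the same route as the paper: apply Proposition \ref{spectralpositivity} with non-negativity coming from the endoscopic character identities together with Corollary \ref{localC}, and strict positivity from Lemma \ref{existenceofcondN} (using the freedom in the local central characters, and the hypothesis $\mf n \neq 1$ to place a nontrivial $\om_v$ at some ramified $v$ when $\om_\infty \neq 1$). The only minor deviation is that you also invoke Lemma \ref{existenceGL2} to cover $N=2$, which the paper's proof silently omits; otherwise the argument matches the paper's.
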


\begin{proof}
We apply Proposition \ref{spectralpositivity} to the set $S$ containing all primes on which $\mf n$ is supported and all finite places at which $\om'_v$ is nontrivial. The input for non-negativity follows from the defining property Corollary \ref{localC} of $\wtd C^\infty_\mf n$ plugged into the endoscopic character identities \cite[Thm 2.2.1(a)]{Art13} or \cite[Thm 3.2.1(a)]{Mok15}. 

For strict positivity, if $\om_\infty \neq 1$, choose $v$ such that $v(\mf n) > 0$ and set $\om_v$ to be the non-trivial character on $Z_{G_v}$. Set all other $\om_{v'} = 1$. Then, Lemma \ref{existenceofcondN} constructs representations $\pi_v$ on each $G_{N,v}$ of conductor $\mf p_v^{v(\mf n)}$, central character $\om'_v$, and that corresponds to a packet on $G_v$ with central character $\om_v$.

Then, if $\pi_S = \prod_{v \in S} \pi_v$ as a representation of $G_{N,S}$, it has conductor $\mf n$ and its corresponding $A$-packet $\Pi_S$ on $G_S$ is made of $\pi_S$ with central character satisfying $\om_{\pi_S}|_{Z_G(F)} = \om_{\pi_\infty}|_{Z_G(F)}$. Therefore, $\Pi_S$ is the necessary input to Proposition \ref{spectralpositivity} for strict positivity (again by the endoscopic character identities). 
\end{proof}

\subsubsection{Conjugate Self-dual case}
In the conjugate self-dual case, globalizing central characters is more complicated. For $v$ a place of $F$, let $e_v$ be its ramification degree in $E$. For $\mf n$ a conductor, let $\mc O_E(\mf n) \subseteq \mc O_E^\times$ be the units that are $1 \pmod{\mf n}$. 

We return to global indexing for conductors (where ramified exponents are half-integers).

\begin{lem}\label{lem ccharglobconj}
Let $G = U^+_N \in \wtd{\mc E}_\sm(N)$. Then for each finite place $v$ of $F$ there is $b_v \in \Z_{\geq -2}$ depending only on $E/F$ such that for all 
\begin{itemize}
    \item $\om_\infty$ a character on $Z_{G_\infty}$ that transfers to $\om'_\infty$ on $Z_{G_N, \infty}$,
    \item conductor $\mf n$ such that
    \begin{itemize}
        \item[(i)] $\mf n$ is an ideal of $\mc O_F$ (i.e. $v(\mf n)$ is an integer for all ramified $v$),
        \item[(ii)] $v(\mf n) \geq b_v + 2/e_v$ for one place or $v(\mf n) \geq b_v + 1/e_v$ for at least two places,
    \end{itemize}
\end{itemize}
there exists a global $\om'$ on $Z_{G_N}(\A_F)$ with infinite component $\om'_\infty$ and such that ${\om'}^\infty$ has conductor exactly $\mf n$. 
\end{lem}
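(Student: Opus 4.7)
By Lemma~\ref{U+cchar}, for $G = U^+_N$ the transfer $\om \mapsto \om'$ is simply pullback along the surjection $Z_{G_N} \twoheadrightarrow (Z_{G_N})_\theta \iso Z_G = U_1^{E/F}$, which on points sends $\beta \mapsto \beta/\bar\beta$. Thus constructing the Hecke character $\om'$ of $E^\times \bs \A_E^\times$ with prescribed archimedean component and exact finite conductor $\mf n$ is equivalent to constructing a Hecke character $\om$ of $U_1(F) \bs U_1(\A_F)$ with prescribed archimedean component such that the composition $\om' = \om \circ \phi$, for $\phi(\beta) := \beta/\bar\beta$, has conductor exactly $v(\mf n)$ at each finite place $v$. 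Note that triviality of $\om'$ on $E^\times$ is automatic because $\phi(E^\times) \subseteq U_1(F)$ (in fact this is an equality by Hilbert~90).

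The first step is to produce candidate local components. At each finite $v$, an appeal to Lemma~\ref{lem:characterexistenceconj} (or its split analogue from Lemma~\ref{splitexistenceofcondNconj}) produces a conjugate self-dual local character of $E_w^\times$ with conductor exactly $v(\mf n)$, provided the parity/ramification constraints encoded by $j_v$ are met; these constraints are what force the threshold $b_v$ to incorporate the wild-ramification parameter $2 j_v - 1$, together with a small additional slack that will be used to absorb the global obstruction below. Assembling such local characters with the chosen $\om'_\infty$ at infinity gives a candidate character of $\A_E^\times$; by class field theory, this character descends to $E^\times \bs \A_E^\times$ if and only if it is trivial on $\mc O_E^\times$, and the obstruction to this is a character of the finite group
\[
\mc O_E^\times \big/ \Bigl( \mc O_E^\times \cap \mc O_{E,\infty}^{\om_\infty} \cdot \prod_{v \nmid \infty} (1 + \mf p_w^{e_v v(\mf n)}) \Bigr),
\]
where $\mc O_{E,\infty}^{\om_\infty}$ is the kernel of $\om_\infty$ at infinity. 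Conjugate self-duality of the candidate cuts this obstruction down to a character of the analogous quotient of $\mc O_E^\times \cap U_1(F)$, which is a finite group of $E/F$-units of norm one, controlled by the unit theorem.

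The second step is to kill this finite obstruction by modifying one or two of the local factors $\om'_v$ without disturbing their conductors. For this we need the local character group $\Hom(\mc O_{E_w}^\times/(1+\mf p_w^{e_v v(\mf n)}),\C^\times)$ to contain, at some place $v$ contributing to $\mf n$, a subspace of characters of conductor strictly less than $v(\mf n)$ large enough to surject onto the dual of our finite obstruction after restriction. Using Lemma~\ref{lem OEcoboundaries} to identify the image of $\phi$ on the local units with $(1+\mf p_w^{j_v})_{Nm=1}$ (in the tamely ramified or unramified case, and an analogous description wildly), one can take $b_v$ to be $2j_v - 2$ plus a fixed constant absorbing the slack from the wild case, so that the condition $v(\mf n) \geq b_v + 2/e_v$ at one place guarantees at least two ``free layers'' of the filtration on which we may vary $\om_v$ without touching the conductor, while $v(\mf n) \geq b_v + 1/e_v$ at two distinct places provides one layer at each. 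Either version is sufficient to surject onto the finite obstruction after pullback to $\mc O_E^\times \cap U_1(F)$.

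\textbf{Main obstacle.} The genuinely delicate step is choosing $b_v$ uniformly in $\om_\infty$ and $\mf n$. The subgroup of $\mc O_{E_w}^\times$ reached by $\phi$ has an index depending on the wild part of $E_w/F_v$, and the local character layers of conductor exactly $k$ versus $\leq k - 1/e_v$ have differing dimensions in the wildly ramified case. Tracking these dimensions carefully, in particular ensuring that the map from characters of conductor exactly $v(\mf n)$ to characters of the finite obstruction group has full rank in the image once the bound on $v(\mf n)$ is met, is where the parameter $b_v$ must absorb all wild ramification data intrinsic to $E/F$ at $v$. Once this is in place, the construction proceeds as described and produces the desired global $\om'$.
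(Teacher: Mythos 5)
Your high-level setup agrees with the paper's — both reduce, via Lemma~\ref{U+cchar}, to constructing a character $\om$ on $Z_G(F) \bs Z_G(\A_F)$ for $Z_G = U_1^{E/F}$ with the right archimedean piece, then setting $\om' = \om\circ\phi$ — but the two routes to $\om$ differ. The paper works \emph{top-down} on the compact class group of the anisotropic torus $U_1^{E/F}$: it uses that $Z_G(F)\cap Z_G(\wh{\mc O}_F) = \Xi$ is the group of roots of unity in $E$, so the quotient $Z_G(F)\bs Z_G(\A_F)/\phi(\mc O_E(\mf n))$ factors as a product with an explicit finite piece, and one simply needs this quotient to strictly enlarge when $\mf n$ grows past certain thresholds so that a character can be chosen not to factor through any proper $\mf n'\mid\mf n$. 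You work \emph{bottom-up}: fix local characters of prescribed conductor, assemble, and correct a descent obstruction. This is also a standard pattern, but you do not carry it through, and the single hard step — where $b_v$ actually comes from — is flagged in your own ``main obstacle'' paragraph as unresolved.

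There is also a genuine error in what you conjecture $b_v$ should be. You propose $b_v$ of the rough shape ``$2 j_v - 2$ plus a wild-ramification slack,'' i.e.\ something controlled by the local ramification invariant $j_v$. In the paper, $b_v$ has nothing to do with $j_v$: it is defined as the smallest $x \geq 0$ such that $\Xi \cap \phi(1 + \mf p_v^x) = 1$ (set to $-2$ if this already holds for $x=0$), where $\Xi$ is the \emph{global} group of roots of unity in $E$. These measure entirely different things: $j_v$ records how deep the norm group $N_{E/F}(E^\times)$ sits locally, while $b_v$ records when the global roots of unity are separated from the level subgroup under $\phi$. Concretely, if $\Xi \cap (1+\mf D_{E/F}) = 1$ — e.g.\ $E$ has no nontrivial roots of unity congruent to $1$ mod the different — then the paper sets every $b_v = -2$ and condition (ii) is essentially vacuous (this is exactly the content of the note after the lemma), whereas your proposal would still demand positive thresholds at ramified primes. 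So your formula is not just imprecise but gives the wrong answer in the generic case.

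Two smaller points. First, the claim that the assembled candidate ``descends to $E^\times\bs\A_E^\times$ if and only if it is trivial on $\mc O_E^\times$'' is not literally correct; descent requires triviality on $E^\times$, and the reduction to (norm-one) units uses compactness of $U_1^{E/F}$ and the identification $Z_G(F)\cap Z_G(\wh{\mc O}_F)=\Xi$, which is precisely what the paper sets up. Second, the remark that ``triviality of $\om'$ on $E^\times$ is automatic because $\phi(E^\times)\subseteq U_1(F)$'' is circular as stated: it only follows once you have a global $\om$ trivial on $U_1(F)$, which is exactly what you are trying to construct. In sum: right skeleton, genuinely different attempted route, but the step that is the actual content of the lemma (the correct identification of $b_v$ and the argument that it kills all proper-divisor characters) is both absent and, as sketched, incorrect.
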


\begin{proof}
Let $\phi(x)=\frac{x}{\bar{x}}$ be the map first defined in \S \ref{sec E(1)conj}. Let $\Xi$ be the roots of unity in $E$ and for each finite place $v$ of $F$, consider the smallest $x \in \Z_{\geq 0}$ such that $\Xi \cap \phi(1 + \mf p_v^x) = 1$. Set $b_v = x$ if $x > 0$ and~$b_v = -2$ if $x = 0$. Note that we are not allowing half-integral $b_v$ when $v$ is ramified. 

The $b_v$ are chosen exactly so that if $\mf n$ is a conductor satisfying conditions (i)-(ii) and $\mf n'$ is a proper divisor of $\mf n$, then there is intermediate proper divisor $\mf n''$ such that $\mf n' | \mf n''$ and $\Xi \cap \phi(\mc O_E(\mf n'')) = 1$. The integrality condition guarantees that there is at least one conjugate-orthogonal character of conductor $\mf n$ by Lemma \ref{lem:characterexistenceconj}, so in addition $\phi(\mc O_E(\mf n'')) \neq \phi(\mc O_E(\mf n))$. 

Next, $Z_G(F) \cap Z_G(\wh{\mc O}_F) = \Xi$ since it consists of integral elements of $E^\times$ that have complex norm one in all their embeddings. It follows that for any compact open $K^\infty \subseteq Z_G(\wh{\mc O}_F)$, 
\[
Z_G(F) \bs Z_G(\A_F) / K^\infty = Z_G(F^\infty)/(\Xi \cap K^\infty) \times Z_G(\A^\infty_F)/Z_G(F)K^\infty.
\]
If $\mf n''$ is a proper divisor of $\mf n$ such that $\Xi \cap \phi(\mc O_E(\mf n'')) = 1$, and $\phi(\mc O_E(\mf n'')) \neq \phi(\mc O_E(\mf n))$, note that:
\begin{multline*}
Z_G(F) \bs Z_G(\A_F) / \phi(\mc O_E(\mf n)) \neq Z_G(F) \bs Z_G(\A_F) / \phi(\mc O_E(\mf n'')) \implies \\
Z_G(F_\infty) \times Z_G(\A^\infty_F)/Z_G(F)\phi(\mc O_E(\mf n)) \neq Z_G(F_\infty) \times Z_G(\A^\infty_F)/Z_G(F)\phi(\mc O_E(\mf n''))  \\
\implies  Z_G(\A^\infty_F)/Z_G(F)\phi(\mc O_E(\mf n)) \neq Z_G(\A^\infty_F)/Z_G(F)\phi(\mc O_E(\mf n'')).
\end{multline*}
Therefore, the properties (i)-(ii) of $\mf n$ guarantees that we can globalize $\om_\infty$ to a character~$\om$ of $Z_G(F) \bs Z_G(\A_F) / \phi(\mc O_E(\mf n))$ that doesn't descend to $Z_G(F) \bs Z_G(\A_F) / \phi(\mc O_E(\mf n'))$ for any proper divisor $\mf n' | \mf n$. 

Setting $\om' = \om \circ \phi$ then suffices by Lemma \ref{U+cchar}. 
\end{proof}

\begin{note}
In many cases, $\Xi \cap (1 + \mf D_{E/F}) = 0$ already, so the condition on $\mf n$ in Lemma~\ref{lem ccharglobconj} is trivial (i.e. all $b_v$ can be chosen to be $-2$). 
\end{note}

\begin{thm}\label{Ctransferpositivityconj}
Let $N \geq 4$ be even, $G = U^+_N \in \wtd{\mc E}_\sm(N)$, and $\wtd C^\infty_\mf n$ the test function on $\wtd G^\infty_N$ defined in \ref{Cdef}. Let $\om_\infty$ be a central character of $G_\infty$. Choose $b_v$ as in Lemma \ref{lem ccharglobconj} and let $\mf n$ be a conductor such that 
\begin{itemize}
    \item For all wildly ramified $v$, $v(\mf n) \leq N/4$ or $v(\mf n) > 2j_v -1$,
    \item There is at least one $v$ such that $\max\{\lfloor v(\mf n) - (j_v - 1/2) \rfloor, 0\} \geq b_v + 2/e_v$ or there are at least two $v$ such that $\max\{\lfloor v(\mf n) - (j_v - 1/2) \rfloor, 0\} \geq b_v + 1/e_v$.
\end{itemize}
Then,
\[
\sum_{\gamma \in Z_G(F)} \om_\infty^{-1}(\gamma) (\wtd C^\infty_\mf n)^G(\gamma) > 0.
\]
\end{thm}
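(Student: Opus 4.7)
The plan is to mirror the self-dual argument of Theorem~\ref{Ctransferpositivity} while accounting for the fact that $Z_G(F) = U_1(F)$ is infinite, so central characters must now be genuinely globalized. I would take $S$ to contain all finite places dividing $\mf n$ together with any auxiliary finite places at which the globalized central character will be ramified; at all other finite places $(\wtd C^\infty_\mf n)^G$ agrees with $\bar \1_{K_v}$ by the twisted fundamental lemma. For any tempered $L$-packet $\Pi_S$ on $G_S$, the twisted endoscopic character identity of \cite[Thm.~3.2.1(a)]{Mok15} combined with the defining spectral identity of $\wtd C_{\mf n}^\infty$ in Corollary~\ref{localC} expands $\tr_{\Pi_S}((\wtd C^\infty_\mf n)^G)$ as a sum of $\{0,1\}$-valued traces over conjugate self-dual extensions of packet members to $\wtd G_N$, so the non-negativity input for Proposition~\ref{spectralpositivity} is immediate.

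For strict positivity, I would construct a single tempered $L$-packet $\Pi_S$ witnessing $\tr_{\Pi_S}((\wtd C^\infty_\mf n)^G) > 0$ while having every $\pi_S \in \Pi_S$ satisfy $\om_{\pi_S}|_{Z_G(F) \cap K^{S,\infty}} = \om_\infty|_{Z_G(F) \cap K^{S,\infty}}$. The strategy is first to globalize $\om_\infty$ to a Hecke character $\om'$ on $Z_{G_N}(\A_F)$, then to realize each local piece $\om'_v$ as the central character of a local tempered representation $\pi_v$ of $G_{N,v}$ of conductor exactly $v(\mf n)$ whose parameter factors through $\Ld G_v$, and finally to let $\Pi_S$ be the $L$-packet attached to $\pi_S = \bigotimes_{v \in S} \pi_v$. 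Because $\om'$ is a global character of $Z_{G_N}$, its transfer under the map of Lemma~\ref{U+cchar} is a global Hecke character of $Z_G$, and the required matching of central characters on $Z_G(F) \cap K^{S,\infty}$ follows automatically.

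The globalization is carried out by Lemma~\ref{lem ccharglobconj} applied to the auxiliary conductor $\mf m \mid \mf n$ defined by $v(\mf m) := \max\{\lfloor v(\mf n) - (j_v - 1/2)\rfloor, 0\}$: this is precisely the largest central character conductor one can realize inside a representation of $G_{N,v}$ of conductor $v(\mf n)$, according to the first-bullet case of Lemma~\ref{existenceofcondNconj}. The evenness constraint built into that lemma at ramified places guarantees $v(\mf m) \in \Z$, so hypothesis~(i) of Lemma~\ref{lem ccharglobconj} holds; the second bullet of the theorem being proved is exactly hypothesis~(ii). Lemma~\ref{lem ccharglobconj} thus supplies $\om'$ with the desired infinite component and finite-part conductor $\mf m \mid \mf n$. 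With $\om'$ fixed, Lemma~\ref{existenceofcondNconj} at non-split $v \in S$ and Lemma~\ref{splitexistenceofcondNconj} at split $v \in S$ furnish the local $\pi_v$ of conductor $v(\mf n)$, central character $\om'_v$, and parameter factoring through $\Ld G_v$.

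The main obstacle is ensuring that Lemma~\ref{existenceofcondNconj} actually applies at the wildly ramified places: its first-bullet case only produces the required $\pi_v$ when the local conductor $k$ avoids the intermediate window $N/2 < k \leq 4j_v - 2$. Converting to global indexing via the half-integer convention at ramified places (where $k = 2\,v(\mf n)$), this translates exactly into the first bullet condition $v(\mf n) \leq N/4$ or $v(\mf n) > 2j_v - 1$ in the statement. A secondary bookkeeping point is to verify at unramified and split $v \in S$ (where no wild constraint is present) that the conductor and central-character-conductor choices produced by $\mf m$ are compatible with the less restrictive cases of Lemmas~\ref{existenceofcondNconj} and~\ref{splitexistenceofcondNconj}; both are routine. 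Once $\Pi_S$ has been produced, Proposition~\ref{spectralpositivity} delivers the claimed strict inequality.
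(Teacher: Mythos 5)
Your proposal follows essentially the same route as the paper's proof: you define the same modified conductor $\mf n_c = \prod_v \mf p_v^{\max\{\lfloor v(\mf n) - (j_v - 1/2) \rfloor, 0\}}$ (you call it $\mf m$), invoke Lemma~\ref{lem ccharglobconj} to globalize the central character, apply Lemmas~\ref{existenceofcondNconj} and~\ref{splitexistenceofcondNconj} to furnish local representations of the prescribed conductor and central character, and conclude with Proposition~\ref{spectralpositivity}. Two small imprecisions worth noting, neither of which affects the correctness of the argument: the integrality of $v(\mf m)$ at ramified places is a consequence of the floor and the parity of $j_v - 1/2$ relative to $v(\mf n)$, not of an ``evenness constraint built into'' Lemma~\ref{existenceofcondNconj}; and the globalization must produce an $\om'_\infty$ that transfers to $\om_\infty^{-1}$ rather than to $\om_\infty$ itself (as the paper makes explicit), since it is the relation $\om'_\infty \om'_S \equiv 1$ on $Z_G(F) \cap K^{S,\infty}$ that feeds the matching hypothesis of Proposition~\ref{spectralpositivity}.
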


\begin{proof}
Define the modified conductor 
\[
\mf n_c = \prod_v \mf p_v^{\max\{\lfloor v(\mf n) - (j_v - 1/2) \rfloor, 0\}}.
\]
By Lemma \ref{lem ccharglobconj}, we can find a character $\om'$ on $Z_{G_N}(\A_F)$ such that $\om'_\infty$ transfers to $\om_\infty^{-1}$ and $\om'$ has conductor $\mf n_c$. Then, our 
assumptions on $\mf n$ let us proceed as in Theorem \ref{Ctransferpositivity}: multiplying together the outputs of Lemmas \ref{existenceofcondNconj} and \ref{splitexistenceofcondNconj} (using the $l \neq 0$ option for inputs in Lemma~\ref{existenceofcondNconj}) gives a representation $\pi^\infty$ of $G_N(\A^\infty)$ with central character $(\om')^\infty$ and conductor $\mf n$. 

The argument then follows exactly as Theorem~\ref{Ctransferpositivity}. 
\end{proof}

We also prove a version for trivial $\om_\infty$:
\begin{thm}\label{Ctransferpositivityconjzero}
Let $N \geq 4$ be even, $G = U^+_N \in \wtd{\mc E}_\sm(N)$, and $\wtd C^\infty_\mf n$ the test function on $\wtd G^\infty_N$ defined in \ref{Cdef}. Choose the conductor $\mf n$ such that: 
\begin{itemize}
    \item For all wildly ramified $v$, $v(\mf n) \leq N/4$ or $v(\mf n) > 4j_v -2$.
\end{itemize}
Then,
\[
\sum_{\gamma \in Z_G(F)} (\wtd C^\infty_\mf n)^G(\gamma) > 0.
\]
\end{thm}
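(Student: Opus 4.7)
The plan is to adapt the strategy of Theorem \ref{Ctransferpositivityconj}, with the key simplification that we do not need to globalize a non-trivial central character via Lemma \ref{lem ccharglobconj}. We will apply the spectral positivity test, Proposition \ref{spectralpositivity}, with $S$ the finite set of finite places where $\mf n$ has non-trivial local component and $\omega$ taken to be the trivial character on $Z_{G,\infty}$. Non-negativity is automatic: the defining spectral characterization of $\wtd C^\infty_\mf n$ in Corollary \ref{localC} together with the endoscopic character identities of \cite[Thm 2.2.1(a)]{Art13}/\cite[Thm 3.2.1(a)]{Mok15} shows that $\tr_{\Pi_S}(\wtd C^\infty_\mf n)^G \geq 0$ on every tempered $L$-packet $\Pi_S$.

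The substance of the argument is producing a single tempered local $L$-packet $\Pi_S$ witnessing strict positivity, namely one with $\tr_{\Pi_S}(\wtd C^\infty_\mf n)^G > 0$ and with central character restricting trivially to $Z_G(F) \cap K^{S,\infty}$. Since $\omega_\infty = 1$ and $Z_G(F) \cap K^{S,\infty} \subseteq Z_G(F) \cap Z_G(\wh{\mc O}_F)$ consists of roots of unity, it will be enough to construct, at each place $v \in S$, a conjugate self-dual tempered representation $\pi_v$ of $G_{N,v}$ with conductor $\mf p_v^{v(\mf n)}$, parameter factoring through $\Ld G_v = \Ld U_N^+$, and trivial central character on $Z_{G_v}$; i.e. central character transferring to one with local conductor $l=0$. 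Then $\Pi_S$ will be the local $A$-packet containing $\pi_S = \prod_{v \in S} \pi_v$, and the endoscopic character identities give $\tr_{\Pi_S}(\wtd C^\infty_\mf n)^G > 0$.

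The existence of these $\pi_v$ is exactly the content of Lemma \ref{existenceofcondNconj} (applied with $l=0$) at non-split places and Lemma \ref{splitexistenceofcondNconj} at split places. Converting from global to local indexing, the local conductor for ramified $v$ is $2v(\mf n)$, so the lemma's condition "$k \leq N/2$ or $k \geq 8j_v - 4$" for wildly ramified places becomes precisely the condition "$v(\mf n) \leq N/4$ or $v(\mf n) > 4j_v - 2$" in the theorem statement. Unramified and tamely ramified places are handled with no further hypothesis, and split places by Lemma \ref{splitexistenceofcondNconj}.

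There is essentially no new obstacle: the technical heart of this statement was already carried out in the proof of Theorem \ref{Ctransferpositivityconj} and in the existence results for local representations in \S\ref{sec repexist}. The trivial-$\omega_\infty$ case is in fact \emph{easier} than the general case, since we bypass the delicate globalization of central characters encoded by the invariants $b_v$ in Lemma \ref{lem ccharglobconj}: one simply takes the trivial adelic character, which automatically restricts trivially to any subgroup of $Z_G(F)$. The only point requiring care is the bookkeeping between the local and global conductor conventions at ramified places.
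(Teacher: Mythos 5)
Your proposal is correct and is exactly the argument the paper gives: run the positivity test of Proposition \ref{spectralpositivity} with the trivial character, using Lemma \ref{existenceofcondNconj} with the $l=0$ option (and Lemma \ref{splitexistenceofcondNconj} at split places) to produce the witnessing tempered packet, the only simplification over Theorem \ref{Ctransferpositivityconj} being that the trivial character globalizes for free. One small bookkeeping remark: the $l=0$ condition ``$k \geq 8j_v - 4$'' translates (via $k = 2v(\mf n)$ at ramified $v$) to ``$v(\mf n) \geq 4j_v - 2$'', which is marginally weaker than the theorem's stated ``$v(\mf n) > 4j_v - 2$'', so the hypothesis as written is sufficient but not sharp; this is immaterial to the correctness of your proof.
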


\begin{proof}
This is the same argument as Theorem \ref{Ctransferpositivity} but instead of globalizing $\om_\infty$ using Lemma \ref{lem ccharglobconj}, we globalize trivial $\om_\infty$ to the trivial character and use the $l=0$ option for inputs in Lemma \ref{existenceofcondNconj}. 
\end{proof}

\begin{note}\label{note conductorsconj}
A more careful analysis of representations in Lemma \ref{existenceofcondNconj} could increase the generality of wildly ramified factors allowed in $\mf n$. Furthermore, by inspection of the proof of Lemma~\ref{existenceofcondNconj}, there are similarly results for $N=2$, but with some extra congruence conditions on the exponents of ramified primes dividing $\mf n$. 
\end{note}

\section{Final Result}
We now discuss implications for equidistribution of epsilon factors:
\subsection{General Setup}
We first describe sets of automorphic representations. 
\begin{dfn}
In either the self-dual or conjugate self-dual case, let~$N > 0$, and consider $\mf n$ a conductor  for $F$ and $\lb$ a (conjugate) self-dual infinitesimal character for $G_{N, \infty}$. Choose $\star$ to be either (conjugate) orthogonal or (conjugate) symplectic---denoted by ``$\orth$'' or ``$\symp$'' respectively. 

Define $\mc{AR}_N^\star(\lb, \mf n)$ to be the set of self-dual, type-$\star$, cuspidal automorphic representations $\pi$ of $\GL_N$ of conductor $\mf n$ and with infinitesimal character $\lb$ at infinity.
\end{dfn}

The most basic question we can ask about distribution of root numbers $\eps(1/2, \pi)$ in $\mc{AR}_{N,\star}(\lb, \mf n)$, is to understand asymptotics of
\[
\f1{|\mc{AR}_N^\star(\lb, \mf n)|} \sum_{\pi \in \mc{AR}_{N, \star}(\lb, \mf n)} \eps(1/2, \pi).
\]
In full generality, we also want to understand weighted distributions---if $\wtd f_S$~is a test function at some set of places $S$ coprime to $\mf n$, we can try to compute the asymptotics of
\[
\f{\sum_{\pi \in \mc{AR}_N^\star(\lb, \mf n)} \tr_{\wtd \pi_S}(\td f_S) \eps(1/2, \pi)}{\sum_{\pi \in \mc{AR}_N^\star(\lb, \mf n)} \tr_{\wtd \pi_S}(\wtd f_S)},
\]
where we recall $\wtd \pi$ is the canonical extension of $\pi$ to $\wtd G_N$.

We first describe the class of functions $f_S$ which our methods allow us to weigh the count by:
\begin{dfn}\label{def fullweighting}
In either the self-dual or conjugate self-dual case, let $S$ be a finite set of finite places of $F$ and $G \in \wtd{\mc E}_\el(N)$. Then a test function $\td f_S \in \ms H(\wtd G_{N,S})$ is a full weighting for $G_S$ if the Fourier transform of $(\td f_S)^G|_{Z_{G_S}}$ is strictly positive on all unramified characters. 

Note that if $(\td f_S)^G|_{Z_{G_S}}$ is supported in $K_S \cap Z_{G_S}$ (in particular, if $Z_{G_S}$ is compact), this simplifies to the condition that $(\td f_S)^G(1) > 0$. 
\end{dfn}
Test functions that are not weightings produce sums over automorphic representations that cancel out except for an asymptotically trivial subset of $\mc{AR}_{\star}(\lb, \mf n)$, making them much harder to study. 

Some examples of full weightings for $G \in \wtd{\mc E}_\el(N)$:
\begin{itemize}
    \item Indicators of hyperspecials $\wtd f_S = \1_{K_S \rtimes \theta}$ give unweighted counts. 
    \item For any test function $f_S \in \ms H(G_S)$, we can find $\td f_S \in \ms H(\wtd G_N)$ such that $(\td f_S)^G = f_S$ by \cite[Prop 3.1.1(b)]{Mok15} or \cite[Prop. 2.1.1]{Art13} (the argument for Mok's extension part (b) also works in Arthur's case). 
    \item If $\wtd f_S$ satisfies that $\tr_{\wtd \pi_S}(f_S) \geq 0$ for all (conjugate) self-dual $\pi_S$ and there are tempered $A$-parameters $\psi_S$ for $G$ with general enough central character such that $\tr_{\wtd \pi_{\psi_S}}(f_S) > 0$, then $f_S$ will be a full weighting similarly to Proposition \ref{spectralpositivity}. 
\end{itemize}

\subsection{Self-Dual Case}
In the self-dual case, $\eps(1/2, \pi) = 1$ unless $N$ is even and $\pi$ is symplectic-type/comes from $G = \SO_{N+1}$. We focus on this case:
\begin{thm}\label{mainthm}
Let $N > 0$ be even, and consider $\mf n$ a conductor  for $F$ and $\lb$ self-dual, symplectic-type, integral infinitesimal character for $G_{N, \infty}$ as in \S\ref{integralinfchars}.
Choose a  finite set of finite places $S$ coprime to $\mf n$ and a test function
\[
\wtd f_S := \1_{K_S \rtimes \theta} \star f_S \in \ms H^\ur(\wtd G_N(F_S))^{\leq \kappa},
\]
such that $\wtd f_S$ is a full weighting for $G = \SO_{N+1}$. For any self-dual automorphic representation $\pi$ of $\GL_N$, define coefficients
\[
w(\pi) := \tr_{\wtd \pi_S}(\wtd f_S).
\]
with respect to the canonical extension of $\pi$ to $\wtd G_N$. Then, if there is place $v$ such that either
\begin{itemize}
    \item $v(\mf n) > N$,
    \item $v(\mf n)$ is odd,
\end{itemize}
we have
\[
\f{\sum_{\pi \in \mc{AR}_N^\symp(\lb, \mf n)} w(\pi) \eps(1/2, \pi)}{\sum_{\pi \in \mc{AR}_N^\symp(\lb, \mf n)} w(\pi)} = O_{\mf n}(m(\lb)^{-C} q_S^{A + B \kappa})
\]
for some constants $A,B,C$ with $C > 1$. Otherwise,
\begin{multline*}
 \f{\sum_{\pi \in \mc{AR}_N^\symp(\lb, \mf n)} w(\pi) \eps(1/2, \pi)}{\sum_{\pi \in \mc{AR}_N^\symp(\lb, \mf n)} w(\pi)} \\=  \eps(1/2, \varphi_\lb) (-1)^{\sum_v v(\mf n)/2}R + O_{\mf n}(m(\lb)^{-C} q_S^{A + B \kappa})
\end{multline*}
for some $R > 0$ and where $\varphi_\lb$ is the parameter with infinitesimal character $\lb$ and trivial central character (i.e. the one coming from the discrete $L$-packet with infinitesimal character $\lb$ on $G$).
\end{thm}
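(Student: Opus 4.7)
The plan is to express both the numerator and denominator of the stated ratio as stable traces on the endoscopic group $G = \SO_{N+1}$ and then invoke Theorem~\ref{weightasymptotics} to read off $\lb \to \infty$ asymptotics, whose leading terms are the values of the appropriate transfers at the identity.

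First I would modify the global test functions of Sections~\ref{Edef} and~\ref{sec Cdef} by replacing their factors at $v \in S$ with the weighting $\wtd f_S$; call the results $\wtd E^{(S)}_\mf n$ and $\wtd C^{(S)}_\mf n$. Every $\pi \in \mc{AR}^\symp_N(\lb, \mf n)$ is unramified at $S$ and of conductor $\mf n$, so Theorem~\ref{globalepsilontrace} and Corollary~\ref{globalC} (combined with $\tau^\psi_{v,0} = 1$ at $v\in S$) yield
\[
\tr_{\wtd\pi^\infty}(\wtd E^{(S)}_\mf n) = \eps(1/2, \pi_\infty)^{-1} \eps(1/2, \pi)\, w(\pi), \qquad \tr_{\wtd\pi^\infty}(\wtd C^{(S)}_\mf n) = w(\pi),
\]
with both traces vanishing on symplectic-type $\pi$ of conductor $\neq \mf n$. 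Summing over all symplectic-type cuspidal $\pi$ of infinitesimal character $\lb$ is the same as summing over $\psi$ in the unique simple shape $\Sigma_\eta$ associated to $G = \SO_{N+1}$ (Corollary~\ref{sigmatogroup}), so Proposition~\ref{stablecount} rewrites the numerator and denominator as $\eps(1/2, \pi_\infty) \, S^G_{\Sigma_\eta}(\EP_\lb (\wtd E^{(S)}_\mf n)^G)$ and $S^G_{\Sigma_\eta}(\EP_\lb (\wtd C^{(S)}_\mf n)^G)$, respectively.

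Next I would apply Theorem~\ref{weightasymptotics} to both; since $Z_G$ is trivial, the main term $\Lambda$ is just $\tau'(G)$ times the value of the relevant transfer at the identity, producing
\[
(\dim \lb) \lf( \tau'(G) (\wtd\star^{(S)}_\mf n)^G(1) + O(m(\lb)^{-C} q_S^{A + B \kappa} f_S(1)) \ri)
\]
for $\star \in \{E, C\}$. Because endoscopic transfer is multiplicative across places and the twisted fundamental lemma trivializes the unramified factors outside $S \cup \supp(\mf n)$, each main term factors as $(\wtd f_S)^G(1) \cdot (\wtd\star^\infty_\mf n)^G(1)$. The full-weighting hypothesis on $\wtd f_S$ is precisely that $(\wtd f_S)^G(1) > 0$ (since $Z_{G_S}$ is trivial); Theorem~\ref{Ctransferpositivity}, applied with the trivial central character (which is the appropriate $\om'_v$ because $\wh G = \Sp_N$), gives $(\wtd C^\infty_\mf n)^G(1) > 0$; and Corollary~\ref{Etransfervanishing} supplies the stated dichotomy, with $(\wtd E^\infty_\mf n)^G(1) = \prod_v (-1)^{v(\mf n)/2} \binom{N/2}{v(\mf n)/2}$ in the non-vanishing regime and $0$ otherwise.

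Forming the quotient then cancels the common factors $\dim \lb$, $\tau'(G)$, and $(\wtd f_S)^G(1)$; the error term acquires a strictly positive denominator and becomes $O_\mf n(m(\lb)^{-C} q_S^{A + B \kappa})$ as required. In the non-vanishing regime, the main term of the ratio is $\eps(1/2, \pi_\infty) (-1)^{\sum_v v(\mf n)/2} R$ with $R = \prod_v \binom{N/2}{v(\mf n)/2} / (\wtd C^\infty_\mf n)^G(1) > 0$. Since $\pi_\infty$ is the unique discrete series of $\GL_N(F_\infty)$ with infinitesimal character $\lb$ and trivial central character, its parameter is $\varphi_\lb$, so $\eps(1/2, \pi_\infty) = \eps(1/2, \varphi_\lb)$, matching the statement. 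The main obstacle is bookkeeping — keeping the normalizations of measures, of $\tau'(G)$, of the volume factors defining $\wtd E$ and $\wtd C$, and of the twisted fundamental lemma consistent with the endoscopic character identities so that the two main-term ratios genuinely cancel — while the analytic substance (the Shalika-germ vanishing in Corollary~\ref{Etransfervanishing}, the Plancherel positivity in Theorem~\ref{Ctransferpositivity}, and the power-saving bound in Theorem~\ref{weightasymptotics}) is already established.
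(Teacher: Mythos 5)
Your proposal is correct and follows essentially the same route as the paper: interpret both the numerator and denominator as $\sum_{\psi \in \Sigma_{\eta,\lb}} \tr_{\wtd\pi_\psi^\infty}(\cdot)$ via Theorem~\ref{globalepsilontrace} and Corollary~\ref{globalC}, rewrite these through Proposition~\ref{stablecount} as $S^G_{\Sigma_\eta}(\EP_\lb (\cdot)^G)$ for $G = \SO_{N+1}$, apply Theorem~\ref{weightasymptotics} to extract the main term $\Lambda$ (which collapses to $\tau'(G)$ times the value of the transfer at the identity since $Z_G$ is trivial), and then invoke Corollary~\ref{Etransfervanishing} and Theorem~\ref{Ctransferpositivity} to evaluate these. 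The only nit is the phrase ``unique discrete series of $\GL_N(F_\infty)$'' — $\GL_N(F_\infty)$ has no discrete series for $N>2$; what you mean, and what makes the identification $\eps(1/2,\pi_\infty)=\eps(1/2,\varphi_\lb)$ valid, is that $\pi_\infty$ is the self-dual representation whose parameter is the image of the unique discrete $L$-parameter $\varphi_\lb$ for $G_\infty = \SO_{N+1}(F_\infty)$ with infinitesimal character $\lb$.
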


\begin{proof}
Let $\Sigma_\eta$ be the shape containing only cuspidal parameters of symplectic-type---i.e. $H(\Sigma_\eta) = \SO_{2N + 1} = G$. By Theorem \ref{globalepsilontrace} and Proposition \ref{stablecount},
\begin{multline*}
\sum_{\pi \in \mc{AR}_N^\symp(\lb, \mf n)} w(\pi) \eps(1/2, \pi) = \eps(1/2, \varphi_\lb)\sum_{\psi \in \Sigma_{\eta,\lb}} \tr_{\wtd \pi_\psi^\infty}(\wtd E^\infty_\mf n \star f_S)\\
= \eps(1/2, \varphi_\lb)S^G_{\Sigma_\eta}(\EP_\lb (\wtd E^\infty_\mf n \star f_S)^G),
\end{multline*}
after noting that $S$ coprime to $\mf n$ means that $\wtd E_{\mf n, S} = \1_{K_S \rtimes \theta}$.

By similar arguments to \cite[\S5.4]{Dal22}, we have $(\wtd f_S)^G \in \ms H^\ur(G)^{\leq D\kappa}$ for some constant $D$ depending only on $N$ and $G$. Therefore, by the bound of Proposition~\ref{fullnewshapebound} with our $S$:
\begin{multline}\label{eq sd final asymp w}
(\dim \lb)^{-1}S^G_{\Sigma_\eta}(\EP_\lb (\wtd E^\infty_\mf n \star f_S)^G) \\ = \Lambda(G, \om_\lb, (\wtd E^\infty_\mf n \star f_S)^G)  + O_{\mf n}(m(\lb)^{-C} q_S^{A + B \kappa}(\wtd f_S)^G(1)),
\end{multline}
where $\Lambda$ is from \eqref{mainterm} and for some $A,B,C$ with $C \geq 1$. By Corollary \ref{Etransfervanishing}, we can compute
\begin{multline*}
\Lambda(G, \om_\lb, (\wtd E^\infty_\mf n \star f_S)^G)  = (\wtd E^\infty_\mf n )^G (1) (\wtd f_S)^G(1) \\= (\wtd f_S)^G(1)\begin{cases}
0 & \mf n \text{ satisfies conditions} \\
M(-1)^{\sum_v v(\mf n)/2} & \text{else}
\end{cases}
\end{multline*}
for some constant $M > 0$. 

By a similar argument, now using Corollary \ref{localC}, 
\[
\sum_{\pi \in \mc{AR}_N^\symp(\lb, \mf n)} w(\pi) = S^G_{\Sigma_\eta}(\EP_\lb (\wtd C^\infty_\mf n \star f_S)^G)
\]
and
\begin{multline} \label{eq sd final asymp nw}
(\dim \lb)^{-1}S^G_{\Sigma_\eta}(\EP_\lb (\wtd C^\infty_\mf n \star f_S)^G) \\ = (\wtd C^\infty_\mf n)^G(1)(\wtd f_S)^G(1) + O_{\mf n}(m(\lb)^{-C} q_S^{A + B \kappa} (\wtd f_S)^G(1)).
\end{multline}
By Theorem \ref{Ctransferpositivity}, $(\wtd C^\infty_\mf n)^G(1) > 0$. The result follows from dividing \eqref{eq sd final asymp w} by \eqref{eq sd final asymp nw}.
\end{proof}

In particular, when $\mf n$ satisfies the bulleted conditions, the root numbers $\eps(1/2, \pi)$ for $\pi \in \mc{AR}_N^\symp(\lb, \mf n)$ become equidistributed between $\pm 1$ as $m(\lb) \to \infty$. This equidistribution is ``independent'' of any full weighting by Weyl-invariant polynomials on the self-dual Satake parameters at places coprime to $\mf n$. There are also power-saving bounds on how fast equidistribution might be achieved. 

If $\mf n$ does not satisfy the bulleted conditions, then there is an ``alternating'' asymptotic bias between $\eps(1/2, \pi) = \pm 1$.

\subsection{Conjugate Self-Dual Case}
In the conjugate self-dual case, our techniques only apply when $N$ is even. Then, $\eps(1/2,\pi) = 1$ unless $\pi$ is conjugate symplectic/comes from $G = U^+_N$. We focus on this case.

Isolating the conditions from Theorem \ref{Ctransferpositivityconj}:
\begin{dfn}
For a place $v$ of $F$, let $e_v$ be the ramification degree, and recall the numbers $b_v$ from Lemma \ref{lem ccharglobconj} and $j_v$ from \S\ref{sec existenceconj}. Call a conductor $\mf n$ (as in \S\ref{sec conductorsconj}) valid if
\begin{itemize}
    \item For all wildly ramified $v$, $v(\mf n) \leq N/4$ or $v(\mf n) > 2j_v - 1$,
    \item There is at least one $v$ such that $\max\{\lfloor v(\mf n) - (j_v - 1/2) \rfloor, 0\} \geq b_v + 2/e_v$ or there are at least two $v$ such that $\max\{\lfloor v(\mf n) - (j_v - 1/2) \rfloor, 0\} \geq b_v + 1/e_v$. 
\end{itemize}

Call it $0$-valid if
\begin{itemize}
    \item For all wildly ramified $v$, $v(\mf n) \leq N/4$ or $v(\mf n) > 4j_v - 2$,
\end{itemize}
(see note \ref{note conductorsconj} for potentially weaker conditions.)
\end{dfn}

We repeat again that these conditions, while not comprehensive, are quite weak: for example, only the second part of the valid condition doesn't involve wild ramification and it is automatically satisfied if there are no roots of unity in $1 + \mf D_{E/F}$. 

\begin{thm}\label{mainthmconj}
Let $N > 2$ be even and let $\lb$ be a conjugate self-dual infinitesimal character of $G_N$ as in \S\ref{integralinfchars}.  Let $\mf n$ be a conductor for $F$. Assume that $\mf n$ is valid or, if $\lb$ corresponds to the trivial central character, possibly instead $0$-valid. 

Choose a finite set of finite places $S$ coprime to $\mf n$ and a test function
\[
\wtd f_S := \1_{K_S \rtimes \theta} \star f_S \in \ms H^\ur(\wtd G_N(F_S))^{\leq \kappa},
\]
such that $\wtd f_S$ is a full weighting for $G = U^+_N$. For any conjugate self-dual automorphic representation $\pi$ of $G_N$, define coefficients
\[
w(\pi) := \tr_{\wtd \pi_S}(\wtd f_S)
\]
with respect to the canonical extension of $\pi$ to $\wtd G_N$. Then, if there is a ramified place $v$ such that either
\begin{itemize}
    \item $v(\mf n)$ is half-integral,
    \item $v(\mf n) > N/2$,
\end{itemize}
we have
\[
\f{\sum_{\pi \in \mc{AR}_N^\symp(\lb, \mf n)} w(\pi) \eps(1/2, \pi)}{\sum_{\pi \in \mc{AR}_N^\symp(\lb, \mf n)} w(\pi)} = O_{\mf n, \om_\lb}(m(\lb)^{-C} q_S^{A + B \kappa})
\]
for some constants $A,B,C$ with $C > 1$ and where $\om_\lb$ is the central character of the finite-dimensional representation of~$G_\infty$ corresponding to $\lb$. 
\end{thm}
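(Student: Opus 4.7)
The plan is to follow the four-step template of Theorem~\ref{mainthm} line-by-line, substituting every self-dual ingredient by its conjugate self-dual counterpart developed in earlier sections. Let $G = U_N^+ \in \wtd{\mc E}_\sm(N)$ and let $\Sigma_\eta$ be the simple unrefined shape whose associated group is $G$, so that by Proposition~\ref{stablecount} the sum of $\tr_{\wtd \pi_\psi^\infty}$ against any test function on $\wtd G_N^\infty$ over $\psi\in\Sigma_{\eta,\lb}$ equals a single stable term $S^G_{\Sigma_\eta}(\EP_\lb\,\cdot\,)$. Since $\mf n$ is coprime to $S$, both $\wtd E^\infty_\mf n$ and $\wtd C^\infty_\mf n$ are equal to $\1_{K_v \rtimes \bar\theta}$ at $v \in S$, so that convolution with $\wtd f_S$ is well-defined and factors appropriately at $S$.

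\smallskip
\textbf{Numerator.} By Theorem~\ref{globalepsilontraceconj} (the conjugate self-dual, $N$ even, conjugate-symplectic row), the factor $\om_\infty(a)^{-1}\eps(1/2,\pi_\infty)^{-1}$ is constant in $\pi \in \mc{AR}_N^\symp(\lb,\mf n)$ since both $\om_\infty$ and $\pi_\infty$ are determined (up to $L$-packet) by $\lb$, and every member of the archimedean $L$-packet has the same standard root number. Hence
\[
\sum_{\pi \in \mc{AR}_N^\symp(\lb,\mf n)} w(\pi)\,\eps(1/2,\pi)
= \om_\infty(a)\,\eps(1/2,\pi_\infty)\cdot S^G_{\Sigma_\eta}\!\bigl(\EP_\lb\,(\wtd E^\infty_\mf n \star \wtd f_S)^G\bigr).
\]
Theorem~\ref{weightasymptotics} then yields
\[
(\dim\lb)^{-1}S^G_{\Sigma_\eta}\!\bigl(\EP_\lb (\wtd E^\infty_\mf n \star \wtd f_S)^G\bigr)
= \Lambda\bigl(G,\om_\lb,(\wtd E^\infty_\mf n\star\wtd f_S)^G\bigr) + O_\mf n\!\bigl(m(\lb)^{-C}q_S^{A+B\kappa}(\wtd f_S)^G(1)\bigr).
\]
Under either of the two bulleted hypotheses on $\mf n$, Corollary~\ref{Etransfervanishingconj} says $(\wtd E^\infty_\mf n)^G(\gamma)=0$ for all $\gamma \in Z_G(F)$; since at $v\in S$ the function $(\wtd f_S)^G$ is an unramified Hecke element evaluated at a central point (giving a fixed number) and $(\wtd E^\infty_\mf n \star \wtd f_S)^G$ factors accordingly at the central elements that hit the support, the entire sum~\eqref{mainterm} defining $\Lambda$ collapses to $0$. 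Thus the numerator is $O_\mf n(m(\lb)^{-C}q_S^{A+B\kappa}(\wtd f_S)^G(1))\cdot \dim\lb$.

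\smallskip
\textbf{Denominator.} Exactly the same recipe with $\wtd C^\infty_\mf n$ in place of $\wtd E^\infty_\mf n$, using Corollary~\ref{localC} (defining property of $\wtd C^\infty_\mf n$) in place of Theorem~\ref{globalepsilontraceconj}, shows
\[
\sum_{\pi \in \mc{AR}_N^\symp(\lb,\mf n)} w(\pi)
= S^G_{\Sigma_\eta}\!\bigl(\EP_\lb\,(\wtd C^\infty_\mf n\star\wtd f_S)^G\bigr)
\]
and
\[
(\dim\lb)^{-1}S^G_{\Sigma_\eta}(\EP_\lb(\wtd C^\infty_\mf n\star\wtd f_S)^G)
= \Lambda + O_\mf n(m(\lb)^{-C}q_S^{A+B\kappa}(\wtd f_S)^G(1)).
\]
The validity (respectively $0$-validity, if $\om_\lb$ is trivial) of $\mf n$ is precisely the hypothesis of Theorem~\ref{Ctransferpositivityconj} (respectively Theorem~\ref{Ctransferpositivityconjzero}), which gives
\[
\sum_{\gamma \in Z_G(F)} \om_\lb^{-1}(\gamma)\,(\wtd C^\infty_\mf n)^G(\gamma) > 0.
\]
Combined with the positivity of $(\wtd f_S)^G(1)$ built into the full-weighting hypothesis on $\wtd f_S$, this makes $\Lambda$ a strictly positive constant depending on $\mf n$ and $\om_\lb$. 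Forming the ratio and absorbing this positive constant into the $O$-term yields the claimed bound with $C>1$.

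\smallskip
\textbf{Main obstacle.} The substantive content has been pushed into the auxiliary results: the vanishing in Corollary~\ref{Etransfervanishingconj} (Shalika-germ descent plus the computation of twisted orbital supports in Proposition~\ref{cosettransfervanishingconj}), the spectral positivity in Theorem~\ref{Ctransferpositivityconj} (resting on the stable Plancherel inversion~\ref{stablefourierinversion} together with the delicate existence of local conjugate symplectic representations with prescribed central character and conductor in Lemma~\ref{existenceofcondNconj}), and the weight-aspect error bound of Theorem~\ref{weightasymptotics}. Assembling them is now essentially formal; the only subtlety is ensuring that the archimedean factor $\om_\infty(a)\eps(1/2,\pi_\infty)$ is independent of $\pi$, which is immediate since $\pi_\infty$ lies in the fixed archimedean $L$-packet determined by~$\lb$.
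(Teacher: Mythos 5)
Your numerator argument is correct and matches the paper's: Theorem~\ref{globalepsilontraceconj}, Proposition~\ref{stablecount}, Theorem~\ref{weightasymptotics}, and Corollary~\ref{Etransfervanishingconj} indeed give vanishing of the main term, with the archimedean constant pulled out because $\pi_\infty$ lies in a fixed $L$-packet.

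The denominator has a genuine gap. Unlike the self-dual case $G=\SO_{N+1}$, here $Z_G$ is the nontrivial torus $U_1$, so $\Lambda(G,\om_\lb,(\wtd C^\infty_\mf n\star f_S)^G)$ is a sum over many $\gamma\in Z_G(F)$ of $\om_\lb^{-1}(\gamma)(\wtd C^\infty_\mf n)^G(\gamma)\,(f_S)^G(\gamma)$; it does \emph{not} factor as ``(positive sum from Theorem~\ref{Ctransferpositivityconj})\,$\times$\,$(f_S)^G(1)$''. The factor $(f_S)^G(\gamma)$ varies with $\gamma$ and could be negative, and moreover Theorem~\ref{Ctransferpositivityconj} gives positivity of a sum over $\gamma$ of the $\wtd C$-piece alone, which does not directly control the product sum. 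The paper handles this by invoking the conditional Plancherel expectation framework of equation~\eqref{maintermspectral}: it rewrites $\Lambda$ as $\sum_{\om_{S_0}\om_S\in\Om_\lb(K^{S,S_0,\infty})}E^\pl((\wtd C_{S_0,\mf n})^G|\om_{S_0})E^\pl(f_S^G|\om_S)$, uses that unramified $f_S^G$ kills all $\om_S$ nontrivial on $K_S$, and then uses the full-weighting hypothesis precisely in its strong form --- strict positivity of the Fourier transform of $(f_S)^G|_{Z_{G_S}}$ on \emph{all} unramified characters, not just at the trivial one --- together with non-negativity of each $E^\pl((\wtd C^\infty_\mf n)^G|\om_{S_0})$ and strict positivity of at least one of them from Theorem~\ref{Ctransferpositivityconj} or~\ref{Ctransferpositivityconjzero}. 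Your proposal needs to add this spectral decomposition step; as written, the claimed factorization of $\Lambda$ is false for nontrivial center.
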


\begin{proof}
This is the same argument as Theorem \ref{mainthm} except we use the analogous results for the conjugate self-dual case. 

Theorem \ref{globalepsilontraceconj} and Proposition \ref{stablecount} interpret
\[
\sum_{\pi \in \mc{AR}_N^\symp(\lb, \mf n)} w(\pi) \eps(1/2, \pi) = \om_\infty(a)^{-1} \eps(1/2, \varphi_\lb)^{-1} S^G_{\Sigma_\eta}(\EP_\lb (\wtd E^\infty_\mf n \star f_S)^G)
\]
for some choice of $a/\bar a = -1$. Then, by Theorem \ref{weightasymptotics}:
\begin{multline*}
(\dim \lb)^{-1}S^G_{\Sigma_\eta}(\EP_\lb (\wtd E^\infty_\mf n \star f_S)^G) \\ = \Lambda(G, \om_\lb, (\wtd E^\infty_\mf n \star f_S)^G)  + O_{\mf n}(m(\lb)^{-C} q_S^{A + B \kappa}(\wtd f_S)^G(1)),
\end{multline*}
where our bulleted conditions on $\mf n$ give that
\[
\Lambda(G, \om_\lb, (\wtd E^\infty_\mf n \star f_S)^G) = 0
\]
by Corollary \ref{Etransfervanishingconj}. 

On the other hand, Corollary \ref{globalC} gives that
\[
\sum_{\pi \in \mc{AR}_N^\symp(\lb, \mf n)} w(\pi) = S^G_{\Sigma_\eta}(\EP_\lb (\wtd C^\infty_\mf n \star f_S)^G)
\]
and Theorem~\ref{weightasymptotics} again gives 
\begin{multline*}
(\dim \lb)^{-1}S^G_{\Sigma_\eta}(\EP_\lb (\wtd C^\infty_\mf n \star f_S)^G) \\ = \Lambda(G, \om_\lb, (\wtd C^\infty_\mf n \star f_S)^G)  + O_{\mf n}(m(\lb)^{-C} q_S^{A + B \kappa}(\wtd f_S)^G(1)). 
\end{multline*}
Let the support of $\mf n$ be $S_0$. Then by \eqref{maintermspectral},
\[
\Lambda(G, \om_\lb, (\wtd C^\infty_\mf n \star f_S)^G) = \sum_{\om_{S_0} \om_S \in \Om_\lb(K^{S, S_0, \infty})} E^\pl((\wtd C_{S_0,\mf n})^G f_S^G | \om_{S_0} \om_S) 
\]
for the set of characters $\Om_\lb(K^{S, S_0, \infty})$ of $Z_{G, S \cup S_0}$ defined there. Each of the summands factors into:
\[
E^\pl((\wtd C_{S_0,\mf n})^G| \om_{S_0})E^\pl(f_S^G | \om_S). 
\]
Since $f_S^G$ is unramified, the second factor is zero unless $\om_S|_{K_S}$ is trivial. Therefore, we can rewrite $\Lambda(G, \om_\lb, (\wtd C^\infty_\mf n \star f_S)^G)$ as
\[
\sum_{\om_{S_0} \in \Omega_\lb(K^{S_0, \infty})} E^\pl((\wtd C^\infty_{\mf n})^G| \om_{S_0}) \lf(\sum_{\om_{S_0} \om_S \in \Om_\lb(K^{S, S_0, \infty})} E^\pl(f_S^G | \om_S)\ri) 
\]
Since $f_S$ is a full weighting, the values of the inner sum are always strictly positive. By Theorem \ref{Ctransferpositivityconj} or \ref{Ctransferpositivityconjzero}, trace-positivity of $\wtd C^\infty_\mf n$, and validity of $\mf n$, all the terms~$E^\pl((\wtd C^\infty_{\mf n})^G| \om_{S_0})$ are non-negative and at least one of them is strictly positive. In total, $\Lambda(G, \om_\lb, (\wtd C^\infty_\mf n \star f_S)^G) > 0$. 

Therefore, we get our result by dividing these two computations. 
\end{proof}

Because we cannot prove Conjecture \ref{expectedEtransfernonvanishingconj}, we do not have a non-equidistribution result analoguous to the second part of Theorem \ref{mainthm}. However, we expect that it does hold for valid (or $0$-valid) $\mf n$ that do not satisfy the bulleted conditions. 

As a replacement, we could use the $\td f_{\mf n}$ from Corollary \ref{Etransfervanishingconj} in our trace formula computation instead of~$\wtd E^\infty_{\mf n}$. This would prove a non-equidistribution result for an average over $\pi$ with $c(\pi) | \mf n$ that is weighted by a product depending on $\mf n/c(\pi)$ of the (signed!) coefficients from Proposition \ref{twistedtraceoldvectorsconj}.

\bibliographystyle{amsalpha}
\bibliography{Tbib}

\newcommand{\etalchar}[1]{$^{#1}$}
\providecommand{\bysame}{\leavevmode\hbox to3em{\hrulefill}\thinspace}
\providecommand{\MR}{\relax\ifhmode\unskip\space\fi MR }
% \MRhref is called by the amsart/book/proc definition of \MR.
\providecommand{\MRhref}[2]{%
  \href{http://www.ams.org/mathscinet-getitem?mr=#1}{#2}
}
\providecommand{\href}[2]{#2}
\begin{thebibliography}{KMSW14}

\bibitem[AGI{\etalchar{+}}24]{AGIKMS24}
Hiraku Atobe, Wee~Teck Gan, Atsushi Ichino, Tasho Kaletha, Alberto Mínguez,
  and Sug~Woo Shin, \emph{Local intertwining relations and co-tempered
  $a$-packets of classical groups}, 2024.

\bibitem[AHKO23]{AHO23}
Moshe Adrian, Guy Henniart, Eyal Kaplan, and Masao Oi, \emph{Simple
  supercuspidal l-packets of split special orthogonal groups over dyadic
  fields}, 2023.

\bibitem[AJ87]{AJ87}
Jeffrey Adams and Joseph~F. Johnson, \emph{Endoscopic groups and packets of
  nontempered representations}, Compositio Math. \textbf{64} (1987), no.~3,
  271--309. \MR{918414}

\bibitem[AOY22]{AOY23}
Hiraku Atobe, Masao Oi, and Seidai Yasuda, \emph{Local newforms for generic
  representations of unramified odd unitary groups and fundamental lemma},
  2022.

\bibitem[Art89]{Art89}
James Arthur, \emph{The {$L^2$}-{L}efschetz numbers of {H}ecke operators},
  Invent. Math. \textbf{97} (1989), no.~2, 257--290. \MR{1001841}

\bibitem[Art13]{Art13}
\bysame, \emph{The endoscopic classification of representations}, American
  Mathematical Society Colloquium Publications, vol.~61, American Mathematical
  Society, Providence, RI, 2013, Orthogonal and symplectic groups. \MR{3135650}

\bibitem[BDK86]{BDK86}
J.~Bernstein, P.~Deligne, and D.~Kazhdan, \emph{Trace {P}aley-{W}iener theorem
  for reductive {$p$}-adic groups}, J. Analyse Math. \textbf{47} (1986),
  180--192. \MR{874050}

\bibitem[BG14]{BG14}
Kevin Buzzard and Toby Gee, \emph{The conjectural connections between
  automorphic representations and galois representations}, London Mathematical
  Society Lecture Note Series, p.~135–187, Cambridge University Press, 2014.

\bibitem[BH06]{BH06}
Colin~J Bushnell and Guy Henniart, \emph{The local langlands conjecture for gl
  (2)}, vol. 335, Springer Science \& Business Media, 2006.

\bibitem[Bin17]{Bin17}
John Binder, \emph{Refined limit multiplicity for varying conductor}, Int.
  Math. Res. Not. IMRN (2017), no.~22, 6731--6751. \MR{3737319}

\bibitem[BP21]{BP21}
Rapha\"el Beuzart-Plessis, \emph{Plancherel formula for {${\rm
  GL}_n(F)\backslash {\rm GL}_n(E)$} and applications to the {I}chino-{I}keda
  and formal degree conjectures for unitary groups}, Invent. Math. \textbf{225}
  (2021), no.~1, 159--297. \MR{4270666}

\bibitem[BZ77]{BZ77}
I.~N. Bernstein and A.~V. Zelevinsky, \emph{Induced representations of
  reductive {${p}$}-adic groups. {I}}, Ann. Sci. \'Ecole Norm. Sup. (4)
  \textbf{10} (1977), no.~4, 441--472. \MR{579172}

\bibitem[Cog04]{cogdell2004long}
J~Cogdell, \emph{Lectures on l-functions, converse theorems, and functoriality
  for gln}, Lectures on automorphic L-functions \textbf{20} (2004), 1--96.

\bibitem[CR15]{CR15}
Ga\"{e}tan Chenevier and David Renard, \emph{Level one algebraic cusp forms of
  classical groups of small rank}, Mem. Amer. Math. Soc. \textbf{237} (2015),
  no.~1121, v+122. \MR{3399888}

\bibitem[Dal22]{Dal22}
Rahul Dalal, \emph{Sato--{T}ate equidistribution for families of automorphic
  representations through the stable trace formula}, Algebra Number Theory
  \textbf{16} (2022), no.~1, 59--137. \MR{4384564}

\bibitem[DEP]{DEP}
Rahul Dalal, Shai Evra, and Ori Parzanchevski, \emph{Golden gates in {PU(N)}},
  draft available at:
  \url{https://www.mat.univie.ac.at/~rdalal/GoldenGatesDraft.pdf}.

\bibitem[DGG22]{DGG22}
Rahul Dalal and Mathilde Gerbelli-Gauthier, \emph{Statistics of cohomological
  automorphic representations via the endoscopic classification}, 2022.

\bibitem[DS05]{DS05}
Fred Diamond and Jerry Shurman, \emph{A first course in modular forms},
  Graduate Texts in Mathematics, vol. 228, Springer-Verlag, New York, 2005.
  \MR{2112196}

\bibitem[Fer07]{Fer07}
Axel Ferrari, \emph{Th\'eor\`eme de l'indice et formule des traces},
  Manuscripta Math. \textbf{124} (2007), no.~3, 363--390. \MR{2350551}

\bibitem[GGP11]{GGP11}
Wee~Teck Gan, Benedict~H Gross, and Dipendra Prasad, \emph{Symplectic local
  root numbers, central critical l-values, and restriction problems in the
  representation theory of classical groups}, Ast{\'e}risque (2011), No--pp.

\bibitem[Gou78]{Go78}
Henry~W Gould, \emph{Euler's formula for n th differences of powers}, The
  American Mathematical Monthly \textbf{85} (1978), no.~6, 450--467.

\bibitem[Hen86]{Hen86}
Guy Henniart, \emph{On the local langlands conjecture for gl (n): the cyclic
  case}, Annals of Mathematics \textbf{123} (1986), no.~1, 145--203.

\bibitem[Hen00]{Hen00}
\bysame, \emph{Une preuve simple des conjectures de langlands pour gl (n) sur
  un corps p-adique}, Inventiones mathematicae \textbf{139} (2000), 439--455.

\bibitem[HKP10]{HKP10}
Thomas~J. Haines, Robert~E. Kottwitz, and Amritanshu Prasad,
  \emph{Iwahori-{H}ecke algebras}, J. Ramanujan Math. Soc. \textbf{25} (2010),
  no.~2, 113--145. \MR{2642451}

\bibitem[HT99]{HT99}
Michael Harris and Richard~Lawrence Taylor, \emph{The geometry and cohomology
  of some simple shimura varieties}, Princeton University Press, 1999.

\bibitem[Hum20]{Hum20}
Peter Humphries, \emph{Archimedean newform theory for $\mathrm{GL}_n$}, 2020.

\bibitem[JPSS81]{JPSS81}
H.~Jacquet, I.~I. Piatetski-Shapiro, and J.~Shalika, \emph{Conducteur des
  repr\'{e}sentations du groupe lin\'{e}aire}, Math. Ann. \textbf{256} (1981),
  no.~2, 199--214. \MR{620708}

\bibitem[KMSW14]{KMSW14}
Tasho Kaletha, Alberto Minguez, Sug~Woo Shin, and Paul-James White,
  \emph{Endoscopic classification of representations: inner forms of unitary
  groups}, arXiv preprint arXiv:1409.3731 (2014).

\bibitem[Kot86]{Kot86}
Robert~E. Kottwitz, \emph{Stable trace formula: elliptic singular terms}, Math.
  Ann. \textbf{275} (1986), no.~3, 365--399. \MR{858284}

\bibitem[Kot88]{Kot88}
\bysame, \emph{Tamagawa numbers}, Ann. of Math. (2) \textbf{127} (1988), no.~3,
  629--646. \MR{942522}

\bibitem[KS99]{KS99}
Robert~E. Kottwitz and Diana Shelstad, \emph{Foundations of twisted endoscopy},
  Ast\'{e}risque (1999), no.~255, vi+190. \MR{1687096}

\bibitem[KS20]{KS20}
Arno Kret and Sug~Woo Shin, \emph{Galois representations for even general
  special orthogonal groups}, Journal of the Institute of Mathematics of
  Jussieu (2020), 1--92.

\bibitem[KS23]{KS23}
\bysame, \emph{Galois representations for general symplectic groups}, J. Eur.
  Math. Soc. (JEMS) \textbf{25} (2023), no.~1, 75--152. \MR{4556781}

\bibitem[KY12]{KY12}
Satoshi Kondo and Seidai Yasuda, \emph{Local l and epsilon factors in hecke
  eigenvalues}, Journal of Number Theory \textbf{132} (2012), no.~9,
  1910--1948.

\bibitem[Lab11]{Lab11}
Jean-Pierre Labesse, \emph{Introduction to endoscopy: {S}nowbird lectures,
  revised version, {M}ay 2010 [revision of mr2454335]}, On the stabilization of
  the trace formula, Stab. Trace Formula Shimura Var. Arith. Appl., vol.~1,
  Int. Press, Somerville, MA, 2011, pp.~49--91. \MR{2856367}

\bibitem[LMF]{LMFDB}
\emph{The {L}-functions and modular forms database},
  \url{https://www.lmfdb.org}, Online; accessed 1-October-2024.

\bibitem[LR05]{LR05}
Erez~M. Lapid and Stephen Rallis, \emph{On the local factors of representations
  of classical groups}, Automorphic representations, {$L$}-functions and
  applications: progress and prospects, Ohio State Univ. Math. Res. Inst.
  Publ., vol.~11, de Gruyter, Berlin, 2005, pp.~309--359. \MR{2192828}

\bibitem[LS07]{LS07}
R~Langlands and Diana Shelstad, \emph{Descent for transfer factors}, The
  Grothendieck Festschrift: A Collection of Articles Written in Honor of the
  60th Birthday of Alexander Grothendieck (2007), 485--563.

\bibitem[Mar18]{Martin18}
Kimball Martin, \emph{Refined dimensions of cusp forms, and equidistribution
  and bias of signs}, Journal of Number Theory \textbf{188} (2018), 1--17.

\bibitem[Mar23]{Martin23}
\bysame, \emph{Root number bias for newforms}, Proceedings of the American
  Mathematical Society \textbf{151} (2023), no.~09, 3721--3736.

\bibitem[Mat13]{Mat13}
Nadir Matringe, \emph{Essential {W}hittaker functions for {$GL(n)$}}, Doc.
  Math. \textbf{18} (2013), 1191--1214. \MR{3138844}

\bibitem[Mok15]{Mok15}
Chung~Pang Mok, \emph{Endoscopic classification of representations of
  quasi-split unitary groups}, American Mathematical Soc., 2015.

\bibitem[MW89]{MW89}
Colette M{\oe}glin and J-L Waldspurger, \emph{Le spectre residuel de $gl(n)$},
  Annales scientifiques de l'{\'E}cole normale superieure, vol.~22, 1989,
  pp.~605--674.

\bibitem[MW16]{MW16}
Colette Moeglin and Jean-Loup Waldspurger, \emph{Stabilisation de la formule
  des traces tordue. {V}ol. 1}, Progress in Mathematics, vol. 316,
  Birkh\"auser/Springer, Cham, 2016. \MR{3823813}

\bibitem[NSW13]{NSW13cohomology}
J{\"u}rgen Neukirch, Alexander Schmidt, and Kay Wingberg, \emph{Cohomology of
  number fields}, vol. 323, Springer Science \& Business Media, 2013.

\bibitem[Ree91]{Ree91}
Mark Reeder, \emph{Old forms on {${\rm GL}_n$}}, Amer. J. Math. \textbf{113}
  (1991), no.~5, 911--930. \MR{1129297}

\bibitem[Rog88]{Rog88}
J.~D. Rogawski, \emph{Trace {P}aley-{W}iener theorem in the twisted case},
  Trans. Amer. Math. Soc. \textbf{309} (1988), no.~1, 215--229. \MR{957068}

\bibitem[Rog81]{Rog81}
Jonathan~D. Rogawski, \emph{An application of the building to orbital
  integrals}, Compositio Math. \textbf{42} (1980/81), no.~3, 417--423.
  \MR{607380}

\bibitem[Sch13]{Sch13}
Peter Scholze, \emph{The local langlands correspondence for gl n over p-adic
  fields}, Inventiones mathematicae \textbf{192} (2013), 663--715.

\bibitem[Ser13]{Se13}
Jean-Pierre Serre, \emph{Local fields}, vol.~67, Springer Science \& Business
  Media, 2013.

\bibitem[Shi11]{Shin11}
Sug~Woo Shin, \emph{Galois representations arising from some compact {S}himura
  varieties}, Ann. of Math. (2) \textbf{173} (2011), no.~3, 1645--1741.
  \MR{2800722}

\bibitem[Shi24]{Shi24}
\bysame, \emph{Weak transfer from classical groups to general linear groups},
  Essential Number Theory \textbf{3} (2024), no.~1, 19--62.

\bibitem[Sil96]{Sil96}
Allan~J. Silberger, \emph{Harish-{C}handra's {P}lancherel theorem for
  {${\mathfrak p}$}-adic groups}, Trans. Amer. Math. Soc. \textbf{348} (1996),
  no.~11, 4673--4686. \MR{1370652}

\bibitem[ST16]{ST16}
Sug~Woo Shin and Nicolas Templier, \emph{Sato-{T}ate theorem for families and
  low-lying zeros of automorphic {$L$}-functions}, Invent. Math. \textbf{203}
  (2016), no.~1, 1--177, Appendix A by Robert Kottwitz, and Appendix B by Raf
  Cluckers, Julia Gordon and Immanuel Halupczok. \MR{3437869}

\bibitem[SZ88]{SZ88}
Nils-Peter Skoruppa and Don Zagier, \emph{Jacobi forms and a certain space of
  modular forms}, Invent. Math. \textbf{94} (1988), no.~1, 113--146.
  \MR{958592}

\bibitem[Ta{\"i}17]{Tai17}
Olivier Ta{\"i}bi, \emph{Dimensions of spaces of level one automorphic forms
  for split classical groups using the trace formula}, Ann. Sci. {\'E}c. Norm.
  Sup{\'e}r. (4) \textbf{50} (2017), no.~2, 269--344. \MR{3621432}

\bibitem[Tat79]{Ta79}
John Tate, \emph{Number theoretic background}, Automorphic forms,
  representations and L-functions (Proc. Sympos. Pure Math., Oregon State
  Univ., Corvallis, Ore., 1977), Part, vol.~2, 1979, pp.~3--26.

\bibitem[Tom24]{Tom24}
Radu Toma, \emph{The sup-norm problem for newforms of large level on
  $\operatorname{PGL}(n)$}, 2024.

\bibitem[TW24]{TW24}
Yugo Takanashi and Satoshi Wakatsuki, \emph{Asymptotic behavior for twisted
  traces of self-dual and conjugate self-dual representations of
  $\mathrm{GL}_n$}, 2024.

\bibitem[Var09]{Var09}
Sandeep Varma, \emph{Descent and the generic packet conjecture}, Ph.D. thesis,
  Purdue University, 2009.

\bibitem[Wal08]{WTE}
Jean-Loup Waldspurger, \emph{L'endoscopie tordue n'est pas si tordue}, American
  Mathematical Soc., 2008.

\bibitem[Yun13]{Yun13}
Zhiwei Yun, \emph{Orbital integrals and dedekind zeta functions}, 2013.

\bibitem[Zub23]{Zub23}
Nina Zubrilina, \emph{Murmurations}, arXiv preprint arXiv:2310.07681 (2023).

\end{thebibliography}

\end{document}